\newtheorem{theorem}{Theorem}[section]
\newtheorem{lemma}[theorem]{Lemma}
\newtheorem{proposition}[theorem]{Proposition}
\newtheorem{corollary}[theorem]{Corollary}
\newtheorem{definition}[theorem]{Definition}
\newtheorem{example}[theorem]{Example}
\newtheorem{remark}[theorem]{Remark}
\newtheorem{conjecture}[theorem]{Conjecture}
\newcommand\bcdot{\ensuremath{%
  \mathchoice%
   {\mskip\thinmuskip\lower0.2ex\hbox{\scalebox{1.5}{$\cdot$}}\mskip\thinmuskip}}%
   {\mskip\thinmuskip\lower0.2ex\hbox{\scalebox{1.5}{$\cdot$}}\mskip\thinmuskip}%   
   {\lower0.3ex\hbox{\scalebox{1.2}{$\cdot$}}}%
   {\lower0.3ex\hbox{\scalebox{1.2}{$\cdot$}}}%
   }
\newcommand{\on}{\operatorname}
\newcommand{\mc}{\mathcal}
\newcommand{\mbb}{\mathbb}
\newcommand{\solid}{\square}
\newcommand{\wh}{\widehat}
\newcommand{\heart}{\heartsuit}
\newcommand{\ul}{\underline}
\def\legendre@dash#1#2{\hb@xt@#1{%
  \kern-#2\p@
  \cleaders\hbox{\kern.5\p@
    \vrule\@height.2\p@\@depth.2\p@\@width\p@
    \kern.5\p@}\hfil
  \kern-#2\p@
  }}
\def\@legendre#1#2#3#4#5{\mathopen{}\left(
  \sbox\z@{$\genfrac{}{}{0pt}{#1}{#3#4}{#3#5}$}%
  \dimen@=\wd\z@
  \kern-\p@\vcenter{\box0}\kern-\dimen@\vcenter{\legendre@dash\dimen@{#2}}\kern-\p@
  \right)\mathclose{}}
\newcommand\legendre[2]{\mathchoice
  {\@legendre{0}{1}{}{#1}{#2}}
  {\@legendre{1}{.5}{\vphantom{1}}{#1}{#2}}
  {\@legendre{2}{0}{\vphantom{1}}{#1}{#2}}
  {\@legendre{3}{0}{\vphantom{1}}{#1}{#2}}
}
\def\dlegendre{\@legendre{0}{1}{}}
\def\tlegendre{\@legendre{1}{0.5}{\vphantom{1}}}
\begin{document}

\title{Duality and linearization for $p$-adic Lie groups}
\author{Dustin Clausen}

\maketitle

\section{Introduction}

If $M$ is a compact oriented real manifold, Poincaré duality says that the homology of $M$ agrees with the cohomology of $M$ up to a shift by the dimension.  If $M$ is not oriented, the homology of $M$ agrees instead with the cohomology of $M$ with coefficients in a canonical twisted coefficient system $\on{or}_M$, the \emph{orientation local system}, again up to a shift by the dimension.  It is natural to incorporate the shift by dimension into the twist; then in terms of chain complexes, the statement is that
$$C_\cdot(M;\mbb{Z}) \simeq C^\cdot(M;\on{or}_M[d]).$$

One can better understand the geometric underpinnings of the orientation local system by passing from abelian group to spectrum coefficients.  Then the statement, known as Atiyah duality (\cite{AtiyahDuality}) or Milnor-Spanier duality (\cite{MilnorDuality}), is that
$$C_\cdot(M;\mbb{S}) \simeq C^\cdot(M;\mbb{S}^{TM}).$$
Here the relevant ``orientation local system'' is the local system of spectra $\mbb{S}^{TM}$ on $M$ gotten by taking a fiberwise one-point compactification of the tangent bundle.

\begin{remark}
Note that the form of $\mbb{S}^{TM}$ explains both the shift and the twist in Poincaré duality with $\mbb{Z}$-coefficients.  The shift is by the dimension $d$ of $TM$ (or equivalently of the sphere $\mbb{S}^{TM}$) and $\on{or}_M=H_d(\mbb{S}^{TM};\mbb{Z})$.

Atiyah duality gives Poincare duality in the most refined sense; one can then specialize to other coefficients. Besides the classical $\mbb{Z}$-coefficients, Poincaré duality with topological K-theory coefficients $KO$ or $KU$ is also quite geometrically relevant, for example in index theory.  In general, Poincare duality phenomena can be studied by combining Atiyah duality with a purely ``linearized'' problem: that of providing ``orientation'' properties (or in other words, Thom isomorphisms) for vector bundles with certain extra structure.  The model for this is \cite{AtiyahOrientation} for the case of $KO$ coefficients, where the extra structure is a spin structure.
\end{remark}

A different situation in which a version of Poincaré duality holds was discovered by Lazard (\cite{LazardGroupes}).  It concerns the continuous group cohomology of compact $p$-adic Lie groups $G$, with $p$-adic coefficients.  Lazard showed that if $G$ is sufficiently small, then $G$ has Poincaré duality. (And later Serre showed that ``sufficiently small'' just means ``$p$-torsionfree'', \cite{SerreDimension}.)  Again there is a shift and twist here, of similar form to Poincaré duality for real manifolds: the shift is by the dimension of $G$, and the twist is determined by the determinant of the adjoint representation.

\begin{example}
The simplest example is the compact abelian $p$-adic Lie group $G=(\mbb{Z}_p^d,+)$.  The continuous cohomology with $p$-adic coefficients of $G$ agrees with that of its discrete subgroup $\mbb{Z}^d$, which in turn agrees with the cohomology (not group cohomology) of the torus $(\mbb{R}/\mbb{Z})^d$, as this is a model for the classifying space of $\mbb{Z}^d$.  Thus the Lazard Poincaré duality for this $G=\mbb{Z}_p^d$ looks just like classical Poincaré duality for the $d$-dimensional real torus.
\end{example}

However, the analog of Atiyah duality had been missing.  More precisely, it's mostly a formal matter to deduce from Poincaré duality with $\mbb{Z}_p$-coefficients (or just $\mbb{F}_p$-coefficients) that Poincaré duality with $\mbb{S}_{\wh{p}}$-coefficients holds with respect to \emph{some} twist; the difficulty is in giving an explicit description of this twist analogous to the tangential description in Atiyah duality for real manifolds.

\begin{remark}
The method of Lazard for identifying the twist in terms of the adjoint representation cannot work with $\mbb{S}_{\wh{p}}$-coefficients.  Indeed, it is based on passage to $\mbb{Q}_p$-coefficients and a comparison with Lie algebra cohomology there.  But over the sphere spectrum inverting $p$ kills all the extra interesting information.
\end{remark}

The goal of this paper is to resolve this issue.  We prove that the twist in Poincaré duality for $p$-adic Lie groups with $p$-complete spectrum coefficients is determined by the adjoint representation, in a manner analogous to Atiyah duality for real manifolds.

\begin{remark}
The interest in duality for $p$-adic Lie groups with spectrum coefficients is not purely academic.  In fact actions of $p$-adic Lie groups on spectra are fundamental in chromatic homotopy theory, and duality properties from these perspectives were studied by Devinatz-Hopkins and Gross-Hopkins, see Strickland's paper \cite{StricklandDuality} for an account.  The statement we prove here was called the ``linearization hypothesis'' in the more recent work \cite{BeaudryLinearization} of Beaudry-Goerss-Hopkins-Stojanoska which explores some interesting consequences for duality in stable homotopy theory.
\end{remark}

There are three main things we need to do to accomplish our goal:

\begin{enumerate}
\item We need to choose a definition for what we mean by ``$p$-adic spectrum coefficient systems'' over objects like ``$BG$'' for a $p$-adic Lie group $G$.  (Part of this is saying what we mean by $BG$.)
\item We also need to set up some duality theory for such coefficient systems.  To do all the arguments in a flexible way, this should be encoded in a six functor formalism.
\item Again in the setting of such coefficient systems, we need to formulate and prove a result explaining how it's possible to translate data from the Lie algebra of a group to the group itself. (This is the ``linearization'' which gives the explicit description of the twist.)
\end{enumerate}

The main difficulty in 1 is psychological: there are so many possible options that it's hard to settle on just one.  To connect with 2 and 3 we need a fairly general solution including many more objects than just $BG$, but even with all the constraints there is a huge number of possibilities each of which would work just fine.  In the end we chose an option based on ``light condensed anima'' from our joint work with Peter Scholze, because ``why not?''.

The main difficulty in 2 is purely technical: it's not so easy to rigorously construct six functor formalisms, even when one knows in one's heart what all the functors are and how they behave.  Thankfully, we are saved by the recent article of Heyer-Mann (\cite{HeyerSixFunctors}), which contains very general and easy-to-cite results doing basically everything we want in terms of constructing six functor formalisms proving properties of them.

Point 3 is the main point, and the only truly novel contribution of this paper.  We actually prove something somewhat shocking: for certain sheaves on the topological space $\mathbb{Q}_p$, there is a canonical isomorphism from the stalk at $0$ to the stalk at $1$.  This is crazy because $\mathbb{Q}_p$ is totally disconnected.  But we have extra structure on our sheaves which comes to the rescue, namely we have $\mathbb{Q}_p^\times$-equivariance.  Even so the story is subtle, and we refer to Section \ref{pathsec} for details.  In any case, we combine this with the obvservation that any $p$-adic Lie group canonically deforms to its Lie algebra, in a manner which can be precisely encoded by a $\mbb{Q}_p^\times$-equivariant family of $p$-adic Lie groups over $\mbb{Q}_p$ (see Section \ref{defsec}), and that is how we get the linearization.

\begin{remark}
I would like to apologize for the extremely long delay in writing this article.  The main results were announced in 2018.  The delay was partly due to indecision regarding point 1 and partly due to despair in face of the task of writing up the technical details in 2.  Fortunately as mentioned above this second concern no longer applies thanks to \cite{HeyerSixFunctors}.  Also, in the meantime joint work with Scholze made light condensed anima seem like an attractive choice for 1.

In the last two sections we have also included some more recent results, designed to make the connection with the discussion in \cite{BeaudryLinearization}.
\end{remark}

Our main result was conjectured in the author's earlier paper \cite{ClausenJ}.  In section \ref{jsec} we also give a much nicer approach to the main result of that paper (the ``reciprocity law for J-homomorphisms'', which gives a relationship between the real and $p$-adic stories).

\subsection*{Acknowledgements}

I thank Agnès Beaudry, Shachar Carmeli, Ofer Gabber, Lars Hesselholt, Mike Hopkins, Peter Kropholler, Lucas Mann, Itamar Mor, and Peter Scholze for very helpful conversations and IHES for excellent writing conditions.

\tableofcontents

\section{Preliminaries on manifolds}\label{mansec}

Following Serre's book \cite{SerreLie}, let $(F,|\cdot |)$ be a complete valued field.  That means that $F$ is a field and $|\cdot|:F\rightarrow\mathbb{R}_{\geq 0}$ is a function such that:
\begin{enumerate}
\item $|x|=0 \Leftrightarrow x=0$ for $x\in F$;
\item $|x+y|\leq |x|+|y|$ for $x,y\in F$;
\item $|xy|=|x||y|$ for $x,y\in F$;
\item with respect to the metric $d(x,y)=|x-y|$, the field $F$ is \emph{complete}.
\end{enumerate}
We furthermore assume that $|\cdot|$ is not the trivial valuation: in other words, there should be a $\pi\in F$ with $0<|\pi|<1$, or, equivalently, the induced topology on $F$ should not be the discrete topology.  For brevity we will often write just $F$ instead of the pair $(F,|\cdot |)$, and we will call $F$ a \emph{nontrivial complete valued field}.

\begin{remark} In fact, the whole theory of $F$-analytic manifolds, which we are reviewing here, is independent of the valuation $|\cdot|$ in the sense that it only depends on the underlying topological field $F$.  This can be justified using the following fact, which is a standard exercise: given two complete valued structures $|\cdot|$ and $|\cdot |'$ on the same field $F$ inducing the same topology on $F$, there is some $\alpha\in\mathbb{R}_{>0}$ such that $|\cdot|' = |\cdot|^\alpha$.
\end{remark}

\begin{remark}
Recall Ostrowski's dichotomy: any nontrivial complete valued field is either \emph{archimedean} (the set of norms of natural numbers is unbounded), in which case it is isomorphic as a topological field to either $\mathbb{R}$ or $\mathbb{C}$, or else it is \emph{nonarchimedean} (the set of norms of natural numbers is bounded), in which case it satisfies the strong triangle inequality $|x+y|\leq \operatorname{max}\{|x|,|y|\}$.  In contrast to the archimedean case, there is a huge variety of different kinds of nonarchimedean nontrivial complete valued fields, most of which (for example $\mathbb{C}_p$ or $\mathbb{Q}((T))$) are not locally compact.
\end{remark}

For $d\in\mathbb{N}$ and an open subset $U\subset F^d$, recall the ring of \emph{$F$-analytic functions on $U$}: it is the set of those functions $\varphi:U\rightarrow F$ such that for all $x\in U$, there is a formal power series $f \in F[[t_1,\ldots,t_d]]$ and an $r=(r_1,\ldots,r_d) \in (\mathbb{R}_{>0})^d$ such that:
\begin{enumerate}
\item if $|f|\in \mathbb{R}[[t_1,\ldots,t_d]]$ denotes the result of applying $|\cdot|$ to each coefficient of $f$, then $|f|(r)<\infty$;
\item for all $y=(y_1,\ldots,y_d)\in F^d$ such that $|y_i|<r_i$ for all $i$, we have $x+y\in U$ and
$$\varphi(x+y) = f(y),$$
where $f(y)\in F$ makes sense as an absolutely convergent sum as a consequence of assumption 1.
\end{enumerate}
In other words, this is the set of functions $\varphi:U\rightarrow F$ which locally admit an absolutely convergent power series expansion.  

The set $\mathcal{O}(U)$ of $F$-analytic functions $\varphi:U\rightarrow F$ is an $F$-algebra under pointwise $+$ and $\times$, and $U\mapsto \mathcal{O}(U)$ forms a sheaf of $F$-algebras on the topological space $F^d$.  Any $\varphi\in\mathcal{O}(U)$ is continuous, and in fact infinitely differentiable, as a function $U\rightarrow F$.  Moreover, the power series $f=\varphi_x$ as above is uniquely determined by $\varphi$ and $x$.  We will call it the \emph{power series expansion of $\varphi$ at $x$}.

\begin{example}
For $d\in\mathbb{N}$ and $r\in(\mathbb{R}_{>0})^d$, denote by $\operatorname{D}(r)$ the open polydisk of radius $r$ at the origin: the set of $a\in F^d$ with $|a_i|<r_i$ $\forall i$.

Then if $f\in F[[t_1,\ldots,t_d]]$ is such that $|f|(r')<\infty$ for all $r'\in (\mathbb{R}_{>0})^d$ with $r'_i<r_i \forall i$, we have that the function $\varphi:\operatorname{D}(r)\rightarrow F$ defined by
$$\varphi(a) = f(a)$$
lies in $\mathcal{O}(\operatorname{D}(r))$.  In other words, an absolutely convergent power series on an open polydisk defines a locally analytic function in that polydisk.  This is shown in \cite{SerreLie}: the proof is by the familiar device of re-expanding a power series at a different point in the polydisk.
\end{example}

\begin{remark}\label{Cisspecial}
If $F=\mathbb{C}$, then the converse to the above example holds: every $\varphi\in \mathcal{O}(D(r))$ is given globally by an $f\in \mathbb{C}[[t_1,\ldots,t_d]]$ which is absolutely convergent on $D(r)$.  This follows from basic complex analysis, or more precisely, from Cauchy's formula for the power series coefficients.

We issue the following standard warning: if $F\not\simeq \mathbb{C}$, then this converse fails even for $d=1,r=1$: that is, not every $\varphi\in \mathcal{O}(\operatorname{D}(1))$ is given globally by a power series converging absolutely in $\operatorname{D}(1)\subset F$. Indeed, if $F=\mathbb{R}$ then we can have poles in the complex plane close to the real axis obstructing the existence of a global power series expansion, whereas if $F$ is nonarchimedean then we can write an open disk as disjoint union of smaller open disks which allows to make obvious counterexamples.
\end{remark}

Now we investigate the stalks of this sheaf of $F$-algebras $\mathcal{O}_{F^d}$ on $F^d$.  By translation, it suffices to consider the stalk at the origin $0\in F^d$, and we will do this for simplicity of notation.  The following is of course well-known.

\begin{proposition}\label{localrings}
Let $d\in\mathbb{N}$, let $F$ be a nontrivial complete valued field.  Let $\mathcal{O}_{F^d,0}$ denote the stalk of the sheaf of $F$-analytic functions on $F^d$ at the origin $0\in F^d$.  Let $\operatorname{ev}:\mathcal{O}_{F^d,0}\rightarrow F$ denote the $F$-algebra homomorphism $\varphi\mapsto \varphi(0)$, and let $\mathfrak{m}=\operatorname{ker}(\operatorname{ev})$.  Then:
\begin{enumerate}
\item the homomorphism $\mathcal{O}_{F^d,0}\rightarrow F[[x_1,\ldots,x_d]]$ of taking power series expansions (which is well-defined) is injective, and its image constists of those formal power series which converge absolutely in some neighborhood of $0$.
\item $\mathcal{O}_{F^d,0}$ is a local ring with maximal ideal $\mathfrak{m}$.
\item $\operatorname{ev}$ is the unique homomorphism of $F$-algebras $\mathcal{O}_{F^d,0}\rightarrow F$;
\item for $d\geq 1$, multiplication by the coordinate function $x_d$ is injective on $\mathcal{O}_{F^d,0}$, and $\mathcal{O}_{F^d,0}/x_d \simeq \mathcal{O}_{F^{d-1},0}$ via restriction of functions along $F^{d-1}\times\{0\}\subset F^d$;
\item the coordinate functions $x_1,\ldots,x_d\in\mathcal{O}_{F^d,0}$ form a regular sequence generating $\mathfrak{m}$.
\item the homomorphism $\mathcal{O}_{F^d,0}\rightarrow F[[x_1,\ldots,x_d]]$ of taking power series expansions identifies the target with the $\mathfrak{m}$-adic completion of the source.
\end{enumerate}
\end{proposition}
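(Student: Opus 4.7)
The plan is to prove (1) first, since it provides the embedding into formal power series which underlies everything else. Injectivity follows from the fact that the power series expansion represents $\varphi$ on some polydisk around $0$, so if it is the zero power series then $\varphi$ is the zero germ. The description of the image is essentially tautological: the definition of $F$-analytic at $0$ already requires the expansion to converge absolutely on some polydisk, and the example preceding the proposition shows that any such power series defines an $F$-analytic function on its polydisk of convergence.

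For (2) and (3): the only nontrivial point is that any $\varphi$ with $\varphi(0)\neq 0$ is invertible as a germ. One writes $\varphi = a_0(1-u)$ with $u(0)=0$, forms the geometric series $a_0^{-1}\sum u^n$ as a formal power series, and verifies absolute convergence on a small enough polydisk on which $|u|<1$ (possible by continuity of $u$ at $0$). Then (3) is immediate: any $F$-algebra map $\mathcal{O}_{F^d,0}\to F$ has a maximal kernel, which must be the unique maximal ideal $\mathfrak{m}$ by (2), and $F$-linearity then forces it to agree with $\on{ev}$.

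For (4), the restriction homomorphism $\mathcal{O}_{F^d,0} \to \mathcal{O}_{F^{d-1},0}$ is surjective via pullback along the projection $F^d\to F^{d-1}$ (which visibly preserves analyticity), and it clearly kills $x_d$. That its kernel equals $x_d\mathcal{O}_{F^d,0}$ reduces via (1) to an elementary statement about the image in $F[[x_1,\ldots,x_d]]$: a convergent series all of whose terms involve $x_d$ is $x_d$ times another series with the same radius of absolute convergence (divide termwise). Injectivity of multiplication by $x_d$ similarly reduces to the same fact for formal power series. Item (5) is then an immediate induction from (4): each $x_i$ is a non-zero-divisor modulo $(x_{i+1},\ldots,x_d)$, and the resulting iterated quotient is $F$, showing both the regular sequence property and that $(x_1,\ldots,x_d)=\mathfrak{m}$.

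Finally, for (6), combining (1) and (5) one sees that $\mathfrak{m}^n$ consists exactly of germs whose power series expansion vanishes to order $\geq n$, so the power series map induces isomorphisms $\mathcal{O}_{F^d,0}/\mathfrak{m}^n \simeq F[[x_1,\ldots,x_d]]/(x_1,\ldots,x_d)^n$ (both sides identified with polynomials of degree $<n$ via truncation of the Taylor expansion); passing to inverse limits yields the completion statement. The main technical obstacle throughout is minor but pervasive: one must track radii of convergence carefully every time a formal power-series manipulation (inversion, division by $x_d$) is promoted to an analytic one, always by shrinking the polydisk appropriately.
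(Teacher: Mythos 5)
Your proof is correct and follows essentially the same route as the paper's: injectivity and the image description via the definition of analyticity together with the example on convergent power series, invertibility by formally inverting with a geometric series and checking convergence, uniqueness of the $F$-algebra map via the maximal ideal, division by $x_d$ reduced to the corresponding statement for absolutely convergent formal series, induction for the regular sequence, and comparison of $\mathfrak{m}$-adic quotients for the completion statement. The only cosmetic difference is in item (6), where you identify $\mathfrak{m}^n$ with germs whose expansion vanishes to order $\geq n$ (which indeed follows by iterating item (4)), while the paper phrases the same comparison via completing the polynomial ring $F[x_1,\ldots,x_d]\rightarrow\mathcal{O}_{F^d,0}$ at $(x_1,\ldots,x_d)$.
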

\begin{proof}

For 1, injectivity follows from the definition of locally analytic. As for the claimed characterization of the image, one containment is by definition, and the other follows from Remark \ref{Cisspecial}  

Now, for part 2, it suffices to see that if $\varphi\in\mathcal{O}_{F^d,0}$ has $\varphi(0)\neq 0$, then $\varphi$ is a unit.  This follows by formally inverting the power series using a geometric series and then checking convergence.

For part 3, suppose given an arbitrary $F$-algebra homomorphism $f:\mathcal{O}_{F^d,0}\rightarrow F$.  The kernel is a maximal ideal, hence equal to $\mathfrak{m}$ by 2.  Thus $f$ factors through $\operatorname{ev}$.  The only $F$-algebra map $F\rightarrow F$ is the identity, so this factoring implies that $f$ equals $\operatorname{ev}$, proving 3.

The injectivity statement in 4 is clear by thinking of power series. Further, the map $\mathcal{O}_{F^d,0}/x_d\rightarrow \mathcal{O}_{F^{d-1},0}$ is clearly well-defined and (split) surjective.  For injectivity it suffices to note that if $f=x_d\cdot g$ in $F[[x_1,\ldots,x_d]]$ and $f$ converges absolutely in some neighborhood of the origin, then so does $g$.

Part 5 follows immediately by induction on $d$.  Finally, 6 follows from 5 by considering the homomorphism $F[x_1,\ldots,x_d]\rightarrow \mathcal{O}_{F^d,0}$ and completing at $(x_1,\ldots,x_d)$.\end{proof}

This gives the following characterization of $F$-analytic functions between open subsets of the basic spaces $F^d,F^e$.

\begin{lemma}\label{analyticmaps}
Let $d,e\in\mathbb{N}$, let $F$ be a nontrivial complete valued field, and let $U\subset F^d$ and $V\subset F^e$ be open subsets.  We endow these with their respective sheaves $\mathcal{O}_U$ and $\mathcal{O}_V$ of $F$-algebras of $F$-analytic functions.

Suppose given a continuous map $f:U\rightarrow V$.  The following conditions are equivalent:
\begin{enumerate}
\item $f$ is $F$-analytic in the sense of \cite{SerreLie}: that is, each of the $e$-many coordinates of $f$ is an $F$-analytic function $U\rightarrow F$.
\item for every open subset $V'\subset V$ and every $F$-analytic function $\varphi:V'\rightarrow F$, the composition $\varphi\circ f:f^{-1}V'\rightarrow F$ is $F$-analytic.
\item there exists a homomorphism of sheaves $F$-algebras $f^\sharp:f^{-1} \mathcal{O}_V\rightarrow \mathcal{O}_U$.
\end{enumerate}
Moreover, under these conditions, the homomorphism $f^\sharp$ in 3 is unique, and is given by the operation of composition with $f$ from 2.
\end{lemma}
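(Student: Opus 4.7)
The plan is to prove $1 \Rightarrow 2 \Rightarrow 3$ directly and then deduce $3 \Rightarrow 1$ together with uniqueness from Proposition \ref{localrings}(3), using the fact that evaluation is the only $F$-algebra map from the local ring at a point to $F$.

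The implication $1 \Rightarrow 2$ is the classical fact that composition of $F$-analytic functions is $F$-analytic: given $x \in f^{-1}V'$, expand $\varphi$ as an absolutely convergent power series at $f(x)$ and each coordinate $f_i - f(x)_i$ as an absolutely convergent power series at $x$. Substituting the latter expansions into the former yields a formal power series in the local coordinates at $x$, and a standard convergence estimate (using the triangle inequality applied coefficient-wise to absolute values) shows that this series converges absolutely in some polydisk about $x$ and represents $\varphi \circ f$ there. This is worked out explicitly in \cite{SerreLie}.

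The implication $2 \Rightarrow 3$ is formal: for each open $V' \subset V$, define $f^\sharp_{V'} : \mathcal{O}_V(V') \to \mathcal{O}_U(f^{-1}V')$ by $\varphi \mapsto \varphi \circ f$. Condition 2 guarantees this is well-defined, and the assignment manifestly respects restrictions and preserves the $F$-algebra operations, so it descends to a map of sheaves $f^\sharp : f^{-1}\mathcal{O}_V \to \mathcal{O}_U$.

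For $3 \Rightarrow 1$ together with uniqueness and the formula, suppose $f^\sharp : f^{-1}\mathcal{O}_V \to \mathcal{O}_U$ is any sheaf homomorphism of $F$-algebras. Fix $y \in U$ and consider the stalk map $f^\sharp_y : \mathcal{O}_{V, f(y)} \to \mathcal{O}_{U, y}$. Then $\operatorname{ev}_y \circ f^\sharp_y$ is an $F$-algebra homomorphism $\mathcal{O}_{V, f(y)} \to F$, so by Proposition \ref{localrings}(3) (applied to the stalk at $f(y)$, after translating $f(y)$ to the origin) it must coincide with $\operatorname{ev}_{f(y)}$. Unpacking: for any section $\varphi$ of $\mathcal{O}_V$ defined near $f(y)$, one has $(f^\sharp\varphi)(y) = \varphi(f(y))$. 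Since this holds for every $y$ in the domain, we conclude $f^\sharp\varphi = \varphi \circ f$ as functions. This simultaneously proves uniqueness and the claimed formula for $f^\sharp$; applied to the coordinate functions $y_1,\ldots,y_e$ on $V$, it shows that the coordinates $f_i = y_i \circ f$ of $f$ are $F$-analytic, so condition 1 holds. The main technical obstacle is the power-series substitution underlying $1 \Rightarrow 2$, but this is completely classical; the conceptually novel step is the stalk-level trick that extracts the geometric map $f$ from the algebraic datum $f^\sharp$ via uniqueness of $F$-algebra maps to $F$.
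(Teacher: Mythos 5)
Your proposal is correct and follows essentially the same route as the paper: citing \cite{SerreLie} for $1\Rightarrow 2$, defining $f^\sharp$ as composition for $2\Rightarrow 3$, and deducing $3\Rightarrow 1$ together with uniqueness by composing the stalk map with evaluation and invoking Proposition \ref{localrings} to conclude $(f^\sharp\varphi)(y)=\varphi(f(y))$. Your explicit remark that applying this to the coordinate functions yields condition 1 is exactly the step the paper leaves implicit.
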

\begin{proof}
The implication 1 $\Rightarrow$ 2 is basic, see \cite{SerreLie}.  We have that 2 implies 3 because we can define $f^\sharp$ to be the operation of composition with $f$.  To simultaneously show that 3 implies 1 and that the uniqueness claim holds, thereby finishing the proof, it suffices to show that any $f^\sharp$ as in 3 must be given, at the level of functions, by composition with $f$.  For that, let $x\in U$, and consider the induced map
$$f^\sharp_x:\mathcal{O}_{V,f(x)}\rightarrow \mathcal{O}_{U,x}$$
of $F$-algebras.  By Lemma \ref{localrings}, the composition of $f^\sharp_x$ with the homomorphism $\mathcal{O}_{U,x}\rightarrow F$ of evaluation at $x$ identifies with evaluation at $f(x)$.  Thus, given any open subset $V'\subset V$, any $\varphi\in\mathcal{O}(V')$ and any $x\in f^{-1}(V')$, we have $(f^\sharp\varphi)(x)=\varphi(f(x))$.  This was exactly the claim.
\end{proof}

We recall from \cite{SerreLie} that, for $F$ a nontrivial complete valued field, an \emph{$F$-analytic manifold}, or just an \emph{$F$-manifold} for brevity, is, by definition, a topological space $X$ equipped with a maximal atlas consisting of charts taking values in some open subset of some $F^d$, with $F$-analytic transition maps.  (Note that we require neither that $X$ be Hausdorff nor that $X$ be paracompact.)  Moreover, an \emph{$F$-analytic map} between $F$-analytic manifolds is a map of underlying topological spaces with the property that, locally on charts, it is given by $F$-analytic functions.

We can use the above lemma to equivalently formalize this definition in the language of $F$-ringed spaces.  Namely, an \emph{$F$-ringed space} is a pair $(X,\mathcal{O}_X)$ where $X$ is a topological space and $\mathcal{O}_X$ is a sheaf of $F$-algebras.  A map $(X,\mathcal{O}_X)\rightarrow (Y,\mathcal{O}_Y)$ of $F$-ringed spaces is a pair $(f,f^\sharp)$ where $f:X\rightarrow Y$ is continuous and $f^\sharp:f^{-1}\mathcal{O}_Y\rightarrow \mathcal{O}_X$ is a map of sheaves of $F$-algebras.  If $X$ is an $F$-manifold, then defining $\mathcal{O}_X(U)$ to be the $F$-algebra of $F$-analytic functions $U\rightarrow F$ for $U\subset X$ open makes an $F$-ringed space $(X,\mathcal{O}_X)$.  Moreover, an $F$-analytic map $f:X\rightarrow Y$ gives rise to a map of corresponding $F$-ringed spaces, where $f^\sharp$ is given by composition with $f$.  This gives a functor from $F$-manifolds to $F$-ringed spaces.

\begin{lemma}
Let $F$ be a nontrivial complete valued field.   The above functor $X\mapsto (X,\mathcal{O}_X)$ from $F$-manifolds to $F$-ringed spaces is fully faithful.  The essential image consists of those $F$-ringed spaces which are locally isomorphic to $(U,\mathcal{O}_U)$ for some open $U\subset F^d$ and $d\in\mathbb{N}$ (where $\mathcal{O}_U$ is the sheaf of $F$-analytic functions on open subsets of $U$).
\end{lemma}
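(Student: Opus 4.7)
The plan is to bootstrap Lemma~\ref{analyticmaps} from the local (open subsets of $F^d$) to the global setting by working chart-by-chart, and then deduce the essential image statement as a direct consequence of the definition of an $F$-manifold via maximal atlases.

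For full faithfulness, let $X$ and $Y$ be $F$-manifolds and consider a morphism $(f, f^\sharp): (X,\mc{O}_X) \to (Y,\mc{O}_Y)$ of $F$-ringed spaces. I want to show that $f$ is $F$-analytic and that $f^\sharp$ is the composition-with-$f$ map. Both claims are local on $X$, so I fix $x \in X$, choose a chart $\psi: U \xrightarrow{\sim} U'$ with $U \subset X$ open containing $x$ and $U' \subset F^d$, and a chart $\chi: V \xrightarrow{\sim} V'$ with $V \subset Y$ open containing $f(x)$ and $V' \subset F^e$. By continuity of $f$, after shrinking $U$ I may assume $f(U) \subset V$. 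Transporting $(f|_U, f^\sharp|_U)$ across these chart isomorphisms (which are isomorphisms of $F$-ringed spaces by construction) produces a morphism of $F$-ringed spaces $(U', \mc{O}_{U'}) \to (V', \mc{O}_{V'})$. By Lemma~\ref{analyticmaps} the underlying continuous map is $F$-analytic and the sheaf map is uniquely determined as composition with $f$. This simultaneously shows that $f$ is $F$-analytic near $x$ and that $f^\sharp$ is composition with $f$, giving both surjectivity on morphisms and uniqueness of the lift.

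For the essential image, the forward direction is immediate: if $X$ is an $F$-manifold with atlas $\{\psi_i: U_i \to U_i'\}$, then each $\psi_i$ identifies $(U_i, \mc{O}_X|_{U_i})$ with $(U_i', \mc{O}_{U_i'})$, since the sheaves of $F$-analytic functions on both sides agree by definition. Conversely, suppose $(X, \mc{O}_X)$ is an $F$-ringed space that is locally isomorphic to standard open subsets of some $F^d$. Pick an open cover $\{U_i\}$ of $X$ together with isomorphisms of $F$-ringed spaces $\psi_i: (U_i, \mc{O}_X|_{U_i}) \xrightarrow{\sim} (U_i', \mc{O}_{U_i'})$ for various $U_i' \subset F^{d_i}$. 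On overlaps, the composites
\[
\psi_j \circ \psi_i^{-1}: \psi_i(U_i \cap U_j) \xrightarrow{\sim} \psi_j(U_i \cap U_j)
\]
are isomorphisms of $F$-ringed spaces between open subsets of $F^{d_i}$ and $F^{d_j}$, hence $F$-analytic by Lemma~\ref{analyticmaps} (applied in both directions). Thus the $\psi_i$ form an $F$-analytic atlas on $X$, making $X$ an $F$-manifold whose associated $F$-ringed space is canonically identified with the given one.

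The main obstacle is purely notational: the reduction to Lemma~\ref{analyticmaps} really is immediate once one is careful about transporting the ringed-space morphism across chart isomorphisms. A minor subtlety worth checking is that in the essential image part, the dimension $d_i$ of the local model need not be constant on a connected component a priori, but since each transition map is an isomorphism between open subsets of $F^{d_i}$ and $F^{d_j}$ one gets $d_i = d_j$ wherever $U_i \cap U_j \neq \emptyset$ (invariance of dimension, which over $F$ follows from the regular sequence statement in Proposition~\ref{localrings}(5) applied to stalks); this is not really needed for the statement as phrased, which does not fix a dimension globally.
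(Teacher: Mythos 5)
Your proof is correct and follows essentially the same route as the paper: both reduce full faithfulness to Lemma~\ref{analyticmaps} by working locally in charts, and both obtain the essential image by using the local isomorphisms as charts whose transition maps are $F$-analytic by that same lemma. Your extra remark on invariance of dimension is a fine sanity check but, as you note, not needed for the statement.
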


\begin{proof}
Suppose given $F$-manifolds $X$ and $Y$.  For full faithfulness, we need to show that a continuous map $f:X\rightarrow Y$ is $F$-analytic if and only if it promotes to a map of $F$-ringed spaces, and that such a promotion is necessarily unique.  For this, we can work locally on both $X$ and $Y$ and thereby reduce to the case covered by Lemma \ref{analyticmaps}.

Clearly every $F$-ringed space in the essential image is locally isomorphic to some $(U,\mathcal{O}_U)$.  Conversely, given an $F$-ringed space $(X,\mathcal{O}_X)$ which is locally isomorphic to some $(U,\mathcal{O}_U)$, we can use these local isomorphisms to put charts on $X$, and the transition functions will be $F$-analytic by Lemma \ref{analyticmaps}.  It is clear from the definition that the ring of $F$-analytic functions on this $F$-analytic manifold identifies with $\mathcal{O}_X$.
\end{proof}

We will also use the \emph{functor of points} perspective on manifolds, following \cite{GrothendieckAnalytic}.  Let $\operatorname{Man}_F$ denote the category of $F$-manifolds.  One way to produce an $F$-manifold is to first produce a functor (presheaf of sets) $\mathcal{F}:\operatorname{Man}_F^{op}\rightarrow\operatorname{Set}$, and then  to prove it is \emph{representable}, i.e.\ isomorphic to $h_M: N\mapsto \operatorname{Hom}(N,M)$ for some $M\in\operatorname{Man}_F$.

\begin{example}
Consider the presheaf of sets on $\operatorname{Man}_F$ which sends $M$ to the set $\mathcal{O}(M)$ of global $F$-analytic functions on $M$.  By definition, this is represented by the $F$-analytic manifold $F$.

Next, consider the subpresheaf which sends $M$ to $\mathcal{O}(M)^\times$, the set of units in the ring $\mathcal{O}(M)$.  This is representable by $F^\times$, by which we mean the $F$-manifold given as the open subset $F^\times =F\setminus \{0\}\subset F$.  The claim here is just that an $F$-analytic function $\varphi: M\rightarrow F$ is a unit in $\mathcal{O}_M(M)$ if and only if $\varphi(m)\neq 0$ for all $m\in M$.  This is a rephrasing of \ref{localrings}.
\end{example}

The following completely standard representability criterion is close to tautological, but still it is useful in organizing gluing procedures.  To set it up, we make a definition: a map $\mathcal{F}\rightarrow\mathcal{G}$ of presheaves of sets on $\operatorname{Man}_F$ is called an \emph{open immersion} if for any $M\in\operatorname{Man}_F$ and any map $h_M\rightarrow\mathcal{G}$, there is a (necessarily unique) open subset $U\subset M$ and an isomorphism
$$h_U\overset{\sim}{\rightarrow} h_M\times_{\mathcal{G}}\mathcal{F}$$
which projects to the inclusion $h_U\rightarrow h_M$.

Note that this implies that $\mathcal{F}\rightarrow\mathcal{G}$ is a monomorphism; beyond this, we can interpret $\mathcal{F}\hookrightarrow\mathcal{G}$ being an open immersion as saying that ``the condition that an $s\in\mathcal{G}(M)$ should lie in $\mathcal{F}(M)$ is an open condition (on $M$)''.

\begin{lemma}\label{localrep}
Let $\mathcal{F}:\operatorname{Man}_F^{op}\rightarrow\operatorname{Set}$ be a functor.  Suppose that:
\begin{enumerate}
\item for all $M\in\operatorname{Man}_F$, the restriction of $\mathcal{F}$ to the poset of open subsets of $M$ satisfies the sheaf condition with respect to open covers;
\item there is a collection $(M_i)_{i\in I}$ of objects of $\operatorname{Man}_F$ and open immersions $(h_{M_i}\rightarrow \mathcal{F})_{i\in I}$ such that for all $N\in\operatorname{Man}_F$ and all maps $h_N\rightarrow \mathcal{F}$, we have $N=\cup_i U_i$ where $U_i\subset N$ is the open subset for which $h_{U_i}=h_N\times_\mathcal{F}h_{M_i}$.
\end{enumerate}
Then $\mathcal{F}$ is representable.
\end{lemma}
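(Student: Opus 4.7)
The plan is to glue the given family $(M_i)_{i\in I}$ along opens determined by the pairwise fiber products $h_{M_i}\times_\mathcal{F}h_{M_j}$ into an $F$-manifold $M$, and then use the sheaf condition (1) to promote the $h_{M_i}\to\mathcal{F}$ to an isomorphism $h_M\simeq\mathcal{F}$ with the help of condition (2). In other words, we interpret hypothesis (2) as saying that the $h_{M_i}$ form a ``cover of $\mathcal{F}$ in the open immersion topology'' and run the standard descent-along-a-cover construction.

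First I would build the gluing data. For each ordered pair $(i,j)$, apply the open-immersion hypothesis on $h_{M_j}\to\mathcal{F}$ to the map $h_{M_i}\to\mathcal{F}$ to obtain a unique open subset $U_{ij}\subset M_i$ together with an isomorphism $h_{U_{ij}}\simeq h_{M_i}\times_\mathcal{F}h_{M_j}$; swapping $i$ and $j$ gives $U_{ji}\subset M_j$, and combining yields a canonical $F$-analytic isomorphism $\phi_{ij}:U_{ij}\xrightarrow{\sim}U_{ji}$, with $U_{ii}=M_i$ and $\phi_{ii}=\mathrm{id}$. For the cocycle condition, I would represent the triple fiber product $h_{M_i}\times_\mathcal{F}h_{M_j}\times_\mathcal{F}h_{M_k}$ two ways: inside $h_{M_i}$ it is the open $h_{U_{ij}\cap U_{ik}}$ (since fiber products of open immersions correspond to intersections), and the two candidate isomorphisms $\phi_{ik}$ and $\phi_{jk}\circ\phi_{ij}$ from $U_{ij}\cap U_{ik}$ to the analogous open in $M_k$ both realize projection to the third factor; uniqueness in the definition of open immersion then forces $\phi_{jk}\circ\phi_{ij}=\phi_{ik}$. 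Gluing the $M_i$ along the $U_{ij}$ via these $\phi_{ij}$ then produces a topological space $M$ with a natural atlas making it an $F$-manifold (no Hausdorff or paracompactness condition is needed).

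Next I would construct $\beta:h_M\to\mathcal{F}$ and show it is an isomorphism. Given an $F$-analytic map $f:N\to M$, pulling back the open cover $(M_i)$ of $M$ yields an open cover $(N_i)$ of $N$ and lifts $f_i:N_i\to M_i$; the sections $\alpha_i\circ h_{f_i}\in\mathcal{F}(N_i)$ agree on overlaps by construction of the $\phi_{ij}$, and so by (1) glue to a single element of $\mathcal{F}(N)$, defining $\beta$. Surjectivity on each $\mathcal{F}(N)$ follows from (2): given $s\in\mathcal{F}(N)$, hypothesis (2) provides an open cover $(N_i)$ of $N$ together with lifts $g_i:N_i\to M_i$ such that $\alpha_i\circ h_{g_i}=s|_{N_i}$, and on overlaps the resulting pairs of maps $N_i\cap N_j\rightrightarrows M_i,M_j$ factor through $U_{ij}$ and $U_{ji}$ respectively and are related by $\phi_{ij}$, hence glue to a global map $N\to M$ mapping to $s$ under $\beta$. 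A parallel argument using (2) to reduce to the $h_{M_i}$ gives injectivity.

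The main obstacle is essentially organizational rather than conceptual: one must be careful that the open subset $U_{ij}\subset M_i$ provided by the definition of open immersion is canonical enough that the two natural ways to compute triple overlaps really coincide, so that the cocycle condition is built into the very definition rather than requiring separate verification. Granted this, the lemma is a formal consequence of the definitions, and its content is almost entirely the choice of notion of open immersion, which was engineered precisely so that this descent argument goes through.
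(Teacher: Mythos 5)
Your proposal is correct and is essentially the paper's own argument: the paper likewise represents $h_{M_i}\times_\mathcal{F}h_{M_j}$ by opens $U_{ij}\subset M_i$, gets the gluing isomorphisms and cocycle condition from Yoneda, glues to a manifold $M$ with a map $h_M\to\mathcal{F}$, and concludes it is an isomorphism using the sheaf condition and hypothesis (2). Your write-up just spells out in detail the steps the paper compresses into three sentences.
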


In other words, if $\mathcal{F}$ is a sheaf, and if $\mathcal{F}$ admits an open cover by representable sheaves, then $\mathcal{F}$ is itself representable.

\begin{proof}
For each $i,j\in I$, because $h_{M_j}\rightarrow \mathcal{F}$ is an open immersion we have that $h_{M_i}\times_\mathcal{F} h_{M_j}$ identifies with the representable functor associated to an open subset $U_{ij}\subset M_i$.  By the Yoneda lemma we get an isomorphism $\alpha_{ij}:U_{ij}\simeq U_{ji}$ satisfying the cocycle condition, allowing to glue the $M_i$ to an $F$-analytic manifold $M$ equipped with a map $h_M\rightarrow\mathcal{F}$ which is an isomorphism over $h_{M_i}$.  Being a map of sheaves it must therefore be a global isomorphism.
\end{proof}

We recall that although the category of $F$-analytic manifolds does not have all pullbacks (there are transversality issues), it at least has pullbacks of diagrams $M\rightarrow N\leftarrow N'$ such that $M\rightarrow N$ is a \emph{submersion}.  More precisely, in this case the presheaf
$$S\mapsto \operatorname{Hom}(S,M)\times_{\operatorname{Hom}(S,N)}\operatorname{Hom}(S,N')$$
is representable.  Indeed, recall from \cite{SerreLie} that a submersion is a map $M\rightarrow N$ which, locally on $M$, identifies with the composition of a projection $U\times V\rightarrow U$ followed by an open inclusion.  Because of the locality principle Lemma \ref{localrep}, this lets us reduce to the case where $M\rightarrow N$ is such a projection, when the question for $M\rightarrow N\leftarrow N'$ is equivalent to that for $V\rightarrow \ast \leftarrow N'$.  Again working locally we can assume $V,N'$ are open subsets of $F^d,F^e$.  Then $V\times N'$ is representable by the corresponding open subset of $F^{d+e}$.  

\begin{remark}
We recall that the implicit function gives a convenient criterion for a map $f:M\rightarrow N$ of $F$-manifolds to be a submersion: $f$ is a submersion if and only if for all $x\in M$, the induced $F$-linear map $df|_x:T_xM\rightarrow T_{f(x)}N$ on tangent spaces is surjective.  Moreover, we have the following permanence properties: the class of submersions is closed under composition and pullback, and the question of whether $M\rightarrow N$ is a submersion is local on $M$.
\end{remark}

\section{The deformation to the tangent bundle}\label{defsec}

The deformation to the tangent bundle is a kind of geometric incarnation of the definition of the derivative as a limit of difference quotients, see Lemma \ref{localdef} for more precision.  Here we give a construction of this deformation from a functor of points perspective.

We start with just the tangent bundle itself.  The functor of points description will be an analog of the familiar picture from algebraic geometry, based on thinking of the ring $F[t]/t^2$ as incarnating ``a point together with a tangent vector at that point''.  More generally, if $S$ is an $F$-analytic manifold, we define a new $F$-ringed space $S_\rightarrow$ to have the same underlying topological space as $S$, but with structure sheaf $\mathcal{O}_{S_\rightarrow} = \mathcal{O}_S[t]/t^2$.  Note that $S\mapsto S_\rightarrow$ has the evident structure of a functor from $\operatorname{Man}_F$ to the category of $F$-ringed spaces.  Then we make the following definition.

\begin{definition}
Let $M\in\operatorname{Man}_F$.  Define a presheaf $TM$ on $F$-manifolds by
$$S\mapsto \operatorname{Hom}(S_\rightarrow,M),$$
where $\operatorname{Hom}(S_\rightarrow,M)$ stands for the set of maps of $F$-ringed spaces $S_\rightarrow\rightarrow M$.
\end{definition}

Note the following structure on $TM$:

\begin{enumerate}
\item There is a projection map
$$TM\rightarrow M,$$
sending $f:S_\rightarrow \rightarrow M$ to the composition $f_0$ of $f$ with the map $S\rightarrow S_\rightarrow$ which is induced by the $\mathcal{O}_S$-algebra map $\mathcal{O}_S[t]/t^2\rightarrow \mathcal{O}_S$ sending $t\mapsto 0$.
\item There is ``zero section'' map
$$M\rightarrow TM,$$
ending $f:S\rightarrow M$ to to its composition with the map $S_\rightarrow\rightarrow S$ induced by the unit map $\mathcal{O}_S\rightarrow\mathcal{O}_S[t]/t^2$.
\item There is a ``scalar multiplication'' map
$$F\times TM\rightarrow TM,$$
sending $(\lambda:S\rightarrow F, f:S_\rightarrow\rightarrow M)$ to $f\circ [\lambda]:S_\rightarrow \rightarrow M$, where
$$[\lambda]:S_{\rightarrow}\rightarrow S_{\rightarrow}$$
is given by the $\mathcal{O}_S$-algebra map $\mathcal{O}_S[t]/t^2\rightarrow \mathcal{O}_S[t]/t^2$ sending $t\mapsto \lambda t$.  Here we view $\lambda$ as a global section of $\mathcal{O}_S$.
\item There is an ``addition'' map
$$TM\times_M TM\rightarrow M,$$
given as follows: suppose we have $f:S_\rightarrow\rightarrow M$ and $g:S_\rightarrow\rightarrow M$ such that $f_0=g_0:S\rightarrow M$.  Using the homomorphism
$$\mathcal{O}_S[x]/x^2\times_{\mathcal{O}_S}\mathcal{O}_S[y]/y^2=\mc{O}_S[x,y]/(x^2,xy,y^2)\to \mathcal{O}_S[t]/t^2$$
defined by $x,y\mapsto t$ we make a new map
$$f+g:S_\to\rightarrow M.$$
\end{enumerate}

In total, this structure makes $TM$ into an $F$-module object over $M$.

\begin{lemma}
\begin{enumerate}
\item Let $M\in\operatorname{Man}_F$.  Then $TM:\operatorname{Man}_F^{op}\rightarrow\operatorname{Sets}$ is a sheaf.
\item Suppose $U\subset M$ is an open subset of an $F$-manifold $M$.  Then $TU\subset TM$ is also an open subset (in the sense of presheaves, as in Lemma \ref{localrep}).
\item  Suppose $\{U_i\}_{i\in I}$ is an open cover of an $F$-manifold $M$.  Then $\{TU_i\}_{i\in I}$ is an open cover of $\{TM\}_{i\in I}$ (in the sense of presheaves).
\end{enumerate}
\end{lemma}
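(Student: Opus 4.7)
The plan is to reduce everything to one observation: a morphism of $F$-ringed spaces $f : S_\rightarrow \to M$ has an underlying continuous map $|f| : S \to M$ (using that $S$ and $S_\rightarrow$ share the same underlying topological space), and moreover $|f|$ coincides with the composition of $f$ with the section $S \hookrightarrow S_\rightarrow$ induced by the $\mathcal{O}_S$-algebra map $\mathcal{O}_S[t]/t^2 \to \mathcal{O}_S,\ t \mapsto 0$. With this in hand, each of the three claims decouples into separate assertions at the topological and at the sheaf-theoretic level, both of which are routine.

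For (1), a compatible family of elements $f_\alpha \in TM(V_\alpha)$ on an open cover $\{V_\alpha\}$ of $S$ amounts to a compatible family of continuous maps $|f_\alpha| : V_\alpha \to M$ together with a compatible family of $F$-algebra sheaf maps $|f_\alpha|^{-1}\mathcal{O}_M \to \mathcal{O}_{V_\alpha}[t]/t^2$. The continuous maps glue because continuous maps form a sheaf on $S$, and the sheaf homomorphisms glue because homomorphisms of sheaves of $F$-algebras on $S$ themselves form a sheaf. This yields a unique global $f : S_\rightarrow \to M$.

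For (2), given $h_N \to TM$ corresponding by Yoneda to $f : N_\rightarrow \to M$, I take $V := |f|^{-1}(U) \subset N$ and verify $h_V \cong h_N \times_{TM} TU$. Unwinding, an $S$-point of $h_N \times_{TM} TU$ is a pair $(g : S \to N,\ h : S_\rightarrow \to U)$ with $\iota \circ h = f \circ g_\rightarrow : S_\rightarrow \to M$, where $\iota : U \hookrightarrow M$ and $g_\rightarrow : S_\rightarrow \to N_\rightarrow$ is the functoriality of $(-)_\rightarrow$ applied to $g$. Since the underlying continuous map of $f \circ g_\rightarrow$ equals $|f| \circ g$, the existence of such an $h$ is equivalent to $|f|\circ g$ landing in $U$, i.e.\ to $g$ factoring through $V$; and uniqueness of $h$ is automatic because $\mathcal{O}_U$ is simply the restriction of $\mathcal{O}_M$ to $U$, so the sheaf-map part of the factorization is forced.

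Part (3) is then immediate: for any $f : N_\rightarrow \to M$, the sets $V_i := |f|^{-1}(U_i)$ produced by (2) cover $N$ because the $U_i$ cover $M$ and $|f|$ is continuous. I do not expect a genuine obstacle anywhere in this lemma; it is essentially bookkeeping for the functor-of-points description of $TM$. The only mildly subtle point is the verification in (2) that a morphism of $F$-ringed spaces into $M$ factors through an open subspace $U \subset M$ iff its underlying continuous map has image in $U$, which boils down to the fact that open subspaces in the category of $F$-ringed spaces are monomorphisms whose structure sheaf is the restriction of the ambient one.
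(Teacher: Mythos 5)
Your proof is correct and follows essentially the same route as the paper's: everything rests on the observation that $S_\rightarrow$ has the same underlying space as $S$ and that $(-)_\rightarrow$ is compatible with restriction to opens (so ringed-space maps out of $S_\rightarrow$ glue, and factoring through an open $U\subset M$ is just the condition that the underlying map $f_0$ lands in $U$). The paper compresses this to one line; your write-up is the same bookkeeping made explicit.
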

\begin{proof}
Note that if $U\subset S$ is an open submanifold, then $U_{\rightarrow} \subset S_{\rightarrow}$ is the same open subset, and the structure sheaf of $U_{\rightarrow}$ is restricted from that of $S_{\rightarrow}$.  The claims follows immediately.
\end{proof}

Given this lemma, the local-global principle Lemma \ref{localrep} implies that in order to check that $TM$ is representable by an $F$-manifold, it suffices to work locally on $M$.  In particular it suffices to check it for open subsets of $F^d$ for any $d\geq 0$.  That case is handled by the following lemma.

\begin{lemma}\label{localtangent}
For each pair $(U,V)$ consisting of a finite-dimensional $F$-vector space $V$ and an open subset $U\subset V$, viewed as an $F$-manifold in the natural way, there is an isomorphism
$$\iota_{U,V}:TU\simeq U\times V,$$
satisfying the following functoriality property: if $(U',V')$ is another such pair and $\varphi:U\rightarrow U'$ is any $F$-analytic map, then the induced map $T\varphi:TU\rightarrow TU'$ corresponds, via $\iota_{U,V}$ and $\iota_{U',V'}$, to the map $U\times V\rightarrow U'\times V'$ described by
$$(u,v)\mapsto (\varphi(u),d\varphi|_u(v)),$$
where $d\varphi|_u:V\rightarrow V'$ denotes the first differential of $\varphi$ at $u$ (defined by the standard limit of difference quotients, or by looking at power series expanstions).  Moreover, via $\iota_{U,V}$ we have that:
\begin{enumerate}
\item the projection $TU\rightarrow U$ corresponds to the projection $(u,v)\mapsto u$;
\item the zero section $U\rightarrow TU$ corresponds to $u\mapsto (u,0)$;
\item the scalar multiplication $F\times TU\rightarrow TU$ corresponds to $(\lambda,u,v)\mapsto (u,\lambda v)$.
\item the addition $TU\times_U TU\rightarrow U$ corresponds to $((u,v),(u,v'))\mapsto (u,v+v')$.
\end{enumerate}
In other words, $TU\rightarrow U$, equipped with its $F$-module structure, identifies with the trivial vector bundle $U\times V\rightarrow U$ modelled on $V$.
\end{lemma}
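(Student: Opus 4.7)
\emph{Plan of proof.} The strategy is to construct the bijection $\iota_{U,V}$ at the level of $S$-points for each $F$-manifold $S$, checking naturality. A map $\tilde{f}: S_\rightarrow \to U$ of $F$-ringed spaces consists of a continuous map $\bar f: S \to U$ together with a homomorphism $\tilde{f}^\sharp: \bar f^{-1}\mathcal{O}_U \to \mathcal{O}_S[t]/t^2$ of sheaves of $F$-algebras. Composing $\tilde{f}^\sharp$ with the quotient $\mathcal{O}_S[t]/t^2 \twoheadrightarrow \mathcal{O}_S$ by the ideal $(t)$ gives an $F$-algebra sheaf homomorphism $\bar f^{-1}\mathcal{O}_U \to \mathcal{O}_S$, so by Lemma \ref{analyticmaps} the continuous map $\bar f$ is in fact $F$-analytic, and the reduction mod $t$ of $\tilde{f}^\sharp$ is the induced ``composition with $\bar f$'' map. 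Denote this $F$-analytic map by $f_0 \in U(S)$; it will be the first coordinate of $\iota_{U,V}(\tilde{f})$.

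Choose a basis of $V$ giving coordinates $x_1, \ldots, x_d$. Since $(t)$ is a square-zero ideal in $\mathcal{O}_S[t]/t^2$, the remaining data in $\tilde{f}^\sharp$ is the difference $\tilde{f}^\sharp(x_i) - x_i \circ f_0 = t v_i$ for $d$ uniquely determined sections $v_i \in \mathcal{O}_S(S)$, which assemble into an $F$-analytic map $v: S \to V$ (the second coordinate of $\iota_{U,V}(\tilde{f})$). Checking that such a specification on coordinates extends uniquely to all of $\bar f^{-1}\mathcal{O}_U$, and does so for every $v$, is the technical heart: working at a stalk $\mathcal{O}_{V, f_0(s)}$ and using Proposition \ref{localrings} to identify any germ with an absolutely convergent power series about $f_0(s)$, the formal substitution $\varphi \mapsto \varphi(f_0 + tv)$ is well-defined because $t^2 = 0$ collapses the Taylor expansion after the first-order term, producing $\varphi \circ f_0 + t \sum_i v_i \cdot (\partial_i \varphi) \circ f_0$; convergence of this expression reduces to convergence of $\varphi$ and its first partials at $f_0(s)$, both standard for $F$-analytic power series, and the Leibniz rule then ensures it is an $F$-algebra homomorphism. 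Basis-independence of the whole construction is the observation that $\sum_i v_i \cdot \partial_i \varphi|_{f_0} = d\varphi|_{f_0}(v)$, the directional derivative, which is intrinsic.

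The bijection $\iota_{U,V}: TU \to U \times V$ being thus established, the four compatibility items follow by direct unraveling of the definitions: the projection $TU \to U$ is reduction mod $t$, giving $(f_0, v) \mapsto f_0$; the zero section corresponds to $v = 0$; scalar multiplication by $\lambda$ acts via $[\lambda]$, which scales $t$ by $\lambda$ in $\mathcal{O}_S[t]/t^2$ and hence scales $v$ by $\lambda$; and addition on $TU$ corresponds under the formula to addition of the $v$'s while $f_0$ is held fixed. Functoriality for $F$-analytic $\varphi: U \to U'$ is the chain rule: writing $\varphi$ in coordinates $x'_j$ on $V'$, we have $(\varphi \circ \tilde{f})^\sharp(x'_j) = \tilde{f}^\sharp(\varphi_j) = \varphi_j \circ f_0 + t \sum_i v_i \cdot (\partial_i \varphi_j) \circ f_0$, so the tangent-vector part of $\varphi \circ \tilde{f}$ has $j$-th coordinate $\sum_i v_i \partial_i \varphi_j|_{f_0} = (d\varphi|_{f_0}(v))_j$, matching the claim. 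The main obstacle in the plan is the technical verification in paragraph two that the formula genuinely defines an $F$-algebra sheaf homomorphism with values in $\mathcal{O}_S[t]/t^2$; once this is in place, everything else is bookkeeping.
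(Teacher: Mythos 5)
Your route is in substance the same as the paper's (first--order substitution in one direction, reading off coordinates in the other), but the step you call the technical heart is only half proved, and the missing half is exactly the part the paper does not treat as routine. You assert that the specification of $\tilde f^\sharp$ on the coordinate functions ``extends uniquely'' to all of $\bar f^{-1}\mathcal{O}_U$, but the justification you give (well-definedness of $\varphi\mapsto \varphi\circ f_0+t\sum_i v_i\,(\partial_i\varphi)\circ f_0$, convergence of the first partials, Leibniz, basis-independence) establishes only \emph{existence} of one extension. What your argument actually needs is \emph{uniqueness}: any homomorphism of sheaves of $F$-algebras $\bar f^{-1}\mathcal{O}_U\to\mathcal{O}_S[t]/t^2$ lifting composition with $f_0$ is determined by its values on $x_1,\dots,x_d$. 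This is needed for the injectivity of your map $\tilde f\mapsto(f_0,v)$ (equivalently, for the composite ``read off coordinates, then substitute'' to return the original $\tilde f$), and it is also silently used in your chain-rule computation of $T\varphi$, where you evaluate $\tilde f^\sharp$ on the arbitrary analytic functions $\varphi_j$, not just on the coordinates. Uniqueness is not formal here, because the stalk $\mathcal{O}_{V,f_0(s)}$ is not generated as an $F$-algebra by the coordinate functions: it contains all germs of convergent power series.

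This is precisely the point the paper's own proof defers to Lemma \ref{technicaldeflemma}: there one embeds the target stalk into $F[[y_1,\dots,y_e]][t]/t^2$, an inverse limit of Artin local $F$-algebras with residue field $F$, and uses Proposition \ref{localrings} (the composite to the residue field must be evaluation, hence $\mathfrak{m}$ lands in the maximal ideal, and any $F$-algebra map from $\mathcal{O}_{F^d,0}$ to an Artin local algebra factors through $F[x_1,\dots,x_d]/(x_1,\dots,x_d)^N$ by the completion statement) to conclude that such a homomorphism is determined by the images of the coordinates. An equivalent repair within your stalkwise setup: the difference of two extensions is $t\cdot D$ for a derivation $D$ over composition with $f_0$ killing the coordinates; writing $\varphi=\varphi(u)+\sum_i(x_i-u_i)\psi_i$ with $\psi_i$ analytic and iterating shows $D\varphi\in\bigcap_N\mathfrak{m}_{S,s}^N$, which vanishes by the injectivity of the power-series expansion (Proposition \ref{localrings}, parts 1 and 6). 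Without some argument of this kind the bijection $\iota_{U,V}$, and with it the functoriality statement, is not yet established.
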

\begin{proof}
Note that for $S\in\operatorname{Man}_F$ there is a natural map of $F$-ringed spaces $S_\rightarrow \rightarrow S\times F$, given by $s\mapsto (s,0)$ on underlying spaces, and on structure sheaves sending $\varphi\in \mathcal{O}(W\times D)$ (for $W\subset S$ open and $D\subset F$ open containing $0$) to
$$\varphi(-,0) + \varphi'(-,0)\cdot t \in \mathcal{O}_S(W)[t]/t^2,$$
where for $s\in W$, the expression $\varphi'(s,0)$ stands for the derivative at $0$ of the function $\varphi(s,-):D\rightarrow F$.

Now, let $U\subset V$ be as in the statement of the lemma, and suppose given $F$-analytic maps $a:S\rightarrow U$ and $b:S\rightarrow V$.  Then we can define an $F$-analytic map
$$a+bt: S\times F\rightarrow V$$
by sending $(s,t)\mapsto a(s)+b(s)\cdot t$.  Restricting along the above map $S_\rightarrow\rightarrow S\times F$, we get a map
$$a+bt: S_{\rightarrow}\rightarrow U.$$
This operation $(a,b)\mapsto a+bt$ defines the desired map
$$\iota_{U,V}: U\times V\rightarrow TU.$$
We can produce an inverse as follows: choose coordinates $x_1,\ldots,x_d$ for $V$.  Given a map $f:S_\rightarrow \rightarrow U$, write $f^\sharp(x_i) = a_i + b_i\cdot t$.  Then the $a_i$ are the coordinates of a map $a:S\rightarrow U$ and the $b_i$ are the coordinates of a map $b:S\rightarrow V$.  The argument that this describes an inverse of $\iota_{U,V}$ will be given later in a more general setting (Lemma \ref{technicaldeflemma}) so we skip it here.  Finally, the required functoriality properties are straightforward to check because we have direct definitions of both $\iota_{U,V}$ and its inverse.
\end{proof}

\begin{remark}
As mentioned before, the previous lemmas together imply that $TM$ is representable by an $F$-manifold.  Moreover, thanks to the described functoriality in the previous lemma, we also see that this $TM$ agrees with the tangent bundle as usually defined using charts.
\end{remark}

Now, the \emph{deformation to the tangent bundle} is a one-parameter family of manifolds $(D_tM)_{t\in F}$ attached to $M$, where $D_t=M\times M$ for $t\neq 0$ but $D_t = TM$ for $t=0$.  More precisely, this family is encoded as an $F$-manifold $DM$ together with a submersive map $DM\rightarrow F$, so that $D_tM$ is the fiber at $t\in F$.  Intuitively, one can imagine ``rescaling'' $M\times M$ near the diagonal $\Delta(M)\subset M\times M$ so that it resembles $TM$ more and more.  To give an explanation from the functor of points perspective, note that the $F$-algebra $F[t]/t^2$ incarnating tangent vectors is a specialization of the $F$-algebra $F[t]/t(t-\tau)$ for $\tau\in F$, which for $\tau\neq 0$ incarnates two points.  Mapping into $M$ we get the desired specialization of $M\times M$ to $TM$.

Now we turn this idea into a precise description of $DM$.  Given $S\in\operatorname{Man}_F$ and $\tau:S\rightarrow F$ we define a new $F$-ringed space $S_{0\cup \tau}$ as follows.

\begin{definition}
Let $S\in\operatorname{Man}_F$ and let $\tau:S\rightarrow F$.  Define an $F$-ringed space $S_{0\cup \tau}$ to have underlying topological space the closed subspace of $S\times F$ which is the union of the $0$-section and $\tau$-section of the projection $S\times F\rightarrow S$, and to have sheaf of $F$-algebras given by $\mathcal{O}_{S\times F}/\mathcal{I}_0\cdot\mathcal{I}_\tau$ where $\mathcal{I}_0$ (resp.\ $\mathcal{I}_\tau$) is the ideal sheaf of functions vanishing on the $0$-section (resp.\ the $\tau$-section).  (Note that this sheaf of $F$-algebras on $S\times F$ vanishes away from $S_{0\cup \tau}$ hence corresponds by restriction to a sheaf of $F$-algebras on $S_{0\cup \tau}$.)
\end{definition}

\begin{lemma}\label{cartiersection}
For $S\in\operatorname{Man}_F$ and $\tau:S\rightarrow F$, the ideal sheaf $\mathcal{I}_\tau\subset\mathcal{O}_{F\times S}$ of functions vanishing on $S_\tau$ (the $\tau$-section of the projection $\pi:S\times F\rightarrow S$) is generated by the single non-zero divisor $t-\pi^\ast\tau$ where $t:S\times F\rightarrow F$ denotes the projection to $F$.  Moreover the $F$-ringed space $(S_\tau,\mathcal{O}_{S\times F}/\mathcal{I}_\tau)$ is isomorphic to $S$ via the mutually inverse maps given by the $\tau$-section and $\pi$.
\end{lemma}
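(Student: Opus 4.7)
The plan is to reduce both parts of the first assertion to the case $\tau=0$ using an $F$-analytic shearing automorphism of $S\times F$, and then to read off the result from Proposition \ref{localrings}. The second assertion, identifying $(S_\tau,\mathcal{O}_{S\times F}/\mathcal{I}_\tau)$ with $S$, will follow directly from the definition of the quotient sheaf together with the obvious section.

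For the reduction, I would introduce the map $\Phi:S\times F\to S\times F$ defined by $(s,t)\mapsto (s,t-\tau(s))$. Because $\tau$ is $F$-analytic, so is $\Phi$, and it is an isomorphism with inverse $(s,t)\mapsto (s,t+\tau(s))$. It carries the $\tau$-section $S_\tau$ to the zero section $S_0$, and a direct computation gives $\Phi^\sharp(t)=t-\pi^*\tau$ as global sections of $\mathcal{O}_{S\times F}$. Since $\Phi^\sharp$ is an isomorphism of sheaves of rings sending $\mathcal{I}_0$ onto $\mathcal{I}_\tau$, it suffices to prove that $\mathcal{I}_0$ is generated by the non-zero divisor $t$.

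For this special case, both the non-zero-divisor claim and the generation claim can be checked at stalks, so I can work in a chart identifying a neighborhood in $S$ with an open subset $U\subset F^d$. Then a neighborhood in $S\times F$ becomes an open subset of $F^{d+1}$ with last coordinate $t$, and the stalk at a point $(x_0,0)$ identifies, after translation, with $\mathcal{O}_{F^{d+1},0}$. Proposition \ref{localrings}(4) then supplies both statements at once: multiplication by $t=x_{d+1}$ is injective (the non-zero-divisor assertion), and the kernel of restriction along $\{x_{d+1}=0\}$ is exactly $(t)$ (the generation assertion, since an element of $\mathcal{I}_0$ at $(x_0,0)$ is by definition a germ vanishing on the zero section). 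At any point off the zero section the stalk of $\mathcal{I}_0$ is the whole local ring and $t$ is a unit there, so generation is automatic.

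For the second assertion, the maps $\sigma_\tau:S\to S_\tau$ and $\pi:S_\tau\to S$ are clearly mutually inverse homeomorphisms on underlying spaces. On structure sheaves, the identity $\pi\circ\sigma_\tau=\mathrm{id}_S$ shows that pullback along $\pi$ furnishes a section of the restriction map $\mathcal{O}_{S\times F}/\mathcal{I}_\tau\to\sigma_{\tau,*}\mathcal{O}_S$ induced by $\sigma_\tau$, while this restriction map is injective by the very definition of $\mathcal{I}_\tau$. Hence both maps promote to mutually inverse isomorphisms of $F$-ringed spaces. The only real obstacle in the whole argument is the bookkeeping around the shearing automorphism $\Phi$; once that reduction to the zero section is in place, Proposition \ref{localrings}(4) does all the substantive work.
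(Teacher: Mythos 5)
Your proposal is correct and follows essentially the same route as the paper: the paper's proof also applies the shearing automorphism $(s,x)\mapsto(s,x-\tau(s))$ to reduce to the zero section and then invokes Proposition \ref{localrings} (part 4) at stalks. You have simply written out in more detail the stalkwise verification and the identification of $(S_\tau,\mathcal{O}_{S\times F}/\mathcal{I}_\tau)$ with $S$, which the paper leaves implicit.
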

\begin{proof}
Using the automorphism $(s,x)\mapsto (s,x-\tau(s))$ of $S\times F$, we can reduce to the case $\tau=0$ which follows from Lemma \ref{localrings}.
\end{proof}

\begin{lemma}\label{quadraticalgebra}
Let $S\in\operatorname{Man}_F$ and $\tau:S\rightarrow F$.  Recall the projection $\pi:S\times F\rightarrow S$ and its restriction to $\pi:S_{0\cup \tau}\rightarrow S$.  The the sheaf of $F$-algebras $\pi_\ast \mathcal{O}_{S_{0\cup \tau}}$ on $S$ identifies with $\mathcal{O}_S[t]/t\cdot (t-\tau)$ where $t$ corresponds to the projection $S\times F\rightarrow F$.  In particular, as an $\mathcal{O}(S)$-module, $\mathcal{O}(S_{0\cup \tau})$ is free of rank 2 on $1$ and $t$.
\end{lemma}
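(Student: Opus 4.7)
The plan is to reduce everything to a pair of divisions governed by Lemma \ref{cartiersection}. By that lemma, $\mathcal{I}_0$ is the principal ideal generated by the (non-zero-divisor) $t$, and $\mathcal{I}_\tau$ is the principal ideal generated by the non-zero-divisor $t-\pi^\ast\tau$, so $\mathcal{I}_0\cdot\mathcal{I}_\tau$ is the principal ideal $(t(t-\pi^\ast\tau))$. Hence $\mathcal{O}_{S_{0\cup\tau}} = \mathcal{O}_{S\times F}/(t(t-\pi^\ast\tau))$, and the content of the lemma reduces to showing that the natural $\mathcal{O}_S$-module map
$$\alpha:\mathcal{O}_S\oplus\mathcal{O}_S\cdot t \longrightarrow \pi_\ast\mathcal{O}_{S_{0\cup\tau}},\qquad (a,b)\longmapsto [\pi^\ast a + t\cdot\pi^\ast b]$$
is an isomorphism of sheaves on $S$.

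I will construct an explicit two-sided inverse $\bar r$. Given a local section of $\pi_\ast\mathcal{O}_{S_{0\cup\tau}}$ over $V\subset S$, represent it by some $\varphi\in\mathcal{O}(W)$ on an open neighborhood $W\subset V\times F$ of $V_0\cup V_\tau$, and set $a:=\varphi|_{V_0}\in\mathcal{O}(V)$ (using the zero section to identify $V_0$ with $V$). The difference $\varphi-\pi^\ast a$ vanishes on $V_0\cap W$, so Lemma \ref{cartiersection} applied with $\tau=0$ produces a unique $\psi\in\mathcal{O}(W)$ with $\varphi-\pi^\ast a = t\psi$; then set $b:=\psi|_{V_\tau}\in\mathcal{O}(V)$. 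Any modification of the lift of $\varphi$ by $t(t-\pi^\ast\tau)q$ shifts $\psi$ by $(t-\pi^\ast\tau)q$, which vanishes on $V_\tau$, so the pair $(a,b)$ depends only on the class of $\varphi$ modulo $(t(t-\pi^\ast\tau))$. This defines the $\mathcal{O}_S$-linear map $\bar r$.

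It remains to verify $\bar r\circ\alpha=\mathrm{id}$ and $\alpha\circ\bar r=\mathrm{id}$. The first is immediate from the recipe: starting with $\pi^\ast a+t\pi^\ast b$, restriction to $V_0$ recovers $a$, and then the uniqueness of division by $t$ gives $\psi=\pi^\ast b$, whose restriction to $V_\tau$ is $b$. For the second, given $\varphi$ with its associated $(a,b)=\bar r(\varphi)$, compute
$$\varphi-\pi^\ast a - t\pi^\ast b = t(\psi-\pi^\ast b);$$
the factor $\psi-\pi^\ast b$ vanishes on $V_\tau$ by definition of $b$, so a second application of Lemma \ref{cartiersection} writes it as $(t-\pi^\ast\tau)\rho$, whence $\varphi-\pi^\ast a - t\pi^\ast b\in (t(t-\pi^\ast\tau))$, as required. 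The only obstacle is purely bookkeeping around open sets; the non-zero-divisor statements in Lemma \ref{cartiersection} are exactly what guarantees each division is well-defined and preserves the domain $W$ without shrinkage, so the construction is genuinely sheaf-theoretic and works uniformly regardless of where $\tau$ vanishes.
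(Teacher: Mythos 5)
Your two divisions and the verification that $\bar r$ is a two-sided inverse are correct as algebra, and this explicit-inverse route is genuinely different from the paper's proof, which instead pushes forward the short exact sequence $0\to\mathcal{O}_{S\times F}/\mathcal{I}_\tau\overset{\cdot t}{\longrightarrow}\mathcal{O}_{S\times F}/\mathcal{I}_0\cdot\mathcal{I}_\tau\to\mathcal{O}_{S\times F}/\mathcal{I}_0\to 0$ along $\pi$ and splits the resulting extension by the unit map. However, the step you wave through at the start is the actual crux: ``represent it by some $\varphi\in\mathcal{O}(W)$ on an open neighborhood $W\subset V\times F$ of $V_0\cup V_\tau$.'' A section of $\pi_\ast\mathcal{O}_{S_{0\cup\tau}}$ over $V$ is a section of the quotient sheaf $\mathcal{O}_{S\times F}/\mathcal{I}_0\cdot\mathcal{I}_\tau$, which is supported on the closed set $S_{0\cup\tau}$ and hence is really a section of the restricted sheaf on that closed subset: it is not handed to you on an open neighborhood $W$, and even on such a $W$ a section of a quotient sheaf need not lift to an element of $\mathcal{O}(W)$ (quotient sheaves are sheafifications, so liftings exist only locally). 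This matters because a lifting near one point of a fiber does not see the other point of the same fiber, so you cannot form both restrictions $a$ and $b$ from it. The non-zero-divisor statements in Lemma \ref{cartiersection} do justify the divisions, as you say, but they have nothing to do with this representability problem; what is needed is precisely that $\pi:S_{0\cup\tau}\to S$ is proper with finite fibers — the ingredient the paper invokes to push the exact sequence forward, and which never appears in your write-up.

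The gap is fixable along your lines. Since $\alpha$ is a map of sheaves on $S$, it suffices to check it on stalks. By properness and finiteness of the fibers of $\pi:S_{0\cup\tau}\to S$, the stalk of $\pi_\ast(\mathcal{O}_{S\times F}/\mathcal{I}_0\cdot\mathcal{I}_\tau)$ at $s\in S$ is the product, over the at most two points $z$ of the fiber, of the stalks $\mathcal{O}_{S\times F,z}/(\mathcal{I}_0\cdot\mathcal{I}_\tau)_z$; germs of the quotient sheaf do lift to germs of functions, and a function defined near the finite fiber is defined on a neighborhood of $S_{0\cup\tau}\cap(V\times F)$ for $V$ small, again by properness. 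With a representative $\varphi$ secured in this way, your construction of $(a,b)$, the well-definedness check modulo $t(t-\pi^\ast\tau)$, and the two composite identities go through verbatim and prove the lemma (alternatively, at a point with $\tau(s)\neq 0$ one sees directly that the stalk is $\mathcal{O}_{S,s}\times\mathcal{O}_{S,s}$, and at $\tau(s)=0$ it is $\mathcal{O}_{S\times F,(s,0)}/(t(t-\pi^\ast\tau))$, and one can verify the stalkwise isomorphism by hand). So: right idea and a valid alternative to the paper's exact-sequence argument, but you must make the properness/finite-fiber input explicit rather than filing it under bookkeeping about open sets.
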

\begin{proof}
The fact that $t\cdot (t-\pi^\ast \tau)\in \mathcal{I}_0\cdot\mathcal{I}_\tau$ furnishes the comparison map $\mathcal{O}_S[t]/t\cdot (t-\tau)\rightarrow \pi_\ast \mathcal{O}_{0\cup \tau}$, so it suffices to show the claim that $\pi_\ast \mathcal{O}_{S_{0\cup\tau}}$ is free on $1$ and $t$.  For this consider the short exact sequence of sheaves on $S\times F$ 
$$0\rightarrow \mathcal{O}_{S\times F}/\mathcal{I}_\tau\overset{\cdot t}{\rightarrow} \mathcal{O}_{S\times F}/\mathcal{I}_0\cdot\mathcal{I}_\tau\rightarrow\mathcal{O}_{S\times F}/\mathcal{I}_0\rightarrow 0$$
coming from Lemma \ref{cartiersection}.  Since these sheaves are supported on $S_{0\cup \tau}$ and $\pi:S_{0\cup \tau}\rightarrow S$ is proper with finite fibers, the pushforward to $S$ is still short exact.  But the outer terms identify with $\mathcal{O}_S$ by previous lemma, and the unit map $\mathcal{O}_S\rightarrow \pi_\ast \mathcal{O}_{S\times F}/\mathcal{I}_0\cdot\mathcal{I}_\tau$ yields a splitting, giving the claim.
\end{proof}

\begin{lemma}\label{specialcasedef}
Let $S\in\operatorname{Man}_F$ and $\tau:S\rightarrow F$.
\begin{enumerate}
\item If $\tau=0$, then $S_{0\cup \tau}=S_{\rightarrow}$.
\item If $\tau(s)\neq 0$ for all $s\in S$, then $S_{0\cup \tau}=S\sqcup S$ via the maps $0,\tau:S\rightarrow S_{0\cup \tau}$.
\end{enumerate}
\end{lemma}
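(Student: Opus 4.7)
The plan is to unwind the definition of $S_{0\cup\tau}$ in each case using Lemmas \ref{cartiersection} and \ref{quadraticalgebra}, which already do most of the work.

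For part 1, when $\tau = 0$ the $0$-section and $\tau$-section of $\pi:S\times F\to S$ coincide, so the underlying topological space of $S_{0\cup\tau}$ is the $0$-section, which is homeomorphic to $S$ via $\pi$. Moreover $\mathcal{I}_\tau = \mathcal{I}_0$, so the defining ideal is $\mathcal{I}_0\cdot\mathcal{I}_\tau = \mathcal{I}_0^2$. Applying Lemma \ref{quadraticalgebra} with $\tau = 0$ gives
$$\pi_\ast\mathcal{O}_{S_{0\cup 0}} \simeq \mathcal{O}_S[t]/t(t-0) = \mathcal{O}_S[t]/t^2,$$
which is exactly $\mathcal{O}_{S_\rightarrow}$ by the definition of $S_\rightarrow$. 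Since $\pi$ restricts to a homeomorphism on the $0$-section, this produces an isomorphism of $F$-ringed spaces $S_{0\cup 0}\simeq S_\rightarrow$.

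For part 2, when $\tau$ is nowhere vanishing, the $0$-section and $\tau$-section of $\pi$ are disjoint closed subsets of $S\times F$, so the underlying space of $S_{0\cup\tau}$ is $S\sqcup S$ via these two sections. To identify the structure sheaf, recall from Lemma \ref{cartiersection} that $\mathcal{I}_0$ and $\mathcal{I}_\tau$ are generated by $t$ and $t-\pi^\ast\tau$ respectively. Their difference $\pi^\ast\tau$ is a global unit on $S\times F$ by the hypothesis on $\tau$, so $\mathcal{I}_0 + \mathcal{I}_\tau = \mathcal{O}_{S\times F}$, and consequently $\mathcal{I}_0\cdot\mathcal{I}_\tau = \mathcal{I}_0\cap\mathcal{I}_\tau$. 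The Chinese remainder theorem then yields
$$\mathcal{O}_{S\times F}/(\mathcal{I}_0\cdot\mathcal{I}_\tau) \;\simeq\; \mathcal{O}_{S\times F}/\mathcal{I}_0 \;\times\; \mathcal{O}_{S\times F}/\mathcal{I}_\tau \;\simeq\; \mathcal{O}_S \times \mathcal{O}_S,$$
where the second isomorphism again invokes Lemma \ref{cartiersection}. This decomposition matches the decomposition $S\sqcup S$ of the underlying space.

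Both parts are essentially bookkeeping and I expect no real obstacle. The one point to be careful about is that in part 2 the coprimality $\mathcal{I}_0 + \mathcal{I}_\tau = \mathcal{O}_{S\times F}$ must be checked at the level of sheaves, not merely stalkwise; but this is immediate from the fact that $\pi^\ast\tau$ provides a single global section witnessing the sum equals the unit ideal everywhere, so the Chinese remainder decomposition globalizes with no issues.
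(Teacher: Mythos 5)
Your proof is correct and follows essentially the same route as the paper: part 1 reads off $\mathcal{O}_S[t]/t^2$ from Lemma \ref{quadraticalgebra} using that $\pi$ is a homeomorphism when $\tau=0$, and part 2 deduces the splitting from Lemma \ref{cartiersection}. Your Chinese remainder argument in part 2 simply makes explicit the sheaf identification that the paper leaves as an immediate consequence of that lemma.
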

\begin{proof}
For 1, note that when $\tau=0$, $\pi:S_{0\cup\tau}\rightarrow S$ is a homeomorphism with inverse the $0$-section, so that the calculation $\pi_\ast\mathcal{O}_{S\cup \tau}=\mathcal{O}_S[t]/t^2$ from the previous lemma amounts to a calculation of $\mathcal{O}_{S\cup \tau}$ itself, giving the claim.  For 2, the map $S\sqcup S\rightarrow S_{0\cup \tau}$ is clearly a homeomorphism, and the identification of structure sheaves results from Lemma \ref{cartiersection}.
\end{proof}

Now we try to understand maps of $F$-ringed spaces from $S_{0\cup \tau}$ to an open subset $U$ of $F^d$, viewed as an $F$-manifold.  Actually this can be reduced to the case of $U=F^d$ itself, because a map of $F$-ringed spaces to $U$ is the same as a map of $F$-ringed spaces to $F^d$ which set-theoretically lands in $U$ (and $S_{0\cup \tau}$ is set-theoretically covered by $0,\tau:S\rightarrow S_{0\cup\tau}$, allowing to easily understand the latter condition).

Instead of talking about $F^d$ it is slightly cleaner to talk about a finite-dimensional $F$-vector space $V$.  Let $S\in\operatorname{Man}_F$ and $\tau:S\rightarrow F$.  Suppose given $F$-analytic maps $a:S\rightarrow V$ and $b:S\rightarrow V$.  The $F$-analytic map
$$a+bt: S\times F\rightarrow V$$
defined by $(s,t)\mapsto a(s) + b(s)\cdot t$ can be restricted to $S_{0\cup \tau}$, giving a map
$$a+bt: S_{0\cup \tau}\rightarrow V.$$

\begin{lemma}\label{technicaldeflemma}
This association $(a,b)\mapsto a+bt$ gives a bijection between the set of $F$-analytic maps $S\rightarrow V\times V$ and the set of maps of $F$-ringed spaces $S_{0\cup \tau}\rightarrow V$.

Moreover, given an open subset $U\subset V$, the map $a+bt:S_{0\cup \tau}\rightarrow V$ lands in $U$ if and only if the map $(a,b):S\rightarrow V\times V$ is such that $a(s),a(s)+b(s)\tau(s)\in U$ for all $s\in S$.
\end{lemma}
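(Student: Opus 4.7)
The strategy is to identify both sides with $d$-tuples of global sections of $\mathcal{O}(S_{0\cup\tau})$, using Lemma \ref{quadraticalgebra}. Choose coordinates $x_1,\ldots,x_d$ on $V$. \emph{Forward direction.} Given $F$-analytic maps $a,b\colon S\to V$, the formula $(s,t_0)\mapsto a(s)+b(s)t_0$ defines an $F$-analytic map $a+bt\colon S\times F\to V$; precomposing with the closed immersion of $F$-ringed spaces $\iota\colon S_{0\cup\tau}\hookrightarrow S\times F$ gives the desired $a+bt\colon S_{0\cup\tau}\to V$. \emph{Backward direction.} Given $f\colon S_{0\cup\tau}\to V$, pull back each coordinate $x_i$ via $f^\sharp$ to obtain an element of $\mathcal{O}(S_{0\cup\tau})$; by Lemma \ref{quadraticalgebra} this element has a unique expression $a_i+b_it$ with $a_i,b_i\in\mathcal{O}(S)$, and the tuples assemble to $F$-analytic maps $a,b\colon S\to V$.

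\emph{Mutual inversion.} That forward followed by backward returns $(a,b)$ is immediate from the construction. The content lies in the opposite composition: recovering $f$ from the $d$-tuple $(f^\sharp(x_i))$. The underlying continuous map $|f|$ is forced by Lemma \ref{localrings}(3): at each $x\in S_{0\cup\tau}$, the composite $\mathcal{O}_{V,f(x)}\overset{f^\sharp_x}{\to}\mathcal{O}_{S_{0\cup\tau},x}\overset{\mathrm{ev}_x}{\to} F$ is a homomorphism of local $F$-algebras ending in $F$, and hence must be evaluation at $f(x)$, yielding $f(x)_i=\mathrm{ev}_x(f^\sharp(x_i))$. For the sheaf map itself, each stalk $f^\sharp_x$ is a local $F$-algebra homomorphism from $\mathcal{O}_{V,f(x)}$, which by Lemma \ref{localrings}(1) injects into the formal power series ring $F[[x_1-f(x)_1,\ldots,x_d-f(x)_d]]$. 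Working locally to reduce to $S$ an open subset of some $F^n$, the stalk $\mathcal{O}_{S_{0\cup\tau},x}$ is an explicit ring in which substitution of maximal-ideal elements into absolutely convergent power series is well-defined and continuous for the $\mathfrak{m}$-adic topology; source and target are both $\mathfrak{m}$-adically Hausdorff, so two local homomorphisms agreeing on the generators $x_i-f(x)_i$ must agree on all absolutely convergent series in them. Applied to $f^\sharp_x$ and the corresponding stalk map of the rebuilt $a+bt$, which agree on the coordinate generators by construction, this gives $f=a+bt$.

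\emph{Openness and main obstacle.} A map $g\colon S_{0\cup\tau}\to V$ factors through the open subspace $U\subset V$ iff its underlying set map lands in $U$; since $S_{0\cup\tau}$ is covered set-theoretically by its $0$- and $\tau$-sections of $S$, whose images under $a+bt$ are $a(s)$ and $a(s)+b(s)\tau(s)$ respectively, this is equivalent to the stated condition. The only nontrivial step is the uniqueness argument in the mutual inversion, namely that a sheaf map into $\mathcal{O}_{S_{0\cup\tau}}$ is determined by its values on the coordinate functions of $V$; this requires the convergence/completion analysis above, which is the essential technical point and is best done by first reducing to the case where $S$ is an open subset of some $F^n$ so that every stalk has an explicit description.
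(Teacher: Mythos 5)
Your proposal is correct and follows essentially the same route as the paper: restrict $a+bt$ from $S\times F$, invert via the rank-two decomposition of Lemma \ref{quadraticalgebra}, reduce the nontrivial direction to the uniqueness statement that a ringed-space map to $F^d$ is determined by the coordinate pullbacks, pin down the underlying map via Lemma \ref{localrings}(3), and settle the stalk maps by embedding the stalk of $\mathcal{O}_{S_{0\cup\tau}}$ into the explicit formal model $F[[y_1,\ldots,y_e]][t]/t(t-\tau)$. Your "locality + $\mathfrak{m}$-adic continuity + density of polynomials + separatedness" argument is just a rephrasing of the paper's observation that this model is an inverse limit of Artin local $F$-algebras, so that homomorphisms out of $\mathcal{O}_{F^d,0}$ factor through $F[x_1,\ldots,x_d]/(x_1,\ldots,x_d)^N$.
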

\begin{proof}
As explained in the previous paragraph, the second claim is immedaite from the fact that the sections $0,\tau:S\rightarrow S_{0\cup \tau}$ of $\pi:S_{0\cup \tau}\rightarrow S$ are jointly surjective.
For the first claim, let us make a map backwards.  It will be convenient to use coordinates, so assume $V=F^d$ with coordinate functions $x_1,\ldots x_d\in\mathcal{O}(F^d)$.  Given a map of $F$-ringed spaces $f:S_{0\cup \tau}\rightarrow F^d$, we get $f^\sharp(x_1),\ldots,f^\sharp(x_d)\in \mathcal{O}(S_{0\cup \tau})$.  By Lemma \ref{quadraticalgebra}, we can uniquely write
$$f^\sharp(x_i) = a_i + b_i\cdot t$$
for $a_i,b_i\in \mathcal{O}(S)$, or equivalently, $a_i,b_i:S\rightarrow F$.  Then we can collect these $(a_i,b_i)_{i=1,\ldots,d}$ back up again as coordinates of functions $a,b:S\rightarrow F^d$, as desired.

It is clear that, given $a,b:S\rightarrow F^d$, if we apply the above procedure to the map $a+bt:S_{0\cup \tau}\rightarrow F^d$, then we recover the same $(a,b)$ we started with.  For the other direction, it suffices to show that if $f,g:S_{0\cup \tau}\rightarrow F^d$ are two maps of $F$-ringed spaces with $f^\sharp(x_i)=g^\sharp(x_i)$ for all $i=1,\ldots,d$, then $f=g$.  For that, first note that $f$ and $g$ must agree as maps of topological spaces, as one sees by composing with $0,\tau:S\rightarrow S_{0\cup \tau}$.  Moreover, and for the same reason, $f$ and $g$ agree on structure sheaves above the open locus in $S$ where $\tau\neq 0$ (as over that locus $S_{0\cup \sigma}$ breaks up into 2 copies of $S$ via $0$ and $\tau$, see Lemma \ref{specialcasedef}).  Thus, let $s\in S$ with $\tau(s)=0$, and let $a\in F^d$ denote the common point $a=f(s,0)=g(s,0)$.  Subtracting $a$, we can assume $a=0$.  We need to see that the two maps on stalks
$$f^\sharp,g^\sharp: \mathcal{O}_{F^d,0}\rightarrow \mathcal{O}_{S_{0\cup \tau},(s,0)}$$
agree.  By Lemma \ref{quadraticalgebra} and Lemma \ref{localrings}, choosing coordinates $y_1,\ldots,y_e$ on $S$ at $s$, the target injects into $F[[y_1,\ldots,y_e]][t]/t\cdot(t-\tau)$, which is an inverse limit of Artin local $F$-algebras with residue field $F$.  However, by Lemma \ref{localrings}, any $F$-algebra homomorphism from $\mathcal{O}_{F^d,0}$ to an Artin local $F$-algebra with residue field $F$ factors through $F[x_1,\ldots,x_d]/(x_1,\ldots,x_d)^N$ for some $N$.  As by assumption $f^\sharp(x_i)=g^\sharp(x_i)$ for all $i$, this implies that $f^\sharp=g^\sharp$ as desired.
\end{proof}

Let us also make a remark about functoriality.  Suppose given $S\in\operatorname{Man}_F$ and $\tau:S\rightarrow F$, with $\tau$ equivalently viewed as a section of $S\times F\rightarrow S$.  Given a map $f:S'\rightarrow S$ in $\operatorname{Man}_F$, the map $f\times \operatorname{id}_F$ sends the sections $0,\tau\circ f$ of $S'\times F\rightarrow S'$ to the sections $0,\tau$ of $S\times F\rightarrow S$, and hence induces a natural map
$$S'_{0\cup \tau\circ f}\rightarrow S_{0\cup \tau}$$
covering $f:S'\rightarrow S$ via the projections, and intertwining the sections $0,\tau$ with $0,\tau\circ f$.

Now we can define the deformation to the tangent bundle.

\begin{definition}\label{defstructure}
Let $M\in\operatorname{Man}_F$.  Define a presheaf $DM$ on $\operatorname{Man}_F$ by
$$S\mapsto (\tau:S\rightarrow F, f:S_{0\cup \tau}\rightarrow M)$$
where $\tau$ and $f$ are maps of $F$-ringed spaces.  The presheaf structure uses the functoriality explained in the previous paragraph.
\end{definition}

We have the following structure on $DM$.

\begin{enumerate}
\item There is a map $(p,\pi_0,\pi_1):DM\rightarrow F\times M\times M$, induced by sending $(\tau,f)$ to $(\tau,f_0, f_\tau)$ where $f_0$ (resp.\ $f_\tau$) is the composition of $f$ with the section of $S_{0\cup \tau}\rightarrow S$ induced by $0$ (resp.\ $\tau$).
\item There is a map $\sigma: F\times M \rightarrow DM$, sending $(\tau:S\rightarrow F,g:S\rightarrow M)$ to $(\tau, g\circ \pi)$ where $\pi:S_{0\cup \tau}\rightarrow S$ is the natural projection.
\item There is an action of the group object $F^\times$ on $DM$, described as follows.  Suppose given $S\in\operatorname{Man}_F$, $\lambda:S\rightarrow F^\times$, and a map $S\rightarrow DM$ corresponding to $(\tau:S\rightarrow F, f:S_{0\cup \tau}\rightarrow M)$ as in the definition.  We define $\lambda\cdot (\tau,f)$ to be $(\lambda\cdot \tau, f\circ [\lambda^{-1}])$, where $[\lambda^{-1}]:S_{0\cup \lambda\cdot\tau}\rightarrow S_{0\cup \tau}$ is the map induced by the map $S\times F\rightarrow S\times F$ given by $(s,t)\mapsto (s,\lambda^{-1}t)$.
\end{enumerate}

More prominent than the whole triple $(p,\pi_0,\pi_1)$ for us will be the first two components, the map $(p,\pi_0):DM\rightarrow F\times M$.  To emphasize this we introduce new notation, writing $\Pi = (p,\pi_0)$.  We note that $\sigma:F\times M\rightarrow DM$ is a section of $\Pi$, and that both $\sigma$ and $\Pi$ are $F^\times$-equivariant with respect to the $F^\times$-action on $DM$ described in 3 and the $F^\times$-action on $F\times M$ given by multiplication on the first coordinate.

\begin{lemma}\label{specialdefgeom}
Let $M\in\operatorname{Man}_F$.  Then:
\begin{enumerate}
\item The map $(\pi,\pi_0,\pi_1):DM\rightarrow F\times M\times M$ restricts to an isomorphism
$$DM\times_F F^\times \simeq F^\times \times M\times M.$$
Via this isomorphism, $\sigma$ corresponds to the map $\operatorname{id}_{F^\times}\times \Delta_M:F^\times \times M\rightarrow F^\times \times M\times M$, $\Pi$ corresponds to the map $F^\times \times M\times M\rightarrow F^\times \times M$ projecting onto the first two factors, and the $F^\times$-action on $DM$ from 3 goes to the action of $F^\times$ on $F^\times \times M\times M$ via multiplication on the first factor.
\item There is a natural isomorphism $DM\times_F \{0\} = TM$, via which the map $\Pi$ goes to the projection $TM\rightarrow M$ and $\sigma$ goes to the $0$-section $M\rightarrow TM$.  Moreover the $F^\times$-action on $DM$ from 3 goes to the $F^\times$-action on $TM$ given by the \emph{inverse} of the natural scalar multiplication action on the vector bundle $TM\rightarrow M$ (see Lemma \ref{localtangent}).
\end{enumerate}
\end{lemma}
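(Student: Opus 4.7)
My plan is to verify everything on the level of functors of points, leveraging the two cases handled in Lemma \ref{specialcasedef}: when $\tau$ is a unit, $S_{0\cup\tau}$ splits as $S \sqcup S$, and when $\tau = 0$, $S_{0\cup\tau} = S_\rightarrow$. Both parts of the lemma then reduce to reading off what Definition \ref{defstructure} and the structure from items 1--3 following it say in these two cases.

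For part 1, I would describe $DM\times_F F^\times$ via its functor of points: a map $S \to DM\times_F F^\times$ is a pair $(\tau, f)$ with $\tau:S \to F$ factoring through $F^\times$ and $f: S_{0\cup \tau} \to M$ a map of $F$-ringed spaces. Since $\tau$ is nowhere zero, Lemma \ref{specialcasedef}(2) identifies $S_{0\cup \tau}$ with $S \sqcup S$ via the sections $0$ and $\tau$, so $f$ is the same as a pair $(f_0, f_\tau)$ of $F$-analytic maps $S \to M$. This gives a natural bijection with the set of maps $S \to F^\times \times M \times M$, which is precisely the map $(p,\pi_0,\pi_1)$, proving it restricts to an isomorphism. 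The identification of $\sigma$ follows immediately since $\sigma(\tau, g) = (\tau, g \circ \pi)$ pulls back along both $0$ and $\tau$ to $g$. For the $F^\times$-action, one checks that $[\lambda^{-1}]: S_{0\cup \lambda\tau} \to S_{0\cup \tau}$, induced by $(s,t) \mapsto (s, \lambda^{-1}t)$, carries the section $0$ to $0$ and the section $\lambda\tau$ to $\tau$; thus $(f \circ [\lambda^{-1}])_0 = f_0$ and $(f\circ [\lambda^{-1}])_{\lambda\tau} = f_\tau$, so the action on $(\tau, f_0, f_\tau)$ is $(\lambda \tau, f_0, f_\tau)$ as required.

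For part 2, the functor $DM \times_F \{0\}$ sends $S$ to the set of maps $f: S_{0\cup 0} \to M$. By Lemma \ref{specialcasedef}(1), $S_{0\cup 0} = S_\rightarrow$, so this functor is precisely $TM$, which gives the natural isomorphism. That $\Pi$ restricts to the projection $TM \to M$ and that $\sigma$ restricts to the zero section are immediate from matching definitions: the section $0:S \to S_{0\cup 0}$ corresponds under Lemma \ref{specialcasedef}(1) to the map $S \to S_\rightarrow$ induced by $t \mapsto 0$, and $\pi: S_{0\cup 0} \to S$ corresponds to the map induced by the unit $\mathcal{O}_S \to \mathcal{O}_S[t]/t^2$. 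For scalar multiplication, the $F^\times$-action of Definition \ref{defstructure}(3) sends $(0, f) \mapsto (0, f \circ [\lambda^{-1}])$ where $[\lambda^{-1}]$ on $S_\rightarrow$ is induced by $t \mapsto \lambda^{-1} t$; comparing with item 3 of the structure on $TM$, where scalar multiplication by $\lambda$ sends $f$ to $f \circ [\lambda]$ with $[\lambda]$ given by $t \mapsto \lambda t$, we see that the $DM$-action by $\lambda$ agrees with the $TM$-action by $\lambda^{-1}$, i.e.\ with the inverse of the natural scalar action. By Lemma \ref{localtangent} this is well-defined on the vector bundle level.

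There is essentially no obstacle: the only potentially delicate point is pinning down the identification of sections under the isomorphism of Lemma \ref{specialcasedef}, and the resulting inverse sign in the scalar action of part 2, both of which are direct computations on the structure sheaves $\mathcal{O}_S[t]/t(t-\tau)$. Everything else is formal unravelling of the definitions.
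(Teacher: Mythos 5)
Your proof is correct and is exactly the paper's argument: the paper disposes of this lemma by noting it is an immediate translation of Lemma \ref{specialcasedef}, and your write-up simply spells out that translation (splitting $S_{0\cup\tau}$ as $S\sqcup S$ when $\tau$ is invertible, identifying $S_{0\cup 0}$ with $S_\rightarrow$, and tracking $\sigma$, $\Pi$, and the $[\lambda^{-1}]$-action through these identifications). No gaps; the inverse scalar action in part 2 is accounted for correctly.
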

\begin{proof}
This is an immediate translation of Lemma \ref{specialcasedef}.
\end{proof}

\begin{lemma}\label{coverdef}
Let $M\in\operatorname{Man}_F$.
\begin{enumerate}
\item The presheaf $DM$ is a sheaf.
\item If $U\subset M$ is an open subset, $DU\rightarrow DM$ is an open immersion.

\item If $\{U_i\}_{i\in I}$ is an open cover of $M$, then $\{DU_i\}_{i\in I}\cup DM\times_F F^\times$ is an open cover of $DM$.
\end{enumerate}
\end{lemma}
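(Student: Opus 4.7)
The three parts are all of the form ``a certain property that is obviously true for maps of $F$-ringed spaces into $M$ transfers to $DM$'', and they all hinge on the observation that formation of the $F$-ringed space $S_{0\cup\tau}$ is compatible with passage to open subsets of $S$. Precisely, if $S_\alpha\subset S$ is open and $\tau:S\to F$, then $\pi^{-1}(S_\alpha)\subset S_{0\cup\tau}$ (where $\pi:S_{0\cup\tau}\to S$ is the projection) coincides as an $F$-ringed space with $(S_\alpha)_{0\cup\tau|_{S_\alpha}}$. This is immediate from the fact that $\mathcal{O}_{S\times F}/\mathcal{I}_0\cdot\mathcal{I}_\tau$ is defined by restriction of ideal sheaves on $S\times F$. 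I would establish this at the start, since everything else follows easily from it.

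For 1, given an open cover $\{S_\alpha\}$ of $S$ and compatible data $(\tau_\alpha,f_\alpha)\in DM(S_\alpha)$: the $\tau_\alpha$ glue to a global $\tau:S\to F$ because $\mathcal{O}(-)$ is a sheaf. By the observation above, the $f_\alpha:(S_\alpha)_{0\cup\tau|_{S_\alpha}}\to M$ are maps of $F$-ringed spaces defined on the members of an open cover $\{\pi^{-1}(S_\alpha)\}$ of $S_{0\cup\tau}$. They glue to a map of $F$-ringed spaces $f:S_{0\cup\tau}\to M$, because maps of sheaves of $F$-algebras form a sheaf.

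For 2, given $h_N\to DM$ corresponding to $(\tau:N\to F,\,f:N_{0\cup\tau}\to M)$, I claim the desired open subset is
\[
V = f_0^{-1}(U)\cap f_\tau^{-1}(U)\subset N,
\]
where $f_0,f_\tau:N\to M$ are the pullbacks of $f$ along the two sections. The set $V$ is open because $f_0,f_\tau$ are continuous. For a test map $g:T\to N$, the composition $T\to N\to DM$ lies in $DU$ precisely when the pulled-back map $T_{0\cup g^\ast\tau}\to M$ factors set-theoretically through $U$; since $T_{0\cup g^\ast\tau}$ is set-theoretically covered by its two sections (Lemma \ref{specialcasedef} away from the zero locus of $g^\ast\tau$ and trivially on it), this factorization is equivalent to $g$ landing in $V$. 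Compatibility of structure sheaves under restriction to open subsets (the observation at the start) then gives $h_V\simeq h_N\times_{DM}DU$.

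For 3, let $N$ and $(\tau,f)$ be as before. By part 2 and Lemma \ref{specialdefgeom}(1), the preimages of $DU_i$ and of $DM\times_F F^\times$ are the open subsets $V_i=f_0^{-1}(U_i)\cap f_\tau^{-1}(U_i)$ and $W=\tau^{-1}(F^\times)$ respectively. Any $n\in N$ with $\tau(n)\neq 0$ lies in $W$, so it suffices to check the case $\tau(n)=0$. In that case the zero and $\tau$-sections agree at $n$, so $f_0(n)=f_\tau(n)$, giving a single point in $M$ which lies in some $U_i$. Then $n\in V_i$, finishing the cover. The only step requiring slight care is the argument in part 2 that the factorization condition depends only on set-theoretic data—this is where I expect to spend the most words, but it is transparent once one remembers that $S_{0\cup\tau}$ is the schematic union of its two sections.
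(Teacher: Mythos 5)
Your proof is correct and follows essentially the same route as the paper: the restriction-compatibility $\pi^{-1}(S_\alpha)=(S_\alpha)_{0\cup\tau|_{S_\alpha}}$ gives part 1, part 2 identifies $h_N\times_{DM}DU$ with an explicit open subset of $N$ using that the factorization through $U$ is a purely set-theoretic condition, and part 3 reduces to the fact that the fiber over a point of $\tau^{-1}(\{0\})$ is a single point lying in some $U_i$. The only (harmless) variation is in part 2, where you obtain openness of $V=f_0^{-1}(U)\cap f_\tau^{-1}(U)$ directly from continuity of $f_0,f_\tau$, while the paper describes the same set as $\{s\in S:\pi^{-1}(\{s\})\subset f^{-1}U\}$ and deduces openness from properness of $\pi$.
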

\begin{proof}
Note that if $S\in\operatorname{Man}_F$ and $\tau:S\rightarrow F$, then for an open subset $U\subset S$ we have that $\pi^{-1}U = U_{0\cup \tau|_U}$ where $\pi:S_{0\cup \tau}\rightarrow S$ is the projection.  This readily implies 1.

For 2, suppose $U\subset M$ is open, and let $S\in\operatorname{Man}_F$ with a map $S\rightarrow DM$, corresponding to $(\tau:S\rightarrow F,f:S_{0\cup \tau}\rightarrow M)$.  Then to lift $S\rightarrow DM$ along $DU\rightarrow DM$ is the same as to lift $f$ along $U\rightarrow M$, i.e.\ it is just the condition that $S_{0\cup \tau}\subset f^{-1}U$.  Let $S'\subset S$ denote the set of those $s\in S$ such that $\pi^{-1}(\{s\})\subset f^{-1}U$.  Then $S'$ is open because $\pi:S_{0\cup \tau}\rightarrow S$ is proper, $\pi^{-1}S'\subset f^{-1}U$ by definition, and the formation of $S'$ plainly commutes with pullback along maps $T\rightarrow S$. We deduce that $S'=S\times_{DM}{DU}$.

For 3, suppose again given $S\in\operatorname{Man}_F$ with a map $S\rightarrow DM$, corresponding to $(\tau:S\rightarrow F,f:S_{0\cup \tau}\rightarrow M)$.  Let $S'_i\subset S$ denote the open subset $\{s\in S: \pi^{-1}(s)\subset f^{-1}(U_i)\}$, so $S'_i=S\times_{DM}{DU_i}$ by the previous paragraph.  Since $S\times_{DM}(DM\times_F F^\times)$ identifies with $\tau^{-1}(F^\times)\subset S$, it suffices to show that the $S'_i$ cover $\tau^{-1}(\{0\})$.  But indeed, for $s\in \tau^{-1}(\{0\})$ the fiber $\pi^{-1}(\{s\})$ consists of a single point so $f(\pi^{-1}(\{s\}))\subset U_i$ for some $i$.
\end{proof}

Next we describe $DM$ for open subsets of $F$-vector spaces, together with the effect of coordinate transformations.

\begin{lemma}\label{localdef}
Suppose given a pair $(U,V)$ consisting of a finite-dimensional $F$-vector space $V$ and an open subset $U\subset V$.  We view $U$ as an $F$-manifold in the natural way.  Let $D_{U,V}$ denote the open subset of $F\times U\times V$ consisting of those $(t,u,v)$ with $u,u+tv\in U$.  Then there is an isomorphism
$$\iota_{U,V}:DU\simeq D_{U,V}$$
satisfying the following naturality property: given another pair $(U',V')$ as above, for any $F$-analytic map $\varphi:U\rightarrow U'$, the induced map $DU\rightarrow DU'$ corresponds, via $\iota_{U,V}$ and $\iota_{U',V'}$, to the map $D_{U,V}\rightarrow D_{U',V'}$ described as follows:
$$(t,u,v) \mapsto (t, \varphi(u),\frac{1}{t}\left(\varphi(u+tv)-\varphi(u)\right))$$
for $t\neq 0$, while
$$(0,u,v)\mapsto (0,\varphi(u),d\varphi|_u(v))$$
for $t=0$.

Moreover, under $\iota_{U,V}$, the map $(p,\pi_0,\pi_1):DU\rightarrow F\times U\times U$ corresponds to the map $D_{U,V}\rightarrow F\times U\times U$ given by $(t,u,v)\mapsto (t,u,u+tv)$, the map $\sigma:F\times U\rightarrow DU$ corresponds to $(t,u)\mapsto (t,u,0)$, and the $F^\times$-action on $DU$ corresponds to $\lambda\cdot(t,u,v) = (\lambda\cdot t,u,\lambda^{-1}\cdot v)$.
\end{lemma}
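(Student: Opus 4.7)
The plan is to build $\iota_{U,V}$ as a direct application of Lemma \ref{technicaldeflemma}. An $S$-point of $DU$ is a pair $(\tau,f)$ with $\tau: S \to F$ and $f: S_{0\cup\tau} \to U$ a map of $F$-ringed spaces, and Lemma \ref{technicaldeflemma} bijectively identifies such $f$ with pairs $(a,b): S \to V\times V$ of $F$-analytic maps satisfying the constraint $a(s),\,a(s)+\tau(s)b(s)\in U$ for all $s\in S$. That is exactly the data of an $S$-point of $D_{U,V}$, so the Yoneda lemma yields the desired isomorphism $\iota_{U,V}$.

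The identifications of $(p,\pi_0,\pi_1)$, $\sigma$, and the $F^\times$-action then come from routine unwindings of Definition \ref{defstructure}. Restricting $f = a+bt$ along the sections $0,\tau: S \to S_{0\cup\tau}$ yields $a$ and $a+\tau b$ respectively, so $(p,\pi_0,\pi_1)$ corresponds to $(t,u,v)\mapsto (t,u,u+tv)$. The section $\sigma$ arises from composition with $\pi: S_{0\cup\tau}\to S$, under which $g$ pulls back to the constant-in-$t$ section, giving the pair $(g,0)$. The $F^\times$-action pre-composes $f$ with $[\lambda^{-1}]: S_{0\cup\lambda\tau}\to S_{0\cup\tau}$ (induced by $t\mapsto \lambda^{-1}t$), sending $a+bt$ to $a+\lambda^{-1}b\cdot t$ and so giving $\lambda\cdot(t,u,v) = (\lambda t, u,\lambda^{-1}v)$.

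For the naturality in $\varphi$, let $\mu: D_{U,V}\to D_{U',V'}$ denote the map described in the lemma. The main obstacle is verifying that $\mu$ is $F$-analytic, i.e.\ that the difference quotient $\psi(t,u,v) := t^{-1}(\varphi(u+tv)-\varphi(u))$ extends analytically across $\{t=0\}$ to $d\varphi|_u(v)$. This is a local power-series calculation: on $D_{U,V}$, both $\varphi(u+tv)$ and $\varphi(u)$ are $F$-analytic, hence so is $h(t,u,v) := \varphi(u+tv)-\varphi(u)$, and $h(0,u,v) = 0$. Near any $(0, u_0, v_0)\in D_{U,V}$, expanding $h$ as a convergent power series $\sum_{m,\alpha,\beta} c_{m,\alpha,\beta}\, t^m (u-u_0)^\alpha (v-v_0)^\beta$ and setting $t = 0$ gives $c_{0,\alpha,\beta} = 0$ for all $\alpha,\beta$; factoring out $t$ produces a convergent power series on the same polydisk, showing $\psi$ extends $F$-analytically with value $d\varphi|_u(v)$ at $t=0$.

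Finally, granting that $\mu$ is $F$-analytic, the identification $D\varphi = \mu$ under $\iota_{U,V}$ and $\iota_{U',V'}$ is checked on the dense open subset $\{t\neq 0\}\subset D_{U,V}$ (the target $D_{U',V'}$ being Hausdorff, two continuous maps agreeing there must agree everywhere). On $\{t\neq 0\}$, Lemma \ref{specialcasedef}(2) decomposes $S_{0\cup\tau}$ as $S\sqcup S$ via the $0$- and $\tau$-sections; the pair $(a',b')$ associated to $\varphi\circ f$ then satisfies $a' = \varphi\circ a$ and $a'+\tau b' = \varphi(a+\tau b)$, so $b' = \tau^{-1}(\varphi(a+\tau b)-\varphi(a))$, matching $\mu$.
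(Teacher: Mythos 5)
Your proof is correct and follows essentially the same route as the paper, whose entire proof is that the lemma is ``an immediate translation of Lemma \ref{technicaldeflemma}'' with the $t=0$ formula obtained from the $t\neq 0$ one by continuity. You have simply filled in the details the paper leaves implicit: the Yoneda/pointwise identification of $DU$ with $D_{U,V}$ via Lemma \ref{technicaldeflemma}, the unwinding of the structure maps from Definition \ref{defstructure}, and the power-series argument that the difference quotient extends analytically across $\{t=0\}$ with value $d\varphi|_u(v)$, which justifies the continuity step.
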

\begin{proof}
This is an immediate translation of Lemma \ref{technicaldeflemma}.  Note also that the formula for $t=0$ follows from that for $t\neq 0$ by continuity.
\end{proof}

\begin{remark}
We interpret this as saying that this association $M\mapsto DM$ geometrizes the definition of derivative as a limit of difference quotients.
\end{remark}

\begin{theorem}\label{defrepresentable}
Let $M\in\operatorname{Man}_F$.
\begin{enumerate}
\item The presheaf $DM$ is representable by an $F$-manifold.
\item The map $\Pi:DM\rightarrow F\times M$ is a submersion.
\item If $f:M\rightarrow N$ is a submersion (resp.\ a surjective submersion), then so is the induced map $Df:DM\rightarrow DN$.
\item If $M\rightarrow N$ is a submersion and $N'\rightarrow N$ is an arbitrary map of $F$-manifolds, then $D(M\times_N N')\overset{\sim}{\rightarrow} DM\times_{DN}DN'$.
\end{enumerate}
\end{theorem}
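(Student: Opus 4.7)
The strategy throughout is to reduce each claim to an explicit calculation on the charts for $DM$ supplied by Lemma \ref{localdef} (near $t=0$) and Lemma \ref{specialdefgeom}(1) (over $t \in F^\times$), using the sheaf/open cover properties of Lemma \ref{coverdef} together with the representability criterion of Lemma \ref{localrep}. Cover $M$ by coordinate charts $U_i \hookrightarrow F^{d_i}$; Lemma \ref{coverdef} then furnishes an open cover of the sheaf $DM$ by the $DU_i$ together with $DM \times_F F^\times$. By Lemma \ref{localdef}, each $DU_i$ is identified with the open subset $D_{U_i, F^{d_i}} \subset F \times U_i \times F^{d_i}$, and by Lemma \ref{specialdefgeom}(1), $DM \times_F F^\times \simeq F^\times \times M \times M$; both are $F$-manifolds, so Lemma \ref{localrep} delivers (1). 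For (2), being a submersion is local on the source, and on both pieces of the cover the map $\Pi$ is the restriction of a projection to an open subset (respectively $(t,u,v)\mapsto (t,u)$ and $(t,m,m')\mapsto (t,m)$), hence a submersion.

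\textbf{Part (3).} Since being a submersion is local on source and target, we reduce to the case in which $f$ has the standard form of a submersion: a composition $U \times V \twoheadrightarrow U \hookrightarrow N$ for $U \subset W_1, V \subset W_2$ open in vector spaces. Then Lemma \ref{localdef} computes $Df$ in local coordinates as
$$(t, (u_1,u_2), (v_1,v_2)) \mapsto (t, u_1, v_1),$$
which is the restriction of a linear projection to an open subset, hence a submersion. For the surjectivity addendum, $Df$ is then open, so it suffices to check surjectivity fiberwise over $t\in F$. The fiber at $t \neq 0$ is $f \times f$ (Lemma \ref{specialdefgeom}(1)), surjective since $f$ is; the fiber at $t=0$ is $Tf: TM \to TN$, which is surjective because $f$ is surjective on bases and fiberwise surjective on tangent spaces (the submersion hypothesis).

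\textbf{Part (4).} Comparing $S$-points of the two sides for arbitrary $S\in\operatorname{Man}_F$ and fixed $\tau:S\to F$ reduces the claim to the identification
$$\operatorname{Hom}(S_{0 \cup \tau}, M \times_N N') \;\simeq\; \operatorname{Hom}(S_{0 \cup \tau}, M) \times_{\operatorname{Hom}(S_{0 \cup \tau}, N)} \operatorname{Hom}(S_{0 \cup \tau}, N')$$
in the category of $F$-ringed spaces. This is local on $M,N,N'$, so (using Lemma \ref{coverdef}) we may assume $M \to N$ has the standard form $U \times V \twoheadrightarrow U$, in which case $M \times_N N' \simeq V \times \widetilde{N}'$ for an appropriate open subset $\widetilde{N}' \subset N'$. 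It remains to check that a map of $F$-ringed spaces from $S_{0 \cup \tau}$ to a product $V \times \widetilde{N}'$ (with $V$ open in a vector space) splits uniquely as a pair of maps to $V$ and to $\widetilde{N}'$; the $V$-component is handled directly by Lemma \ref{technicaldeflemma}, and the $\widetilde{N}'$-component by covering $\widetilde{N}'$ by charts and invoking the same lemma locally.

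\textbf{Main obstacle.} The real technical content lies in Lemma \ref{technicaldeflemma}: because $S_{0 \cup \tau}$ fails to be an $F$-manifold at points where $\tau$ vanishes, identifying its maps to a manifold with honest pairs of $F$-analytic maps out of $S$ is not a formal matter and depends on the local-ring calculation of that lemma. Once it is in hand, each part of the theorem is a routine unpacking of the local charts on $DM$. The only mild subtlety is in part (4), where one must check that the universal property of the pullback manifold $M\times_N N'$ persists for $F$-ringed space domains of the form $S_{0\cup\tau}$; this holds because locally $M\times_N N'$ is literally a Cartesian product, reducing the matter back to Lemma \ref{technicaldeflemma} coordinate by coordinate.
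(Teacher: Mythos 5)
Your proof follows the paper's own argument essentially step for step: representability, the submersion claims, and base-change are all checked on the cover of $DM$ by the chart pieces $DU_i$ together with the locus $DM\times_F F^\times$ (Lemmas \ref{coverdef}, \ref{localdef}, \ref{specialdefgeom}, \ref{localrep}), with Lemma \ref{technicaldeflemma} carrying the real technical content, exactly as in the paper. The one point to make explicit in (3) is that working locally on $M$ and $N$ only produces the $DU_i$ pieces of the cover of the source $DM$, so the submersion property of $Df$ must additionally be verified on the remaining piece $DM\times_F F^\times\simeq F^\times\times M\times M$, where $Df$ is $\on{id}_{F^\times}\times f\times f$ — precisely the identification you already invoke for the surjectivity statement (and which the paper records for the same purpose).
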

\begin{proof}
For 1, by the local-global principle Lemma \ref{localrep}, we can check representability by restricting to an open cover of $DM$.  Choose an open cover $\{U_i\}_{i\in I}$ of $M$ by open subsets of $F$-vector spaces, and let us use the open cover $\{DU_i\}_{i\in I}\cup DM\times_{F}F^\times$ provided by Lemma \ref{coverdef}.  We have that $DM\times_FF^\times = F^\times \times M\times M$ by Lemma \ref{specialdefgeom} and this is plainly representable.  On the other hand each $DU_i$ is representable by Lemma \ref{localdef}.  Thus $DM$ is representable.

For 2, we likewise argue as follows.  Over the locus $F^\times\subset F$, we see the projection map $F^\times \times M\times M\rightarrow F^\times\times M$ which is clearly a submersion.  Thus by Lemma \ref{coverdef} we can work locally on $M$, reducing to the case of an open subset of an $F$-vector space, which follows from Lemma \ref{localdef}.

For 3, again, over the locus $F^\times\subset F$ we see the map $F^\times\times M\times M\rightarrow F^\times \times N \times N$ given by $\operatorname{id}_{F^\times}\times f\times f$ which is a submersion.  Thus by Lemma \ref{coverdef} we can work locally and therefore assume that $f$ is a projection map $U\times U'\rightarrow U'$ for $U\subset V$ and $U'\subset V'$ open subsets of $F$-vector spaces.  Then Lemma \ref{localdef} shows that $Df$ is obtained from a projection map via restriction to open subsets of source and target, hence is a submersion as desired.  For surjectivity, we can check over $F^\times \subset F$ and $\{0\}\times F$  separately, where it is clear in each case by the identification with $(-)\times (-)$ and $T(-)$ respectively.

For 4, again, over the locus $F^\times\subset F$ this is clear from the identification of $DM$ with $M\times M$, and so Lemma \ref{localdef} lets us work locally, thereby (by the implicit function theorem) reducing us to the case where $M\rightarrow N$ is a projection $U\times V\rightarrow V$ induced by a projection map of ambient vector spaces.  Moreover we can assume $N'$ is also an open subset of a vector space.  Then the explicit description from Lemma \ref{localdef} gives the claim.
\end{proof}

We also need a relative version of this construction, for a submersion $f:M\rightarrow S$ (thought of as a family of $F$-manifolds parametrized by $S$).  We define this in terms of the absolute version as follows.

\begin{definition}
Let $f:M\rightarrow S$ be a submersion in $\on{Man}_F$.  Define
$$D(M/S):= DM\times_{DS} (F\times S),$$
where the map $DM\to DS$ is $Df$ and the map $F\times S\rightarrow DS$ is the section $\sigma$ from Definition \label{defstructure}.
\end{definition}

Note that this fiber product is representable by an $F$-manifold because $DM\rightarrow DS$ is a submersion (Theorem \ref{defrepresentable}).  Note also that the map $f$ is supposed to be implicit in the notation $D(M/S)$.

We have in-families analogs of the basic structure on $DM$:

\begin{enumerate}
\item There is an $F^\times$-action on $D(M/S)$, since $F^\times$ acts on the diagram defining the pullback.
\item There is an $F^\times$-equivariant map $\Pi:D(M/S)\rightarrow F\times M$ gotten by composing the tautological map $D(M/S)\rightarrow DM$ from the pullback and the map $\Pi:DM\rightarrow F\times M$ from the absolute case (Definition \ref{defstructure}).  Note that the other tautological map $D(M/S)\rightarrow F\times S$ from the pullback is recovered from this $\Pi$ by composition with $F\times f:F\times M\rightarrow F\times S$.
\item The map $\Pi$ has an $F^\times$-equivariant section $\sigma:F\times M\rightarrow D(M/S)$, which when composed to $DM$ is given by the $\sigma:F\times M\rightarrow DM$ from the absolute case and when composed to $F\times S$ is given by $\on{id}_F\times f$.  These two maps agree when composed to $DS$ by the functoriality of the $\sigma$ in the absolute case.
\item There is an $F^\times$-equivariant isomorphism $D(M/S)\times_F F^\times \simeq  (M\times_SM)\times F^\times $, under which $\Pi$ goes to the first projection map $M\times_S M\rightarrow M$ (crossed with $F^\times$) and $\sigma$ goes to the diagonal section.  This is easily deduced from the absolute case using $M\times_S M = S\times_{S\times S}(M\times M)$.
\item There is an isomorphism $D(M/S)\times_F \{0\}\simeq T(M/S)$ under which $\Pi$ goes to the natural projection map $T(M/S)\rightarrow M$ and $\sigma$ goes to the zero section.  Here $T(M/S)$ is the usual relative tangent bundle, i.e.\ the kernel of the surjective vector bundle map $TM\rightarrow f^\ast TS$.  This isomorphism is $F^\times$-equivariant with respect to the inverse of the usual scalar multiplication action on the relative tangent bundle.  This is immediate from the absolute case.
\end{enumerate}

\begin{remark}
A slightly nontrivial point is that the map $\Pi:D(M/S)\rightarrow F\times M$ is a submersion, just as in the absolute case.  This does not immediately follow from the absolute case because the map $D(M/S)\rightarrow DM$ is almost never a submersion.  However, we can argue analogously to the absolute case.  Over $F^\times \subset F$, we see the first projection map $M\times_S M\rightarrow M$ (crossed with $F^\times$).  This is a pullback of $f$, hence a submersion.  Thus, by the locality principle Lemma \ref{coverdef}, we can work locally on $M$ and $N$.  By the implicit function theorem, we can therefore assume that $f$ is obtained from a linear projection map of $F$-vector spaces $V\times W\rightarrow V$ via restricting to open subsets.  In that case the explicit description of $D$ from Lemma \ref{localdef} implies that our map $\Pi:D(M/S)\rightarrow F\times M$ is similarly obtained by restriction to open subsets from a surjective linear map of vector spaces, hence is a submersion.
\end{remark}

Beyond this, to see that $D(M/S)$ is a reasonable in-families construction, we need to check that it recovers the absolute construction when $S=\ast$, and that it is compatible under base-change.  The first claim is trivial because $\sigma:F\times M\rightarrow DM$ is an isomorphism when $M=\ast$.  The second claim, more precisely, is the following lemma.

\begin{lemma}
Let $f:M\rightarrow S$ be a submersion in $\on{Man}_F$, let $g:S'\rightarrow S$ be an arbitrary map to $S$ in $\on{Man}_F$, and let $f':M'\rightarrow S'$ denote the pullback of $f$ along $g$.  Then:
\begin{enumerate}
\item The natural map $D(M'/S')\rightarrow D(M/S)\times_S S'$ is an isomorphism.
\item The isomorphism in 1 intertwines all the structure discussed above: the $F^\times$-action, the submersion $\Pi$, the section $\sigma$, and the identifications over $F^\times\subset F$ and $\{0\}\subset F$.
\end{enumerate}
\end{lemma}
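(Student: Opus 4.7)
The plan is to reduce everything to the absolute pullback compatibility from Theorem \ref{defrepresentable}(4), followed by routine manipulation of iterated fiber products and an inspection-level check of the structure.

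First I would unfold the definitions. By definition,
\[
D(M'/S') = DM' \times_{DS'} (F\times S'),\qquad D(M/S)\times_S S' = \bigl(DM\times_{DS}(F\times S)\bigr)\times_S S'.
\]
Since $f'\colon M'\to S'$ is the pullback of the submersion $f\colon M\to S$, Theorem \ref{defrepresentable}(4) gives a canonical isomorphism $DM'\simeq DM\times_{DS}DS'$. Substituting this and using that $DS'\times_{DS'}(F\times S')=F\times S'$, the left hand side becomes $DM\times_{DS}(F\times S')$, where the map $F\times S'\to DS$ factors through either $DS'$ (via the naturality of $\sigma$) or through $F\times S$ (via $\mathrm{id}_F\times g$); these agree because $\sigma$ is natural in its $F$-manifold argument. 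Rearranging the right hand side using $(F\times S)\times_S S'=F\times S'$ likewise yields $DM\times_{DS}(F\times S')$. Comparing the two descriptions gives the desired isomorphism in part 1.

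For part 2, I would check each piece of structure by tracing it through the identification just produced. The $F^\times$-action on each side is induced from the $F^\times$-action on $DM$ together with the (trivial) action on the $S$ or $S'$ factor, so the identification of both sides with $DM\times_{DS}(F\times S')$ carries one to the other tautologically. The map $\Pi\colon D(M'/S')\to F\times M'$ factors as $D(M'/S')\to DM'\xrightarrow{\Pi}F\times M'$, and after the identifications this is precisely the map $DM\times_{DS}(F\times S')\to F\times (M\times_S S')=F\times M'$ induced by the $\Pi$ on $DM$ and the second projection; the same description applies to $\Pi$ on $D(M/S)\times_S S'$, so they match. The section $\sigma$ corresponds on both sides to the map $F\times M'\to DM\times_{DS}(F\times S')$ that factors through $\sigma$ for $DM$ in the first coordinate and the tautological $F\times M'\to F\times S'$ in the second, so again the identifications intertwine them. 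Finally, the identifications over $F^\times\subset F$ and $\{0\}\subset F$ reduce via Lemma \ref{specialdefgeom} to the standard base-change behavior of the absolute constructions $(-)\times_{(-)}(-)$ and $T(-)$, both of which commute with pullbacks in a well-known way.

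The only step requiring any care is verifying that the iterated-pullback manipulation in part 1 is genuinely given by the map stated in the lemma (the \emph{natural} map $D(M'/S')\to D(M/S)\times_S S'$), rather than some other isomorphism between these same objects. This is the main (mild) obstacle, and I would handle it by checking that both the natural map and the isomorphism obtained by reshuffling fiber products fit into the same commutative diagram when mapped to $DM$, $F\times S'$, and $S'$, at which point the universal property of the target fiber product forces them to coincide. Everything else is formal.
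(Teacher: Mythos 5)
Your proof is correct and takes essentially the same route as the paper, which deduces the lemma in one line from the fact that $D(-)$ preserves the relevant pullback square (Theorem \ref{defrepresentable}, part 4); your fiber-product reshuffling, the naturality of $\sigma$, and the structure checks are just the explicit unwinding of that citation.
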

\begin{proof}
This follows immediately from the fact that $D(-)$ preserves the pullback square in question (Theorem \ref{defrepresentable}).
\end{proof}

We will also rely on a certain specialization of this deformation to the tangent bundle construction, namely the deformation of $M$ itself to a fixed tangent space.  More precisely, suppose given a point $m$ in an $F$-manifold $M$.  Then we can consider the fiber product
$$D_mM:= DM\times_{F\times M}F$$
where the map $DM\rightarrow F\times M$ is the submersion $\Pi$ and the map $F\rightarrow F\times M$ is $t\mapsto (t,m)$.  This $D_mM$ is an $F^\times$-equivariant manifold equipped with a submersion $D_mM\rightarrow F$ whose fiber over $0$ identifies with the tangent space $T_mM$ and whose general fiber identifies with $M$ itself.  Moreover this submersion $D_mM\rightarrow F$ has a section (induced by $\sigma$) which specializes to the point $0\in T_mM$ on the special fiber and to the point $m\in M$ on the general fiber.

The in-families version of this is the following.

\begin{definition}\label{deformman}
Let $f:M\rightarrow S$ be a submersion and suppose given a section $g:S\rightarrow M$.  Define
$$D_g(M/S):= D(M/S)\times_{F\times M}(F\times S)$$
where the map $D(M/S)\rightarrow F\times M$ is the relative $\Pi$ and $F\times S\rightarrow F\times M$ is $\on{id}_F\times g$.
\end{definition}

This is an $F^\times$-equivariant manifold equipped with a $F^\times$-equivariant submersion $\Pi$ to $F\times S$ together with an $F^\times$-equivariant section $\sigma$.  Over $F^\times\subset F$ it recovers our original $f$ and $g$ (crossed with $F^\times$), but over $\{0\}\times F$ it identifies with the $g$-pullback of the relative tangent bundle  $g^\ast T(M/S)$ (or, equivalently, the normal bundle of $g:S\rightarrow M$) together with its zero section.

It will be convenient to think of this construction as a functor from the category of manifolds over and under $S$, where the ``over'' map is a submersion, to the category of $F^\times$-equivariant manifolds over and under $F\times S$ where again the ``over'' map is a submersion.  By Theorem \ref{defrepresentable}, this functor commutes with base-change in $S$ and moreover it preserves finite products (which are calculated as fiber products over the given base manifold).

In particular, this functor sends group objects to group objects.  This fits into the following definition.

\begin{definition}
Let $S\in\on{Man}_F$.  A \emph{Lie group over $S$} is a group object in the category of manifolds equipped with a submersion to $S$.  Denote the category of Lie groups over $S$ by $\on{LieGp}_{S}$.  Note that pullback defines a natural functor $\on{LieGp}_S\rightarrow \on{LieGp}_{S'}$ for any map $S'\rightarrow S$.
\end{definition}
  Given a Lie group $G$ over $S$, we can apply the above relative deformation construction, specialized to the section given by the identity section, giving us an $F^\times$-equivariant manifold $D_e(G/S)$ over $F\times S$ equipped with an $F^\times$-equivariant section.  By the commutation with fiber products, the group structure passes through this construction and hence promotes $D_e(G/S)$ to an $F^\times$-equivariant Lie group over $F\times S$, such that the above section is the identity section for the group structure.  In other words:

  \begin{definition}\label{deformlie}
Let $S\in\on{Man}_F$.  We denote by $\on{LieGp}_{(F/F^\times)\times S}$ the category of $F^\times$-equivariant Lie groups over $F\times S$ (the notation is justified by the language of stacks, which will be reviewed later), and by
$$D_e(-/S):\on{LieGp}_{S}\rightarrow \on{LieGp}_{(F/F^\times)\times S}$$
the functor given by the relative deformation to the tangent bundle, specialized along the identity section, as discussed above.  Note that this functor commutes with base-change in $S$.
  \end{definition}

  \begin{remark}
  When restricted to $F^\times\subset F$, this deformation just recovers original Lie group $G\rightarrow S$.  However, over $\{0\}\subset F$ it gives us a certain Lie group structure (over $S$) on the pullback of the relative tangent bundle along the identity section of $G\rightarrow S$, or in other words the relative Lie algebra of $G\to S$.  Thus a priori this construction produces some exotic Lie group structure on the Lie algebra.  However, this Lie group structure is in fact the same as the usual additive Lie group structure one has on any vector bundle, as follows immediately from the Eckmann-Hilton argument.

  \end{remark}

  \begin{remark}

  One can be more precise at the level of Lie algebras: the induced deformation of Lie algebras corresponds to ``scaling the Lie bracket down to $0$''.  But we will not need this more precise remark.
\end{remark}

\section{Etale sheaves on light condensed anima}\label{etalesec}

To discuss Atiyah duality for $p$-adic Lie groups, we want to define a six functor formalism on some kinds of $p$-adic analytic stacks.  But it is a bit inconvenient to try to set up such a six functor formalism directly in analytic stacks.

There are two reasons for this.  First, the category $\on{Man}_F$ does not have all pullbacks (due to transversality issues).  Compatibility of the lower-$!$ functors with pullbacks is \emph{the} crucial aspect of a six functor formalism.  This means that, unless we are willing to make a technical modification to the standard set-up for six functor formalisms (which we are not), we can only encode lower-$!$ functors for a class of maps which is closed under arbitrary pullbacks.  In the case of $\on{Man}_F$, this requires us to restrict to submersions.  Now, for our intended application we do only need lower-$!$ for submersions, but such a restriction is still not desirable.

The second reason $\on{Man}_F$ is inconvenient is that it depends on $F$.  The category of sheaves on an $F$-manifold by definition only depends on the topological space underlying the manifold; the $F$-analytic structure should only be relevant for discussing things like tangent spaces and so on, not for setting up the six functor formalism.  Moreover, there are some very interesting ``global'' relations between the archimedean and non-archimedean six functor formalisms  (such as the reciprocity law in Section \ref{jsec}), and to express these relations it is convenient to have a common framework which both $\on{Man}_{\mathbb{Q}_p}$ and $\on{Man}_\mathbb{R}$ map to.

For this reason, we will take as our basic ``geometric'' objects not $F$-manifolds for some fixed $F$, but rather a certain class of compact Hausdorff spaces.  In fact there are two basic choices to make: one, the class of compact Hausdorff spaces, and two, the Grothendieck topology to put on the resulting full subcategory of compact Hausdorff spaces.  It turns out that many different choices would work equally well for present purposes.  But in recent years with Peter Scholze we have developed a certain specific choice which results in the theory of  \emph{light condensed anima}.  We came to that choice with a different motivation in mind, namely we wanted light condensed anima to serve as ``coefficient objects'' rather than ``geometric objects'', but light condensed anima can and do serve both purposes.

It seems reasonable enough for us to also use this theory of light condensed anima for our present purposes, so that is what we will do.  Thus, in this section we will recall the definition of light condensed anima, and then discuss the formalism of etale sheaves on light condensed anima.

\begin{remark}
Much of the material in this section is from joint work with Peter Scholze.  It is also very closely analogous to, and was inspired by, the theory of etale sheaves on v-stacks developed by Scholze in \cite{ScholzeDiamonds}.
\end{remark}

\begin{definition}
A \emph{light profinite set} is a topological space $S$ which can be written as a sequential inverse limit of finite discrete topological spaces, $S=\varprojlim_n S_n$.  The full subcategory of $\on{Top}$ spanned by the light profinite sets is denoted $\on{ProfSet}^{light}$.
\end{definition}
The full subcategory $\on{ProfSet}^{light}\subset \on{Top}$ is closed under countable limits and finite disjoint unions.  Moreover, if $S\in \on{ProfSet}^{light}$ and $T\subset S$ is any closed subset, then $T\in\on{ProfSet}^{light}$. 

\begin{remark}
While we defined $\on{ProfSet}^{light}$ as a full subcategory of topological spaces, it also embeds as a full subcategory of the Pro-category of finite sets, namely that spanned by the sequential inverse limits of finite sets.
\end{remark}

\begin{definition}
Let $S\in\on{ProfSet}^{light}$.
\begin{enumerate}
\item  Let $\mathcal{U}\subset \on{ProfSet}^{light}_{/S}$ be a sieve over $S$.  We say that $\mathcal{U}$ is a covering sieve if there exist finitely many $\{T_i\rightarrow S\}_{i\in I}$ in $\mathcal{U}$ such that the induced map $\sqcup_{i\in I} T_i\rightarrow S$ is surjective.
\item Let $S_\bullet \rightarrow S$ be a simplicial object in $\on{ProfSet}^{light}_{/S}$.  We say that $S_\bullet \rightarrow S$ is a \emph{hypercover} if for all $n\geq 0$, the map $S_n\rightarrow S_\bullet(\partial \Delta^n)$ is surjective.  (Here $S_\bullet(\partial \Delta^n)$ is calculated in $\on{ProfSet}^{light}_{/S}$.  Thus for $n=0$ the condition is $S_0\twoheadrightarrow S$, for $n=1$ it is $S_1\twoheadrightarrow S_0\times_S S_0$, etc.)
\end{enumerate}
\end{definition}

\begin{example}
Suppose $f:S_0\twoheadrightarrow S$ is a surjective map of light profinite sets.  Then the Cech nerve of $f$ is a hypercover of $S$.
\end{example}

\begin{definition}\label{lightdef}
A \emph{light condensed anima} is a functor $X:(\on{ProfSet}^{light})^{op}\rightarrow\on{An}$ such that the following conditions hold:
\begin{enumerate}
\item If $I$ is a finite set and $(S_i)_{i\in I}$ is an $I$-indexed collection of light profinite sets, then
$$X(\sqcup_{i\in I} S_i)\overset{\sim}{\rightarrow}\prod_{i\in I}X(S_i).$$
\item If $S_\bullet \rightarrow S$ is a hypercover of $S$ in $\on{ProfSet}^{light}$, then
$$X(S)\overset{\sim}{\rightarrow}\varprojlim_{n\in\Delta} X(S_n).$$
\end{enumerate}
The $\infty$-category of light condensed anima is the full subcategory $\on{CondAn}^{light}\subset\on{PSh}(\on{ProfSet}^{light})$ spanned by the $X$ satisfying the above two conditions.
\end{definition}

\begin{remark}
By the criterion of \cite{LurieSAG} Prop.\ A.5.7.2, $\on{CondAn}^{light}$ identifies with the collection of hypercomplete objects in the $\infty$-topos of sheaves of anima on $\on{ProfSet}^{light}$ with respect to the Grothendieck topology specified by the family of covering sieves described in Definition \ref{lightdef} part 1.  In particular, $\on{CondAn}^{light}$ is a hypercomplete $\infty$-topos.
\end{remark}

\begin{remark}
The Grothendieck topology on $\on{ProfSet}^{light}$ is subcanonical, so the Yoneda embedding $\on{ProfSet}^{light}\rightarrow \on{CondAn}^{light}$ is fully faithful.  This follows from the fact that any surjective map of compact Hausdorff spaces is a quotient map.
\end{remark}

The general notion of a \emph{surjective} (or \emph{$0$-connective}) map in an $\infty$-topos (called \emph{effective epimorphism} in \cite{LurieHTT}) unwinds to the following in $\on{CondAn}^{light}$: a map $X\rightarrow Y$ is surjective if and only if for all $S\in\on{ProfSet}^{light}$ and all maps $S\rightarrow Y$, there is a surjective map of light profinite sets $T\twoheadrightarrow S$ such that the composition $T\twoheadrightarrow S\rightarrow Y$ lifts along $X\rightarrow Y$.  From this we deduce the following.

\begin{lemma}
The $\infty$-topos $\on{CondAn}^{light}$ is \emph{replete} in the sense of \cite{BhattProEtale}: for any tower $\ldots\twoheadrightarrow X_1\twoheadrightarrow X_0$ in $\on{CondAn}^{light}$ with surjective transition maps, the map
$$\varprojlim_n X_n\rightarrow X_0$$
is surjective.
\end{lemma}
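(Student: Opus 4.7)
The plan is to verify the lifting characterization of surjectivity in $\on{CondAn}^{light}$ stated in the paragraph just before the statement. So I take a map $g \colon S \to X_0$ with $S$ a light profinite set and aim to produce a surjection $T \twoheadrightarrow S$ in $\on{ProfSet}^{light}$ together with a lift $T \to \varprojlim_n X_n$ of $g$. Replacing each $X_n$ by the pullback $X_n \times_{X_0} S$ (which preserves surjectivity of the transition maps by base change in the $\infty$-topos, and whose sequential limit is $(\varprojlim_n X_n) \times_{X_0} S$) reduces us to the case $X_0 = S$ with $g = \on{id}_S$.

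I would then construct inductively a tower
$$\cdots \twoheadrightarrow T_2 \twoheadrightarrow T_1 \twoheadrightarrow T_0 = S$$
of light profinite surjections equipped with maps $f_n \colon T_n \to X_n$ such that the composition $T_n \xrightarrow{f_n} X_n \to X_{n-1}$ is homotopic to $T_n \twoheadrightarrow T_{n-1} \xrightarrow{f_{n-1}} X_{n-1}$. Given $f_n$, the construction of $f_{n+1}$ is exactly what the surjectivity characterization for $X_{n+1} \twoheadrightarrow X_n$ applied to $f_n$ provides: a surjection $T_{n+1} \twoheadrightarrow T_n$ in $\on{ProfSet}^{light}$ together with the required lift $f_{n+1}$ and the homotopy witnessing compatibility. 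Setting $T = \varprojlim_n T_n$ gives a light profinite set (since $\on{ProfSet}^{light}$ is closed under countable limits, as noted just after its definition), and the projection $T \to S$ is surjective because, for each $s \in S$, the fiber is the cofiltered limit in $\on{ProfSet}^{light}$ of the nonempty compact sets $T_n \times_S \{s\}$ with surjective transitions, hence nonempty by compactness.

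The main obstacle — and the only non-formal step — is to promote the family of maps $\bar f_n \colon T \to T_n \xrightarrow{f_n} X_n$ together with the pulled-back homotopies to an actual map $T \to \varprojlim_n X_n$. Since $T$ is in the generating family and the Yoneda embedding is fully faithful, this is equivalent to upgrading the data to a point of the limit of anima $\varprojlim_n X_n(T) = \on{Map}(T, \varprojlim_n X_n)$. Because the indexing category $\mathbb{N}^{op}$ is a direct poset, such a sequential limit of anima is modeled, after a fibrant replacement making every transition $X_{n+1}(T) \to X_n(T)$ a Kan fibration, by the strict inverse limit; one then rectifies our data inductively by lifting each given homotopy along the fibration and replacing $\bar f_n$ by a homotopic representative so that the compatibilities become strict equalities. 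This produces a strict compatible system, hence the desired lift $T \to \varprojlim_n X_n$ refining $g$, and completes the verification of repleteness.
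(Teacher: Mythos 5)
Your argument is correct and is essentially the paper's: the paper handles this in one line by reducing, via the lifting characterization of surjectivity against light profinite sets, to the corresponding fact for towers of light profinite sets, which is exactly the inductive construction of the covers $T_n$ and the compactness/nonempty-limit argument you spell out. The only step you treat as a serious obstacle --- assembling the maps $\bar f_n$ and the connecting homotopies into a point of $\varprojlim_n X_n(T)$ --- is automatic for a sequential tower (for instance by the Milnor pullback description of countable inverse limits used later in Lemma \ref{replete}), since a point of such a limit requires no coherence data beyond points and connecting paths; your rectification via a tower of Kan fibrations is a valid, if heavier, way of seeing the same thing.
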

\begin{proof}
This follows from the corresponding fact for light profinite sets, which is obvious.
\end{proof}

In \cite{BhattProEtale} and \cite{MondalReplete}, it is shown that hypercomplete replete $\infty$-topoi have favorable exactness properties, at least assuming a defining 1-site.  We review this here, partly for later use and partly to simplify and generalize some arguments from \cite{MondalReplete}.

\begin{lemma}\label{replete}
Let $\mathcal{C}$ be an $\infty$-topos.  For the following conditions on $\mathcal{C}$, we have 
1 $\Leftrightarrow$ 2 $\Rightarrow$ 3 $\Leftrightarrow$ 4 $\Rightarrow$ 5, and 5 $\Rightarrow$ 6 and 7.
\begin{enumerate}
\item $\mathcal{C}$ is replete: the inverse limit of a tower of surjections maps surjectively to each term.
\item Let $(f_n):(X_n)\rightarrow (Y_n)$ be a map of towers in $\mathcal{C}$.  Suppose that $f_0$ is surjective, and that for all $n\geq 1$, the map $X_n\rightarrow Y_n\times_{Y_{n-1}} X_{n-1}$ is surjective.  Then $\varprojlim_n X_n\rightarrow \varprojlim_n Y_n$ is surjective.
\item A countable product of surjective maps in $\mathcal{C}$ is surjective.
\item A countable product of $d$-connective maps in $\mathcal{C}$ is $d$-connective, for all $d\geq 0$.
\item Let $(f_n):(X_n)\rightarrow (Y_n)$ be a map of towers in $\mathcal{C}$, and let $d\geq 0$.  Suppose that $f_n$ is $d+1$-connective for all $n\geq 0$.  Then $\varprojlim_n X_n\rightarrow \varprojlim_n Y_n$ is $d$-connective.
\item For all $X\in\mathcal{C}$, the map $X\rightarrow \varprojlim_n \tau_{\leq n}X$ is $\infty$-connective.
\item If $(X_n)_n$ is a tower in $\mathcal{C}$ such that the map $X_{n+1}\rightarrow X_n$ exhibits $X_n$ as $\tau_{\leq n}X_{n+1}$ for all $n\geq 0$, then the projection $\varprojlim_n X_n\rightarrow X_d$ exhibits $X_d$ as $\tau_{\leq d}\varprojlim_n X_n$ for all $d\geq 0$.
\end{enumerate}

In particular, any hypercomplete replete $\infty$-topos is \emph{Postnikov complete}: the functor
$$\mathcal{C}\overset{\sim}{\rightarrow}\varprojlim_n \mathcal{C}_{\leq n},$$
sending $X\in\mathcal{C}$ to its Postnikov tower, is an equivalence.
\end{lemma}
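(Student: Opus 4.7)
The plan is to take condition (2) as the main engine and derive everything from it. For $1 \Rightarrow 2$, given the data, I form the auxiliary tower $Z_n := X_n \times_{Y_n} \varprojlim_k Y_k$: the hypothesis that $X_n \to Y_n \times_{Y_{n-1}} X_{n-1}$ is surjective translates exactly into surjectivity of the transitions $Z_n \to Z_{n-1}$, while $Z_0 \to \varprojlim_k Y_k$ is the pullback of $f_0$ along the projection, hence surjective. Repleteness gives $\varprojlim_n Z_n \twoheadrightarrow Z_0$, and since $\varprojlim_n Z_n \simeq \varprojlim_n X_n$ (as $\varprojlim_k Y_k = \varprojlim_n Y_n$), this exhibits $\varprojlim X_n \to \varprojlim Y_n$ as surjective. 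The converse $2 \Rightarrow 1$ is immediate by specializing to a constant target tower. For $2 \Rightarrow 3$, I realize each countable product as the limit of partial products $P_N := \prod_{n \leq N} X_n$, $Q_N := \prod_{n \leq N} Y_n$; the comparison map $P_N \to Q_N \times_{Q_{N-1}} P_{N-1}$ simplifies to $f_N \times \on{id}$, so (2) applies. Finally $3 \Leftrightarrow 4$ follows by induction on $d$, using that the diagonal of a product of maps is the product of their diagonals together with the characterization of $(d+1)$-connectivity via surjectivity plus $d$-connective diagonal.

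For $4 \Rightarrow 5$, I induct on $d$. Base case $d = 0$: to obtain $\varprojlim f_n$ surjective from (2), I must show $X_n \to Y_n \times_{Y_{n-1}} X_{n-1}$ is surjective. Its fiber over $(y_n, x_{n-1})$ identifies with the fiber of $f_n^{-1}(y_n) \to f_{n-1}^{-1}(y_{n-1})$ over $x_{n-1}$; $1$-connectivity of $f_n$ and $f_{n-1}$ makes both source and target of this map nonempty connected anima, so the map is $\pi_0$-surjective and the fiberwise surjectivity holds. Inductive step: the diagonal of $\varprojlim f_n$ equals $\varprojlim \Delta(f_n)$ since limits commute with pullbacks, and each $\Delta(f_n)$ is $d$-connective (as $f_n$ is $(d+1)$-connective); by the inductive hypothesis $\varprojlim \Delta(f_n)$ is $(d-1)$-connective, and combined with surjectivity (from applying the base case to the weaker $1$-connectivity of $f_n$) this yields that $\varprojlim f_n$ is $d$-connective.

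For $5 \Rightarrow 6$, apply (5) to the map from the constant tower $(X)_n$ to the Postnikov tower $(\tau_{\leq n} X)_n$: each component $X \to \tau_{\leq n} X$ is $(n+1)$-connective, so (5) yields that $X \to \varprojlim_n \tau_{\leq n} X$ is $n$-connective for every $n$, hence $\infty$-connective. For $5 \Rightarrow 7$: for $n \geq d+2$ the composite $X_n \to X_{d+1}$ is $(d+2)$-connective as an iterated Postnikov truncation, so (5) applied at index $d+1$ gives that $\varprojlim X_n \to X_{d+1}$ is $(d+1)$-connective; post-composing with the $\tau_{\leq d}$-truncation $X_{d+1} \to X_d$ (and noting $X_d$ is $d$-truncated by hypothesis) identifies $X_d$ with $\tau_{\leq d}(\varprojlim X_n)$. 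In a hypercomplete $\infty$-topos $\infty$-connective morphisms are equivalences, so (6) says one composite of $\mathcal{C} \rightleftarrows \varprojlim_n \mathcal{C}_{\leq n}$ is the identity and (7) says the other is, yielding Postnikov completeness.

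The main obstacle I anticipate is the base case of $4 \Rightarrow 5$: extracting surjectivity of $X_n \to Y_n \times_{Y_{n-1}} X_{n-1}$ from only $1$-connectivity of the $f_n$ is the sole step genuinely using $\infty$-topos theory beyond formal manipulation of surjections and limits, and it rests on the fact that a map between nonempty connected objects is automatically $\pi_0$-surjective. Everything else is induction on $d$ together with standard commutation of limits with pullbacks.
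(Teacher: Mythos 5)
Your proof of the implication $4 \Rightarrow 5$ has a genuine logical gap: its base case $d=0$ explicitly invokes condition (2) (via the surjectivity of the comparison maps $X_n\to Y_n\times_{Y_{n-1}}X_{n-1}$ and the tower criterion), but (2) is not among the hypotheses available there. The lemma is deliberately stated as a one-way chain $1\Leftrightarrow 2\Rightarrow 3\Leftrightarrow 4\Rightarrow 5$: conditions (3)/(4) are a priori weaker than (1)/(2), and no implication $4\Rightarrow 2$ is claimed (nor do you prove one). So what your induction actually establishes is $2\Rightarrow 5$, not $4\Rightarrow 5$. The paper's argument avoids this precisely by the Milnor square
$$\varprojlim_n Z_n \;=\; \prod_n Z_n\times_{(\prod_n Z_n)\times(\prod_n Z_n)}\prod_n Z_n,$$
which converts the sequential limit into a single pullback of countable products; then condition (4) gives that all three legs of the map of cospans are $(d+1)$-connective, and the general fact that a pullback of $(d+1)$-connective maps of cospans is $d$-connective yields (5). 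Note that the loss of one degree of connectivity in this step is exactly why only the weaker conclusion (5) is claimed from (4) -- your route, if it worked from (4) alone, would essentially prove repleteness from (3), which is not asserted. Your proof still suffices for the paper's actual application (a replete hypercomplete topos satisfies (1), hence your chain gives (5), (6), (7) and Postnikov completeness), but it does not prove the lemma as stated.

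Two smaller points. First, your base-case justification argues with fibers over points, which is not valid verbatim in a general $\infty$-topos; the underlying fact is true and can be made internal: if $h:A\to B$, $g:B\to C$ with $g\circ h$ surjective and $\Delta_g$ surjective, then $h$ is surjective, because $h$ factors as the graph $A\to A\times_C B$ (a base change of $\Delta_g$) followed by $A\times_C B\to B$ (a base change of $g\circ h$). Second, the remaining implications ($1\Leftrightarrow 2$, $2\Rightarrow 3$, $3\Leftrightarrow 4$, $5\Rightarrow 6,7$, and the deduction of Postnikov completeness) coincide in substance with the paper's proof and are fine.
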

\begin{proof}
Assuming 1, consider 2.  Set $X_\infty=\varprojlim_n X_n$ and $Y_\infty=\varprojlim_n Y_n$.  We can identify $X_\infty\rightarrow Y_\infty$ with the inverse limit of the tower formed by the
$$X_n\times_{Y_n}Y_\infty\rightarrow Y_\infty,$$
so by repleteness it suffices to show that the transition maps in this tower are surjective.  But each transition map is a base-change of a map we've assumed surjective, hence is surjective.  Thus 1 $\Rightarrow 2$, and the converse follows by considering the case $Y_n=\ast$ for all $n$.

Now consider 3.  A finite product of surjections is always a surjection, because surjections are closed under base-change and composition.  Then assuming 2 we can pass to the inverse limit and get 3, so 2 $\Rightarrow$ 3.

Certainly 4 $\Rightarrow$ 3 because 3 is the special case $d=0$ of 4.  The converse follows by induction because for $d\geq 1$, a map $X\rightarrow Y$ in an $\infty$-topos is $d$-connective if and only if it is surjective and the diagonal map $X\rightarrow X\times_Y X$ is $d-1$-connective.  Thus $3\Leftrightarrow 4$.

Now consider 5.  Recall that in a general $\infty$-category we have the Milnor square for countable inverse limits:
$$\varprojlim_n Z_n = \prod_nZ_n\times_{(\prod_n Z_n)\times (\prod_{n}Z_{n})}\prod_n Z_n.$$
Here the first map $\prod_n Z_n\rightarrow (\prod_n Z_n)\times(\prod_n Z_n)$ is the diagonal and the second map $\prod_n Z_n\rightarrow (\prod_n Z_n)\times(\prod_n Z_n)$ is the identity on the first coordinate and the shift map on the second coordinate.  Thus, assuming 4, we deduce 5 from the fact that a pullback of $d+1$-connective maps is $d$-connective.  Hence 4 $\Rightarrow$ 5.

Next, consider 6.  In the map of towers from the constant tower on $X$ to the tower $(\tau_{\leq n}X)_n$, the maps in the tower are eventually as connective as we'd like.  Thus 5 $\Rightarrow$ 6.

Similarly, 7 also follows from 5: indeed, considering the natural map of towers from the tower $(X_n)_n$ to the tower $(\tau_{\leq d+1}X_n)_n$ we deduce from 3 that
$$\varprojlim_n X_n\rightarrow \varprojlim_n \tau_{\leq d+1}X_n$$
is $d+1$-connective, hence
$$\tau_{\leq d}\varprojlim X_n = \tau_{\leq d}\varprojlim_n \tau_{\leq d+1}X_n.$$
But the target tower is eventually constant with value $X_{d+1}$ and $\tau_{\leq d}X_{d+1}=X_d$, yielding the claim.

Finally, the remark about Postnikov completeness follows because unwinding the unit and counit for the adjunction between $\mathcal{C}$ and $\varprojlim_n \mathcal{C}_{\leq n}$ shows that $\mathcal{C}$ is Postnikov complete if and only if 7 holds, and the map $X\rightarrow\varprojlim_n \tau_{\leq n}X$ is an isomorphism for all $X\in\mathcal{C}$.  But in a hypercomplete $\infty$-topos that last condition is equivalent to 6.
\end{proof}

\begin{theorem}
The $\infty$-topos $\on{CondAn}^{light}$ is Postnikov-complete.  More generally, for any $X\in\on{CondAn}^{light}$, the $\infty$-topos $\on{CondAn}^{light}_{/X}$ is Postnikov-complete.
\end{theorem}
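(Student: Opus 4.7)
The plan is to appeal to Lemma \ref{replete}, which asserts that any hypercomplete replete $\infty$-topos is Postnikov complete. So I would verify first that $\on{CondAn}^{light}$ itself is hypercomplete and replete, and then more generally that every slice $\on{CondAn}^{light}_{/X}$ shares both properties.

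The absolute case is already on the table. Hypercompleteness holds by construction: the remark following Definition \ref{lightdef} identifies $\on{CondAn}^{light}$ with the hypercomplete objects of the sheaf $\infty$-topos on $\on{ProfSet}^{light}$, and this is by definition a hypercomplete $\infty$-topos. Repleteness is exactly the content of the lemma stated just before Lemma \ref{replete}. So Lemma \ref{replete} immediately yields Postnikov completeness of $\on{CondAn}^{light}$.

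For the slice version, I would invoke two standard transfer principles for $\infty$-topoi. First, hypercompleteness passes to slices: the forgetful functor $U:\mathcal{C}_{/X}\rightarrow \mathcal{C}$ preserves and detects $n$-connectivity of morphisms for all $n$, since the diagonals and fiber products relevant to this condition in $\mathcal{C}_{/X}$ coincide with those formed in $\mathcal{C}$. Hence if $\mathcal{C}$ is hypercomplete, every $\infty$-connective map in $\mathcal{C}_{/X}$ becomes an equivalence upon forgetting, and so is already an equivalence in the slice. Second, repleteness passes to slices: the forgetful functor $U$ preserves connected limits — in particular limits of towers — and detects surjectivity, so a tower of surjections in $\mathcal{C}_{/X}$ has underlying tower of surjections in $\mathcal{C}$, whose inverse limit surjects onto each term by repleteness of $\mathcal{C}$, and this surjectivity transfers back to the slice. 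Applying Lemma \ref{replete} to $\on{CondAn}^{light}_{/X}$ then gives the general statement.

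I expect no real obstacle: the substantive work was already assembled in the two preceding lemmas, and what remains is the essentially formal observation that the two hypotheses of Lemma \ref{replete} are stable under passage to slices. If anything deserves care, it is checking that the forgetful functor $U$ really does preserve $n$-connectivity of maps (as opposed to merely $n$-connectivity of objects), which I would do by unwinding the recursive characterization of $n$-connectivity via surjectivity of iterated diagonals, each of which is computed as a fiber product already living in the slice.
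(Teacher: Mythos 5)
Your proposal is correct and follows exactly the paper's route: hypercompleteness is built into the definition, repleteness is the preceding lemma, both properties pass to slices, and Lemma \ref{replete} then gives Postnikov completeness. The paper's own proof is just the one-line observation that repleteness and hypercompleteness pass to slice $\infty$-topoi; your extra unwinding of why the forgetful functor preserves and detects connectivity is a harmless elaboration of the same argument.
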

\begin{proof}
Note that repleteness and hypercompleteness pass to slice $\infty$-topoi, so this follows from Lemma \ref{replete}.
\end{proof}

For a light profinite set $S$, we can consider the usual $\infty$-category $\on{Sh}(S)$ of sheaves of anima on the topological space $S$.  A more convenient presentation of the same $\infty$-topos is as the sheaves on the site $\on{COp}_S$ of compact open subsets of $S$ with respect to the Grothendieck topology of finite disjoint unions.  The equivalence follows from the general criterion of \cite{HoyoisLefschetz} Lemma C.3, since the compact open subsets form a basis for the topology closed under finite intersection.  Note that this presentation in terms of compact open subsets makes it plain that $\on{Sh}(S)$ is Postnikov complete of homotopy dimension $0$ (indeed, taking sections over compact open subsets commutes Postnikov truncation, because the sheaf condition just involves finite products).

\begin{remark}
Equivalently, if we write $S=\varprojlim_n S_n$ as a countable limit of finite sets, then
$$\on{Sh}(S)\overset{\sim}{\rightarrow}\varprojlim_n \on{Sh}(S_n)=\varprojlim_n \prod_{S_n} \on{An}$$
via the pushforward maps.  Indeed, the site of compact open subsets of $S$ is the filtered colimit of the site of subsets of $S_n$, and these are finitary sites, so this follows from \cite{ClausenHyper} Lemma 3.3.
\end{remark}

We will now embed $\on{Sh}(S)$ as a full subcategory of $\on{CondAn}^{light}_{/S}$.  For this we will go through the category of \emph{finite etale} maps to $S$, for $S\in\on{ProfSet}^{light}$.  We can discuss these from 3 equivalent perspectives: topological, sheaf theoretic, or combinatorial.
\begin{enumerate}
\item Topologically, a finite etale map $T\rightarrow S$ is a map of topological spaces which, locally on $S$, is isomorphic to one of the form $F\times S\rightarrow S$ where $F$ is a finite set.  In other words, it's a finite covering map.
\item Sheaf theoretically, a finite etale map $T\rightarrow S$, being in particular a local homeomorphism, can be encoded in terms of its sheaf of sections.  The sheaves arising in this way are exactly those which are locally isomorphic to the constant sheaf on a finite set.
\item Combinatorially, a finite etale map $T\rightarrow S$ is a map in $\on{ProfSet}^{light}=\on{Pro}^{\aleph_0}(\on{fSets})$ which is a pullback of a map between finite sets.
\end{enumerate}

For future use, we will axiomatize the combinatorial perspective.  Given an $\infty$-category $\mathcal{C}$, denote by $\on{Pro}^{\aleph_1}(\mathcal{C})$ the full subcategory of the Pro-category of $\mathcal{C}$ spanned by the countable inverse systems of objects of $\mathcal{C}$.

\begin{lemma}\label{prononsense}
Let $\mathcal{C}$ be a small $\infty$-category admitting all pullbacks.  Define a map $f:X\rightarrow Y$ in $\on{Pro}^{\aleph_1}(\mathcal{C})$ to be a \emph{$\mathcal{C}$-map} provided it is a pullback of a map between objects in $\mathcal{C}$, and denote the full subcategory of $\mathcal{C}$-maps to $Y\in \on{Pro}^{\aleph_1}(\mathcal{C})$ by $\mathcal{C}_{/Y}$ (very abusively, as when $X\rightarrow Y$ is a $\mathcal{C}$-map, $X$ need not lie in $\mathcal{C}$).  Then:
\begin{enumerate}
\item The functor $(\on{Pro}^{\aleph_1}(\mathcal{C}))^{op}\rightarrow\mathcal{C}at_\infty$, sending $Y$ to $\mathcal{C}_{/Y}$ (with pullback functoriality), preserves countable filtered colimits.
\item For $Y\in\on{Pro}^{\aleph_1}(\mathcal{C})$, the full subcategory $\mathcal{C}_{/Y}\subset \on{Pro}^{\aleph_1}(\mathcal{C})_{/Y}$  is closed under pullbacks.
\item For $Y\in\on{Pro}^{\aleph_1}(\mathcal{C})$, we have $\on{Pro}^{\aleph_1}(\mathcal{C})_{/Y} = \on{Pro}^{\aleph_1}(\mathcal{C}_{/Y})$.
\end{enumerate}
\end{lemma}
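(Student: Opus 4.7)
The three parts will be addressed in the order $(2)$, $(1)$, $(3)$. The key input throughout is the defining property of the Pro-category, which I will refer to as the \emph{factoring principle}: for $Y = \varprojlim_n Y_n \in \on{Pro}^{\aleph_1}(\mathcal{C})$ with $Y_n \in \mathcal{C}$ and any $Z \in \mathcal{C}$, the natural map
$$\on{colim}_n \on{Map}_{\mathcal{C}}(Y_n, Z) \to \on{Map}_{\on{Pro}^{\aleph_1}(\mathcal{C})}(Y, Z)$$
is an equivalence; in particular, every such map factors through some $Y_n$, and analogously for diagrams of such maps.

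For $(2)$, given $\mathcal{C}$-maps $X \to Y$ and $X' \to Y$ written as pullbacks of $X_0 \to Y_0$ and $X_0' \to Y_0'$ in $\mathcal{C}$, the factoring principle produces an index $n$ such that both $Y \to Y_0$ and $Y \to Y_0'$ factor through $Y_n$. Pulling back in $\mathcal{C}$ gives $\widetilde{X}, \widetilde{X}' \to Y_n$, and their fiber product over $Y_n$ (which exists in $\mathcal{C}$ by hypothesis) has pullback to $Y$ equal, by pasting of pullbacks, to $X \times_Y X'$, exhibiting it as a $\mathcal{C}$-map.

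For $(1)$, the content unwinds to the assertion that $\on{colim}_n \mathcal{C}_{/Y_n} \to \mathcal{C}_{/Y}$ is an equivalence (the general case reduces to this by presenting each $Y_n$ in turn as a countable cofiltered limit in $\mathcal{C}$). Essential surjectivity is just the argument of $(2)$ applied to one $\mathcal{C}$-map. For fully faithfulness, after lifting two $\mathcal{C}$-maps to the same $\mathcal{C}_{/Y_n}$, a morphism between their pullbacks to $Y$ corresponds by the universal property of pullback to a morphism $\widetilde{X} \times_{Y_n} Y \to \widetilde{X}'$ over $Y_n$, i.e.\ a map from a pro-object of $\mathcal{C}_{/Y_n}$ to an object of $\mathcal{C}_{/Y_n}$; the factoring principle applied in that slice exhibits it as coming from some level $m \geq n$.

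For $(3)$, I will construct a natural functor $\Phi : \on{Pro}^{\aleph_1}(\mathcal{C}_{/Y}) \to \on{Pro}^{\aleph_1}(\mathcal{C})_{/Y}$ by flattening a countable pro-system $(W_i)$ of $\mathcal{C}$-maps to its cofiltered limit in $\on{Pro}^{\aleph_1}(\mathcal{C})_{/Y}$; this stays in $\on{Pro}^{\aleph_1}$ since each $W_i$ is itself a countable cofiltered limit of objects of $\mathcal{C}$. For essential surjectivity, given $Z = \varprojlim_m Z_m \to Y$, apply the factoring principle diagonally to produce compatible factorizations $Z \to Z_{m(k)} \to Y_{n(k)}$ for a cofinal sequence of pairs $(m(k), n(k))$ and set $W_k := Z_{m(k)} \times_{Y_{n(k)}} Y$; since finite limits commute with cofiltered ones, $\varprojlim_k W_k \simeq Z$ in $\on{Pro}^{\aleph_1}(\mathcal{C})_{/Y}$. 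Full faithfulness reduces, via the cocompactness of each $W \in \mathcal{C}_{/Y}$ inside $\on{Pro}^{\aleph_1}(\mathcal{C})_{/Y}$ (a consequence of $(1)$ plus the factoring principle), to the standard pro-categorical formula for mapping spaces. The main obstacle I anticipate is precisely this bookkeeping in $(3)$: the cofinal refinement producing the $W_k$ as a genuine cofiltered $\infty$-categorical diagram (rather than an incoherent collection of factorizations) requires inductive zigzagging with the factoring principle to ensure homotopy coherence, and is the only step not completely mechanical.
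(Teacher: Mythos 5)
Your overall route is the paper's: descend to a finite stage of the tower via the pro-categorical mapping-space formula, identify $\mathcal{C}_{/Y}$ with $\varinjlim_n\mathcal{C}_{/Y_n}$, and for part (3) rectify $X\to Y$ to a map of towers and form the tower $X'_n\times_{Y_n}Y$ of $\mathcal{C}$-maps (your $W_k=Z_{m(k)}\times_{Y_{n(k)}}Y$ is exactly the paper's construction). The coherence issue you flag in (3) is handled by the standard reindexing fact that a map of countable pro-objects can be realized by a map of towers after passing to cofinal subsequences, which is all the paper invokes; so that worry is not a real obstruction.

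There is, however, a concrete gap in your part (2). "Closed under pullbacks" means: for an arbitrary cospan $X\to Z\leftarrow X'$ of $\mathcal{C}$-maps over $Y$, the fiber product $X\times_Z X'$ is again a $\mathcal{C}$-map to $Y$. Your argument only treats the case $Z=Y$, i.e.\ binary products in the slice. For a general cospan it is not enough to factor the structure maps of the three \emph{objects} through a common $Y_n$: the two \emph{morphisms} $X\to Z$ and $X'\to Z$ over $Y$ need not be pulled back from morphisms over $Y_n$ at that stage, and the trick $X\times_Z X'=(X\times_Y X')\times_{Z\times_Y Z}Z$ does not bootstrap, since it produces another pullback over a base other than $Y$. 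Descending the morphisms is precisely the full faithfulness statement in (1); this is why the paper proves (1) first and then deduces (2) by lifting the whole (finite) cospan diagram to a finite stage and using that the transition functors $-\times_{Y_n}Y$ preserve pullbacks. So you should reorder: extract from your (2) only the single-object descent (which is the essential surjectivity you use in (1)), prove (1), and then get (2) in full generality from (1). One smaller point on (1): full faithfulness requires the equivalence of mapping \emph{anima} $\varinjlim_{m}\on{Map}_{\mathcal{C}_{/Y_m}}(-,-)\to\on{Map}_{\mathcal{C}_{/Y}}(-,-)$, not merely that each morphism descends to some finite level; your universal-property-of-pullback computation does yield this (countable filtered colimits commute with the finite limits defining slice mapping spaces), but it should be stated at that level, as in the paper.
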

\begin{proof}
For 1, it is equivalent to say that if $(X_n)_n$ is a tower of objects in $\mathcal{C}$ with limit $X \in \on{Pro}^{\aleph_1}(\mathcal{C})$, then the comparison functor
$$\varinjlim_n \mathcal{C}_{/X_n}\overset{\sim}{\rightarrow} \mathcal{C}_{/X}$$
is an equivalence in $\mathcal{C}at_\infty$.  This functor is essentially surjective by definition, and fully faithfulness follows from the fact that mapping anima in fitered colimits on $\mathcal{C}at_\infty$ are calculated by filtered colimits of mapping anima of the terms.
Claim 2 follows from claim 1, because we can lift a finite diagram to a finite stage in the colimit, and the transition functors all preserve pullbacks.
For claim 3, it suffices to show that we can write every map $X\rightarrow Y$ in $\on{Pro}^{\aleph_1}(\mathcal{C})$ as a sequential filtered limit of a tower $\mathcal{C}$-maps $(X_n\rightarrow Y)_n$, such that for any other $\mathcal{C}$-map $Y'\rightarrow Y$, we have
$$\varinjlim_n \on{Map}_Y(X_n,Y')\overset{\sim}{\rightarrow} \on{Map}_Y(X,Y').$$
For this, write $X=\varprojlim_n X'_n$ and $Y=\varprojlim Y_n$ as limits of objects in $\mathcal{C}$.  The map $f$ comes from a map of pro-objects $(X'_n)\rightarrow (Y_n)$, but by reindexing we can assume it is even realized by a map of towers $(X'_n)\rightarrow (Y_n)$.  Then we set $X_n = X'_n\times_{Y_n} Y$.  Clearly $X=\varprojlim_n X_n$, and the second claim is trivial to verify after descending $Y'\rightarrow Y$ to some $Y'_n\rightarrow Y_n$.
\end{proof}

\begin{lemma}\label{etaleonprof}
For $S\in\on{ProfSet}^{light}$, define a functor
$$\delta_S:\on{Sh}(S)\rightarrow \on{CondAn}^{light}_{/S}$$
as the left Kan extension of the functor $\on{COp}_S\rightarrow \on{CondAn}^{light}_{/S}$ which sends $T\subset S$ to the inclusion map $T\rightarrow S$.  Then:
\begin{enumerate}
\item $\delta_S$ preserves finite limits and arbitrary colimits, hence gives a map in $\mathcal{T}op^L$;
\item $\delta_S$ is fully faithful;
\item the essential image of $\delta_S$ consists of those light condensed anima $X\rightarrow S$ over $S$ such that the corresponding functor $X:(\on{ProfSet}^{light}_{/S})^{op}\rightarrow \on{An}$ (sending $T\rightarrow S$ to $\on{Hom}_S(T,X)$) preserves countable filtered colimits.
\end{enumerate}
\end{lemma}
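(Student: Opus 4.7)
I will treat all three claims together by constructing the right adjoint of $\delta_S$ explicitly and then using the reduction to finite-over-$S$ objects supplied by Lemma \ref{prononsense}.

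\emph{Construction of the adjoint.} Define $\delta_S^R(X \to S)$ to be the presheaf on $\on{COp}_S$ given by $U \mapsto \on{Map}_{\on{CondAn}^{light}_{/S}}(U, X)$. This is a sheaf since covering sieves in $\on{COp}_S$ are generated by finite disjoint decompositions $U = U_1 \sqcup U_2$, and such disjoint unions are categorical coproducts in $\on{CondAn}^{light}$. The adjunction $\delta_S \dashv \delta_S^R$ then follows from density of representables in $\on{Sh}(S)$.

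\emph{Part (1).} Preservation of colimits by $\delta_S$ is automatic (it is a left adjoint). For left-exactness, observe that the Yoneda-like embedding $\on{COp}_S \hookrightarrow \on{CondAn}^{light}_{/S}$ preserves finite limits (intersections of compact opens agree with pullbacks in condensed anima, since the inclusions into $S$ are monomorphisms; $S$ is terminal in $\on{COp}_S$ and goes to $\on{id}_S$) and sends covering sieves to effective epimorphic sieves. By the universal property of $\on{Sh}(S)$ as the $\infty$-topos of hypersheaves on the site $\on{COp}_S$, $\delta_S$ is automatically left-exact.

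\emph{Parts (2) and (3).} Apply Lemma \ref{prononsense} with $\mathcal{C}=\on{fSet}$: every object of $\on{ProfSet}^{light}_{/S}$ is a countable cofiltered limit of finite-over-$S$ objects, and thus a functor on $(\on{ProfSet}^{light}_{/S})^{op}$ preserves countable filtered colimits iff it is left Kan extended from its restriction to the finite-over-$S$ subcategory. Moreover, each finite-over-$S$ object factors through some level $S_n$ in the tower $S = \varprojlim S_n$ and decomposes as a disjoint union of points of compact opens of $S$, identifying sheaves on the finite-over-$S$ site (with finite-cover topology) with $\on{Sh}(S)$. With this reduction I check (3) directly. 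On representables $U \subset S$ the functor $T \mapsto \on{Map}_S(T,U)$ preserves countable filtered colimits, because for $T=\varprojlim T_n$, ``$T \to S$ factors through $U$'' is equivalent to it factoring eventually through $U$. For a general $X = \delta_S F$, the analogous computation $\on{Map}_S(V, \delta_S F) = \prod_{v \in V} F_{p(v)}$ for a finite $V \to S$ (using the decomposition through $S_n$ and the sheaf property of $F$) extends this. Conversely, given $X$ satisfying the countable filtered colimit preservation condition, its restriction to finite-over-$S$ objects produces, via the identification above, a sheaf $F = \delta_S^R X \in \on{Sh}(S)$; the counit $\delta_S F \to X$ is an equivalence because both sides preserve countable filtered colimits and agree on finite-over-$S$ objects by construction. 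Fully faithfulness (part 2) follows: the unit $F \to \delta_S^R \delta_S F$ is an equivalence because $\delta_S F$ lies in the essential image, and by construction its $\delta_S^R$-image recovers $F$ on compact opens.

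\emph{Main obstacle.} The delicate point is verifying that for a general sheaf $F$, the condensed anima $\delta_S F$—which arises from a genuine sheafification procedure in $\on{CondAn}^{light}_{/S}$, not merely a presheaf colimit—has mapping spaces from finite-over-$S$ test objects controlled by the stalk formula $\on{Map}_S(\{s\}, \delta_S F) \simeq F_s$. One needs to prove this stalk comparison by approximation, which is where the Postnikov completeness of $\on{CondAn}^{light}$ from Lemma \ref{replete} enters: reducing to truncated sheaves allows one to descend the stalk formula from the compact open site, where it is tautological.
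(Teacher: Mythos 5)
Your part (1) is fine, and your use of Lemma \ref{prononsense} to reduce to the finite-étale-over-$S$ subcategory is the same first move the paper makes. But the heart of parts (2) and (3) is missing. The statement you need is: for every $F\in\on{Sh}(S)$ and every tower $(T_n)$ in $\on{ProfSet}^{light}_{/S}$ with limit $T$, the map $\varinjlim_n\on{Map}_S(T_n,\delta_S F)\to\on{Map}_S(T,\delta_S F)$ is an equivalence. Your argument only controls $\on{Map}_S(-,\delta_S F)$ on representable compact opens and on finite étale objects (and even there the displayed formula $\prod_{v\in V}F_{p(v)}$ is wrong as written: a finite étale $V\to S$ decomposes into finitely many compact opens $U_i\subset S$, so the value is the finite product $\prod_i F(U_i)$, not a product of stalks over the points of the profinite set $V$). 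Knowing the values on finite étale test objects does not by itself give preservation of countable filtered colimits, because $\delta_S F$ is produced by a left Kan extension \emph{into the topos} $\on{CondAn}^{light}_{/S}$, i.e.\ a sheafification intervenes, and nothing in your write-up rules out a sheafification correction when you evaluate on a general light profinite $T$ over $S$. Your ``main obstacle'' paragraph flags a gap but under-scopes it: the stalk formula $\on{Map}_S(\{s\},\delta_S F)\simeq F_s$ is neither the real difficulty nor sufficient (sections over an infinite profinite $T/S$ are not determined by stalks without exactly the kind of descent/continuity statement at issue), and the counit argument in your converse direction silently uses the same unproved comparison (``agree on finite-over-$S$ objects by construction'' is not by construction).

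The paper closes this gap by a genuinely different mechanism that your sketch never touches: it shows that any functor $(\on{ProfSet}^{light}_{/S})^{op}\to\on{An}$ which preserves countable filtered colimits and satisfies the finite-disjoint-union sheaf condition \emph{automatically} sends hypercovers to limit diagrams, so the left Kan extension from the finite étale site already lands in $\on{CondAn}^{light}_{/S}$ and no sheafification correction can occur; $\delta_S$ is then identified with this composite by transitivity of Kan extensions. The proof of automatic hyperdescent is where the real work is: closure of the relevant class under sectionwise Postnikov truncation (truncation commutes with finite products and filtered colimits), reduction to the Čech nerve of a single surjection $T'\twoheadrightarrow T$, writing that surjection as a countable limit of surjections in $\on{fEt}_{/S}$, commuting the (finite, by truncatedness) limit over $\Delta$ with the filtered colimit, and finally the key geometric input that a surjective finite étale map of light profinite sets admits a section. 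To make your approach work you would have to supply this argument (or an equivalent direct computation of $\on{Map}_S(T,\delta_S F)$ for arbitrary $T$); as it stands, the ``approximation/Postnikov completeness'' gesture does not substitute for it.
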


\begin{proof}
Consider the following $\infty$-categories:
\begin{enumerate}
\item The $\infty$-category of functors $\on{COp}_S^{op}\rightarrow\on{An}$ satisfying the sheaf condition for finite disjoint unions, i.e.\ $\on{Sh}(S)$.
\item The $\infty$-category of functors $(\on{fEt}_{/S})^{op}\rightarrow\on{An}$ satisfying the sheaf condition for finite disjoint unions.
\item The $\infty$-category of functors $(\on{ProfSet}^{light}_{/S})^{op}\rightarrow\on{An}$ which preserve countable filtered colimits and satisfy the sheaf condition for finite disjoint unions.
\item The $\infty$-category of functors $(\on{ProfSet}^{light}_{/S})^{op}\rightarrow\on{An}$ which preserve countable filtered colimits, satisfy the sheaf condition for finite disjoint unions and send hypercovers to limit diagrams, i.e.\ the full subcategory of $\on{CondAn}^{light}_{/S}$ which part 3 of this Lemma claims is the essential image of $\delta_S$.
\end{enumerate}
We claim that restriction and right Kan extension identify 1 and 2, that restriction and left Kan extension identify 2 and 3, and that 3 and 4 are the same.  The claim about 1 and 2 follows from \cite{HoyoisLefschetz} Lemma C.3.  For the claim about 2 and 3, Lemma \ref{prononsense} part 2 shows the same without the conditions of satisfying the sheaf conditions, so it suffices to see that that a countable filtered colimit-preserving functor $(\on{ProfSet}^{light}_{/S})^{op}\rightarrow\on{An}$ satisfies the sheaf condition for finite disjoint unions if and only if its restriction to $(\on{fEt}_{/S})^{op}$ does, which is obvious because finite products commute with filtered colimits.  For the claim about 3 and 4, we need to see that every $X:(\on{ProfSet}^{light}_{/S})^{op}\rightarrow\on{An}$ as in 3 sends hypercovers to limit diagrams.  Note that the collection of $X$ as in 3 is closed under sectionwise Postnikov truncation, because Postnikov truncation is preserved by finite products and filtered colimits.  Thus we can reduce to truncated $X$.  In that setting it suffices to consider hypercovers given by the Cech nerve of a single surjective map $T'\twoheadrightarrow T$ (over $S$).  We can write this as the limit of a tower of surjective maps $T'_n\twoheadrightarrow T_n$ in $\on{fEt}_{/S}$.  Since limits over $\Delta$ are finite limits in the truncated setting, and finite limits commute with filtered colimits, we thereby reduce to the case of a finite etale surjective map of light profinite sets, where the claim is trivial because such a map admits a section.

To show that $\delta_S$ is a map in $\mathcal{T}op^L$ it suffices to see that $h_T\mapsto (T\rightarrow S)$ preserves finite limits and finite disjoint unions, which is obvious.  To prove the rest of the claims it suffices to show that $\delta_S$ identifies with the composition of the above identifications $1-2-3-4$.  For this, by transitivity of left Kan extensions, we need to check that $\delta_S$, applied to the sheaf of sections of a finite etale map $T\rightarrow S$, recovers $T\rightarrow S$. This is true for $T\subset S$ a compact open subset by definition, and it follows in general because $\delta_S$ preserves finite disjoint unions.
\end{proof}

\begin{theorem}\label{etale}
Let $f:X\to Y $ be a map in $\on{CondAn}^{light}$.  The following properties are equivalent.
\begin{enumerate}
\item For all towers $(S_n)_n$ of light profinite sets with limit $S=\varprojlim_n S_n$, the map $f$ has the unique right lifting property with respect to the map (of pro-objects) $S\rightarrow \underset{n}{``\varprojlim"} S_n$. In other words, $\varinjlim_n X(S_n)\overset{\sim}{\rightarrow} X(S)\times_{Y(S)} \left(\varinjlim_n Y(S_n)\right)$.
\item The functor $(\on{ProfSet}^{light}_{/Y})^{op}\rightarrow \on{An}$ represented by $f:X\to Y$ preserves countable filtered colimits.
\item For all $S\in\on{ProfSet}^{light}$ and all maps $S\rightarrow Y$, the pullback $X\times_YS \rightarrow S$ lies in the essential image of the fully faithful functor $\delta_S:\on{Sh}(S)\rightarrow \on{CondAn}^{light}_{/S}$ from Lemma \ref{etaleonprof}.
\end{enumerate}
\end{theorem}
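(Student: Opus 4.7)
The plan is to establish (1) $\Leftrightarrow$ (2) by a fiber-decomposition argument over the base of the pullback square, and then to deduce (2) $\Leftrightarrow$ (3) as a direct consequence of Lemma \ref{etaleonprof} part 3.

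First I would recall that for a map $T\to Y$ from a light profinite set, the functor on $(\on{ProfSet}^{light}_{/Y})^{op}$ represented by $f$ evaluates at $T\to Y$ to $\on{Hom}_Y(T,X) = \on{fib}_{[T\to Y]}(X(T)\to Y(T))$, the fiber over the specified point of $Y(T)$. Also, any countable filtered diagram in a $1$-category admits a cofinal tower indexed by $\mathbb{N}^{op}$, so condition (2) is equivalent to preservation of limits of towers in $\on{ProfSet}^{light}_{/Y}$.

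For (1) $\Leftrightarrow$ (2): a tower in $\on{ProfSet}^{light}_{/Y}$ unpacks as a tower $(T_n)_n$ in $\on{ProfSet}^{light}$ together with a point $y=(y_n)_n$ of $\varinjlim_n Y(T_n)$, and its limit is $T=\varprojlim_n T_n$ equipped with the induced map $[y]:T\to Y$. Using that filtered colimits in anima commute with finite limits (in particular with formation of fibers over points), the fiber over $y$ of the comparison map
$$\varinjlim_n X(T_n) \longrightarrow X(T)\times_{Y(T)} \varinjlim_n Y(T_n)$$
from (1) is precisely the comparison map $\varinjlim_n \on{Hom}_Y(T_n,X) \to \on{Hom}_Y(T,X)$ appearing in (2), for the tower over $Y$ determined by $y$. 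Since a map of anima over a fixed base is an equivalence if and only if it induces equivalences on fibers over every point, and since every tower over $Y$ with underlying tower $(T_n)$ arises from some such $y$, the condition (1) for all towers $(T_n)$ is equivalent to (2) on all towers in $\on{ProfSet}^{light}_{/Y}$.

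For (2) $\Leftrightarrow$ (3): for each $S\to Y$ with $S$ light profinite, the functor on $(\on{ProfSet}^{light}_{/S})^{op}$ represented by $X\times_Y S\to S$ sends $T\to S$ to $\on{Hom}_S(T,X\times_Y S)=\on{Hom}_Y(T,X)$; in other words, it is the restriction of the functor appearing in (2) along $\on{ProfSet}^{light}_{/S}\hookrightarrow \on{ProfSet}^{light}_{/Y}$. By Lemma \ref{etaleonprof} part 3, condition (3) is equivalent to saying this restriction preserves countable filtered colimits for every $S\to Y$. Then (2) $\Rightarrow$ (3) is immediate by restriction, while for the converse any countable cofiltered system in $\on{ProfSet}^{light}_{/Y}$ with limit $T\to Y$ lives in $\on{ProfSet}^{light}_{/T}$, so (3) applied with $S=T$ gives the desired preservation and hence (2).

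The only delicate point is the fiber-decomposition step in (1) $\Leftrightarrow$ (2), which depends on the commutation of finite anima limits with filtered colimits and on the fact that equivalences over a base can be tested fiberwise; both are standard but deserve to be made explicit. Everything else is formal bookkeeping, given Lemma \ref{etaleonprof}.
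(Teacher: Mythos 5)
Your overall route is the same as the paper's (which simply declares (1) $\Leftrightarrow$ (2) to be formal and (2) $\Leftrightarrow$ (3) a translation of Lemma \ref{etaleonprof}), and your fiberwise argument for (1) $\Leftrightarrow$ (2) as well as the restriction argument for (2) $\Rightarrow$ (3) are correct. The gap is in (3) $\Rightarrow$ (2): it is not true that a countable cofiltered system $(T_i\to Y)$ in $\on{ProfSet}^{light}_{/Y}$ with limit $T\to Y$ ``lives in $\on{ProfSet}^{light}_{/T}$''. That would require the structure maps $T_i\to Y$ to factor compatibly through $T\to Y$, and the limit $T$ is in general far too small for this: take for instance $Y=T_0$ a light profinite set with more than one point and $T_n=\ast$ for $n\geq 1$ mapping to a fixed point of $Y$; then $T=\ast$, and the identity map $T_0\to Y$ does not factor through $T\to Y$. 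So applying (3) with $S=T$ gives no information about the comparison map $\varinjlim_i\on{Hom}_Y(T_i,X)\to\on{Hom}_Y(T,X)$ that you need to prove is an equivalence.

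The repair is immediate and uses the cofinality reduction you already invoked: replace the system by a cofinal tower $\cdots\to T_1\to T_0$ over $Y$ and apply (3) with $S=T_0$ instead. Each $T_n$ maps to $T_0$ over $Y$ via the composite of the transition maps, so the tower is a tower in $\on{ProfSet}^{light}_{/T_0}$, and its limit there has the same underlying light profinite set $T$ (limits in either slice are created in $\on{ProfSet}^{light}$), with $T\to T_0\to Y$ the original structure map. Lemma \ref{etaleonprof} part 3 applied to $X\times_Y T_0\to T_0$, together with the identification $\on{Hom}_{T_0}(-,X\times_Y T_0)\simeq\on{Hom}_Y(-,X)$, then yields $\varinjlim_n\on{Hom}_Y(T_n,X)\overset{\sim}{\rightarrow}\on{Hom}_Y(T,X)$, which is (2). (A cosmetic point: a tower in $\on{ProfSet}^{light}_{/Y}$ is the data of a tower $(T_n)$ together with a point of $Y(T_0)$, not of $\varinjlim_n Y(T_n)$; this does not affect your fiberwise argument, since every point of the colimit is hit from some finite stage.)
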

\begin{proof}
The equivalence of 1 and 2 is purely formal, and the equivalence of 2 and 3 is a translation of Lemma \ref{etaleonprof}.
\end{proof}

\begin{definition}
A map $X\rightarrow Y$ in $\on{CondAn}^{light}$ is called \emph{etale} if it satisfies the equivalent properties listed in Theorem \ref{etale}.
\end{definition}

\begin{lemma}\label{etaleproperties}
\begin{enumerate}
\item The collection of etale maps of light condensed anima is closed under pullbacks and compositions, and contains all isomorphisms.  It is also closed under passage to diagonals; or, equivalently (given the previous properties), if $X\rightarrow Y\rightarrow Z$ are maps of light condensed anima such that $X\rightarrow Z$ and $Y\rightarrow Z$ are etale, then $X\rightarrow Y$ is etale.
\item For $S\in\on{CondAn}^{light}$, the full subcategory of $\on{CondAn}^{light}_{/S}$ spanned by the etale maps is closed under finite limits and arbitrary colimits.
\item A map $X\rightarrow Y$ of light condensed anima is etale if and only if its $n^{th}$ relative Postnikov truncation is etale for all $n$.
\item If $\{S_i\rightarrow S\}_{i\in I}$ is a collection of maps of light condensed anima such that $\sqcup_{i\in I} S_i\rightarrow S$ is surjective, then a map $X\rightarrow S$ is etale if and only if the pullback $X\times_S S_i\rightarrow S_i$ is etale for all $i\in I$.
\end{enumerate}
\end{lemma}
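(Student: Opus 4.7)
The plan is to prove the four parts sequentially, using each of the three equivalent characterizations from Theorem \ref{etale} as convenient, with Part 4 as the main obstacle.

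For Part 1, I would treat isomorphisms, pullbacks, and compositions as essentially routine: isomorphisms and pullback stability are immediate from characterization (3), while composition $X \to Y \to Z$ is cleanest via characterization (1) --- given a lifting problem against $S \to \underset{n}{``\varprojlim"} S_n$ one first solves the outer problem for $Y \to Z$, getting compatible lifts $S_n \to Y$, then the inner one for $X \to Y$. For the diagonal $\Delta : X \to X \times_Y X$: a profinite $S' \to X \times_Y X$ amounts to two sections $\sigma_1, \sigma_2$ of the etale pullback $X \times_Y S' \in \on{Sh}(S')$, and $X \times_{X\times_Y X} S'$ is their equalizer, a finite limit preserved by $\delta_{S'}$, hence still in $\on{Sh}(S')$. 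The closure-under-diagonals reformulation is formal via the factorization $X \to X \times_Z Y \to Y$.

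For Part 2, I would reduce by (3) to the case of profinite $S$; there $\delta_S$ is fully faithful, preserves finite limits by Lemma \ref{etaleonprof}, and preserves arbitrary colimits as a left adjoint, so the essential image is closed under these operations. The reduction uses universality of colimits in the $\infty$-topos $\on{CondAn}^{light}_{/S}$. For Part 3, the nontrivial implication uses Postnikov completeness: if each relative truncation $\tau^Y_{\leq n}(X \to Y)$ is etale, then for light profinite $T \to Y$, $X \times_Y T = \varprojlim_n \tau^T_{\leq n}(X \times_Y T) \in \on{CondAn}^{light}_{/T}$ has all Postnikov layers in $\on{Sh}(T)$. Since $\on{Sh}(T)$ is itself Postnikov complete (of homotopy dimension zero, as noted just above Lemma \ref{etaleonprof}) and $\delta_T$ preserves $n$-truncated objects as the inverse image of a geometric morphism, the Postnikov tower lifts canonically to $\on{Sh}(T)$, and its limit computed there coincides with $X \times_Y T$ in $\on{CondAn}^{light}_{/T}$.

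Part 4 is the main obstacle. I would first reduce via Part 3 to the case where $X \to S$ is $n$-truncated for some $n$, using that $\tau^S_{\leq n}X \times_S S_i = \tau^{S_i}_{\leq n}(X \times_S S_i)$ is etale by hypothesis and Part 1. For a light profinite $T \to S$, the surjectivity of $\sqcup_i S_i \to S$ in $\on{CondAn}^{light}$ together with the characterization of surjections yields a profinite surjection $V \twoheadrightarrow T$ whose composite to $S$ lifts to $\sqcup_i S_i$; compactness of $V$ forces the lift through a finite subunion $\sqcup_{j \in J} S_j$, which via the clopen decomposition $V = \sqcup_{j \in J} V_j$ gives $X \times_S V^k \in \on{Sh}(V^k)$ for each term of the \v{C}ech nerve $V^\bullet$ of $V \to T$. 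Verifying characterization (2) for $X \times_S T$ then reduces, for $U = \varprojlim_m U_m$ in $\on{ProfSet}^{light}_{/T}$, to the interchange
\[
\varprojlim_k \varinjlim_m \on{Map}_S(V^k \times_T U_m, X) \;\simeq\; \varinjlim_m \varprojlim_k \on{Map}_S(V^k \times_T U_m, X),
\]
with the left side obtained via \v{C}ech descent for $U$ and etaleness of $X \times_S V^k \to V^k$, and the right via \v{C}ech descent for each $U_m$ (noting $V \times_T U_m \to U_m$ is a profinite surjection). This interchange is precisely where the reduction to $n$-truncated pays off: each mapping animum is $n$-truncated, so the \v{C}ech limit collapses to a finite limit over $\Delta_{\leq n+1}$, and finite limits commute with filtered colimits in $\on{An}$.
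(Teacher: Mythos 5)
Your proposal is correct and follows essentially the same route as the paper: parts 1--3 via the lifting-property/sheaf characterizations of Theorem \ref{etale}, reduction to light profinite bases, full faithfulness of $\delta_S$ and Postnikov completeness on both sides, and for part 4 the reduction to relatively truncated maps via part 3 followed by commuting the (now finite, by truncatedness) $\Delta$-limits from \v{C}ech descent along a refining profinite surjection with countable filtered colimits --- exactly the paper's argument, just with the descent bookkeeping made explicit. The only small point to patch: you never address the easy forward half of part 3 (relative truncations of an etale map are etale), which your part-4 reduction actually invokes (you cite ``Part 1'' there); it follows in one line from part 2, since $\tau_{\leq n}$ is a colimit of iterated finite limits (coskeleta), or equivalently from the fact that the left exact colimit-preserving $\delta_T$ commutes with truncation.
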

\begin{proof}
Property 1 is a general fact about a collection of maps defined by a unique right lifting property.  For property 2, by part 3 of Theorem \ref{etale} we can reduce to the case $S\in\on{ProfSet}^{light}$.  There it follows from Lemma \ref{etaleonprof}.  For property 3, the closure under Postnikov truncation follows from property 2 (in any $\infty$-topos, $\tau_{\leq n}X$ is the colimit over $\Delta^{op}$ of the $(n+1)^{st}$ coskeleton of the constant simplicial object on $X$).  For the converse, by Theorem \ref{etale} part 3 we can reduce to $S\in\on{ProfSet}^{light}$.  In that case, the claim follows from the fact that a map in $\mathcal{T}op^L$ between Postnikov-complete $\infty$-topoi preserves inverse limits of Postnikov towers. (Namely, we apply this to $\delta_S:\on{Sh}(S)\rightarrow\on{CondAn}^{light}_{/S}$.)

Finally, for property 4, we can reduce to the case where $S,S_i\in\on{ProfSet}^{light}$ for all $i$, and $I$ is finite.  We want to show that for $X\in\on{CondAn}^{light}_{/S}$, the associated $X:(\on{ProfSet}^{light}_{/S})^{op}\rightarrow \on{An}$ preserves countable filtered colimits if and only if it does so on restriction to light profinite sets over $S_i$ for all $i\in I$.  This is a problem of commuting countable filtered colimits with limits over $\Delta$.  However, by Property 3 we can reduce to the case where $X$ is truncated, in which case there is no problem because limits over $\Delta$ in $\on{An}_{\leq n}$ are finite limits.
\end{proof}

\begin{remark}\label{checkonstalks}
A useful property is the following.  If $S\in\on{CondAn}^{light}$ and $X\to S$ and $Y\to S$ are etale maps to $S$, then a map $X\to Y$ over $S$ is an isomorphism if and only if it is so on pullback along $\ast \to S$ for any map $\ast \to S$ from a point.  Indeed, we can reduce to $S$ light profinite and then this follows because we can check isomorphisms of sheaves on stalks (recall that we are homotopy dimension zero, so this reduces to the classical case of sheaves of sets).
\end{remark}

Sometimes, instead of thinking of maps $X\rightarrow S$ in $\on{CondAn}^{light}$ as geometric/topological objects over $S$, we'd rather think of them as a kind of sheaves on $S$ . For such a purpose, let us adopt some abbreviated notation, borrowed from \cite{ScholzeDiamonds}.

\begin{definition}
For $S\in\on{CondAn}^{light}$, let us denote by
$$\on{Sh}_v(S):= \on{CondAn}^{light}_{/S},$$
and by $\on{Sh}_{et}(S)\subset\on{Sh}_v(S)$ the full subcategory of etale maps to $S$.
\end{definition}

Note that $v$-sheaves have the obvious pullback functoriality, namely $\on{Sh}_v(-):(\on{CondAn}^{light})^{op}\rightarrow \mathcal{T}op^L$.  This functor preserves limits by descent applied to the $\infty$-topos $\on{CondAn}^{light}$.  The full subcategory $\on{Sh}_{et}(-)\subset\on{Sh}_v(-)$ is closed under pullback and is of local nature by Lemma \ref{etaleproperties}, so $\on{Sh}_{et}(-):(\on{CondAn}^{light})^{op}\rightarrow \mathcal{T}op^L$ inherits the functoriality and the limit-preservation.  It follows that $\on{Sh}_{et}(-)$ can also be described as the right Kan extension of its restriction to $(\on{ProfSet}^{light})^{op}$, where by Lemma \ref{etaleonprof} it identifies with the usual theory of sheaves of anima on the underlying topological space.

Thus, to understand $\on{Sh}_{et}(S)$ for a condensed anima $S$, one can write $S$ as a colimit of light profinite sets and try to identify the resulting limit of sheaf categories.  Let us carry this procedure out in three classes of examples:
\begin{enumerate}
\item The ``discrete'' case: the case where $S$ lies in the essential image of the unique map in $\mathcal{T}op^L$
$$\on{An}\to\on{CondAn}^{light}.$$
\item The ``topological'' case: the case where $S$ lies in the essential image of the functor
$$\on{Top}\to \on{CondAn}^{light}$$
sending $K\mapsto \ul{K}: S\mapsto \on{Cont}(S,K).$
\item The ``profinite'' case: the case where $S$ is a countable inverse limit of (discrete) $\pi$-finite anima.
\end{enumerate}

\noindent\textit{The discrete case:}\label{etaleonetale}\\

We start with the discrete case, which is quite simple.  The special case $S=\ast$ of Lemma \ref{etaleonprof} shows that the unique map in $\mathcal{T}op^L$
$$\delta: \on{An}\rightarrow \on{CondAn}$$
is fully faithful with essential image those light condensed anima which are \emph{discrete}, i.e.\ etale over $\ast$.  By the 2 out of 3 property of etale maps, it follows that for $X\in\on{An}$,
$$\on{Sh}_{et}(\delta(X)) = \on{An}_{/X} = \on{Fun}(X,\on{An}),$$
the usual $\infty$-category of ``local systems of anima over $X$''.\\

\noindent\textit{The topological case:}\\

Now we consider the topological case.  There is a functor $\on{Top}\rightarrow \on{CondAn}^{light}$, sending a topological space $K$ to the light condensed anima $\underline{K}$ defined by
$$\underline{K}(S) = \on{Cont}(S,K).$$
Note that this satisfies the sheaf condition because surjective maps of compact Hausdorff spaces are quotient maps.  We are interested in relating etale sheaves on $\underline{K}$ to usual sheaves on $K$, extending what we already know in the case $K\in\on{ProfSet}^{light}$.

\begin{lemma}\label{etaleontopological}
Let $K$ be a topological space.
\begin{enumerate}
\item Suppose $\{U_i\}_{i\in I}$ is an open cover of $K$.  Then $\{\underline{U_i}\rightarrow \underline{S}\}_{i\in I}$ is a cover in $\on{CondAn}^{light}$.
\item Suppose $f:K'\rightarrow K$ is a local homeomorphism.  Then $\underline{K'}\rightarrow \underline{K}$ is etale.
\item There is a unique map $\on{Sh}(K)\rightarrow \on{Sh}_{et}(\underline{K})$ in $\mathcal{T}op^L$ which sends the Yoneda image of an open subset $U\subset K$ to the inclusion $\underline{U}\rightarrow \underline{K}$.
\item Suppose every point $x\in K$ admits a neighborhood $L$ which is a second-countable compact Hausdorff space.  Then this functor $\on{Sh}(K)\rightarrow \on{Sh}_{et}(\underline{K})$ realizes the target as the Postnikov-completion of the source.
\end{enumerate}
\end{lemma}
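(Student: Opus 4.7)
For Part 1, I will use the characterization of surjective maps in $\on{CondAn}^{light}$ described before Lemma \ref{replete}: it suffices, given a continuous $f:S\to K$ from a light profinite set, to find a surjective $T\twoheadrightarrow S$ of light profinite sets through which the composite to $\underline{K}$ lifts to $\sqcup_i\underline{U_i}$. Since $S$ is compact and totally disconnected with a basis of clopens, the open cover $\{f^{-1}(U_i)\}$ refines to a finite clopen partition $S=W_1\sqcup\cdots\sqcup W_n$ with $W_k\subset f^{-1}(U_{i_k})$; the identity homeomorphism $T:=\sqcup_k W_k\to S$ is then the required surjection, with tautological lift.

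For Part 2, I first verify the special case of an open inclusion $V\subset K$ using the tower criterion of Theorem \ref{etale}(1). Given a tower $(S_n)$ of light profinite sets with limit $S$ and an element of the right-hand side represented by $(g:S\to V,\, h:S_m\to K)$ with $g$ the restriction of $h$, the images of $S_n$ in $S_m$ form a decreasing family of compact sets whose intersection lies in the open $h^{-1}(V)$, so by compactness this image itself lies in $h^{-1}(V)$ for some $n$, producing the required lift; uniqueness of lifts is analogous. Given a general local homeomorphism $f:K'\to K$, pick an open cover $\{U_j\}$ of $K'$ such that each $U_j$ maps homeomorphically onto an open $V_j=f(U_j)\subset K$ (using that local homeomorphisms are open). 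Setting $E=\sqcup_j U_j$, Part 1 yields that $\underline{E}\to\underline{K'}$ is surjective in $\on{CondAn}^{light}$; since $\underline{(-)}$ preserves limits of topological spaces, the Cech nerve of $\underline{E}\to\underline{K'}$ has as its degree-$\ell$ term the disjoint union of the $\underline{U_{j_1}\cap\cdots\cap U_{j_\ell}}$, each of which maps via a homeomorphism to the open subset $f(U_{j_1}\cap\cdots\cap U_{j_\ell})\subset K$, and so is etale over $\underline{K}$ by the open-inclusion case. Closure of etale maps under colimits (Lemma \ref{etaleproperties}(2)) then upgrades $\underline{K'}\to\underline{K}$ to an etale map.

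For Part 3, I appeal to the universal property of the $\infty$-topos $\on{Sh}(K)$: a map in $\mathcal{T}op^L$ out of $\on{Sh}(K)$ is equivalent to a finite-limit-preserving, cover-preserving functor out of the site $\on{Op}(K)$. The functor $U\mapsto(\underline{U}\to\underline{K})$ preserves finite intersections because $\underline{(-)}$ preserves all limits of topological spaces, sends open covers to effective epi families by Part 1, and lands in the etale subcategory by Part 2; uniqueness is automatic.

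The main obstacle is Part 4. The plan is to establish that the functor is an equivalence on $n$-truncated objects for every $n$; combined with Postnikov-completeness of the right-hand side (by the theorem proved above), this identifies the target with the Postnikov completion of the source. The $n=0$ case amounts to matching sheaves of sets on $K$ with $0$-truncated etale condensed anima over $\underline{K}$. In one direction, the classical correspondence between sheaves of sets on $K$ and local homeomorphisms (etale spaces) $K'\to K$ combined with Part 2 produces the functor from left to right; for the inverse, given a $0$-truncated etale $X\to\underline{K}$, one constructs a sheaf on $K$ whose etale space recovers $X$, using that pullback along each point $\ast\to\underline{K}$ (i.e., $x\in K$) gives a set (by Remark \ref{checkonstalks}), and that the second-countability hypothesis lets one assemble these stalks into a genuine sheaf on $K$ rather than merely a set-valued assignment on the underlying point set. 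The inductive step from $n-1$ to $n$ should follow from a standard obstruction theory argument in an $\infty$-topos, using that the functor preserves pullbacks and that Postnikov truncations of etale sheaves are again etale (Lemma \ref{etaleproperties}(3)), reducing higher truncations to the $n=0$ case applied to homotopy sheaves. The most delicate point is constructing the inverse equivalence at the set-theoretic level in the base case; second countability enters precisely to make the stalk-assembly argument work coherently across points, and verifying this carefully is the heart of the proof.
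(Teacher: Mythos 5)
Parts 1--3 are fine. Part 1 is the same refinement-to-clopen-partitions argument as the paper; Part 3 is the same appeal to the universal property of $\on{Sh}(K)$; Part 2 is a legitimate variant: the paper handles open inclusions by reducing to compact open inclusions of profinite sets and citing the construction of $\delta_S$ in Lemma \ref{etaleonprof}, while you verify the tower criterion of Theorem \ref{etale} directly by a compactness argument and then handle a general local homeomorphism by realizing $\underline{K'}$ as the colimit of the Cech nerve of $\sqcup_j\underline{U_j}\to\underline{K'}$ and invoking closure of etale maps under colimits. Both routes work.

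Part 4, however, is a plan rather than a proof, and what is missing is precisely the mathematical content. You reduce everything to (i) a base case, that $\on{Sh}(K)_{\leq 0}\to\on{Sh}_{et}(\underline{K})_{\leq 0}$ is an equivalence, which you explicitly defer (``verifying this carefully is the heart of the proof''), and (ii) an inductive step asserted to follow from ``standard obstruction theory.'' Neither is carried out, and the hypothesis on $K$ never enters any concrete argument. Worse, (ii) is not formal: a left exact left adjoint that restricts to an equivalence on $0$-truncated objects need not be an equivalence on $n$-truncated objects --- distinct $\infty$-topoi (e.g.\ $n$-localic ones for different $n$) can share the same underlying $1$-topos. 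To run a Postnikov-stage induction you would additionally have to know that classification of torsors, gerbes, and higher extensions --- that is, cohomology with coefficients in arbitrary sheaves of (abelian) groups --- agrees on the two sides, and that comparison is of the same order of difficulty as the statement you are proving.

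The ingredient your sketch lacks, and the place where local compactness and second countability are actually used, is a descent statement for truncated sheaves on $K$ along a profinite cover. The paper's proof first reduces by descent to $K$ itself second-countable compact Hausdorff, chooses a light profinite surjection $S\twoheadrightarrow K$, and proves that $\on{Sh}(K)_{\leq n}$ is the limit of $\on{Sh}(-)_{\leq n}$ over the Cech nerve of $S\to K$, using Lurie's proper base change theorem for compact Hausdorff spaces together with conservativity of pullback (checked on stalks) and the fact that limits over $\Delta$ of truncated objects are finite limits; combined with descent and Lemma \ref{etaleonprof} on the condensed side, this gives the equivalence on all truncated parts in one stroke, with no separate base case or obstruction-theoretic induction. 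Some substitute for this proper-base-change/descent input is indispensable; assembling stalks in the $0$-truncated case and then inducting will not close the argument without it.
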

\begin{proof}
For 1, we are formally reduced to the case where $K\in\on{ProfSet}^{light}$.  Then $\{U_i\}_{i\in I}$ admits a refinement which is a disjoint cover by compact open subsets, proving the claim.  For 2, because of 1 we can reduce to the case where $f$ is an open inclusion, or even a compact open inclusion of light profinite sets.  Then the claim follows by definition of the functor $\delta_S$ in Lemma \ref{etaleonprof}.  Claim 3 follows formally from 1 and 2, given that $U\mapsto \underline{U}$ obviously preseves finite limits.  Finally, for claim 4, by descent we can reduce to the case when $K$ is an open subset of an $L$ as in the statement.  Then $K$ is Hausdorff, so the collection of compact Hausdorff subsets is closed under finite intersections, so by descent again we can reduce to the case $K=L$.  Choose a light profinite set $S$ with a surjection $f:S\twoheadrightarrow K$ (e.g., take the spectral space of Example \ref{countableexamples} part 1 and equip it with the constructible topology).  It suffices to show that $\on{Sh}(K)_{\leq d}$ is the limit of $\on{Sh}(-)_{\leq n}$ over the Cech nerve of $f$ for all $n\geq 0$.  This is something that can be checked using the cohomological descent formalism set up in \cite{LurieHA} 4.7.5.  Namely, by \cite{LurieHTT} 7.3 the proper base change theorem holds for sheaves of anima on compact Hausdorff spaces, so it suffices to show that if $g:X\rightarrow Y$ is a surjective map of compact Hausdorff spaces, then $g^\ast:\on{Sh}(Y)_{\leq n}\rightarrow \on{Sh}(X)_{\leq n}$ is conservative and commutes with limits over $\Delta$.  Conservativity follows from the fact that isomorphisms in $\on{Sh}(Y)_{\leq n}$ are tested on stalks, and commutation with limits over $\Delta$ follows because we are truncated, so that limits over $\Delta$ are finite limits hence commute with pullbacks.
\end{proof}

\begin{remark}
We recall from \cite{LurieHTT} Thm.\ 7.2.3.6 that if $K$ is a compact Hausdorff space of finite covering dimension, then $\on{Sh}(K)$ identifies with its Postnikov-completion.  Thus, if $K$ is additionally assumed second-countable, then we get $\on{Sh}(K)\simeq \on{Sh}_{et}(\underline{K})$. See also Proposition \ref{chauscohdim} for a similar statement with spectrum coefficients under a weaker hypothesis.\end{remark}

\noindent\textit{The profinite case}:\label{profiniteanima}\\

Now we turn to the profinite setting.  This was notably considered by Barwick-Haine, \cite{BarwickPyknotic} Example 3.3.10.  We start with the unique map in $\mathcal{T}op^L$
$$\delta: \on{An}\rightarrow \on{CondAn}^{light}$$
considered above.  Then we restrict to the full subcategory $\on{An}_{\pi}\subset\on{An}$ of $\pi$-finite anima (those anima $X$ for which $\pi_i(X,x)$ is finite for all $x\in X$ and $i\in\mathbb{N}$, and $\ast$ for $i$ sufficiently large).  Then we formally extend by countable filtered inverse limits to get a countable-limit-preserving functor
$$\widehat{\delta}:\on{Pro}^{\aleph_1}(\on{An}_\pi)\rightarrow \on{CondAn}^{light}.$$
Barwick-Haine proved the theorem that this functor is fully faithful.  Actually, they proved not quite this statement, but its ``pyknotic" variant.  However the light condensed context is even easier to handle because countable pro-systems are simpler than arbitrary pro-systems.  Just to be safe we'll include a proof of this light variant: it follows from the following more general claim (which we'll generalize even further below).

\begin{lemma}\label{mapoutofprof}
Suppose $(X_n)_n$ is a tower of $\pi$-finite anima and that $A\in\on{An}$ is an anima such that for each $x\in \pi_0A$, the corresponding component $A_x\subset A$ is $d$-truncated for some $d\in\mathbb{N}$ (possibly depending on $x$).  Then if $X=\varprojlim_n X_n$ is the limit in $\on{CondAn}^{light}$, we have
$$\varinjlim_n \on{Map}(X_n,A)\overset{\sim}{\rightarrow}\on{Map}_{\on{CondAn}^{light}}(X,A).$$
Here we implicitly use the fully faithful embedding $\delta:\on{An}\rightarrow\on{CondAn}^{light}$.
\end{lemma}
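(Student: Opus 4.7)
Plan: The natural map $\varinjlim_n \on{Map}(X_n,A) \to \on{Map}_{\on{CondAn}^{light}}(X,A)$, defined by the universal property of $X=\varprojlim_n X_n$, will be shown to be an equivalence in three stages: a componentwise reduction, a truncation reduction, and an induction on truncation level.

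First I would reduce to the case where $A$ is $d$-truncated for a single $d$. Decompose $A=\bigsqcup_i A_i$ into its connected components, each $A_i$ some $d_i$-truncated anima. Since $\pi_0(X_n)$ is finite, every map $X_n\to A$ factors through a finite subcoproduct $\bigsqcup_{i\in I_0}A_i$, so the left-hand side is a filtered colimit over finite $I_0\subset \pi_0(A)$ of the corresponding mapping anima. On the condensed side, the analogous factorization requires that maps $X\to \pi_0(A)$ (a discrete set) factor through a finite subset, which rests on identifying the condensed $0$-truncation of $X$ with the compact light profinite set $\varprojlim_n\pi_0(X_n)$, together with the fact that continuous maps from a compact light profinite set to a discrete set have finite image. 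Once this is done, we are reduced to $A$ being $d$-truncated for $d=\max_{i\in I_0}d_i$.

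Next, for $A$ $d$-truncated, both sides depend only on the $d$-truncations of $X$ and of the $X_n$, by Postnikov completeness of $\on{CondAn}^{light}$. To reduce to the case where each $X_n$ is itself $d$-truncated, I need the sub-claim $\tau_{\leq d}(\varprojlim_n X_n)\simeq \varprojlim_n\tau_{\leq d}(X_n)$. I would establish this by iterating Lemma \ref{replete}(5): apply it first to the map of towers $(X_n)\to(\tau_{\leq d+1}X_n)$, whose entries are $(d+2)$-connective, to deduce that $X\to\varprojlim_n\tau_{\leq d+1}(X_n)$ is $(d+1)$-connective into a $(d+1)$-truncated target, identifying this limit with $\tau_{\leq d+1}(X)$. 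A further analysis of the resulting tower of $(d+1)$-truncated $\pi$-finite anima, using that the fibers of $\tau_{\leq d+1}X_n\to\tau_{\leq d}X_n$ are Eilenberg--MacLane objects $K(\pi_{d+1},d+1)$ (hence $\pi$-finite), lets one close the connectivity gap and obtain the identification at level $d$.

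Finally I would induct on $d$. The base case $d=0$: the $X_n$ are finite sets, $X=\varprojlim_n X_n$ becomes a light profinite set in $\on{CondAn}^{light}$, and $\on{Map}_{\on{CondAn}^{light}}(X,A)=\on{Cont}(X,A)$ with $A$ discrete factors through a finite quotient by compactness, giving the claim. For the inductive step $d\geq 1$, use the Postnikov fibration $A\to\tau_{\leq d-1}(A)$: the base is handled by the induction hypothesis, while the fiber contributions are Eilenberg--MacLane objects $K(M,d)$ for abelian groups $M$, reducing the problem to the cohomological identification $H^d(X,M)\simeq \varinjlim_n H^d(X_n,M)$, which follows from the repleteness and Postnikov completeness of $\on{CondAn}^{light}$ applied to the tower $(X_n)$. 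The main obstacle will be the sub-claim in the second stage — showing that Postnikov truncation commutes with the condensed limit for towers of $\pi$-finite anima — which, along with its cohomological counterpart in the inductive step, requires carefully balancing connectivity estimates from Lemma \ref{replete} against the $\pi$-finiteness of each $X_n$.
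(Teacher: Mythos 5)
Your overall architecture (reduce to truncated $A$ via components, commute Postnikov truncation with the condensed limit, then induct up the Postnikov tower of $A$) is genuinely different from the paper's proof, which instead builds a hypercover of the tower $(X_n)$ by towers of finite sets (via Lemma \ref{surjectfromprofset} and repleteness), invokes the already-known profinite-set case from Theorem \ref{etale}, and then commutes the filtered colimit with the limit over $\Delta$, which is a finite limit in the truncated setting. The problem is that both of the load-bearing steps in your route are left unproved at exactly their hard points. First, the sub-claim $\tau_{\leq d}(\varprojlim_n X_n)\simeq \varprojlim_n \tau_{\leq d}X_n$ is precisely Proposition \ref{truncateprof} of the paper, and Lemma \ref{replete} alone cannot deliver it: part (5) gives that a limit of $(d+2)$-connective maps is $(d+1)$-connective, so every iteration of your argument lands one degree short, and comparing the towers $(\tau_{\leq d+1}X_n)$ and $(\tau_{\leq d}X_n)$ again only yields $d$-connectivity of the limit map. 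Closing that one-degree gap is where $\pi$-finiteness must actually be used (in the paper: inductively constructing a section of a termwise $(d+1)$-connective map of towers and invoking the nonemptiness of inverse limits of finite sets); "a further analysis lets one close the connectivity gap" is exactly the statement that needs a proof.

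Second, and more seriously, the inductive step rests on $H^d(X;M)\simeq\varinjlim_n H^d(X_n;M)$, i.e.\ $\on{Map}_{\on{CondAn}^{light}}(X,K(M,d))\simeq\varinjlim_n\on{Map}(X_n,K(M,d))$ — but this is an instance of the very lemma you are proving, and it does not follow from repleteness and Postnikov completeness. Repleteness controls limits of towers of surjections and exactness of countable products; it says nothing about cohomology classes or torsors on $X=\varprojlim_n X_n$ descending to a finite stage of the tower. To get this continuity you need an actual descent input, e.g.\ the paper's hypercover of $(X_n)$ by towers of finite sets together with the profinite-set case (Theorem \ref{etale}) and the fact that $\Delta$-limits of truncated objects are finite limits; the later site-theoretic results (Theorem \ref{truncatedtheorem}, Proposition \ref{procontinuity}) cannot be cited here without circularity. (There are also smaller points to attend to — the Postnikov tower of a general truncated $A$ need not be principal, so the $d\geq 1$ step requires twisted coefficients/relative Eilenberg–MacLane objects, and nonabelian $H^1$ when $d=1$ — but the two gaps above are the essential ones, and once you supply the hypercover argument for them you are in effect reconstructing the paper's proof.)
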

\begin{proof}
By the Lemma which follows, every tower of $\pi$-finite anima admits a termwise surjective map from a tower of finite sets, such that the induced map on inverse limits (in $\on{CondAn}^{light}$) is also surjective.  It follows that any tower $(X_n)$ of $\pi$-finite anima admits a hypercover by towers of finite sets, such that on inverse limits we get a hypercover in $\on{CondAn}^{light}$.

Now, we know the lemma in the case of towers of finite sets (it is a special case of the discussion of etale sheaves in Theorem \ref{etale}), so by this hypercover we reduce to the question of commuting a filtered colimit by a limit over $\Delta$.  But in the truncated context this is no problem as limits over $\Delta$ are finite.  This handles the case of truncated $A$.  In particular, the lemma holds for sets $A$.  Then applying this to the set of components of our $A$, we deduce that $\on{Map}_{\on{CondAn}^{light}}(X,-)$ commutes with the filtered colimit writing $A$ as the union of its sub-anima with finitely many components, letting us reduce to the truncated case, already handled.
\end{proof}

We used the following simple lemma in the proof.

\begin{lemma}\label{surjectfromprofset}
Suppose $(X_n)$ is a tower of $\pi$-finite anima.  Then there exists a tower of finite sets $(S_n)$ and a map of towers $(S_n)\to (X_n)$ such that $S_0\to X_0$ is surjective and
$$S_{n+1} \to S_n\times_{X_n} X_{n+1}$$
is surjective for all $n\geq 0$.

Moreover, for such a map of towers we have:
\begin{enumerate}
\item Each $f_n:S_n\to X_n$ is surjective.
\item The induced map $\varprojlim_n S_n \to \varprojlim_n X_n$ in $\on{CondAn}^{light}$ is surjective.
\end{enumerate}
\end{lemma}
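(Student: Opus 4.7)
The plan is to build the tower $(S_n)$ by a straightforward induction, then deduce part 1 by a second induction, and finally deduce part 2 as a direct application of Lemma \ref{replete}(2) in the replete $\infty$-topos $\on{CondAn}^{light}$.

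First, I construct the tower. Take $S_0$ to be any finite set equipped with a surjection onto $\pi_0(X_0)$, which is a finite set since $X_0$ is $\pi$-finite; the composition $S_0\to \pi_0(X_0)\to X_0$ is then a surjection of anima. Inductively, suppose $S_n$ has been built as a finite set together with a map $f_n:S_n\to X_n$. The pullback $S_n\times_{X_n}X_{n+1}$ is a finite limit of $\pi$-finite anima, hence $\pi$-finite; so $\pi_0(S_n\times_{X_n}X_{n+1})$ is a finite set and I can take $S_{n+1}$ to be any finite set equipped with a surjection onto $\pi_0(S_n\times_{X_n}X_{n+1})$. By construction the composite $S_{n+1}\to S_n\times_{X_n}X_{n+1}$ is surjective, and composing with the projection to $S_n$ provides the transition map $S_{n+1}\to S_n$ and (via the projection to $X_{n+1}$) the map $f_{n+1}:S_{n+1}\to X_{n+1}$ covering $f_n$.

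For part 1, I argue by induction on $n$. The base case $n=0$ is the first hypothesis. If $f_n:S_n\to X_n$ is surjective, then its pullback $S_n\times_{X_n}X_{n+1}\to X_{n+1}$ is also surjective (surjections in an $\infty$-topos are stable under base change), and composing with the surjection $S_{n+1}\to S_n\times_{X_n}X_{n+1}$ provided by hypothesis gives that the composite to $X_{n+1}$, which is precisely $f_{n+1}$, is surjective.

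Part 2 is the substantive content, and it reduces to a direct citation of Lemma \ref{replete}(2) applied to the $\infty$-topos $\on{CondAn}^{light}$, which is hypercomplete and replete (hence satisfies condition 1 of that lemma, and therefore condition 2 as well). Indeed, the hypotheses of Lemma \ref{replete}(2), with the map of towers $(S_n)\to (X_n)$, are exactly that $S_0\to X_0$ be surjective and that $S_{n+1}\to X_{n+1}\times_{X_n}S_n$ be surjective for all $n\geq 0$, which is what we have; the conclusion is precisely that the map on inverse limits, computed in $\on{CondAn}^{light}$, is surjective. The main ``obstacle''—if there is one—was already handled in the earlier development of the repleteness machinery; here we are simply cashing in on it.
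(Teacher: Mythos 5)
Your proof is correct and follows essentially the same route as the paper: build $(S_n)$ inductively using finiteness of $\pi_0$ of $\pi$-finite anima and closure of $\pi$-finiteness under pullback, deduce part 1 from stability of surjections under base change and composition, and deduce part 2 from repleteness via Lemma \ref{replete}. The only (harmless) elision is that your maps $\pi_0(X_0)\to X_0$ and $\pi_0(S_n\times_{X_n}X_{n+1})\to S_n\times_{X_n}X_{n+1}$ require choosing a point in each component, which is fine since these $\pi_0$'s are finite.
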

\begin{proof}
Every $\pi$-finite anima has finite $\pi_0$, hence admits a surjection from a finite set.  Moreover the class of $\pi$-finite anima is closed under pullbacks.  Thus we can build $(S_n)$ inductively in the obvious way.

Claim 1 follows from the fact that surjections are closed under base change and composition.  Claim 2 follows from repleteness, see Lemma \ref{replete}.
\end{proof}

\begin{example}
Suppose we take $A\in\on{An}_\pi$.  Then Lemma \ref{prononsense} implies that $\on{Pro}^{\aleph_0}(\on{An}_\pi)\rightarrow \on{CondAn}^{light}$ is fully faithful.  Call the essential image the $\infty$-category $\on{ProfAn}^{light}$ of \emph{light profinite anima}.  Note that the full subcategory of light profinite anima is closed under countable limits and finite disjoint unions.
\end{example}

We can also use Lemma \ref{mapoutofprof} to study the notion of \emph{$\pi$-finite etale} map of light condensed anima.  First, recall that a $\pi$-finite map of usual anima is a map $X\to Y$ in $\on{An}$ such that each fiber is $\pi$-finite etale, or equivalently, it is locally on $Y$ (with respect to the canonical Grothendieck topology on $\on{An}$) isomorphic to projection off some $\pi$-finite anima.  Then $\pi$-finite maps with target $Y\in \on{An}$ are classified by maps in $\on{An}$
$$Y\to \on{An}^{\simeq}_{\pi},$$
where $\on{An}^{\simeq}_{\pi}$ is the groupoid core of the $\infty$-category of $\pi$-finite anima.

\begin{definition}\label{pifiniteetale}
Let $Y\in\on{CondAn}^{light}$.  Say that a map $X\to Y$ in $\on{CondAn}^{light}$ is \emph{$\pi$-finite etale} if and only if it is, locally on $Y$ in the light condensed Grothendieck topology, isomorphic to projection off some $\pi$-finite anima.
\end{definition}

In particular, by descent it follows that every $\pi$-finite etale map is indeed etale in the sense of Theorem \ref{etale}.

\begin{lemma}\label{pifiniteproperties}
\begin{enumerate}
\item A composition or base-change of $\pi$-finite etale maps is $\pi$-finite etale.  Moreover the class of $\pi$-finite etale maps is closed under taking the diagonal.
\item For $Y\in \on{CondAn}^{light}$, $\pi$-finite etale maps to $Y$ are classified by maps
$$Y\to \on{An}_{\pi}^\simeq$$
in $\on{CondAn}^{light}$.  (As usual, we implicitly use the fully faithful functor $\delta:\on{An}\to\on{CondAn}^{light}$ from \ref{etaleonprof}.)
\item If $Y\in \on{ProfAn}^{light}$ , then a map $X\to Y$ in $\on{CondAn}^{light}$ is $\pi$-finite etale if and only if it is a pullback of a map between $\pi$-finite anima (in which case $X\in\on{ProfAn}^{light}$ as well).
\item If again $Y\in \on{ProfAn}^{light}$, then $\on{ProfAn}^{light}_{/Y}$ identifies with the countable pro-category of the $\infty$-category of $\pi$-finite etale maps to $Y$.
\end{enumerate}
\end{lemma}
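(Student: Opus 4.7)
The plan is to prove (2) first as the heart, use it to derive (1) and (3), and then combine (3) with Lemma~\ref{prononsense} to obtain (4). The only non-formal ingredient is verifying the truncation hypothesis of Lemma~\ref{mapoutofprof} for the classifying object $\on{An}_\pi^\simeq$; everything else follows from descent and the formalism already in place.

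For (2), both $Y \mapsto (\text{$\pi$-finite étale maps to } Y)^{\simeq}$ and $Y \mapsto \on{Map}(Y,\on{An}_\pi^\simeq)$ are sheaves on $\on{CondAn}^{light}$: the first by definition (local triviality in the light condensed topology), the second because $\on{An}_\pi^\simeq$ is a light condensed anima via the left-exact left adjoint $\delta$. The universal $\pi$-finite étale map on $\on{An}_\pi^\simeq \in \on{An}$ is preserved by $\delta$ and furnishes a comparison; to see it is an equivalence one works locally, reducing to $Y\in\on{ProfSet}^{light}$, where by Lemma~\ref{etaleonprof} both sides recover the classical classification of $\pi$-finite covers of the underlying topological space. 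Part (1) then reduces to formal properties of maps to $\on{An}_\pi^\simeq$: closure under pullback is built into the definition, composition is handled by choosing a common trivializing cover, and the diagonal of a $\pi$-finite étale map $X\to Y$, locally of the form $Y\times F\to Y$ with $F$ $\pi$-finite, is the base change of the diagonal of $F$, classified by the path-space fibration $F\times F\to\on{An}_\pi^\simeq$, whose fibers are $\pi$-finite.

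For (3), write $Y=\varprojlim_n Y_n$ with $Y_n\in\on{An}_\pi$. Each connected component of $\on{An}_\pi^\simeq$ has the form $B\on{Aut}(F)$ for some $F\in\on{An}_\pi$; since the automorphism anima $\on{Aut}(F)$ is itself $\pi$-finite, each such component is truncated. Lemma~\ref{mapoutofprof} therefore applies and yields
$$\on{Map}(Y,\on{An}_\pi^\simeq)=\varinjlim_n\on{Map}(Y_n,\on{An}_\pi^\simeq),$$
so by (2) every $\pi$-finite étale map over $Y$ is the pullback of some $\pi$-finite étale map $X_n\to Y_n$ between $\pi$-finite anima; the converse direction is immediate from (1). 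Writing $X=X_n\times_{Y_n}Y=\varprojlim_{m\geq n}(X_n\times_{Y_n}Y_m)$ exhibits $X$ as a countable inverse limit of $\pi$-finite anima, placing it in $\on{ProfAn}^{light}$.

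For (4), the fully faithful embedding $\on{Pro}^{\aleph_1}(\on{An}_\pi)\hookrightarrow\on{CondAn}^{light}$ identifies $\on{ProfAn}^{light}_{/Y}$ with $\on{Pro}^{\aleph_1}(\on{An}_\pi)_{/Y}$, which by Lemma~\ref{prononsense}(3) equals $\on{Pro}^{\aleph_1}((\on{An}_\pi)_{/Y})$; by (3) this last slice coincides with the $\infty$-category of $\pi$-finite étale maps to $Y$, giving the claim.
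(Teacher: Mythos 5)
The main gap is your treatment of composition in part (1). "Choosing a common trivializing cover" is not an available move: a cover of $Y$ trivializing $X\to Y$ lives over $Y$, and in general it cannot be refined by the pullback of a cover of $Z$. Already for $Z=\ast$, $Y=\ast/G$ with $G$ a finite group, and $X\to Y$ given by a $G$-action on a finite set, the trivializing cover is $\ast\to \ast/G$, which is not pulled back from any cover of the point; the composite $X\to\ast$ is indeed $\pi$-finite etale, but proving this in general requires knowing that, after working locally so that $Z$ is a light profinite set and $Y\to Z$ is descended (by part (3)) to a map of $\pi$-finite anima, the map $X\to Y$ also comes from a finite stage — i.e.\ part (3) applied again over the light profinite anima $Y$ — after which one concludes in $\on{An}$ by the long exact sequence of homotopy groups. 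So closure under composition has to be proved \emph{after} (3), not as a formal consequence of (2); this is exactly how the paper orders the argument. Your base-change and diagonal arguments in (1) are fine.

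There is also a soft spot at the heart of your proof of (2). After the (correct) reduction to $Y=S$ light profinite, you appeal to "the classical classification of $\pi$-finite covers of the underlying topological space", but that is precisely the point at issue: why is a family that becomes trivial after a surjection $T\twoheadrightarrow S$ of light profinite sets (a cover in the condensed topology) already classified by a locally constant map $S\to\on{An}_\pi^\simeq$? Nothing established earlier in the paper gives this, and the statement genuinely uses $\pi$-finiteness — it fails, for instance, for forms of an infinite discrete set. The paper's proof isolates exactly this input: by descent, forms of $\ul{K}$ are classified by $B\ul{\on{Aut}}(\ul{K})$, and one checks $\ul{\on{Aut}}(\ul{K})=\ul{\on{Aut}(K)}$ for $K$ $\pi$-finite, using that the constant-sheaf functor preserves finite limits and coproducts; some such argument is needed to complete your step. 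The remainder of your plan — verifying that each component of $\on{An}_\pi^\simeq$ is truncated so Lemma \ref{mapoutofprof} applies for (3), and deducing (4) from (3) together with Lemma \ref{prononsense} — coincides with the paper's route.
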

\begin{proof}
For 1, the claims about base-change and closure under passage to the diagonal are obvious by descent.  We will return to closure under composition after proving Claim 3.

Claim 2 (as well as Definition \ref{pifiniteetale}) actually works in any $\infty$-topos.  Namely if $\mc{C}$ is an $\infty$-topos, then the anima of global sections of the constant sheaf of anima with value $\on{An}^\simeq_\pi$ on $\mc{C}$ identifies with the anima of $\pi$-finite etale maps $X\to \ast$ in $\mc{C}$ (those maps which locally are isomorphic to projection off some $\pi$-finite anima).  Indeed by descent theory $\pi$-finite etale maps $X\to \ast$ are classified by global sections of the object
$$\bigsqcup_{[K]} B\ul{\on{Aut}}(\underline{K})$$
of $\mc{C}$.  Here $K$ runs over a set of representatives for the isomorphism classes of $\pi$-finite anima, $\underline{K}$ denotes the constant sheaf on $K$, and $\ul{\on{Aut}}$ means the internal automorphism group.  In this language what we need to see is that $\ul{\on{Aut}}(\ul{K}) = \ul{\on{Aut}(K)}$, i.e.\ that the sheaf of automorphisms of the constant sheaf on $K$ identifies with the constant sheaf on the anima of automorphisms of $K$, when $K$ is $\pi$-finite (caution that the statement is false e.g.\ for an infinite discrete set $K$).  But this follows from the fact that $X\mapsto \underline{X}$, being a special case of pullback map of $\infty$-topoi, preserves finite limits and coproducts.

For 3, this follows from 2 and Lemma \ref{mapoutofprof}, noting that $A=\on{An}_\pi^\simeq$ has the property that each component is $\pi$-finite. As for 4, it follows directly from 3 and Lemma \ref{prononsense}.

What remains is the claim (from 1) that $\pi$-finite etale maps are closed under composition.  Assume given $X\to Y \to Z$ where both maps are $\pi$-finite etale.  Working locally we can assume $Z$ is a light profinite set.  By 3, the map $Y\to Z$ is pulled back from a map of $\pi$-finite anima, and then so is $X\to Y$, and in this way we reduce to the setting of $\on{An}$ where it is clear from the long exact sequence of homotopy groups.
\end{proof}

The following lemma will also be convenient.

\begin{lemma}\label{surjtower}
Let $X\in\on{ProfAn}^{light}$.  Then there is a tower $(X_n)_n$ of $\pi$-finite anima and an isomorphism $X\simeq \varprojlim_n X_n$ such that $X\rightarrow X_n$ is surjective for all $n$.
\end{lemma}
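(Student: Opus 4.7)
The plan is to write $X = \varprojlim_n Y_n$ using the definition of $\on{ProfAn}^{light}$, then to replace the tower $(Y_n)$ by a ``Mittag-Leffler'' tower $(X_n)$ of $\pi$-finite anima with the same inverse limit but with surjective transition maps, and finally to deduce surjectivity of $X\to X_n$ from repleteness of $\on{CondAn}^{light}$.

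More precisely, I would first fix $n$ and consider, for $m\geq n$, the sub-anima $\on{im}(Y_m\to Y_n)\subset Y_n$ consisting of those components hit by $Y_m$. As $m$ grows, these form a decreasing sequence of unions of components of $Y_n$, and since $Y_n$ has only finitely many components this sequence stabilizes. Let $X_n\subset Y_n$ denote its stable value; it is then a sub-$\pi$-finite anima. Because images of composites are contained in images of either factor, the transition map $Y_{n+1}\to Y_n$ restricts to $X_{n+1}\to X_n$. Picking $m$ large enough that both $X_n=\on{im}(Y_m\to Y_n)$ and $X_{n+1}=\on{im}(Y_m\to Y_{n+1})$ are already realized, we see that $X_n=\on{im}(X_{n+1}\to X_n)$, i.e.\ the transition maps of the new tower are surjective in $\on{An}$.

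Next I would show $X\simeq\varprojlim_n X_n$ in $\on{CondAn}^{light}$. Since each $X_n\hookrightarrow Y_n$ is a monomorphism (inclusion of a union of components), $\varprojlim_n X_n\hookrightarrow\varprojlim_n Y_n=X$ is a monomorphism. Conversely, by Lemma \ref{mapoutofprof} applied to the $\pi$-finite target $Y_n$, the structural map $X\to Y_n$ factors through some $Y_m\to Y_n$, and for $m$ large enough factors through $X_n\subset Y_n$; uniqueness of factorizations through monomorphisms ensures the resulting maps $X\to X_n$ are compatible, yielding a map $X\to\varprojlim_n X_n$ which, composed with the monomorphism above, is the identity on $X$. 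Hence $X\to\varprojlim_n X_n$ is an isomorphism.

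Finally, each $X_{n+1}\to X_n$ is a surjection in $\on{An}$, hence stays a surjection under the fully faithful embedding $\delta\colon\on{An}\to\on{CondAn}^{light}$ of Lemma \ref{etaleonprof}, as $\delta$ is a left exact left adjoint and therefore preserves effective epimorphisms. Applying repleteness of $\on{CondAn}^{light}$ (Lemma \ref{replete}) to the shifted tower $(X_n)_{n\geq m}$ gives that $X=\varprojlim_n X_n\to X_m$ is surjective for every $m$, completing the proof. The only mildly subtle step is the stabilization of images, which is the reason the hypothesis $Y_n\in\on{An}_\pi$ (and in particular finiteness of $\pi_0$) is needed; once that is in hand, everything else is a formal consequence of Lemma \ref{mapoutofprof} and repleteness.
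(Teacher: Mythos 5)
Your proof is correct, but it takes a somewhat different route from the paper's. The paper simply sets $X_n=\on{im}(X\to X'_n)$ with the image formed in the $\infty$-topos $\on{CondAn}^{light}$; the only point needing care there is that a subobject (in $\on{CondAn}^{light}$) of a discrete object is again discrete, so each $X_n$ is a union of components of $X'_n$ and hence $\pi$-finite, and surjectivity of $X\to X_n$ is automatic from the image factorization. You instead stay entirely inside $\on{An}$: you stabilize the decreasing images $\on{im}(Y_m\to Y_n)$ using finiteness of $\pi_0 Y_n$ (a Mittag-Leffler argument), obtain a subtower with surjective transition maps, identify its limit with $X$ via the mono-plus-section argument, and then invoke repleteness (Lemma \ref{replete}) to get surjectivity of $X\to X_m$. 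The outcome is the same tower of subobjects, but the inputs differ: the paper trades your repleteness step for the ``subobjects of discrete objects are discrete'' observation, while your version avoids topos-level image factorizations at the cost of the stabilization argument and the (correct) remark that $\delta$, being a left exact left adjoint, preserves effective epimorphisms. Two small remarks: your appeal to Lemma \ref{mapoutofprof} is superfluous, since the structural map $X\to Y_n$ factors through the transition map $Y_m\to Y_n$ by definition of the limit cone (and you in fact need it to be the transition map so that the image lands in $X_n$); and the assembly of the compatible factorizations $X\to X_n$ into a map $X\to\varprojlim_n X_n$ deserves the one-line justification that the space of factorizations of a given map through a monomorphism is $(-1)$-truncated, so the levelwise factorizations glue over the tower. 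Neither point is a gap.
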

\begin{proof}
Start with any tower $(X'_n)$ of $\pi$-finite anima with $X\simeq\varprojlim_n X'_n$.  Then set $X_n = \on{im}(X\rightarrow X'_n)$.  Here the image is in the sense of the $\infty$-topos $\on{CondAn}^{light}$.  However, it's easy to see that if $A\in\on{An}\subset\on{CondAn}^{light}$ and $B\subset A$ is any subobject in $\on{CondAn}^{light}$, then $B\in\on{An}$ as well (by descent, we reduce to $A=\ast$, whose only subobjects are $\ast$ and $\emptyset$).  Thus $(X_n)_n$ is a tower of $\pi$-finite anima, and $X=\varprojlim_n X_n$.
\end{proof}

For $Y\in\on{CondAn}^{light}$, let $\on{fEt_\pi}_{/Y}$ denote the full subcategory of $\pi$-finite etale maps to $Y$, equipped with the Grothendieck topology inherited from $\on{CondAn}^{light}$.  The following theorem shows that, up to completeness issues, $\on{fEt_\pi}_{/Y}$ provides a defining site for $\on{Sh}_{et}(Y)$, for any light profinite anima $Y$.

\begin{theorem}\label{truncatedtheorem}
Let $S\in\on{ProfAn}^{light}$.  Consider the $\infty$-topos $\on{Sh}(\on{fEt_\pi}_{/S})$ of sheaves of anima on the site $\on{fEt_\pi}_{/Y}$ with respect to the Grothendieck topology inherited from $\on{CondAn}^{light}$.  Then the natural comparison map
$$\on{Sh}(\on{fEt_\pi}_{/S})\rightarrow \on{Sh}_{et}(S)$$
in $\mathcal{T}op^L$ (left Kan extending the inclusion map $\on{fEt_\pi}_{/S}\rightarrow \on{Sh}_{et}(S)$) identifies the target as the Postnikov-completion of the source.
\end{theorem}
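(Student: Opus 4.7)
The plan is to use the decomposition $S=\varprojlim_m X_m$ from Lemma~\ref{surjtower}, with $X_m$ $\pi$-finite and transition maps surjective, to express both sides of the comparison as countable limits indexed by $m$, thereby reducing the theorem to the case where $S$ is itself a $\pi$-finite anima, and then to treat that base case directly.

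For the target, the functor $\on{Sh}_{et}(-)$ preserves limits (as noted in the text just after the definition of $\on{Sh}_{et}$), so
\[\on{Sh}_{et}(S)\;\simeq\;\varprojlim_m \on{Sh}_{et}(X_m)\;\simeq\;\varprojlim_m \on{An}_{/X_m}\]
using the discrete case identification. Each $\on{An}_{/X_m}=\on{Fun}(X_m,\on{An})$ is Postnikov-complete because $X_m$ is truncated, and Postnikov-completeness is inherited by limits in $\mathcal{T}op^L$ (the characterization $\mathcal{C}\simeq\varprojlim_n\mathcal{C}_{\leq n}$ from Lemma~\ref{replete} is evidently preserved by such limits). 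Hence $\on{Sh}_{et}(S)$ is Postnikov-complete. For the source, Lemma~\ref{pifiniteproperties}(3) combined with Lemma~\ref{prononsense}(1) gives $\on{fEt_\pi}_{/S}\simeq\varinjlim_m\on{fEt_\pi}_{/X_m}$ as $\infty$-categories, and one checks that any finite covering family over some $T\in\on{fEt_\pi}_{/S}$ descends to such a family at some $X_m$-level, so the Grothendieck topologies match up. Consequently
\[\on{Sh}(\on{fEt_\pi}_{/S})\;\simeq\;\varprojlim_m \on{Sh}(\on{fEt_\pi}_{/X_m})\]
in $\mathcal{T}op^L$, with the comparison functor to $\on{Sh}_{et}(S)$ arising levelwise. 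Since formation of $n$-truncated parts and of Postnikov completions both commute with these limits, the theorem reduces to the analogous statement for each $X_m$ in place of $S$.

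For the $\pi$-finite base case, I would work with the fully faithful inclusion $\on{fEt_\pi}_{/X}\hookrightarrow\on{An}_{/X}$, which preserves finite limits, sends covers to effective epimorphisms, and generates $\on{An}_{/X}$ under colimits (any $Y\to X$ is the colimit of the point maps $\ast\to X$ over $y\in Y$, all of which already lie in $\on{fEt_\pi}_{/X}$). Since $\on{An}_{/X}$ is Postnikov-complete, it suffices to check the comparison is an equivalence on $n$-truncated objects for each $n$. Writing $X\simeq |X_\bullet|$ as the realization of a simplicial finite set, classical descent gives $\on{An}_{/X,\leq n}\simeq\varprojlim_{[k]\in\Delta}\on{Fun}(X_k,\on{An}_{\leq n})$; on the source side, any $\pi$-finite etale map $T\to X$ admits a hypercover by finite sets over $X$ (since $X$ is truncated), and because we are truncated the simplicial limits collapse to finite ones, realizing $\on{Sh}(\on{fEt_\pi}_{/X})_{\leq n}$ as the same limit. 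Comparing level by level reduces to the trivial case of a finite set, where both sides are simply $\on{Fun}(-,\on{An}_{\leq n})$. The main obstacle, and where care is required, is precisely this last reduction: verifying that the sheaf condition on $\on{fEt_\pi}_{/X}$ for $n$-truncated values factors through the cofinal subsite of finite sets over $X$, so that both comparisons can be expressed as limits of the same diagram indexed by $\Delta$. Once this is established, the rest of the proof is formal.
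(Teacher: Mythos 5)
The reduction to the $\pi$-finite case is where your argument breaks. The limit-preservation of $\on{Sh}_{et}(-)$ recorded after its definition is a descent statement: it says that colimits in $\on{CondAn}^{light}$ are sent to limits in $\mathcal{T}op^L$ along pullback functors. Your decomposition $S=\varprojlim_m X_m$ is a cofiltered \emph{limit} of light condensed anima, so that result says nothing here, and the identification $\on{Sh}_{et}(S)\simeq\varprojlim_m\on{Sh}_{et}(X_m)$ (which can only mean the limit along pushforwards, equivalently the colimit along pullbacks in $\on{Pr}^L$) is not available. Worse, it is false in general: by the continuity result you correctly invoke for the source (\cite{ClausenHyper} Lemma 3.3, as used in Proposition \ref{procontinuity}), that same limit computes $\on{Sh}(\on{fEt_\pi}_{/S})$, so your claim for the target amounts to asserting that $\on{Sh}(\on{fEt_\pi}_{/S})$ is already Postnikov complete, i.e.\ the theorem with no completion at all. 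That fails, e.g., for $S=\ast/G$ with $G$ a light profinite group of infinite mod $p$ cohomological dimension (say an infinite product of copies of $\mathbb{Z}/p$), which is precisely why the completion appears in the statement.

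The same defect recurs in the sentence ``formation of $n$-truncated parts and of Postnikov completions both commute with these limits.'' The transition functors in these limits are pushforwards along maps of $\pi$-finite anima, e.g.\ homotopy fixed points for finite groups, and these do not commute with truncation: for $f:\ast/(\mathbb{Z}/p)\to\ast$ and $Y=B(\mathbb{Z}/p)$ with trivial action, $\tau_{\leq 0}(f_\ast Y)$ has $p$ points while $f_\ast(\tau_{\leq 0}Y)=\ast$. Hence Postnikov completeness of each $\on{An}_{/X_m}$ does not pass to the limit, and the truncated part of the limit is not the limit of the truncated parts, so the theorem does not reduce to the $\pi$-finite case in the way you propose. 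What is true, and is essentially the paper's proof, is a fixed-truncation statement: for each $d$ one identifies $d$-truncated sheaves on $\on{fEt_\pi}_{/S}$ with $d$-truncated, countable-filtered-colimit-preserving sheaves on $\on{ProfAn}^{light}_{/S}$, and those with $\on{Sh}_{et}(S)_{\leq d}$, using Lemma \ref{surjtower}, Lemma \ref{surjectfromprofset} and \cite{HoyoisLefschetz} Lemma C.3, with truncatedness invoked exactly to commute the $\Delta$-limits with countable filtered colimits; only then does one assemble over $d$ using Postnikov completeness of $\on{Sh}_{et}(S)$ (already known from Theorem \ref{etale}). Your $\pi$-finite base case is unproblematic (both sides are Postnikov complete there), but the passage from it to general $S$ cannot be effected by commuting Postnikov completion with the pro-limit.
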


\begin{proof}
Because $\on{Sh}_{et}(S)$ is Postnikov-complete (Theorem \ref{etale}), it suffices to show that the comparison map induces an equivalence on $d$-truncations for all $d\geq 0$.  Consider the following three $\infty$-categories:
\begin{enumerate}
\item The $\infty$-category of $\on{An}_{\leq d}$-valued sheaves on $\on{fEt_\pi}_{/S}$, i.e. $\on{Sh}(\on{fEt_\pi}_{/S})_{\leq d}$.
\item The $\infty$-category of $\on{An}_{\leq d}$-valued sheaves on $\on{ProfAn}^{light}_{/S}=\on{Pro}^{\aleph_0}(\on{fEt_\pi}_{/S})$ which preserve countable filtered colimits (as functors $(\on{ProfAn}^{light}_{/S})^{op}\to \on{An}_{\leq d}$).
\item The $\infty$-category of $\on{An}_{\leq d}$-valued sheaves on $\on{ProfSet}^{light}_{/S}$ which preserve countable filtered colimits (as functors $(\on{ProfSet}^{light}_{/S})^{op}\to\on{An}_{\leq d}$), i.e.\ $\on{Sh}_{et}(S)_{\leq d}$.
\end{enumerate}
We claim that restriction and left Kan extension identify 1 and 2, and restriction and right Kan extension identify 2 and 3.  Assuming this, the composition of these equivalences restricts to the inclusion $\on{fEt_\pi}_{/S}\rightarrow \on{Sh}_{et}(S)$, so it agrees with the comparison map, finishing the proof.
For the claim about 1 and 2, we need to show that a functor $\mathcal{F}:(\on{ProfAn}^{light}_{/S})^{op}\to \on{An}_{\leq d}$ preserving countable filtered colimits whose restriction to $\on{fEt_\pi}_{/S}$ is a sheaf is itself necessarily a sheaf.  Finite disjoint unions correspond to maps from some object to a finite set, hence come from a finite stage in a defining filtered inverse limit, so the sheaf property for finite disjoint unions follows from the fact that finite products commute with filtered colimits.  For surjections, using Lemma \ref{surjtower} we can write any surjection $T'\twoheadrightarrow T$ in $\on{ProfAn}^{light}_{/S}$ as a sequential filtered limit of surjective maps in $\on{fEt_\pi}_{/S}$, and the conclusion follows by commuting the (finite, by truncatedness) limit over $\Delta$ with the filtered colimit.   For the claim about 2 and 3, note that $\on{ProfSet}^{light}_{/S}\subset\on{ProfAn}^{light}_{/S}$ is closed under pullbacks, and every $X\in \on{ProfAn}^{light}_{/S}$ admits a surjection from a $Y\in \on{ProfSet}^{light}_{/S}$ by Lemma \ref{surjectfromprofset}.  By \cite{HoyoisLefschetz} Lemma C.3 this gives the claim if we remove the condition of preserving countable filtered colimits from both 2 and 3.  Thus, to finish it suffices to show that if a sheaf $\mathcal{F}: (\on{ProfAn}^{light}_{/S})^{op}\to\on{An}_{\leq d}$ is such that its restriction to $(\on{ProfSet}^{light}_{/S})^{op}$ preserves countable filtered colimits, then $\mathcal{F}$ also preserves countable filtered colimits.  For this, suppose given a tower $(X_n)_n$ over $S$ of light profinite anima.  Proceeding inductively using the fact that every $X\in \on{ProfAn}^{light}_{/S}$ admits a surjection from a $Y\in \on{ProfSet}^{light}_{/S}$, we can make a hypercover of this tower by a tower of light profinite sets.  Then the question amounts to commuting a filtered colimit with a limit over $\Delta$, which is handled as usual using truncatedness. \end{proof}

\begin{remark}
For $S\in\on{ProfAn}^{light}$, the Grothendieck topology on $\on{fEt}^\pi_{/S}$ inherited from $\on{CondAn}^{light}$ is simple to understand.  In fact, more generally, the Grothendieck topology on $\on{ProfAn}^{light}$ can be described as follows: a sieve over $S\in\on{ProfAn}^{light}$ is covering if and only if it contains finitely many $\{f_i:S_i\rightarrow S\}_{i\in I}$ such that for all $x:\ast\rightarrow S$, there is an $i\in I$ such that $x$ lifts along $f_i:S_i\rightarrow S$.  Note that this generalizes the defining Grothendieck topology on $\on{ProfSet}^{light}$.  The proof is straightforward using that every $S\in\on{ProfAn}^{light}$ admits a surjective map from an $T\in\on{ProfSet}^{light}$.
\end{remark}

In practice, a truncated variant of this theorem is useful.  Namely, for $d\geq 0$, consider the $\infty$-category of $d$-truncated $\pi$-finite anima, $\on{An}^\pi_{\leq d}$.  The induced functor $\on{Pro}^{\aleph_0}(\on{An}^\pi_{\leq d})\rightarrow \on{CondAn}^{light}$ is of course still fully faithful, and we call the essential image the full subcategory of \emph{$d$-truncated light profinite anima}.\footnote{There is in principle a question to be addressed, as to whether every light profinite anima which is $d$-truncated in the sense of the t-structure on $\on{CondAn}^{light}$ is actually a $d$-truncated light profinite anima in this sense.  This is true and follows from Lemma \ref{truncateprof}.}
For a $d$-truncated light profinite anima $S=\varprojlim_n S_n$, we can consider the full subcategory $\on{fEt^\pi_{\leq d-1}}_{/S}$ of $\on{fEt^\pi}_{/S}=\varinjlim_n \on{An}^\pi_{/S_n}$ consisting of the $(d-1)$-truncated maps $X\rightarrow S$, and equip it with the Grothendieck topology induced from $\on{CondAn}^{light}$.

Then we have the following.

\begin{corollary}\label{truncatedcorollary}
Let $d\geq 0$.  Suppose $S$ is a $d$-truncated light profinite anima.  Then the comparison morphism in $\mathcal{T}op^L$
$$\on{Sh}(\on{fEt^\pi_{\leq d-1}}_{/S})\rightarrow \on{Sh}_{et}(S)$$
exhibits the target as the Postnikov-completion of the source.
\end{corollary}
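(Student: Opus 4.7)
My plan is to reduce to Theorem \ref{truncatedtheorem} by proving that the inclusion of sites $\on{fEt^\pi_{\leq d-1}}_{/S}\hookrightarrow \on{fEt^\pi}_{/S}$ induces an equivalence on associated sheaf $\infty$-topoi. Since Postnikov completion is preserved by $\infty$-topos equivalences, combining with Theorem \ref{truncatedtheorem} (which exhibits $\on{Sh}_{et}(S)$ as the Postnikov completion of $\on{Sh}(\on{fEt^\pi}_{/S})$) then gives the corollary, because by the universal property of sheaves the corollary's map factors as the conjectured equivalence followed by the map of Theorem \ref{truncatedtheorem}.

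To prove the sheaf equivalence, I will apply the standard comparison lemma for $\infty$-sites (the $\infty$-analog of \cite{HoyoisLefschetz} Lemma C.3): if $\mathcal{B}\subset \mathcal{C}$ is a full subcategory closed under pullbacks with every object of $\mathcal{C}$ admitting a cover by objects of $\mathcal{B}$, then restriction induces an equivalence $\on{Sh}(\mathcal{C})\simeq\on{Sh}(\mathcal{B})$. Closure under pullbacks is immediate, since truncatedness of a map is a fiberwise condition and $(d-1)$-truncated anima are closed under finite limits. For density I distinguish two cases. When $d\geq 1$: by Lemma \ref{pifiniteproperties}(3), applied after replacing each stage by its $d$-truncation (which preserves $\pi$-finiteness), one can realize any $X\in\on{fEt^\pi}_{/S}$ as pulled back from some $X_n\to S_n$ with $S_n$ a $d$-truncated $\pi$-finite anima; choose a surjection $Y_n\twoheadrightarrow X_n$ from a finite set (e.g.\ $Y_n=\pi_0 X_n$ with a basepoint in each component), and observe that the fibers of $Y_n\to S_n$ are finite disjoint unions of path spaces in the $d$-truncated anima $S_n$, hence $(d-1)$-truncated. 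Pulling back along $S\to S_n$ yields $Y\in\on{fEt^\pi_{\leq d-1}}_{/S}$ surjecting onto $X$ (surjections being preserved by pullback in any $\infty$-topos). When $d=0$: here $S$ is a light profinite set and $X\to S$ is a classical finite etale cover, so a finite clopen refinement $\{V_j\}$ of $S$ over which $X|_{V_j}\simeq F_j\times V_j$ gives sections $V_j\to X$ whose images cover $X$; the $V_j$'s are precisely the compact open subsets that constitute $\on{fEt^\pi_{\leq -1}}_{/S}=\on{COp}_S$.

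The main obstacle will be the careful invocation of the $\infty$-categorical comparison lemma: making sure that closure under pullbacks and density are enough to conclude a full $\infty$-topos equivalence, rather than merely an equivalence at some truncation level (which is what lets the Postnikov completion aspect propagate). Modulo this foundational point, everything else is routine verification. The $d=0$ case is qualitatively different from $d\geq 1$ because $(-1)$-truncated means monomorphism, so one cannot simply use a finite set surjection; but the resulting local-sections argument fits the same overall framework.
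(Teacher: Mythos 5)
Your proposal is correct and follows essentially the same route as the paper: reduce to Theorem \ref{truncatedtheorem} by showing that $\on{Sh}(\on{fEt^\pi_{\leq d-1}}_{/S})\to\on{Sh}(\on{fEt^\pi}_{/S})$ is an equivalence via \cite{HoyoisLefschetz} Lemma C.3, with density verified by descending to a $\pi$-finite stage, covering $X_n$ by points of its components, and pulling back along $S\to S_n$. The only (cosmetic) difference is that the paper's density argument is uniform in $d$ --- the composites $\ast\to X_n\to S_n$ are automatically $(d-1)$-truncated because $S_n$ is $d$-truncated, including $d=0$ where a point of a finite set is clopen and pulls back to a compact open --- so your separate $d=0$ case, while correct, is not needed.
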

\begin{proof}
By the theorem, it suffices to show that the comparison functor $\on{Sh}(\on{fEt^\pi_{\leq d-1}}_{/S})\rightarrow \on{Sh}(\on{fEt^\pi}_{/S})$ is an equivalence.  But $\on{fEt^\pi_{\leq d-1}}_{/S}$ is closed under finite limits, so by \cite{HoyoisLefschetz} Lemma C.3 for this it suffices to see that for every $X\rightarrow S$ in $\on{fEt^\pi}_{/S}$, there is a finite jointly surjective family of maps $\{X_i\rightarrow X\}_{i\in I}$ such that $X_i\rightarrow S$ is in $\on{fEt^\pi_{\leq d-1}}_{/S}$ for all $i\in I$.  By base change we can assume $X,S\in \on{An}^\pi$.  Then take $I=\pi_0X$, take $X_i=\ast$ for all $i\in I$, and let the map $X_i\rightarrow X$ be any map hitting the $i^{th}$ component.
\end{proof}

\begin{example}
In the case $d=0$, the $0$-truncated light profinite anima are exactly the light profinite sets $S$, and the $-1$-truncated finite etale maps to $S$ are exactly the inclusions of compact open subsets.  Thus when $d=0$ this corollary recovers Theorem \ref{etaleonprof}.
\end{example}

\begin{example}\label{profgpexample}
Let $G$ be a light profinite group, written as the limit of a tower $(G_n)_n$ of finite groups.  We then have
$$\ast/G \simeq \varprojlim_n \ast/G_n.$$
Indeed, by repleteness the map $\ast \rightarrow \varprojlim_n \ast/G_n$ is surjective, so we can test on base change along it, and then we exactly reduce to $G\simeq \varprojlim_n G_n$.

Thus $\ast/G$ is a $1$-truncated light profinite anima.  A $0$-truncated finite etale map to $\ast/G$ is the same thing as a finite set with a continuous $G$-action.  Thus, in this case Corollary \ref{truncatedcorollary} shows that $\on{Sh}_{et}(\ast/G)$ is the Postnikov completion of the ``usual'' $\infty$-topos of sheaves of anima on the site of finite continuous $G$-sets (see \cite{ClausenHyper} 4.1).
\end{example}

\begin{remark}
We recall that classically, the 1-topos of sheaves of sets on the site of finite sets with continuous $G$-action has an alternate description: it is the category of (arbitrary) sets with continuous $G$-action.  The $\infty$-topos $\on{Sh}_{et}(\ast/G)$ is already the direct $\infty$-categorical generalization of this.  Indeed, by descent $\on{Sh}_v(\ast/G)$ identifies with the $\infty$-category of light condensed anima with $G$-action, and by definition $\on{Sh}_{et}(\ast/G)$ is the full subcategory where the underlying light condensed anima is ``discrete'', i.e.\ etale over $\ast$.
\end{remark}

We can also deduce the following convenient ``continuity'' result.

\begin{proposition}\label{procontinuity}
Let $S$ be a light profinite anima, and let
$$\ldots \to X_n \to X_{n-1} \to \ldots \to X_0 \to S$$
be a tower of light profinite anima over $S$, with limit $X=\varprojlim X_n$.  Then if $\mathcal{F}\in\on{Sh}_{et}(S)$ is $d$-truncated for some $d$, we have
$$\Gamma(X;\mathcal{F})=\varinjlim_n  \Gamma(X_n;\mathcal{F}).$$
\end{proposition}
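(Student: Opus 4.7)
The plan is to introduce a presheaf $\bar{\mathcal{F}}\colon (\on{ProfAn}^{light}_{/S})^{op}\to \on{An}_{\leq d}$ defined by $\bar{\mathcal{F}}(Y) := \Gamma(Y; p^*\mathcal{F})$, and then to reduce the proposition to the statement that $\bar{\mathcal{F}}$ preserves countable filtered colimits (i.e.\ sends countable inverse limits in $\on{ProfAn}^{light}_{/S}$ to filtered colimits in $\on{An}$). Once this is known, applying it to the given tower with limit $X = \varprojlim_n X_n$ yields $\Gamma(X;\mathcal{F}) = \bar{\mathcal{F}}(X) = \varinjlim_n \bar{\mathcal{F}}(X_n) = \varinjlim_n \Gamma(X_n; \mathcal{F})$, as required.

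To prove that $\bar{\mathcal{F}}$ has this property, I will invoke the auxiliary fact established inside the proof of Theorem~\ref{truncatedtheorem}: any sheaf of $d$-truncated anima on $\on{ProfAn}^{light}_{/S}$ whose restriction to $\on{ProfSet}^{light}_{/S}$ preserves countable filtered colimits does so itself. The verification of the two hypotheses is direct. First, $\bar{\mathcal{F}}$ is a sheaf for the Grothendieck topology inherited from $\on{CondAn}^{light}$: identifying $\mathcal{F}$ with an etale map $F\to S$ in $\on{CondAn}^{light}$, the functor $\bar{\mathcal{F}}$ becomes $Y\mapsto \on{Map}_S(Y,F)$, which is the restriction to $\on{ProfAn}^{light}_{/S}$ of a representable presheaf on the $\infty$-topos $\on{Sh}_v(S)=\on{CondAn}^{light}_{/S}$, and such representables satisfy descent for the canonical topology, which refines the induced topology on $\on{ProfAn}^{light}_{/S}$. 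Second, the restriction of $\bar{\mathcal{F}}$ to $\on{ProfSet}^{light}_{/S}$ preserves countable filtered colimits by the characterization of etale maps in Theorem~\ref{etale} part 2 (applied after base-change along $T\to S$ for $T\in\on{ProfSet}^{light}$).

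The main obstacle is mainly bookkeeping: one must recognize that the argument appearing inside the proof of Theorem~\ref{truncatedtheorem} is exactly what the proposition needs. That argument proceeds by hypercovering a given tower of light profinite anima by a tower of light profinite sets (using Lemma~\ref{surjectfromprofset} together with repleteness, Lemma~\ref{replete}) and then exploiting $d$-truncatedness to convert the $\Delta$-limit arising from descent into a finite limit, which commutes with the countable filtered colimit coming from the $\on{ProfSet}^{light}$-tower. With that lemma as input, the proposition is immediate from the construction of $\bar{\mathcal{F}}$ above.
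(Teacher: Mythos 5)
Your proof is correct, but it follows a somewhat different path than the paper's. The paper argues at the level of finitary sites: by Lemma \ref{pifiniteproperties}, the $\pi$-finite \'etale site of $X$ is the countable filtered colimit of those of the $X_n$, continuity of (truncated) sheaf theory along filtered colimits of finitary sites is quoted from \cite{ClausenHyper} Lemma 3.3, and the result is transferred to $\on{Sh}_{et}$ via the comparison Theorem \ref{truncatedtheorem}. You instead never touch the finitary sites: you identify $\mathcal{F}$ with its \'etale map $F\to S$, so that $\bar{\mathcal{F}}(Y)=\on{Map}_S(Y,F)$ is a restricted representable (hence a sheaf for the inherited topology, since the inherited covers are effective epimorphisms and $\on{ProfAn}^{light}_{/S}$ has the needed fiber products), its restriction to $\on{ProfSet}^{light}_{/S}$ preserves countable filtered colimits by Theorem \ref{etale}(2), and you then bootstrap from profinite sets to profinite anima using the hypercover-plus-truncatedness step (Lemma \ref{surjectfromprofset}, Lemma \ref{replete}) that appears inside the proof of Theorem \ref{truncatedtheorem}. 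Both arguments rest on the same underlying mechanism — resolving towers of light profinite anima by towers of light profinite sets and using $d$-truncatedness to commute the $\Delta$-limit with the filtered colimit — but your version is more self-contained within the paper (no appeal to \cite{ClausenHyper} Lemma 3.3 or to the site identification), at the cost of citing a statement that is only embedded in another proof; it would be cleaner to extract that bootstrap claim as a standalone lemma, which your summary of its argument essentially already does. The paper's version, by contrast, is a three-line consequence of machinery it has already set up and keeps the continuity statement at the level of sites, where it is independent of the particular representable sheaf.
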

\begin{proof}
By Lemma \ref{pifiniteproperties}, the (finitary) site $\on{fEt}^\pi_{/X}$ is the filtered colimit of the $\on{fEt}^{\pi}_{/X_n}$.  Thus the analog of this result for sheaves on the $\pi$-finite etale sites follows from \cite{ClausenHyper} Lemma 3.3.  Then we deduce the claim from Theorem \ref{truncatedtheorem}.
\end{proof}

To conclude this section, we prove a general ``$\on{lim}^1$-vanishing result'' in the setting of light profinite anima, showing that Postnikov truncations can be taken termwise.

\begin{proposition}\label{truncateprof}
Suppose $(X_n)_n$ is a tower of light profinite anima, with limit $X=\varprojlim_n X_n$ in $\on{CondAn}^{light}$.  Then for all $d\geq -1$, the map
$$\tau_{\leq d} X \rightarrow \varprojlim_n \tau_{\leq d}X_n$$
is an isomorphism in $\on{CondAn}^{light}$.

More generally, if $(f_n):(X_n)\rightarrow (Y_n)$ is a map of towers of light profinite anima, then taking the inverse limit in $\on{CondAn}^{light}$ commutes with relative Postnikov truncation.
\end{proposition}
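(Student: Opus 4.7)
The plan is to reduce the relative statement to the absolute one by slicing, then reduce the absolute statement to the case of towers of $\pi$-finite anima using Lemma \ref{surjtower}, and finally prove that case by induction on $d$ using Lemma \ref{replete} together with a Mittag-Leffler vanishing argument.

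For the relative-to-absolute reduction, I would pass to the slice $\on{CondAn}^{light}_{/\varprojlim_n Y_n}$. This slice inherits hypercompleteness and repleteness (both properties pass to slices), hence is Postnikov complete by Lemma \ref{replete}; relative Postnikov truncation in the ambient topos matches absolute Postnikov truncation in the slice, so the relative claim follows from the absolute one applied in the slice. For the absolute-to-$\pi$-finite reduction, write each $X_n = \varprojlim_m X_{n,m}$ via Lemma \ref{surjtower} with $X_{n,m}$ a $\pi$-finite anima and with all transition maps (including $X_n \to X_{n,m}$) surjective; extracting a diagonal $Y_k = X_{k,k}$ yields a tower of $\pi$-finite anima with the same limit $X$. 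Applying the $\pi$-finite case both to the diagonal tower and, for each fixed $n$, to the tower $(X_{n,m})_m$ interchanges the two nested limits and yields $\tau_{\leq d} X = \varprojlim_n \tau_{\leq d} X_n$.

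For the $\pi$-finite case I proceed by induction on $d \geq -1$. The base case $d = -1$ asserts that $X$ is nonempty whenever each $X_n$ is; this is seen by projecting to the tower of nonempty finite sets $(\pi_0 X_n)$, whose inverse limit in $\on{CondAn}^{light}$ is a nonempty light profinite set, and lifting to a point of $X$ using repleteness (Lemma \ref{replete}) together with the surjections $X_n \twoheadrightarrow \pi_0 X_n$. For the inductive step, Lemma \ref{replete} part 5 applied to the map of towers $(X_n) \to (\tau_{\leq d} X_n)$ of $(d+1)$-connective truncation maps shows that the limit map $X \to \varprojlim_n \tau_{\leq d} X_n$ is already $d$-connective. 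To boost from $d$-connective to $(d+1)$-connective, I would analyze the pointed fibers of $\tau_{\leq d} X_n \to \tau_{\leq d-1} X_n$: each is an Eilenberg-MacLane piece governed by the finite group $\pi_d X_n$. By the inductive hypothesis the bottom row $\varprojlim_n \tau_{\leq d-1} X_n = \tau_{\leq d-1} X$ is already an isomorphism, so comparing fibers reduces the problem to checking that $\pi_d X = \varprojlim_n \pi_d X_n$ in the condensed sense, which follows from vanishing of the relevant $\varprojlim^1$ for a tower of finite groups viewed inside $\on{CondAn}^{light}$.

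The main obstacle I expect is the final Mittag-Leffler step: rigorously justifying $\varprojlim^1 = 0$ for a tower of finite groups in $\on{CondAn}^{light}$ and turning this into the upgrade of connectivity from $d$ to $d+1$. The key input is that, for towers of finite groups viewed as light profinite objects, the derived inverse limit agrees with the classical limit---a statement that ultimately bottoms out in the base case $d = -1$ applied to an auxiliary tower of connected components, packaged through the Milnor fiber square used in the proof of Lemma \ref{replete} part 5.
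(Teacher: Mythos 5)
There are two genuine gaps. First, the relative-to-absolute reduction by slicing does not work as stated: the objects $X_n$ do not live in $\on{CondAn}^{light}_{/\varprojlim_n Y_n}$ at all (they map to $Y_n$, not to $Y=\varprojlim_n Y_n$), so ``absolute truncation in the slice'' of $X_n$ is undefined. If you repair this by base-changing to $A_n:=X_n\times_{Y_n}Y$ (legitimate, since relative truncation is stable under base change), the statement you then need is that truncation \emph{in the slice topos} $\on{CondAn}^{light}_{/Y}$ commutes with $\varprojlim_n A_n$; this is a relative statement over $Y$, not the absolute statement you prove afterwards, and it does not follow formally from it. Postnikov completeness of the slice (repleteness plus hypercompleteness) is far too weak: Postnikov completeness never by itself forces truncation to commute with limits of towers --- the whole content of the proposition is the special structure of light profinite anima, and your absolute argument (Lemma \ref{surjtower}, finiteness of homotopy groups, etc.) is about $\on{CondAn}^{light}$, not about the slice. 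The paper instead proves the relative statement head-on: it writes the entire map of towers as a sequential limit of maps of towers of $\pi$-finite anima, factors each $f_n$ as ($(d+1)$-connective)$\circ$($d$-truncated), and reduces to showing that a limit of a tower of $(d+1)$-connective maps of $\pi$-finite anima is $(d+1)$-connective --- a formulation in terms of connectivity of \emph{maps}, which is stable under base change and can be checked after pullback along a surjection from a tower of finite sets (Lemma \ref{surjectfromprofset}).

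Second, your inductive boost from $d$-connective (via Lemma \ref{replete} part 5) to $(d+1)$-connective is exactly the crux, and the route you sketch does not go through as stated. Checking that your fiberwise comparison is an isomorphism cannot be done on points: Remark \ref{checkonstalks} applies only to etale sheaves, and the objects here (limits of truncations, Eilenberg--MacLane pieces for profinite groups) are not etale, so ``comparing fibers'' over points of $\tau_{\leq d-1}X$ proves nothing without a further descent argument along a chosen cover. Moreover the key input you invoke --- that $\varprojlim_n K(G_n,d)\simeq K(\varprojlim_n G_n,d)$ in $\on{CondAn}^{light}$ for a tower of finite groups, i.e.\ the condensed $\varprojlim^1$-vanishing --- is itself an instance of the proposition being proved (for $d=1$ it is Example \ref{profgpexample}), so as organized your induction is circular unless you carefully make it a simultaneous induction; you flag this as the main obstacle but do not resolve it. There are also unaddressed bookkeeping issues ($\pi_d X_n$ is a sheaf of groups over the components of $X_n$, basepoints must be chosen compatibly along the tower, and the diagonal extraction $Y_k=X_{k,k}$ requires rectifying the maps of pro-objects $X_{n+1}\to X_n$ to maps of the chosen towers). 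The paper's section trick avoids all of this: given a termwise $(d+1)$-connective map to a tower of finite sets, one builds a section levelwise using $\pi_0X_n\simeq S_n$, notes it is surjective by the $d=-1$ compactness case, and observes that $S\times_XS\to S$ is a limit of $d$-connective maps of $\pi$-finite anima, so induction on $d$ closes the argument without any $\varprojlim^1$ discussion.
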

\begin{proof}
Let us immediately handle the more general relative case.  By Lemma \ref{surjectfromprofset}, we can write $X_0 \to Y_0$ as the sequential inverse limit of a map of $\pi$-finite anima.  Then we do the same for $Y_1 \to Y_0$, but using the same tower for $Y_0$ as before (again this is possible by Lemma \ref{surjectfromprofset}).  Then we do the same again for $X_1 \to Y_1\times_{Y_0}X_0$, using the tower on the target coming from what we've already constructed.  Continuing in this manner, we write our map of towers as a sequential inverse limit of maps of towers of $\pi$-finite anima. Thus we can reduce our claim to the case where all $X_n,Y_n$ are $\pi$-finite anima.

Consider the factorization
$$X_n\rightarrow \tau_{\leq d}f_n \rightarrow Y_n$$
of $f_n$ into a $d+1$-connective map followed by a $d$-truncated map. Taking inverse limits, we get
$$X\rightarrow \varprojlim_n \tau_{\leq d}f_n\rightarrow Y.$$
The second map is $d$-truncated because a limit of $d$-truncated maps is $d$-truncated.  Thus it suffices to show that the first map is $d+1$-connective, or in other words, a limit of a tower of $d+1$-connective maps of $\pi$-finite anima is $d+1$-connective.  Choose a tower finite sets with a surjection to the target tower such that the map on inverse limits is still a surjection (using Lemma \ref{surjectfromprofset}).  Then by base change, we reduce to the case where the target tower consists of finite sets.

When $d=-1$, we need to show that if our map $(X_n)\rightarrow (S_n)$ (with $S_n$ a tower of finite sets and $X_n$ a tower of $\pi$-finite anima) is termwise surjective, then $\varprojlim_n X_n\rightarrow\varprojlim_n S_n$ is also surjective.  Choosing also a tower of finite sets surjecting onto $(X_n)$, we reduce to the case that both $(X_n)$ and $(S_n)$ are towers of finite sets.  Then it is a consequence of the fact that a filtered inverse limit of nonempty finite sets is nonempty (by a standard compactness argument).

For $d\geq 0$, we proceed inductively.  Suppose $(X_n)\rightarrow (S_n)$ is $d+1$-connective, in particular it induces $\pi_0(X_n)\overset{\sim}{\rightarrow}S_n$.  Using this we can define, proceeding inductively on the tower, a section of $(X_n)\rightarrow (S_n)$, giving in particular a section $\sigma:S\rightarrow X$ on inverse limits.  By the $d=-1$ case, $\sigma$ is surjective.  On the other hand, $S\times_X S\rightarrow S$ (the fiber product being formed using the map $\sigma$) is the inverse limit of a tower of $d$-connective maps of $\pi$-finite anima.  (We have in essence passed to the based loop spaces on fibers.)  Thus by induction $S\times_X S\rightarrow S$ is $d$-connective, so $X\rightarrow S$ is $d+1$-connective as desired.
\end{proof}

\section{Light condensed anima attached to sheaves on manifolds}\label{condmansec}

Above, we briefly discussed the light condensed sets $\underline{X}$ associated to topological spaces $X$ which are ``locally compact'' in the slightly nonstandard sense that each point admits a second-countable compact Hausdorff neighborhood.  We showed that for such an $X$, the category $\on{Sh}_{et}(X)$ of etale sheaves on $X$ identifies with the Postnikov completion of the category of sheaves of anima on the topological space $X$.

This result implies in particular to $F$-manifolds if $F$ is a \emph{local field} (a non-discrete locally compact field).  That case is actually even simpler: any compact subset of $F^d$ has finite covering dimension (less than or equal to $0$ if $F$ is nonarchimedean, $d$ if $F=\mathbb{R}$, and $2d$ if $F\simeq\mathbb{C}$.)  Thus $\on{Sh}(M)$ is Postnikov-complete by \cite{LurieHTT} Cor.\ 7.2.3.7, and so the result of Lemma \ref{etaleontopological} shows that
$$\on{Sh}(M)\overset{\sim}{\rightarrow}\on{Sh}_{et}(\underline{M})$$
for any $F$-manifold $M$: etale sheaves on $F$-manifolds identify with usual sheaves.

In this paper we are not directly interested in $F$-manifolds; rather our purported focus is on cohomology of $F$-Lie groups $G$.  For this it is convenient to pass from $G$ to some version of the classifying stack $BG=\ast/G$.  For some of our arguments, we will also have to consider more general quotient stacks.  To form such a quotient $M/G$, we could of course consider the underlying light condensed sets of $G$ and $M$ and then form this quotient in light condensed anima.  But for some purposes we'll want to remember the $F$-analytic structure on such an object.  For this we make the following standard definitions.

\begin{definition}
Let $F$ be a local field.  A \emph{sheaf on $F$-manifolds} is a sheaf (of anima) on the category $\on{Man}_F$ with respect to the Grothendieck topology generated by open covers.
\end{definition}

\begin{remark}
Since $\on{Man}_F$ is not a small category, in principle there are cardinality issues in this definition.  But since every manifold by definition admits an open cover by open subsets of some $F^d$, it follows from \cite{HoyoisLefschetz} Lemma C.3 that we could equivalently restrict to the full subcategory of $\on{Man}_F$ consisting of such open sets and the sheaf category would stay the same.  In particular $\on{Sh}(\on{Man}_F)$ is an $\infty$-topos.

Moreover, $\on{Sh}(\on{Man}_F)$ is hypercomplete.  This follows from the fact that $F$-manifolds have local finite covering dimension and \cite{LurieHTT} Thm.\ 7.2.3.6.  Note that every $\mathcal{F}\in\on{Sh}(\on{Man}_F)$ admits a hypercover by $F$-manifolds, or even by disjoint unions of open polydisks.
\end{remark}

Now we describe the ``forgetful functor'' from sheaves on $F$-manifolds to light condensed anima.

\begin{proposition}\label{underlying}
Let $F$ be a local field.
\begin{enumerate}
\item There is a unique colimit-preserving functor
$$\on{Sh}(\on{Man}_F)\rightarrow \on{CondAn}^{light}$$
whose restriction to $\on{Man}_F$ is the functor $M\mapsto \underline{M^{top}}: \on{Man}_F\rightarrow \on{CondAn}^{light}$, where $M^{top}$ denotes the underlying topological space of $M$.  By abuse of notation, denote this functor also by $X\mapsto \underline{X}$ (or sometimes, if we're being really abusive, just $X\mapsto X$ with the context being implicit).
\item This functor $X\mapsto \underline{X}$ preserves pullbacks of diagrams of the form $X\rightarrow S\leftarrow Y$ whenever $X\to S$ can, locally on $S$, be written as a colimit of submersions of manifolds $X_i \to S$.
\end{enumerate}
\end{proposition}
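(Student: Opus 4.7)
The plan is to treat parts 1 and 2 in turn, exploiting in both cases the universality of colimits in each of the relevant $\infty$-topoi $\on{Sh}(\on{Man}_F)$ and $\on{CondAn}^{light}$, together with the simple observation that the functor $\underline{-}:\on{Top}\to\on{CondAn}^{light}$, $K\mapsto (S\mapsto \on{Cont}(S,K))$, preserves all limits since $\on{Cont}(S,-)$ does.

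For part 1, the $\infty$-topos $\on{Sh}(\on{Man}_F)$ is hypercomplete (as noted in the preceding remarks), and has the universal property that a colimit-preserving functor $\on{Sh}(\on{Man}_F)\to\mathcal{D}$ into a presentable $\infty$-category $\mathcal{D}$ is uniquely determined by a functor $\on{Man}_F\to\mathcal{D}$ satisfying descent for hypercovers; uniqueness is therefore automatic. I would produce the extension by verifying this descent for $M\mapsto \underline{M^{top}}$. Combining the limit-preservation of $\underline{-}$ with Lemma \ref{etaleontopological}(1), which says that an open cover of $M^{top}$ induces a surjective family in $\on{CondAn}^{light}$, yields that Cech nerves of open covers of $M$ are sent to colimit diagrams. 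Since $\on{CondAn}^{light}$ is itself hypercomplete (in fact Postnikov-complete), descent for Cech nerves of open covers then upgrades to descent for all hypercovers, finishing part 1.

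For part 2, both sides of the desired isomorphism $\underline{X\times_S Y}\overset{\sim}{\rightarrow}\underline{X}\times_{\underline{S}}\underline{Y}$ are formed by pullback along $\underline{S}$, so by descent in $\on{CondAn}^{light}$ the question is local on $\underline{S}$, allowing me to assume $S$ is a manifold and $X=\on{colim}_i X_i$ in $\on{Sh}(\on{Man}_F)$ with each $X_i\to S$ a submersion of manifolds. Universality of colimits in $\on{Sh}(\on{Man}_F)$ gives $X\times_S Y=\on{colim}_i (X_i\times_S Y)$; colimit-preservation of $\underline{-}$ together with universality of colimits in $\on{CondAn}^{light}$ lets me commute the colimit past the pullback on both sides, reducing to the case where $X\to S$ is a single submersion of manifolds. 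Repeating the argument in the $Y$-variable, using that any sheaf admits a hypercover by manifolds, reduces further to the case where $X,Y$ are both manifolds and $X\to S$ is a submersion. In that case $X\times_S Y$ is representable by an $F$-manifold whose underlying topological space is the ordinary topological fiber product $X^{top}\times_{S^{top}}Y^{top}$, and the claim follows from the limit-preservation of $\underline{-}$ invoked above.

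The main obstacle is the hypercover-descent step in part 1, together with justifying the universal property in the hypercomplete setting; once that is in hand, part 2 is a mechanical unwinding using universality of colimits in the two $\infty$-topoi and the compatibility of submersive fiber products of manifolds with passage to underlying topological spaces.
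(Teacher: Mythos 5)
Your overall skeleton is the paper's: part 1 by verifying a codescent condition for $M\mapsto \underline{M^{top}}$ using Lemma \ref{etaleontopological}, and part 2 by distributing colimits over pullbacks until only a submersive fiber product of manifolds remains. But two of your justifications do not hold up as stated. In part 1, the upgrade ``Čech descent implies hyperdescent because $\on{CondAn}^{light}$ is hypercomplete'' is not a valid principle for a merely colimit-preserving functor: to push the $\infty$-connectivity of a hypercover augmentation into the target one needs the (left Kan extended) functor to be left exact, and the paper explicitly declines to claim that $X\mapsto\underline{X}$ preserves arbitrary pullbacks --- indeed it remarks that this is likely but not needed. The step is repaired by hypercompleteness of the \emph{source}, not the target: $\on{Sh}(\on{Man}_F)$ is the topological localization of presheaves at open covering sieves and happens to be hypercomplete, so a colimit-preserving functor out of presheaves factors through it as soon as it inverts covering sieves; Čech codescent (which you do verify) already suffices, and hypercover augmentations are then inverted automatically because they lie in the strongly saturated class generated by the covering sieves when the sheaf topos is hypercomplete. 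So your conclusion stands, but for a different reason than the one you give.

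In part 2 the genuine gap is the opening reduction: ``both sides are formed by pullback along $\underline{S}$, so by descent in $\on{CondAn}^{light}$ the question is local on $\underline{S}$, allowing me to assume $S$ is a manifold.'' The left-hand side $\underline{X\times_S Y}$ is not formed by a pullback in $\on{CondAn}^{light}$; to identify its base change along a cover $\underline{S'}\to\underline{S}$ with $\underline{(X\times_S Y)\times_S S'}$ is itself an instance of the compatibility being proved, so this localization is circular. The paper avoids base-changing in the target altogether: it works in $\on{Sh}(\on{Man}_F)$, writes the relevant objects as colimits of manifolds over the given base, and uses universality of colimits in one variable at a time (base fixed) on both sides, reducing directly to the claim that a pullback of a submersion of manifolds is representable and has the expected underlying condensed set --- which is where your argument also ends, correctly. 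To keep a localization-style reduction you would first have to establish the comparison for the covering maps themselves (e.g. for the second leg a manifold, or for representable submersions), and organize the argument so that every base change performed in $\on{CondAn}^{light}$ is along a map for which the comparison is already known; as written, the reduction assumes what is to be proved.
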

\begin{proof}
Part 1 follows formally from Lemma \ref{etaleontopological}.  For part 2, we can use that colimits distribute over pullbacks in any $\infty$-topos to reduce to the claim that pullbacks in $\on{Man}_F$ by submersions are representable and commute with the forgetful functor to condensed sets.
\end{proof}

\begin{remark}
A special case of 2 is that $X\mapsto \underline{X}$ preserves finite products.  In particular, it sends group objects to group objects.  Moreover, if $G$ is a group object in $\on{Sh}(\on{Man}_F)$, then
$$\underline{\ast/G} = \ast/\underline{G}$$
by commutation with colimits and finite products.
\end{remark}

\begin{remark}
It seems likely that $X\mapsto \underline{X}$ commutes with arbitrary pullbacks, hence gives a morphism in $\mc{T}op^L$.  But we won't need this.
\end{remark}

Up until now we've done things uniformly in $F$.  But of course there are major structural differences between $F$-manifolds for archimedean and non-archimedean $F$.  One such difference is that when $F$ is archimedean, every $F$-manifold is locally contractible.  Local contractibility gives a good theory of locally constant sheaves, see \cite{LurieHA} App.\ A, and relatedly shows that every $F$-manifold has an ``underlying anima''.  We now give an expression of this phenomenon in the setting of etale sheaves.  There is no real loss in restricting to $F=\mathbb{R}$, so we will.

\begin{lemma}\label{manifoldanima}
\begin{enumerate}
\item For $X\in\on{Sh}(\on{Man}_{\mbb{R}})$, there is an $|X|\in\on{An}$ and a map $\underline{X}\rightarrow |X|$ in $\on{CondAn}^{light}$ such that the pullback functor $\on{Sh}_{et}(|X|)\rightarrow\on{Sh}_{et}(\underline{X})$ is fully faithful.
\item Given $\underline{X}\rightarrow |X|$ as in 1, for arbitrary $A\in\on{An}$ we have
$$\on{Map}(|X|,A)\overset{\sim}{\rightarrow} \on{Map}(\underline{X},A).$$
In particular, $X\mapsto |X|$ is functorial in $\underline{X}$.
\item For $X\in\on{Sh}(\on{Man}_{\mbb{R}})$, the essential image of the fully faithful pullback functor $\on{Sh}_{et}(|X|)\rightarrow \on{Sh}_{et}(\underline{X})$ from 1 consists of the locally constant sheaves, meaning those sheaves $\mathcal{F}\in\on{Sh}_{et}(\underline{X})$ such that there exists a cover $\{f_i:X_i\rightarrow X\}_{i\in I}$ in $\on{Sh}(\on{Man}_{\mbb{R}})$ such that $f_i^\ast\mathcal{F}$ is pulled back along $X_i\rightarrow \ast$, for all $i\in I$.
\item The functor $X\mapsto |X|:\on{Sh}(\on{Man}_{\mbb{R}})\rightarrow\on{An}$ is uniquely characterized as follows: it preserves all colimits, and on the full subcategory of open polydisks it identifies with the constant functor with value $\ast\in\on{An}$.  More precisely, the full subcategory of $\on{Fun}(\on{Sh}(\on{Man}_{\mbb{R}}),\on{An})$ spanned by the functors satisfiying these two conditions is equivalent to the terminal $\infty$-category $\ast$.
\item The functor $X\mapsto |X|$ preserves finite products.
\end{enumerate}
\end{lemma}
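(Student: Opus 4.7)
The plan is to check preservation of the terminal object and of binary products separately. The terminal object is handled immediately by part 4: $\ast = \mbb{R}^0$ is an open polydisk (of dimension $0$), so $|\ast|\simeq\ast$. For binary products, I would produce the natural comparison map
$$\alpha_{X,Y}:|X\times Y|\rightarrow |X|\times |Y|$$
induced by the two projections $X\times Y\rightarrow X$ and $X\times Y\rightarrow Y$, and then show $\alpha_{X,Y}$ is an equivalence for all $X,Y\in\on{Sh}(\on{Man}_{\mbb{R}})$.

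The key observation is that both bifunctors $(X,Y)\mapsto |X\times Y|$ and $(X,Y)\mapsto |X|\times |Y|$ preserve colimits in each variable separately. For the first, $X\mapsto X\times Y$ preserves colimits since $\on{Sh}(\on{Man}_{\mbb{R}})$ is an $\infty$-topos, and $|-|$ preserves colimits by part 4. For the second, $|-|$ preserves colimits and $-\times |Y|$ preserves colimits since $\on{An}$ is an $\infty$-topos.

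Now I would exploit the fact (from the remark following the definition of $\on{Sh}(\on{Man}_F)$) that every sheaf on $\on{Man}_{\mbb{R}}$ admits a hypercover by disjoint unions of open polydisks; in particular the open polydisks generate $\on{Sh}(\on{Man}_{\mbb{R}})$ under colimits. Fixing $Y$, and using that both $X\mapsto |X\times Y|$ and $X\mapsto |X|\times |Y|$ preserve colimits in $X$, it suffices to show that $\alpha_{D,Y}$ is an equivalence when $D$ is an open polydisk. Since $|D|\simeq \ast$ by part 4, this amounts to showing that the map $|D\times Y|\rightarrow |Y|$ induced by the second projection is an equivalence. Now I would fix $D$ and run the same reduction on $Y$: both sides preserve colimits in $Y$, so it suffices to check when $Y=D'$ is another open polydisk. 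But $D\times D'$ is itself an open polydisk, so both sides equal $\ast$ by part 4, and any map between contractible anima is an equivalence.

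The one subtle point that deserves care is the justification of the reduction, namely that a natural transformation between two colimit-preserving functors out of $\on{Sh}(\on{Man}_{\mbb{R}})$ is an equivalence as soon as it is so on open polydisks. This is not literally part 4, but follows from the same underlying fact (open polydisks generate $\on{Sh}(\on{Man}_{\mbb{R}})$ under colimits via hypercovers), and can be made precise by presenting $\on{Sh}(\on{Man}_{\mbb{R}})$ as an accessible localization of presheaves on the full subcategory of open polydisks, so that any colimit-preserving functor to a cocomplete target is a left Kan extension of its restriction. Everything else in the argument is formal.
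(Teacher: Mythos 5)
Your proposal addresses only part 5 of the lemma: you take parts 1--4 as given (part 4 is invoked repeatedly as an input) and say nothing about the existence of $|X|$ with fully faithful pullback (part 1), the universal property $\on{Map}(|X|,A)\simeq\on{Map}(\underline{X},A)$ (part 2), the identification of the essential image with the locally constant sheaves (part 3), or the unique characterization of $X\mapsto|X|$ (part 4). In the paper these are the substantive points: part 1 is proved by noting that the class of $X$ admitting such a map $\underline{X}\to|X|$ is closed under colimits (using part 2), then reducing via hypercovers to an open polydisk, where $|D|=\ast$ works by the contractibility results of \cite{LurieHA} A.2; part 3 needs the locality of the condition of lying in $\on{Sh}_{et}(|X|)$; and part 4 uses hypercompleteness to see that any colimit-preserving functor out of $\on{Sh}(\on{Man}_{\mbb{R}})$ is the left Kan extension of its restriction to open polydisks. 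So as a proof of the stated lemma your proposal is incomplete.

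For part 5 itself, your argument is correct and is in substance the paper's: the paper simply observes that colimits distribute over finite products in any $\infty$-topos, so one reduces to open polydisks, where the claim is $\ast\times\ast=\ast$; your version makes the same reduction explicit, one variable at a time, via the comparison map $\alpha_{X,Y}:|X\times Y|\to |X|\times|Y|$. One small caution about your justification of the reduction: presenting $\on{Sh}(\on{Man}_{\mbb{R}})$ as an accessible localization of presheaves on polydisks does not by itself imply that every colimit-preserving functor is a left Kan extension of its restriction to polydisks; what you actually need (and also state) is that the open polydisks generate $\on{Sh}(\on{Man}_{\mbb{R}})$ under colimits, which holds because every sheaf admits a hypercover by disjoint unions of polydisks and the topos is hypercomplete. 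Given that, the clean way to finish is to note that, for fixed $Y$, the full subcategory of those $X$ for which $\alpha_{X,Y}$ is an equivalence is closed under colimits and contains the polydisks, and similarly in the other variable.
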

\begin{proof}
Note that 2 follows from 1, since mapping to $A$ is the same as giving a section of the etale projection map $A\times (-)\rightarrow (-)$.  It follows that collection of $X$ satisfying 1 is closed under colimits.  Thus (using hypercovers) we can reduce to the case where $X$ is an open polydisk.  Then we claim $|X|=\ast$ works.  Indeed, this follows by contractibility, see \cite{LurieHA} A.2.

For 3, note that we just showed that if a sheaf is in the essential image, then it is a constant sheaf on any open polydisk, hence it is locally constant. For the converse, suppose $\mathcal{F}\in\on{Sh}_{et}(\underline{X})$ is locally constant.  Then $\mathcal{F}$ locally comes from $\on{Sh}_{et}(|X|)$.  But the property of lying in $\on{Sh}_{et}(|X|)\subset\on{Sh}_{et}(X)$ is a local one thanks to the colimit preservation of $X\mapsto |X|$, so $\mathcal{F}$ comes from $\on{Sh}_{et}(|X|)$ as required.

For 4, note that by hypercompleteness that every colimit-preserving functor out of $\on{Sh}(\on{Man}_{\mbb{R}})$ is the left Kan extension of its restriction to open polydisks.  But for any $\infty$-category $\mathcal{C}$, the full subcategory of functors $\mathcal{C}\to \on{An}$ spanned by the constant functor with value $\ast$ is equivalent to $\ast$, whence the claim.

For 5, since colimits distribute finite products (in any $\infty$-topos, so in particular in $\on{Sh}(\on{Man}_{\mbb{R}})$ and in $\on{An}$), we can reduce to the case of open polydisks, which is obvious because $\ast\times \ast=\ast$.
\end{proof}

\begin{remark}
We can interpret $|X|$ as the ``homotopy type'', or ``underlying anima'', of $X$.  Claim 4 makes it easy to identify this model for the homotopy type with any other one.  For example, given $M\in\on{Man}_{\mbb{R}}$ we can consider the usual Kan complex $\on{Sing}(M)$ of singular simplices on $M$ (so $\on{Sing}(M)_n = \on{Cont}(\Delta^n,M)$), and its realization $\varinjlim_{\Delta^{op}} \on{Sing}(M)$ in $\on{An}$.  The functor
$$M\mapsto \varinjlim_{\Delta^{op}} \on{Sing}(M)$$
from $\on{Man}_F$ to $\on{An}$ is a cosheaf, because of the standard Mayer-Vietoris property for binary covers and the fact that $\on{Sing}(-)$ preserves filtered unions of open subsets.  Thus it extends uniquely to a colimit-preserving functor $\on{Sh}(\on{Man}_F)\rightarrow\on{An}$.  On the other hand, its value on an open polydisk is $\ast$ by contractibility, so we deduce that the usual singular simplicial Kan complex of an $F$-manifold $M$ gives a model for the abstract $|M|$ considered in Lemma \ref{manifoldanima}.  Note, however, that the map $\underline{M}\rightarrow |M|$, which is a fundamental structure, is not at all clear from the perspective of the singular simplicial complex.
\end{remark}

This gives the following.

\begin{theorem}\label{locconst}
Let $X\in\on{Sh}(\on{Man}_{\mbb{R}})$ with underlying anima $|X|$ as defined above.  The functor
$$\on{An}_{/|X|}\rightarrow \on{Sh}_{et}(\underline{X}),$$
arising from pull back of etale sheaves under the map $\underline{X}\rightarrow |X|$, is fully faithful.  The essential image consists of those $\mathcal{F}\in\on{Sh}_{et}(\underline{X})$ such that there is a cover $\{f_i:X_i\rightarrow X\}_{i\in I}$ in $\on{Sh}(\on{Man}_{\mbb{R}})$ for which the pullback $f_i^\ast\mathcal{F}\in \on{Sh}_{et}(\underline{X_i})$ is constant for all $i\in I$.
\end{theorem}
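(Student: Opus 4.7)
The plan is to deduce the theorem as a direct combination of parts of Lemma \ref{manifoldanima} together with the identification of the discrete case discussed earlier.

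First, I would recall from the discussion of the discrete case (following the observation that $\delta:\on{An}\to\on{CondAn}^{light}$ is fully faithful) that for any anima $Y$ we have a canonical equivalence $\on{Sh}_{et}(\delta(Y)) \simeq \on{An}_{/Y}$. Applied to $Y = |X|$, this identifies the source $\on{An}_{/|X|}$ of our functor with $\on{Sh}_{et}(|X|)$. Under this identification, the functor in the statement becomes the etale pullback functor along the canonical map $\underline{X}\to|X|$ provided by Lemma \ref{manifoldanima}(1).

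Next, fully faithfulness is then literally the content of Lemma \ref{manifoldanima}(1): by construction, $|X|$ was chosen precisely so that the pullback $\on{Sh}_{et}(|X|)\to \on{Sh}_{et}(\underline{X})$ is fully faithful. This takes care of the first half of the theorem with no additional work.

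Finally, for the identification of the essential image, I would invoke Lemma \ref{manifoldanima}(3) directly: it states exactly that the essential image of $\on{Sh}_{et}(|X|)\to\on{Sh}_{et}(\underline{X})$ consists of those $\mathcal{F}$ such that there exists a cover $\{f_i:X_i\to X\}_{i\in I}$ in $\on{Sh}(\on{Man}_{\mbb{R}})$ with $f_i^\ast\mathcal{F}$ pulled back from $X_i\to\ast$, i.e.\ constant on each $\underline{X_i}$. This matches the locally constant condition in the theorem verbatim. There is really no obstacle here since both assertions have already been established in Lemma \ref{manifoldanima}; the theorem is a clean repackaging that makes the ``local system'' viewpoint explicit by rewriting $\on{Sh}_{et}(|X|)$ as $\on{An}_{/|X|}$.
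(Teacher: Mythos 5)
Your proposal is correct and is exactly the paper's argument: the paper proves this theorem by combining Lemma \ref{manifoldanima} (parts 1 and 3 for fully faithfulness and the essential image) with the identification $\on{Sh}_{et}(\delta(Y))\simeq\on{An}_{/Y}$ from the discrete case, which is precisely the repackaging you describe.
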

\begin{proof}
This follows by combining Lemmas \ref{manifoldanima} and \ref{etaleonetale}.
\end{proof}

\begin{corollary}
Let $G$ be an $\mathbb{R}$-Lie group with underlying group anima $|G|$.  Then
$$\on{An}_{/(\ast/|G|)}\overset{\sim}{\rightarrow} \on{Sh}_{et}(\underline{\ast/G}).$$
In other words, an etale sheaf on $\ast/G$ is the same as an etale sheaf on $\ast/|G|$ which is the same thing as an anima with a $|G|$-action.
\end{corollary}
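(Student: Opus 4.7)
The plan is to apply Theorem \ref{locconst} to $X = \ast/G \in \on{Sh}(\on{Man}_{\mbb{R}})$ and verify two things: that the underlying anima is $|\ast/G| \simeq \ast/|G|$, and that every etale sheaf on $\underline{\ast/G}$ is locally constant in the precise sense described in that theorem.

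First I would compute $|\ast/G|$. The quotient $\ast/G$ in the $\infty$-topos $\on{Sh}(\on{Man}_{\mbb{R}})$ is by definition the geometric realization of the bar construction $B_\bullet G$ with $B_nG = G^n$. Since $X \mapsto |X|$ preserves all colimits and finite products by Lemma \ref{manifoldanima}(4,5), it carries this diagram to the analogous bar construction in $\on{An}$, giving
$$|\ast/G| \;\simeq\; \on{colim}_{\Delta^{op}} |G|^\bullet \;\simeq\; \ast/|G|.$$
Theorem \ref{locconst} applied to $X=\ast/G$ then yields a fully faithful functor
$$\on{An}_{/(\ast/|G|)} \;\hookrightarrow\; \on{Sh}_{et}(\underline{\ast/G}),$$
whose essential image consists of those $\mathcal{F}$ that become constant on pullback along some cover in $\on{Sh}(\on{Man}_{\mbb{R}})$.

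To obtain essential surjectivity, I would exhibit the single-element cover $f : \ast \to \ast/G$. This map is an effective epimorphism in $\on{Sh}(\on{Man}_{\mbb{R}})$ because its \v{C}ech nerve is exactly the bar construction whose colimit is $\ast/G$ (the standard presentation of a group quotient in any $\infty$-topos). The pullback $f^\ast\mathcal{F}$ lies in $\on{Sh}_{et}(\underline{\ast}) = \on{Sh}_{et}(\ast) \simeq \on{An}$, which coincides with $\on{An}_{/|\ast|}$ since $|\ast|=\ast$; such a sheaf is trivially constant. Hence every $\mathcal{F} \in \on{Sh}_{et}(\underline{\ast/G})$ is locally constant, and the fully faithful functor above is an equivalence.

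There is no real obstacle; the only step meriting care is the identification $|\ast/G| \simeq \ast/|G|$, which boils down to the colimit- and product-preservation of $|\cdot|$, together with the fact that $\ast \to \ast/G$ is an effective epimorphism in $\on{Sh}(\on{Man}_{\mbb{R}})$. The concluding informal sentence (``an anima with a $|G|$-action'') is then just the standard identification $\on{An}_{/(\ast/|G|)} \simeq \on{Fun}(\ast/|G|,\on{An})$.
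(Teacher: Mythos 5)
Your proof is correct and follows essentially the same route as the paper: apply Theorem \ref{locconst} to $X=\ast/G$ and observe that along the cover $\ast\to\ast/G$ every etale sheaf pulls back to a sheaf on $\ast$, hence is tautologically constant. The only difference is that you spell out the identification $|\ast/G|\simeq\ast/|G|$ via the bar construction and Lemma \ref{manifoldanima}(4,5), which the paper leaves implicit.
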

\begin{proof}
The map $\ast \rightarrow \ast/G$ gives a cover such that, tautologically, any sheaf is constant on pullback along it.  Thus this follows from Theorem \ref{locconst}.
\end{proof}

\begin{example}
We have $\on{Sh}_{et}(\ast/\mathbb{R})\overset{\sim}{\leftarrow}\on{An}$ via pullback, so the theory of etale sheaves on $\ast/\mathbb{R}$ is trivial... Or is it? [Cue dramatic music.] It turns out that the six functor formalism we consider will do something nontrivial even in this situation, and this is actually a crucial phenomenon, see Section \ref{jsec}.
\end{example}

It is convenient to make the following standard definition, as a reasonable category in which all our examples of interest will live.

\begin{definition}
Let $F$ be a local field.  An \emph{$F$-analytic smooth Artin stack} is an $X\in\on{Sh}(\on{Man}_F)$ such that there exists a surjective representable submersion $M\twoheadrightarrow X$ with $M\in\on{Man}_F$.
\end{definition}

\begin{remark}
A representable submersion is surjective in the sense of $\infty$-topos theory if and only if on pullback to any manifold it is surjective as a map of sets.  This follows because a submersion has open image, and a surjective submersion has local sections (by the implicit function theorem).
\end{remark}

\begin{example}
\begin{enumerate}
\item Suppose $G$ is an $F$-Lie group.  The \emph{classifying stack} $\ast/G$, also denoted $BG$, is an $F$-analytic smooth Artin stack.
\item More generally, suppose $G$ is an $F$-Lie group acting on an $F$-manifold $M$.  Then the quotient $M/G$ is an $F$-analytic smooth Artin stack, with cover $M\to M/G$.  This kind of example is called a \emph{quotient stack}.
\item Even more generally, if $N\in\on{Man}_F$, and $G\to N$ is an $F$-Lie group over $N$ (a group object in submersions with target $N$), then for any $F$-manifold $M\to N$ mapping to $N$ (not necessarily by a submerison) with an action by $G$ in the slice category of manifolds over $N$, the quotient $M/G$ is an $F$-analyic smooth Artin stack.  We'll call this a \emph{relative quotient stack}.
\end{enumerate}
\end{example}

In the $p$-adic case, we will have to make a study of families of Lie groups.  Recall that Lazard showed that every $p$-adic Lie group $G$ admits a compact open subgroup $H$ which is a \emph{uniform pro-$p$-group}.  Here we use the language of \cite{DixonAnalytic} instead of \cite{LazardGroupes}, and refer to the nice lecture notes \cite{Pstragowski} for more on this classical situation.

In particular, we recall the following properties of uniform pro-$p$-groups $H$:\label{scholiumuniform}
\begin{enumerate}
\item The subset of $p^n$-powers $H^{p^n}\subset H$ is an open normal subgroup, and $\cap_n H^{p^n}=\{e\}$.
\item Each $H^{p^n}/H^{p^{n+1}}$ is abelian, hence an $\mathbb{F}_p$-vector space, and has dimension $d$ where $d=\on{dim}(H)=\on{dim}(G)$;
\item For the continuous group cohomology of $H$ with constant $\mathbb{F}_p$-coefficients, we have:
\begin{enumerate}
\item The natural map $H^1(H;\mbb{F}_p)\to \on{Hom}_{\mbb{F}_p}(H/H^p,\mbb{F}_p)$ is an isomorphism.
\item For all $x\in H^1(H;\mbb{F}_p)$, we have $x^2=0$ in $H^2(H;\mbb{F}_p)$, and the induced map
$$\Lambda^i_{\mbb{F}_p}H^1(H;\mbb{F}_p)\to H^i(H;\mbb{F}_p)$$
is an isomorphism for all $i\geq 0$.
\end{enumerate}
\end{enumerate}

Now we go to the in-families version.  If $M$ is a $\mathbb{Q}_p$-manifold, recall that a $\mathbb{Q}_p$-Lie group over $M$ is a group object in the slice category of submersions with target $M$.

\begin{lemma}\label{nonarchimedeancontract}
Let $n,m\geq 0$ and let $\varphi: U\to\mathbb{Q}_p^m$ be a $\mathbb{Q}_p$-analytic map from an open subset $U\subset \mathbb{Q}_p^n$ containing $0$ such that:
\begin{enumerate}
\item $\varphi(0)=0$;
\item the $\mathbb{Q}_p$-linear map $d\varphi |_0:\mathbb{Q}_p^n \to \mathbb{Q}_p^m$ sends $\mathbb{Z}_p^n$ inside $\mathbb{Z}_p^m$.
\end{enumerate}
Then there is an $N\geq 0$ such that:
\begin{enumerate}
\item $p^N\mathbb{Z}_p^n\subset U$;
\item The power series expansion of $\varphi$ at the origin converges on $p^N\mathbb{Z}_p$;
\item If we re-coordinatize both source and target by replacing the old coordinate $X$ with the new coordinate $X'=p^NX$, then each coefficient in the Taylor expansion of $\varphi$ at the origin lies in $\mathbb{Z}_p$;
\item With respect to the new coordinates, $\varphi(\mathbb{Z}_p^n)\subset \mathbb{Z}_p^m$.
\end{enumerate}
\end{lemma}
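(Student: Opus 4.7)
The plan is a direct bookkeeping argument on the Taylor coefficients of $\varphi$ at the origin. First, I would write the expansion componentwise as $\varphi = (\varphi_1,\ldots,\varphi_m)$ with $\varphi_j(X) = \sum_{\alpha\in\mathbb{N}^n} c_\alpha^{(j)} X^\alpha$, and invoke the definition of $\mathbb{Q}_p$-analyticity at $0$ to fix some $N_1\geq 0$ for which each of these series converges absolutely at a point of absolute value $p^{-N_1}$ in each coordinate, and for which additionally $p^{N_1}\mathbb{Z}_p^n\subset U$ (the latter by shrinking the neighborhood if needed, using openness of $U$ at $0$). Absolute convergence at radius $p^{-N_1}$ is equivalent to the uniform lower bound $v_p(c_\alpha^{(j)}) \geq N_1|\alpha| - M$ for some constant $M$, which will be the only quantitative input needed. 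Conditions (1) and (2) of the lemma then hold for every $N\geq N_1$.

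Next, I would compute the effect of the coordinate change $X = p^N X'$, $Y = p^N Y'$: expanding $p^N Y' = \varphi(p^N X')$ shows that the new Taylor coefficient at multi-index $\alpha$ is $c'^{(j)}_\alpha = p^{N(|\alpha|-1)}\, c_\alpha^{(j)}$. The low-order cases are immediate from the hypotheses: for $|\alpha|=0$, $c_0^{(j)} = \varphi_j(0) = 0$, and for $|\alpha|=1$ the matrix $(c_{e_i}^{(j)})$ is the Jacobian $d\varphi|_0$, which by hypothesis 2 has entries in $\mathbb{Z}_p$ and is not rescaled since $N(|\alpha|-1)=0$. For $|\alpha|\geq 2$, the convergence estimate combined with the positive exponent yields
$$v_p(c'^{(j)}_\alpha) \geq (N+N_1)|\alpha| - N - M \geq N + 2N_1 - M,$$
which is $\geq 0$ once $N\geq \max(N_1,\, M - 2N_1)$. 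Fixing such an $N$ then gives (3).

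Finally, condition (4) comes essentially for free: once the rescaled series has all coefficients in $\mathbb{Z}_p$ and still converges on the new unit polydisk $\mathbb{Z}_p^n$ (because, in old coordinates, that polydisk is $p^N\mathbb{Z}_p^n\subset p^{N_1}\mathbb{Z}_p^n$), each term has nonnegative $p$-adic valuation and the sum therefore lies in $\mathbb{Z}_p$. I do not expect any real obstacle here; the only step requiring a moment of care is extracting the clean bound $v_p(c_\alpha^{(j)}) \geq N_1|\alpha| - M$ from the definition of absolute convergence on a closed polydisk, but that is standard. The content of the lemma is really the observation that rescaling source and target by the same factor leaves the linear part of $\varphi$ untouched while contracting each degree-$k$ term by $p^{N(k-1)}$, a rate that grows with $k$ and so dominates any fixed denominators in the higher coefficients.
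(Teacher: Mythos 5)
Your argument is essentially the paper's own proof: both work directly with the Taylor coefficients, observe that rescaling source and target by the same factor $p^N$ multiplies the degree-$k$ coefficient by $p^{N(k-1)}$ (so the vanishing constant term and the integral linear part are untouched), and choose $N$ large enough that this growth swamps the denominators allowed by convergence; the paper just phrases it as "first pick $N$ with $c_\alpha p^{N|\alpha|}\in\mathbb{Z}_p$, then rescale by $p^{2N}$" instead of your explicit valuation bound. One small correction: absolute convergence at radius $p^{-N_1}$ gives $v_p(c_\alpha^{(j)})\geq -N_1|\alpha|-M$, not $N_1|\alpha|-M$; with the correct sign your estimate becomes $v_p(c'^{(j)}_\alpha)\geq (N-N_1)|\alpha|-N-M\geq N-2N_1-M$ for $|\alpha|\geq 2$, so one simply takes $N\geq 2N_1+M$ and the rest of the argument, including point (4), goes through unchanged.
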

\begin{proof}
We can assume $m=1$ by working coordinate-wise.  It is clearly no problem to ensure 1 and 2.  In fact if we write the power series expansion of $\varphi$ as
$$\varphi(X) = \sum_{i_1,i_2,\ldots,i_n} c_{i_1,\ldots,i_n}x_1^{i_1}\ldots x_n^{i_n},$$
then by definition of locally analytic we can ensure that
$$\sum_{i_1,\ldots,i_n} |c_{i_1,\ldots,i_n}|p^{-N(i_1+\ldots+i_n)}<\infty,$$
and in particular for all but finitely many $i_1,\ldots,i_n$ we have $c_{i_1,\ldots,i_n}p^{N(i_1+\ldots+i_n)}\in \mbb{Z}_p$.  Since the constant term vanishes, by increasing $N$ we can even ensure this for all $i_1,\ldots,i_N$.  If we make the re-coordinatizations $X'= p^MX$ on both source and target, this changes $c_{i_1,\ldots,i_n}$ to
$$c'_{i_1,\ldots,i_n} = \frac{1}{p^M}p^{M(i_1+\ldots+i_n)}c_{i_1,\ldots,i_n}.$$
The linear terms are unchanged, but we assumed those are in $\mathbb{Z}_p$ anyway.  Thus if we take $M=2N$ we get all the coefficients to lie in $\mathbb{Z}_p$, meaning we can ensure 1, 2, and 3.  But then 4 follows because $\mathbb{Z}_p$ is closed under convergent sums in $\mathbb{Q}_p$.

\end{proof}

\begin{proposition}\label{localstructurepadiclie}
Let $p$ be a prime, let $M$ be a compact Hausdorff $\mathbb{Q}_p$-manifold, and let $G\to M$ be a $\mathbb{Q}_p$-Lie group over $M$.  Then there exists a compact Hausdorff open sub-group object $H\subset G$ over $M$ such that for all $m\in M$, the fiber $H_m$ is a uniform pro-$p$-group.
\end{proposition}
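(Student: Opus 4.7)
The plan is to build $H$ locally around each point of $M$, using Lemma \ref{nonarchimedeancontract} to put things in an integral normal form, and then glue using that compact Hausdorff $\mathbb{Q}_p$-manifolds are totally disconnected.

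\textbf{Local normal form.} Fix $m_0 \in M$. Since $G \to M$ is a submersion with identity section $e$, the implicit function theorem lets me pick coordinates identifying a neighborhood of $m_0$ in $M$ with an open subset $U \subset \mathbb{Q}_p^n$ containing $0$ and identifying $G$ near $e(m_0)$ with $U \times V$ for an open neighborhood $V \subset \mathbb{Q}_p^d$ of $0$, with projection $(u, v) \mapsto u$ and identity section $u \mapsto (u, 0)$. Multiplication is then an analytic map $\mu: U \times V \times V \to \mathbb{Q}_p^d$ with $\mu(u, v, 0) = v$ and $\mu(u, 0, w) = w$, and inversion is $\iota: U \times V \to \mathbb{Q}_p^d$ with $\iota(u, 0) = 0$. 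Both vanish at the origin, and their differentials there are $(\alpha, \beta, \gamma) \mapsto \beta + \gamma$ and $(\alpha, \beta) \mapsto -\beta$, which visibly send integer points to integer points. Applying Lemma \ref{nonarchimedeancontract} separately to $\mu$ and $\iota$ and taking the larger of the two resulting exponents, I obtain $N$ such that after uniformly rescaling all source and target coordinates by $p^N$, both $\mu$ and $\iota$ have $\mathbb{Z}_p$-Taylor coefficients and restrict to maps $\mathbb{Z}_p^n \times \mathbb{Z}_p^d \times \mathbb{Z}_p^d \to \mathbb{Z}_p^d$ and $\mathbb{Z}_p^n \times \mathbb{Z}_p^d \to \mathbb{Z}_p^d$ respectively. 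In these rescaled coordinates, $\mathbb{Z}_p^n \times \mathbb{Z}_p^d$ is a compact open sub-group object of $G$ lying over the clopen neighborhood $\mathbb{Z}_p^n \subset M$ of $m_0$.

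\textbf{Uniform pro-$p$ fibers.} For any $k \geq 1$, $\mathbb{Z}_p^n \times p^k \mathbb{Z}_p^d$ remains a compact open sub-group object, since the integer-coefficient expansion $\mu_u(v, w) = v + w + O(vw)$ sends $p^k \mathbb{Z}_p^d \times p^k \mathbb{Z}_p^d$ into $p^k \mathbb{Z}_p^d$ (and similarly for $\iota_u$). For $k \geq 2$ (or $k$ slightly larger if $p = 2$) and any fixed $u$, the fiber is uniform pro-$p$. Indeed the $p$-th power map has the form $v \mapsto v^p = pv + \eta(v)$ with $\eta$ an integer-coefficient analytic map vanishing to order $2$ at the origin; substituting $v = p^k a$ converts the equation $v^p = p^{k+1} b$ into a fixed-point problem $a = b - p \cdot (\text{integer analytic in } a)$, which for $k \geq 2$ is a strict contraction on $\mathbb{Z}_p^d$ with unique solution by the $p$-adic Banach fixed-point theorem. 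This yields a bijection $p^k \mathbb{Z}_p^d \simeq p^{k+1} \mathbb{Z}_p^d$ via $v \mapsto v^p$; iterating, the subgroup of $p^n$-th powers is exactly $p^{k+n} \mathbb{Z}_p^d$ with successive quotients of constant rank $d$ as $\mathbb{F}_p$-vector spaces, and the commutator lands in $p^{2k} \mathbb{Z}_p^d \subset p^{k+1} \mathbb{Z}_p^d$, giving powerfulness. This is the uniform pro-$p$ structure of \cite{DixonAnalytic}. Because the Taylor coefficients are integral uniformly in $u \in \mathbb{Z}_p^n$, the same $k$ works over the whole base.

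\textbf{Gluing.} Since $M$ is compact Hausdorff and locally modelled on totally disconnected $\mathbb{Q}_p^n$, $M$ is a Stone space, and any open cover admits a refinement by a finite disjoint clopen partition. Applying the local construction at every $m_0 \in M$ produces a clopen cover of $M$ in which each element carries a compact open sub-group object with uniform pro-$p$ fibers. Refining to a finite disjoint clopen partition $M = V_1 \sqcup \cdots \sqcup V_r$ with each $V_i$ contained in one of the local neighborhoods, I set $H = \bigsqcup_i H_i|_{V_i}$; this is the required compact open sub-group object of $G$ over $M$ with uniform pro-$p$ fibers. The main obstacle in the plan is the fiberwise uniformity step, which amounts to redoing the classical Lazard/Dixon-Du Sautoy-Mann-Segal analysis in the integral chart produced in step 1; the uniformity of the integral Taylor coefficients in $u$ makes this fiberwise reduction immediate.
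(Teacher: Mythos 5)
Your proposal is correct and follows essentially the same route as the paper: reduce to a local chart over a profinite base (gluing is trivial), apply Lemma \ref{nonarchimedeancontract} to put the group law in integral form after rescaling, and then observe that the resulting convergent formal group law with $\mathbb{Z}_p$-coefficients yields uniform pro-$p$ fibers on a small polydisc. The only divergence is cosmetic: the paper rescales the single map $(g,h)\mapsto gh^{-1}$ and cites \cite{DixonAnalytic} 8.4 for the uniformity of the fibers, whereas you rescale $\mu$ and $\iota$ separately and re-derive the uniformity by hand via the contraction-mapping analysis of the $p$-power map, which is just the standard proof of that cited result.
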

\begin{proof}
Note that because $M$ is profinite, we are free to work locally on $M$; the gluing back up will be trivial.  In particular we can assume $M=\mathbb{Z}_p^e$ for some $e\geq 0$.  Using the implicit function theorem, again working locally on $M$, we can write $G\to M$ in local coordinates around the identity section as the projection
$$\mathbb{Z}_p^d \times \mathbb{Z}_p^e \to \mathbb{Z}_p^e,$$
such that the identity section corresponds to $t\mapsto (0,t)$.  Now consider the map $\varphi: G\times_M G\to G$ defined by $(g,h)\mapsto gh^{-1}$.  Note that an open subset $H\subset G$ containing the identity section is a sub-group object if and only if $\varphi(H\times_M H)\subset H$. Restricting to our coordinate chart $\varphi$ takes the form
$$\varphi: \mbb{Z}_p^d\times \mbb{Z}_p^d\times\mbb{Z}_p^e \to G,$$
lying over $M=\mbb{Z}_p^e$.  By continuity there is an $M\geq 0$ such that $\varphi$ sends $p^M(\mbb{Z}_p^{d+d}\times\mbb{Z}_p^e)$ inside our coordinate chart $\mbb{Z}_p^d\times\mbb{Z}_p^e$ again.  Now, we have $\varphi(0)=0$, and
$$d\varphi|_0(X,Y,Z) = X - Y + Z,$$
Thus we can apply Lemma \ref{nonarchimedeancontract} to conclude that there is some $N\geq 0$ such that after changing all the coordinates via $T' = p^NT$, we have
$$\varphi(\mbb{Z}_p^d\times \mbb{Z}_p^d\times\mbb{Z}_p^e)\subset \mbb{Z}_p^d\times \mbb{Z}_p^e,$$
and in fact the power series expansion of $\varphi$ at $0$ has $\mathbb{Z}_p$-coefficients.

It follows that $\mathbb{Z}_p^d\times \mathbb{Z}_p^e$ defines a compact open subgroup object of $G$.  Moreover, if we specialize to the fiber over any $t\in \mbb{Z}_p^e$, we get that the group law is defined by a (convergent) formal group law with $\mathbb{Z}_p$-coefficients.  By \cite{DixonAnalytic} 8.4, then, if we further restrict to $(2p\mathbb{Z}_p^d)\times\mathbb{Z}_p^e$, then this gives a uniform pro-$p$-group (and still a compact open subgroup object).
\end{proof}

Now let us describe some consequences for the associated light condensed anima.

\begin{proposition}\label{localstructurecorollary}
Let $M$ be a compact Hausdorff $\mathbb{Q}_p$-manifold, let $G\to M$ be a group object in sumbersions over $M$, and let $H\subset G$ be a compact Hausdorff open sub-group object such that each fiber $H_x$ is a uniform pro-$p$-group (whose existence is guaranteed by the previous result).  Let $BH \in \on{Sh}(\on{Man}_{\mbb{Q}_p})$ denote the classifying stack of $H$ relative to $M$, and similarly for $H^{p^n}$ or $G$.  

Then:
\begin{enumerate}
\item For $n\geq 1$, the subset $H^{p^n}\subset G$ is also a compact open sub-group object.
\item $\underline{BH}\to \underline{BG}$ is etale, as is $\underline{BH^{p^n}}\to \underline{BH}$ for all $n\geq 0$.
\item $\underline{BH} = \varprojlim_n \underline{B(H/H^{p^n})}$.
\item $\underline{BH}$ is a 1-truncated light profinite anima.
\end{enumerate}
\end{proposition}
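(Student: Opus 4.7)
I would proceed in the order of the four parts, reducing the later ones to the earlier. The main obstacle is Part 3, the identification of $\underline{BH}$ with the inverse limit; the other parts reduce fairly directly to standard local structure or to Part 3.

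For Part 1, I would work in local coordinates as in the proof of Proposition \ref{localstructurepadiclie}: after shrinking $M$, the group $H$ is modeled as $\mbb{Z}_p^d \times M \to M$ with multiplication given by a $\mbb{Z}_p$-coefficient formal group law whose fibers are uniform pro-$p$-groups. Via the exponential/logarithm correspondence for uniform pro-$p$-groups, the subset $H^{p^n}$ is identified in such coordinates with $p^n \mbb{Z}_p^d \times M$, which is manifestly a compact open submanifold; that it is a sub-group object follows from $p^n\mbb{Z}_p^d$ being closed under the $\mbb{Z}_p$-coefficient Baker--Campbell--Hausdorff formula.

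For Part 2, I would base-change $\underline{BH} \to \underline{BG}$ along the surjection $\underline{M} \to \underline{BG}$ induced by the identity section of $G\to M$ (presenting $M$ as the trivial $G$-torsor). The pullback is $\underline{G/H} \to \underline{M}$, and it suffices to show $G/H \to M$ is a local homeomorphism. Given $g\in G$ over $m\in M$, submersivity gives a chart-neighborhood $U$ of $g$ with $U\to M$ an open embedding, and openness of $H\subset G$ plus continuity of the group operations let one shrink $U$ so that $U^{-1}U \subset H$; then $U$ embeds openly into $G/H$ and composes with $G/H\to M$ to the open embedding $U\to M$. Hence $G/H \to M$ is a local homeomorphism, so by Lemma \ref{etaleontopological} $\underline{G/H}\to\underline{M}$ is etale in $\on{CondAn}^{light}$, and by the locality of etale maps (Lemma \ref{etaleproperties}) so is $\underline{BH}\to \underline{BG}$. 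The identical argument with $G$ replaced by $H$ and $H$ by $H^{p^n}$ gives etaleness of $\underline{BH^{p^n}}\to \underline{BH}$.

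For Part 3, both sides of the comparison map $\underline{BH} \to \varprojlim_n \underline{B(H/H^{p^n})}$ are $1$-truncated condensed anima. By the Postnikov completeness of $\on{CondAn}^{light}$ it suffices to check the map induces isomorphisms on $\tau_{\leq 0}$ and on the relative loop space over the identity section $\underline{M}\to \underline{BH}$. On $\tau_{\leq 0}$ both sides are $\underline{M}$: each $\tau_{\leq 0}(\underline{B(H/H^{p^n})}) = \underline{M}$, the tower of $\underline{B(H/H^{p^n})}$ has surjective transition maps (since $H/H^{p^n}\to H/H^{p^{n-1}}$ is surjective), and by repleteness together with Proposition \ref{truncateprof}, $\tau_{\leq 0}$ commutes with the tower limit. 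On loops, since $\Omega$ commutes with all limits, we compare $\underline{H}$ with $\varprojlim_n \underline{H/H^{p^n}}$; these agree because the natural map $H \to \varprojlim_n H/H^{p^n}$ is a continuous bijection of compact Hausdorff spaces over $M$ (injective since $\cap_n H_x^{p^n} = \{e_x\}$ fiberwise, surjective by the standard compactness argument for cofiltered limits of nonempty compact fibers), and hence a homeomorphism.

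For Part 4, given Part 3, it suffices to observe that each $\underline{B(H/H^{p^n})}$ is a $1$-truncated light profinite anima: locally on $\underline{M}$ (which is itself a compact Hausdorff light profinite set) the finite cover $H/H^{p^n}\to M$ splits as $\underline{U}\times F$ for a finite group $F$, so $\underline{B(H/H^{p^n})}$ is $\pi$-finite etale over $\underline{M}$ and Lemma \ref{pifiniteproperties} gives that it lies in $\on{ProfAn}^{light}$ and is $1$-truncated. A countable tower limit of light profinite anima is again light profinite, and $1$-truncation is preserved under limits; hence $\underline{BH}$ is a $1$-truncated light profinite anima.
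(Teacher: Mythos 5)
Your part 1 has a genuine gap. You assert that, after shrinking $M$, the given $H$ itself becomes $\mathbb{Z}_p^d\times M\to M$ with a $\mathbb{Z}_p$-coefficient group law under which $H^{p^n}$ is $p^n\mathbb{Z}_p^d\times M$, citing the proof of Proposition \ref{localstructurepadiclie}. That proof does not supply this: it constructs \emph{some} small compact open subgroup in such coordinates, after shrinking in the fibre direction as well, and says nothing about an arbitrary prescribed $H$ with uniform fibres. What you need is a family version of coordinates of the second kind for the bundle of uniform groups $H\to M$ (a topological generating set of the fibres varying analytically in the base, inducing a chart of all of $H$ over a small piece of $M$ identifying $H^{p^n}$ with $p^n\mathbb{Z}_p^d\times M$). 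This is true, but it is exactly where the content of part 1 sits: fibrewise openness of $H_x^{p^n}$ in $H_x$ does not give openness of $H^{p^n}$ in the total space, so delegating it to an inapplicable citation leaves the key point unproved. (The paper avoids this by showing $H/H^{p^n}\to M$ is a finite covering map, using that its fibres are finite of locally constant cardinality $p^{nd}$, and then reading off openness of $H^{p^n}$ from openness of the identity section.)

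In part 2 your reduction to showing $G/H\to M$ is a local homeomorphism, via base change along $\underline{M}\to\underline{BG}$ and Propositions \ref{underlying} and \ref{etaleontopological}, is the same as the paper's, but the point-set step is garbled: an open neighborhood $U$ of $g$ in $G$ cannot map to $M$ by an open embedding, since the fibres of the submersion $G\to M$ are $d$-dimensional, and the condition $U^{-1}U\subset H$ is not meaningful in the relative setting because products are only defined within fibres. The fix is short: take a local analytic section $\sigma:V\to G$ of $G\to M$ through $g$; then $\sigma(V)\cdot H$ is open in $G$, being the preimage of the open subset $H$ under $g'\mapsto \sigma(\pi(g'))^{-1}g'$, so $q\circ\sigma$ is an open embedding into $G/H$ inverse to the projection, giving the local homeomorphism. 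Parts 3 and 4 are essentially correct: the paper instead base-changes twice along surjections (repleteness) to reduce part 3 to $\bigcap_n H^{p^n}$ being the identity section, while you compare $\tau_{\leq 0}$ and the relative loop object and use $H\simeq\varprojlim_n H/H^{p^n}$; both work, though your reduction should be justified by noting that the identity sections are effective epimorphisms onto both sides (which follows from your $\tau_{\leq 0}$ computation) and comparing the resulting \v{C}ech nerves, rather than by appealing to Postnikov completeness.
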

\begin{proof}
By definition $H^{p^n}\subset H$ is the image of the $p$-power map $H \to H$, hence it is compact.  It is  a sub-group object because this is so fiberwise (\ref{scholiumuniform}).  Then $H/H^{p^n}$ is the quotient of a compact Hausdorff space by a closed equivalence relation, hence compact Hausdorff.  The map $H/H^{p^n}\to M$ of compact Hausdorff spaces has finite fibers, and the cardinality of the fibers is a locally constant function of the base (by \ref{scholiumuniform}); it follows that $H/H^{p^n}\to M$ is finite etale (a finite covering map), and a fortiori $H^{p^n}\subset H$ is open, proving 1.

Next we claim that if $G'\subset G$ is any open sub-group object, the quotient $G/G'\to M$ is etale.  Indeed because $G\to M$ is a submersion it admits local sections through any given point, hence so does $G/G'\to M$.  The conclusion follows easily.  In this situation we have that $BG'\to BG$ is a pullback of $G/G'\to M$, hence it is also etale.  Moreover this pullback is preserved by the functor to light condensed anima by Lemma \ref{underlying}, and we deduce 2.

Now consider 3.  By repleteness $M$ surjects onto the target, so we can test this map being an isomorphism on base change to $M$.  Thus we reduce to showing 
$$\varprojlim_n \ul{BH^{p^n}} \overset{\sim}{\rightarrow} \ul{M}.$$
Using the same trick again with repleteness and pullbacks, we further reduce to showing $M\overset{\sim}{\rightarrow}\varprojlim_n H^{p^n}$.  But this is a map of compact Hausdorff spaces and it's a bijection as we see looking fiberwise.

Finally, for 4, given 3 it suffices to show that each $\ul{B(H/H^{p^n})}$ is a 1-truncated light profinite anima.  But we saw above that $H/H^{p^n}\to M$ is finite etale.  It follows that as a group object it is locally constant with finite fibers, and the claim follows.
\end{proof}

Thus, in particular, for any $M\in\on{Man}_F$ and any Lie group object $G\to M$ over $M$, the relative $BG$ has the following property: etale locally, the associated light condensed anima is light profinite.  More generally one can deduce that the same claim holds for any quotient stack or relative quotient stack.  I conjecture that this actually holds for all $p$-adic analytic smooth Artin stacks:

\begin{conjecture}
Let $X$ be a $p$-adic analytic smooth Artin stack.  Then étale locally, the associated light condensed anima is a light profinite anima.
\end{conjecture}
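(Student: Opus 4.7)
The strategy is to reduce the conjecture to the classifying-stack case already handled by Proposition \ref{localstructurecorollary}. Given a $p$-adic analytic smooth Artin stack $X$ with smooth cover $M\twoheadrightarrow X$ and a point $\tilde x\in M$ above $x\in X$, the isotropy $I_{\tilde x}:=\ast\times_X\ast$ is a $p$-adic Lie group, and $M\times_X M\rightrightarrows M$ is a smooth Lie groupoid whose source and target are submersions. The first step I would take is to prove a $p$-adic analytic analogue of the Luna slice theorem (or, more precisely, of the Alper--Hall--Rydh local structure theorem for algebraic stacks): construct an étale morphism $V/K\to X$ whose image contains $x$, where $V$ is a $\mbb{Q}_p$-manifold through $\tilde x$ and $K\subset I_{\tilde x}$ is a compact open subgroup acting smoothly on $V$.

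Granted such a local presentation, I would apply Proposition \ref{localstructurepadiclie} to $K$ (in its relative form, if $V$ is positive-dimensional) to replace it by an open subgroup $H$ each of whose fibers is a uniform pro-$p$-group, and shrink $V$ to a compact $H$-invariant open subset of itself. The problem then reduces to showing that $\underline{V/H}$ is a light profinite anima whenever $V$ is a compact Hausdorff $\mbb{Q}_p$-manifold acted on smoothly by a uniform pro-$p$-group $H$.

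To handle this last step I would exploit the fiber sequence
$$\underline{V}\longrightarrow\underline{V/H}\longrightarrow\underline{BH}$$
in $\on{CondAn}^{light}$. The base is light profinite by Proposition \ref{localstructurecorollary}, and the fiber $\underline{V}$ is light profinite since a compact Hausdorff, second-countable, totally disconnected space is a light profinite set. These facts already show that the set-theoretic quotient $\pi_0(\underline{V/H})=V/H$ is a light profinite set and that the pointwise stabilizers $\pi_1(\underline{V/H},[v])=H_v$ are profinite groups. What remains is a coherent light profinite presentation of the $1$-stack $\underline{V/H}$ globally. I would construct this by using the tower $H\supset H^p\supset H^{p^2}\supset\cdots$ of normal open subgroups with $\bigcap_n H^{p^n}=\{e\}$ to exhibit $\underline{V/H}$ as a sequential inverse limit of $1$-truncated $\pi$-finite anima along the lines of the proof of Proposition \ref{localstructurecorollary}(3), invoking the repleteness of $\on{CondAn}^{light}$ (Lemma \ref{replete}) to verify that the limit is realized correctly after base-change along the section $V\to V/H$.

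The principal obstacle is the first step: producing the étale-local quotient-stack presentation. For smooth Artin stacks with jumping stabilizer dimension, as exhibited by $\mbb{Q}_p/\mbb{Q}_p^\times$, no geometric slice through $\tilde x$ exists, and one must directly construct an étale neighborhood of the required form. Adapting the strategy of Alper--Hall--Rydh from algebraic to $\mbb{Q}_p$-analytic geometry --- exploiting the fact that every $p$-adic Lie group has a uniform pro-$p$ compact open subgroup, and perhaps passing to the formal completion of the groupoid $M\times_X M\rightrightarrows M$ along the identity bisection over $\tilde x$ --- is the heart of the technical work required.
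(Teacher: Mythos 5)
The statement you are proving is not proved in the paper at all: it is stated there as an open conjecture (the paper only establishes the special cases of $BG$, quotient stacks, and relative quotient stacks, via Proposition \ref{localstructurecorollary} and the remark following it, and notes that the general case would imply every $p$-adic analytic smooth Artin stack is $!$-good in the sense of Theorem \ref{good}). So there is no paper proof to compare against, and your proposal must stand on its own — which it does not, as you yourself concede. Your steps 2 and 3 essentially re-derive the cases the paper already has: once one knows that étale locally $X$ is a quotient $V/K$ of a compact Hausdorff $\mbb{Q}_p$-manifold by a smooth action of a compact $p$-adic Lie group, Proposition \ref{localstructurepadiclie} plus the tower/repleteness argument of Proposition \ref{localstructurecorollary} (together with the standard fact that a continuous action of a light profinite group on a light profinite set is a pro-(finite action)) gives the light profiniteness. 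The entire content of the conjecture is therefore concentrated in your first step, which you do not prove.

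That first step — a $\mbb{Q}_p$-analytic analogue of the Luna slice theorem or of the Alper--Hall--Rydh local structure theorem, producing étale-locally a presentation $V/K$ with $K$ a \emph{compact} open subgroup of the isotropy acting on a manifold $V$ — is not a technical lemma one can expect to import; it is essentially a restatement of the conjecture. Indeed, given the paper's quotient-stack case, ``étale locally a quotient of a manifold by a compact $p$-adic Lie group'' is at least as strong as ``étale locally light profinite,'' so your reduction begs the question. Moreover the analogy with AHR is doubtful on its face: their theorem requires linearly reductive (or at least nice) stabilizers at closed points and its proof runs through deformation theory and coherent completeness of adic completions, none of which has an evident counterpart for $\mbb{Q}_p$-analytic Artin stacks; and a general smooth groupoid $M\times_X M\rightrightarrows M$ need not be étale-locally equivalent to a group action at all, let alone by a compact group, as your own example $\mbb{Q}_p/\mbb{Q}_p^\times$ with jumping, non-compact stabilizers illustrates. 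As it stands, then, your text is a plausible reduction of the conjecture to a hypothetical local structure theorem whose difficulty is the same as (or greater than) the original statement, not a proof; the conjecture remains open both in the paper and in your proposal.
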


If true, this would imply that every $p$-adic analytic smooth Artin stack is ``$!$-good'' in the sense of Theorem \ref{good}.

\section{Interlude on descendability in sheaf categories}\label{descsec}

For $\mathcal{C}\in\on{CAlg}(\on{Pr}_{st}^L)$, or equivalently $\mathcal{C}$ a symmetric monoidal presentable stable $\infty$-category where the tensor product distributes over colimits, Mathew in \cite{MathewGalois}, \cite{MathewDescent} defined and studied the condition of \emph{descendability} on a commutative algebra object $A\in\on{CAlg}(\mathcal{C})$.  This notion has important technical relevance in the construction of six functor formalisms, so we will prove some results giving criteria for descendability in (certain) sheaf categories.

We start with a review of some aspects of Mathew's theory.  For the basic definition, actually, the commutative algebra structure on $A$ is not relevant.

\begin{definition}
Let $\mathcal{C}$ be a symmetric monoidal stable $\infty$-category and let $A\in \mathcal{C}$.
\begin{enumerate}
\item Say a map $f:X\to Y$ in $\mathcal{C}$ is \emph{$A$-null} if the induced map $X\otimes A \to Y \otimes A$ in $\mathcal{C}$ is null, i.e.\ homotopic to $0$.
\item For $d\geq 0$, say that $A$ is \emph{descendable of index $\leq d$} if whenever $X_0\to X_1\to\ldots \to X_d$ is a sequence of $d$ composable $A$-null maps, the composition $X_0\to X_d$ is null.
\item Say that $A$ is descendable if if is descendable of index $\leq d$ for some $d\geq 0$.
\end{enumerate}
\end{definition}

\begin{example}
Suppose $\mathcal{C}=\on{D}(k)$ for a field $k$.  Then $A\in\mathcal{C}$ is descendable if and only if $A\neq 0$, in which case it is descendable of index $\leq 1$.  Indeed, every nonzero $A\in \on{D}(k)$ admits $k[n]$ as a summand for some $n\in\mathbb{Z}$.
\end{example}
\begin{remark}
If $A\in\mathcal{C}$ is descendable, then $-\otimes A:\mathcal{C}\to\mathcal{C}$ is conservative, because $X\otimes A=0$ means that $\on{id}_X$ is $A$-null.  The converse fails: in $\mathcal{C}=\on{D}(\mathbb{Z})$, the object $A=\mathbb{Z}[1/p]\times\mathbb{F}_p$ satisfies that $-\otimes A$ is conservative, but it is not descsendable, because
$$\mathbb{Z}/p\mathbb{Z}\overset{p}{\to}\mathbb{Z}/p^2\mathbb{Z}\overset{p}{\to} \ldots$$
gives an infinite sequence of $A$-null maps where no composition is null.
\end{remark}

\begin{remark}
In verifying the definition of descendability for an $A\in\on{CAlg}(\mathcal{C})$, it is enough to consider only a certain basic instance of a composition of $A$-null maps.  Namely, set $I=\on{Fib}(1\to A)$, and denote by $\alpha:I\to 1$ the canonical map.  Note that $\alpha$ is $A$-null.  Define inductively a sequence of maps
$$\alpha_d:I^{\otimes d}\to 1$$
by taking $\alpha_0$ to be the identity map $1\to 1$, and taking $\alpha_d$ for $d\geq 1$ to be the composition
$$\alpha_d: I^{\otimes d}\overset{\on{id}\otimes \alpha}{\longrightarrow}I^{\otimes d-1}\overset{\alpha_{d-1}}{\longrightarrow} 1.$$
Then $A$ is descendable of index $\leq d$ if and only if $\alpha_d$ is null.  This criterion in particular shows that symmetric monoidal functors send descendable algebras of index $\leq d$ to descendable algebras of index $\leq d$.
\end{remark}

How to prove descendability in a given case?  Sometimes one can use devissage to reduce complicated cases to some more basic ones, a nice example being given in \cite{BhattProjectivity} 11.2.  But in the end, one is faced with the problem of proving that some $\alpha_d$ map is null.  Sometimes, one can proceed by finding an auxiliary property $\mathcal{P}$ of maps, such that every sufficiently long composition of $A$-null maps satisfies $\mathcal{P}$, and any sufficiently long composition of maps satisfying $\mathcal{P}$ is null.  One model for this is Bousfield's argument in the context of spectra (see \cite{BousfieldLocalization} Section 6), where the relevant property $\mathcal{P}$ is that of being a \emph{phantom map}.  We would like to place Bousfield's argument in a more general context, starting with the following definition.

\begin{definition}\label{phantomdef}
Let $\mathcal{C}$ be a presentable stable $\infty$-category.  Say that a map $f:X\to Y$ is \emph{phantom} if for every colimit-preserving functor $F:\mathcal{C}\to\on{Sp}$, the induced map of abelian groups
$$\pi_0 F(X)\to \pi_0 F(Y)$$
is zero.
\end{definition}

\begin{example}
If $\mathcal{C}$ is compactly generated, then then the colimit preserving functors $F:\mathcal{C}\to\on{Sp}$ are exactly the filtered colimits of functors of the form $\on{map}(c,-)$ for some $c\in\mathcal{C}^{\aleph_0}$.  It follows that the above definition recovers the usual one: a map $f:X\to Y$ is phantom if and only if for every compact object $c$, the induced map $[c,X]\to[c,Y]$ is zero.
\end{example}

\begin{example}
More generally, suppose $\mathcal{C}$ is \emph{dualizable}. Then every colimit-preserving functor $\mathcal{C}\to\on{Sp}$ is a filtered colimit of functors of the form
$$\on{map}_{cpct}(c,-) = \on{map}(c,\hat{y}(-))$$
for $c\in\mathcal{C}$.  Here $\hat{y}:\mathcal{C}\to\on{Ind}(\mathcal{C})$ is the \emph{left} adjoint to the colimit functor, which exists when $\mathcal{C}$ is dualizable.  It follows that in the dualizable case, Definition \ref{phantomdef} agrees with that given in \cite{EfimovKtheory} App.\ E: a map $f:X\to Y$ is phantom if and only if for every $c\in \mathcal{C}$ and every compact map $g:c\to X$, we have $g\circ f\sim 0$.
\end{example}

\begin{lemma}\label{phantomobvious}
Let $\mathcal{C}$ be a presentable stable $\infty$-category.
\begin{enumerate}
\item Any shift or retract of a phantom map is phantom.
\item Phantom maps form a 2-sided ideal: they are closed under addition and under composition on either side with an arbitrary map.
\item If $A:\mathcal{C}\to\mathcal{D}$ is a colimit-preserving functor to another presentable stable $\infty$-category, then $A$ sends phantom maps to phantom maps.
\end{enumerate}
\end{lemma}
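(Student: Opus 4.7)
The plan is that all three parts follow essentially formally from the definition, by composing or modifying the colimit-preserving test functor $F:\mc{C}\to\on{Sp}$ in an appropriate way. The overarching observation is that the collection of colimit-preserving functors $\mc{C}\to\on{Sp}$ is itself closed under composition with colimit-preserving functors and under shifts, and this is all we need to exploit.

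For part 3, I would simply note that if $A:\mc{C}\to\mc{D}$ is colimit-preserving and $F:\mc{D}\to\on{Sp}$ is colimit-preserving, then $F\circ A:\mc{C}\to\on{Sp}$ is colimit-preserving, so if $f$ is phantom in $\mc{C}$ then $\pi_0(F\circ A)(f)=0$, which is exactly the statement that $\pi_0F(A(f))=0$. Since $F$ was arbitrary this gives that $A(f)$ is phantom. For part 1, the claim about retracts follows because $F$ preserves retracts (being a functor), and a retract of the zero map of abelian groups is zero. For shifts, the point is that $X\mapsto F(X)[n]$ is again colimit-preserving, and $\pi_0$ of this applied to $f[n]$ equals $\pi_0F(f)$ which vanishes.

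For part 2, closure under addition is immediate because $\pi_0F(-)$ is an additive functor from $\mc{C}$ to abelian groups (an easy consequence of $F$ being exact, which follows from colimit-preservation on a stable target). Closure under pre- or post-composition with an arbitrary map $g$ likewise comes from factoring $\pi_0F(gf)=\pi_0F(g)\circ\pi_0F(f)$ (and similarly on the other side), where one factor is zero by hypothesis.

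None of these steps poses a genuine obstacle; the only thing to verify carefully is that the elementary categorical operations invoked (shift, composition with a colimit-preserving functor, pre/post-composition with a morphism in $\mc{C}$) preserve the property of being a colimit-preserving functor to $\on{Sp}$, which is automatic. I would write the proof as a single short paragraph treating the three parts in the order 3, 1, 2, since part 3 is the cleanest application of the definition and sets the template for the rest.
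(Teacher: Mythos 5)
Your argument is correct and is essentially the paper's own proof, which likewise rests on the observation that the class of colimit-preserving test functors $\mathcal{C}\to\on{Sp}$ is closed under composition with shifts (so phantomness amounts to vanishing on all homotopy groups), with the remaining closure properties immediate from the definition. One small indexing slip in your shift step: the clean identity is $\pi_0\left(F((-)[n])\right)(f)=\pi_0F(f[n])$, i.e.\ test $f$ against the colimit-preserving functor $F\circ [n]$; applying $X\mapsto F(X)[n]$ to $f[n]$ does not literally give $\pi_0F(f)$ (it gives $\pi_{-2n}F(f)$), though the conclusion is unaffected.
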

\begin{proof}
For closure under shifts note that since the class of colimit-preserving functors is closed under shifts, we could replace the condition that $\pi_0F(X)\to \pi_0F(Y)$ should be $0$ in the definition with the condition that $F(X)\to F(Y)$ should be zero on all homotopy groups.  The rest of the properties are straightforward from the definition.
\end{proof}

We can attempt to bridge the gap between phantom maps and nullhomotopic maps using the following defintion.

\begin{definition}
Let $\mathcal{C}$ be a presentable stable $\infty$-category, let $X\in\mathcal{C}$, and let $n\in\mathbb{N}$.

Say $X$ is a \emph{ghostbuster of index $\leq n$} if for any composable sequence $X_0 \to X_1\to\ldots\to X_n$ of $n$ consecutive phantom maps in $\mathcal{C}$ and any map $X\to X_0$ the composition $X \to X_n$ is homotopic to $0$.
\end{definition}

\begin{lemma}\label{ghostbuster}
Let $\mathcal{C}$ be a presentable $\infty$-category.
\begin{enumerate}
\item For $n\in\mathbb{N}$, the collection of ghostbusters of index $\leq n$ is closed under shifts, retracts, and arbitrary direct sums.
\item Suppose $X \to Y \to Z$ is a cofiber sequence in $\mathcal{C}$.  If $X$ is ghostbuster of index $\leq n$ and $Z$ is a ghostbuster of index $\leq m$, then $Y$ is a ghostbuster of index $\leq n+m$.
\item Any compact object $X\in\mathcal{C}$ is a ghostbuster of index $\leq 1$.
\item Suppose given another presentable stable $\infty$-category $\mathcal{D}$ and functors $A:\mathcal{C}\to\mathcal{D}$ and $B:\mathcal{D}\to\mathcal{C}$ such that $A$ preserves colimits, $B(0)=0$, and $B\circ A \simeq \on{id}_{\mathcal{C}}$.  Then for $X\in\mathcal{C}$, we have the implication $A(X)\in\mathcal{D}$ is a ghostbuster of index $\leq n$ $\Rightarrow$ $X\in\mathcal{C}$ is a ghostbuster of index $\leq n$.
\end{enumerate}
\end{lemma}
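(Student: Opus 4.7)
The plan is to handle the four parts in sequence, with the real content being in part 2; the other parts are essentially formal exercises. Throughout, the key definitional features to exploit are that the ghostbuster condition concerns maps \emph{out of} the object, and that Lemma \ref{phantomobvious} already packages most stability properties of the phantom ideal.

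For part 1, closure under shifts is immediate from the fact that phantom maps are closed under shifts (Lemma \ref{phantomobvious}(1)): given any offending chain $X[k]\to X_0\to\cdots\to X_n$, shifting produces a chain starting at $X$. For retracts, a retract $X\to Y\to X$ with composite $\on{id}_X$ lets us extend any $X\to X_0$ to $Y\to X\to X_0$ and use the ghostbuster property of $Y$; reinserting the retract recovers the original composition up to homotopy. For direct sums, I would use that a map out of $\bigoplus_i X_i$ is null iff its restriction to each summand is null, which reduces the claim to each summand.

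Part 2 is the main step. Given $Y\to X_0\to X_1\to\cdots\to X_{n+m}$ with all $X_i\to X_{i+1}$ phantom, I consider the composition $X\to Y\to X_0\to\cdots\to X_n$. The prefix $X\to X_0$ is followed by $n$ phantom maps, so by the $X$-ghostbuster property this composition $X\to X_n$ is null. Hence $Y\to X_n$, post-composed with $X\to Y$, vanishes, which means $Y\to X_n$ factors (up to homotopy) through the cofiber $Z$ of $X\to Y$. Call the resulting map $\tilde{f}:Z\to X_n$. Then $\tilde f$ is followed by $m$ phantom maps $X_n\to X_{n+1}\to\cdots\to X_{n+m}$, so the $Z$-ghostbuster property of index $\leq m$ makes the composition $Z\to X_{n+m}$ null. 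Finally, $Y\to X_{n+m}$ factors through this null composition, hence is null, establishing that $Y$ is a ghostbuster of index $\leq n+m$.

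Part 3 follows because for compact $X$, the functor $\on{map}(X,-):\mathcal{C}\to\on{Sp}$ preserves all colimits (it preserves filtered colimits by compactness and finite colimits by stability). Hence, by definition of phantom, any phantom map $X_0\to X_1$ induces the zero map on $\pi_0\on{map}(X,-) = [X,-]$, so every composition $X\to X_0\to X_1$ is nullhomotopic. For part 4, I apply $A$ to the offending chain: $A$ preserves phantom maps by Lemma \ref{phantomobvious}(3), so we land in the hypothesis for $A(X)$ and get that $A(X)\to A(X_n)$ is null. Then $B$ sends null maps to null maps (since $B(0)=0$ forces any map factoring through $0$ to still factor through $0$ after applying $B$), and $B\circ A\simeq\on{id}_{\mathcal{C}}$ translates this back to the statement that $X\to X_n$ is null. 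No step presents a genuine obstacle; the only place that requires thought is the cofiber-sequence factorization in part 2.
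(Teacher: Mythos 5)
Your proof is correct and follows essentially the same route as the paper: parts 1, 3, and 4 are handled by the same formal observations (phantoms closed under shifts, $\on{map}(X,-)$ colimit-preserving for compact $X$, pushing the chain through $A$ and pulling back with $B$), and your factorization of $Y\to X_n$ through the cofiber $Z$ in part 2 is precisely the step the paper's proof dismisses as "clear."
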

\begin{proof}
Part 1 is straightforward using the remark that the class of phantom maps is closed under shifts.  For part 2, it suffices to show that if we have maps $A \to B \to C$ in $\mathcal{C}$ such that $[X,A] \to [X,B]$ is $0$ and $[Z,B]\to [Z,C]$ is $0$, then $[Y,A]\to [Y,C]$ is $0$.  But this is clear.  Part 3 follows from the definition of phantom, as $\on{map}(X,-)$ preserves colimits when $X$ is compact.  For part 4, assume $A(X)$ is a ghostbuster of index $n$, let $X_0\to\ldots \to X_n$ be a string of $n$ composable phantom maps in $\mathcal{C}$, and let $X\to X_0$ be arbitary.  As $A$ preserves colimits, it sends phantom maps to phantom maps by Lemma \ref{phantomobvious}.  We deduce that $A(X)\to A(X_0)\to\ldots A(X_n)$ is homotopic to $0$.  Applying $B$ we get the conclusion.
\end{proof}

\begin{example}
Let $A$ be an associative ring, and let $M$ be a left $A$-module which has projective dimension $n$.  Then $M\in\on{D}(A)$ is a ghostbuster of index $\leq n+1$.  Indeed, a projective module is a ghostbuster of index $1$ by Lemma \ref{ghostbuster} part 3 , so this follows by induction Lemma \ref{ghostbuster} parts 1 and 2.
\end{example}

It turns out that ghostbusters of small index are prevalent under countability conditions.  Under a countability condition on the object, we can say the following.

\begin{example}
Let $\mathcal{C}$ be a presentable stable $\infty$-category.
\begin{enumerate}
\item Suppose $\mathcal{C}$ is compactly generated.  Then any $\aleph_1$-compact object is a ghostbuster of index $2$.  Indeed, an $\aleph_1$-compact object is a sequential colimit of compact objects, so by the Milnor sequence it fits into a cofiber sequence with direct sums of compact objects.
\item More generally, suppose $\mathcal{C}$ is dualizable (see \cite{EfimovKtheory} Section 1). Then again any $\aleph_1$-compact object is a ghostbuster of index $2$.  Indeed, consider the functor $B:\on{Ind}(\mathcal{C}^{\aleph_1})\to\mathcal{C}$ of taking colimits and its left adjoint $A$.  Then $A$ is fully faithful, so $B\circ A\simeq \on{id}$.  Also, since $B$ preserves colimits, $A$ sends $\aleph_1$-compact objects to $\aleph_1$-compact objects.  Thus Lemma \ref{ghostbuster} part 4 reduces the dualizable case to the compactly generated case.
\end{enumerate}
\end{example}

A decidedly deeper fact, due to Neeman (\cite{NeemanPhantom}), is that under a countability assumption on $\mathcal{C}$ itself \emph{every} object is a ghostbuster of index 2:

\begin{theorem}\label{countablephantom}
Suppose that $\mathcal{C}$ is a presentable stable $\infty$-category which satisfies the following countability criterion: there is a countable stable $\infty$-category $\mathcal{C}_0$ (meaning the set of isomorphism classes of objects in $\mathcal{C}_0$ is countable, and the set of homtopy classes of maps between any $X,Y\in\mathcal{C}_0$ is countable) such that $\mathcal{C}$ is a retract, in $\on{Pr}^L_{st}$, of $\on{Ind}(\mathcal{C}_0)$.  Then every $X\in\mathcal{C}$ is a ghostbuster of index 2.
\end{theorem}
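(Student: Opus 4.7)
I would prove this in three stages.

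\emph{First}, reduce to the case $\mathcal{C}=\on{Ind}(\mathcal{C}_0)$. By hypothesis there exist colimit-preserving functors $A:\mathcal{C}\to\on{Ind}(\mathcal{C}_0)$ and $B:\on{Ind}(\mathcal{C}_0)\to\mathcal{C}$ with $B\circ A\simeq \on{id}_\mathcal{C}$; since $B$ preserves colimits it preserves the zero object, so Lemma \ref{ghostbuster}(4) applies and reduces the problem to proving that every $A(X)\in\on{Ind}(\mathcal{C}_0)$ is a ghostbuster of index $\leq 2$.

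\emph{Second}, observe that it suffices to prove that the composition of any two phantom maps in $\on{Ind}(\mathcal{C}_0)$ is null. Indeed, given a composable chain $X\to X_0\to X_1\to X_2$ with $X_0\to X_1$ and $X_1\to X_2$ phantom, once we know $X_0\to X_2$ is null, the whole composite $X\to X_2$ factors through zero.

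\emph{Third}, and most importantly, prove that phantom composed with phantom is zero in $\on{Ind}(\mathcal{C}_0)$, along the lines of Neeman \cite{NeemanPhantom}. The strategy is a Milnor $\on{lim}^1$ argument: any map $f:X\to Y$ in $\on{Ind}(\mathcal{C}_0)$ is controlled by towers of data of the form $\on{Map}_{\mathcal{C}_0}(c,-)$ for $c\in\mathcal{C}_0$, and the countability hypothesis on $\mathcal{C}_0$ allows one to arrange such a tower as a countable sequential tower of abelian groups, inserting $f$ into a Milnor short exact sequence in which the phantom part of $[X,Y]$ is exactly $\on{lim}^1$ of this tower. Post-composition with a phantom $g:Y\to Z$ then induces the zero map on each term of the tower (the terms involve mapping in from objects of $\mathcal{C}_0$, which are compact, and phantom maps are null on compacts), hence zero on $\on{lim}^1$, giving $g\circ f=0$.

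The main obstacle lies in step three. The naive implementation --- writing $X$ itself as a sequential colimit $X=\on{colim}_n X_n$ of objects $X_n\in\mathcal{C}_0$ and running the classical Milnor argument on that presentation --- fails in general: not every object of $\on{Ind}(\mathcal{C}_0)$ admits such a presentation, as one sees from the example of the $\mathbb{Z}$-module $\mathbb{R}$ in $D(\mathbb{Z})=\on{Ind}(\on{Perf}(\mathbb{Z}))$, which has uncountable cardinality whereas any sequential colimit of perfect complexes over $\mathbb{Z}$ would have countable homotopy groups. One must instead apply the Milnor-type argument to a countable $\mathcal{C}_0$-valued resolution of the \emph{mapping spectrum} $\on{map}(X,Y)$ rather than of $X$ itself, exploiting countability of $\mathcal{C}_0$ to ensure that the relevant tower is sequential; producing such a resolution and checking that it really captures the phantom part of $[X,Y]$ as $\on{lim}^1$ of a countable tower is the technical heart of the argument and the reason the hypothesis is on $\mathcal{C}_0$ rather than on individual objects of $\mathcal{C}$.
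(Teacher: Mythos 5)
Your steps one and two are fine and agree with the paper: the reduction to $\mathcal{C}=\on{Ind}(\mathcal{C}_0)$ via Lemma \ref{ghostbuster}(4) is exactly how the paper begins, and proving that the composite of any two phantom maps in $\on{Ind}(\mathcal{C}_0)$ vanishes is indeed equivalent to the assertion. You have also correctly diagnosed why the naive Milnor argument fails (not every object of $\on{Ind}(\mathcal{C}_0)$ is a sequential colimit of compacts; your $\mathbb{R}\in D(\mathbb{Z})$ example is right). The problem is that your step three, which is where all the content lives, is not an argument. The paper's proof at this point invokes Neeman's theorem (\cite{NeemanPhantom}, Thm.\ 5.1 and Lemma 4.1): under the countability hypothesis on $\mathcal{C}_0$, \emph{every} object of $\on{Ind}(\mathcal{C}_0)$ is the cofiber of a map between (possibly uncountable) direct sums of compact objects. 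Given that, the conclusion is immediate from Lemma \ref{ghostbuster}: coproducts of compacts are ghostbusters of index $\leq 1$ by parts 1 and 3, and a cofiber of a map between two such is a ghostbuster of index $\leq 2$ by part 2 (equivalently, the two-step factorization: a phantom $f:X\to Y$ vanishes on the middle coproduct, hence factors through the suspension of the first coproduct, on which a second phantom vanishes). This structural theorem, or some equivalent of it, is the missing idea in your proposal.

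Your proposed substitute --- a Milnor $\on{lim}^1$ argument applied to ``a countable $\mathcal{C}_0$-valued resolution of the mapping spectrum $\on{map}(X,Y)$'' --- does not work as described, and is in any case left undefined at the decisive point. The natural diagram presenting $X$ from compacts (or computing $[X,Y]$ by mapping compacts into $X$) is indexed by the comma category $\mathcal{C}_0{/X}$, which is filtered but generally \emph{uncountable} even when $\mathcal{C}_0$ is countable, because the groups $[c,X]$ can be uncountable (again $[\mathbb{Z},\mathbb{R}]=\mathbb{R}$ in $D(\mathbb{Z})$); so countability of $\mathcal{C}_0$ does not produce a countable sequential tower of groups of the form $[c,Y]$ whose $\on{lim}^1$ is the phantom part of $[X,Y]$, and you have not said what such a tower would be. In Neeman's proof the countability of $\mathcal{C}_0$ enters quite differently: it feeds a Brown--Adams-type representability argument (a Mittag-Leffler/$\on{lim}^1$-vanishing step for towers built from the countably many objects and maps of $\mathcal{C}_0$), whose output is precisely the cofiber presentation above. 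So either cite that theorem, as the paper does, or supply a proof of it; as written, the heart of your step three is a gap.
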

\begin{proof}
Again by Lemma \ref{ghostbuster} part 4 we reduce to the case $\mathcal{C}=\on{Ind}(\mathcal{C}_0)$.  But in that case, by \cite{NeemanPhantom} Thm.\ 5.1 and Lemma 4.1, every object is the cofiber of a map between direct sums of compact objects, so this follows from Lemma \ref{ghostbuster}.
\end{proof}

\begin{example}\label{countableexamples}
\begin{enumerate}
\item Suppose $X$ is a second-countable compact Hausdorff space.  Then $\on{Sh}(X;\on{Sp})$ satisfies the countability criterion of Theorem \ref{countablephantom}, and hence every object in $\on{Sh}(X;\on{Sp})$ is a ghostbuster of index 2.  Indeed, we can prove this by the following variant of the argument in \cite{LurieHTT} 7.3.4.  Let $\mathcal{K}$ denote a countable collection of compact subsets of $X$ which forms a basis for the topology, closed under finite intersection and finite union, and view $\mathcal{K}$ as a poset.  Let $X_\mathcal{K}$ denote the spectral space whose poset of quasicompact open subsets identifies with $\mathcal{K}$, via a fixed bijection which we will denote $K\in \mathcal{K} \mapsto [K]\subset X_\mathcal{K}$.  There is a unique continuous map $\pi:X_\mathcal{K}\to X$ such that for $U\subset X$ open, we have
$$\pi^{-1}(U) = \cup_{K\subset U} [K].$$
For the induced pullback $\pi^\ast$ and pushforward $\pi_\ast$ functors for sheaves of anima, we have that:
\begin{enumerate}
\item $(\pi^\ast\mathcal{F})([K]) = \varinjlim_{K\subset U}\mathcal{F}(U)$, where $U$ runs over open neighborhoods of $K$.  Indeed, this describes the presheaf pullback, but one checks it's a sheaf.
\item $\on{id}\overset{\sim}{\rightarrow} \pi_\ast \pi^\ast$.  Indeed, this follows readily from (a) and the description of $\pi_\ast$ coming from the definition of $\pi$.
\item $\pi_\ast$ commutes with filtered colimits and Postnikov truncations.  Indeed, by (b) $\pi^\ast$ is conservative, and as $\pi^\ast$ preserves colimits and Postnikov truncation on general grounds we reduce to checking that $\pi^\ast \pi_\ast$ preserves filtered colimits and Postnikov truncations, which is clear from (a).
\end{enumerate}
It follows that on the level of sheaves of spectra we have the same properties, and $\pi_\ast$ commutes with arbitrary colimits.  Thus $\on{Sh}(X;\on{Sp})$ is a retract of $\on{Sh}(X_\mathcal{K};\on{Sp})$ in $\on{Pr}^L_{st}$.  To finish, we note that $\on{Sh}(X_\mathcal{K};\on{Sp})$ is compactly generated by the Yoneda images of the $[K]$, and that Homs in all degrees between these are countable, for example by writing the site of quasicompact opens as a countable filtered colimit of finitary sites associated to finite posets.
\item Let $X = \varprojlim X_n$ be a 1-truncated light profinite anima (see \ref{profiniteanima}).  Suppose that $X$ has finite cohomological dimension in the sense that there is a $d\in\mathbb{N}$ such that for all $M\in\on{Sh}_{et}(X;\on{D}(\mbb{Z}))$ in degree $0$, we have $\Gamma(X;M)\in \on{D}(\mbb{Z})_{\geq -d}$.  Then $\on{Sh}_{et}(X;\on{Sp})$ satisfies the countability criterion of Theorem \ref{countablephantom}, and hence every object is a ghostbuster of index $2$.

Indeed, Corollary \ref{truncatedcorollary} shows that $\on{Sh}_{et}(X;\on{Sp})$ identifies with the left-completion of the $\infty$-category of sheaves of spectra on the finite etale site of $X$.  Now note that the cohomological dimension estimate passes to any finite etale $f:Y\to X$ because $f_\ast$ is t-exact (check on pullback to light profinite sets).  Then from \cite{ClausenHyper} Cor. 2.20 we deduce that for sheaves of spectra on the finite etale site of $X$, the hypercomplete full subcategory is already left-complete and hence identifies with $\on{Sh}_{et}(X;\on{Sp})$.   Moreover \cite{ClausenHyper} Prop.\ 2.28 gives that this category is compactly generated by the $\mbb{S}[h_{Y\to X}]$ as $Y\to X$ runs over the finite etale maps to $X$.  To finish we note that homs between these are countable as one sees by reducing to cohomology using the Postnikov tower.
\end{enumerate}
\end{example}

We will be more concerned with a variant of part 1 of this example in the setting of etale sheaves.  Recall from Lemma \ref{etaleontopological} that if $X$ is a second-countable compact Hausdorff space, then $\on{Sh}_{et}(\underline{X})$ identifies with the Postnikov completion of $\on{Sh}(X)$.  It follows that $\on{Sh}_{et}(X;\on{Sp})$ identifies with with the left completion of $\on{Sh}(X;\on{Sp})$ equipped with its natural t-structure.  (See Section \ref{sixsec} for the notion of etale sheaves with coefficients.)  Using this we show the following.

\begin{proposition}\label{chauscohdim}
Let $X$ be a second-countable compact Hausdorff space which has finite cohomological dimension, in the sense that there is a $d\geq 0$ such that for any sheaf of abelian groups $\mathcal{A}$ on $X$, we have $H^i(X;\mathcal{A})=0$ for $i>d$.

Then $\on{Sh}_{et}(X;\on{Sp})$ satisfies the countability hypothesis from Theorem \ref{countablephantom}, so the composition of any two phantom maps is $0$.
\end{proposition}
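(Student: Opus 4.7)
The strategy is to reduce to Example \ref{countableexamples}(1), which already establishes the countability criterion for the ordinary sheaf category $\on{Sh}(X;\on{Sp})$ whenever $X$ is a second-countable compact Hausdorff space. By the discussion immediately preceding the proposition, $\on{Sh}_{et}(\underline{X};\on{Sp})$ is the left completion of $\on{Sh}(X;\on{Sp})$ with respect to its natural t-structure. Thus it suffices to show that, under the finite cohomological dimension hypothesis, this t-structure is already left complete; then $\on{Sh}_{et}(\underline{X};\on{Sp}) \simeq \on{Sh}(X;\on{Sp})$ and Example \ref{countableexamples}(1) applies verbatim.

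The substantive step is this left-completeness claim. I would first extend the cohomological dimension bound from $X$ itself to all of its open subsets: for any open inclusion $j : U \hookrightarrow X$ and any sheaf of abelian groups $\mathcal{A}$ on $U$, the identity $H^i(U;\mathcal{A}) = H^i(X; j_! \mathcal{A})$ combined with the hypothesis on $X$ gives $H^i(U; \mathcal{A}) = 0$ for $i > d$. Consequently, each section functor $\Gamma(U;-) : \on{Sh}(X;\on{Sp}) \to \on{Sp}$ has cohomological dimension at most $d$: it carries $\on{Sh}(X;\on{Sp})_{\geq n}$ into $\on{Sp}_{\geq n-d}$. Given any $F \in \on{Sh}(X;\on{Sp})$, set $F_{>n} = \on{fib}(F \to \tau_{\leq n} F) \in \on{Sh}(X;\on{Sp})_{\geq n+1}$; left-completeness is equivalent to $\varprojlim_n F_{>n} = 0$. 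Since limits of sheaves of spectra are computed sectionwise, it is enough that $\varprojlim_n \Gamma(U; F_{>n}) = 0$ for each open $U$. But $\Gamma(U; F_{>n}) \in \on{Sp}_{\geq n+1-d}$, so for any fixed homotopy degree $i$ the tower $\pi_i \Gamma(U; F_{>n})$ is eventually zero in $n$, and the Milnor sequence then forces all homotopy groups of the spectral limit to vanish. This yields $F \simeq \varprojlim_n \tau_{\leq n} F$, i.e., left-completeness (the argument is the standard one; compare \cite{LurieHA} 1.3.5.21).

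Once left-completeness is in hand the proposition is immediate. The final sentence follows formally from Theorem \ref{countablephantom}, which guarantees that every object is a ghostbuster of index $2$: applied with the identity map out of the source of a composable pair of phantom maps, this gives that the composition of any two phantom maps is null. The only place where the hypothesis of finite cohomological dimension is genuinely used is to pass from $\on{Sh}(X;\on{Sp})$, which is treated in Example \ref{countableexamples}(1), to its left completion $\on{Sh}_{et}(\underline{X};\on{Sp})$; the main obstacle (very mild) is the amplitude extension of $\Gamma(U;-)$ and the Milnor-sequence bookkeeping that implements it.
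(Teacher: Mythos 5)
Your reduction hinges on the claim that $\on{Sh}(X;\on{Sp})$ is already left complete, so that $\on{Sh}_{et}(\underline{X};\on{Sp})\simeq\on{Sh}(X;\on{Sp})$; this is where the argument breaks, in two places. First, a local error: the identity $H^i(U;\mathcal{A})=H^i(X;j_!\mathcal{A})$ is false — it is $Rj_\ast$, not $j_!$, that computes $H^\ast(U;-)$ on $X$ (for $U=(0,1)\subset[0,1]$ and $\mathcal{A}=\underline{\mathbb{Z}}$ one has $H^1(X;j_!\mathbb{Z})=\mathbb{Z}$ while $H^1(U;\mathbb{Z})=0$). The statement you want (open subsets of $X$ inherit the bound $d$) is in fact true for a metrizable compactum, but it needs a real argument, e.g.\ that every sheaf on $X$ admits a soft resolution of length $d$ and that soft sheaves restrict to soft sheaves on open subsets; it does not follow from the one-line identity.

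Second, and more seriously, the step from the abelian bound to "$\Gamma(U;-)$ carries $\on{Sh}(X;\on{Sp})_{\geq n}$ into $\on{Sp}_{\geq n-d}$" is circular. That connectivity estimate is proved by running the Postnikov/descent spectral sequence, i.e.\ it is only available for truncated sheaves or for sheaves one already knows to be Postnikov-complete; for a general connective sheaf of spectra it is essentially equivalent to what you are trying to prove. Indeed, applied to an $\infty$-connective sheaf $K$ it would give $\Gamma(U;K)=0$ for all $U$, i.e.\ hypercompleteness of $\on{Sh}(X;\on{Sp})$ — and whether that holds for a second-countable compactum of finite cohomological but infinite covering dimension (the Dranishnikov examples mentioned in the remark after Proposition \ref{chauscohdim}) is exactly the delicate point. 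This is why the paper never asserts $\on{Sh}_{et}(\underline{X};\on{Sp})\simeq\on{Sh}(X;\on{Sp})$ under the cohomological hypothesis (compare the remark after Lemma \ref{etaleontopological}, which requires finite covering dimension and \cite{LurieHTT} 7.2.3.6). The paper's proof instead works with the left completion throughout: using that the functors $\pi_\ast,\pi^\ast$ of Example \ref{countableexamples}(1) commute with Postnikov truncations and colimits, it exhibits $\on{Sh}_{et}(X;\on{Sp})$ as a retract of the left completion of $\on{Sh}(X_{\mathcal{K}};\on{Sp})$ for the auxiliary spectral space $X_{\mathcal{K}}$, checks that the basic opens $[K]$ there have cohomological dimension $\leq d$, and then invokes \cite{ClausenHyper} (Cor.\ 2.20 and Prop.\ 2.28) to get compact generation of that left completion together with countability of the mapping spectra between generators, via finitary sites. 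To repair your proof you would either need to supply the missing hypercompleteness/Postnikov-convergence statement for such $X$ (not known by the methods at hand), or route the countability through a coherent model as the paper does.
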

\begin{proof}
We recall from Lemma \ref{etaleontopological} that $\on{Sh}_{et}(X;\on{Sp})=\widehat{\on{Sh}}(X;\on{Sp})$, the left completion of sheaves of spectra on $X$.  By the commutation with Postnikov towers in Example \ref{countableexamples} part 1, we deduce that $\on{Sh}_{et}(X;\on{Sp})$ is, via $\pi_\ast$ and $\pi^\ast$, a retract of the left completion $\widehat{\on{Sh}}(X_\mathcal{K};\on{Sp})$ of $\on{Sh}(X_{\mathcal{K}};\on{Sp})$.  Moreover $\pi_\ast$ still commutes with colimits on the level of left-completions: it suffices to show preservation of direct sums, but this follows again from the t-exactness.  Thus it suffices to show that $\widehat{\on{Sh}}(X_{\mathcal{K}};\on{Sp})$ is compactly generated and the full subcategory of compact objects is countable.

Since again $\pi_\ast$ is t-exact, we deduce that $X_\mathcal{K}$ is also of cohomological dimension $\leq d$.  But moreover, the cohomological dimension hypothesis passes to any closed subset $K$ of $X$, because the pushforward from a closed subset is $t$-exact.   Since $K_{\mathcal{K}'} = [K]\subset X_\mathcal{K}$ where $\mathcal{K}'$ denotes the set of elements of $\mathcal{K}$ which lie in $K$, we deduce in fact that for all $K\in\mathcal{K}$, the quasicompact open subset $[K]\subset X_{\mathcal{K}}$ has cohomological dimension $\leq d$.  Then from \cite{ClausenHyper} Cor 2.20 and Prop 2.28 we deduce that the left completion of $\on{Sh}(X_\mathcal{K};\on{Sp})$ agrees with the hypercompletion, and the $\mathbb{S}[h_{[K]}]$ form a set of compact generators.  To finish we need to see that Hom's between shifts of these compact generators are countable.  Replacing $X$ by $K$, we reduce to showing that $\Gamma(X_\mathcal{K};\varprojlim_d \tau_{\leq d}\mathbb{S}[h_{[K]}])$ has countable homotopy groups for any $K\in\mathcal{K}$.  By the cohomological dimension hypothesis this limit stabilizes in any degree, and then we can again write the site of quasicompact opens as a countable filtered colimit of finitary sites associated to finite posets to conclude.
\end{proof}

\begin{remark}
There do exist second-countable compact Hausdorff spaces which have infinite covering dimension, but finite cohomological dimension, see \cite{DranishnikovDimension} Section 7.  (See also \cite{GrothendieckTohoku} for the equality of the notion of cohomological dimension used in \cite{DranishnikovDimension} and the one we use here based on sheaf cohomology.)
\end{remark}

We have just seen that phantom maps are often close enough to being nullhomotopic.  For this to be useful we also need to give criteria for proving that a given map is phantom.  Here is a helpful statement for that.

\begin{lemma}\label{tstructureargument}
Let $\mathcal{C}$ be a stable $\infty$-category equipped with a t-structure $(\mathcal{C}_{\geq 0},\mathcal{C}_{\leq 0})$.  Denote the heart by $\mathcal{C}^\heart$, the homotopy group functors by $\pi^\heart_d:\mathcal{C}\to\mathcal{C}^\heart$ for $d\in\mathbb{Z}$, and the truncation functors by $\tau_{\geq d},\tau_{\leq d}:\mathcal{C}\to\mathcal{C}$ for $d\in\mathbb{Z}$.  Suppose $F:\mathcal{C}\to\on{Sp}$ is an exact functor (in the sense of preserving finite limits and colimits, not in the sense of the t-structures) which satisfies the following conditions:

\begin{enumerate}
\item $F(\mathcal{C}^{\heart}) \subset \on{Sp}_{[-n,0]}$
for some $n\in\mathbb{N}$;
\item $\varinjlim_d F(\tau_{\geq -d}X)\overset{\sim}{\rightarrow} F(X)\overset{\sim}{\rightarrow} \varprojlim_d F(\tau_{\leq d}X)$ for all $X\in\mathcal{C}$.
\end{enumerate}

Then if $X_0\to\ldots\to X_{n+1}$ is any sequence of $n+1$ composable maps in $\mathcal{C}$, each of which induces the $0$ map on $\pi_d^\heart$ for all $d\in\mathbb{Z}$, we have that the composition
$$\pi_0 F(X_0)\to\ldots \to \pi_0 F(X_n)$$
is $0$.
\end{lemma}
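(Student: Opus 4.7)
The plan is to filter $\pi_0 F(X)$ for each $X\in\mathcal{C}$ using the connective-cover filtration from the $t$-structure, establish that this filtration has length at most $n+1$, and verify that any map killing all $\pi^\heart_d$ shifts the filtration by one.

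Concretely, set $F^{\geq p}(X):=F(\tau_{\geq p}X)$. Applying the exact functor $F$ to the fiber sequence $\tau_{\geq p+1}X\to\tau_{\geq p}X\to \pi^\heart_p X[p]$ yields a fiber sequence of spectra
$$F^{\geq p+1}(X)\to F^{\geq p}(X)\to F(\pi^\heart_p X[p])$$
whose third term lies in $\on{Sp}_{[p-n,p]}$ by hypothesis 1. Define $\on{Fil}^p\pi_0 F(X):=\on{im}(\pi_0 F^{\geq p}(X)\to \pi_0 F(X))$, a decreasing filtration on $\pi_0 F(X)$. I would then establish two endpoint computations: $\on{Fil}^0\pi_0F(X)=\pi_0F(X)$ and $\on{Fil}^{n+1}\pi_0 F(X)=0$.

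For the lower end, when $p\leq -1$ the third term above has $\pi_0=\pi_{-1}=0$, so the long exact sequence forces $\pi_0 F^{\geq p+1}(X)\twoheadrightarrow \pi_0F^{\geq p}(X)$, making $\on{Fil}^p=\on{Fil}^{p+1}$ for all such $p$; combined with $\pi_0F(X)=\varinjlim_p \pi_0F^{\geq p}(X)$ coming from the colimit half of hypothesis 2, this gives $\on{Fil}^0=\pi_0F(X)$. For the upper end, when $p\geq n+1$ the third term lies in $\on{Sp}_{\geq 1}$, so again $\pi_0 F^{\geq p+1}(X)\twoheadrightarrow \pi_0F^{\geq p}(X)$. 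Meanwhile, the fiber sequence $F^{\geq p}(X)\to F(X)\to F(\tau_{\leq p-1}X)$ combined with the limit half of hypothesis 2 identifies
$$\varprojlim_p F^{\geq p}(X)=\on{fib}\bigl(F(X)\overset{\sim}{\rightarrow}\varprojlim_p F(\tau_{\leq p-1}X)\bigr)=0,$$
and the Milnor sequence then forces $\varprojlim_p \pi_0 F^{\geq p}(X)=0$. Since a tower of surjections of abelian groups with vanishing inverse limit must have all terms zero (any element can be lifted indefinitely, yielding an element of the limit), we conclude $\pi_0 F^{\geq n+1}(X)=0$, and hence $\on{Fil}^{n+1}\pi_0F(X)=0$.

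Finally, for a map $f:X\to Y$ with $\pi^\heart_d f=0$ for all $d$, naturality provides a commutative square whose right column $F(\pi^\heart_p f[p])$ is zero; hence any lift $\tilde y\in\pi_0 F^{\geq p}(X)$ of an element $y\in\on{Fil}^p\pi_0F(X)$ has $f_\ast(\tilde y)\in\pi_0 F^{\geq p}(Y)$ mapping to zero in $\pi_0 F(\pi^\heart_p Y[p])$, hence lifts further to $\pi_0 F^{\geq p+1}(Y)$. This shows $f_\ast(\on{Fil}^p)\subseteq \on{Fil}^{p+1}$, so composing the $n+1$ maps of the hypothesis shifts the filtration by $n+1$, sending $\pi_0 F(X_0)=\on{Fil}^0\pi_0F(X_0)$ into $\on{Fil}^{n+1}\pi_0F(X_{n+1})=0$. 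The main technical point is the vanishing $\on{Fil}^{n+1}=0$, which is the only place the $\varprojlim$ half of hypothesis 2 enters; the rest is routine $t$-structure and long-exact-sequence bookkeeping.
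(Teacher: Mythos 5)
Your proposal is correct and is essentially the paper's own argument repackaged as a filtration: the two key inputs --- the vanishing $\pi_0F(\tau_{\geq n+1}X)=0$ proved by combining hypothesis 1 with the $\varprojlim$ half of hypothesis 2 (indefinite lifting up a surjective tower with zero limit), the use of the $\varinjlim$ half of hypothesis 2 together with hypothesis 1 in negative degrees to handle the bottom of the tower, and the one-step connectivity gain per map killing $\pi^\heart_\ast$ --- are exactly the steps of the paper's lifting argument, just phrased via $\on{Fil}^p=\on{im}(\pi_0F(\tau_{\geq p}X)\to\pi_0F(X))$. (One harmless slip: for $p=-1$ you only have, and only need, $\pi_0F(\pi^\heart_pX[p])=0$; the claim that $\pi_{-1}$ also vanishes is false there but unused.)
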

\begin{proof}
Let us first show that if $X\in\mathcal{C}$, then $\pi_0 F(\tau_{\geq n+1}X)=0$ and $\pi_0 F(\tau_{\leq -1}X)=0$.   Let $\alpha\in \pi_0 F(\tau_{\geq n+1}X)$.  By 1, the image of $\alpha$ in $\pi_0 F(\pi^{\heart}_{n+1}[n+1])$ is 0, hence $\alpha$ lifts to some $\alpha_1 \in \pi_0 F(\tau_{\geq n+2}X)$, which by the same reasoning lifts to $\alpha_2 \in \pi_0 F(\tau_{\geq n+3})$.  Continuing in this way, we find that $\alpha$ lifts to a class in $\pi_0 \varprojlim_n F(\tau_{\geq n}X)$.  But this group is $0$ by 2, hence $\alpha=0$ as desired.  Now let $\alpha \in \pi_0 F(\tau_{\leq -1}X)$.  By 2, $\alpha$ comes form some class in $\pi_0 F(\tau_{[-N,-1]}X)$, so it suffices to show that this group vanishes.  Here we can use descending induction on $N$ and 1 to handle the claim.

Now suppose $X\to Y$ is a map in $\mathcal{C}$ which induces $0$ on $\pi^{\heart}_0$.  By the above vanishing of $\pi_0 F(\tau_{\leq -1}X)$, any $\alpha \in \pi_0F(X)$ lifts to $\alpha' \in \pi_0F(\tau_{\geq 0}X)$.  Since $\tau_{\geq 0}X \to \tau_{\geq 0}Y \to \pi^\heart_0Y$ is $0$, the image of $\alpha'$ in $\pi_0 F(\tau_{\geq 0}Y)$ lifts to $\pi_0 F(\tau_{\geq 1}Y)$.  If $Y\to Z$ is a further map which is $0$ on $\pi^\heart_1$, applying the same argument to this lift in $\pi_0 F(\tau_{\geq 1}Y)$ we get that the image of $\alpha'$ in $\pi_0F(\tau_{\geq 0}Z)$ lifts to $\pi_0 F(\tau_{\geq 2}Z)$. Continuing in this manner, we deduce that if $X\to Y$ is a composition of $d+1$ many maps each of which is $0$ on $\pi^\heart_\ast$, then the image of $\alpha'$ in $\pi_0F (\tau_{\geq 0}Y)$ lifts to $\pi_0 F(\tau_{\geq d+1}Y)$.  But this group is $0$ by the previous paragraph, whence the conclusion.
\end{proof}

To proceed further we recall a class of presentable stable $\infty$-categories $\mathcal{C}$ which are canonically self-dual, permitting an efficient classification of the colimit-preserving functors $\mathcal{C}\to\on{Sp}$ (such as is needed to understand phantom maps).  The definition is due to \cite{GaitsgoryDerivedI}, 1.9.

\begin{definition}\label{defrigid}
Let $\mathcal{C}\in \on{CAlg}(\on{Pr}_{st}^L)$.  Say that $\mathcal{C}$ is \emph{rigid} if the following conditions are satisfied:
\begin{enumerate}
\item The unit object $1\in\mathcal{C}$ is compact.
\item The multiplication map $m:\mathcal{C}\otimes\mathcal{C}\to\mathcal{C}$ in $\on{CAlg}(\on{Pr}^L_{st})$ has a right adjoint which preserves colimits and satisfies the projection formula.
\end{enumerate}
\end{definition}

If $\mathcal{C}$ is rigid, then by \cite{GaitsgoryDerivedI} 9.2, $\mathcal{C}$ is canonically self-dual in $\on{Pr}_{st}^L$ via the coevaluation pairing given by the composition

$$\on{Sp} \overset{1}{\rightarrow} \mathcal{C} \overset{m^R}{\rightarrow} \mathcal{C}\otimes \mathcal{C}$$

\noindent where the first map is the unit and the second map is the right adjoint $m^R$ to the functor $m:\mathcal{C}\otimes\mathcal{C}\to\mathcal{C}$.  The evaluation pairing can be described as follows: if we define $\Gamma:\mathcal{C}\to\on{Sp}$ to be the functor $\Gamma = \on{map}(1,-)$ or in other words the right adjoint to the functor $\on{Sp}\to\mathcal{C}$ classifying the unit, then the evaluation is given by the composition
$$\mathcal{C}\otimes\mathcal{C}\overset{m}{\rightarrow}\mathcal{C}\overset{\Gamma}{\rightarrow}\on{Sp}.$$

\label{keypointrigid} In particular, and this is the key point for us, every colimit-preserving functor $F:\mathcal{C}\to\on{Sp}$ is of the form
$$F(-) \simeq \Gamma(X\otimes -)$$
for some (unique) $X\in\mc{C}$.

\begin{example}\label{rigidexamples}
\begin{enumerate}
\item Suppose $\mathcal{C}\in\on{CAlg}(\on{Pr}^L_{st})$ is compactly generated.  Then $\mathcal{C}$ is rigid if and only if for $X\in\mathcal{C}$, we have that $X$ is compact $\Leftrightarrow$ $X$ is dualizable.  (Or equivalently if $1\in\mathcal{C}$ is compact, and $\mathcal{C}$ is generated by objects which are both compact and dualizable.)  See \cite{GaitsgoryDerivedI} Lemma 9.1.5.
\item For a compact Hausdorff space $X$, the $\infty$-category $\on{Sh}(X;\on{Sp})$ is rigid.  This follows formally from the proper base-change theorem of \cite{LurieHTT} 7.3.1.18 and the Kunneth property $\on{Sh}(X;\on{Sp})\otimes\on{Sh}(X;\on{Sp})\overset{\sim}{\rightarrow}\on{Sh}(X\times X;\on{Sp})$ of \cite{LurieHTT} 7.3.1.11.
\item Suppose $X=\varprojlim X_n$ is a 1-truncated light profinite anima such that $X$ has finite cohomological dimension.  Then the $\infty$-category $\on{Sh}_{et}(X;\on{Sp})$ is rigid.

Indeed, recall from Example \ref{countableexamples} that $\on{Sh}_{et}(X;\on{Sp})$ is compactly generated by the $f_\natural\mbb{S}$ where $f_\natural$ is the left adjoint to $f^\ast$ for $f:Y\to X$ finite etale.  By example 1, to finish the proof it suffices to show that these are dualizable.  In fact this holds generally for an arbitrary finite etale map.  Indeed by descent we can reduce to the case where the target is light profinite set, and then it is obvious because such finite etale maps are split.

\item If $R$ is any $E_\infty$-ring, then $D(R)$ is rigid.  Indeed, the unit is compact and dualizable and generates, so this follows from 1.

\item If $\mathcal{C},\mathcal{D}\in\on{CAlg}(\on{Pr}^L_{st})$ are both rigid, then so is $\mathcal{C}\otimes\mathcal{D}$.  Indeed the projection formula is stable under tensor product.
\end{enumerate}
\end{example}

We will want to use the following etale sheaf variant of  part 2 of the above example.

\begin{proposition}
Suppose $X$ is a second-countable compact Hausdorff space of finite cohomological dimension.  Then $\on{Sh}_{et}(\underline{X};\on{Sp})$ is rigid.
\end{proposition}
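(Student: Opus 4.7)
Plan. The strategy is to reduce the rigidity of $\on{Sh}_{et}(\underline{X};\on{Sp})$ to that of $\on{Sh}(X;\on{Sp})$ from Example \ref{rigidexamples}(2), by showing that these two $\infty$-categories actually coincide under the finite cohomological dimension hypothesis. Recall, as invoked in the proof of Proposition \ref{chauscohdim} via Lemma \ref{etaleontopological}, that $\on{Sh}_{et}(\underline{X};\on{Sp})$ is the left completion $\widehat{\on{Sh}(X;\on{Sp})}$ with respect to the natural t-structure. So it suffices to prove that $\on{Sh}(X;\on{Sp})$ is already left complete.

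For that, fix a uniform bound $d$ on the cohomological dimension of $X$. For any open $U \subset X$, this bound passes to $U$ (because $j_{U,!}:\on{Sh}(U;\on{Ab}) \to \on{Sh}(X;\on{Ab})$ is exact, giving $H^i(U;\mc{A}) = H^i(X;j_{U,!}\mc{A}) = 0$ for $i > d$), so the global sections functor $\Gamma(U;-): \on{Sh}(X;\on{Sp}) \to \on{Sp}$ has cohomological amplitude $\leq d$. For any $F \in \on{Sh}(X;\on{Sp})$, the fiber sequence $\tau_{\geq n+1}F \to F \to \tau_{\leq n}F$ then gives $\pi_k\Gamma(U;F) \simeq \pi_k\Gamma(U;\tau_{\leq n}F)$ once $n \geq k+d+1$, because $\Gamma(U;\tau_{\geq n+1}F)$ has vanishing homotopy in degrees $\leq n-d$. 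The Milnor $\lim^1$ sequence now yields $\Gamma(U;F) \simeq \varprojlim_n \Gamma(U;\tau_{\leq n}F)$ for every open $U \subset X$, and since limits in $\on{Sh}(X;\on{Sp})$ are computed sectionwise, we conclude $F \simeq \varprojlim_n \tau_{\leq n}F$, i.e., $\on{Sh}(X;\on{Sp})$ is left complete.

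Combining, $\on{Sh}_{et}(\underline{X};\on{Sp}) \simeq \widehat{\on{Sh}(X;\on{Sp})} \simeq \on{Sh}(X;\on{Sp})$, and the right-hand side is rigid by Example \ref{rigidexamples}(2) (proper base change and the Künneth formula for $\on{Sh}(-;\on{Sp})$ on compact Hausdorff spaces). The main obstacle is the left completeness step: this is where the finite cohomological dimension hypothesis is used essentially, since without it left completion genuinely differs from $\on{Sh}(X;\on{Sp})$ and we could not simply transfer rigidity from Example \ref{rigidexamples}(2). Once uniform amplitude is in hand, the Milnor argument is essentially formal; the only subtlety is ensuring the amplitude bound is uniform in the open $U$, which is why one passes through $j_{U,!}$ rather than directly restricting sheaves.
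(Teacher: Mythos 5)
Your reduction hinges on the claim that $\on{Sh}(X;\on{Sp})$ is already left (Postnikov) complete when $X$ merely has finite \emph{cohomological} dimension, and this is exactly the step that does not go through. The connectivity estimate you assert --- that $\Gamma(U;\tau_{\geq n+1}F)$ has vanishing homotopy in degrees $\leq n-d$ --- is only automatic for \emph{truncated} sheaves, where one can induct on the finitely many Postnikov layers using $H^i(U;\mathcal{A})=0$ for $i>d$. For a general $(n+1)$-connective sheaf of spectra the only natural route to this vanishing is to write it as the limit of its Postnikov tower and use the layerwise bound, i.e.\ precisely the convergence you are trying to prove; the class of sheaves satisfying the estimate is not closed under the operations needed to pass from the heart to all connective objects without that convergence. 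Indeed, your estimate applied to an $\infty$-connective sheaf would force it to vanish, so your argument secretly proves hypercompleteness/Postnikov completeness of $\on{Sh}(X;\on{Sp})$ from a cohomological dimension bound alone --- but that implication is exactly what fails to be available here: finite covering dimension gives it (\cite{LurieHTT} 7.2.3.6, and then your route is fine and is essentially the Remark following Lemma \ref{etaleontopological}), whereas the Proposition is stated under the strictly weaker cohomological hypothesis, and the paper explicitly notes (citing Dranishnikov) that there are second-countable compact Hausdorff spaces of infinite covering dimension but finite cohomological dimension. For such $X$ the identification $\on{Sh}_{et}(\underline{X};\on{Sp})\simeq\on{Sh}(X;\on{Sp})$ is not established, and the paper deliberately avoids it: its proof establishes rigidity for the left completion $\wh{\on{Sh}}(X;\on{Sp})$ directly, by (i) proving proper base change for the completed pushforward using $\tau_{\leq n}f_\ast\mathcal{F}\simeq\tau_{\leq n}f_\ast\tau_{\leq N}\mathcal{F}$ for $N>n+d$, and (ii) proving the K\"unneth property for the completions via the spectral space models $X_{\mathcal{K}}$, compact generators $\mathbb{S}[h_{[K]}]$, bounded-below mapping spectra, and reduction to $\on{D}(\mathbb{Z})$-coefficients.

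A secondary error: the identity $H^i(U;\mathcal{A})=H^i(X;j_{U,!}\mathcal{A})$ you use to pass the dimension bound to opens is false; $R\Gamma(X;j_{U,!}\mathcal{A})$ computes compactly supported cohomology of $U$, not $R\Gamma(U;\mathcal{A})$ (e.g.\ $U=(0,1)\subset[0,1]$ with constant coefficients $\mathbb{Z}$ gives $H^1(X;j_!\mathbb{Z})=\mathbb{Z}$ while $H^1(U;\mathbb{Z})=0$). One can bound the cohomological dimension of opens by other means (e.g.\ monotonicity for metrizable spaces), so this is repairable, but the main gap above --- the circular left-completeness claim --- is not repairable within your strategy under the stated hypothesis.
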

\begin{proof}
First we note that for a map $f:X\to Y$ of compact Hausdorff spaces such that the fibers have uniformly bounded cohomological dimension $\leq d$, the proper base change theorem holds for the left completion $\widehat{Sh}(-;\on{Sp})$.  Indeed, for a truncated sheaf of spectra $\mathcal{F}$ on $X$, the pushforward $f_\ast\mathcal{F}$ is also truncated, and by proper base-change and the cohomological dimension hypothesis we deduce that $\tau_{\leq n} f_\ast \mathcal{F} \overset{\sim}{\rightarrow} \tau_{\leq n} f_\ast \tau_{\leq N}\mathcal{F}$ for $N> n+d$.  Thus for an object $(\mathcal{F}_n)_n$ in the left-completion $\widehat{\on{Sh}}(X;\on{Sp})$, the left-complete pushforward $f_\ast((\mathcal{F}_n)_n)$ identifies with the object $(\tau_{\leq n} f_\ast \mathcal{F}_{n+d+1})_n$, and so the conclusion follows again from (noncomplete) proper base-change.

Thus rigidity of $\on{Sh}_{et}(\underline{X})=\wh{\on{Sh}}(X;\on{Sp})$ will follow provided we can show that we can show the Kunneth property, namely that if $X$ and $Y$ are compact Hausdorff spaces of finite cohomological dimension, then
$$\wh{\on{Sh}}(X;\on{Sp})\otimes \wh{\on{Sp}}(Y;\on{Sp})\overset{\sim}{\rightarrow} \wh{\on{Sp}}(X\times Y;\on{Sp}).$$
For this, as in the proof of Proposition \ref{chauscohdim} it suffices to prove the following spectral space variant: if $X,Y$ are spectral spaces with a basis of quasi-compact open subsets of uniformly bounded cohomological dimension, then $\wh{\on{Sh}}(X;\on{Sp})\otimes\wh{\on{Sh}}(Y;\on{Sp})$ identifies with $\widehat{\on{Sh}}(X\times Y;\on{Sp})$.   Note that the non-complete variant holds, for example by writing $X$ and $Y$ as filtered inverse limits of finite posets and using \cite{ClausenHyper} Lemma 3.3.  Looking at $\mathbb{Z}$-coefficients, it follows in particular that the uniform boundedness of cohomological dimension passes to $X\times Y$.  Now we compare compact generators.  The objects $\mathbb{S}[h_U]$, $\mathbb{S}[h_V]$, and $\mathbb{S}[h_{U\times V}]$ give compact generators of respectively $\wh{\on{Sh}}(X;\on{Sp})$, $\wh{\on{Sh}}(Y;\on{Sp})$, and $\wh{\on{Sh}}(X\times Y;\on{Sp})$ by \cite{ClausenHyper} Prop 2.28, where $U,V$ run over quasicompact basic opens in $X,Y$ respectively.  Note that the mapping spectra between these compact generators are all bounded below, by Postnikov completeness and the finite cohomological dimension.  Thus we can check our Kunneth property after base-change to $D(\mathbb{Z})$.  But then there is no distinction between hypercomplete = Postnikov-complete sheaves and arbitrary sheaves because the generators are truncated and hence automatically hypercomplete, see \cite{ClausenHyper} Prop.\ 2.28.
\end{proof}

Putting things together, we get the following criterion for descendability.

\begin{theorem}\label{descendabletheorem}
Let $X$ be a light condensed anima.  Suppose that the following hypotheses are satisfied:
\begin{enumerate}
\item $\on{Sh}_{et}(X;\on{Sp})$ is rigid and satisfies the countability hypothesis from Theorem \ref{countablephantom}.
\item $X$ has finite cohomological dimension: there is a $d\in\mathbb{N}$ such that for all $M\in \on{Sh}_{et}(X;\on{D}(\mathbb{Z}))$ concentrated in degree $0$, we have $\Gamma(M)\in\on{D}(\mathbb{Z})_{\geq -d}$.
\end{enumerate}
Then for any $A\in \on{CAlg}(\on{Sh}_{et}(X;\on{Sp}))$, the following conditions are equivalent:
\begin{enumerate}
\item $A$ is descendable;
\item there is an $N\in\mathbb{N}$ such that for all maps $x:\ast\to X$, the pullback $x^\ast A \in \on{CAlg}(\on{Sp})$ is descendable of index $\leq N$.
\end{enumerate}
\end{theorem}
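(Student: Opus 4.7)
The easy direction $1\Rightarrow 2$ is formal: for each $x:\ast\to X$, the pullback $x^\ast:\on{Sh}_{et}(X;\on{Sp})\to\on{Sp}$ is symmetric monoidal and colimit preserving, hence preserves descendability of any given index.

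For the converse, set $I:=\on{fib}(1\to A)$ and $\alpha_k:I^{\otimes k}\to 1$ the $k$-fold composition of the canonical $\alpha:I\to 1$; we must show $\alpha_M=0$ for some $M$. Pointwise descendability of uniform index $\leq N$ is exactly the statement $x^\ast\alpha_N=0$ for every $x:\ast\to X$, i.e.\ $\alpha_N$ has vanishing stalks. By Remark~\ref{checkonstalks} applied on the abelian heart of the t-structure on $\on{Sh}_{et}(X;\on{Sp})$, this forces $\pi^\heart_k\alpha_N=0$ for every $k\in\mbb{Z}$; the same then holds for $Y\otimes\alpha_N$ for any $Y$, since tensor product commutes with stalks.

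The first main step is to show that $\alpha_{(d+1)N}$ is \emph{phantom} in the sense of Definition~\ref{phantomdef}. The key idea is to apply Lemma~\ref{tstructureargument} with $F=\Gamma:\on{Sh}_{et}(X;\on{Sp})\to\on{Sp}$ itself: the finite cohomological dimension $d$ of $X$ is precisely the statement that $\Gamma$ has amplitude $\leq d$ on the abelian heart (i.e.\ $\Gamma(\mathcal{C}^\heart)\subset\on{Sp}_{[-d,0]}$), and $\Gamma=\on{map}(1,-)$ is colimit preserving (since $1$ is compact by rigidity) and preserves limits (as a right adjoint), so condition~(2) of the Lemma is satisfied. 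Now for any $Y\in\on{Sh}_{et}(X;\on{Sp})$, consider in $\mathcal{C}$ the sequence
\[
Y\otimes I^{\otimes(d+1)N}\longrightarrow Y\otimes I^{\otimes dN}\longrightarrow\cdots\longrightarrow Y\otimes 1=Y
\]
whose $d+1$ transition maps are the evident $\on{id}_Y\otimes\on{id}^{\otimes\ast}\otimes\alpha_N$; each induces the zero map on $\pi^\heart_k$ for all $k$. Lemma~\ref{tstructureargument} then yields $\pi_0\Gamma(Y\otimes\alpha_{(d+1)N})=0$, and applying the same argument to shifts $Y[-j]$ for all $j\in\mbb{Z}$ upgrades this to $\Gamma(Y\otimes\alpha_{(d+1)N})=0$ in $\on{Sp}$ for every $Y$. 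By the rigidity description \ref{keypointrigid} of colimit preserving functors $\mathcal{C}\to\on{Sp}$ as those of the form $\Gamma(Y\otimes-)$, this precisely says $\alpha_{(d+1)N}$ is phantom.

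To finish, upgrade phantomness to nullity: phantom maps form a tensor ideal (Lemma~\ref{phantomobvious}), so with $M=(d+1)N$ the map $\on{id}_{I^{\otimes M}}\otimes\alpha_M$ is again phantom, and hence $\alpha_{2M}=\alpha_M\circ(\on{id}\otimes\alpha_M)$ is a composition of two phantom maps. By the countability hypothesis, Theorem~\ref{countablephantom} says every object of $\on{Sh}_{et}(X;\on{Sp})$ is a ghostbuster of index $2$; applied to $\on{id}_{I^{\otimes 2M}}$, this forces $\alpha_{2M}=0$, so $A$ is descendable of index $\leq 2(d+1)N$. The main obstacle is arranging the setup so that $\Gamma$ itself (rather than all $\Gamma(Y\otimes-)$) can be fed into Lemma~\ref{tstructureargument}; once the composable sequence is taken in $\mathcal{C}$ with $Y$ absorbed into the objects, the three hypotheses (finite cohomological dimension, rigidity, countability) each enter at exactly one step and the argument closes.
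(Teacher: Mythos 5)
Your proof is correct and takes essentially the same route as the paper's: the chain ($A$-null maps) $\Rightarrow$ (zero on homotopy sheaves, tensor-stably, via stalkwise detection and t-exactness of pullback) $\Rightarrow$ (phantom, via Lemma \ref{tstructureargument} applied to $\Gamma$ together with the rigidity classification of colimit-preserving functors) $\Rightarrow$ (null, via Theorem \ref{countablephantom}), with the same bound $2(d+1)N$ on the index. The only cosmetic difference is that you specialize to the canonical maps $\alpha_k$ instead of arbitrary compositions of $A$-null maps; just note, as the paper does, that verifying condition (2) of Lemma \ref{tstructureargument} uses right- and left-completeness of the t-structure on $\on{Sh}_{et}(X;\on{Sp})$ in addition to $\Gamma$ preserving colimits and limits.
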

\begin{proof}
If condition 1 holds, then $A$ is descendable of index $\leq N$ for some $N$.  This condition is preserved by symmetric monoidal functors, so condition 2 holds.  For the converse, we use the usual t-structure on $\on{Sh}_{et}(X;\on{Sp})$ coming from sheafified Postnikov towers.  The heart is the category of etale sheaves of abelian groups.  Consider the following classes of maps:
\begin{enumerate}
\item The $A$-null maps.
\item The maps inducing $0$ on homotopy groups (with respect to the t-structure).
\item The maps $f$ such that for all $X\in\mathcal{C}$, the map $f\otimes X$ is $0$ on homotopy groups.
\item The phantom maps.
\item The nullhomotopic maps.
\end{enumerate}
We first claim that any $N$-fold composition of $A$-null maps induces $0$ on homotopy groups.  As pullbacks are t-exact and detect being $0$ on abelian group sheaves, this follows from hypothesis 2.  Since the class of $A$-null maps is closed under $-\otimes X$, we even see that any $N$-fold composition of $A$-null maps satisfies the stronger condition 3 just above.  Next we claim that any $d+1$-fold composition of maps satisfying 3 is phantom.  By \ref{keypointrigid}, all colimit-preserving functors to spectra are of the form $\Gamma(X\otimes -)$ for some $X\in\mathcal{C}$, so for this it suffices to show that a $d+1$-fold composition of maps which induce $0$ on homotopy groups gets sent to a null map by $\Gamma$.  For that we can invoke Lemma \ref{tstructureargument}.  Since $\Gamma$ preserves colimits (because the unit is compact) and the t-structure is right complete, we have $\Gamma(X) = \varinjlim_d \Gamma(\tau_{\geq -d}X)$.  On the other hand, $\Gamma$ also preserves limits (because it is a right adjoint) and the t-structure is left-complete, so we have $\Gamma(X) = \varprojlim_d \Gamma(\tau_{\leq d}X)$.  Finally, the last hypothesis in Lemma \ref{tstructureargument} is satisfied by our cohomological dimension assumption.  Thus all the hypotheses of Lemma \ref{tstructureargument} are satisfied and we get the desired conclusion.  Finally, we claim that any 2-fold composition of phantom maps is null.  This follows from Theorem \ref{countablephantom} and the countability hypothesis.

In total, we have seen that any $2N(d+1)$-fold composition of $A$-null maps is null, hence $A$ is descendable.
\end{proof}

\begin{remark}\label{pcompletesemirigid}
Let $p$ be a prime.  There is also a $p$-complete analog of Theorem \ref{descendabletheorem}, but since $\on{Sp}_{\wh{p}}$ is not itself rigid we have to state it a bit differently.  We can use the concept of a \emph{semi-rigid} category from \cite{ArinkinLanglands} App.\ C (also called \emph{locally rigid} in \cite{EfimovKtheory} and \cite{RamziRigid}). This is a $\mathcal{C}\in\on{CAlg}(\on{Pr}^L_{st})$ which satisfies condition 2 in Definition \ref{defrigid} and is moreover dualizable as an object in $\on{Pr}^L_{st}$.  Any $\mathcal{C}\in\on{CAlg}(\on{Pr}^L_{st})$ which is compactly generated by dualizable objects is semi-rigid, and if $\mathcal{C}$ is semi-rigid and $p$-complete and $1/p$ is compact, then $\mathcal{C}$ is self-dual via $\Gamma(-)[p^\infty]$, as follows from \cite{RamziRigid} Cor.\ 4.7.2. (This applies in particular to $\mathcal{C}=\on{Sp}_{\wh{p}}$.)

Then if we assume $\on{Sh}_{et}(X;\on{Sp}_{\wh{p}})$ is semi-rigid and $X$ has finite (mod $p$) cohomological dimension, we see from a Postnikov tower argument that $1/p$ is compact, and then we can run a similar argument as in the above proof to deduce once more that (uniform) descendability is detected on pullback to points.
\end{remark}

\begin{remark}\label{cohdimfibers}
The following criterion can be useful for establishing finite cohomological dimension.  Suppose $X$ is a light condensed anima which admits a hypercover by light profinite sets (for example, $X$ could be a second-countable compact Hausdorff space or a light profinite anima).  Suppose further that we have a map $f:X \to S$ to a light profinite set, and for every $s:\ast \to S$ the fiber $X_s$ has finite cohomological dimension $\leq d$, with $d$ being independent of $s$.  Then $X$ has cohomological dimension $\leq d$.

Indeed, for a truncated $\on{D}(\mbb{Z})$-valued sheaf $\mathcal{F}$ we can deduce base-change for $f_\ast\mathcal{F}$ by using the hypercover and the fact that base-change for lower-$\ast$ holds for maps of light profinite sets.  Then we can make use of the fact that pullbacks to points are t-exact (as always) and jointly detect isomorphisms (Remark \ref{checkonstalks}) to conclude.

The same remark applies to mod $p$ cohomological dimension.
\end{remark}

\section{The six functor formalism for etale sheaves}\label{sixsec}

In this section we produce a six functor formalism for etale sheaves on light condensed anima, following \cite{HeyerSixFunctors}.  Recall that in the set-up of that article, a six functor formalism consists of the following data:
\begin{enumerate}
\item An $\infty$-category $\mathcal{C}$ which has all finite products;
\item A collection $E$ of morphisms in $\mathcal{C}$ which is \emph{geometric}, i.e.\ closed under composition, closed under base-change, contains all isomorphisms, and is closed under passing to diagonals.
\item A lax symmetric monoidal functor $M:\on{Span}_E(\mathcal{C})\to \on{Pr}^L$.
\end{enumerate}

Here $\on{Span}_E(\mathcal{C})$ is a certain natural $\infty$-category whose objects are the objects in $\mathcal{C}$ and where morphisms from $X$ to $Y$ are spans
$$X\leftarrow T\rightarrow Y$$
with $T\rightarrow Y$ in $E$, and where composition is induced by pullbacks of spans.  This $\infty$-category $\on{Span}_E(\mathcal{C})$ carries a symmetric monoidal structure induced by the product in $\mathcal{C}$, and we view $\on{Pr}^L$ as symmetric monoidal via the Lurie tensor product.

Thus a six functor formalism assigns a presentable $\infty$-category $M(X)$ to every $X\in\mathcal{C}$ and a colimit-preserving functor $f^\ast:M(Y)\to M(X)$ to every $f:X\to Y$ in $\mathcal{C}$ and a colimit-preserving functor $g_!:M(X)\to M(Y)$ to every map $g:X\to Y$ in $E$.  The functoriality of $M$ implies that $(-)^\ast$ and $(-)_!$ are separately functorial, and (crucially) that the formation of $g_!$ commutes with $f^\ast$ in a base-change square.  The symmetric monoidality gives the exterior product operation:
$$M(X)\otimes M(Y)\to M(X\times Y),$$
such that composing with pullback along the diagonal promotes each $M(X)$ to a presentably symmetric monoidal $\infty$-category.  We refer to \cite{HeyerSixFunctors} for more details on all of this.

Generally, one produces six functor formalisms in a multi-step procedure.  First one starts with the situation in which $g_!$ can be taken to be $g_\ast$, and hence the whole data is in principle determined just by the underlying functor $M^\ast:\mathcal{C}^{op}\to\on{CAlg}(\on{Pr}^L)$ by some fancy procedure of passing to adjoints.  Then, in the subsequent steps, one applies various formal procedures allowing to enlarge the class of maps $E$ allowed in the formalism.  In \cite{HeyerSixFunctors} a convenient result is proved which axiomatizes this procedure in a way which is sufficient for our applications.

We are primarily interested in etale sheaves of $p$-complete spectra , but for extra flexibility we consider a more general set-up.  Following \cite{LurieSAG} 21.1.2, we single out a class of reasonable ``coefficient categories'' which includes all examples of interest.

\begin{definition}
Suppose $\mathcal{C}$ is a compactly assembled presentable $\infty$-category.  For $X\in\on{CondAn}^{light}$, define
$$\on{Sh}_{et}(X;\mathcal{C}) = \on{Sh}_{et}(X)\otimes\mathcal{C}$$
and
$$\on{Sh}_v(X;\mathcal{C}) = \on{Sh}_v(X)\otimes \mathcal{C}.$$
\end{definition}

\begin{remark}
By \cite{LurieSAG} 1.3.1.6 this is consistent notation: if $\mathcal{X}$ is an $\infty$-topos which identifies with the $\infty$-topos of hypercomplete sheaves of anima on some site, then $\mathcal{X}\otimes \mathcal{C}$ identifies with the $\infty$-category of hypercomplete $\mathcal{C}$-valued sheaves on that same site.  But we use the tensor product perspective to keep better track of the functoriality.
\end{remark}

The reason we restrict to compactly assembled $\mathcal{C}$ is because of the following favorable properties.

\begin{lemma}\label{compassgood}
Let $\mathcal{C}$ be a compactly assembled presentable $\infty$-category.
\begin{enumerate}
\item The functor
$$\on{Sh}_{et}(X;\mathcal{C})\to\on{Sh}_v(X;\mathcal{C}),$$
gotten by tensoring the inclusion $\on{Sh}_{et}(X;\mathcal{C})\to\on{Sh}_v(X;\mathcal{C})$ with $\mathcal{C}$, is fully faithful.
\item The functors $X\mapsto \on{Sh}_{et}(X;\mathcal{C})$ and $X\mapsto \on{Sh}_v(X;\mathcal{C})$ from $\on{CondAn}^{op}$ to $\on{Pr}^L$ preserve arbitrary limits.
\item Let $X\in\on{CondAn}^{light}$.  A map $\mathcal{F}\to\mathcal{G}$ in $\on{Sh}_{et}(X;\mathcal{C})$ is an isomorphism if and only if $x^\ast\mathcal{F}\to x^\ast\mathcal{G}$ is an isomorphism for all $x:\ast \to X$.
\end{enumerate}
\end{lemma}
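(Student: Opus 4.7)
The plan is to treat the three parts in the order (2), (1), (3), using throughout that compactly assembled $\infty$-categories are exactly the dualizable objects of $\on{Pr}^L$, so that $-\otimes\mathcal{C}:\on{Pr}^L\to\on{Pr}^L$ has both a left and a right adjoint (given by tensoring with the dual $\mathcal{C}^\vee$) and consequently preserves all small limits and colimits. Part (2) is then immediate: the un-tensored functors $X\mapsto \on{Sh}_{et}(X)$ and $X\mapsto \on{Sh}_v(X)$ from $(\on{CondAn}^{light})^{op}$ to $\on{Pr}^L$ preserve arbitrary limits, as already observed earlier in the excerpt (topos descent for $\on{Sh}_v$, and then locality of etaleness for $\on{Sh}_{et}$), and composing with the limit-preserving $-\otimes\mathcal{C}$ gives the claim.

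For (1), a morphism in $\on{Pr}^L$ is fully faithful if and only if it is $(-1)$-truncated, i.e.\ a monomorphism---equivalently, its relative diagonal is an equivalence. Since $-\otimes\mathcal{C}$ preserves pullbacks (and therefore monomorphisms), applying it to the fully faithful inclusion $\on{Sh}_{et}(X)\hookrightarrow \on{Sh}_v(X)$ yields a fully faithful inclusion $\on{Sh}_{et}(X;\mathcal{C})\hookrightarrow \on{Sh}_v(X;\mathcal{C})$.

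For (3), the case $\mathcal{C}=\on{An}$ is exactly Remark \ref{checkonstalks}. For general compactly assembled $\mathcal{C}$, I write $\mathcal{C}$ as a retract in $\on{Pr}^L$ of a compactly generated category $\on{Ind}(\mathcal{D})$ via $r:\on{Ind}(\mathcal{D})\to\mathcal{C}$ and $s:\mathcal{C}\to\on{Ind}(\mathcal{D})$ with $r\circ s=\on{id}_\mathcal{C}$, so that $s$ (and hence $\on{id}_{\on{Sh}_{et}(X)}\otimes s$) is conservative. Since $x^\ast$ is natural in the coefficient category, this reduces the claim to $\mathcal{C}=\on{Ind}(\mathcal{D})$. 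I then reduce further to $\mathcal{C}=\on{PSh}(\mathcal{D})$ by using that $\on{Ind}(\mathcal{D})$ is a localization of $\on{PSh}(\mathcal{D})$ in $\on{Pr}^L$; tensoring preserves localizations (by the universal property defining $\mathcal{X}[W^{-1}]$), so $\on{Sh}_{et}(X;\on{Ind}(\mathcal{D}))$ is a reflective subcategory of $\on{Sh}_{et}(X;\on{PSh}(\mathcal{D}))$. Finally, for $\mathcal{C}=\on{PSh}(\mathcal{D})$ the standard identification $\on{Sh}_{et}(X;\on{PSh}(\mathcal{D}))\simeq \on{Fun}(\mathcal{D}^{op},\on{Sh}_{et}(X))$ allows isomorphisms to be checked object-wise on $\mathcal{D}$, at each object reducing directly to the $\on{An}$-case.

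The main obstacle is the careful naturality tracking in (3): verifying that, at each reduction step (retract, reflection, functor-category identification), the point-pullback functors $x^\ast$ transport correctly through the fully faithful or conservative functors involved, so that the equivalence ``iso iff $x^\ast$-iso'' can be chained back to the known $\on{An}$-case. Once these naturalities are set up using the functoriality of $\on{Sh}_{et}(X;-)$ in the coefficient variable, each individual step is routine.
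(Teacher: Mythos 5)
The gap is in your part (1): you assert that a morphism of $\on{Pr}^L$ is fully faithful if and only if it is $(-1)$-truncated (a monomorphism), and you need both directions. Only ``fully faithful $\Rightarrow$ mono'' is true. A monomorphism in $\on{Pr}^L$ only forces the maps on mapping anima to be inclusions of path components together with essential injectivity; it does not force surjectivity on components, i.e.\ fullness. Concretely, the colimit-preserving functor $\on{An}\times\on{An}\to\on{Fun}(\Delta^1,\on{An})$ sending $(X,Y)$ to the summand inclusion $X\to X\sqcup Y$ is a monomorphism in $\on{Pr}^L$: the induced map $\on{Map}(X,X')\times\on{Map}(Y,Y')\to\on{Map}(X,X')\times\on{Map}(Y,X'\sqcup Y')$ on mapping anima is an inclusion of components, and one checks that every equivalence between images comes from an equivalence of pairs, so the diagonal is an equivalence; but it is not full (take $(X,Y)=(\emptyset,\ast)$, $(X',Y')=(\ast,\emptyset)$). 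So from the (correct) facts that $-\otimes\mathcal{C}$ preserves pullbacks for dualizable $\mathcal{C}$ and that the inclusion $\on{Sh}_{et}(X)\hookrightarrow\on{Sh}_v(X)$ is a mono, you cannot conclude that the tensored functor is fully faithful. This is not a removable technicality: preservation of fully faithfulness under $-\otimes\mathcal{C}$ is exactly the delicate point of part (1) (it fails for general presentable $\mathcal{C}$), so it needs an argument that genuinely uses compact assembly.

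A repair in your spirit: for dualizable $\mathcal{C}$ one has $-\otimes\mathcal{C}\simeq\on{Fun}^L(\mathcal{C}^\vee,-)$, and postcomposition with a fully faithful functor is fully faithful on (full subcategories of) functor categories, which gives (1). The paper instead uses the retract trick you invoke in (3) to reduce to compactly generated $\mathcal{C}$ and then identifies $\mathcal{X}\otimes\mathcal{C}\simeq\on{Fun}^{rex}((\mathcal{C}^{\aleph_0})^{op},\mathcal{X})$, naturally in colimit-preserving, finite-limit-preserving functors $\mathcal{X}\to\mathcal{Y}$; since the inclusion $\on{Sh}_{et}(X)\subset\on{Sh}_v(X)$ and the point-pullbacks $x^\ast$ are such functors, all three claims drop at once to $\mathcal{C}=\on{An}$. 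Your (2) is fine as a shortcut (limit-preservation of $-\otimes\mathcal{C}$ via dualizability), and your (3) can be made to work, but note that the step you call routine --- compatibility of the reflective inclusion $\on{Sh}_{et}(X;\on{Ind}(\mathcal{D}))\subset\on{Sh}_{et}(X;\on{PSh}(\mathcal{D}))$ with $x^\ast$ --- amounts to identifying the local objects with $\on{Fun}^{rex}(\mathcal{D}^{op},\on{Sh}_{et}(X))$ compatibly with postcomposition by $x^\ast$, i.e.\ to the paper's key natural equivalence; once you have that, it also settles (1) directly, making the mono argument unnecessary.
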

\begin{proof}
Because every compactly assembled presentable $\infty$-category is a retract in $\on{Pr}^L$ of a compactly generated $\infty$-category (\cite{LurieSAG} 21.1.2), we can reduce to the compactly generated case, so assume $\mathcal{C}$ compactly generated.  Note that if $\mathcal{X}$ is a presentable $\infty$-category, then
$$\mathcal{X}\otimes\mathcal{C}\simeq \on{Fun}^{rex}((\mathcal{C}^{\aleph_0})^{op},\mathcal{X}),$$
where $\on{Fun}^{rex}$ means the $\infty$-category of functors which preserve finite limits.  Indeed we have $\mathcal{C}\simeq\on{Fun}^{rex}((\mathcal{C}^{\aleph_0})^{op},\on{An})$ by sending $c\in C$ to the functor $c_0\mapsto \on{Map}(c_0,c)$.  Using that $\mathcal{X}$ is tensored over $\on{An}$, this produces an obvious comparison functor $\mathcal{X}\otimes \mathcal{C}\to \on{Fun}^{rex}((\mathcal{C}^{\aleph_0})^{op},\mathcal{X})$.  It is an isomorphism by \cite{LurieHA} Prop.\ 4.8.1.17.  But note that from this definition of the comparison functor, we see that it is functorial in maps between presentable $\infty$-categories $\mathcal{X}\to\mathcal{Y}$ which preserve colimits and finite limits.  Every pullback map of $\infty$-topoi is of this form by definition, and thus we reduce all three claims 1,2,3 to the case $\mathcal{C}=\on{An}$.  Then claims 1 and 2 follow from the definition, while 3 is given by Remark \ref{checkonstalks}.
\end{proof}

\begin{remark}\label{etinsidev}
If $\mathcal{C}$ is compactly generated, then the essential image of the fully faithful functor in 1 consists of those $\mathcal{C}$-valued $v$-sheaves $\mathcal{F}$ such that for all compact objects $c\in\mathcal{C}$, the $v$-sheaf of anima $\on{Map}(c,\mathcal{F})$ lies in $\on{Sh}_{et}(X)$.  For example, a $v$-sheaf of $p$-complete spectra $\mathcal{F}$ is etale if and only if $\mathcal{F}/p$ is etale as a $v$-sheaf of spectra, which is if and only if $\Omega^\infty (\mathcal{F}/p[n])$ is etale as a $v$-sheaf of anima for all $n\in\mathbb{Z}$, or again (see Lemma \ref{etaleproperties}) if and only if each homotopy group sheaf  of $\mathcal{F}/p$ is etale as an abelian group valued sheaf.  This follows because $\mathbb{S}/p$ is a compact generator of $\on{Sp}_{\wh{p}}$.
\end{remark}

In the abstract context, our ``base category'' for our six functor formalism will therefore be some $\mathcal{R}\in\on{CAlg}(\on{Pr}^L)$ which is compactly assembled (as an underlying $\infty$-category).

Following the procedure outlined above, a sufficient supply of proper maps to start out with is guaranteed by the following simple result.

\begin{lemma}
Let $f:T\to S$ be a map of light profinite sets.  Then the right adjoint $f_\ast:\on{Sh}_{et}(T;\on{Sp})\to\on{Sh}_{et}(S;\on{Sp})$
to the pullback of etale sheaves of spectra preserves all colimits, satisfies the projection formula, and commutes with base-change (along any map $S'\to S$ in $\on{ProfSet}^{light}$).
\end{lemma}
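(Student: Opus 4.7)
First I would reduce from etale sheaves to topological sheaves. By Lemma \ref{etaleonprof}, for any light profinite set $X$ the functor $\delta_X:\on{Sh}(X)\xrightarrow{\sim}\on{Sh}_{et}(X)$ is an equivalence in $\mathcal{T}op^L$; tensoring with $\on{Sp}$ yields an equivalence $\on{Sh}(X;\on{Sp})\simeq\on{Sh}_{et}(X;\on{Sp})$ of symmetric monoidal presentable stable $\infty$-categories. Since the discussion preceding Theorem \ref{etale} identifies $\on{Sh}_{et}(-)|_{\on{ProfSet}^{light}}$ with the usual sheaf-of-anima functor on the underlying topological spaces, this equivalence is natural in pullback, and by uniqueness of adjoints it also intertwines pushforward. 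All three claims thereby reduce to the corresponding assertions for sheaves of spectra on the compact Hausdorff spaces $T$ and $S$, and I will use throughout that these spaces have covering dimension $0$: any open cover admits a refinement by disjoint clopen subsets.

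For base change, I would invoke the proper base-change theorem for sheaves of anima on compact Hausdorff spaces (\cite{LurieHTT} Thm.\ 7.3.1.18) and promote to spectra formally. Since $f^\ast,g^\ast$ preserve finite limits (being pullbacks in $\mathcal{T}op^L$) and $f_\ast, f'_\ast$ preserve all limits (as right adjoints), all of these functors commute with the stabilization procedure $\on{Sp}(-)=\varprojlim(\cdots\xrightarrow{\Omega}(-)_\ast\xrightarrow{\Omega}(-)_\ast)$. Hence the anima-level base-change isomorphism immediately gives the corresponding isomorphism for the spectrum-valued functors.

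For colimit preservation of $f_\ast$, I would check on stalks, which detect isomorphisms of sheaves of spectra on compact Hausdorff spaces of finite covering dimension and which commute with colimits. By the base-change formula just established, applied along $\{s\}\hookrightarrow S$, the stalk $(f_\ast\mathcal{F})_s$ identifies with $\Gamma(f^{-1}(s);\mathcal{F}|_{f^{-1}(s)})$, so it suffices to prove that $\Gamma(K;-):\on{Sh}(K;\on{Sp})\to\on{Sp}$ preserves colimits for every light profinite set $K$. Writing $K=\varprojlim_n K_n$ as a cofiltered limit of finite sets, I would use the identification $\on{Sh}(K;\on{Sp})=\varinjlim_n\prod_{K_n}\on{Sp}$ in $\on{Pr}^L$ (with transition maps given by pullback), under which $\Gamma(K;-)$ is realized as the filtered colimit of the finite-product functors $\prod_{K_n}\on{Sp}\to\on{Sp}$; in the stable setting each such functor is also a finite coproduct, hence colimit-preserving, so $\Gamma(K;-)$ is colimit-preserving as the filtered colimit of these in $\on{Pr}^L$.

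Finally, for the projection formula $\mathcal{G}\otimes f_\ast\mathcal{F}\to f_\ast(f^\ast\mathcal{G}\otimes\mathcal{F})$, the same stalk-wise strategy applies: via the base-change formula, the comparison at a stalk $s\in S$ becomes the assertion that $\Gamma(f^{-1}(s);-)$ commutes with tensor product by the fixed spectrum $\mathcal{G}_s$, which follows directly from the cocontinuity of $\Gamma$ on light profinite sets established above. The main obstacle to the plan is essentially just the passage from anima to spectra in proper base change, and the covering-dimension-zero property together with the stabilization formalism handle this cleanly without any separate Postnikov-completeness considerations.
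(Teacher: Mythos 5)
Your overall route is the same as the paper's: identify etale sheaves on a light profinite set with sheaves on the underlying compact Hausdorff space (Lemma \ref{etaleonprof}, equivalently Lemma \ref{etaleontopological}), then import Lurie's proper base change theorem from \cite{LurieHTT} 7.3 and promote from anima to spectra by applying the four functors levelwise to spectrum objects, which is legitimate since the pullbacks are left exact and the pushforwards preserve limits. The stalkwise reductions of colimit preservation and of the projection formula are also sound (conservativity of stalks here is Lemma \ref{compassgood}(3), or your dimension-zero argument), and the projection formula does reduce, as you say, to cocontinuity and exactness of $\Gamma$ on fibers.

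The one step that fails as written is your justification that $\Gamma(K;-)$ preserves colimits for $K=\varprojlim_n K_n$ light profinite. In the presentation $\on{Sh}(K;\on{Sp})\simeq\varinjlim_n\prod_{K_n}\on{Sp}$ in $\on{Pr}^L$ with pullback transition maps, the finite-product functors $\prod_{K_n}\on{Sp}\to\on{Sp}$ do \emph{not} form a cocone: pulling back along $K_{n+1}\twoheadrightarrow K_n$ repeats coordinates, so summing over $K_{n+1}$ after pullback differs from summing over $K_n$. Moreover the actual restriction of $\Gamma(K;-)$ along the insertion $\pi_n^\ast$ sends $(X_k)_k$ to $\bigoplus_{k\in K_n}\Gamma(\pi_n^{-1}(k);\underline{X_k})$, a spectrum of locally constant functions, not the finite product; and even granting that these restrictions preserve colimits, a functor out of a colimit in $\on{Pr}^L$ whose restrictions along the insertions preserve colimits need not itself preserve colimits (only the functor \emph{induced} by the universal property is so guaranteed), so the argument is circular at exactly the point at issue. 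The conclusion is true and easily repaired: use instead the presentation $\on{Sh}(K;\on{Sp})\simeq\varprojlim_n\prod_{K_n}\on{Sp}$ along the pushforwards, which are finite sums over fibers and hence colimit-preserving, so that colimits in the limit category are computed levelwise and $\Gamma(K;-)$ is ``project to level $0$ and take the finite sum''; or note that on the finitary site of compact open subsets (covers being finite disjoint decompositions) colimits of sheaves are computed sectionwise, so the unit is compact; or simply observe that properness of compact Hausdorff maps in \cite{LurieHTT} 7.3 already includes commutation of $f_\ast$ with filtered colimits, which is how the paper's one-line proof disposes of all three claims at once.
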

\begin{proof}
By the comparison of etale sheaves on light profinite sets with sheaves on the underlying compact Hausdorff space (Lemma \ref{etaleontopological}), this follows from Lurie's proper base change theorem (\cite{LurieHTT} 7.3).
\end{proof}

\begin{remark}
Lurie's proper base change theorem is of course quite elementary in the context of profinite sets. See \cite{HeyerSixFunctors} 3.5 for another perspective, by identifying sheaves of spectra on a profinite set $S$ with modules over the ring $\Gamma(S;\mbb{S})$.  Yet another perspective, that of \cite{ScholzeDiamonds}, is the following: it is tautological from slice topos formalism that the pushforward of $v$-sheaves commutes with pullbacks and satisfies the projection formula; thus to see that the etale pushforward does as well, it suffices to show that the pushforward of $v$-sheaves sends etale sheaves to etale sheaves.  This is a simple consequence of the criterion of Theorem \ref{etale}.
\end{remark}
\begin{remark}
For any $\mathcal{R}\in\on{CAlg}(\on{Pr}_{st}^L)$, if we tensor this lemma with $\mathcal{R}$ we deduce the analogous lemma for $\mathcal{R}$-valued etale sheaves.
\end{remark}

From this result and \cite{HeyerSixFunctors} Prop.\ 3.3.3, we get a (completely explicit) six functor formalism on $\on{ProfSet}^{light}$ with $E$ the class of all maps such that:
\begin{enumerate}
\item The underlying functor $M^\ast:(\on{ProfSet}^{light})^{op}\to\on{CAlg}(\on{Pr}^L)$ is $\on{Sh}_{et}(-;\mathcal{R})$ (for any $\mathcal{R}\in\on{CAlg}(\on{Pr}^L_{st})$ compactly assembled).
\item For all maps $f:X\to Y$ in $\on{ProfSet}^{light}$, the functor $f_!$ identifies with $f_\ast$, and the base-change isomorphisms for $f_!$ (coming from the six functor formalism) and for $f_\ast$ (coming from adjunction) agree.
\end{enumerate}
There is a more precise version of 2: in this six functor formalism every map is \emph{proper} in the sense of \cite{HeyerSixFunctors} Def.\ 4.6.1 (we will also review this notion below).  It follows by \cite{DauserUniqueness} that this is the unique six functor formalism on $\on{ProfSet}^{light}$ such that 1 holds and every map is proper.

Now we apply \cite{HeyerSixFunctors} Thm.\ 3.4.11 to extend this six functor formalism to all of $\on{CondAn}^{light}$.  In the statement we make use the notion of a \emph{$!$-cover} associated to a given six functor formalism
$$M:\on{Span}_E(\mathcal{C})\to\on{Pr}^L.$$
The definition is as follows:  a collection of maps $F=\{f_i:X_i\to X\}_{i\in I}$ in $\mathcal{C}$ with common target is a \emph{$!$-cover} (called a universal $!$-cover in \cite{HeyerSixFunctors} Def.\ 3.4.6) if:
\begin{enumerate}
\item Each $f_i$ lies in $E$;
\item For any map $X'\to X$ in $\mathcal{C}$, if $F':=\{f'_i: X'_i\to X'\}_{i\in I}$ denotes the pullback, then the co-presheaf $M_!: E \to \on{Pr}^L$ satisfies the cosheaf condition for the sieve on $E_{/X'}$ generated by $F'$.  Here $E$ is the wide subcategory of $\mathcal{C}$ corresponding to $E$, and $M_!$ is the functor given by the $!$-pushforwards from the six functor formalism.
\end{enumerate}

\begin{theorem}\label{sixfunctorextension}
Let $\mathcal{R}\in\on{CAlg}(\on{Pr}_{st}^L)$ such that $\mathcal{R}$ is compactly assembled.  Then there exists a minimal geometric class of maps $E$ of light condensed anima satisfying the following conditions:
\begin{enumerate}
\item There is a unique six functor formalism $M:\on{Span}_E(\on{CondAn}^{light})\to \on{Pr}^L$ equipped with an identification of the underlying functor $M^\ast:(\on{CondAn}^{light})^{op}\to \on{CAlg}(\on{Pr}^L)$ with $\on{Sh}_{et}(-;\mathcal{R})$ such that $E$ contains all maps between light profinite sets, and all maps between light profinite sets are \emph{proper}.
\item Suppose that $f:X\to Y$ is a map of light condensed anima such for all light profinite sets $S$ and all maps $S\to Y$, the pullback $X\times_Y S \to S$ lies in $E$.  Then $f$ lies in $E$.
\item Suppose that $f:X\to Y$ is a map of light condensed anima which, $!$-locally on source or target (with respect to the six functor formalism $M$ from 1), lies in $E$.  Then $f$ lies in $E$.
\item If $f:X\to Y$ lies in $E$ and $Y\in\on{ProfSet}^{light}$, then $X$ admits a $!$-cover by light profinite sets.
\end{enumerate}
By definition, we will say that a map $f$ of light condensed anima is \emph{$\mathcal{R}$-$!$-able} (or \emph{$\mathcal{R}$-fine}, in the language of \cite{HeyerSixFunctors}) if it lies in this class $E$.
\end{theorem}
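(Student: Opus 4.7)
The strategy is to set up the hypotheses of \cite{HeyerSixFunctors} Thm.\ 3.4.11 and invoke it. That extension theorem takes as input a six functor formalism on a subcategory together with compatible covering data, and outputs the minimal extension to an ambient category, equipped with a canonical geometric class $E$ of morphisms over which the $!$-pushforwards are defined. The class $E$ is characterized by being the smallest geometric class containing the input maps and closed under the natural $!$-local procedures, so essentially everything in the theorem statement falls out of the ``output'' of \cite{HeyerSixFunctors} Thm.\ 3.4.11 once the hypotheses are matched.

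For the input data, I would take the six functor formalism $M_0 : \on{Span}(\on{ProfSet}^{light}) \to \on{Pr}^L$ produced immediately before the theorem statement: by the proper base change lemma combined with \cite{HeyerSixFunctors} Prop.\ 3.3.3, and with uniqueness guaranteed by \cite{DauserUniqueness}, this formalism has $\on{Sh}_{et}(-;\mathcal{R})$ as underlying functor, satisfies $f_! = f_\ast$ for every map of light profinite sets, and has every such map \emph{proper} in the Heyer--Mann sense. The ambient category $\on{CondAn}^{light}$ has its canonical Grothendieck topology generated by finite surjective families of light profinite sets, and this links to the input topology on $\on{ProfSet}^{light}$ via hypercovers in the defining sense of \ref{lightdef}, supplying the covering data that Heyer--Mann's theorem demands.

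Applying \cite{HeyerSixFunctors} Thm.\ 3.4.11 then yields, in one stroke, both the extended six functor formalism and its class of fine morphisms $E$. Property (1) is the content of the extension theorem (existence of $M$ with the specified underlying functor) together with \cite{DauserUniqueness} (uniqueness, given that properness of maps of light profinite sets is preserved); property (3) is the $!$-locality closure built into the construction of $E$; property (4) records the shape of the $!$-covers used in the extension procedure, namely that any fine map to a light profinite set is $!$-covered by maps coming from light profinite sets. Property (2) is the only part that calls for a separate argument: given $f : X \to Y$ such that the pullback $X \times_Y S \to S$ lies in $E$ for every map $S \to Y$ from a light profinite set, choose a hypercover $Y_\bullet \to Y$ by light profinite sets, observe that each pullback $X \times_Y Y_n \to Y_n$ lies in $E$ by hypothesis, and then invoke property (3) along this $!$-cover of $Y$ to conclude that $f$ itself lies in $E$. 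The genuinely hard step is of course concealed inside the invocation of \cite{HeyerSixFunctors} Thm.\ 3.4.11 — the delicate verification that $!$-descent extends a six functor formalism from a small category to a large one, and that the resulting class $E$ is stable under diagonals and composition — but this is exactly the labor Heyer--Mann have already done for us.
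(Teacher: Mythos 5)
Your overall route is the same as the paper's: feed the basic proper formalism on $\on{ProfSet}^{light}$ (proper base change plus \cite{HeyerSixFunctors} Prop.\ 3.3.3, with uniqueness from \cite{DauserUniqueness}) into \cite{HeyerSixFunctors} Thm.\ 3.4.11. But you have skipped the one step that carries the actual content of the paper's proof, and your added argument for property (2) does not work. On the first point: Heyer--Mann's extension theorem produces a formalism whose underlying functor is required to preserve limits and to restrict to the given one on $\on{ProfSet}^{light}$, i.e.\ it is the right Kan extension of $\on{Sh}_{et}(-;\mathcal{R})\mid_{\on{ProfSet}^{light}}$. To obtain condition (1) as stated --- an identification of $M^\ast$ with $\on{Sh}_{et}(-;\mathcal{R})$ on all of $\on{CondAn}^{light}$ --- you must know that $\on{Sh}_{et}(-;\mathcal{R})$ itself preserves limits, hence is right Kan extended from light profinite sets; that is exactly Lemma \ref{compassgood}, and without it condition (1) is not in the form Thm.\ 3.4.11 delivers.

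On the second point: your proposed proof of property (2) takes a hypercover $Y_\bullet\to Y$ by light profinite sets, notes each $X\times_Y Y_n\to Y_n$ lies in $E$, and then invokes property (3) "along this $!$-cover of $Y$." But property (3) requires a $!$-cover with respect to the formalism $M$, and a surjection from a light profinite set onto a general light condensed anima is not known to be a $!$-cover --- it is not even known to lie in $E$ at that stage (deciding membership in $E$ for such maps is precisely what property (2) is supposed to provide, so the argument is circular). The paper is explicit about this trap: Remark \ref{trulylocal} cautions that "every cover is a $!$-cover" holds only over light profinite sets, not over general light condensed anima. In fact no separate argument is needed at all: property (2) is one of the two closure procedures defining the minimal class $E$ produced by Thm.\ 3.4.11 (the extension $E'\mapsto E'_\ast$ of Remark \ref{actuallyconstruct}), exactly on the same footing as (3) and (4), so it comes out of the citation rather than needing the hypercover argument you supply.
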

\begin{proof}
By Lemma \ref{compassgood}, $\on{Sh}_{et}(-;\mathcal{R}):(\on{CondAn}^{light})^{op}\to\on{CAlg}(\on{Pr}^L)$ is right Kan extended from $\on{ProfSet}^{light}$.  Thus condition 1 is equivalent to the following condition: there is a unique six functor formalism $M:\on{Span}_E(\on{CondAn}^{light})\to \on{Pr}^L$ such that $E$ contains all maps of light profinite sets, $M^\ast$ preserves limits, and $M$ is equipped with an isomorphism to the basic six functor formalism discussed above on restriction to light profinite sets.  Then the result follows from a direct application of \cite{HeyerSixFunctors} Thm.\ 3.4.11.
\end{proof}

\begin{remark}\label{actuallyconstruct}
From the proof of Thm.\ 3.4.11 of \cite{HeyerSixFunctors}, one sees that the class of maps $E$ is produced by starting with the class $E_0$ of maps of light condensed anima whose every pullback to a light profinite set is light profinite, and then iteratively extending $E_0$ by two simple extension procedures (and, when necessary, taking unions over a chain of previously constructed classes of maps).  For each class of maps produced one has the uniqueness claim as in 1, allowing to talk about $!$-covers, etc.  The extension procedures are as follows:
\begin{enumerate}
\item Given a class of maps $E'$, enlarge to $E'_!$, the class of maps which are $!$-locally on the source in $E'$.
\item Given a class of maps $E'$, enlarge it to $E'_\ast$, the class of maps such that each pullback to a light profinite set lies in $E'$.
\end{enumerate}
\end{remark}

\begin{remark}\label{changecoefficients}
Suppose that $\mathcal{R}\to \mathcal{R}'$ is a map in $\on{CAlg}(\on{Pr}^L_{st})$, where both $\mathcal{R}$ and $\mathcal{R}'$ are compactly assembled.  Let $E$ denote the collection of $\mathcal{R}$-$!$-able maps, and similarly $E'$ the collection of $\mathcal{R}'$-$!$-able maps. Then $E\subset E'$. Moreover, we have the following compatibility property.  Suppose we take the six functor formalism for $\mathcal{R}$-valued etale sheaves, viewed as a lax symmetric mononidal functor
$$\on{Span}_E(\on{CondAn}^{light})\to\on{Mod}_\mc{R}(\on{Pr}^L)$$
(see \cite{HeyerSixFunctors} Lemma 3.2.5), and then apply $-\otimes_\mathcal{R}\mathcal{R}'$.  Then the resulting six functor formalism agrees with the one for $\mathcal{R}'$-valued etale sheaves, restricted from $(\on{CondAn}^{light},E')$ to $(\on{CondAn}^{light},E)$.

Following the previous remark, and as in the proof of \cite{HeyerSixFunctors} Prop.\ 3.5.22, to justify this we need to see that if we have added a new $\mathcal{R}$-$!$-able map by the extension procedures 1 or 2, then it is also in the collection of maps which are $\mathcal{R}'$-$!$-able in the corresponding step for $\mathcal{R}'$.  For procedure 2 this is obvious.  For procedure 1, we need to show that a $!$-cover for $\mathcal{R}$ is also a $!$-cover for $\mathcal{R}'$.  However, $M_!:E \to \on{Pr}^L$ promotes naturally to $M_!: E \to \on{Mod}_\mathcal{R}(\on{Pr}^L)$ (\cite{HeyerSixFunctors} Prop.\ 3.5.22), and then we can apply $-\otimes_\mathcal{R}\mathcal{R}'$ to conclude (recalling that $\on{Mod}_\mathcal{R}(\on{Pr}^L)\to\on{Pr}^L$ preserves and detects colimits).
\end{remark}

There are some important classes of morphisms one can define associated to any six functor formalism.  We spell some of them out now in the context of the six functor formalism from Theorem \ref{sixfunctorextension}.  Recall first that a map in an $\infty$-category is called \emph{truncated} if some iterated diagonal of it is an isomorphism.  Some of these notions will only be defined for truncated maps.

\begin{remark}
In fact, $!$-able morphisms in the six functor formalism from Theorem \ref{sixfunctorextension} have a strong tendency to be truncated (or at least, unions of truncated objects).  Indeed, in practice the $!$-covers one uses to produce new examples of $!$-able maps (via property 3 in Theorem \ref{sixfunctorextension}) are also covers in the light condensed Grothendieck topology, hence new examples of $!$-covers one produces are at most one less degree of truncated than old ones.  This is not a completely satisfactory situation: for example one would like to have that an arbitrary etale map, not necessarily truncated, between light condensed anima is $\on{Sp}$-$!$-able, but this is not true for the six functor formalism given by Theorem \ref{sixfunctorextension}.  Some new principle of construction (or extension) of six functor formalisms would be necessary to ``reach'' such maps.
\end{remark}

\begin{definition}\label{etalepropersmoothdef}
Let $\mathcal{R}\in\on{CAlg}(\on{Pr}^L_{st})$ be compactly assembled.
\begin{enumerate}
\item (\cite{HeyerSixFunctors} Def.\ 4.6.1) A truncated map $f:X\to Y$ of light condensed anima is called \emph{$\mathcal{R}$-proper} if it is $\mathcal{R}$-$!$-able, and either:
\begin{enumerate}
\item $f$ is an isomorphism, in which case we define an isomorphism $f_!\simeq f_\ast$ using the pullback of $f$ along itself and base-change in the pullback square $X\overset{(\on{id},\on{id})}{\longrightarrow} X\times_Y X$;
\item $\Delta_f:X\to X\times_Y X$ is proper and the natural map $f_!\to f_\ast$, coming from the isomorphism $(\Delta_f)_\ast\simeq (\Delta_f)_!$ and base-change in the pullback $X\times_Y X$, is an isomorphism.
\end{enumerate}
Thus by definition a proper map comes with a canonical isomorphism $f_!\simeq f_\ast$.
\item (\cite{HeyerSixFunctors} Def.\ 4.6.1) A truncated map $f:X\to Y$ of light condensed anima is called \emph{$\mathcal{R}$-etale} if it is $\mathcal{R}$-$!$-able, and either:
\begin{enumerate}
\item $f$ is an isomorphism, in which case we define an isomorphism $f^\ast \simeq f^!$ using the pullback of $f$ along itself and $X\overset{(\on{id},\on{id})}{\longrightarrow} X\times_Y X$;
\item $\Delta_f$ is etale and the natural map $f^\ast \to f^!$, coming from the isomorphism $(\Delta_f)^!\simeq (\Delta_f)^\ast$ and the pullback of $f$ along itself, is an isomorphism.
\end{enumerate}
Thus by definition an etale map comes with a canonical isomorphism $f^\ast\simeq f^!$.  Here of course $f^!$ denotes the right adjoint to $f_!$.
\item (\cite{HeyerSixFunctors} Def.\ 4.5.1) A map $f:X\to Y$ of light condensed anima is called \emph{$\mathcal{R}$-smooth} if it is $\mathcal{R}$-$!$-able, and the following two conditions are satisfied:
\begin{enumerate}
\item The formation of $f^!$ commutes with arbitrary base-change (via the natural comparison map coming from base-change for $f_!$), and likewise for any base change of $f$.
\item The natural transformation $f^\ast(-)\otimes f^!(1)\rightarrow f^!(-)$ (adjoint to the projection formula for $f_!$) is an isomorphism.  (In particular, $f^!$ preserves colimits.)
\item If $1 \in \on{Sh}_{et}(Y;\mathcal{R})$ denotes the unit object, then $f^!1 \in \on{Sh}_{et}(X;\mathcal{R})$ is invertible.
\end{enumerate}
\end{enumerate}
\end{definition}
\begin{remark}
Every $f:X\to Y$ which is truncated and $\mathcal{R}$-etale is $\mathcal{R}$-smooth.  In this case $f^!1\simeq 1$.
\end{remark}
\begin{remark}
We have the following closure properties:
\begin{enumerate}
\item The classes of $\mathcal{R}$-etale and $\mathcal{R}$-proper maps are geometric, \cite{HeyerSixFunctors} Lemma 4.6.3.
\item The class of $\mathcal{R}$-smooth maps is not geometric (it is not closed under passage to diagonals), but it is closed under composition and pullback (clear from the definition).
\end{enumerate}

\end{remark}

\begin{remark}\label{etalepropersmoothlocal}
There are also many situations in which one can test properness or smoothness locally on the source or target in some sense or another.  Let $f:X\to Y$ be a map in $\on{CondAn}^{light}$.  We have:
\begin{enumerate}
\item Suppose $f$ is truncated.  If for all $S\in\on{ProfSet}^{light}$ with a map $S\to Y$ the map $X\times_Y S \to S$ is $\mathcal{R}$-proper (resp.\ $\mathcal{R}$-etale), then $f$ is $\mathcal{R}$-proper (resp.\ $\mathcal{R}$-etale); see \cite{HeyerSixFunctors} Lemma 4.6.3.
\item If for all $S\in\on{ProfSet}^{light}$ with a map $S\to Y$ the map $X\times_Y S \to S$ is $\mathcal{R}$-smooth, then $f$ is $\mathcal{R}$-smooth; see \cite{HeyerSixFunctors} Lemma 4.5.7.
\item Let $g:X'\to X$ be a truncated $\mathcal{R}$-proper map.  Suppose $g_\ast 1\in \on{CAlg}(\on{Sh}_{et}(X;\mathcal{R}))$ is descendable (See section \ref{descsec}).  If $f\circ g$ is $\mathcal{R}$-$!$-able, then $f$ is $\mathcal{R}$-$!$-able; and if $f$ is truncated and $f\circ g$ is truncated and proper, then $f$ is proper; see \cite{HeyerSixFunctors} Lemma 4.7.4 and  Cor.\ 4.7.5
\item Suppose $\{g_i:X_i\to X\}_{i\in I}$ is a collection of $\mathcal{R}$-smooth maps to $X$ such that the pullback functors $\{g_i^\ast\}_{i\in I}$ are jointly conservative.  If each $f\circ g_i$ is $\mathcal{R}$-$!$-able (resp.\ $\mathcal{R}$-smooth), then $f$ is $\mathcal{R}$-$!$-able (resp.\ $\mathcal{R}$-smooth); see \cite{HeyerSixFunctors} Lemmas 4.7.1 and 4.5.8.
\item Suppose $f$ is truncated.  Let $\{g_i:X_i\to X\}_{i\in I}$ be a collection of truncated $\mathcal{R}$-etale maps to $X$ such that the pullacks $\{g_i^\ast\}_{i\in I}$ are jointly conservative and each $f\circ g_i$ is $\mathcal{R}$-etale.  Then $f$ is $\mathcal{R}$-etale; see \cite{HeyerSixFunctors} Lemma 4.6.3.
\end{enumerate}
Of course, implicit here is the claim that the $g$ in 2 and the $\{g_i\}$ in 3 give examples of $!$-covers.
\end{remark}

\begin{remark}
Suppose $\mathcal{R}\to\mathcal{R}'$ is a map in $\on{CAlg}(\on{Pr}^L)$ such that both $\mathcal{R}$ and $\mathcal{R}'$ are compactly assembled.  If a map $f$ is truncated and $\mathcal{R}$-proper, then it is also $\mathcal{R}'$-proper.  Similarly if $f$ is truncated and $\mathcal{R}$-etale, then it is $\mathcal{R}'$-etale, and if $f$ is $\mathcal{R}$-smooth, then it is $\mathcal{R}'$-smooth.  Using the compatibility from Remark \ref{changecoefficients}, this is clear by applying $-\otimes_\mc{R}\mc{R}'$.
\end{remark}

\begin{remark}
By construction, any $f:X\to Y$ which is truncated and $\mathcal{R}$-proper comes with a natural isomorphism $f_!\simeq f_\ast$.  One can check inductively that these natural isomorphisms commute with the base-change comparison maps and the projection formula comparison maps, where the ones for $f_!$ come from the six functor formalism and the ones for $f_\ast$ come from adjunction.  In particular, for $\mathcal{R}$-proper maps the formation of $f_\ast$ commutes with base-change and satisfies a projection formula.  Thus we could build another six functor formalism with $E$ the class of all $\mathcal{R}$-proper maps based on this via the explicit procedure of \cite{HeyerSixFunctors} Prop.\ 3.3.3.  But it follows from the uniqueness principle \cite{DauserUniqueness} that this six functor formalism agrees with the one from Theorem \ref{sixfunctorextension}.  Thus the isomorphism $f_!\simeq f_\ast$ for $\mathcal{R}$-proper maps $f$ automatically comes with all the relevant higher coherences.

Similarly, when $f:X\to Y$ is truncated and $\mathcal{R}$-etale, the natural isomorphism $f_!\simeq f_\natural$ (where $f_\natural$ denotes the left adjoint to $f^\ast$), adjoint to $f^!\simeq f^\ast$, commutes with base change and projection formulas, again up to all higher coherences as encoded in six functor formalisms.
\end{remark}

Now we give the basic examples of $\on{Sp}$-etale and $\on{Sp}$-proper maps.  (We defer discussion of smoothness to the next section.)

\begin{lemma}\label{openisetale}
For every light profinite set $S$ and every open subset $U\subset S$ (viewed as a light condensed subset), the inclusion $U\to S$ is $\on{Sp}$-etale.
\end{lemma}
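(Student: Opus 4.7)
The map $j: U \to S$ is a monomorphism, hence truncated with an identity diagonal (and thus a $\on{Sp}$-etale diagonal by case (a) of Definition~\ref{etalepropersmoothdef}). To conclude that $j$ is $\on{Sp}$-etale by case (b), I must exhibit it as $\on{Sp}$-$!$-able and verify that the natural comparison $j^\ast \to j^!$ is an isomorphism. My plan is to argue in two steps.

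The first step is to show clopen inclusions of light profinite sets are $\on{Sp}$-etale. If $T = T' \sqcup T''$ is a clopen decomposition of a light profinite set, then $\on{Sh}_{et}(T;\on{Sp})$ factors as the product $\on{Sh}_{et}(T';\on{Sp}) \times \on{Sh}_{et}(T'';\on{Sp})$, with pullback along the inclusion $i: T' \to T$ given by first projection. As $i$ is proper (a map of light profinite sets), $i_! \simeq i_\ast$ is the functor sending $\mc{F}$ to $(\mc{F},0)$; because $0$ is both initial and terminal in $\on{Sp}$, this coincides with the left adjoint $i_\natural$ of $i^\ast$. Hence $i_! \simeq i_\natural$, which dualizes to $i^\ast \simeq i^!$, finishing case (b).

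The second step handles the general open inclusion. Since $S$ is a countable inverse limit of finite sets, it has a countable basis of clopens, so I may write $U = \bigcup_n K_n$ as an increasing countable union of clopen subsets of $S$. Each $K_n$ is then clopen in $U$ as well, and the decomposition $U = K_n \sqcup (U \setminus K_n)$ holds in $\on{CondAn}^{light}$ because for any map $T \to U$ from a light profinite set, the preimages of $K_n$ and $U \setminus K_n$ form a clopen partition of $T$. In particular, $K_n \to U$ is, after any such pullback, a clopen inclusion of light profinite sets; combined with the first step and Remark~\ref{etalepropersmoothlocal}(1), this shows $K_n \to U$ is $\on{Sp}$-etale. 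The composition $K_n \to U \to S$ is itself a clopen inclusion of light profinite sets, hence $\on{Sp}$-etale directly by the first step. Since every point of $U$ lies in some $K_n$ and isomorphisms in $\on{Sh}_{et}(U;\on{Sp})$ may be tested on stalks (Lemma~\ref{compassgood}(3)), the pullback functors along the family $\{K_n \to U\}_n$ are jointly conservative. Applying Remark~\ref{etalepropersmoothlocal}(5) then concludes that $j: U \to S$ is $\on{Sp}$-etale.

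I do not anticipate a serious obstacle: the clopen inclusion case is essentially a direct consequence of the product decomposition of the sheaf category together with $0$ being a zero object in $\on{Sp}$, and the general case is a single application of the source-local criterion. The only mild subtlety is verifying that $U = K_n \sqcup (U \setminus K_n)$ really is a coproduct in $\on{CondAn}^{light}$, but this is immediate from descent once one notes that $K_n$ is clopen in $S$ and therefore its preimage in any light profinite $T$ mapping to $U$ is clopen.
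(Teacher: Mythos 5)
Your proof is correct and follows essentially the same route as the paper: first the clopen (compact open) case via the product decomposition $\on{Sh}_{et}(S)\simeq\on{Sh}_{et}(U)\times\on{Sh}_{et}(S\setminus U)$, using that $0$ is a zero object in $\on{Sp}$ to identify $i_!\simeq i_\ast\simeq i_\natural$, and then the general case by exhausting $U$ by clopens and invoking the locality statements of Remark~\ref{etalepropersmoothlocal} (parts 1 and 5) together with conservativity on stalks. The only cosmetic difference is that the paper writes $U$ as a countable \emph{disjoint} union of light profinite sets rather than an increasing union of clopens, which changes nothing since the source-local criterion only needs joint conservativity.
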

\begin{proof}
First suppose $U$ is also a light profinite set.  Then $U\to S$ is $\on{Sp}$-proper by definition, and if we set $V=S \setminus U$ so that $$\on{Sh}_{et}(S) = \on{Sh}_{et}(U)\times\on{Sh}_{et}(V)$$ via pullback, then it is easy to see in terms of this decomposition that $U\to S$ is $\on{Sp}$-etale.

In the general case, we can write $U=\sqcup_n U_n$ as a disjoint union of light profinite sets indexed by $\mathbb{N}$.  As $U_n\to U$ is $\on{Sp}$-etale on pullback to any light profinite set by the first part of this proof, we deduce that $U_n\to U$ is $\on{Sp}$-etale.  So is $U_n\to S$, again by the first part of this proof. Then by Remark \ref{etalepropersmoothlocal} we deduce that $U\to S$ is $\on{Sp}$-etale, as desired.
\end{proof}

Generalizing this, we have the following, ensuring in particular that there is no clash of terminology concerning the word ``etale''.

\begin{proposition}\label{etaleisetale}
Suppose $f:X\to Y$ is a truncated etale map of light condensed anima.  Then $f$ is $\on{Sp}$-etale.
\end{proposition}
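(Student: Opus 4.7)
The plan is to reduce to a light profinite target and then induct on the truncation level of $f$, in each step building a cover of $X$ by open subsets of the target and appealing to Lemma \ref{openisetale}.

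First I would reduce to the case where $Y$ is a light profinite set. Since $f$ is truncated, Remark \ref{etalepropersmoothlocal} part 1 says we only need to show that for every map $S\to Y$ with $S\in\on{ProfSet}^{light}$, the pullback $X\times_Y S\to S$ is $\on{Sp}$-etale; etale-ness and truncated-ness of $f$ are preserved by such pullback (Lemma \ref{etaleproperties}), so we are reduced to the case $Y\in\on{ProfSet}^{light}$. With $Y$ light profinite, Lemma \ref{etaleontopological} identifies $X\to Y$ with an object of $\on{Sh}(Y)$, so it makes sense to talk about open subsets of $Y$ with a section to $X$ and about stalks.

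I will then induct on the truncation level $n$ of $f$. In the base case $n=-1$ the map $f$ is a monomorphism in $\on{CondAn}^{light}$, so under the identification above it corresponds to a subobject of the terminal object of $\on{Sh}(Y)$, which is simply an open subset $U\subset Y$; this case is exactly Lemma \ref{openisetale}. For the inductive step, assuming the result for $(n-1)$-truncated etale maps, let $f:X\to Y$ be $n$-truncated etale with $Y$ light profinite. Consider the family $\{s:U\to X\}$ where $U$ ranges over open subsets of $Y$ and $s$ ranges over sections of $f$ over $U$ (equivalently, lifts of the inclusion $U\hookrightarrow Y$). By the standard colimit description of stalks of etale sheaves, every point of $X$ is hit by some such section, so this family is jointly surjective; combined with Remark \ref{checkonstalks} this gives joint conservativity of the pullback functors $\{s^\ast\}$. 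Each $s$ is etale by the 2-out-of-3 property (Lemma \ref{etaleproperties}), and a direct unwinding (computing $U\times_X U$ as the loop sheaf of $X$ at $s$, which is $(n-1)$-truncated over $U$, so its diagonal section is $(n-2)$-truncated) shows $s$ is $(n-1)$-truncated. By the inductive hypothesis each $s$ is $\on{Sp}$-etale, while each composition $f\circ s:U\hookrightarrow Y$ is an open inclusion into a light profinite set, which is $\on{Sp}$-etale by Lemma \ref{openisetale}. Then Remark \ref{etalepropersmoothlocal} part 5 delivers the conclusion that $f$ itself is $\on{Sp}$-etale.

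The only step that is not completely formal is the verification that a section $s:U\to X$ of an $n$-truncated etale $X\to Y$ over a $(-1)$-truncated $U\to Y$ is $(n-1)$-truncated, and the corresponding verification that the family of such sections is jointly conservative; both are routine once phrased sheaf-theoretically on the topological space $Y$, but they are the essential content that links Lemma \ref{openisetale} to the statement via the inductive dévissage.
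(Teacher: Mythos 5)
Your argument is correct and is essentially the paper's own proof: induct on the truncation level, reduce to a light profinite target via Remark \ref{etalepropersmoothlocal}, cover $X$ by sections $s:U\to X$ over open subsets $U\subset Y$ (which are etale and one step less truncated), and conclude from Lemma \ref{openisetale} together with the locality statement in Remark \ref{etalepropersmoothlocal}. The only cosmetic caveat is that the reduction to profinite $Y$ should be performed inside each inductive step (i.e.\ the induction is over arbitrary targets), so that the inductive hypothesis legitimately applies to the sections $s:U\to X$, whose target $X$ is not profinite — which is exactly how the paper arranges it, and is otherwise a one-line fix by pulling $s$ back along any light profinite $S\to X$.
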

\begin{proof}
We proceed by induction on the truncatedness of $f$.  If $f$ is $-2$-truncated, then it is an isomorphism, and the claim is trivial.  Now we perform the inductive step.  Suppose $f:X\to Y$ is $d+1$-truncated, and we know the claim for $d$-truncated maps.  By Remark \ref{etalepropersmoothlocal} we can assume $Y\in\on{ProfSet}^{light}$.  By the comparison of etale maps with topological sheaves (Lemma \ref{etaleontopological}), there is a collection of open subsets $\{U_i\subset Y\}_{i\in I}$ and maps $\{s_i:U_i\to X\}$ over $Y$ which are jointly surjective.  The map $\sqcup_i U_i\to X$ is $d$-truncated and etale, hence $\mathcal{R}$-etale by induction. As each $U_j\to \sqcup_i U_i$ and $U_j \to X$ is $\on{Sp}$-etale by Lemma \ref{openisetale}, we deduce from Remark \ref{etalepropersmoothlocal} that $f$ is $\on{Sp}$-etale, as desired.
\end{proof}

We also have the following examples of $\on{Sp}$-proper maps, proved using the descendability formalism from the previous section.

\begin{theorem}\label{chausproper}
Suppose $X$ is a light condensed anima such that either:
\begin{enumerate}
\item $X$ is a second-countable compact Hausdorff space of finite cohomological dimension; or
\item $X$ is a 1-truncated light profinite anima of finite cohomological dimension.
\end{enumerate}

Then the map
$$f:X\to \ast$$
is $\on{Sp}$-proper.

\end{theorem}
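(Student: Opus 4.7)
The strategy is to apply Remark \ref{etalepropersmoothlocal} part 3: I want to produce a truncated $\on{Sp}$-proper surjection $\pi\colon S\twoheadrightarrow X$ from a light profinite set $S$, show that $\pi_\ast 1\in\on{CAlg}(\on{Sh}_{et}(X;\on{Sp}))$ is descendable, and then transfer $\on{Sp}$-properness from $S\to \ast$ (automatic, as a map of light profinite sets) to $f\colon X\to\ast$. Note that $f$ is truncated in both cases (either $0$-truncated or $1$-truncated), which is what we'll need to conclude properness rather than just $!$-ability.

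\emph{Step 1 (produce $\pi$).} In case (1), take $S$ the spectral space attached to a countable basis of compact subsets of $X$ (cf.\ Example \ref{countableexamples} part 1) with its natural surjection $\pi\colon S\twoheadrightarrow X$. In case (2), Lemma \ref{surjectfromprofset} produces a surjection $\pi\colon S\twoheadrightarrow X$ from a light profinite set.

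\emph{Step 2 ($\pi$ is truncated and $\on{Sp}$-proper).} Truncatedness is immediate since $S$ is a set and $X$ is at most $1$-truncated. For $\on{Sp}$-properness, by Remark \ref{etalepropersmoothlocal} part 1 it suffices to check that every pullback $S\times_X T\to T$ along a map $T\to X$ from a light profinite set is $\on{Sp}$-proper, for which it is enough to know that $S\times_X T$ is a light profinite set (then the pullback is a map between light profinite sets, hence proper by construction of the six functor formalism). In case (1), $S\times_X T$ is a closed subset of the light profinite set $S\times T$ since $X$ is Hausdorff. In case (2), writing $X=\varprojlim_n X_n$ with $X_n$ a $1$-truncated $\pi$-finite anima, one has $S\times_X T=\varprojlim_n S\times_{X_n} T$, each factor is a light profinite set (a finite-groupoid-torsor over $S\times T$, checked by reduction to $X_n=\ast/G$), and a countable inverse limit of light profinite sets is light profinite.

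\emph{Step 3 ($\pi_\ast 1$ is descendable).} Apply Theorem \ref{descendabletheorem}. Rigidity of $\on{Sh}_{et}(X;\on{Sp})$ holds by the proposition preceding Theorem \ref{descendabletheorem} in case (1) and by Example \ref{rigidexamples} part 3 in case (2); the countability hypothesis of Theorem \ref{countablephantom} holds by Proposition \ref{chauscohdim} in case (1) and by Example \ref{countableexamples} part 2 in case (2); finite cohomological dimension is part of our hypotheses. Thus descendability is detected on stalks with a uniform bound, and by proper base-change for the $\on{Sp}$-proper map $\pi$ (now available from Step 2), for $x\colon \ast\to X$ we have
$$x^\ast \pi_\ast 1 \;\simeq\; \Gamma(\pi^{-1}(x);\mbb{S}).$$
The fiber $\pi^{-1}(x)$ is a light profinite set by the same argument as in Step 2, and it is nonempty since $\pi$ is surjective (any nonempty light profinite set admits a point $\ast\to\pi^{-1}(x)$). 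Choosing such a point produces a retraction $\Gamma(\pi^{-1}(x);\mbb{S})\to\mbb{S}$ of the unit map, splitting the augmentation and giving descendability of index $\leq 1$ uniformly in $x$.

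\emph{Step 4 (conclude).} Apply Remark \ref{etalepropersmoothlocal} part 3 with $g=\pi$ and $f\colon X\to\ast$: $g$ is truncated and $\on{Sp}$-proper (Step 2), $g_\ast 1$ is descendable (Step 3), $f\circ g\colon S\to\ast$ is a map of light profinite sets hence truncated and $\on{Sp}$-proper, and $f$ itself is truncated. Therefore $f$ is $\on{Sp}$-proper.

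The main technical core is Step 3, which depends on the combined rigidity-plus-countability package of Section \ref{descsec}; the other steps are formal once the descendability input is in hand. A subtler bookkeeping point — needed to even use proper base-change in Step 3 — is that Step 2 must be completed first, which is why I verify $\on{Sp}$-properness of $\pi$ directly via pullback to light profinite sets rather than trying to deduce it from the main statement.
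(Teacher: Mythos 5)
Your proposal is correct and follows essentially the same route as the paper: produce a surjection from a light profinite set whose pullbacks to light profinite sets remain light profinite (hence the surjection is $\on{Sp}$-proper), verify descendability of its pushforward unit via Theorem \ref{descendabletheorem} pointwise using the section coming from surjectivity, and conclude by Remark \ref{etalepropersmoothlocal}. The only difference is that you spell out the verification that the cover exists and has light profinite pullbacks in each of the two cases, which the paper simply asserts as one of its standing properties.
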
\label{moreproper}
\begin{proof}
In both cases, we have the following properties:
\begin{enumerate}
\item $\on{Sh}_{et}(X;\on{Sp})$ is rigid and countably assembled (see Section \ref{descsec});
\item $X$ has finite cohomological dimension (by assumption);
\item there exists a light profinite set and a surjective $g:S\twoheadrightarrow X$, such that for any $T\to X$ with $T\in\on{ProfSet}^{light}$ the fiber product $T\times_X S$ is also a light profinite set.
\end{enumerate}
Let us show that for any such $X$ the map $X\to \ast$ is $\on{Sp}$-proper.

Indeed, by assumption 3 and Theorem \ref{chausproper} $g:S\to K$ is $\on{Sp}$-proper.  As $S\to \ast$ is likewise $\on{Sp}$-proper, using Remark \ref{etalepropersmoothlocal} it suffices to show that $g_\ast\mathbb{S}\in\on{CAlg}(\on{Sh}_{et}(K;\on{Sp}))$ is descendable.  Applying the criterion of Theorem \ref{descendabletheorem}, it suffices to show that for all points $x:\ast\to K$, the pullback $x^\ast g_\ast \mathbb{S}$ is descendable, of some index which is uniform in $x$.  But by base change $x^\ast g_\ast\mathbb{S}$ is $(g_x)_\ast\mathbb{S}$ for $g_x:g^{-1}(x)\to \ast$.  Since $g$ is surjective, $g_x$ has a section, hence so does the unit map $\mathbb{S}\to (g_x)_\ast\mathbb{S}$, and thus $(g_x)_\ast\mathbb{S}$ is descendable of index $\leq 1$, whence the claim.
\end{proof}

\begin{remark}\label{trulylocal}
The same argument shows that for any surjective map $f:T\twoheadrightarrow S$ of light profinite sets, $f_\ast\mbb{S}$ is descendable in $\on{Sh}_{et}(S;\on{Sp})$.  In particular $f$ is a $!$-cover by Remark \ref{etalepropersmoothlocal}.  Of course, finite disjont unions are also $!$-covers by Lemma \ref{openisetale}.  Thus the Grothendieck topology on light profinite sets has the property that every cover is a $!$-cover.  (Caution that this is only true for light profinite sets, not for general light condensed anima.)

 This implies the following convenient property: the condition that a map $f:X\to Y$ in $\on{CondAn}^{light}$ be $\mc{R}$-$!$-able is local on $Y$ in the light condensed Grothendieck topology, i.e.\ if $\{Y_i\to Y\}_{i\in I}$ is a jointly surjective family of maps to $Y$ and $X\times_Y Y_i\to Y_i$ is $\mc{R}$-$!$-able for all $i$, then $f$ is $\mc{R}$-$!$-able.  The same holds for the classes of $\mc{R}$-proper, $\mc{R}$-smooth, and $\mc{R}$-etale maps.
\end{remark}

\begin{remark}
The presence of finite dimensionality assumptions in Theorem \ref{moreproper} is unavoidable.  In fact, suppose $X$ is a light condensed anima such that $f:X\to \ast$ is $\on{D}(\mbb{Z})$-proper.  Then $X$ has finite cohomological dimension.

Indeed, suppose the contrary.  Then there is a sequence $\mathcal{A}_0,\mathcal{A}_1,\ldots$ of etale sheaves of abelian groups on $X$ and integers $i_0,i_1,\ldots$ tending to $\infty$ such that $H^{i_n}(X;\mathcal{A}_n)\neq 0$ for all $n\geq 0$.  Consider $\mathcal{A} = \oplus_{n\geq 0} \mathcal{A}_n[i_n]$.  Because $f_\ast$ preserves colimits (due to the properness assumption), we have
$$H^0(X;\mathcal{A}) \simeq \oplus_{n\geq 0} H^{i_n}(X;\mathcal{A}_n).$$
On the other hand we also have $\mathcal{A} = \prod_{n\geq 0} \mathcal{A}_n[i_n]$ because this holds after $N$-truncation for any $N$.  Then since $f_\ast$ preserves limits we also have
$$H^0(X;\mathcal{A})\simeq \prod_{n\geq 0} H^{i_n}(X;\mathcal{A}_n).$$
We deduce that the comparison map from the direct sum to the product is an iso.  Thus $H^{i_n}(X;\mathcal{A}_n)=0$ for all but finitely many $n$, a contradiction.

Note that this also means that if $f:X\to Y$ is any $D(\mbb{Z})$-proper map of light condensed anima, then the pullback $X\times_Y S$ along any map from a light profinite set $S\to Y$ has finite cohomological dimension, and for $s\in S$ there is a uniform bound on the cohomological dimensions of the fibers.
\end{remark}

\begin{remark}\label{tn}
By Theorem \ref{chausproper}, if $G$ is a light profinite group of finite cohomological dimension, then $BG\to \ast$ is $\on{Sp}$-proper.  Suppose now $G$ is only \emph{virtually} of finite cohomological dimension, and let $N$ be an open normal subgroup of finite cohomological dimension.  Consider
$$BG \overset{f}{\to} B(G/N) \overset{g}{\to} \ast.$$
The map $g$ has proper diagonal, so there is a natural map $g_! \to g_\ast$.  But $g$ is etale so $g_!$ is left adjoint to $g^\ast$, and this $g_! \to g_\ast$ is just the usual ``norm'' or transfer map from homotopy orbits to homotopy fixed points.

Thus if we pass to $\mathcal{R}=L_{T(n)}\on{Sp}$ coefficients, then by telescopic Tate vanishing (\cite{KuhnTate}, the norm map $g_! \to g_\ast$ is an iso hence $g$ is proper, and therefore so is $BG\to \ast$.

On the other hand, turning this around, if $G$ has finite cohomological dimension and hence $g\circ f$ is proper a priori, we deduce that the norm map $g_! \to g_\ast$ is an isomorphism on any object of the form $f_\ast\mc{F}$; this is the ``Theorem of Tate-Thomason''.
\end{remark}

Now we consider the topological situation.

\begin{proposition}\label{LCHaus}
Suppose $X,Y$ are topological spaces each of which admits a basis of second countable compact Hausdorff spaces of finite cohomological dimension.  Then any map $f:\underline{X}\to\underline{Y}$ is $\on{Sp}$-$!$-able.
\end{proposition}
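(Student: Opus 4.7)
The plan is to combine Theorem~\ref{chausproper}, which supplies $\on{Sp}$-properness of the ``building blocks'' of $X$ and $Y$, with the locality principles of Remark~\ref{etalepropersmoothlocal} and the standard $2$-out-of-$3$ property of any geometric class $E$: if $A\to C$ and $B\to C$ lie in $E$, then so does any $A\to B$ over $C$, since $A\to B$ factors as the graph $A\to A\times_C B$ (a pullback of the diagonal $\Delta_{B/C}\in E$) followed by the projection $A\times_C B\to B$ (a pullback of $A\to C \in E$).

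The first step is to show that if $U\subset X$ is an open subset contained in a second-countable compact Hausdorff subspace $K\subset X$ of finite cohomological dimension, then $\underline{U}\to \ast$ is $\on{Sp}$-$!$-able. Indeed, the inclusion $\underline{U}\hookrightarrow\underline{K}$ is a local homeomorphism, hence a truncated etale map by Lemma~\ref{etaleontopological}(2), hence $\on{Sp}$-etale by Proposition~\ref{etaleisetale}; meanwhile $\underline{K}\to \ast$ is $\on{Sp}$-proper by Theorem~\ref{chausproper}, so the composition is $\on{Sp}$-$!$-able. By hypothesis such $U$ cover $X$, and the analogous statement holds for $Y$.

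For the general map $f:\underline{X}\to \underline{Y}$, I first choose a cover of $Y$ by such open subsets $V$. The family $\{\underline{V}\hookrightarrow\underline{Y}\}$ consists of $\on{Sp}$-etale maps (in particular $\on{Sp}$-smooth) and is jointly conservative on pullbacks by Lemma~\ref{compassgood}(3), hence a $!$-cover. By locality on the target (Remark~\ref{etalepropersmoothlocal}(4)), it suffices to show that each pullback $\underline{f^{-1}(V)}\to\underline{V}$ is $\on{Sp}$-$!$-able. I then cover each $f^{-1}(V)\subset X$ by opens $W$ contained in suitable compact subspaces of $X$, so by the previous paragraph each $\underline{W}\to\ast$ is $\on{Sp}$-$!$-able; since also $\underline{V}\to\ast$ is $\on{Sp}$-$!$-able, the $2$-out-of-$3$ property with $C=\ast$ gives that $\underline{W}\to\underline{V}$ is $\on{Sp}$-$!$-able. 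Locality on the source, applied to the $!$-cover $\{\underline{W}\hookrightarrow\underline{f^{-1}(V)}\}$, then concludes.

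There is no serious obstacle once the formalism of \cite{HeyerSixFunctors} and the preparatory results of Section~\ref{sixsec} are available; the argument is essentially a formal assembly driven by the substantive input of Theorem~\ref{chausproper}. The only points requiring mild care are the identifications of pullbacks in $\on{CondAn}^{light}$ with topological preimages of open immersions (immediate from the functor-of-points description) and the claim that jointly surjective families of open immersions become jointly conservative after pullback on $\on{Sp}$-valued etale sheaves, both of which are routine.
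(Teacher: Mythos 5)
Your argument is correct and is essentially the paper's proof: the paper applies the graph/geometric-class reduction directly to $f$, so that it suffices to show $\underline{X}\to\ast$ and $\underline{Y}\to\ast$ are $\on{Sp}$-$!$-able, and then covers $X$ by opens contained in compacta, using $\on{Sp}$-etaleness of open inclusions, the $\on{Sp}$-properness of Theorem \ref{chausproper}, and the locality principles of Remark \ref{etalepropersmoothlocal} — exactly your ingredients. The one slip is that your ``locality on the target'' step should cite Remark \ref{trulylocal} (locality of $!$-ability in the light condensed topology) rather than Remark \ref{etalepropersmoothlocal}(4), which is locality on the source; in any case that detour is avoidable, since your own two-out-of-three graph observation applied directly to $f$ reduces everything at once to the absolute statements for $X$ and $Y$.
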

\begin{proof}
Because $!$-able maps are a geometric class of maps, it suffices to show that $f:X \to \ast$ is $!$-able (and same for $Y$, but the hypotheses on $X$ and $Y$ are the same).  Open covers are $\on{Sp}$-etale by Lemma \ref{openisetale}, so by Remark \ref{etalepropersmoothlocal} we can reduce to the case where $X$ is second countable finite dimensional compact Hausdorff which is given by Theorem \ref{chausproper}.
\end{proof}

In particular, if $F$ is a local field, then this result applies to any $F$-manifold.

Note that the uniqueness result of \cite{DauserUniqueness} shows that the six functor formalism on the class of topological spaces in the previous proposition agrees with any other reasonable one made by potentially different means.  For example, on the Hausdorff full subcategory we could use the usual method of factorizations into open immersions and proper maps.  Let's just record the following which gives the essence of the comparison.

\begin{lemma}
Suppose $X$ is a light condensed anima and $j:U\hookrightarrow X$ is an open immersion with closed complement $i:Z\subset X$ (meaning, on pullback to any light profinite set we see an open inclusion and its closed complement).  Then for any $\mathcal{F}\in\on{Sh}_{et}(X;\on{Sp})$, there is a unique and functorial fiber sequence
$$j_! j^\ast \mathcal{F} \to \mathcal{F} \to i_\ast i^\ast \mathcal{F}$$
such that the second map is the unit for the adjunction and the first map is the counit for the adjunction (using that $j:U\hookrightarrow X$ is etale so $j^!=j^\ast$).
\end{lemma}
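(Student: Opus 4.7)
The plan is to build the fiber sequence from the canonical (co)unit maps of the six functor formalism, then check it is a fiber sequence using the joint conservativity of $j^\ast$ and $i^\ast$. First I would verify that $j_!$ and $i_\ast$ both exist. The open immersion $j$ is $(-1)$-truncated and etale (it pulls back to open inclusions of light profinite sets, which are etale by Lemma \ref{openisetale}), hence $\on{Sp}$-etale by Proposition \ref{etaleisetale}, so $j_!$ is left adjoint to $j^\ast = j^!$. The closed complement $i$ pulls back to closed immersions between light profinite sets; those are maps of light profinite sets and hence $\on{Sp}$-proper by construction, so by Remark \ref{etalepropersmoothlocal} the map $i$ itself is $\on{Sp}$-proper and $i_! \simeq i_\ast$.

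Next I would assemble four base-change identities from the elementary pullback squares of the open/closed decomposition: since $j$ and $i$ are monomorphisms one has $U \times_X U \simeq U$ and $Z \times_X Z \simeq Z$, while disjointness gives $U \times_X Z \simeq \emptyset$. Combined with base change for $j_!$ and $i_! \simeq i_\ast$, these yield
\[
j^\ast j_! \simeq \on{id}, \qquad i^\ast i_\ast \simeq \on{id}, \qquad j^\ast i_\ast \simeq 0, \qquad i^\ast j_! \simeq 0.
\]
The composition of the counit $j_! j^\ast \mathcal{F} \to \mathcal{F}$ and the unit $\mathcal{F} \to i_\ast i^\ast \mathcal{F}$ is adjoint to a map $j^\ast \mathcal{F} \to j^\ast i_\ast i^\ast \mathcal{F}$, and the target is canonically $0$ by the third identity. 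This provides a canonical nullhomotopy of the composition and hence a canonical map $j_! j^\ast \mathcal{F} \to \on{fib}(\mathcal{F} \to i_\ast i^\ast \mathcal{F})$.

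To show this comparison is an equivalence I would apply $j^\ast$ and $i^\ast$ separately. Under $j^\ast$, using $j^\ast j_! \simeq \on{id}$ and $j^\ast i_\ast \simeq 0$, both sides identify with $j^\ast \mathcal{F}$ and the map with the identity. Under $i^\ast$, using $i^\ast j_! \simeq 0$ and $i^\ast i_\ast \simeq \on{id}$, both sides vanish. The two pullbacks are jointly conservative: by Lemma \ref{compassgood} part 3 equivalences can be checked on stalks, and every stalk $x : \ast \to X$ factors either through $j$ or through $i$ (as seen on any light profinite set mapping through $x$). Functoriality in $\mathcal{F}$ and the uniqueness claim follow immediately because the whole construction was performed using only the canonical (co)unit maps and the canonical base-change identities.

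The main care point is Step 2: each of the four identities must be produced canonically inside the six functor formalism rather than checked by hand. In particular $j^\ast i_\ast \simeq 0$ first uses the properness of $i$ to replace $i_\ast$ by $i_!$, then base change along $U \times_X Z \simeq \emptyset$, and finally the fact that $!$-pushforward from the initial object is the zero functor; once all four identities are on this footing, the rest of the argument is purely formal.
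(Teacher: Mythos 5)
Your proof is correct and follows essentially the same route as the paper: a canonical nullhomotopy coming from the vanishing of the relevant mapping spectrum (you obtain it via $j^\ast i_\ast \simeq 0$, the paper via $i^\ast j_! \simeq 0$, both by adjunction plus base change), followed by checking that the comparison map is an isomorphism on stalks. Your explicit verification that $i$ is $\on{Sp}$-proper, which the paper leaves implicit, is indeed what justifies the base-change identities for $i_\ast$ used in the pointwise check.
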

\begin{proof}
The spectrum of maps $j_!j^\ast \mathcal{F} \to i_\ast i^\ast \mathcal{F}$ is $0$ by adjunction, because $i^\ast j_!=0$ by base-change and the fact that $\on{Sh}_{et}(\emptyset;\on{Sp})=0$.  For the same reason the spectrum of natural transformations in $\mathcal{F}$ of that form is $0$.  Thus there is a unique and functorial nullhomotopy of our purported fiber sequence.  Because isomorphisms of etale sheaves are detected on points (Remark \ref{checkonstalks}), to prove it is indeed a fiber sequence it suffices to check on points, where $j$ becomes either $\emptyset \subset \ast$ or $\ast\subset \ast$, where in both cases the result is trivial.
\end{proof}

\begin{example}\label{recollement}
In particular, if $f:X\to S$ is a map of second countable finite dimensional compact Hausdorff spaces and $j:U\to X$ is as above, then for any sheaf $\mathcal{F}$ on $X$ we can calculate the lower-$!$ of its restriction to $U$ in terms of lower-$*$ functors as the fiber
$$(f\circ j)_!f^\ast\mathcal{F} = \on{Fib}(f_\ast\mathcal{F} \to (f\circ i)_\ast f^\ast \mathcal{F}).$$
Intuitively, compactly supported cohomology on $U$ is the fiber of cohomology of the compactification restricting to cohomology of the boundary.
\end{example}

\section{Smooth morphisms}\label{smoothsec}

Now we go a bit beyond the basic results proved at the end of the previous section.  We are interested in exploring the six functor formalism on those light condensed anima which are associated to $F$-analytic Artin stacks, as discussed in Section \ref{condmansec}.

\begin{lemma}\label{repsubsmooth}
Let $F$ be a local field, and suppose $f:X\to Y$ is a representable submersion in $\on{Sh}(\on{Man}_F)$.  Then $\underline{X}\to\underline{Y}$ is $\on{Sp}$-$!$-able.
\end{lemma}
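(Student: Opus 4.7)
The plan is to reduce via target-locality and then source-locality to the basic case covered by Proposition \ref{LCHaus}. First I would invoke Remark \ref{trulylocal}, which states that $\on{Sp}$-$!$-ability can be checked locally on the target in the light condensed Grothendieck topology, to reduce to the case where $Y = \underline{M}$ for some $F$-manifold $M$. This step uses: (a) any $Y \in \on{Sh}(\on{Man}_F)$ admits a surjection from a (possibly disconnected) $F$-manifold $M \twoheadrightarrow Y$, which descends to a surjection in $\on{CondAn}^{light}$ by colimit-preservation of $\underline{(-)}$ (Proposition \ref{underlying}); and (b) Proposition \ref{underlying}(2), which identifies $\underline{X} \times_{\underline{Y}} \underline{M}$ with $\underline{X \times_Y M}$, and this by representability is $\underline{N}$ for an $F$-manifold $N$ submerging onto $M$. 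So I reduce to showing that a submersion of manifolds $f: N \to M$ induces an $\on{Sp}$-$!$-able map $\underline{f}: \underline{N} \to \underline{M}$.

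Next I would localize on the source $N$. By the local structure of submersions recalled in Section \ref{mansec}, $N$ admits an open cover $\{N_i\}_{i \in I}$ with each $N_i \cong U_i \times V_i$ (where $U_i \subset M$ is an open subset and $V_i$ is open in some $F^{d_i}$) such that $f|_{N_i}$ is the composite of the projection $U_i \times V_i \to U_i$ with the open inclusion $U_i \hookrightarrow M$. The maps $\underline{N_i} \hookrightarrow \underline{N}$ are truncated etale (Proposition \ref{etaleisetale}), hence $\on{Sp}$-smooth, and form a jointly surjective — therefore jointly conservative (Lemma \ref{compassgood}(3)) — family. By Remark \ref{etalepropersmoothlocal}(4) it then suffices to show that each composite $\underline{N_i} \to \underline{M}$ is $\on{Sp}$-$!$-able.

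Finally, using finite-product preservation of $\underline{(-)}$, this composite factors as $\underline{U_i} \times \underline{V_i} \to \underline{U_i} \hookrightarrow \underline{M}$. The open inclusion is $\on{Sp}$-etale by Proposition \ref{etaleisetale}, while the projection is the base change of $\underline{V_i} \to \ast$; the latter is $\on{Sp}$-$!$-able by Proposition \ref{LCHaus} because $V_i$ is an open subset of a finite power of a local field and hence admits a basis of second-countable compact Hausdorff neighborhoods of finite covering (and hence cohomological) dimension. Closure of $!$-ability under composition and base change yields the result. The only nontrivial bookkeeping is in the target-localization step, where one must invoke Proposition \ref{underlying}(2) — this is the only point at which the representability (rather than merely the sheaf) hypothesis is essentially used, since it ensures the pullback of $f$ against the manifold cover of $Y$ is preserved by $\underline{(-)}$ and produces an honest submersion of manifolds.
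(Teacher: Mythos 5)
Your argument is correct and follows the paper's proof: localize on the target via Remark \ref{trulylocal} and Proposition \ref{underlying}(2) (which is indeed where representability of $f$ is used) to reduce to a submersion of $F$-manifolds, and then conclude from Proposition \ref{LCHaus}. The only difference is that your further localization on the source into projections and open immersions is superfluous: Proposition \ref{LCHaus} applies to an \emph{arbitrary} map $\underline{N}\to\underline{M}$ between $F$-manifolds (no submersion structure is needed), so once both source and target are manifolds you are already done.
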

\begin{proof}
By Remark \ref{trulylocal}, we can use descent on $Y$ to reduce to the case where $Y$ is an $F$-manifold, when this follows from Proposition \ref{LCHaus}.
\end{proof}

We would like to be able to say that if $f:X\to Y$ is a representable submersion which is \emph{surjective}, then $\ul{X}\to\ul{Y}$ is a $!$-cover, but this is perhaps too much to ask in full generality in the nonarchimedean setting.  However we can show it in the archimedean setting using \emph{smoothness} (Definition \ref{etalepropersmoothdef}).   In principle this requires checking a number of separate conditions.  But we now give a very convenient mechanism for reducing some of these conditions to others.

First, note that by descent it suffices to treat the case $Y=S\in\on{ProfSet}^{light}$.  Moreover, again by descent, to check that the formation of $f^!$ commutes with base-change it suffices to show this for base-changes along maps of light profinite sets.  In that setting the following condition will be useful.

\begin{definition}
Let $f:X\to S$ be a map of light condensed anima with $S\in\on{ProfSet}^{light}$, and let $\mathcal{R}\in\on{CAlg}(\on{Pr}^L_{st})$ be compactly assembled.  Say that $f$ \emph{satisfies categorical base-change} (with $\mathcal{R}$-coefficients) if for all maps $S'\to S$ of light profinite sets with pullback $X'=X\times_S S'$, the natural map in $\on{CAlg}(\on{Pr}_{st}^L)$
$$\on{Sh}_{et}(X;\mc{R})\otimes_{\on{Sh}_{et}(S;\mc{R})}\on{Sh}_{et}(S';\mc{R})\rightarrow \on{Sh}_{et}(X';\mc{R})$$
is an isomorphism.
\end{definition}

Note that this definition does not refer to the six functor formalism, but only to the symmetric monoidal pullback functoriality.  However, we have the following lemma.

\begin{lemma}
Let $f:X\to S$ be a map of light condensed anima with $S\in\on{ProfSet}^{light}$, and let $\mathcal{R}\in\on{CAlg}(\on{Pr}^L_{st})$ be compactly assembled.  If $f$ is $\mathcal{R}$-$!$-able, then $f$ satisfies categorical base-change with $\mathcal{R}$-coefficients.
\end{lemma}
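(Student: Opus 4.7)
Plan: We reduce the problem via $!$-cover descent to the base case of a map between light profinite sets, where the claim becomes a Kunneth-type formula.

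For the base case, suppose $X$ and $S$ (hence $X'$ and $S'$) are all light profinite sets. By Lemma \ref{etaleontopological} tensored with $\mc{R}$, we have $\on{Sh}_{et}(T;\mc{R}) \simeq \on{Sh}(T^{top};\mc{R})$ for any such $T$, and the desired equivalence reduces to a Kunneth-type formula for $\mc{R}$-valued sheaves on compact Hausdorff spaces. This follows from Lurie's proper base change and Kunneth theorems (\cite{LurieHTT} 7.3.1.18 and 7.3.1.11) combined with the compact assembledness of $\mc{R}$ (cf.\ Example \ref{rigidexamples}(2)).

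For the general case, we argue by induction along the iterative construction of $\mc{R}$-$!$-able maps from Remark \ref{actuallyconstruct}. Since the target $S$ is light profinite, extension procedure (2) is vacuous: any map in $E'_\ast$ with light profinite target already lies in $E'$. So it suffices to verify stability under procedure (1): if $f:X\to S$ admits a $!$-cover $\{h_i:Y_i\to X\}_{i\in I}$ with each $h_i$ and $f\circ h_i$ in a class $E'$ for which the lemma already holds, then $f$ itself satisfies categorical base change. Taking $Y = \bigsqcup_i Y_i$ (which one can arrange to be a single light profinite set), the $!$-cover condition supplies a canonical equivalence
\[
\on{Sh}_{et}(X;\mc{R}) \simeq \on{Tot}\!\left(\on{Sh}_{et}(Y \times_X \cdots \times_X Y; \mc{R})\right)
\]
in $\on{CAlg}(\on{Pr}^L)$, and similarly for $X' = X \times_S S'$ using the pulled-back $!$-cover (still a $!$-cover, by stability under base change).

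The main technical obstacle is to commute the tensor product $-\otimes_{\on{Sh}_{et}(S;\mc{R})}\on{Sh}_{et}(S';\mc{R})$ with the Cech totalization. To resolve this, one uses that $g: S'\to S$ is a map of light profinite sets, hence $\mc{R}$-proper (Theorem \ref{chausproper}); the projection formula for $g_! = g_\ast$ then allows one to present tensoring with $\on{Sh}_{et}(S';\mc{R})$ as a module-category construction over a commutative algebra object in $\on{Sh}_{et}(S;\mc{R})$ via Barr--Beck--Lurie monadicity, which preserves limits as a right adjoint to the forgetful functor. Combined with the termwise categorical base change supplied by the inductive hypothesis, this yields the claim.
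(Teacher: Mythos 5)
The central gap is your totalization step. By definition, a $!$-cover gives the \emph{cosheaf} condition for $M_!$: it presents $\on{Sh}_{et}(X;\mc{R})$ as the \emph{colimit} over $\Delta^{op}$ of the Cech diagram with transition functors the $!$-pushforwards (equivalently, after passing to right adjoints, a limit along $!$-pullbacks). It does not give the equivalence $\on{Sh}_{et}(X;\mc{R})\simeq\on{Tot}\bigl(\on{Sh}_{et}(Y\times_X\cdots\times_X Y;\mc{R})\bigr)$ along $\ast$-pullbacks that you assert; that is a $\ast$-descent statement which is not part of the hypothesis and is not available for the abstract $!$-covers arising in the inductive construction of $E$ from Remark \ref{actuallyconstruct} (such covers need not even be covers in the condensed Grothendieck topology, and your parenthetical that the cover can be arranged to be a single light profinite set is also unwarranted at this stage: the covering objects are merely sources of maps in the previously constructed class $E'$, and in any case a countable disjoint union of light profinite sets is not light profinite). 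Since this Tot presentation is the input to your monadicity argument, the inductive step is unsupported as written.

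Once you use the presentation the $!$-cover actually provides, your ``main technical obstacle'' disappears and no dualizability or Barr--Beck argument is needed: each $!$-pushforward in the Cech diagram satisfies the projection formula, so the colimit diagram naturally lives in $\on{Mod}_{\on{Sh}_{et}(S;\mc{R})}(\on{Pr}^L)$, where colimits are computed as in $\on{Pr}^L$; relative tensor products preserve colimits, so $-\otimes_{\on{Sh}_{et}(S;\mc{R})}\on{Sh}_{et}(S';\mc{R})$ commutes with the colimit, and the termwise categorical base change (the inductive hypothesis, applied to the iterated fibre products over $X$ mapped to $S$, which lie in $E'$ by geometricity of that class) finishes the argument --- this is how the paper proceeds. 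Two smaller points: your base case is only gestured at, since the statement needed is a relative Kunneth over $S$, and the absolute Kunneth theorem of \cite{LurieHTT} 7.3.1.11 does not directly give it; the paper instead uses that $\on{Sh}_{et}(-;\mc{R})$ on a light profinite set is compactly generated by the unit (hence is modules over the pushforward of the unit) together with proper base change. Your observation that tensoring with $\on{Sh}_{et}(S';\mc{R})\simeq\on{Mod}_{g_\ast 1}(\on{Sh}_{et}(S;\mc{R}))$ preserves limits is correct in itself (it is a dualizable module category), but it cannot repair the unjustified descent step.
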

\begin{proof}
By Remark \ref{actuallyconstruct}, it suffices to show that the class of maps of light condensed anima, each of whose base-change to a light profinite set satisfies categorical base-change, contains $E_0$ and is closed under the two extension procedures $E'\mapsto E'_!$ and $E'\mapsto E'_\ast$.

To see it contains $E_0$ we need to show that a map of light profinite sets $X\to S$ satisfies categorical base-change.  Using that $\on{Sh}_{et}(-;\on{Sp})$ on a light profinite set is compactly generated by the unit, this follows from proper base-change.

Closure under $E'\mapsto E'_\ast$ is tautological.  For closure under $E'\mapsto E'_!$,  suppose $X\to S$ admits a $!$-cover $\{X_i\to X\}_{i\in I}$ in $E'$ such that each $X_i\to S$ lies in $E'$.  Each iterated fiber product of the $X_i\to X$ mapping to $S$ also lies in $E'$ and hence satisfies categorical base change.  Then we write $\on{Sh}_{et}(X;\mathcal{R})$ as the colimit of the Cech nerve via $!$-functors.  By definition this is a colimit in $\on{Pr}^L$, also after base-change to $S'$.  But the diagram lies in $\on{Mod}_{\on{Sh}_{et}(S;\mathcal{R})}(\on{Pr}^L)$ hence also gives a colimit there.   Using that relative tensor products preserve colimits we conclude.
\end{proof}

Using categorical base-change we get the following criterion for smoothness.

\begin{proposition}\label{smoothlemma}
Let $f:X\to S$ be a map of light condensed anima with $S\in\on{ProfSet}^{light}$ and let $\mathcal{R}\in\on{CAlg}(\on{Pr}^L_{st})$ such that $\mathcal{R}$ is \emph{semi-rigid} (Remark \ref{pcompletesemirigid}, \cite{ArinkinLanglands} App.\ C).  Suppose that $f$ is $\mathcal{R}$-$!$-able.  Then the following properties are equivalent:

\begin{enumerate}
\item $f$ is $\mathcal{R}$-smooth;
\item $f^!$ commutes with colimits and $f^!1$ is invertible.
\end{enumerate}

Furthermore, if we just assume that $f^!$ commutes with colimits, then the formation of $f^!$ commutes with base-change, so we can also test invertibility of $f^!1$ locally on $S$.
\end{proposition}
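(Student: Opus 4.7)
Plan. The implication (1) $\Rightarrow$ (2) is immediate: $\mathcal{R}$-smoothness directly asserts that $f^!(-) \simeq f^*(-) \otimes f^!1$ (so $f^!$ preserves colimits) and that $f^!1$ is invertible. For the converse and the ``furthermore'' clause, the unifying move is to show that, whenever $f^!$ preserves colimits, it is automatically $\mathcal{C}$-linear as a functor $\mathcal{C} = \on{Sh}_{et}(S;\mathcal{R}) \to \mathcal{D} = \on{Sh}_{et}(X;\mathcal{R})$, where $\mathcal{D}$ is a $\mathcal{C}$-module via $f^*$. This single fact yields both the projection formula (condition (b) of Definition \ref{etalepropersmoothdef}(3)) and, via categorical base-change, the base-change property (condition (a)); condition (c) is hypothesis.

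First I will verify that $\mathcal{C}$ inherits semi-rigidity from $\mathcal{R}$: since $\on{Sh}_{et}(S;\on{Sp})$ is rigid for any light profinite $S$ (Example \ref{rigidexamples}(2)), and semi-rigidity is closed under tensor product in $\on{CAlg}(\on{Pr}^L_{st})$, the identification $\mathcal{C} \simeq \on{Sh}_{et}(S;\on{Sp}) \otimes_{\on{Sp}} \mathcal{R}$ shows $\mathcal{C}$ is semi-rigid. The projection formula for $f_!$, which is part of the six functor formalism, says precisely that $f_!: \mathcal{D} \to \mathcal{C}$ is a morphism in $\on{Mod}_\mathcal{C}(\on{Pr}^L)$. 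I will then invoke the following general principle for semi-rigid $\mathcal{C}$: a morphism $F$ in $\on{Mod}_\mathcal{C}(\on{Pr}^L)$ whose right adjoint $G$ in $\on{Pr}^L$ also preserves colimits automatically has $G$ as a morphism in $\on{Mod}_\mathcal{C}(\on{Pr}^L)$ -- a consequence of the self-duality of module categories for semi-rigid $\mathcal{C}$; see \cite{RamziRigid}. Applied to $F = f_!$ with $G = f^!$ (colimit-preserving by hypothesis), this yields $\mathcal{C}$-linearity of $f^!$, i.e., the natural map $f^*B \otimes f^!A \to f^!(A \otimes B)$ is an isomorphism, which at $A = 1$ gives (b).

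For base-change (a), by Remark \ref{trulylocal} it suffices to handle base-change along a map $g: S' \to S$ of light profinite sets with pullback $g': X' \to X$ and $f': X' \to S'$. Since $f$ is $\mathcal{R}$-$!$-able, categorical base-change (the preceding lemma) gives $\on{Sh}_{et}(X';\mathcal{R}) \simeq \mathcal{D} \otimes_\mathcal{C} \mathcal{C}'$ with $\mathcal{C}' = \on{Sh}_{et}(S';\mathcal{R})$, in such a way that $f'_!$ corresponds to $f_! \otimes_\mathcal{C} \on{id}_{\mathcal{C}'}$. Because $f^!$ is now a morphism in $\on{Mod}_\mathcal{C}(\on{Pr}^L)$, base-changing yields $f^! \otimes_\mathcal{C} \on{id}_{\mathcal{C}'}: \mathcal{C}' \to \mathcal{D}'$, which is a colimit-preserving right adjoint to $f'_!$ in $\on{Mod}_{\mathcal{C}'}(\on{Pr}^L)$; by uniqueness of adjoints this must be $f'^!$, and unwinding gives the base-change isomorphism $g'^* \circ f^! \simeq f'^! \circ g^*$. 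The ``furthermore'' clause then follows, since base-change was established from $f^!$ preserving colimits alone; invertibility of $f^!1$ pulls back to invertibility of $f'^!1$, and invertibility in $\on{Sh}_{et}(X;\mathcal{R})$ is detected after pullback to any jointly surjective family (in particular to points, by Lemma \ref{compassgood}(3)).

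The main technical obstacle is Step 2, the principle that right adjoints of $\mathcal{C}$-linear colimit-preserving morphisms are $\mathcal{C}$-linear for semi-rigid $\mathcal{C}$. This is an established categorical fact which we will simply quote from the literature on rigid and semi-rigid presentable $\infty$-categories; the remainder is a formal assembly of the projection formula for $f_!$, categorical base-change for $\mathcal{R}$-$!$-able maps, and uniqueness of adjoints.
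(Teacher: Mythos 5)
Your proposal is correct and follows essentially the same route as the paper: observe that $\on{Sh}_{et}(S;\mathcal{R})\simeq\on{Sh}_{et}(S;\on{Sp})\otimes\mathcal{R}$ is semi-rigid, invoke the general fact that over a semi-rigid base a colimit-preserving right adjoint of a module functor is automatically linear (the paper cites \cite{ArinkinLanglands} Prop.\ C.5.5 where you cite \cite{RamziRigid}, but the content is the same), which gives the projection formula, and then combine categorical base-change for the $\mathcal{R}$-$!$-able map $f$ with the fact that the now 2-categorical adjunction $f_!\dashv f^!$ in $\on{Mod}_{\on{Sh}_{et}(S;\mathcal{R})}(\on{Pr}^L)$ passes through $-\otimes_{\on{Sh}_{et}(S;\mathcal{R})}\on{Sh}_{et}(S';\mathcal{R})$ to identify $(f')^!$ with the base-change of $f^!$. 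The only cosmetic difference is your citation of Remark \ref{trulylocal} for reducing to base-changes along maps of light profinite sets, where the intended justification is simply descent (limit-preservation of $\on{Sh}_{et}(-;\mathcal{R})$), as noted in the paper's discussion preceding the categorical base-change lemma.
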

\begin{proof}
Suppose $f^!$ preserves colimits.  Since $\mathcal{R}$ and $\on{Sh}_{et}(S;\on{Sp})$ are both semi-rigid, so is their tensor product $\on{Sh}_{et}(S;\mc{R})$.  Thus by \cite{ArinkinLanglands} Prop.\ C.5.5 the projection formula for $f^!$ is automatic.

Now, suppose $f':X'\to S'$ is the base-change of $f$ along a map of light profinite sets $S'\to S$.  Then $(f')_!$ is a $\on{Sh}_{et}(S';\mathcal{R})$-linear map in $\on{Pr}^L$ which agrees with $f_!$ as a $\on{Sh}_{et}(S;\mathcal{R})$-linear map on restriction to $\on{Sh}_{et}(X;\mathcal{R})$.  By categorical base-change it follows that
$$(f')_! = f_! \otimes_{\on{Sh}_{et}(S;\mathcal{R})}\on{Sh}_{et}(S';\mathcal{R}).$$
By preservation of colimits and the projection formula, $f^!$ is the right adjoint to $f_!$ in the 2-categorical sense of $\on{Mod}_{\on{Sh}_{et}(S;\mathcal{R})}(\on{Pr}^L)$ and hence this adjunction passes through the tensor product, establishing the claim.
\end{proof}

\begin{remark}\label{smoothremark}
Suppose $\on{Sh}_{et}(X;\mc{R})$ is compactly generated.  Then $f^!$ preserves colimits if and only if $f_!$ sends compact objects to compact objects.  It even suffices to just consider a generating set of compact objects.
\end{remark}

We apply this, first in the archimedean case.

\begin{theorem}\label{realsmooth}
Suppose that $f:X\to Y$ is a representable submersion in $\on{Sh}(\on{Man}_\mathbb{R})$.  Then $f:\ul{X}\to\ul{Y}$ is $\on{Sp}$-smooth.
\end{theorem}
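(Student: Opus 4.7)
The plan is to reduce the theorem to the single base case $f:\underline{\mathbb{R}}\to \ast$ via standard devissage, and then handle this base case by explicit compactification and recollement computations in the closed interval $[-1,1]$.

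For the reduction, I would exploit that $\on{Sp}$-smooth maps are closed under composition and pullback, contain the truncated $\on{Sp}$-etale maps (Proposition \ref{etaleisetale}), and can be tested locally on the target (Remark \ref{etalepropersmoothlocal} part 2) and through $!$-covers by smooth maps on the source (Remark \ref{etalepropersmoothlocal} part 4). Since every sheaf on $\on{Man}_\mathbb{R}$ admits a hypercover by manifolds, descent on $Y$ reduces to $Y \in \on{Man}_\mathbb{R}$. The implicit function theorem presents any submersion locally as a projection $U\times V \to V$ with $U,V$ Euclidean open, and open inclusions are $\on{Sp}$-etale (Lemma \ref{openisetale}); this reduces to $\underline{\mathbb{R}^d} \to \ast$. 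Factoring $\mathbb{R}^d \to \mathbb{R}^{d-1}\to\cdots\to \mathbb{R}\to\ast$ as a tower of pullbacks of $\mathbb{R}\to\ast$ and using closure under composition and pullback reduces finally to $f:\underline{\mathbb{R}}\to\ast$.

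For this base case, $f$ is $\on{Sp}$-$!$-able by Lemma \ref{repsubsmooth}, so Proposition \ref{smoothlemma} (and semi-rigidity of $\on{Sp}$) reduces the problem to showing that $f^!$ preserves colimits and $f^!\mathbb{S}$ is invertible. I would compactify $f = \bar f\circ j$ where $j:\underline{\mathbb{R}}\hookrightarrow\underline{[-1,1]}$ is the open inclusion (via a homeomorphism $\mathbb{R}\cong (-1,1)$) and $\bar f:\underline{[-1,1]}\to\ast$, with complementary closed immersion $i:\{-1,1\}\hookrightarrow [-1,1]$. Then $j$ is $\on{Sp}$-etale by Lemma \ref{openisetale} and $\bar f$ is $\on{Sp}$-proper by Theorem \ref{chausproper}, giving $f_! = \bar f_*\circ j_!$ and $f^! = j^*\circ \bar f^!$. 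The fundamental computation: the recollement fiber sequence $j_!\mathbb{S}\to \mathbb{S}_{[-1,1]}\to i_*\mathbb{S}_{\{-1,1\}}$ on $[-1,1]$, pushed forward via the proper $\bar f_*$, yields
$$f_!\mathbb{S} \;=\; \on{Fib}\!\left(\mathbb{S}\xrightarrow{(1,1)}\mathbb{S}^{\oplus 2}\right) \;=\; \mathbb{S}[-1].$$
Applied to $(j_U)_!\mathbb{S}$ for $U=(a,b)\subset \mathbb{R}$ a bounded open interval (via the analogous recollement inside $[a,b]$), the same calculation gives $f_!(j_U)_!\mathbb{S}= \mathbb{S}[-1]$, and by adjunction $\Gamma(U; f^!\mathbb{S})=\on{Map}(\mathbb{S}[-1],\mathbb{S})=\mathbb{S}[1]$. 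Passing to the colimit over shrinking intervals $U\ni x$ identifies the stalk of $f^!\mathbb{S}$ at each $x\in\mathbb{R}$ with $\mathbb{S}[1]$, and Remark \ref{checkonstalks} then gives that $f^!\mathbb{S}$ is invertible (isomorphic to the constant sheaf $\mathbb{S}[1]$, once one produces a natural global comparison map).

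The main obstacle is establishing that $f^!$ preserves colimits; this is the technical crux. Via Remark \ref{smoothremark}, the natural route is to show $f_!$ sends compact objects to compact objects, and to pin down a workable set of compact generators of $\on{Sh}_{et}(\underline{\mathbb{R}};\on{Sp})$. Since $\underline{\mathbb{R}}$ is not itself compact, compact generation does not follow directly from the results of Section 7; the clean approach is to exploit the descent identification $\on{Sh}_{et}(\underline{\mathbb{R}};\on{Sp})=\varprojlim_n \on{Sh}_{et}(\underline{[-n,n]};\on{Sp})$, taking compact generators $(j_U)_!\mathbb{S}$ with $U$ a bounded open interval and verifying compactness by the recollement computation above. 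Alternatively, once $f^!\mathbb{S}$ is shown invertible, one constructs directly a natural transformation $f^*(-)\otimes f^!\mathbb{S} \to f^!(-)$ (for instance, from the recollement assembled over all points) and checks it is an isomorphism stalkwise via Remark \ref{checkonstalks}; colimit-preservation of $f^!$ then follows for free from that of $f^*$ and $-\otimes f^!\mathbb{S}$, and smoothness is established.
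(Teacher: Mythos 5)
Your reduction and your central computation are the paper's: work locally on target and source to reduce to $(-1,1)\to\ast$, invoke Proposition \ref{smoothlemma}, and compute via the compactification $[-1,1]$ and the recollement of Example \ref{recollement} that $f_!\mathbb{S}[h_U]\simeq\mathbb{S}[-1]$ for every open subinterval $U$. The problems are in how you finish the two remaining checks. The concrete error is your proposed set of compact generators: the objects $(j_U)_!\mathbb{S}$ for $U$ a bounded open interval are \emph{not} compact in $\on{Sh}_{et}(\ul{[-n,n]};\on{Sp})$ (nor in sheaves on $(-1,1)$), since $\on{map}((j_U)_!\mathbb{S},\mathcal{F})=\Gamma(U;\mathcal{F}|_U)$ and sections over a non-compact open do not commute with filtered colimits: take the filtered system $\mathcal{F}_N=\oplus_{n\geq N}(i_{x_n})_\ast\mathbb{Z}$ of sums of skyscrapers at points $x_n$ accumulating at a boundary point of $U$; the colimit sheaf vanishes while $\varinjlim_N\Gamma(U;\mathcal{F}_N)\neq 0$. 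Compact objects on such spaces are corepresented by sections over \emph{compact} subsets, not opens, so Remark \ref{smoothremark} cannot be fed this generating set, and the recollement computation only shows compactness of $f_!\mathbb{S}[h_U]$ in $\on{Sp}$, not of $\mathbb{S}[h_U]$ in the sheaf category. The paper's argument uses exactly the latter, weaker fact: equivalences are detected on sections over the basis of open subintervals (closed under intersection), and $\Gamma(U;f^!(-))=\on{map}(f_!\mathbb{S}[h_U],-)$ is corepresented by the compact spectrum $\mathbb{S}[-1]$; one then observes that the termwise colimit already has the correct sections on this basis (all restriction maps are equivalences and the basis elements are intervals). Your alternative ending (checking $f^\ast(-)\otimes f^!\mathbb{S}\to f^!(-)$ stalkwise) can be made to work and sidesteps the issue, but as written it conceals the needed step of identifying $x^\ast f^!X\simeq X[1]$ for all $X$ before Remark \ref{checkonstalks} applies.

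For invertibility of $f^!\mathbb{S}$ your phrase ``once one produces a natural global comparison map'' is precisely the missing ingredient: stalk computations by themselves do not supply the map. The paper's device is cleaner and you should use it: once $f^!$ preserves colimits, the ``furthermore'' clause of Proposition \ref{smoothlemma} gives base-change for $f^!$, and the additive group structure of $\mathbb{R}$ then forces $f^!\mathbb{S}$ to be a constant sheaf, so invertibility reduces to the single computation $\Gamma((-1,1);f^!\mathbb{S})\simeq\on{map}(f_!\mathbb{S},\mathbb{S})\simeq\mathbb{S}[1]$. (Alternatively, the map $f^\ast\mathbb{S}[1]\to f^!\mathbb{S}$ adjoint to the identity under $f_!f^\ast\mathbb{S}[1]\simeq\mathbb{S}$ is the comparison map your stalkwise route needs.)
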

\begin{proof}
Working locally on $Y$ (Remark \ref{trulylocal}), we reduce to the case where $Y$ is a manifold.  We can also work locally on the source in the sense of open covers by Remark \ref{etalepropersmoothlocal}.  Thus we can further reduce to the case of a projection off an open polydisk, $S\times D \to S$.  By base-change and composition this in turn reduces to showing that the open interval $f:(-1,1)\to \ast$ is $\on{Sp}$-smooth.  By Lemma \ref{smoothlemma}, for this it suffices to show that $f^!$ preserves colimits and $f^!\mathbb{S}$ is invertible.  Since $\on{Sh}_{et}((-1,1);\on{Sp})$ identifies with usual sheaves of spectra on the topological space $(-1,1)$ (Lemma \ref{etaleontopological}), and the open sub-intervals form a basis for the topology closed under finite intersection, to prove $f^!$ preserves colimits it suffices to show that $f_!(h_U)$ is a compact object in $\on{Sp}$ for any open subinterval $U\subset (-1,1)$.  But calculating using Example \ref{recollement} and the compactification $[-1,1]$ gives $f_!(h_{U})\simeq \Sigma^{-1}\mbb{S}$, whence the conclusion.  To prove that $f^!\mbb{S}$ is invertible, we can replace $(-1,1)$ by $\mbb{R}$; then by base-change for $f^!$ and the fact $\mbb{R}$ has a group structure we deduce that $f^!\mbb{S}$ is a constant sheaf.  Thus it suffices to show that $\Gamma((-1,1);f^!\mbb{S})$ is an invertible spectrum.  But this is by definition the dual of what we calculated earlier to be $\mbb{S}[-1]$, whence the claim.
\end{proof}

\begin{corollary}\label{realartinsmooth}
\begin{enumerate}
\item If $Y\twoheadrightarrow X$ is any surjective representable submersion in $\on{Sh}(\on{Man}_{\mbb{R}})$, then $\ul{Y}\to\ul{X}$ is a $\on{Sp}$-$!$-cover.
\item Every map between $\mbb{R}$-analytic smooth Artin stacks is $\on{Sp}$-$!$-able.
\item For every $\mathbb{R}$-analytic smooth Artin stack $X$ the map $\underline{X}\to\ast$ is $\on{Sp}$-smooth.
\end{enumerate}
\end{corollary}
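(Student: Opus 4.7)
The plan is to bootstrap from Theorem \ref{realsmooth} using the locality criteria of Remark \ref{etalepropersmoothlocal}, together with the fact that $X\mapsto\underline{X}$ preserves the pullbacks in question (Proposition \ref{underlying}(2)). Most of the work is in (1); parts (2) and (3) then follow by covering and reducing to maps of manifolds.

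For (1), Theorem \ref{realsmooth} already gives that $\underline{Y}\to\underline{X}$ is $\on{Sp}$-smooth, so by Remark \ref{etalepropersmoothlocal}(4) applied to the singleton $\{\underline{f}\}$ (invoking the parenthetical there that smooth families with jointly conservative pullback are $!$-covers), it suffices to show that $\underline{f}^\ast$ is conservative on $\on{Sp}$-valued etale sheaves. By Lemma \ref{compassgood}(3), conservativity reduces to checking that every point $x:\ast\to\underline{X}$ lifts to $\underline{Y}$. This follows from the representable-submersion hypothesis: the pullback $Y\times_X\ast$ is a manifold, and it is nonempty by the surjectivity of $Y\twoheadrightarrow X$, so any point of it yields the desired lift.

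For (2), let $f:X\to Y$ be a map of $\mbb{R}$-analytic smooth Artin stacks and pick surjective representable submersions $N\twoheadrightarrow Y$ and $M\twoheadrightarrow X$ from manifolds. Observe that $X\times_Y N$ is again an $\mbb{R}$-analytic smooth Artin stack: the base change $M\times_Y N\twoheadrightarrow X\times_Y N$ is a surjective representable submersion, and $M\times_Y N$ is a manifold since $N\to Y$ is representable. By part (1), $\underline{N}\to\underline{Y}$ is a $!$-cover, so using Remark \ref{trulylocal} combined with Proposition \ref{underlying}(2) (which identifies $\underline{X\times_Y N}$ with $\underline{X}\times_{\underline{Y}}\underline{N}$), it suffices to prove $\underline{X\times_Y N}\to\underline{N}$ is $!$-able. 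Applying (1) again to $M\times_Y N\twoheadrightarrow X\times_Y N$ and invoking Remark \ref{etalepropersmoothlocal}(4) reduces this further to showing $\underline{M\times_Y N}\to\underline{N}$ is $!$-able; but this is a map of $\mbb{R}$-manifolds, handled by Proposition \ref{LCHaus}. Finally, for (3), choose a cover $M\twoheadrightarrow X$ by a manifold: by (1), $\underline{M}\to\underline{X}$ is a smooth $!$-cover, and by Theorem \ref{realsmooth} the composite $\underline{M}\to\ast$ is $\on{Sp}$-smooth, so Remark \ref{etalepropersmoothlocal}(4) immediately gives smoothness of $\underline{X}\to\ast$.

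The main subtlety I foresee is in (1): one must check that a singleton family consisting of a smooth surjective map already qualifies as a $!$-cover in the sense of Theorem \ref{sixfunctorextension}, i.e.\ that the required descent of $f_!$ along the \v{C}ech nerve of $f$ holds rather than just conservativity of $f^\ast$. This ultimately relies on smoothness of $f$ producing a $\on{Sh}_{et}(\underline{X};\on{Sp})$-linear right adjoint to $f_!$ (via Proposition \ref{smoothlemma} and categorical base change), which feeds into the Beck--Chevalley-type descent underlying Remark \ref{etalepropersmoothlocal}(4). Once this is granted, the cover $Y\times_X Y\rightrightarrows Y$ together with conservativity yields the cosheaf condition for $M_!$, completing the argument.
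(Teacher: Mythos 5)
Your proposal is correct, and for parts (1) and (3) it is essentially the paper's own argument: smoothness from Theorem \ref{realsmooth}, conservativity of pullback from surjectivity, and then the ``smooth jointly conservative families are $!$-covers'' part of Remark \ref{etalepropersmoothlocal} — exactly the reliance the paper also makes, so your closing worry about singleton smooth covers is the same implicit appeal, not an extra gap. The one genuine divergence is part (2): the paper disposes of it in one line by observing that the $\on{Sp}$-$!$-able maps form a geometric class (closed under diagonals, composition and base change), so that $!$-ability of every $X\to\ast$ (i.e.\ part (3)) already yields $!$-ability of any $X\to Y$; you instead prove (2) directly, base-changing along a chart $N\twoheadrightarrow Y$, checking that $X\times_Y N$ is again an Artin stack, and descending along $M\times_Y N\twoheadrightarrow X\times_Y N$ down to a map of manifolds handled by Proposition \ref{LCHaus}. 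Both routes work; the paper's is shorter, while yours avoids the diagonal-closure trick at the cost of invoking Proposition \ref{underlying}(2) to identify $\ul{X\times_Y N}$ with $\ul{X}\times_{\ul{Y}}\ul{N}$ and the extra Artin-stack verification. One small imprecision in (1): a point $x:\ast\to\ul{X}$ of the light condensed anima need not come from a map $\ast\to X$ in $\on{Sh}(\on{Man}_{\mbb{R}})$, so ``$Y\times_X\ast$ is a nonempty manifold'' is not quite the right justification; the clean statement is that $\ul{Y}\to\ul{X}$ is surjective in $\on{CondAn}^{light}$ (the underline functor preserves colimits, hence effective epimorphisms), and points of $\ul{X}$ lift along any surjection since every cover of $\ast$ by a light profinite set has a point — after which conservativity follows from Lemma \ref{compassgood}(3) as you say.
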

\begin{proof}
For 1, we have seen it's $\on{Sp}$-smooth.  It's also conservative on pullbacks because of surjectivity.  Thus the claim follows from Remark \ref{etalepropersmoothlocal}.

For 2, because the $!$-able maps form a geometric class, it suffices to show 3.

For 3, let $M\to X$ be a surjective representable submersion.  This is smooth and a cover by 1 and $M\to \ast$ is also smooth by Lemma \ref{repsubsmooth}.  Thus $X\to \ast$ is smooth by Remark \ref{etalepropersmoothlocal}.
\end{proof}

Now we move to the non-archimedean case, which is more technically demanding.

To apply Proposition \ref{smoothlemma} (in conjunction with Remark \ref{smoothremark}) we need criteria detecting when an object in $\on{Sh}_{et}(S;\on{Sp}_{\wh{p}})$ is compact, and also when it is invertible.  To complete the picture we also discuss dualizable objects.  (Dualizable is in some sense a bit nicer than compact because being dualizable is a local condition, by descent.)

\begin{lemma}\label{compactinvertible}
Let $p$ be a prime, $S$ a light condensed anima, and $\mathcal{F}\in \on{Sh}_{et}(S;\on{Sp}_{\wh{p}})$.  Then the following are equivalent:
\begin{enumerate}
\item $\mathcal{F}$ is dualizable.
\item $\mathcal{F}/p$ is locally bounded below in the t-structure, $\pi_\ast (\mathcal{F}\otimes\mbb{F}_p)$ locally lives in only finitely many degrees, and each $\pi_i (\mathcal{F}\otimes\mbb{F}_p)$ is represented by a ($0$-truncated) finite etale map $X\to S$.
\end{enumerate}
Moreover, the following are equivalent:
\begin{enumerate}
\item $\mathcal{F}$ is invertible.
\item $\mathcal{F}$ is dualizable, for any point $s:\ast \to S$ we have $s^\ast\mc{F}\otimes\mbb{F}_p\simeq \mbb{F}_p[d]$ for some $d\in\mbb{Z}$.
\end{enumerate}
Finally, suppose $S$ is a light profinite set.  Then the following are equivalent:
\begin{enumerate}
\item $\mathcal{F}$ is compact.
\item $\mathcal{F}$ is dualizable and killed by some power of $p$.
\end{enumerate}
\end{lemma}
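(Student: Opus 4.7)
The plan is to deduce all three characterizations from an analysis after smashing with $\mbb{F}_p$, using that for $\mc{F}\in\on{Sh}_{et}(S;\on{Sp}_{\wh{p}})$ which is locally bounded below, dualizability, invertibility and compactness may each be tested on the mod-$p$ reduction. The key inputs are that $-\otimes\mbb{F}_p$ is symmetric monoidal and, on $p$-complete bounded-below objects, conservative (so it detects dualizability and invertibility); that in $\on{Sp}_{\wh{p}}$ the dualizable objects are the perfect $p$-complete spectra, the invertible objects are the shifts of $\mbb{S}_{\wh{p}}$ (characterized by $X\otimes\mbb{F}_p\simeq \mbb{F}_p[d]$), and the compact objects are the $p$-power-torsion dualizable ones; and that isomorphisms of etale sheaves of $p$-complete spectra may be checked on geometric points by Lemma \ref{compassgood} and Remark \ref{checkonstalks}.

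For (1) ``only if'', I would pull back to each geometric point $s:\ast\to S$: the stalk $s^\ast\mc{F}$ is dualizable in $\on{Sp}_{\wh{p}}$, hence perfect, so $\pi_\ast(s^\ast\mc{F}\otimes \mbb{F}_p)$ lives in a finite range of degrees and each homotopy group is a finite $\mbb{F}_p$-vector space. I would then spread local boundedness from a single point to a neighborhood using the continuity statement of Proposition \ref{procontinuity}, after writing $S$ (or a profinite cover of it) as an inverse limit and observing that the relevant bounded-range condition is an open condition in the profinite topology. The cohomology sheaves of $\mc{F}\otimes\mbb{F}_p$ are then dualizable etale sheaves of abelian groups whose stalks are finite; by passing to the identification of Lemma \ref{etaleontopological} on a light profinite cover, such a sheaf is locally constant with finite fibers, i.e.\ is represented by a $0$-truncated finite etale map. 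For ``if'', the hypothesis reduces locally on $S$ to the case where $\mc{F}\otimes\mbb{F}_p$ is a bounded complex of finite etale $\mbb{F}_p$-sheaves; each such sheaf becomes a constant sheaf on a finite etale cover, so the complex is manifestly dualizable in $\on{Sh}_{et}(S;\on{D}(\mbb{F}_p))$. Then I would lift this dual through the tower $\mc{F}/p^n$ by the formula $\mc{F}^\vee \simeq \varprojlim_n (\mc{F}/p^n)^\vee$, which converges since $\mc{F}$ is $p$-complete and the tower of duals has good Mittag-Leffler properties thanks to the mod-$p$ dualizability.

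For (2), combining (1) with the pointwise description of invertibles: a dualizable $\mc{F}$ is invertible iff $\mc{F}\otimes\mc{F}^\vee \to 1$ is an isomorphism, which by Remark \ref{checkonstalks} can be tested at each point; at a point, a dualizable $p$-complete spectrum $X$ is invertible iff $X\otimes\mbb{F}_p$ is a shift of $\mbb{F}_p$, giving the stated criterion. For (3), I would restrict to $S$ light profinite, where by Example \ref{countableexamples} the category $\on{Sh}_{et}(S;\on{Sp})$ is compactly generated and hence $\on{Sh}_{et}(S;\on{Sp}_{\wh{p}})$ is compactly generated by the images $j_\natural(\mbb{S}/p)$ with $j$ a compact open inclusion; these are both dualizable and $p$-torsion. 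Thus any compact object, being a retract of a finite colimit of such generators, is dualizable and killed by some $p^n$; conversely any dualizable $p$-torsion $\mc{F}$ is, by part (1) combined with stalkwise perfectness, killed by a uniform $p^n$ and hence cellular over $\mbb{S}/p^n$, so compact.

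The main obstacle I anticipate is the local spreading step: passing from the pointwise finiteness data of a dualizable $\mc{F}$ to a uniform local bound on where $\pi_\ast(\mc{F}\otimes\mbb{F}_p)$ lives and on the finite etale structure of its cohomology sheaves. This requires carefully exploiting that $S$ is a (filtered limit of) profinite set and that dualizable objects are detected by an inverse system of ``finite-rank'' conditions, which is where Proposition \ref{procontinuity} and the compactly generated presentation of the mod-$p$ category do the real work. The other somewhat delicate step is the convergence of the Milnor-tower construction of the dual in the ``if'' direction of (1), though under the boundedness and $p$-completeness hypotheses this is standard.
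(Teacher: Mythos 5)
There is a genuine gap in your forward direction of the first equivalence. Your plan is to extract finiteness at each point $s:\ast\to S$ and then ``spread'' it, asserting that the bounded-range and finite-stalk conditions are open and that Proposition \ref{procontinuity} does the spreading. But these pointwise conditions are not open for a general sheaf: a skyscraper such as $i_\ast\mbb{F}_p$ at a point of a light profinite set has perfect stalks at every point (almost all of them zero) yet is neither locally constant nor dualizable, so pointwise data cannot be propagated without re-using the global hypothesis in an essential way. Moreover Proposition \ref{procontinuity} concerns sections of a fixed truncated sheaf along a tower of spaces over $S$; it does not descend $\mathcal{F}\otimes\mbb{F}_p$ itself to a finite stage. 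The mechanism the paper uses, and which your sketch never actually supplies, is that the unit is compact mod $p$, so dualizability forces $\mathcal{F}/p$ to be a \emph{compact} object; since for $S=\varprojlim_n S_n$ light profinite the mod $p$ sheaf category is the filtered colimit of the categories over the finite quotients $S_n$ (compact generation as in Example \ref{countableexamples}), the compact object $\mathcal{F}\otimes\mbb{F}_p$ is pulled back from some $S_n$, where boundedness and local constancy with finite stalks are immediate. That compactness step is the crux of the ``only if'' direction and is absent from your argument.

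The converse also rests on an under-justified step: you build the dual as $\varprojlim_n(\mathcal{F}/p^n)^\vee$ with an appeal to ``Mittag-Leffler properties,'' but dualizability of $\mathcal{F}\otimes\mbb{F}_p$ does not formally give dualizability of each $\mathcal{F}/p^n$, nor the commutation of the limit with $-\otimes\mathcal{G}$ needed to verify evaluation and coevaluation. Lifting perfectness from the mod $p$ reduction to the $p$-complete object is precisely the content of Lemma \ref{checkperfmodp}, which the paper invokes after observing that the locally constant finite homotopy sheaves place $\mathcal{F}\otimes\mbb{F}_p$ in the thick subcategory generated by $\mbb{F}_p$ (over a light profinite base); your sketch should be routed through that lemma, or an equivalent idempotent-lifting argument, rather than a bare limit of duals. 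Your treatment of invertibility matches the paper, and your compactness discussion is essentially recoverable (note that ``cellular over $\mbb{S}/p^n$'' does not by itself give compactness; argue instead that a $p^n$-torsion $\mathcal{F}$ is a retract of $\mathcal{F}\otimes\mbb{S}/p^n$, which is dualizable tensor compact, or use the paper's thick-subcategory argument), but the first equivalence needs the compactness-and-descent argument above in place of pointwise spreading.
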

\begin{proof}
For 1, by descent we can assume $S$ is a light profinite set.  Suppose $\mc{F}$ is dualizable.  As the unit is compact mod $p$, it follows that $\mc{F}/p$ is a compact object, hence lies in the thick subcategory generated by $\mbb{S}/p$, hence is bounded below.  Moreover $\mc{F}\otimes \mbb{F}_p \in \on{Sh}_{et}(S;\on{D}(\mbb{F}_p))$ is dualizable and therefore compact, hence it comes by pullback from a finite set where it has finite dimensional stalks whence $\mc{F}$ satisfies 2.  For the converse, since each homotopy group is represented by a finite etale map, we see it is locally constant with finite dimensional stalks, whence it lies in the thick subcategory generared by $\mbb{F}_p$ (we are still in the situation where $S$ is a light profinite set).  Applying Lemma \ref{checkperfmodp} we deduce that $\mc{F}$ lies in the thick subcategory generated by the unit $\mbb{S}_{\wh{p}}$, whence $\mc{F}$ is dualizable.

For the claim about invertible objects, again we can reduce to light profinite sets.  Clearly 1 implies 2, and for the converse note that we can check $\mc{F}^\vee\otimes \mc{F}\overset{\sim}{\rightarrow}\mbb{S}_{\wh{p}}$ by applying $-\otimes \mbb{F}_p$ because both sides are bounded below (as we saw above, a dualizable $\mc{F}$ lies in the thick subcategory generated by $\mbb{S}_{\wh{p}}$ hence so does its dual).

Finally, for the claim about compact objects, compact implies dualizable and killed by a power of $p$ because the generating compact object $\mbb{S}/p$ is dualizable and killed by a power of $p$.  For the converse, we saw above that any $\mc{F}$ which is dualizable lies in the thick subcategory generated by $\mbb{S}_{\wh{p}}$.  If $\mc{F}$ is killed by a power of $p$, then we deduce that $\mc{F}$ lies in the thick subcategory generated by some $\mbb{S}/p^n$ which is compact.
\end{proof}

We will now consider the setting of $F=\mbb{Q}_p$, with $\on{Sp}_{\wh{p}}$-coefficients.  Our results are not quite so strong as in the Archimedean case: we will not show that all $\mbb{Q}_p$-analytic smooth Artin stacks are $!$-able, but only a (fairly large) subclass.

We will rely on Proposition \ref{localstructurepadiclie}, showing that a $p$-adic Lie group $G\to S$ in families locally on $S$ admits a compact open subgroup object $H\subset G$ such that each fiber is a uniform pro-$p$ group.  We can use this in conjunction with the following:

\begin{lemma}\label{uniformisgood}
Let $S$ be a compact Hausdorff (hence light profinite) $\mbb{Q}_p$-manifold.  Let $H\to S$ be a compact Hausdroff $\mbb{Q}_p$-Lie group over $S$ such that each fiber is a uniform pro-$p$-group.  Denote by $BH\to S$ the relative classifying stack.  Then:
\begin{enumerate}
\item $BH$ has finite cohomological dimension.
\item $\on{Sh}_{et}(BH;\on{Sp})$ is rigid, with a family of compact generators provided by the $(j_n)_!\mbb{S}$ for $j_n:BH^{p^n}\to BH$.
\item $\on{Sh}_{et}(BH;\on{D}(\mbb{F}_p))$ is compactly generated by the unit $\mbb{F}_p$.
\end{enumerate}
\end{lemma}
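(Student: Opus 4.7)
The strategy is to establish part 1 first, since the finite cohomological dimension bound it provides feeds into parts 2 and 3 through Examples \ref{rigidexamples} and \ref{countableexamples}. For part 1, I would apply Remark \ref{cohdimfibers} to the projection $\pi : BH \to S$. By Proposition \ref{localstructurecorollary}, $BH$ is a 1-truncated light profinite anima (so it admits a hypercover by light profinite sets), and $S$ is a light profinite set. It therefore suffices to bound the cohomological dimensions of the fibers $BH_s$ uniformly; here each $H_s$ is a uniform pro-$p$-group of fixed dimension $d = \dim(H/S)$, so by \ref{scholiumuniform} its mod-$p$ cohomological dimension is exactly $d$, and the standard pro-$p$ dévissage (splitting any discrete abelian sheaf into its $p$-primary and prime-to-$p$ parts, using vanishing of higher cohomology with prime-to-$p$ coefficients for pro-$p$ groups, and reducing $p$-primary sheaves to $\mbb{F}_p$-coefficients via short exact sequences after writing them as filtered colimits of $p^k$-torsion sheaves) extends the bound to all discrete abelian sheaves.

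For part 2, rigidity is immediate from Example \ref{rigidexamples} part 3 combined with part 1. For the compact generators: each $j_n : BH^{p^n} \to BH$ is finite etale by Proposition \ref{localstructurecorollary}, so $(j_n)_! = (j_n)_\natural$, and the right adjoint $j_n^! = j_n^\ast$ preserves colimits, which makes $(j_n)_!$ preserve compact objects; since the unit on $BH^{p^n}$ is compact by Example \ref{countableexamples} (applied there using part 1 for $H^{p^n}$ in place of $H$), each $(j_n)_!\mbb{S}$ is compact. For joint detection of zero, I would take $\mc{F}$ with $\on{Map}((j_n)_!\mbb{S}, \mc{F}) = \Gamma(BH^{p^n}; j_n^\ast \mc{F}) = 0$ for all $n$ and, by Postnikov completeness (Theorem \ref{etale}) together with dévissage, reduce to $\mc{F}$ being a single discrete abelian sheaf. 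The hypothesis then becomes $(\mc{F}|_S)^{H^{p^n}} = 0$ for all $n$; applied stalkwise via Remark \ref{checkonstalks}, the continuity of the $H_s$-action on each discrete stalk $\mc{F}_s$ writes $\mc{F}_s = \bigcup_n \mc{F}_s^{H_s^{p^n}} = 0$.

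For part 3, the unit $\mbb{F}_p$ is compact by the $\mbb{F}_p$-analog of the above, and the same argument as in part 2 shows $\{(j_n)_!\mbb{F}_p\}_n$ is a family of compact generators. It remains to show each $(j_n)_!\mbb{F}_p$ lies in the thick subcategory of $\on{Sh}_{et}(BH;\on{D}(\mbb{F}_p))^{\aleph_0}$ generated by $\mbb{F}_p$, which I would prove by induction on $n$ starting from $(j_0)_!\mbb{F}_p = \mbb{F}_p$: the transition $BH^{p^{n+1}} \to BH^{p^n}$ is a finite etale torsor under the elementary abelian group $H^{p^n}/H^{p^{n+1}} \cong (\mbb{F}_p)^d$ (by \ref{scholiumuniform}), so its $!$-pushforward of $\mbb{F}_p$ is the $BH^{p^n}$-sheaf $\mbb{F}_p[(\mbb{F}_p)^d]$, a finite-dimensional local $\mbb{F}_p$-algebra with residue field $\mbb{F}_p$, which admits a finite composition series with trivial factors; applying the exact functor $(j_n)_!$ then realizes $(j_{n+1})_!\mbb{F}_p$ as a finite iterated extension of $(j_n)_!\mbb{F}_p$. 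The main obstacle is the joint detection step in part 2, whose essential input is the classical principle that any nonzero continuous $\mbb{Z}[G]$-module for a pro-$p$-group $G$ acquires a nonzero invariant under some sufficiently small open subgroup of the form $G^{p^n}$; the remaining work is to verify stalkwise that this principle does apply, using that taking $H_s^{p^n}$-invariants of a discrete sheaf on $S$ commutes with stalk formation.
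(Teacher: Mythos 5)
Parts 1 and 3, and the rigidity statement in part 2, follow the paper's own proof essentially step for step: part 1 is Remark \ref{cohdimfibers} plus the uniform mod-$p$ bound from \ref{scholiumuniform} and vanishing away from $p$ for pro-$p$ groups, and part 3 is the same thick-subcategory argument via finite $p$-group quotients, where your filtration of $\mbb{F}_p[(\mbb{F}_p)^d]$ by powers of the augmentation ideal is the same device as the paper's filtration of the group ring of a finite $p$-group (both need the small extra remark that one works locally on $S$, since $H^{p^n}/H^{p^{n+1}}$ is only a finite \emph{locally} constant group over $S$).

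Where you genuinely diverge is the claim in part 2 that the $(j_n)_!\mbb{S}$ generate, and there your argument has a gap. The paper does not prove joint conservativity directly: it quotes from Example \ref{countableexamples} (i.e.\ the compact generation of $\on{Sh}_{et}(BH;\on{Sp})$ by $j_!\mbb{S}$ for \emph{all} finite \'etale $j$) and then reduces to the $j_n$ by noting that every finite \'etale map to $BH$ is dominated by some $BH^{p^n}\times_S T$ with $T\subset S$ clopen, whose $j_!\mbb{S}$ is a retract of $(j_n)_!\mbb{S}$. Your route instead needs: if $\Gamma(BH^{p^n};\mc{F}[k])=0$ for all $n,k$ then $\mc{F}=0$; and the step ``by Postnikov completeness together with d\'evissage, reduce to $\mc{F}$ being a single discrete abelian sheaf'' is not a valid inference as stated. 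The hypothesis does not pass to the individual homotopy sheaves of $\mc{F}$: in the descent/Postnikov spectral sequence $H^s(BH^{p^n};\pi_t\mc{F})\Rightarrow\pi_{t-s}\Gamma(BH^{p^n};\mc{F})$, vanishing of the abutment does not force vanishing of the $E_2$-page (differentials can cancel contributions of different homotopy sheaves), so you cannot conclude the per-$n$ statement $H^0(BH^{p^n};\pi_t\mc{F})=0$ that your subsequent invariants argument ``$(\mc{F}|_S)^{H^{p^n}}=0$ for all $n$'' requires; Postnikov completeness alone also does not handle $\mc{F}$ unbounded below. The stalkwise part of your argument (continuity of the $H_s$-action on a discrete stalk, with the $H_s^{p^n}$ cofinal among open subgroups, and commutation of invariants with stalks via proper base change) is fine once one has the right input, and the argument can plausibly be repaired using the \emph{uniform} cohomological-dimension bound $d$ for all the $BH^{p^n}$: from $\tau_{\geq m+1}\mc{F}\to\mc{F}\to\tau_{\leq m}\mc{F}$ one gets vanishing of $\Gamma(BH^{p^n};\tau_{\leq m}\mc{F}[k])$ in the range $k\leq m-d$, and one then has to convert this, e.g.\ by a continuity argument in the colimit over $n$, into vanishing of $\Gamma(S;\pi_t\mc{F}|_S)$ for every $t$. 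But this requires a real argument that your sketch does not supply; the paper's domination/retract reduction sidesteps the issue entirely and is the simpler fix.
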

\begin{proof}
For 1, by Remark \ref{cohdimfibers} it suffices to show that each fiber of $BH\to S$ has uniformly bounded finite cohomological dimension.  We recalled this with mod $p$ coefficients in \ref{scholiumuniform}.  With $\mbb{Z}[1/p]$-coefficients the cohomological dimension of each fiber is $0$ because they are pro-$p$ groups.

For 2, the rigidity claim follows from 1 and Example \ref{rigidexamples}, and we get compact generation by the $j_!\mbb{S}$ where $j$ runs over all finite etale maps to $BH$.  To see that the $(j_n)_!\mbb{S}$ are enough to generate, note by Proposition \ref{localstructurecorollary} every finite etale map to $BH$ is dominated by $BH^{p^n}\times_ST$ for some $T\subset S$ compact open and $n\geq 0$.  But the $j_!$ of this is a retract of the one for $j_n$, proving the claim.

For 3, from 2 we get that the $(j_n)_!\mbb{F}_p$ give compact generators.  To show that $\mbb{F}_p$ on its own generates, it suffices to show that each $(j_n)_!\mbb{F}_p$ lies in the thick subcategory generated by $\mbb{F}_p$.  For that, because $H/H^{p^n}\to S$ is a finite etale group object with $p$-group fibers (\ref{scholiumuniform}), by base-change and working locally on $S$ it suffices to show that if $Q$ is a finite $p$-group then for the map $f:\ast \to \ast/Q$ we have that $f_!\mbb{F}_p$ lies in the thick subcategory generated by $\mbb{F}_p$.  This holds because the group ring of a finite $p$-group with $\mbb{F}_p$-coefficients, as a representation, admits a finite filtration with associated gradeds the trivial representation $\mbb{F}_p$.
\end{proof}
Here is our main result on $!$-ability in the nonarchimedean setting.  We do not yet have to pass to $p$-complete coefficients; that will come when we discuss smoothness.

\begin{theorem}\label{good}
Let $X$ be a $\mbb{Q}_p$-analytic smooth Artin stack. If there exists a surjective representable submersion $M\to X$ with $M\in\on{Man}_F$ which is a $\on{Sp}$-$!$-cover, then every surjective representable submersion $Y\to X$ is a $\on{Sp}$-$!$-cover.

Call $X$ ``$!$-good'' if it satisfies this condition.  Then $\ast$ is $!$-good, and we have the following closure properties:
\begin{enumerate}
\item Suppose $Y\to X$ is a map such that there exists a surjective representable submersion $M\to X$ from a manifold such that the pullback $Y\times_XM$ is representable by a manifold. (For example $Y\to X$ could be a representable submersion.) Then $X$ is $!$-good $\Rightarrow$ $Y$ is $!$-good.
\item If $X$ is $!$-good and $G\to X$ is a group object in representable submersions over $X$, then the relative classifying stack $BG$ is $!$-good.
\item If $X$ admits an etale cover by $!$-good stacks, then $X$ is $!$-good.
\end{enumerate}
Finally, for any $!$-good $X$, the map $X\to \ast$ is $\on{Sp}$-$!$-able. (Hence any map between such $X$'s is likewise $!$-able.)
\end{theorem}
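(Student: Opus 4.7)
The strategy is to derive all claims from a single key geometric lemma --- that every surjective representable submersion $f:Y\to M$ of $\mbb{Q}_p$-manifolds is an $\on{Sp}$-$!$-cover --- together with formal manipulations in the six functor formalism. Granting this lemma, the well-definedness of ``$!$-good'' follows by sieve containment: if $M\to X$ is a $!$-cover with $M$ a manifold and $Y\to X$ is any surjective representable submersion, then $Y\times_X M$ is a manifold (as the pullback of a representable submersion from a manifold); the map $Y\times_X M\to M$ is a surjective representable submersion of manifolds and hence a $!$-cover by the geometric lemma; composing with $M\to X$ yields that $Y\times_X M\to X$ is a $!$-cover, and since the sieve on $E_{/X}$ generated by $Y\times_X M\to X$ is contained in that generated by $Y\to X$, the latter is also a $!$-cover (using the general principle that any sieve containing a cover is a cover). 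That $\ast$ is $!$-good is trivial, exhibited by the identity.

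To prove the geometric lemma, I reduce (by locality of $!$-covers on the target, see Remark \ref{trulylocal}) to the case where $M$ is a compact open subset of $\mbb{Q}_p^d$, hence a light profinite set. The implicit function theorem lets me cover $Y$ by opens of the form $V_i = U_i \times W_i$ with $U_i \subset \mbb{Q}_p^e$ and $W_i \subset M$ open, over which $f$ is the projection; and by compactness of $M$, I arrange that finitely many $W_i$, each itself a light profinite set, cover $M$. The key sub-step is that for any open $U \subset \mbb{Q}_p^e$ the map $U \to \ast$ is a $!$-cover: decomposing $U = \sqcup_j T_j$ as a countable disjoint union of compact opens, each $T_j \to \ast$ is a surjective map of light profinite sets and hence a $!$-cover by Remark \ref{trulylocal}, and the sieve generated by $\{T_j\to \ast\}$ is contained in that generated by $\{U\to \ast\}$. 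Base-changing gives that each $V_i \to W_i$ is a $!$-cover; combining with the finite open $!$-cover $\{W_i \hookrightarrow M\}$ of the light profinite set $M$, transitivity of $!$-covers shows that $\{V_i \to M\}$ is a $!$-cover, whence $Y \to M$ is a $!$-cover by sieve containment.

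Closure (1) is immediate: the pullback $Y\times_X M\to Y$ of the $!$-cover $M\to X$ is again a $!$-cover and by hypothesis has a manifold source. Closure (3) follows by choosing a (countable, by second countability) etale cover $\{U_i\to X\}$ by $!$-good stacks, which is itself a $!$-cover as a jointly surjective family of $\on{Sp}$-$!$-etale maps (Proposition \ref{etaleisetale}); lifting each $U_i$ to an atlas $M_i\to U_i$ and taking $\sqcup M_i \to X$ yields the required surjective representable submersion from a manifold, which is a $!$-cover by transitivity. Closure (2) is the most subtle: by Proposition \ref{localstructurepadiclie}, after passing to a cover of $X$ (justified by (3)), $G$ contains a compact open subgroup object $H$ with uniform pro-$p$ fibers; by Lemma \ref{uniformisgood} combined with Theorem \ref{chausproper}, $H\to X$ is $\on{Sp}$-proper, so $X\to BH$ is a surjective representable submersion whose Cech nerve consists of proper maps, and hence is a $!$-cover (for proper morphisms $!$-pushforward coincides with $*$-pushforward, so $!$-descent reduces to the automatic $\infty$-topos descent); meanwhile $BH \to BG$ is $\on{Sp}$-$!$-etale (its fibers $G/H$ are discrete) and surjective, hence also a $!$-cover; composing $M\to X\to BH\to BG$ yields the desired atlas of $BG$. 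Finally, for $!$-ability of $X\to \ast$ when $X$ is $!$-good, pick a $!$-cover $M\to X$ with manifold source, note that $M\to \ast$ is $!$-able by Proposition \ref{LCHaus}, and conclude by locality of $!$-ability on the source (Theorem \ref{sixfunctorextension}(3), Remark \ref{actuallyconstruct}). The main obstacle is the geometric lemma, where the subtlety is in handling noncompact opens $U\subset \mbb{Q}_p^e$ via sieve containment rather than attempting to realize infinite disjoint unions as single $!$-covers directly, since light profiniteness is not preserved by countable coproducts.
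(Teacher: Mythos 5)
Your treatment of the well-definedness of ``$!$-good'' (via the manifold-level statement that surjective representable submersions of $\mbb{Q}_p$-manifolds are $!$-covers, proved by charts and sieve containment rather than the paper's one-line ``étale locally split''), of closures (1) and (3), and of the final $!$-ability claim is essentially sound and close in spirit to the paper. The problem is closure (2), which is the real content of the theorem, and there your argument has a genuine gap. You claim that $X\to BH$ is a $!$-cover because ``for proper morphisms $!$-pushforward coincides with $*$-pushforward, so $!$-descent reduces to the automatic $\infty$-topos descent.'' This conflates two different descent conditions. The automatic topos descent is the statement that $\on{Sh}_{et}(BH)$ is the limit of $\on{Sh}_{et}$ of the \v{C}ech nerve along the functors $f^\ast$. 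The $!$-cover condition is the cosheaf condition for $M_!$, i.e.\ that $\on{Sh}_{et}(BH)$ is the colimit along the functors $f_!$; passing to right adjoints in $\on{Pr}^L$, this is the limit along the functors $f^!$, the right adjoints of $f_!\simeq f_\ast$. For étale maps $f^!\simeq f^\ast$ and the two conditions coincide, but for proper maps they do not, and $*$-descent does not imply the $!$-cosheaf condition. This is exactly why the criterion in Remark \ref{etalepropersmoothlocal}(3) (Heyer--Mann 4.7.4) carries the hypothesis that $f_\ast\mbb{S}$ be \emph{descendable}, and why the paper's proof at this point invokes Lemma \ref{uniformisgood} (rigidity, compact generation by dualizables, finite cohomological dimension of $\on{Sh}_{et}(BH;\on{Sp})$) and Theorem \ref{descendabletheorem} to check descendability pointwise, where the cover splits and one gets descendability of index $1$. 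Your proposal omits this entire input, which is the technical heart of the step; without it the claim that the atlas is a $!$-cover of $BH$ is unproved.

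A secondary, related flaw in the same step: the reduction ``after passing to a cover of $X$ (justified by (3))'' is not legitimate. Proposition \ref{localstructurepadiclie} needs the base to be a compact Hausdorff $\mbb{Q}_p$-manifold, and the cover of $X$ you would use for this is an atlas $N\to X$, a surjective representable submersion, not an étale cover; a general $!$-good Artin stack (e.g.\ a classifying stack) admits no étale cover by manifolds, so closure (3) cannot perform this reduction, and in any case the subgroup $H$ only exists after the base change, not over $X$. The correct move, as in the paper, is to test the $!$-cover property of the atlas $M\to X\to BG$ after base change along the $!$-cover $N\to X$ (equivalently along $BG\times_X N\to BG$), thereby reducing to a manifold base, then shrink to compact opens using that open covers are $!$-covers (Remark \ref{trulylocal}), and only then produce $H$ and pass along the étale $!$-cover $BH\to BG$ before running the properness-plus-descendability argument.
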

\begin{proof}
Suppose given a surjective representable submersion $M\to X$ with $M\in\on{Man}_{\mbb{Q}_p}$ which is a $!$-cover, and let $Y\to X$ be an arbitrary surjective representable submersion.  We already know $Y\to X$ is $!$-able by Lemma \ref{repsubsmooth}, so we can test whether $Y\to X$ is a $!$-cover on pullback to $M$.  Thus it suffices to show that any surjective submersion $M\to N$ of manifolds is a $!$-cover.  But it's etale locally split.

Now consider 1.  Assuming $X$ is $!$-good, we have that $M\to X$ is a $!$-cover, hence so is its pullback $M\times_XY \to Y$ (note also that this pullback commutes with the functor to light condensed anima by Proposition \ref{underlying}).  Thus $Y$ is $!$-good.

Now consider 2.  This is the only one where we have to work a bit.  Let $M\to X$ be a surjective representable submersion, so a $!$-cover by goodness of $X$.  The composition $M\to X \to BG$ is a surjective representable submersion, and we claim it's a $!$-cover.  Choose another surjective representable submersion $N\to X$.  (We could choose the same one, but we separate the notation for clarity.)  By hypothesis $N\to X$ is a $!$-cover.  Since $BG$ maps to $X$ we can pull back our situation along $N\to X$ and therefore reduce to the case where $X=N$ is a manifold.  Then it suffices to show that $N\to BG$ is a $!$-cover.  Working etale locally, we can assume $N$ is compact Hausdorff.  Then by Proposition \ref{localstructurepadiclie} there is a compact open sub-group object $H\subset G$ such that each fiber $H_x$ is a uniform pro-$p$-group and by Proposition \ref{localstructurecorollary} $BH \to BG$ is (surjective and) etale.  Thus we reduce to $H=G$.  Then $f:N\to BH$ is proper because $H$ is light profinite (\ref{localstructurecorollary}) and so by Remark \ref{etalepropersmoothlocal} it suffices to show that $f_\ast\mbb{S}$ is descendable.  Then by Lemma \ref{uniformisgood} we can apply Theorem \ref{descendabletheorem}  and work pointwise, where we have a section, to conclude.

Finally, suppose $\{U_i\to X\}_{i\in I}$ is an etale cover such that each $U_i$ is $!$-good.  Choose a surjective representable submersion $M_i \to U_i$ for each $i$, so by hypothesis this is a $!$-cover.  Then $\sqcup_i M_i \to X$ is a surjective representable submersion and it suffices to show it's a $!$-cover.  But we can check this on each $U_i$ where it's refined by $M_i \to U_i$ as required.

It remains to show that if $X$ is $!$-good then $X\to \ast$ is $!$-able (as always, with $\on{Sp}_{\wh{p}}$-coefficients).  Let $M\to X$ be a $!$-cover from a manifold.  We know $M\to \ast$ is $!$-able by Lemma \ref{repsubsmooth}, and the claim follows by Remark \ref{etalepropersmoothlocal}
\end{proof}

\begin{example}\label{lotsaregood}
Any quotient stack is $!$-good.  Indeed, $M/G \to \ast/G$ is a representable submerion, so this follows from 1 and 2.  More generally we can take local quotient stacks, or local quotient stacks over local quotient stacks, etc.  This generality will certainly cover all the Artin stacks we will need to consider.
\end{example}

Smoothness is a lot less common in the nonarchimedean setting: already of course $\mathbb{Q}_p\to \ast$ is not smooth. But Lazard proved both that Poincaré duality holds (with $p$-adic coefficients) for the continuous cohomology of sufficiently small $p$-adic Lie groups, and that every $p$-adic Lie group has a sufficiently small (in the same sense) open subgroup.  Later Serre clarified that ``sufficiently small'' exactly means compact and $p$-torsionfree. This suggests that classifying stacks are smooth, and proper under a hypothesis like Serre's.  In fact we have the following.

\begin{theorem}\label{padicsmooth}
Suppose that $X$ is a $\mathbb{Q}_p$-analytic smooth Artin stack which is $!$-good.  Let $G\to X$ be a group object in representable submersions over $X$, and let $BG\to X$ denote the relative classifying stack.  Then:
\begin{enumerate}
\item $BG \to X$ is $\on{Sp}_{\wh{p}}$-smooth. 
\item If $G\to X$ is proper and each fiber $G_x$ is $p$-torsionfree, then $BG \to X$ is $\on{Sp}_{\wh{p}}$-proper.
\end{enumerate}
\end{theorem}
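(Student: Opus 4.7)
Both assertions are étale-local on $X$ (Remark~\ref{trulylocal}), so I may assume $X$ is a compact Hausdorff $\mathbb{Q}_p$-manifold. By Proposition~\ref{localstructurepadiclie}, after shrinking $X$ further I fix a compact open sub-group object $H\subset G$ whose fibers are uniform pro-$p$-groups, hence automatically $p$-torsion-free. By Proposition~\ref{localstructurecorollary}(2), the natural map $\pi:BH\to BG$ is étale and surjective, so it is $\on{Sp}_{\wh{p}}$-smooth with conservative pullback. By Remark~\ref{etalepropersmoothlocal}(4), for part~1 it suffices to verify that $f:BH\to X$ is $\on{Sp}_{\wh{p}}$-smooth. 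The $!$-ability of both $BG\to X$ and $BH\to X$ is automatic from Theorem~\ref{good} applied to $X$ $!$-good, together with Remark~\ref{changecoefficients} to pass from $\on{Sp}$ to $\on{Sp}_{\wh{p}}$.

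Before addressing smoothness I establish that $f:BH\to X$ is $\on{Sp}_{\wh{p}}$-proper. Its diagonal is representable by the proper map $H\to X$. To see $f_!1\simeq f_\ast 1$, I apply Remark~\ref{etalepropersmoothlocal}(3) to the universal torsor $g:X\to BH$: one checks directly that $g$ is itself proper, since its diagonal is the identity section of $H\to X$ and the comparison $g_!1\to g_\ast 1$ pulls back along $g$ to the corresponding comparison for the proper map $H\to X$. It remains to show that $g_\ast 1$ is descendable in $\on{Sh}_{et}(BH;\on{Sp}_{\wh{p}})$: by Lemma~\ref{uniformisgood}(1,2) this category is semi-rigid with finite mod-$p$ cohomological dimension, so Theorem~\ref{descendabletheorem} via Remark~\ref{pcompletesemirigid} reduces descendability to a pointwise check along $X\to BH$, where by base change it becomes descendability of $\mathcal{C}(H_x;\mathbb{S}_{\wh{p}})$ over $\mathbb{S}_{\wh{p}}$, of index $\leq 1$ via the retraction "evaluation at the identity".

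With $f$ proper I verify the criterion of Proposition~\ref{smoothlemma}. Lemma~\ref{uniformisgood}(3) identifies $\mathbb{S}/p$ as a compact generator, so Remark~\ref{smoothremark} reduces preservation of colimits by $f^!$ to the compactness of $f_!(\mathbb{S}/p)=f_\ast(\mathbb{S}/p)$. The exterior-algebra description of Scholium~\ref{scholiumuniform}(3), combined with the finite étale structure of $H/H^p$ from Proposition~\ref{localstructurecorollary}, shows that this pushforward has finite-étale-representable, degree-bounded homotopy sheaves, hence is dualizable and $p$-power killed, i.e., compact by Lemma~\ref{compactinvertible}. Proposition~\ref{smoothlemma} then yields base change for $f^!$, so the invertibility of $f^!1$ is a pointwise check and follows at each $x\in X$ from Lazard's Poincaré duality for the uniform $p$-torsion-free pro-$p$-group $H_x$. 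For part~2, the hypothesis that $G\to X$ be proper makes $H$ a finite-index open subgroup fiberwise, so $G/H\to X$ is finite étale and $\pi:BH\to BG$ is itself finite étale, hence proper. To deduce properness of $BG\to X$ from that of $BH\to X$ via Remark~\ref{etalepropersmoothlocal}(3), I check descendability of $\pi_\ast 1\in\on{Sh}_{et}(BG;\on{Sp}_{\wh{p}})$: pulled back along $X\to BG$ this becomes the locally constant algebra with pointwise stalk $\mathbb{S}_{\wh{p}}^{(G/H)_x}$, a nonzero finite product, descendable of index $\leq 1$; Theorem~\ref{descendabletheorem} with Remark~\ref{pcompletesemirigid} applies to $BG$ because semi-rigidity is inherited from $BH$ through the finite étale cover $\pi$, while the $p$-torsion-free hypothesis on $G$ is precisely what delivers the finite mod-$p$ cohomological dimension of each $BG_x$ (Serre), and hence of $BG$ itself (Remark~\ref{cohdimfibers}).

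The main obstacle I anticipate is the properness step for $BH\to X$ in part~1: it requires running the descendability criterion of Theorem~\ref{descendabletheorem} over the non-representable stack $BH$ rather than over a light profinite set, and this is only viable thanks to the structural inputs of Lemma~\ref{uniformisgood} (rigidity, compact generation by $\mathbb{S}/p$, and finite cohomological dimension). Once that is in hand, both the compactness argument for $f_\ast(\mathbb{S}/p)$ and the propagation from $BH$ to $BG$ in part~2 are relatively mechanical, with the $p$-torsion-free hypothesis entering only through the cohomological-dimension input required to reapply the same descendability criterion over $BG$.
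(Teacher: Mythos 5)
Your overall architecture tracks the paper's closely: the reduction to a compact Hausdorff $\mathbb{Q}_p$-manifold base, the passage to a uniform compact open sub-group object $H\subset G$ via Proposition \ref{localstructurepadiclie}, the reduction of smoothness from $BG$ to $BH$ along the surjective \'etale map $BH\to BG$, the properness of $BH\to X$ via descendability checked pointwise (where one has a section), and your part 2 (descendability of $\pi_\ast 1$ for the finite \'etale $\pi:BH\to BG$, semi-rigidity transported along $\pi$, Serre plus Remark \ref{cohdimfibers} for finite mod-$p$ cohomological dimension of $BG$) is an acceptable mild rerouting of the paper's check that $e_\ast\mathbb{S}_{\wh{p}}$ is descendable for $e:X\to BG$. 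However, the smoothness argument for $BH\to X$ has two genuine gaps.

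First, your appeal to Lemma \ref{uniformisgood}(3) to claim that $\mathbb{S}/p$ is a compact generator of $\on{Sh}_{et}(BH;\on{Sp}_{\wh{p}})$ is a misattribution: that statement concerns $\on{D}(\mathbb{F}_p)$-coefficients, and its proof (the filtration of the group ring of a finite $p$-group by trivial representations) is genuinely $\mathbb{F}_p$-linear and does not lift to sphere coefficients. With $\on{Sp}_{\wh{p}}$-coefficients the available generators are the family $(j_n)_!\mathbb{S}/p$ for $j_n:BH^{p^n}\to BH$ (Lemma \ref{uniformisgood}(2)), so you must check compactness of $f_!$ on all of them; this is harmless, since each $H^{p^n}$ is again uniform and the same computation applies, but your single-generator shortcut is unproven. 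Second, and more seriously, the final step ``invertibility of $f^!\mathbb{S}_{\wh{p}}$ is a pointwise check and follows from Lazard'' does not work as stated: the points of $X$ (or of $BH$) are jointly conservative for isomorphisms of \'etale sheaves, but they do not form a cover in the light condensed topology, and a jointly conservative family of symmetric monoidal functors does not detect dualizability or invertibility. To apply Lemma \ref{compactinvertible} one needs, as a global input over the base, that $f^!\mathbb{S}_{\wh{p}}$ is dualizable --- i.e.\ mod $p$ bounded below with homotopy sheaves concentrated in finitely many degrees and represented by finite \'etale maps --- with only the rank-one condition checked on points. Supplying exactly this is where the actual proof does its work: boundedness below of $e^\ast f^!\mathbb{S}/p$ via Proposition \ref{procontinuity} and the uniform bounds on the $(f_n)_\ast\mathbb{S}/p$; reduction to invertibility of $f^!\mathbb{F}_p$; the identification of $\on{Sh}_{et}(BH;\on{D}(\mathbb{F}_p))$ with $f_\ast\mathbb{F}_p$-modules (this is where Lemma \ref{uniformisgood}(3) is really used); and finally the computation of $f_\ast\mathbb{F}_p$ as an exterior algebra in families together with the self-duality (up to shift) of exterior algebras. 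Your proposal skips this entire chain, so the invertibility of the dualizing object --- the heart of the smoothness statement --- is not established.
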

\begin{proof}
By Example \ref{lotsaregood} we have that $BG$ is also $!$-good.  In particular $BG\to X$ is $!$-able.  Let us first show Claim 2.  By Remark \ref{trulylocal} we can check properness on pullback to any light profinite set $S$ (after implicitly passing to the light condensed anima).  By refining via a surjection we can even assume $S$ is a $\mbb{Q}_p$-manifold and the map $S\to X$ comes from a map in $\on{Sh}(\on{Man}_{\mbb{Q}_p})$.  Now we replace $X$ by $S$.  Thus $G\to S$ is a group object in proper submersions over the compact Hausdorff $p$-adic manifold (hence light profinite set) $S$, and we are assuming also that each fiber is $p$-torsionfree.

By Proposition \ref{localstructurepadiclie} we can find a compact open subgroup $H\subset G$ such that each fiber $H_s$ is a uniform pro-$p$-group.  By Lemma \ref{uniformisgood} $\on{Sh}_{et}(\ul{BH};\on{Sp})$ is compactly generated by dualizable objects.  However, the map $f:BH\to BG$ is etale (Proposition \ref{localstructurecorollary}), hence $f_!$ sends compact objects to compact objects, and $f$ is even finite etale, hence $f_!$ sends dualizables to dualizables (check on pullback to a light profinite set).  Moreover, since $f^\ast$ is conservative, $f_!$ sends a set of generators to a set of generators.  Thus we deduce that $\on{Sh}_{et}(\ul{BG};\on{Sp})$ is likewise compactly generated by dualizable objects.  But Serre showed (\cite{SerreDimension}) that every compact $p$-torsionfree $p$-adic Lie group has finite mod $p$ cohomological dimension, bounded by the dimension of the manifold.  It follows from Remark \ref{cohdimfibers} that $\ul{BG}$ has finite mod $p$ cohomological dimension, and then we can apply Theorem \ref{descendabletheorem} (or rather Remark \ref{pcompletesemirigid}) to see that for the (proper, as $G$ is light profinite) map $e:S\to \ul{BG}$, the object $e_\ast\mbb{S}_{\wh{p}}$ is descendable.  Thus $\ul{BG}\to S$ is proper by Remark \ref{etalepropersmoothlocal}, as desired.

For claim 1 we proceed similarly.  We can check smoothness $!$-locally (Remark \ref{trulylocal}) and hence it suffices to consider the case where $X=S$ is a $\mathbb{Q}_p$-manifold, which we can even assume compact Hausdorff (hence light profinite).  Then by Proposition \ref{localstructurepadiclie} there exists a compact open subgroup object $H\subset G$ such that each fiber $H_s$ is a uniform pro-$p$-group.  Because $BH\to BG$ is etale and surjective (Proposition \ref{localstructurecorollary}), we can reduce to the case $H=G$.  Then the $(j_n)_!\mbb{S}/p$ from Example \ref{countableexamples} give compact generators, so applying Proposition \ref{smoothlemma}, it suffices to show that each of these goes to a compact object under $f_!$, and that $f^!\mbb{S}_{\wh{p}}$ is invertible.

We put aside the invertibility for now and come back to it later. Thus consider the problem of proving $f_!$ sends these compacts to compacts.  As $H^{p^n}$ is uniform whenever $H$ is, the problem is actually the same for all $n$: we just need to show that if $H$ is family of uniform pro-$p$-groups over $S$, then $f_!\mbb{S}/p$ is compact in $\on{Sh}_{et}(S;\on{Sp}_{\wh{p}})$.  By 2, $f$ is proper, so we can replace $f_!$ by $f_\ast$.  Note that $f_\ast\mbb{S}/p$ is bounded below by the finite cohomological dimension (\ref{uniformisgood}), so applying Lemma \ref{compactinvertible} we reduce to showing the following: the homotopy group sheaves of $f_\ast \mbb{F}_p$ are $0$ outside finitely many degrees and in all degrees are locally trivial with finite fibers.

We start by analyzing $H^1 f_\ast \mbb{F}_p = \pi_{-1}f_\ast \mbb{F}_p$.  The global sections are the homomorphisms of $S$-group objects from $H$ to the constant object $\mbb{F}_p$, and similarly for sections over any $S'\subset S$.  As $H^p\subset H$ is a normal subgroup object, this is the same as the homomorphisms $H/H^p\to \mathbb{F}_p$.  Recall (Proposition \ref{localstructurecorollary}) that $H/H^p$ is finite etale, and in fact a finite locally constant $\mbb{F}_p$-vector space sheaf.  This implies that $H^1 f_\ast \mbb{F}_p$ is just the $\mbb{F}_p$-linear dual of $H/H^p$, and the claim holds in degree $1$.  Now we recall that $f_\ast\mbb{F}_p$ is a commutative algebra object.  Note that for any local section $x\in H^1 f_\ast \mbb{F}_p$, the square $x^2$ vanishes in $H^2 f_\ast \mbb{F}_p$.  Indeed, as $f_\ast$ commutes with base-change we can check this fiberwise where it reduces to the classical fact about uniform pro-$p$-groups (\ref{scholiumuniform}).  It follows we can make a comparison map
$$\Lambda^i_{\mbb{F}_p} H^1f_\ast\mbb{F}_p \to H^if_\ast \mbb{F}_p$$
of sheaves of $\mbb{F}_p$-vector spaces on $S$ extending the identity map in degree $1$.  Again checking on stalks and using the classical fact we deduce that this is an isomorphism.  As the left hand side is clearly finite locally constant (given we already know this in degree 1) and vanishes for all but finitely many $i$, we have thus proved the desired claim that $f_\ast\mbb{S}_{\wh{p}}$ is dualizable, and so the formation of $f^!$ commutes with base-change and satisfies the projection formula by Lemma \ref{smoothlemma}.

What remains to be shown is that $f^!\mbb{S}_{\wh{p}}$ is an invertible object.  For this we first claim that (mod $p$) it is bounded below in the t-structure.  By base-change and testing on points we can reduce to the case $S=\ast$, and we only need see that $e^\ast f^!\mbb{S}/p$ is bounded below.  But by Proposition \ref{procontinuity} and a Postnikov tower argument we have that $e^\ast f^!\mbb{S}/p$ is the colimit of the sections of $f^!\mbb{S}/p$ over the $BH^{p^n}$. These are the duals to $(f_n)_\ast \mbb{S}/p$ which are bounded below and have a uniform bound on the $\mathbb{F}_p$-homology by the above calculations, implying the claim.

Thus by Lemma \ref{compactinvertible} it suffices to show that $f^!\mbb{F}_p$ is invertible in $\on{Sh}_{et}(BH;\on{D}(\mbb{F}_p))$ (back in the setting of general $S$).  For this, recall from Lemma \ref{uniformisgood} that $\on{Sh}_{et}(BH;\on{D}(\mbb{F}_p))$ is compactly generated by the unit.  It follows that the comparison map in $\on{CAlg}(\on{Pr}^L)$
$$\on{Mod}_{f_\ast\mbb{F}_p}(\on{Sh}_{et}(S;\on{D}(\mbb{F}_p)))\to \on{Sh}_{et}(BH;\on{D}(\mbb{F}_p))$$
is an isomorphism.  Now $f^!$ is just the double right adjoint to the pullback functor $\on{Sh}_{et}(S;\on{D}(\mbb{F}_p))\to\on{Sh}_{et}(BH;\on{D}(\mbb{F}_p))$.  Translating this over via the isomorphism, we find that it suffices to show that the double right adjoint to the base-change functor
$$-\otimes f_\ast \mbb{F}_p: \on{Sh}_{et}(S;\on{D}(\mbb{F}_p))\to \on{Mod}_{f\ast\mbb{F}_p}(\on{Sh}_{et}(S;\on{D}(\mbb{F}_p)))$$
sends the unit $\mbb{F}_p$ to an invertible object in the target.  This double right adjoint sends
$$M\mapsto \ul{\on{Map}}(f_\ast\mbb{F}_p;M)$$
where we take internal Hom in $\on{Sh}_{et}(S;\on{D}(\mbb{F}_p))$, and give this the $f_\ast\mbb{F}_p$-module structure coming from the source $f_\ast\mbb{F}_p$.

Thus it suffices to show that the dual $\ul{\on{Map}}(f_\ast\mbb{F}_p;\mbb{F}_p)$ of $f_\ast\mbb{F}_p$ is invertible as a module over $f_\ast \mbb{F}_p$.  But we calculated the homotopy of $f_\ast\mbb{F}_p$ as the exterior algebra on a locally free sheaf of finite rank $d$ in degree $-1$.  Thus the claim follows from the familiar self-duality (up to shift) of exterior algebras.
\end{proof}

We have identified various situations where a certain map $f$ is smooth.  The next important thing is to identify the dualizing object $f^!1$ in more explicit or tractable terms.  This is in fact the main point of this article.  But to accomplish it we need a digression.

\section{On nonstandard ``paths"}\label{pathsec}

Our main theorem says that there is an identification between two a priori different sheaves on $*/G$, where $G$ is a $p$-adic Lie group.  To accomplish this, we will construct, using the deformation to the tangent bundle, a certain sheaf on $(*/G)\times (\mathbb{Q}_p/\mathbb{Q}_p^\times)$ whose specialization to $1\in \mathbb{Q}_p$ gives the first sheaf and whose specialization to $0\in \mathbb{Q}_p$ gives the second sheaf.  In this way, our problem will be translated into the question of how to identify the stalks at two different points of a certain sheaf on the stack $\mathbb{Q}_p/\mathbb{Q}_p^\times$ (or rather, we need an in-families analog of this, over $\ast/G$, but we start with the simplest instance).

To explain our solution to this problem, let's look at an analogous but much more familiar situation.  Suppose we have a connected real manifold $M$ (or more generally a path-connected topological space $M$), two points $m_0,m_1\in M$, and a sheaf (say, of anima) $\mathcal{F}$ on $M$.  We want to try to identify the stalks $\mathcal{F}_0=m_0^\ast\mathcal{F}$ and $\mathcal{F}_1=m_1^\ast\mathcal{F}$.  One classical way to proceed is to choose a path $\gamma:[0,1]\rightarrow M$ with $\gamma(0)=m_0$ and $\gamma(1)=m_1$.  Replacing $\mathcal{F}$ with its pullback along $\gamma$ we can reduce to considering the problem for $M=[0,1]$.  The key property of $[0,1]$ that makes this an advantageous reduction is that the pullback functor $\pi^\ast:\operatorname{An}=\operatorname{Sh}(\ast)\rightarrow \operatorname{Sh}([0,1])$ is fully faithful; this is an expression of the contractibility of $[0,1]$, see \cite{LurieHA} A.2.  Moreover, a sheaf $\mathcal{F}$ lies in the essential image of $\pi^\ast$ if and only if $\pi^\ast\pi_\ast\mathcal{F}\overset{\sim}{\rightarrow}\mathcal{F}$, and it follows in particular that the essential image of $\pi^\ast$ is closed under all colimits and finite limits (in fact all limits, due to the existence of a left adjoint to $\pi^\ast$, a reflection of the fact that $[0,1]$ is not just contractible but also \emph{locally} contractible, again see \cite{LurieHA} A.2).

What this all means is that that the condition of ``being a constant sheaf'' on $[0,1]$ really is just a condition, not extra structure, and it is moreover a very robust condition.   On the full subcategory of constant sheaves, there is a canonical isomorphism $m_0^\ast\simeq m_1^\ast$, because both describe inverses to the isomorphism $\pi^\ast$. Thus, after a choice of path connecting $m_0$ and $m_1$ in $M$, we get a robust condition on $\mathcal{F}$ (that of having constant pullback to the path) which ensures a functorial identification of the stalks of $\mathcal{F}$ at these two different points.

We would like the same thing for the stack $\mathbb{Q}_p/\mathbb{Q}_p^\times$ instead of $M$, with respect to the points coming from $0,1\in\mathbb{Q}_p$.  Unfortunately, there is no map $[0,1]\rightarrow \mathbb{Q}_p/\mathbb{Q}_p^\times$ (say, of light condensed anima) sending $0$ to $0$ and $1$ to $1$: the totally disconnected nature of $\mathbb{Q}_p$ obstructs this rather severely.

However, in our setting we are not actually working with sheaves of anima; we are only working with $p$-complete sheaves of spectra.  This raises the possibility of repeating the above story but with a different compact Hausdorff space $\widetilde{I}$ instead of $[0,1]$, one for which $\pi^\ast:\on{Sh}(\ast;\mathcal{C})\rightarrow\on{Sh}(\widetilde{I};\mathcal{C})$ is not necessarily fully faithful for $\mathcal{C}=\on{An}$, but maybe only for $\mathcal{C}=\operatorname{Sp}_{\hat{p}}$, the $\infty$-category of $p$-complete spectra.

Remarkably, this idea works: there is such an $\widetilde{I}$, equipped with two points $i_0,i_1$, mapping to $\mathbb{Q}_p/\mathbb{Q}_p^\times$ in the appropriate way.  This $\widetilde{I}$ is a one-dimensional compact Hausdorff space which, while not path connected, is contractible from the perspective of torsion coefficients.  It is a variant of the famous ``topologist's sine curve'', ending in the solenoid $\varprojlim_N\mathbb{R}/N\mathbb{Z}$ instead of the usual closed interval.  We will describe $\widetilde{I}$ and the map $\widetilde{I}\rightarrow\mathbb{Q}_p/\mathbb{Q}_p^\times$ explicitly towards the end of the section; for now we start with some abstract generalities about such exotic ``intervals'', using the notion of etale sheaves with coefficients from the previous section (\ref{compassgood}).

\begin{definition}
Let $f:X\to S$ be a map light condensed anima and $\mathcal{C}\in\on{Pr}^L$ compactly assembled.  Say that $f$ is a \emph{$\mc{C}$-blob} if for every base-change $f'$ of $f$, the functor $(f')^\ast$ (on etale sheaves with $\mc{C}$-coefficients) is fully faithful.
\end{definition}

Under some hypotheses, we get reasonable fiberwise criteria.

\begin{lemma}\label{blob}
Suppose $\mathcal{C}\in\on{Pr}^L$ is compactly assembled, and let $f:X\to S$ be a map of light condensed anima.  Suppose that:
\begin{enumerate}
\item either the right adjoint to pullback on etale $\mc{C}$-sheaves exists for all base-changes of $f$ and commutes with (further) base-change, or the left adjoint to pullback on $\mc{C}$-sheaves exists for all base-changes of $f$ and commutes with base-change;
\item for all $s:\ast \to S$, the pullback $f_s:X_s\to \ast$ satisfies that $(f_s)^\ast$ is fully faithful.
\end{enumerate}
Then $f$ is an $\mc{C}$-blob.  Moreover, we have that a sheaf $\mathcal{F}\in \on{Sh}_{et}(X;\mc{C})$ lies in the essential image of $f^\ast$ if and only if for every $s:\ast \to S$, the pullback of $\mc{F}$ to $X_s$ is a constant sheaf, i.e.\ is pulled back from $\ast$.

In this situation we will call $f$ a \emph{right $\mathcal{C}$-blob} or a \emph{left $\mc{C}$-blob} depending on which version of condition 1 holds.
\end{lemma}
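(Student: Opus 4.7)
The plan is to verify fully faithfulness of $f^\ast$ (and of each of its base-changes) by checking that the appropriate unit or counit is an isomorphism, detected pointwise via Lemma \ref{compassgood}(3) together with the assumed commutation of the relevant adjoint with base-change. This reduces each case to the fiberwise hypothesis 2.

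Concretely, suppose we are in the right-adjoint case. For $\mc{G}\in\on{Sh}_{et}(S;\mc{C})$ and a point $s:\ast\to S$, let $i_s:X_s\to X$ denote the inclusion of the fiber. The assumed base-change identifies $s^\ast f_\ast f^\ast\mc{G}$ with $(f_s)_\ast i_s^\ast f^\ast\mc{G} = (f_s)_\ast (f_s)^\ast s^\ast\mc{G}$, and under this identification the unit $\mc{G}\to f_\ast f^\ast\mc{G}$ pulls back to the unit $s^\ast\mc{G}\to (f_s)_\ast(f_s)^\ast s^\ast\mc{G}$, which is an isomorphism because $(f_s)^\ast$ is fully faithful by hypothesis 2. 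Since isomorphisms in $\on{Sh}_{et}(S;\mc{C})$ are detected on pullback to points, $f^\ast$ is fully faithful. The same argument applies verbatim to any base-change $f':X'\to S'$: the fibers of $f'$ over points $s':\ast\to S'$ agree with fibers of $f$ over $g\circ s':\ast\to S$, where $g:S'\to S$ is the structure map, and the right adjoint to $(f')^\ast$ is again assumed to commute with further base-change. The left-adjoint case is entirely dual, using the counit $f_\natural f^\ast\to\on{id}$.

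For the essential-image characterization, the ``only if'' direction is immediate since $i_s^\ast f^\ast\mc{G}=(f_s)^\ast s^\ast\mc{G}$ is constant. For ``if'', in the right-adjoint case we must show the counit $f^\ast f_\ast\mc{F}\to\mc{F}$ is an isomorphism. We test at a stalk at an arbitrary point $x:\ast\to X$, factoring $x=i_s\circ x_s$ with $s=f\circ x$, and writing $i_s^\ast\mc{F}=(f_s)^\ast V$ by the constancy hypothesis. Base-change then yields
$$x^\ast f^\ast f_\ast\mc{F} \;=\; x_s^\ast (f_s)^\ast s^\ast f_\ast\mc{F} \;=\; x_s^\ast (f_s)^\ast (f_s)_\ast (f_s)^\ast V \;=\; x_s^\ast (f_s)^\ast V \;=\; V,$$
which matches $x^\ast\mc{F}=V$, so the counit is indeed an isomorphism. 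The left-adjoint case is again dual, using the unit $\mc{F}\to f^\ast f_\natural\mc{F}$.

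There is no serious obstacle beyond bookkeeping: the whole argument is an exercise in orchestrating the base-change identifications and invoking the detection-on-points principle, so the main thing to be careful about is to state the unit/counit check on the correct side of the adjunction in each of the two cases of hypothesis 1.
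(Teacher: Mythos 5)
Your proposal is correct and follows essentially the same route as the paper: test the unit (right case) or counit (left case) for an isomorphism after pullback to points, using the assumed base-change compatibility of the adjoint together with Lemma \ref{compassgood}(3) to reduce to the fiberwise hypothesis. You additionally spell out the essential-image characterization via the counit (resp.\ unit) at stalks, which the paper leaves implicit, and that argument is likewise correct.
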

\begin{proof}
We need to see that for every base-change $f'$ of $f$, the pullback $(f')^\ast$ is fully faithful.  In the right blob case, we use the criterion which says that $(f')^\ast$ is fully faithful if and only if the unit map
$$\on{id}\rightarrow (f')_\ast (f')^\ast$$
is an isomorphism.  By Lemma \ref{compassgood}, we can test this on pullback to points.  But by assumption the formation of this unit map commutes with base-change.  Similarly in the left blob case we can use the similar criterion involving the left adjoint to $(f')^\ast$.
\end{proof}
\begin{remark}\label{propersmoothblob}
If $\mathcal{C}\in\on{CAlg}(\on{Pr}_{st}^L)$ is compactly assembled, then the right adjoint condition in 1 holds if $f$ is $\mc{C}$-proper, and the left adjoint condition holds if $f$ is $\mc{C}$-smooth.
\end{remark}

The following simple observation is then our main result in this abstract context.

\begin{theorem}\label{blobsections}
Let $\mathcal{C}$ be a compactly assembled presentable $\infty$-category, let $f:X\to S$ be either a left $\mc{C}$-blob or a right $\mc{C}$-blob, let $\mathcal{F}\in\on{Sh}_{et}(X;\mc{C})$, and let $\sigma_1,\sigma_2:S\to X$ be two sections of $f$.

If $\mathcal{F}$ restricted to the fiber $X_s$ is a constant sheaf for all $s:\ast\to S$, then there is a natural isomorphism $(\sigma_1)^\ast \mc{F}\simeq (\sigma_2)^\ast\mathcal{F}$.

\end{theorem}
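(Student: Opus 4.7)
The plan is to observe that this theorem is essentially a formal consequence of the blob property together with the characterization of the essential image of $f^\ast$ given by Lemma \ref{blob}. Concretely, since $f$ is a $\mc{C}$-blob (either left or right), the hypothesis in Lemma \ref{blob} applies and hence the essential image of $f^\ast: \on{Sh}_{et}(S;\mc{C}) \to \on{Sh}_{et}(X;\mc{C})$ is exactly the full subcategory of sheaves whose restriction to each fiber $X_s$ is constant. So the given $\mc{F}$ lies in this essential image.

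First I would invoke the full faithfulness of $f^\ast$ (which is part of the definition of being a blob) to produce a canonically determined $\mc{G} \in \on{Sh}_{et}(S;\mc{C})$, together with an isomorphism $f^\ast \mc{G} \simeq \mc{F}$. By full faithfulness this pair $(\mc{G},\, f^\ast\mc{G}\simeq\mc{F})$ is unique up to unique isomorphism, so it depends functorially on $\mc{F}$.

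Next I would use that $\sigma_1$ and $\sigma_2$ are sections of $f$, i.e.\ $f \circ \sigma_i \simeq \on{id}_S$ for $i=1,2$. Pulling back the isomorphism $f^\ast \mc{G} \simeq \mc{F}$ along each $\sigma_i$ yields canonical isomorphisms
\[
\sigma_i^\ast \mc{F} \;\simeq\; \sigma_i^\ast f^\ast \mc{G} \;\simeq\; (f\circ \sigma_i)^\ast \mc{G} \;\simeq\; \mc{G}
\]
for $i=1,2$. Composing the $i=1$ isomorphism with the inverse of the $i=2$ isomorphism produces the desired natural identification $\sigma_1^\ast \mc{F} \simeq \sigma_2^\ast \mc{F}$, and naturality in $\mc{F}$ (on the full subcategory of sheaves satisfying the fiberwise constancy hypothesis) follows from the functoriality of the $(\mc{G},\, f^\ast\mc{G}\simeq\mc{F})$ construction.

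There is no real obstacle here: all the work has already been done in setting up Lemma \ref{blob} and the blob formalism. The genuine content of Section \ref{pathsec} lies not in this abstract theorem but in its application, namely in exhibiting a concrete compact Hausdorff space $\widetilde{I}$ over $\mbb{Q}_p/\mbb{Q}_p^\times$ (with the two specified sections) which is a $\on{Sp}_{\wh{p}}$-blob despite the totally disconnected nature of $\mbb{Q}_p$; that is the step where the topologist's-sine-curve-type construction with the solenoid will do the work, and where fiberwise contractibility with $p$-complete spectrum coefficients must be verified.
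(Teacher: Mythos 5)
Your proof is correct and is essentially identical to the paper's: both invoke Lemma \ref{blob} to write $\mathcal{F}\simeq f^\ast\mathcal{G}$ for a unique, functorial $\mathcal{G}$, and then observe that any section $\sigma$ of $f$ gives $\sigma^\ast\mathcal{F}\simeq\mathcal{G}$, whence $\sigma_1^\ast\mathcal{F}\simeq\sigma_2^\ast\mathcal{F}$. Your closing remark is also accurate: the substance of the section lies in producing the concrete blob $\widetilde{I}$ over $\mathbb{Q}_p/\mathbb{Q}_p^\times$, not in this formal step.
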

\begin{proof}
By Lemma \ref{blob}, $\mathcal{F}\simeq f^\ast\mc{G}$ for some unique and functorial $\mc{G}\in\on{Sh}_{et}(S;\mc{C})$.  It follows that for any section $\sigma$ of $f$,
$$\sigma^\ast\mathcal{F}\simeq \mc{G},$$
giving the conclusion.
\end{proof}

The following criterion will help recognize blobs and constant sheaves for our purposes.

\begin{lemma}\label{explicitblob}
Let $p$ be a prime number and $K$ a second-countable compact Hausdorff space of finite (mod $p$) cohomological dimension.  Suppose that $R\Gamma(K;\mathbb{F}_p)=\mathbb{F}_p$.
Then $K$ is a right $\on{Sp}_{\wh{p}}$-blob, i.e.\ the map $f:K\to \ast$ is a right $\on{Sp}_{\wh{p}}$-blob.  

Moreover, if we further suppose that $\Gamma(K;BG)=BG$ for all finite groups $G$, then for $\mc{F}\in\on{Sh}_{et}(K;\on{Sp}_{\wh{p}})$ the following conditions are equivalent:
\begin{enumerate}
\item $\mathcal{F}$ is constant;
\item $\mc{F}$ is a colimit of dualizable objects.
\end{enumerate}
\end{lemma}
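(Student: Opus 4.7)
For Part 1, the plan is to invoke Lemma \ref{blob} in its right-adjoint form via Remark \ref{propersmoothblob}. The preliminary observation is that $\pi\colon K\to\ast$ is $\on{Sp}_{\wh{p}}$-proper by Theorem \ref{chausproper} combined with Remark \ref{pcompletesemirigid} (our finite mod $p$ cohomological dimension hypothesis being enough in the $p$-complete setting). This reduces the problem to showing that $\pi^{\ast}\colon\on{Sp}_{\wh{p}}\to\on{Sh}_{et}(K;\on{Sp}_{\wh{p}})$ is fully faithful, i.e.\ that the unit $M\to\pi_{\ast}\pi^{\ast}M$ is an isomorphism for every $M\in\on{Sp}_{\wh{p}}$. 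Since both sides are $p$-complete, this unit is an isomorphism iff it is so after $-\otimes\mbb{F}_{p}$; by the projection formula (valid by properness) one has $\pi_{\ast}\pi^{\ast}M\otimes\mbb{F}_{p}\simeq\pi_{\ast}\pi^{\ast}(M\otimes\mbb{F}_{p})$, so the problem reduces to the case where $M$ is an $\mbb{F}_{p}$-module spectrum. Such objects are colimits of shifts of $\mbb{F}_{p}$; the claim holds for $\mbb{F}_{p}$ itself by hypothesis and extends to colimits because $\pi_{\ast}$ preserves them, again by properness.

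For Part 2, the direction (1)$\Rightarrow$(2) is immediate: every object of $\on{Sp}_{\wh{p}}$ is a colimit of copies of the dualizable unit, and $\pi^{\ast}$ preserves both colimits and dualizability. For the harder (2)$\Rightarrow$(1), I plan to reduce to showing that every dualizable $\mc{D}\in\on{Sh}_{et}(K;\on{Sp}_{\wh{p}})$ is already constant; this reduction is legitimate because the essential image of $\pi^{\ast}$ is closed under colimits, thanks to fully faithfulness (Part 1) combined with the colimit-preservation of $\pi^{\ast}$. By Lemma \ref{compactinvertible}, the homotopy sheaves of $\mc{D}\otimes\mbb{F}_{p}$ are finite etale sheaves of finite-dimensional $\mbb{F}_{p}$-vector spaces, only finitely many nonzero. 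Under the hypothesis $\Gamma(K;BG)=BG$ for all finite $G$, every finite etale cover of $K$ is trivial (it is classified by a necessarily null map $K\to B\Sigma_{n}$), so these homotopy sheaves are actually \emph{constant} finite-dimensional $\mbb{F}_{p}$-vector space sheaves.

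Next I would check that the Postnikov tower of $\mc{D}\otimes\mbb{F}_{p}$ in $\on{Sh}_{et}(K;D(\mbb{F}_{p}))$ splits: its $k$-invariants lie in the groups $\on{Ext}^{>0}(\mbb{F}_{p}^{a},\mbb{F}_{p}^{b})\cong H^{>0}(K;\mbb{F}_{p})^{ab}$, which vanish by the hypothesis $R\Gamma(K;\mbb{F}_{p})=\mbb{F}_{p}$. Hence $\mc{D}\otimes\mbb{F}_{p}$ is a finite direct sum of shifts of constant sheaves, hence constant. Finally, to upgrade constancy of $\mc{D}\otimes\mbb{F}_{p}$ to constancy of $\mc{D}$, I would examine the counit $\pi^{\ast}\pi_{\ast}\mc{D}\to\mc{D}$: after $-\otimes\mbb{F}_{p}$, the projection formula identifies it with the counit for $\mc{D}\otimes\mbb{F}_{p}$, which is already an isomorphism; since both sides of the original counit are $p$-complete, it too is an isomorphism, so $\mc{D}$ is constant.

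The main subtle point is the last step, the mod-$p$-to-$p$-complete upgrade, which crucially leverages the projection formula (itself resting on properness, hence on finite cohomological dimension) together with the fully faithfulness proved in Part 1. Without this combination the constancy mod $p$ would not propagate to constancy in $\on{Sp}_{\wh{p}}$, and both cohomological hypotheses of the lemma enter here in an essential way: the $BG$-hypothesis trivializes the homotopy sheaves of $\mc{D}\otimes\mbb{F}_{p}$, while $R\Gamma(K;\mbb{F}_{p})=\mbb{F}_{p}$ kills the $k$-invariants and (via Part 1) enables the $p$-adic upgrade.
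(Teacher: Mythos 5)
Your Part 1 contains a genuine gap: the claim that the unit $M\to\pi_\ast\pi^\ast M$ is an isomorphism if and only if it is one after $-\otimes\mbb{F}_p$ is false when $\mbb{F}_p$ denotes the Eilenberg--MacLane spectrum, and it must denote that, since you then use that $\mbb{F}_p$-module spectra split into shifts of $\mbb{F}_p$ and that the hypothesis $R\Gamma(K;\mbb{F}_p)=\mbb{F}_p$ applies to the base case. Smashing with $\mbb{F}_p$ does not detect equivalences of $p$-complete spectra unless they are bounded below (for example $KU/p$ is a nonzero $p$-complete spectrum with $KU/p\otimes\mbb{F}_p=0$), and the unit must be checked for arbitrary $M\in\on{Sp}_{\wh{p}}$, which need not be bounded below. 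The valid reduction is along the Moore spectrum: since $\mbb{S}/p$ is a compact generator and both $\pi^\ast$ and $\pi_\ast$ preserve colimits (the latter by properness), it suffices to prove $\mbb{S}/p\simeq R\Gamma(K;\mbb{S}/p)$, or more generally the statement for $p$-torsion spectra. But these are not $\mbb{F}_p$-modules, and bridging from the hypothesis about $\mbb{F}_p$ to this statement is precisely the missing content: the paper does it by devissage, using the finite mod-$p$ cohomological dimension to commute $R\Gamma(K;-)$ with Postnikov limits (and filtered colimits), reducing to $p$-torsion abelian group sheaves in a single degree and then, by extensions and filtered colimits, to $\mbb{F}_p$ itself. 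This is where the cohomological-dimension hypothesis does real work beyond furnishing properness and colimit preservation; your argument uses it only for the latter, so the passage from Eilenberg--MacLane coefficients to the sphere-level statement is never made.

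Part 2 otherwise follows the paper's route: reduce to dualizable $\mc{F}$ by closure of constant sheaves under colimits, identify the sheaves $\pi_i(\mc{F}\otimes\mbb{F}_p)$ as locally constant of finite rank, and trivialize them using $\Gamma(K;BG)=BG$ (note you also need connectedness of $K$, which follows from $H^0(K;\mbb{F}_p)=\mbb{F}_p$, to know the rank, hence the classifying finite group, is constant; your $k$-invariant splitting is dispensable, since constant sheaves are closed under extensions once mod-$p$ pullback is fully faithful). However, your final upgrade from constancy of $\mc{F}\otimes\mbb{F}_p$ to constancy of $\mc{F}$ invokes the same invalid principle: ``both sides are $p$-complete'' alone does not let you deduce that the counit is an equivalence from its $\mbb{F}_p$-linearization. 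Here the fix is exactly the paper's: by Lemma \ref{compactinvertible} dualizable objects are bounded below, hence so is $\pi^\ast\pi_\ast\mc{F}$ by the finite cohomological dimension, and for bounded-below $p$-complete objects an $\mbb{F}_p$-equivalence is an equivalence. So Part 2 is repaired by one sentence about bounded-belowness, whereas Part 1 requires the Postnikov/devissage argument that your proposal omits.
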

\begin{proof}
By Theorem \ref{chausproper} and Remark \ref{propersmoothblob}, $K$ will be a right $\on{Sp}_{\wh{p}}$-blob provided that $f^\ast:\on{Sp}_{\wh{p}}\to\on{Sh}_{et}(K;\on{Sp}_{\wh{p}})$ is fully faithful.  Thus it suffices to show that if $A$ is a $p$-torsion spectrum, then
$$A \overset{\sim}{\rightarrow} R\Gamma(K;A).$$
(See also \ref{etaleontopological} for the comparison of etale sheaves and usual sheaves.) Note that the mod $p$ cohomological dimension hypothesis passes to $p$-torsion abelian group sheaves by extensions and filtered colimits.  Thus we can argue using the Postnikov tower to reduce to the case where $A$ lives in degree $0$.  Then another devissage reduces to $A=\mathbb{F}_p$ which was the hypothesis.

Since every object in $\on{Sp}_{\wh{p}}$ is a filtered colimit of dualizable objects (the compact generator $\mbb{S}/p$ is dualizable), by pullback we see that 1 implies 2.  For the converse, we first claim that the collection of constant sheaves is closed under colimits.  This follows because $\mc{F}$ is constant if and only if the counit is an isomorphism:
$$f^\ast f_\ast \mc{F}\simeq \mc{F}.$$
Indeed, both $f^\ast$ and $f_\ast$ preserve colimits (the latter by the cohomological dimension estimate and a Postnikov tower argument).  Thus it suffices to show that any dualizable $\mc{F}$ is constant.  Since dualizable objects are bounded below (\ref{compactinvertible}) we can use the counit criterion again to see that it suffices to show $\mc{F}\otimes\mbb{F}_p$ is constant.  For this, since pullbacks are t-exact it suffices to show that each $\pi_i (\mc{F}\otimes \mbb{F}_p)$ is constant.  But by Lemma \ref{compactinvertible} each of these represented by a finite etale map $X\to K$, hence is locally constant with finite dimensional stalks.  From $H^0(K;\mbb{F}_p)=\mbb{F}_p$ we see that $K$ is connected so the dimension of the stalks must be constant. Thus $\pi_i (\mc{F}\otimes \mbb{F}_p)$ is classified by a map $K \to B\on{GL}_d(\mbb{F}_p)$ and hence is constant by assumption.
\end{proof}

Now we will describe our exotic ``interval''.  We start with the product space
$$(\mathbb{Z}\cup\{+\infty\})\times \mathbb{R},$$
equipped with the $\mathbb{Z}$-action where $k\in\mathbb{Z}$ acts by
$$(n,t) \mapsto (n+k,t-k),$$
where of course we say $+\infty+k=+\infty$, as required by continuity.  This is a free and properly discontinuous action, and we denote by
$$X:=\left((\mathbb{Z}\cup\{+\infty\})\times \mathbb{R}\right)/\mathbb{Z}$$
the quotient topological space.  The open-closed decomposition 
$$\mathbb{Z}\times\mathbb{R}\hookrightarrow (\mathbb{Z}\cup\{+\infty\})\times \mathbb{R} \hookleftarrow \{+\infty\}\times \mathbb{R}$$
is preserved by the $\mathbb{Z}$-action, and therefore descends to an open-closed decomposition of $X$.  The closed part $\{+\infty\}\times \mathbb{R}/\mathbb{Z}$ identifies with the circle $\mathbb{R}/\mathbb{Z}$; call the inclusion $i:\mathbb{R}/\mathbb{Z}\rightarrow X$.  On the other hand, the open part $(\mathbb{Z}\times\mathbb{R})/\mathbb{Z}$ identifies with $\mathbb{R}$ via the map
$$j:\mathbb{R}\overset{t\mapsto (0,t)}{\longrightarrow} \mathbb{Z}\times\mathbb{R}\rightarrow (\mathbb{Z}\times\mathbb{R})/\mathbb{Z},$$
with inverse induced by $(n,t)\mapsto t-n$.

\begin{remark}
In terms of the set-theoretic decomposition $X=\mathbb{R}\sqcup\mathbb{R}/\mathbb{Z}$ coming from this open-closed decomposition, we can describe the topology of $X$ as follows:
\begin{enumerate}
\item For $t\in\mathbb{R}$, a neighborhood base of $t\in X$ is given by the sets $U_{\epsilon,t}=(t-\epsilon,t+\epsilon)\subset\mathbb{R}$ for varying $\epsilon>0$.
\item For $\alpha\in\mathbb{R}/\mathbb{Z}$, let $\epsilon>0$ and $T\in\mathbb{R}$.  Define the set $U_{T,\epsilon,\alpha}\subset X$ to consist of those points in $\mathbb{R}/\mathbb{Z}$ of $\epsilon$-distance to $\alpha$, together with those points in $\mathbb{R}$  which are within $\epsilon$-distance of a coset representative $\widetilde{\alpha}\in\mathbb{R}$ of $\alpha$ satisfying $\widetilde{\alpha}>T$.  Then these $U_{T,\epsilon,\alpha}$ form a neighborhood basis of $\alpha$.
\end{enumerate}
From this description we easily see that $X$ is locally compact Hausdorff, and that a basis of open neighborhoods of $\mathbb{R}/\mathbb{Z}$ in $X$ is given by the $(T,+\infty)\sqcup \mathbb{R}/\mathbb{Z} \subset \mathbb{R}\sqcup\mathbb{R}/\mathbb{Z}=X$ for varying $T\in\mathbb{R}$.
\end{remark}

\begin{remark}\label{explicittopology}
We can give a more pictorial description of $X$ as follows.  consider the embedding of $\mathbb{R}$ into $\mathbb{R}^3$ via
$$t\mapsto (t,\on{cos}(t),\on{sin}(t)).$$
This gives a helix.  Then $X$ is the closure this helix inside the partial compactification $(\mathbb{R}\cup\{+\infty\})\times\mathbb{R}^2$ of $\mathbb{R}^3$.  In essence we add a boundary circle at $t=+\infty$.  This identification is implemented by the map
$$(\mathbb{Z}\cup\{+\infty\})\times\mathbb{R}\rightarrow (\mathbb{R}\cup\{+\infty\})\times\mathbb{R}^2$$
sending $(n,t)\mapsto (t+n,\on{cos}(t),\on{sin}(t))$  for $n\in\mathbb{Z}$ and $(+\infty,t)\mapsto (+\infty,\on{cos}(t),\on{sin}(t))$.

Of course, we could apply some increasing homeomorphism $\mathbb{R} \simeq (-\infty,0)$ to the $t$-axis to make the boundary circle lie above the point $t=0$ instead of $t=+\infty$ and in this way get a closed embedding of $X$ into $\mathbb{R}^3$, resembling the topologist's sine curve, but with a limiting circle instead of a limiting line segment.
\end{remark}

Now, let $N\in\mathbb{Z}_{\geq 1}$.  Consider the map
$$(\mathbb{Z}\cup\{+\infty\})\times\mathbb{R}\rightarrow (\mathbb{Z}\cup\{+\infty\})\times\mathbb{R}$$
given by multiplication by $N$ on each factor, so $(n,t)\mapsto (N\cdot n,N\cdot t)$ and $(+\infty,t)\mapsto (+\infty,N\cdot t)$.  This passes through the quotients, giving an induced map
$$[N]:X\rightarrow X.$$
This map $[N]$ respects the open-closed decomposition.  On the open part it is the map $\cdot N:\mathbb{R}\rightarrow\mathbb{R}$: in particular, a homeomorphism.  On the closed part, it is the map $\mathbb{R}/\mathbb{Z}\rightarrow\mathbb{R}/\mathbb{Z}$ induced by $\cdot N$.  This is the standard $N$-fold covering map of the circle.

For $N,N'\in\mathbb{Z}_{\geq 1}$ we have $[N]\circ [N']=[N\cdot N']$, whence a functor
$$(\mathbb{N},|)^{op}\rightarrow \on{Top}$$
from the opposite of the poset of natural numbers under divisibility to topological spaces, sending every $N$ to $X$ and where for $N|M$ the induced map $X\rightarrow X$ is given by $[M/N]$.  Then we take the filtered limit of this diagram, getting
$$\widetilde{X} := \varprojlim_N X.$$
This gets an open-closed decomposition inherited from $X$
$$\mathbb{R}\overset{\widetilde{j}}{\hookrightarrow} \widetilde{X}\overset{\widetilde{i}}{\hookleftarrow} \varprojlim_N \mathbb{R}/\mathbb{Z}$$
into the real line and a solenoid.

Now let's get rid of the non-compactness of these spaces, which comes from the open part.  Denote by $I\subset X$
the closed subset which is gotten by removing $(-\infty,0)$ from the open locus $j:\mathbb{R}\hookrightarrow X$.  We have $[N]^{-1}(I)=I$ for $N\in\mathbb{Z}_{\geq 1}$, and in the inverse limit we get a closed subset $\widetilde{I}\subset\widetilde{X}$, again just obtained from the open locus $\mathbb{R}$ by removing $(-\infty,0)$.

\begin{lemma}
Let $K$ denote either $I$ or $\widetilde{I}$.  Then $K$ is second countable compact Hausdorff and has covering dimension 1.
\end{lemma}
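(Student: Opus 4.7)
The plan is to treat the three point-set properties (compact, Hausdorff, second countable) first, then covering dimension. For $I$ I use the concrete embedding from Remark \ref{explicittopology}, which realizes $X$ as a closed subset of $(\mathbb{R}\cup\{+\infty\})\times\mathbb{R}^2$ via $(n,t)\mapsto(t+n,\cos t,\sin t)$; under this identification $I$ is the closed subset where the first coordinate lies in $[0,+\infty]$, so it lies in the compact box $[0,+\infty]\times[-1,1]^2$ and is therefore compact Hausdorff. Second countability is immediate from the explicit neighborhood bases in the excerpt once restricted to rational $\epsilon$, rational $T$, and rational representatives of $\alpha$. For $\widetilde{I}$, the cofinal sequence $(n!)_{n\geq1}$ in $(\mathbb{N},\mid)$ presents the defining inverse limit as sequential, so $\widetilde{I}$ is a closed subspace of $I^{\mathbb{N}}$ and inherits all three properties. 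By the Urysohn metrization theorem, both spaces are in particular compact metrizable.

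By metrizability, covering dimension agrees with the small inductive dimension, and I will bound the latter by $1$. The lower bound $\dim\geq 1$ is immediate since both spaces contain $[0,1]$. For $I$, every helix point $t\in[0,\infty)$ admits a neighborhood base of open intervals with two-point boundary, while every circle point $\alpha$ admits the neighborhood base $\{U_{T,\epsilon,\alpha}\}_{T,\epsilon}$ from the explicit topology, whose boundary consists of two points on the circle together with countably many pairs of endpoints of helix arcs --- a countable discrete, hence $0$-dimensional, set.

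For $\widetilde{I}$ I first unpack the underlying set. Since $[M/N]$ acts on the open helix factor by the homeomorphism $t\mapsto (M/N)t$, a direct compatibility check shows that a point $(x_N)$ of $\widetilde{I}$ either has every $x_N$ on the helix, in which case $Nx_N\in[0,\infty)$ is independent of $N$ and the locus is a single copy of $[0,\infty)$, or else every $x_N$ on the circle, in which case the substitution $y_N:=Nx_N\in\mathbb{R}/N\mathbb{Z}$ identifies the locus with the standard solenoid $\Sigma=\varprojlim_N \mathbb{R}/N\mathbb{Z}$. Helix points of $\widetilde{I}$ are handled exactly as for $I$. For a solenoid point $\sigma$, pull back a basic neighborhood $U_{T,\epsilon,\alpha}\subset I$ along a projection $\pi_N:\widetilde{I}\to I$; its boundary $\pi_N^{-1}(\partial U_{T,\epsilon,\alpha})$ is a disjoint union of isolated points (above helix-arc endpoints) and the two fibers $\pi_N^{-1}(\{\text{circle endpoint}\})$. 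Each such fiber is a filtered inverse limit of the finite sets $[M/N]^{-1}(\text{endpoint})$ indexed by $N\mid M$; along the cofinal sequence $Nn!$ this becomes a profinite set, of covering dimension $0$. Letting $T,\epsilon,\alpha$ range over rationals and $N$ through the cofinal sequence gives a countable neighborhood base at $\sigma$ with $0$-dimensional boundaries, so $\dim\widetilde{I}\leq1$.

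The main obstacle is the identification of the boundary fibers $\pi_N^{-1}(\alpha)$ at limiting circle points as profinite sets: this hinges on recognising each transition map $[M/N]$ as a finite covering on the circle factor, so that the sequential inverse limit of the finite preimages is a profinite Stone space of covering dimension zero. Once this is in hand, the small inductive bound propagates and the lemma follows.
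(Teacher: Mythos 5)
Your point-set argument and your local analysis on $I$ are essentially the paper's; the genuine divergence is in how you bound the dimension of $\widetilde{I}$. The paper disposes of $\widetilde{I}$ in one line by invoking the fact that the covering dimension of a filtered inverse limit of compact Hausdorff spaces is bounded by the supremum of the dimensions of the terms, so only $I$ requires the boundary analysis; you instead work directly on $\widetilde{I}$, identifying its two strata (helix and solenoid) and pulling back the basic neighborhoods $U_{T,\epsilon,\alpha}$ along the projections $\pi_N$, with profinite fibers over the circle endpoints. Your route is more hands-on and makes the structure of $\widetilde{I}$ explicit (which the paper needs anyway, e.g.\ in Lemma \ref{reducetosolenoid}); the paper's route is shorter but leans on the dimension-theoretic limit theorem. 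Two small patches to your version: (i) the boundary of $U_{T,\epsilon,\alpha}$ in $I$ is not discrete --- the helix-arc endpoints accumulate at the two circle points, so the boundary is two copies of $\mathbb{N}\cup\{+\infty\}$, exactly as the paper states; it is still a countable compact metric space, hence zero-dimensional, which is all you need, but the word ``discrete'' should go. (ii) For $\widetilde{I}$ you should note that $\partial\bigl(\pi_N^{-1}U_{T,\epsilon,\alpha}\bigr)\subseteq \pi_N^{-1}\bigl(\partial U_{T,\epsilon,\alpha}\bigr)$, and that the latter is zero-dimensional because it is a countable union of closed zero-dimensional pieces (singletons over the helix endpoints together with the two profinite fibers), by the countable closed sum theorem; the bare phrase ``disjoint union of isolated points and profinite fibers'' is not by itself a proof of zero-dimensionality. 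With those repairs both arguments conclude identically, via the coincidence of small inductive and covering dimension for compact metrizable spaces.
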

\begin{proof}
We have already seen that $X$, hence $\widetilde{X}$, is Hausdorff.  Second countability is clear from Remark \ref{explicittopology}. To show that $\widetilde{I}$ is compact, because $\widetilde{I}$ is an inverse limit of copies of $I$ it suffices to see that $I$ is.  However the composition
$$(\mathbb{N}\cup\{+\infty\})\times[0,1]\subset (\mathbb{Z}\cup\{+\infty\})\times \mathbb{R}\rightarrow X$$
has image $I$, so this follows from compactness of $\mathbb{N}\cup\{+\infty\}$ and $[0,1]$.  For the covering dimension assertion, clearly our space does not have covering dimension $0$, so it suffices to bound the covering dimension by 1.  Again because $\widetilde{I}$ is a filtered inverse limit of copies of $I$, to see that $\widetilde{I}$ has covering dimension $\leq 1$ it suffices to see the same for $I$.  For that we use the neighborhood bases from Remark \ref{explicittopology} (say with $\epsilon<1/2$ for clarity).  The boundary of the neighborhood of a point $t\in [0,\infty)$ is either 2 points if $t\neq 0$ or 1 point if $t=0$, and the boundary of the neighborhood of a point $\alpha\in\mathbb{R}/\mathbb{Z}$ is a disjoint union of two copies of $\mathbb{N}\cup\{+\infty\}$.  In all cases the boundary is 0-dimensional, so our space has inductive dimension $1$, hence covering dimension $\leq 1$ as desired.
\end{proof}

\begin{lemma}\label{reducetosolenoid}
Let $K$ denote either $I$ or $\widetilde{I}$, let $i:Z\subset X$ denote the closed subset from the open-closed decomposition, and let $j:U\subset X$ denote the open complement.  (So for $K=I$ we have $U= [0,+\infty)$ and $Z=\mathbb{R}/\mathbb{Z}$, while for $K=\widetilde{I}$ we have $U=[0,+\infty)$ and $Z=\varprojlim_N \mathbb{R}/\mathbb{Z}$.)

Let $\mathcal{F}$ be a sheaf of anima on $K$ such that $j^\ast \mathcal{F}$ is a constant sheaf on $U$.  Then
$$\Gamma(K,\mathcal{F})\overset{\sim}{\rightarrow}\Gamma(Z;i^\ast\mathcal{F}).$$
\end{lemma}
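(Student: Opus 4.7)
The strategy combines the fact that sheaves are continuous at a closed subset with a Mayer--Vietoris argument exploiting the hypothesis on $j^\ast\mc{F}$.

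Step 1. I would first identify a convenient cofinal basis of open neighborhoods of $Z$ in $K$, namely $V_T := (T,+\infty)\cup Z$ for $T>0$. For $K=I$ this is immediate from the description of the neighborhood basis of $\mathbb{R}/\mathbb{Z}$ in Remark \ref{explicittopology}. For $K=\widetilde{I}$, one uses that each transition map $[N]$ defining $\widetilde{X}$ acts on the open part $\mathbb{R}$ by multiplication by $N$, hence is a homeomorphism on open parts; so the inclusion $U=[0,+\infty)\subset\widetilde{I}$ carries its usual topology. Then $K\setminus V_T=[0,T]$ is compact; conversely, any closed subset of $K$ disjoint from $Z$ is compact (as $K$ is compact) and contained in $U$, hence bounded, hence contained in some $[0,T]$. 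Thus the $V_T$ are cofinal. By continuity of sheaves of anima on compact Hausdorff spaces at a closed subspace,
\[
\Gamma(Z;i^\ast\mc{F}) \simeq \varinjlim_{T\to+\infty}\Gamma(V_T;\mc{F}),
\]
so it suffices to show that the restriction $\Gamma(K;\mc{F})\overset{\sim}{\to}\Gamma(V_T;\mc{F})$ is an equivalence for each $T>0$.

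Step 2. I apply the sheaf condition (Mayer--Vietoris) to the open cover $K=U\cup V_T$, whose intersection is $U\cap V_T=(T,+\infty)$, obtaining the pullback square
\[
\Gamma(K;\mc{F}) \simeq \Gamma(U;\mc{F}) \times_{\Gamma((T,+\infty);\mc{F})} \Gamma(V_T;\mc{F}).
\]
By hypothesis $j^\ast\mc{F}$ is a constant sheaf on $U$, say with value $A\in\on{An}$. Both $U=[0,+\infty)$ and $(T,+\infty)$ are contractible, so $\Gamma(U;\mc{F})\simeq A\simeq\Gamma((T,+\infty);\mc{F})$, with the restriction map the identity on $A$. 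The pullback therefore collapses to $\Gamma(V_T;\mc{F})$, giving the required equivalence. Passing to the colimit over $T$ finishes the proof.

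Honestly, no step presents a genuine obstacle: both key ingredients---continuity of sheaves at closed subsets of compact Hausdorff spaces, and Mayer--Vietoris for binary open covers of sheaves of anima---are standard. The one point requiring a moment of verification is the identification of the cofinal neighborhood basis $\{V_T\}$ of $Z$ in the solenoidal case $K=\widetilde{I}$, which reduces to the remark about the transition maps $[N]$ being homeomorphisms on open parts.
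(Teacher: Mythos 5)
Your proof is correct and is essentially the paper's argument: both use continuity of sections at the closed subset $Z$ (valid since $K$ is (para)compact Hausdorff), the Mayer--Vietoris square for the cover $\{U, (T,+\infty)\sqcup Z\}$, and the constancy of $j^\ast\mathcal{F}$ together with contractibility of $[0,+\infty)$ and $(T,+\infty)$ to collapse that square. The only difference is cosmetic — you run Mayer--Vietoris at each finite stage $V_T$ and then pass to the colimit, while the paper takes the colimit over neighborhoods first — and your explicit cofinality check for the $V_T$ in the $\widetilde{I}$ case is a welcome, if minor, addition.
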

\begin{proof}
Since $K$ is paracompact Hausdorff, by \cite{LurieHTT} 7.1.5.6 we have
$$\Gamma(Z;i^\ast\mathcal{F})=\varinjlim_V \Gamma(V;\mathcal{F})$$
where $V$ runs over open neighborhoods of $Z$ in $K$.  Thus we get
$$\Gamma(X;\mathcal{F})\overset{\sim}{\rightarrow}\Gamma(Z;i^\ast\mathcal{F})\times_{\varinjlim_V \Gamma(V\cap U;\mathcal{F})}\Gamma(U;\mathcal{F}).$$
It follows that a sufficient condition for $\Gamma(K;\mathcal{F})\overset{\sim}{\rightarrow}\Gamma(Z;i^\ast\mathcal{F})$ is that $Z$ should have a neighborhood basis of open subsets $V$ such that $\Gamma(U;\mathcal{F})\overset{\sim}{\rightarrow}\Gamma(U\cap V;\mathcal{F})$.  In our setting, such a neighborhood basis is given, in as in Remark \ref{explicittopology}, by the $(T,\infty)\sqcup Z$ for $T>0$.  As both $[0,\infty)$ and $(T,\infty)$ are contractible, they have the same sections for a constant sheaf, whence the claim.
\end{proof}

\begin{remark}
In my original approach to the main result in this section (Theorem \ref{deformiso}), I was using the topos $\mathcal{T}=\on{Sh}(\mathbb{Z}\cup\{+\infty\})^\mathbb{Z}$ and its inverse limit $\widetilde{\mathcal{T}}$ instead of these topological spaces $X$ and $\widetilde{X}$.  The relationship between the two is that $\mathcal{T}$ is the category of constructible sheaves on $X$ (or $I$) with respect to the stratification with 2 strata given by the open-closed decomposition.

In the context of $\mathcal{T}$, Lemma \ref{reducetosolenoid} was pointed out to me by Lucas Mann.  Using it allowed to greatly simplify my original construction of the comparison isomorphism between the stalks at $0$ and $1$ from Theorem \ref{deformiso}.  The idea to switch from the topos $\mathcal{T}$ to the corresponding topological space $X$ was suggested to me by Shachar Carmeli.  It makes the presentation more accessible.  I thank them both for their very helpful remarks.
\end{remark}

The following attempts to express the extent to which $\widetilde{I}$ is ``contractible'' from the perspective of sheaf theory.

\begin{lemma}\label{itscontractible}
Let $A\in\on{An}$.  Suppose that for each $a\in A$ and each $i\geq 1$, the homotopy group $\pi_i(A,a)$ is torsion (i.e.\ every element has finite order).  Then $\Gamma(\widetilde{I};A)=A$.
\end{lemma}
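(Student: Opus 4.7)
The plan is to first invoke Lemma \ref{reducetosolenoid} applied to $K=\widetilde{I}$. The restriction of the constant sheaf $\underline{A}$ to the open part $U=[0,\infty)$ is again constant, so the hypothesis is met and we obtain $\Gamma(\widetilde{I};\underline{A})\simeq\Gamma(Z;\underline{A})$ where $Z=\varprojlim_N\mathbb{R}/\mathbb{Z}$ is the solenoid. Since $Z$ is connected (as a filtered inverse limit of nonempty connected compact Hausdorff spaces) any map $Z\to A$ factors through a single component, so by decomposing $A=\sqcup_{x\in\pi_0A}A_x$ and observing each $A_x$ still has torsion higher homotopy groups, we may assume $A$ is connected and pointed.

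Next I would use Postnikov convergence $A=\varprojlim_n\tau_{\leq n}A$ in $\on{An}$. Since the constant sheaf functor preserves limits and $\Gamma(Z;-)$ is a right adjoint, we get $\Gamma(Z;\underline{A})=\varprojlim_n\Gamma(Z;\underline{\tau_{\leq n}A})$, reducing to showing by induction that the natural map $\tau_{\leq n}A\to\Gamma(Z;\underline{\tau_{\leq n}A})$ is an isomorphism for every $n$. The base case $n=0$ is trivial. For the inductive step, form the fiber sequence $K(\pi_nA,n)\to\tau_{\leq n}A\to\tau_{\leq n-1}A$ (fiberwise over the base), apply $\Gamma(Z;-)$, and observe that by exactness and the inductive hypothesis it suffices to show $\Gamma(Z;\underline{K(\pi,n)})\simeq K(\pi,n)$ for every torsion abelian group $\pi$ and every $n\geq 1$.

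The key computation is that $H^i(Z;\pi)=0$ for $i\geq 1$ and torsion $\pi$. I would deduce this from the continuity formula $H^i(Z;\underline{\pi})=\varinjlim_NH^i(Z_N;\underline{\pi})$ for constant coefficients on the filtered inverse limit of compact Hausdorff spaces $Z_N=\mathbb{R}/\mathbb{Z}$. For $i\geq 2$ each term vanishes since $\mathbb{R}/\mathbb{Z}$ is one-dimensional; for $i=1$ the transition maps in the colimit system are induced by the degree-$k$ covers $[k]:Z_N\to Z_{N/k}$, which act on $H^1(-;\pi)=\pi$ as multiplication by $k$, so the colimit is $\pi\otimes\mathbb{Z}[1/\text{all primes}]=\pi\otimes\mathbb{Q}$, which is zero for $\pi$ torsion. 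From this vanishing we read off $\pi_i\,\Gamma(Z;\underline{K(\pi,n)})=H^{n-i}(Z;\pi)$, giving $\pi$ concentrated in degree $n$, so $\Gamma(Z;\underline{K(\pi,n)})\simeq K(\pi,n)$ as desired.

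The main obstacle is justifying the continuity formula $H^i(Z;\underline{\pi})=\varinjlim_NH^i(Z_N;\underline{\pi})$ for constant sheaves on the solenoid. This is classical (e.g.\ via proper base change applied to the tower of proper projections $Z\to Z_N$ together with a Postnikov truncation argument exploiting the one-dimensionality of each $Z_N$), but some care is needed because the stalks of $(\pi_N)_\ast\underline{A}$ on $Z_N$ are sections over the profinite fibers of $Z\to Z_N$ and are not themselves constant; one has to run the argument after truncating $A$ and passing degree-by-degree through the Postnikov tower, using the truncated continuity statement (analogous to Proposition \ref{procontinuity}). Once this continuity input is in hand, the rest of the proof is a formal Postnikov induction.
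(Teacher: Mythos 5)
Your overall route matches the paper's: reduce to the solenoid via Lemma \ref{reducetosolenoid}, run a Postnikov induction whose convergence is guaranteed by finite-dimensionality, and feed in vanishing of positive-degree cohomology of the solenoid with torsion coefficients. However, there is a genuine gap in the degree-one step. The hypothesis of the lemma only asks that $\pi_1(A,a)$ be a torsion \emph{group}, not a torsion abelian group, and this generality is used downstream (Lemma \ref{explicitblob} needs $\Gamma(K;BG)=BG$ for arbitrary finite groups $G$, e.g.\ $G=\on{GL}_d(\mathbb{F}_p)$). Your induction reduces everything to $\Gamma(Z;\underline{K(\pi,n)})\simeq K(\pi,n)$ for torsion \emph{abelian} $\pi$, and your key computation (transition maps acting by multiplication by $k$ on $H^1(\mathbb{R}/\mathbb{Z};\pi)=\pi$, so the colimit is $\pi\otimes\mathbb{Q}=0$) is formulated only abelianly. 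Since the fiber of $\tau_{\leq 1}A\to\tau_{\leq 0}A$ over a point is $B\pi_1(A,a)$ with $\pi_1$ possibly nonabelian, you additionally need: every $G$-torsor on the solenoid is trivial when $G$ is a torsion, possibly nonabelian, group. The paper proves exactly this by compactness descent: a torsor descends to a finite stage $\mathbb{R}/\mathbb{Z}$, where it is induced by a homomorphism $\mathbb{Z}\to G$, and that homomorphism dies after pulling back along multiplication by the order of the image of $1$; your colimit computation is the abelian shadow of this argument, and the same descent idea fills the gap, but as written your proof does not cover the stated generality.

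Two smaller points. First, your justification of $\Gamma(Z;\underline{A})=\varprojlim_n\Gamma(Z;\underline{\tau_{\leq n}A})$ — ``the constant sheaf functor preserves limits'' — is false as stated: pullback along $Z\to\ast$ is a left adjoint and only preserves finite limits. The correct justification, which is what the paper invokes, is that $\underline{\tau_{\leq n}A}=\tau_{\leq n}\underline{A}$ and that $\on{Sh}(Z)$ is Postnikov complete because $Z$ is paracompact of finite covering dimension (\cite{LurieHTT} 7.2), so $\Gamma$ commutes with limits of Postnikov towers; this is a citation-level fix. Second, for degrees $\geq 2$ you invoke a continuity formula $H^i(Z;\pi)=\varinjlim_N H^i(\mathbb{R}/\mathbb{Z};\pi)$, which (as you acknowledge) requires a proper-base-change/truncation argument; the paper avoids this entirely by observing that the solenoid itself has covering dimension $1$, so $H^i(Z;\mathcal{A})=0$ for $i>1$ for any abelian sheaf $\mathcal{A}$, reserving the inverse-limit structure for the degree-one (torsor) statement only. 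Your version can be made to work, but the dimension argument is lighter and sidesteps the continuity input.
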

\begin{proof}
By Lemma \ref{reducetosolenoid}, we reduce to the analogous claim for the solenoid $S=\varprojlim_N \mathbb{R}/\mathbb{Z}$.  The functor $A\mapsto \Gamma(S;A)$ preserves finite limits, as with any topological space.  Because $S$ is paracompact Hausdorff of finite covering dimension, it also preserves limits of Postnikov towers (\cite{LurieHTT} 7.2).  Thus, by the usual induction with the Postnikov tower, it suffices to show:
\begin{enumerate}
\item For any set $A$ we have $\Gamma(S;A)=A$.
\item For any torsion group $G$, every $G$-torsor on $S$ is trivial.
\item For any torsion abelian group $A$ we have $H^i(S;A)=0$ for $i>1$ (the case $i=1$ being handled by 2).
\end{enumerate}
Point 1 is the same as the connectedness of the solenoid, which follows from the connectedness of the circle.  For point 2, by compactness a $G$-torsor on $S$ descends to the $N^{th}$ stage $\mathbb{R}/\mathbb{Z}$ for some $N$.  There, by contractibility of $\mathbb{R}$, it is induced by a homomorphism $\mathbb{Z}\rightarrow G$.  If $G$ is torsion then this homomorphism becomes trivial on composing with $\cdot M:\mathbb{Z}\rightarrow \mathbb{Z}$ for some $M\in\mathbb{Z}_{\geq 1}$, whence triviality of our descended $G$-torsor on pullback to the $(N\cdot M)^{th}$ stage, whence triviality of our original $G$-torsor on the solenoid.  Finally, 3 is true for any sheaf of abelian groups by the fact that $S$ has covering dimension 1.
\end{proof}

\begin{remark}\label{itsablob}
In particular, the previous results imply that the hypotheses of Lemma \ref{explicitblob} are satisfied.  Thus $\widetilde{I}$ is a $\on{Sp}_{\wh{p}}$-blob and an object $\mc{F}\in\on{Sh}_{et}(\widetilde{I};\on{Sp}_{\wh{p}})$ is constant if and only if it is a colimit of dualizable objects.  
\end{remark}

Now we describe the map of light condensed anima $\gamma:\widetilde{I}\rightarrow\mathbb{Q}_p/\mathbb{Q}_p^\times$ which we use to ``connect'' the points $0$ and $1$.  More generally, we will give such a map for any local field $F$ and any choice of $q\in F$ with $0<|q|<1$.  (For $F=\mathbb{Q}_p$ we can of course choose $q=p$).

For this, we take the composition
$$\widetilde{I}\to I \to \mathbb{Z}\cup\{+\infty\}/\mathbb{Z} \to F/F^\times,$$
where:
\begin{enumerate}
\item The first map is the natural projection to the $1^{st}$ term in the inverse limit defining $\widetilde{I}$;
\item The second map is projection off $\mathbb{R}$ in terms of the definition $I=((\mathbb{Z}\cup\{+\infty\})\times \mathbb{R})/\mathbb{Z}$;
\item The third map is induced by $n\mapsto q^n$.
\end{enumerate}

Note that the points $0,1\in F$ lift along $\gamma$ to points $i_0,i_1\in \widetilde{I}$.  For example we can take $i_0$ to be the identity element of the solenoid closed subspace of $\widetilde{I}$ and we can take $i_1$ to be $0$ inside the $[0,+\infty)$ open subspace of $\widetilde{I}$.

We get the following, which is our main result in this section. Write $0,1:\ast \to F$ for the specified points of $F$ (or their composition to $F/F^\times$).

\begin{theorem}\label{deformiso}
Let $F$ be a local field and $p$ a prime.  Suppose given $X\in\on{CondAn}^{light}$ and $\mathcal{F}\in\on{Sh}_{et}(X\times(F/F^\times);\on{Sp}_{\wh{p}})$ satisfying the following condition: for all $x:\ast\rightarrow X$, the sheaf $x^\ast\mathcal{F}\in\on{Sh}_{et}(F/F^\times;\on{Sp}_{\wh{p}})$ is a colimit of dualizable objects.  (We will only apply this when $\mc{F}$ itself is invertible.)

Then there is a natural isomorphism $0^\ast\mathcal{F}\simeq 1^\ast\mathcal{F} \in \on{Sh}_{et}(X;\on{Sp}_{\widehat{p}})$.
\end{theorem}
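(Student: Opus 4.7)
The plan is to reduce the theorem to a direct application of Theorem \ref{blobsections}, using the ``interval'' $\widetilde{I}$ and the map $\gamma:\widetilde{I}\to F/F^\times$ constructed just above the theorem as a torsion-coefficient substitute for a path connecting $0$ to $1$. Specifically, I would form $\widetilde{\mathcal{F}} := (\on{id}_X\times\gamma)^\ast\mathcal{F} \in\on{Sh}_{et}(X\times\widetilde{I};\on{Sp}_{\wh{p}})$ and consider the projection $f:X\times\widetilde{I}\to X$ together with its two tautological sections $\sigma_0=(\on{id}_X,i_0)$ and $\sigma_1=(\on{id}_X,i_1)$ induced by the specified lifts $i_0,i_1\in\widetilde{I}$ of $0,1\in F/F^\times$ along $\gamma$. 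Since $\sigma_k^\ast\widetilde{\mathcal{F}} = k^\ast\mathcal{F}$ for $k=0,1$ by construction, the desired natural isomorphism $0^\ast\mathcal{F}\simeq 1^\ast\mathcal{F}$ will follow once I verify the two hypotheses of Theorem \ref{blobsections} for $f$ and $\widetilde{\mathcal{F}}$.

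For the first hypothesis, I need to see that $f$ is a $\on{Sp}_{\wh{p}}$-blob. Since $\widetilde{I}$ is a second-countable compact Hausdorff space of covering dimension $1$, Theorem \ref{chausproper} gives that $\widetilde{I}\to\ast$ is $\on{Sp}_{\wh{p}}$-proper, and hence so is its base change $f$. In particular (Remark \ref{propersmoothblob}), the right adjoint to pullback commutes with base change along any map into $X$, so Lemma \ref{blob} reduces us to checking fiberwise that $\widetilde{I}\to\ast$ is a blob -- which is exactly the content of Remark \ref{itsablob}.

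For the second hypothesis, I need to see that $\widetilde{\mathcal{F}}$ restricts to a constant sheaf on each fiber $\{x\}\times\widetilde{I}\simeq\widetilde{I}$. This restriction is $\gamma^\ast(x^\ast\mathcal{F})$. By hypothesis, $x^\ast\mathcal{F}$ is a colimit of dualizable objects in $\on{Sh}_{et}(F/F^\times;\on{Sp}_{\wh{p}})$; since $\gamma^\ast$ is a symmetric monoidal colimit-preserving functor, $\gamma^\ast(x^\ast\mathcal{F})$ is again a colimit of dualizable objects in $\on{Sh}_{et}(\widetilde{I};\on{Sp}_{\wh{p}})$. Applying the second half of Lemma \ref{explicitblob} to $\widetilde{I}$ -- whose hypotheses are verified by Lemma \ref{itscontractible} (giving $R\Gamma(\widetilde{I};\mathbb{F}_p)=\mathbb{F}_p$ and $\Gamma(\widetilde{I};BG)=BG$ for finite $G$, since $\mathbb{F}_p$ and $BG$ have torsion higher homotopy groups) -- this colimit-of-dualizables condition is exactly the criterion for $\gamma^\ast(x^\ast\mathcal{F})$ to be constant.

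All the conceptual heavy lifting has been done in the preceding sections -- the construction of the exotic interval $\widetilde{I}$, its $\on{Sp}_{\wh{p}}$-contractibility via Lemma \ref{itscontractible}, and the blob formalism culminating in Theorem \ref{blobsections}. The only thing to be careful about is ensuring we work throughout with $\on{Sp}_{\wh{p}}$-coefficients, so that the torsion-cohomological contractibility of $\widetilde{I}$ is sufficient; with $\on{Sp}$-coefficients the whole argument would collapse because $\widetilde{I}$ is genuinely not path connected.
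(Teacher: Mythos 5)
Your proposal is correct and follows essentially the same route as the paper: pull back along $\on{id}_X\times\gamma$ to $X\times\widetilde{I}$, use Remark \ref{itsablob} (i.e.\ the blob property of $\widetilde{I}$ and the colimit-of-dualizables criterion for constancy, checked fiberwise since $\gamma^\ast$ preserves colimits and dualizables), and conclude by Theorem \ref{blobsections}. The paper's own proof is just a terser version of exactly this argument, leaving implicit the fiberwise verification and the properness/base-change input that you spell out.
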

\begin{proof}
Let $\mathcal{G} = \gamma^\ast \mathcal{F} \in \on{Sh}_{et}(X\times \widetilde{I};\on{Sp}_{\wh{p}})$.  By Remark \ref{itsablob}, $\widetilde{I}$ is a right $\on{Sp}_{\wh{p}}$-blob and $\mathcal{G}$ lies in the essential image of the pullback functor $\on{Sh}_{et}(X;\on{Sp}_{\wh{p}})\to\on{Sh}_{et}(X\times \widetilde{I};\on{Sp}_{\wh{p}})$.  Thus by Theorem \ref{blobsections} we have $i_0^\ast\mathcal{G}\simeq i_1^\ast\mathcal{G}$, which gives the claim.
\end{proof}

This was valid for all complete valued fields $F$, but of course if $F$ is archimedean (i.e.\ $F=\mathbb{R}$ or $F\simeq\mathbb{C}$), then we can do better, because already $F$ itself is a left $\mathcal{C}$-blob for all $\mathcal{C}$ (see \cite{LurieHA} A.2); we don't even need the $F^\times$-equivariance.  Thus the same arguments as above give the following analog of the previous theorem.

\begin{theorem}
Let $F$ be an \emph{archimedean} complete normed field, let $X\in\on{CondAn}^{light}$, and let $\mathcal{F}\in\on{Sh}_{et}(X\times F;\on{Sp})$.  Suppose that for all $x:\ast\rightarrow X$, the sheaf $x^\ast\mathcal{F}\in\on{Sh}(F;\on{Sp})$ is is locally constant.  Then there is a natural isomorphism $0^\ast\mathcal{F}\simeq 1^\ast\mathcal{F} \in \on{Sh}_{et}(X;\on{Sp})$.
\end{theorem}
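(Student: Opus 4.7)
The plan is to run the same strategy as in Theorem \ref{deformiso}, but with $F$ itself playing the role of the exotic compactum $\widetilde{I}$. This works because in the archimedean setting $F$ is already contractible as a topological space, so no $F^\times$-equivariance or special ``path'' is needed to connect $0$ and $1$. Concretely, I would consider the projection $f = \mathrm{id}_X \times \pi \colon X \times F \to X$, which is a base-change of $\pi \colon F \to \ast$, and aim to apply Theorem \ref{blobsections} to the two sections $0,1 \colon X \to X \times F$.

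First I would verify that $\pi \colon F \to \ast$ is a left $\on{Sp}$-blob in the sense of Lemma \ref{blob}. The existence and base-change compatibility of the left adjoint to $\pi^\ast$ on $\on{Sp}$-valued étale sheaves is automatic from Theorem \ref{realsmooth} (which shows $\pi$ is $\on{Sp}$-smooth, since $F$ is an $\mathbb{R}$-manifold and every representable submersion in $\on{Sh}(\on{Man}_{\mathbb{R}})$ is $\on{Sp}$-smooth) together with Remark \ref{propersmoothblob}. The fiberwise condition in Lemma \ref{blob} reduces to the claim that $\pi^\ast \colon \on{Sp} \to \on{Sh}_{et}(\underline{F}; \on{Sp})$ is fully faithful, which follows from contractibility of $F$: by Theorem \ref{locconst}, $|F| = \ast$, and the functor $\on{An}_{/|F|} \to \on{Sh}_{et}(\underline{F})$ is fully faithful with essential image the locally constant sheaves; tensoring with $\on{Sp}$ and invoking the compactly assembled formalism of Lemma \ref{compassgood} gives what we need. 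Base-changing along $X \to \ast$, we conclude that $f$ is also a left $\on{Sp}$-blob.

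Next I would check the fiberwise constancy hypothesis of Theorem \ref{blobsections} applied to $f$ and $\mathcal{F}$. By assumption, for each point $x \colon \ast \to X$ the restriction $x^\ast \mathcal{F} \in \on{Sh}_{et}(\underline{F}; \on{Sp})$ is locally constant. But since $F$ is contractible, $|F| = \ast$, and hence by Theorem \ref{locconst} every locally constant sheaf on $\underline{F}$ is actually constant, i.e.\ pulled back from a point. Thus the fiberwise hypothesis of Theorem \ref{blobsections} holds, which supplies a unique $\mathcal{G} \in \on{Sh}_{et}(X; \on{Sp})$ with $\mathcal{F} \simeq f^\ast \mathcal{G}$; applying the two sections yields the natural isomorphism $0^\ast \mathcal{F} \simeq \mathcal{G} \simeq 1^\ast \mathcal{F}$.

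There is no genuine obstacle to overcome beyond collecting and citing the appropriate formalism: the heart of the matter (producing a contractible compactum connecting $0$ and $1$) is free here because the archimedean $F$ is literally contractible, and the technical setup of blobs in the stable setting is already in place. The only minor thing worth flagging is the need to justify the fiberwise fully faithfulness of $\pi^\ast$ in the $\on{Sp}$-enriched sense rather than only on anima, which I would handle as above via the compactly assembled tensor product of Lemma \ref{compassgood}.
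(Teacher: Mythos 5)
Your proposal is correct and follows essentially the same route as the paper: the paper's proof is precisely that $F$ itself is a left blob (it cites contractibility/local contractibility via \cite{LurieHA} A.2, so no $F^\times$-equivariance or exotic interval is needed) and then runs the Theorem \ref{blobsections} argument verbatim. Your only deviation is in packaging — you derive the left-adjoint half of the blob condition from $\on{Sp}$-smoothness (Theorem \ref{realsmooth} plus Remark \ref{propersmoothblob}) and the fiberwise full faithfulness and "locally constant $\Rightarrow$ constant" step from Theorem \ref{locconst}, rather than quoting \cite{LurieHA} A.2 directly — which is a legitimate instantiation of the same argument.
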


\section{Duality and linearization for real and p-adic Lie groups}\label{dualsec}

Recall from Theorem \ref{realsmooth} that for a submersion $f:X\to Y$ in $\on{Sh}(\on{Man}_\mathbb{R})$, the map $f:\underline{X}\to\underline{Y}$ of light condensed anima is $\on{Sp}$-smooth.  In particular we have a dualizing object $f^!\mbb{S}\in \on{Sh}_{et}(\ul{X};\on{Sp})$.  Now we want to identify this in terms of the relative tangent bundle $T_{X/Y}$, as in Atiyah duality.  First let's explain what construction associated to the tangent bundle we're aiming for.  It is defined for an arbitrary vector bundle.  We start with a description in terms of the six functor formalism.

\begin{definition}
Let $X\in\on{Sh}(\on{Man}_\mbb{R})$ and let $f:V\to X$ be a vector bundle on $X$, with zero section $e:X \to V$.  Define
$$\mathbb{S}^V = e^\ast f^!\mbb{S} \in \on{Sh}_{et}(\ul{X};\on{Sp}).$$
\end{definition}

By smoothness of $f$, this is an invertible object.  It follows formally that it is locally constant in the sense of \ref{locconst} (the proof of smoothness of $f$ also shows this directly).  Thus it lifts to $\on{Sh}_{et}(|X|;\on{Sp}) = \on{An}_{/|X|}\otimes\on{Sp}$ by \ref{locconst}: in other words it comes from a local system of spectra on the anima $|X|$ underlying $X$.  Let us now prove this in a different, more informative way, by explicitly describing a lift of $\mbb{S}^V$ to $\on{An}_{/|X|}$.

\begin{lemma}
Let $f_\natural$ denote the left adjoint to $f^\ast$ (it exists and commutes with base change by smoothness).  Then there is a natural isomorphism (compatible with base change)
$$\mathbb{S}^V \simeq f_\natural e_\ast\mbb{S}.$$
\end{lemma}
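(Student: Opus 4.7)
The plan is to leverage that $f$ is $\on{Sp}$-smooth (Theorem \ref{realsmooth}) and that the zero section $e$ is a proper closed embedding, so that both $f^!$ and $e_\ast$ behave well enough to manipulate freely.

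First, I would set $\omega_f := f^!\mbb{S}$, which is invertible by smoothness of $f$. From the smoothness criterion (Definition \ref{etalepropersmoothdef}(3)), we have $f^!(-) \simeq f^\ast(-)\otimes \omega_f$ as functors. Passing to left adjoints — here $f^\ast$ has left adjoint $f_\natural$ and $f^!$ has left adjoint $f_!$ — the identity $\on{Map}(Y,f^!X)\simeq \on{Map}(f_\natural(Y\otimes \omega_f^{-1}),X)$ yields the dual formula
$$f_\natural Y \simeq f_!(Y\otimes \omega_f),$$
natural in $Y\in\on{Sh}_{et}(\ul{V};\on{Sp})$ and compatible with base change in $X$.

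Next, I would observe that the zero section $e:X\to V$ is a closed immersion (locally on $X$, it is the inclusion $X\to X\times \mbb{R}^d$ at the origin), hence $\on{Sp}$-proper. Therefore $e_\ast \simeq e_!$ canonically, and $e_\ast$ satisfies the projection formula. Applying the dual formula above to $Y = e_\ast\mbb{S}$, and then using the projection formula for $e_\ast$, I get
$$f_\natural e_\ast \mbb{S} \;\simeq\; f_!\bigl(e_\ast\mbb{S}\otimes \omega_f\bigr) \;\simeq\; f_!\, e_\ast\!\bigl(e^\ast \omega_f\bigr) \;\simeq\; f_!\, e_!\bigl(e^\ast \omega_f\bigr) \;\simeq\; (f\circ e)_!\, e^\ast\omega_f.$$
Since $f\circ e = \on{id}_X$, the last term is just $e^\ast \omega_f = e^\ast f^!\mbb{S} = \mbb{S}^V$, completing the identification. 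All comparison maps in the chain commute with base change in $X$ (smoothness of $f$ makes $f_\natural$ and $f^!$ base-change compatible; properness of $e$ makes $e_! = e_\ast$ base-change compatible), so the resulting isomorphism is natural in the base as required.

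The only mildly delicate point is justifying that the zero section is $\on{Sp}$-proper and that the projection formula/proper base-change hold for it. I would settle this by appealing to the general fact, used throughout the preceding section, that closed inclusions of locally compact Hausdorff spaces are $\on{Sp}$-proper — for instance via the open-closed recollement (Example \ref{recollement}) applied to the closed embedding $e$ with complement $V\setminus e(X)$ — which gives $e_! = e_\ast$ with the usual formalism of proper maps, and in particular the projection formula. Apart from this, the argument is a purely formal consequence of the smoothness of $f$ and $f\circ e = \on{id}_X$.
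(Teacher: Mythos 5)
Your argument is correct and is essentially the paper's own proof: pass to left adjoints in the smoothness identity $f^!\simeq f^\ast\otimes f^!\mbb{S}$ to get $f_\natural(-)\simeq f_!(-\otimes f^!\mbb{S})$, then use the projection formula and $e_\ast\simeq e_!$ for the closed (hence proper) zero section and $f\circ e=\on{id}$ to conclude. The only cosmetic difference is your explicit aside justifying properness of $e$ and base-change compatibility, which the paper leaves implicit.
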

\begin{proof}
We have $f_\natural(-) = f_!(-\otimes f^!\mbb{S})$ because $f^!(-)\simeq f^\ast(-)\otimes f^!\mbb{S}$.  Thus
$$f_\natural e_\ast \mbb{S} = f_!(f^!\mbb{S}\otimes  e_\ast \mathbb{S}) = f_!(e_\ast(e^\ast f^!\mbb{S}))$$
by the projection formula for the closed inclusion $e$.  However $e_\ast = e_!$ again because it's a closed inclusion.  The claim follows.
\end{proof}

To move forward, we consider the open complement $j:V\setminus 0 \to V$ to $e$.

\begin{lemma}
There is a natural isomorphism
$$\mbb{S}^V \simeq \on{cofib}((p\circ j)_\natural\mbb{S} \to p_\natural\mbb{S}).$$
\end{lemma}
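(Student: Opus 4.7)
The plan is to combine the preceding lemma $\mathbb{S}^V \simeq f_\natural e_\ast \mathbb{S}$ (reading $p$ as $f$, the vector bundle projection) with the open-closed recollement associated to the decomposition $V = (V \setminus 0) \sqcup_X e(X)$.

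First I would invoke the basic fiber sequence for a recollement recorded earlier in the paper: for any $\mathcal{F} \in \on{Sh}_{et}(\underline{V};\on{Sp})$ there is a natural and functorial cofiber sequence
$$j_! j^\ast \mathcal{F} \longrightarrow \mathcal{F} \longrightarrow e_\ast e^\ast \mathcal{F}.$$
Specializing to $\mathcal{F} = \mathbb{S}$ (and using that both $j^\ast$ and $e^\ast$ are symmetric monoidal, so they send the unit to the unit) gives
$$j_! \mathbb{S} \longrightarrow \mathbb{S} \longrightarrow e_\ast \mathbb{S}$$
in $\on{Sh}_{et}(\underline{V};\on{Sp})$.

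Second, since $j$ is an open immersion it is $\on{Sp}$-etale by Proposition \ref{etaleisetale}, so $j_! \simeq j_\natural$ canonically. Now apply the colimit-preserving (hence exact) functor $f_\natural$ to the displayed fiber sequence; by functoriality of left adjoints we have $f_\natural j_\natural \simeq (f\circ j)_\natural$, so we obtain the cofiber sequence
$$(f\circ j)_\natural \mathbb{S} \longrightarrow f_\natural \mathbb{S} \longrightarrow f_\natural e_\ast \mathbb{S}.$$
By the previous lemma the rightmost term is $\mathbb{S}^V$, yielding exactly the claimed identification of $\mathbb{S}^V$ as $\on{cofib}((f\circ j)_\natural \mathbb{S} \to f_\natural \mathbb{S})$.

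There is no real obstacle: every ingredient has already been established in the paper (smoothness of $f$ and hence existence and exactness of $f_\natural$, the identification $j_! \simeq j_\natural$ for etale $j$, and the recollement fiber sequence). The only thing to double-check is that the map appearing in the cofiber is the expected one, i.e.\ the natural transformation $(f\circ j)_\natural \mathbb{S} \to f_\natural \mathbb{S}$ induced by the counit $j_\natural j^\ast \to \on{id}$ agrees with the counit map coming from the first arrow of the recollement sequence. This is immediate from the construction of the recollement (the first arrow is by definition this counit applied to $\mathcal{F}$), so no additional verification is needed.
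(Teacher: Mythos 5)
Your proof is correct and is essentially the paper's own argument: apply the recollement cofiber sequence $j_\natural\mathbb{S}\to\mathbb{S}\to e_\ast\mathbb{S}$ on $V$ (with $j_!\simeq j_\natural$ since $j$ is etale) and then push forward along the colimit-preserving $p_\natural$, identifying the last term with $\mathbb{S}^V$ via the preceding lemma. The paper's proof is just a terser version of exactly these steps.
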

\begin{proof}
We have a cofiber sequence
$$j_\natural\mathbb{S} \to \mathbb{S} \to e_\ast\mathbb{S}$$
by Lemma \ref{recollement}.  The result follows by applying $p_\natural$.
\end{proof}

Now that we've translated everything in terms of the left adjoint to pullback, we can move to the homotopy theory context using the following.

\begin{lemma}
Let $f:Y \to X$ be a map in $\on{Sh}(\on{Man}_{\mbb{R}})$.  Suppose $f$ is a \emph{fiber bundle} in the sense that, locally on $X$, it is isomorphic to a projection map $U\times F\to U$ for some $U\in\on{Man}_{\mbb{R}}$ and $F\in\on{Sh}(\on{Man}_\mbb{R})$.  Then:
\begin{enumerate}
\item the left adjoint $f_\natural$ to $f^\ast:\on{Sh}_{et}(X;\on{An})\to\on{Sh}_{et}(Y;\on{An})$ exists and commutes with all base-change;
\item this left adjoint $f_\natural$ sends locally constant sheaves to locally constant sheaves (\ref{locconst}), and hence a fortiori restricts to the left adjoint to the pullback $\on{An}_{/|X|}\to\on{An}_{/|Y|}$ via the comparison \ref{locconst}.
\end{enumerate}
\end{lemma}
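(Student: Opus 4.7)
My plan is to handle part 1 by two successive reductions, then derive part 2. Both the existence of $f_\natural$ and its compatibility with base change are local conditions on $X$ in the Grothendieck topology of $\on{Sh}(\on{Man}_\mbb{R})$: a left adjoint that commutes with base change can be glued across any open cover. Using the fiber-bundle hypothesis I reduce to the case where $f$ is a projection $p\colon X\times F\to X$ with $X\in\on{Man}_\mbb{R}$ and $F\in\on{Sh}(\on{Man}_\mbb{R})$. By Proposition \ref{underlying}(2) we have $\ul{X\times F}\cong\ul X\times\ul F$ in $\on{CondAn}^{light}$, so $\ul p$ is the base change of the structure map $\ul F\to\ast$ along $\ul X\to\ast$. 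Combined with the base-change compatibility we are after, this further reduces the problem to producing a left adjoint to the constant-sheaf functor $\on{An}\to\on{Sh}_{et}(\ul F;\on{An})$ which behaves well in families.

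To produce this left adjoint, I would present $F$ as the colimit in $\on{Sh}(\on{Man}_\mbb{R})$ of a hypercover by disjoint unions of open polydisks $F_i$, so that $\ul F=\on{colim}_i\ul{F_i}$ in $\on{CondAn}^{light}$. Lemma \ref{compassgood}(2) then presents $\on{Sh}_{et}(\ul F;\on{An})$ as the limit in $\on{Pr}^L$ of the $\on{Sh}_{et}(\ul{F_i};\on{An})$, and by Lemma \ref{etaleontopological}, since each polydisk is second-countable locally compact Hausdorff of finite covering dimension, these identify with the usual topoi $\on{Sh}(F_i;\on{An})$. Each open polydisk is contractible and locally contractible, so the constant-sheaf functor $\on{An}\to\on{Sh}(F_i;\on{An})$ admits a left adjoint (the underlying-anima functor on sheaves over a space of singular shape, cf.\ \cite{LurieHA} A.2). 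These left adjoints are natural in the simplicial structure of the hypercover, so they assemble through the limit into the desired left adjoint on $\on{Sh}_{et}(\ul F;\on{An})$, and the construction commutes with pullback along arbitrary $\ul X\to\ast$ by naturality.

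For part 2, $f^\ast$ plainly preserves locally constant sheaves. Via Theorem \ref{locconst}, which identifies locally constant sheaves with $\on{An}_{/|X|}$ and $\on{An}_{/|Y|}$, this restriction becomes pullback along $|f|\colon|Y|\to|X|$, whose left adjoint is composition with $|f|$. By uniqueness of adjoints it therefore suffices to show that $f_\natural$ preserves local constancy. Using the base-change compatibility from part 1 this can be verified locally on $X$, reducing to the projection case; the explicit formula from the previous paragraph then shows that $f_\natural$ applied to a locally constant sheaf on $\ul Y$ --- corresponding to an anima over $|Y|$ --- produces the forgetful image, an anima over $|X|$, and hence a locally constant sheaf.

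The main obstacle I anticipate is rigorously assembling the individual polydisk left adjoints across the hypercover into a genuine global left adjoint on $\on{Sh}_{et}(\ul F;\on{An})$ (rather than merely a pro-object): this is a shape-theoretic compatibility that should follow from \cite{LurieHA} A.2 and hyperdescent, but needs careful verification. A possibly cleaner alternative is to show directly that the constant-sheaf functor $\on{An}\to\on{Sh}_{et}(\ul F;\on{An})$ preserves all small limits --- using local connectedness of each polydisk, so that limits of constant sheaves remain constant on each factor --- and then invoke the adjoint functor theorem to produce $f_\natural$ in one step.
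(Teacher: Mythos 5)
Your argument is correct and follows essentially the same route as the paper: reduce by descent/locality to the absolute case $F\to\ast$, resolve $F$ by open polydisks, and invoke contractibility of polydisks together with \cite{LurieHA} A.2, with part 2 then handled by base change in the projection case. The gluing step you flag is no more of a gap than what the paper leaves implicit: since $\on{Sh}_{et}(\ul{F};\on{An})$ is the limit of the polydisk sheaf categories along the hypercover, the termwise left adjoints assemble by the colimit formula (mapping anima in a limit of $\infty$-categories are the limits of mapping anima, so no Beck--Chevalley condition is needed for existence), and base-change compatibility persists because pullback preserves the colimit defining $f_\natural$.
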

\begin{proof}
For 1, note that such claims can be checked locally by descent, so it suffices to handle the case of $f:F\to \ast$ for some $F\in\on{Sh}(\on{Man}_F)$.  Resolving $F$ by open polydisks we further reduce to open polydisks, and then to the unit interval.  In that case the claim is elementary, see \cite{LurieHA} A.2.

Again for 2, we reduce to the case of a projection which, by commutation with base-change, reduces to $f:F\to \ast$, where the claim is trivial.
\end{proof}

In total we deduce the following:

\begin{proposition}\label{homotopytwists}
Let $X\in\on{Sh}(\on{Man}_F)$ and let $V\to X$ be a vector bundle.  Write $V\setminus 0$ for the complement of the zero section.  View $|V\setminus 0|$ and $|V|$ as objects in $\on{An}_{/|X|}$, and write $\Sigma^\infty$ for the suspension spectrum functor from pointed objects in $\on{An}_{/|X|}$ to its stabilization.  Then
$$\mathbb{S}^V \simeq \Sigma^\infty(|V|/|V\setminus 0|)$$
\end{proposition}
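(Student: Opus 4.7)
The plan is to combine the cofiber-sequence formula $\mbb{S}^V \simeq \on{cofib}((f\circ j)_\natural \mbb{S} \to f_\natural \mbb{S})$ already established with the identification, from the previous lemma, of $f_\natural$ on locally constant sheaves with the left adjoint to the pullback along $|f|:|V|\to |X|$ in $\on{An}_{/|X|}$.

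First I would verify the lemma applies. The projection $f:V\to X$ is a vector bundle, hence a fiber bundle with fiber an open polydisk, and the restriction $f\circ j : V\setminus 0 \to X$ is also a fiber bundle (locally on $X$, it is a projection off the fiber of $V$ with the origin removed). So both $f_\natural$ and $(f\circ j)_\natural$ preserve local constancy and, after stabilization and the identification $\on{Sh}_{et,\mathrm{lc}}(\ul{X};\on{Sp})\simeq \on{Sp}^{|X|}$ coming from Theorem \ref{locconst}, correspond to the left adjoints of $|f|^*$ and $|f\circ j|^*$ in parameterized spectra over $|X|$.

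Second, in $\on{An}_{/|X|}$ the left adjoint to $|f|^*$ is simply post-composition with $|f|$, sending the terminal object $\on{id}_{|V|}$ to $(|V|\to |X|)$. The constant sheaf $\mbb{S}$ on $\ul V$ corresponds to $\Sigma^\infty_+\on{id}_{|V|}$ (the unit under stabilization plus disjoint basepoint), so upon stabilizing the adjunction I obtain the identifications
\[
f_\natural \mbb{S}\simeq \Sigma^\infty_+(|V|\to |X|),\qquad (f\circ j)_\natural \mbb{S}\simeq \Sigma^\infty_+(|V\setminus 0|\to |X|),
\]
as objects of $\on{Sp}^{|X|}\simeq \on{Sh}_{et,\mathrm{lc}}(\ul X;\on{Sp})$, with the map between them induced by $|j|:|V\setminus 0|\to |V|$ in $\on{An}_{/|X|}$.

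Finally I would compute the cofiber. For a map $A\to B$ in $\on{An}_{/|X|}$, the cofiber of $\Sigma^\infty_+A \to \Sigma^\infty_+B$ in $\on{Sp}^{|X|}$ is $\Sigma^\infty(B/A)$, where $B/A := B\sqcup_A |X|$ is the cofiber in pointed $\on{An}_{/|X|}$ (this is the usual fact that adding disjoint basepoints turns an unpointed cofiber into a pointed one). Applying this to $|j|$ yields
\[
\mbb{S}^V\simeq \on{cofib}\bigl((f\circ j)_\natural\mbb{S}\to f_\natural\mbb{S}\bigr)\simeq \Sigma^\infty\bigl(|V|/|V\setminus 0|\bigr),
\]
as required. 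There is no real obstacle here beyond the invocation of the fiber-bundle lemma and bookkeeping of the pointed/unpointed $\Sigma^\infty$ conventions; the heart of the matter is that all four functors $f^*,(f\circ j)^*$ and their left adjoints have been arranged to restrict cleanly to the locally constant (equivalently parameterized-anima) world where the computation is purely homotopy-theoretic.
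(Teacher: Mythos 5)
Your proposal is correct and follows essentially the same route as the paper: the paper's proof is precisely to combine the cofiber-sequence lemma for $\mbb{S}^V$ with the fiber-bundle lemma on $f_\natural$ preserving locally constant sheaves, noting that both $V\to X$ and $V\setminus 0\to X$ are fiber bundles. Your extra bookkeeping with $\Sigma^\infty_+$ versus $\Sigma^\infty$ and the unpointed-to-pointed cofiber is just an explicit spelling-out of the same argument.
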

\begin{proof}
This follows by combining the two previous lemmas, noting that $V\to X$ and $V\setminus 0 \to X$ are both fiber bundles.
\end{proof}

Now that we've discussed $\mathbb{S}^V$, we can give the proof of the Atiyah duality statement.  But first we isolate a very simple lemma which we will use repeatedly.

\begin{lemma}\label{universalsection}
Let $f:X\to Y$ be a smooth map (with respect to some six functor formalism).  Then we have
$$f^!1 \simeq \delta^\ast \pi^! 1,$$
where $\delta: X\to X\times_YX$ is the diagonal and $\pi:X\times_YX\to X$ is the first projection.
\end{lemma}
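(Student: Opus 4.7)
The plan is to deduce the statement as a direct application of smooth base-change for $f^!$. The key observation is to set up the Cartesian square
$$\begin{array}{ccc} X\times_Y X & \xrightarrow{\pi_2} & X \\ \pi \downarrow & & \downarrow f \\ X & \xrightarrow{f} & Y, \end{array}$$
in which $\pi$ is the first projection and $\pi_2$ is the second projection. Since $f$ is smooth, Definition \ref{etalepropersmoothdef} part 3(a) tells us that the formation of $f^!$ commutes with arbitrary base-change. Applied to this square, this yields a natural isomorphism of functors $\pi_2^\ast f^! \simeq \pi^! f^\ast$.

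Evaluating both sides on the unit object $1 \in \on{Sh}_{et}(Y;\mathcal{R})$, and using that $f^\ast$ is symmetric monoidal so that $f^\ast 1 \simeq 1 \in \on{Sh}_{et}(X;\mathcal{R})$, we obtain $\pi_2^\ast f^! 1 \simeq \pi^! 1$. Finally, I pull back along the diagonal $\delta: X \to X \times_Y X$. Since $\pi_2 \circ \delta = \on{id}_X$, the left-hand side becomes $\delta^\ast \pi_2^\ast f^! 1 \simeq f^! 1$, while the right-hand side becomes $\delta^\ast \pi^! 1$, giving the desired identification.

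There is no real obstacle here: the entire content of the lemma is encoded in the base-change property of $f^!$ for smooth maps, which is built into the definition of smoothness used in the paper. The only thing worth being careful about is to keep straight which of the two projections is which, so that the composition $\pi_2 \circ \delta$ is the identity rather than something else. In a full write-up I would also note that this isomorphism is natural and compatible with further base-change, but for the bare statement of the lemma the argument above suffices.
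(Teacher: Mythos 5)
Your proof is correct and is essentially identical to the paper's argument: base-change for $f^!$ (part of the definition of smoothness) applied to the square with the two projections of $X\times_Y X$ gives $\pi_2^\ast f^!1\simeq \pi^!1$, and restricting along the diagonal, using $\pi_2\circ\delta=\on{id}_X$, yields the claim. No issues.
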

\begin{proof}
By base change for upper shriek in the pullback $X\times_Y X$, we have $(\pi_2)^\ast f^!1 \simeq \pi^! 1$ where $\pi_2$ is the second projection.  Applying $\delta^\ast$ we deduce the claim.
\end{proof}

Thus dualizing objects can be simply recovered from what should just be a specialization of them, namely their pullback along a section, as long as we allow ourselves to work in families and change the base.  Of course this is a general principle: $X$ is the ``moduli space of points in $X$'', and the universal section proving this is the diagonal section of the first projection.  But combined with the deformation to the tangent space, it gives a very useful technique for calculating dualizing objects, as in the next proof.

\begin{theorem}
Let $f:X \to Y$ be a representable submersion in $\on{Sh}(\on{Man}_\mbb{R})$.  Write $T_f$ for the relative tangent bundle.  Then there is a natural isomorphism
$$f^!\mbb{S} \simeq \mbb{S}^{T_f}$$
in $\on{Sh}_{et}(\ul{X};\on{Sp})$.
\end{theorem}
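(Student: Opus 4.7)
The plan is to use the relative deformation to the tangent bundle from Section \ref{defsec} together with the universal-section trick (Lemma \ref{universalsection}) to manufacture a single invertible sheaf on $\underline{\mathbb{R}\times X}$ that specializes to $f^!\mathbb{S}$ over $1\in\mathbb{R}$ and to $\mathbb{S}^{T_f}$ over $0\in\mathbb{R}$; then invoke the archimedean ``straight line'' theorem at the end of Section \ref{pathsec} to identify these two stalks. First I would reduce to the case where $X$ and $Y$ are themselves manifolds. This is legitimate because both $f^!\mathbb{S}$ and $\mathbb{S}^{T_f}$ are formed from pullbacks, upper-shrieks, and the zero/diagonal sections, each of which commutes with smooth base change along a representable submersion (in particular along an étale cover of $X$ by manifolds), and similarly $D(X/Y)$ is compatible with such base change. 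Under this reduction the absolute construction from Definition \ref{deformman} literally applies.

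Next, form the relative deformation $D(X/Y)\in\on{Man}_{\mathbb{R}}$, together with its structure map $\Pi:D(X/Y)\to \mathbb{R}\times X$ and the $\mathbb{R}^\times$-equivariant section $\sigma:\mathbb{R}\times X\to D(X/Y)$. By construction $\Pi$ is a representable submersion, and hence by Theorem \ref{realsmooth} the corresponding map of light condensed anima is $\on{Sp}$-smooth; in particular $\Pi^!\mathbb{S}$ is an invertible etale sheaf of spectra. Set
\[
\mathcal{F} \;:=\; \sigma^\ast \Pi^!\mathbb{S}\;\in\; \on{Sh}_{et}(\underline{\mathbb{R}\times X};\on{Sp}),
\]
which is again invertible.

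Now I would identify the two specializations of $\mathcal{F}$ via Lemma \ref{specialdefgeom} in its relative form. Over the open locus $\mathbb{R}^\times\subset\mathbb{R}$, the map $\Pi$ becomes $\on{id}_{\mathbb{R}^\times}\times \pi_1: \mathbb{R}^\times\times(X\times_YX)\to \mathbb{R}^\times\times X$ with $\sigma$ becoming $\on{id}_{\mathbb{R}^\times}\times \delta$ for $\delta:X\to X\times_YX$ the diagonal. Pulling back along $1\in\mathbb{R}^\times$ therefore gives
\[
1^\ast\mathcal{F}\;\simeq\; \delta^\ast \pi_1^!\mathbb{S} \;\simeq\; f^!\mathbb{S},
\]
by Lemma \ref{universalsection} applied to the smooth map $f$. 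Over $0\in\mathbb{R}$, the same lemma identifies $\Pi$ with the tangent bundle projection $p:T_f\to X$ and $\sigma$ with its zero section $e:X\to T_f$, yielding
\[
0^\ast\mathcal{F}\;\simeq\; e^\ast p^!\mathbb{S}\;=\; \mathbb{S}^{T_f}
\]
by definition.

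Finally, I would apply the archimedean version of the path principle from Section \ref{pathsec}: for each point $x:\ast\to X$ the restriction $x^\ast\mathcal{F}$ is an invertible sheaf of spectra on $\underline{\mathbb{R}}$ and hence locally constant, so the theorem provides a natural isomorphism $0^\ast\mathcal{F}\simeq 1^\ast\mathcal{F}$ in $\on{Sh}_{et}(\underline{X};\on{Sp})$; combined with the two specializations computed above, this is the desired $\mathbb{S}^{T_f}\simeq f^!\mathbb{S}$. The main obstacle is really the bookkeeping of naturality at the two endpoints: one must check that the identifications of $\Pi,\sigma$ over $0$ and over $1$ from Lemma \ref{specialdefgeom} really do intertwine $\sigma^\ast\Pi^!$ with the formulas $e^\ast p^!$ and $\delta^\ast\pi_1^!$ in a six-functorially coherent way, and that the reduction to the manifold case is compatible with smooth base change on both sides — no single step is hard, but the consistency is what has to be verified with care.
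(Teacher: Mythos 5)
Your proposal is correct and follows essentially the same route as the paper: write $f^!\mathbb{S}\simeq\delta^\ast\pi^!\mathbb{S}$ via the universal-section Lemma \ref{universalsection}, form the relative deformation $D(X/Y)\to \mathbb{R}\times X$ with its section, use smoothness of $\Pi$ (Theorem \ref{realsmooth}) to get an invertible, locally constant $\sigma^\ast\Pi^!\mathbb{S}$, and identify the fibers at $0$ and $1$ by base change together with the archimedean path theorem of Section \ref{pathsec}. The only cosmetic difference is that you reduce to the case of manifolds by base change, whereas the paper instead extends the deformation construction to representable submersions by descent and runs the argument globally.
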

\begin{proof}
By the lemma, $f^!\mbb{S}\simeq \delta^\ast \pi^!1$.  Now we consider the deformation to the tangent bundle from \ref{deformman}, which we can extend to representable submersions by descent.  This gives a representable submersion

$$\Pi:D(X/Y) \rightarrow X \times \mbb{R}$$

\noindent equipped with a section $\Sigma$, such that over $1\in \mbb{R}$ we recover $\pi$ with its section $\delta$, while over $0\in \mbb{R}$ we recover $T_f \to X$ together with its zero section.  Because $\Pi$ is a representable submersion, by Theorem \ref{realsmooth} we have that $\Pi^!\mbb{S}$ is invertible (and indeed locally constant), and hence so is $\Sigma^\ast \Pi^!\mbb{S}$.  Therefore by Theorem \ref{deformiso} we get a canonical isomorphism
$$1^\ast \Sigma^\ast \Pi^!\mbb{S}\simeq 0^\ast \Sigma^\ast \Pi^!\mathbb{S}.$$
By base-change (which for upper $!$ functors is a part of smoothness), this gives the desired isomorphism
$$\delta^\ast \pi^\ast \mbb{S}\simeq \mbb{S}^{T_f}.$$\end{proof}

Now let's discuss classifying stacks of Lie groups.  Recall from Corollary \ref{realartinsmooth} that these are also smooth.  The same argument as above gives the following.

\begin{lemma}
Let $X\in\on{Sh}(\on{Man}_\mbb{R})$, and let $G\to X$ be a group object in representable submersions over $X$.  Write $f:BG\to X$ for the relative classifying stack, and $e:X\to BG$ for the canonical quotient map.  Let also $\mathfrak{g}\to X$ denote the Lie algebra of $G$, viewed as a vector bundle on $X$.

Then there is a natural isomorphism
$$e^\ast f^!\mbb{S} \simeq \mathbb{S}^{-\mathfrak{g}},$$
where we write $\mathbb{S}^{-\mathfrak{g}}$ for the inverse of $\mbb{S}^{\mathfrak{g}}$.
\end{lemma}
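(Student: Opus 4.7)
The plan is to reduce this statement to the previous theorem (which identifies $f^!\mbb{S}$ with $\mbb{S}^{T_f}$ for a representable submersion) by applying it not to $f:BG\to X$ but rather to the quotient map $e:X\to BG$ itself.

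First I would verify that $e:X\to BG$ is a representable submersion and that its relative tangent bundle $T_e$ is canonically identified with $\mathfrak{g}$. For any map $T\to BG$ from a manifold $T$, the pullback $X\times_{BG}T\to T$ is a principal $G$-torsor relative to $T$, hence a representable submersion; thus $e$ is itself a representable submersion. The standard trivialization identifies $X\times_{BG}X$ with $G$ (as an $X$-manifold via the first projection), carrying the diagonal section to the identity section $X\to G$. The relative tangent bundle $T_e$ on $X$ is by construction the pullback of the relative tangent bundle of the first projection $X\times_{BG}X\to X$ along the diagonal; translating through the identification, this is $e_G^\ast T_{G/X}=\mathfrak{g}$, where $e_G:X\to G$ is the identity section of $G\to X$.

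Given this, the previous theorem applied to $e$ yields a natural isomorphism $e^!\mbb{S}\simeq \mbb{S}^{\mathfrak{g}}$. Now combine this with the general smooth-projection formula $e^!(-)\simeq e^\ast(-)\otimes e^!\mbb{S}$, which holds because $e$ is $\on{Sp}$-smooth by Theorem \ref{realsmooth} (or Corollary \ref{realartinsmooth}). Applying this to $f^!\mbb{S}$ and using functoriality of upper-$!$ together with the identity $f\circ e=\on{id}_X$, we get
$$\mbb{S}\simeq (f\circ e)^!\mbb{S}\simeq e^! f^!\mbb{S}\simeq e^\ast f^!\mbb{S}\otimes \mbb{S}^{\mathfrak{g}},$$
and tensoring with the inverse $\mbb{S}^{-\mathfrak{g}}$ yields the desired canonical isomorphism $e^\ast f^!\mbb{S}\simeq \mbb{S}^{-\mathfrak{g}}$.

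The argument is essentially formal once the previous theorem is in hand, so there is no major obstacle. The only point requiring genuine care is the identification $T_e\simeq \mathfrak{g}$; here one must be consistent about how the relative tangent bundle of a representable submersion to a (non-representable) stack is defined, i.e.\ check that the relative tangent bundle of the first projection $X\times_{BG}X\to X$, pulled back along the diagonal, coincides with the pullback along the identity section of $T_{G/X}$ under the trivialization $X\times_{BG}X\simeq G$. Naturality of all the isomorphisms in the base $(X,G)$ is automatic from the naturality already built into the previous theorem and the smoothness formalism.
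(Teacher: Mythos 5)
Your proposal is correct, but it takes a genuinely different route from the paper. The paper's proof of this lemma is deliberately parallel to the $p$-adic case: it deforms the group $G$ to its Lie algebra via the family $D_e(G/X)$ over $X\times\mbb{R}$ (Definition \ref{deformlie}), passes to relative classifying stacks, invokes the archimedean specialization theorem (the analogue of Theorem \ref{deformiso}) to replace $BG$ by $B\mathfrak{g}$, and only then runs the formal computation you use --- $f\circ e=\on{id}$, the smooth projection formula, and Lemma \ref{universalsection} --- for the additive group $BV$. You instead apply the Atiyah duality theorem for representable submersions (proved just before the lemma) directly to the quotient map $e:X\to BG$, which is indeed a representable submersion with $T_e\simeq\mathfrak{g}$, and then conclude formally. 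What your route buys: you never need the group structure on the deformation, nor smoothness/$!$-ability of the deformed classifying stack over $X\times\mbb{R}$, nor a fresh appeal to the specialization theorem; that linearization input is already packaged inside the previous theorem, whose proof deforms $X\times_Y X$ along the diagonal --- which for $Y=BG$ is precisely the deformation of $G$ along its identity section, so at bottom the two arguments use the same geometry, decomposed differently. What the paper's route buys: it transfers verbatim to the $p$-adic analogue (Lemma \ref{padiclemma}), where your shortcut is unavailable because $e:X\to BG$ is not $\on{Sp}_{\wh{p}}$-smooth and there is no Atiyah duality for $p$-adic submersions ($\mbb{S}_{\wh{p}}^V$ is defined through $BV$, not $V$). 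Two points to keep straight in your write-up, both of which you essentially flag: $X\times_{BG}X\simeq G$ is in general only representable over manifolds (neither $X$ nor $G$ need be a manifold), so the identification $T_e\simeq\mathfrak{g}$ must be phrased through the descent definition of the relative tangent bundle, using that local triviality of torsors over manifolds reduces everything to $G_T\to T$; and one should record that $f:BG\to X$ is $!$-able (local on $X$ by Remark \ref{trulylocal}, then Corollary \ref{realartinsmooth}) so that $(f\circ e)^!\simeq e^!f^!$ is available, though this is needed for the statement to make sense in any case, while $e$ itself is $!$-able and smooth by Lemma \ref{repsubsmooth} and Theorem \ref{realsmooth}.
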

\begin{proof}
We deform $G$ to its Lie algebra (\ref{deformlie}), giving the group object in representable submersions
$$DG\to X\times \mbb{R}.$$
Pass to relative classifying stacks.  Applying Theorem \ref{deformiso} this gives
$$e^\ast f^!\mbb{S} \simeq (e')^\ast (f')^!\mbb{S}$$
where we write $f':B\mathfrak{g}\to X$ and $e':X\to B\mathfrak{g}$ for the maps analogous to $f$ and $e$ but applied to the additive Lie group $\mathfrak{g}$.  Thus it suffices to show for an arbitrary vector bundle $V$ over $X$ that upper shriek followed by upper star in
$$X \to BV \to X$$
gives $\mbb{S}^{-V}$.  But upper shriek followed by upper shriek obviously gives $\mbb{S}$, so by projection formula for upper shriek it suffices to see that upper shriek of $\mathbb{S}$ along $X \to BV$ gives $\mbb{S}^V$.  This follows by Lemma \ref{universalsection}.
\end{proof}

Now by combining with another general principle we deduce the following.

\begin{theorem}\label{realgrouplinearize}
Let $G$ be a real Lie group.  Then for the projection $f:BG \to \ast$ (which we recall is smooth by Corollary \ref{realartinsmooth}), we have
$$f^!\mbb{S} \simeq \mbb{S}^{-\on{ad}_G}$$
in $\on{Sh}_{et}(BG;\on{Sp})$, where $\on{ad}_G$ denotes the adjoint representation of $G$, viewed as a vector bundle on $\ast/G$.
\end{theorem}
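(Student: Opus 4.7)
The plan is to derive the theorem from the preceding lemma using the universal section principle of Lemma \ref{universalsection} applied to $f\colon BG\to\ast$ itself, interpreting the resulting identity via a relative classifying stack sitting over $BG$.

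Because $f$ is smooth (Corollary \ref{realartinsmooth}), Lemma \ref{universalsection} gives $f^!\mbb{S} \simeq \delta^\ast \pi^!\mbb{S}$, where $\pi\colon BG\times BG\to BG$ is the first projection and $\delta\colon BG\to BG\times BG$ is the diagonal. The key geometric input is to recognize $\pi$ as a relative classifying stack. Namely, let $H := BG\times_{BG\times BG}BG \to BG$ be the inertia of $BG$ --- equivalently, the bundle of automorphism groups of the universal $G$-torsor, locally isomorphic over $BG$ to $G$ acting on itself by conjugation. This $H$ is a group object in representable submersions over $BG$ (source and target of the inertia groupoid coincide), and its relative Lie algebra over $BG$ is tautologically the adjoint representation bundle $\on{ad}_G$. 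Since $\delta$ is an effective epimorphism in $\on{Sh}(\on{Man}_\mbb{R})$ --- any pair of $G$-torsors on a test manifold becomes isomorphic after a refining cover --- the \v{C}ech nerve of $\delta$ computes $BG\times BG$ as a colimit. A direct comparison of face maps identifies this \v{C}ech nerve with the bar construction of $H$ as a group over $BG$, yielding a canonical isomorphism $BG\times BG \simeq B_{BG}H$ under which $\pi$ becomes the structural projection $B_{BG}H\to BG$ and $\delta$ becomes the canonical identity section (both arising from the $0$-simplex inclusion).

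Applying the preceding lemma to $X=BG$ with the group object $H\to BG$ then yields $\delta^\ast \pi^!\mbb{S} \simeq \mbb{S}^{-\on{ad}_G}$, which combined with the universal section identity produces the desired conclusion $f^!\mbb{S} \simeq \mbb{S}^{-\on{ad}_G}$.

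The step requiring the most care is the identification $\pi\simeq B_{BG}H$ with $\delta$ corresponding to the identity section; everything else is a routine combination of Lemma \ref{universalsection} and the preceding lemma. This identification is geometrically natural --- it expresses the classical fact that $BG\times BG$ is the ``delooping'' of the inertia $H\to BG$ --- but one must verify carefully that the two simplicial presentations (the \v{C}ech nerve of $\delta$ and the bar construction of $H$ over $BG$) really do coincide with matching face and degeneracy maps, so that the $0$-simplex inclusion on the \v{C}ech side genuinely matches the identity section of the classifying stack on the bar side.
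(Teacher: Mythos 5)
Your proposal is correct and follows essentially the same route as the paper: apply Lemma \ref{universalsection} to write $f^!\mbb{S}\simeq\delta^\ast\pi^!\mbb{S}$, identify $\pi\colon BG\times BG\to BG$ with the relative classifying stack over $BG$ of the adjoint group bundle (the inertia, with $\delta$ as its identity section), and invoke the preceding lemma, whose relative Lie algebra is $\on{ad}_G$. The only cosmetic difference is how the identification $BG\times BG\simeq B_{BG}H$ is justified — you use the \v{C}ech nerve of the (effectively epimorphic) diagonal versus the bar construction, while the paper uses the canonical functor $\on{Ad}_X$ on groupoids and the Grothendieck construction — and both verifications are sound.
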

\begin{proof}
By Lemma \ref{universalsection}, $f^!\mbb{S}$ identifies with upper shriek followed by upper star in
$$BG\to BG\times BG \to BG$$
where the first map is the diagonal $\delta$ and the second map is the first projection $\pi$.  On the other hand this diagram is isomorphic to $f$ and $e$ of the previous lemma in the following situation: take $G$ with its adjoint action and view it as a group $G^{ad}$ over $BG$.  We conclude by applying the previous lemma.
\end{proof}

\begin{remark}
We used an identification of $BG \times BG$ with the relative classifying stack (over $BG$) associated to the adjoint action of $G$ on itself.  This may seem opaque.  We offer the following perspective.  For any groupoid $X$, there is a canonical functor
$$\on{Ad}_X: X \to \on{Gp}$$
to the category of groups.  It sends a point $x\in X$ to the group $\on{Aut}(x)$, and a map $\gamma:x\to y$ to the isomorphism $\on{Aut}(x)\to \on{Aut}(y)$ given by conjugation by $\gamma$.  The composition with the functor from groups to groupoids given by taking the classifying space has a different description: it sends $x\in X$ to the component of $X$ in which $x$ lives.  The identification is gotten by the obvious inlusion functor $B\on{Aut}(x)\rightarrow X$ based at $x$.

In particular, if $X$ is connected this composed functor to groupoids is constant with value $X$.  On the other hand if $X$ is also pointed, hence identifies with $BG$ for a group $G$, then $\on{Ad}_X$ by construction encodes the adjoint action of $G$ on itself.

Taking Grothendieck constructions, this provides the desired canonical identification.  It is functorial in $G$ and therefore passes to topoi.
\end{remark}

\begin{example}
Suppose that $G$ is a real Lie group which is \emph{compact}.  Then the diagonal of $p:BG \to \ast$ is $\on{Sp}$-proper (since it is a locally isomorphic to projection off $G$).  Thus by Definition \ref{etalepropersmoothdef} we get a natural transformation
$$p_! \to p_\ast.$$
On the other hand, passing to left adjoints in $f^!\simeq f^\ast \otimes f^!\mbb{S}$ and using Theorem \ref{realgrouplinearize} shows that
$$p_!(-) \simeq p_\natural(- \otimes \mbb{S}^{\on{ad}_G}).$$

Thus we obtain a natural transformation
$$p_\natural(-\otimes \mbb{S}^{\on{ad}_G}) \to p_\ast(-)$$
of functors $\on{Sh}_{et}(BG;\on{Sp})\to \on{Sp}$, relating the left and right adjoints to the pullback.  But recall from \ref{locconst} that $\on{Sh}_{et}(BG;\on{Sp}) = \on{Sh}_{et}(B|G|;\on{Sp})$ identifies with the $\infty$-category of spectra with $|G|$-action.  In these terms $p_\natural$ stands for the $|G|$-orbit functor and $p_\ast $ for the $|G|$-fixed point functor.  Thus we recover the twisted transfer maps of \cite{AdemTransfer}.

Note that the comparison of the two a priori different maps can be made by seeing they both satisfy the conditions of \cite{NikolausTC} Thm.\ I.4.1.  For the map constructed above, we can argue for this as follows:  because $e:\ast \to \ast/G$ is proper we have that $p_!\to p_\ast$ is an isomorphism on any $e_\ast X$, but by duality for $e$ (a proper representable submersion) this is the same as the $e_\natural X[d]$, hence one sees that $p_!\to p_\ast$ is an iso on compact objects and hence $p_!$ is the left Kan extension of the restriction of $p_\ast$ to the compact objects.
\end{example}

Now we discuss the $p$-adic case.  It's pretty similar, but one difference is that we can't define $\mbb{S}^V$ exactly as in the real case, because $\mbb{Q}_p\to \ast$ is not smooth.  This problem goes away when we pass to classifying stacks, though.

\begin{definition}\label{spherefromvb}
Let $X\in \on{Sh}(\on{Man}_{\mbb{Q}_p})$, and let $V\to X$ be a vector bundle on $X$.  Define
$$\mathbb{S}_{\wh{p}}^{V} = e^\ast f^! \mbb{S}_{\wh{p}},$$
where $X\overset{e}{\rightarrow} BV \overset{f}{\rightarrow} X$ are the quotient and projection maps associated to the relative classifying stack $BV$.
\end{definition}

Note that $BV \to X$ is indeed ($!$-able and) smooth: the universal case is $X=B\on{GL}_d(\mbb{Q}_p)$ which is $!$-good, so this follows from Theorem \ref{padicsmooth} and closure under pullbacks of $!$-able smooth maps.  Thus, as in the real case, $\mathbb{S}_{\wh{p}}^{V}$ is invertible.  Moreover its $\mathbb{F}_p$-homology lives only in degree $d=\on{dim}(V)$, again just as in the real case (see the proof of Theorem \ref{padicsmooth}).

Proceeding exactly as in the real case we get the following.

\begin{lemma}\label{padiclemma}
Let $X$ be a $!$-good $p$-adic analytic smooth Artin stack (\ref{good}), and let $G\to X$ be a group object in representable submersions over $X$.  Write $f:BG\to X$ for the relative classifying stack, and $e:X\to BG$ for the canonical quotient map.  Let also $\mathfrak{g}\to X$ denote the Lie algebra of $G$, viewed as a vector bundle on $X$.

Then there is a natural isomorphism
$$e^\ast f^!\mbb{S}_{\wh{p}} \simeq \mathbb{S}_{\wh{p}}^{\mathfrak{g}}.$$
\end{lemma}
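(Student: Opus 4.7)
The plan is to mimic the structure of the preceding real-case lemma, but substituting the $\mathbb{Q}_p^\times$-equivariant deformation from Section \ref{defsec} together with the $p$-complete ``path'' isomorphism from Theorem \ref{deformiso}. Concretely, I would first apply the relative deformation functor $D_e(-/X)$ of Definition \ref{deformlie} to $G\to X$ to produce a $\mathbb{Q}_p^\times$-equivariant $\mathbb{Q}_p$-Lie group $DG$ over $\mathbb{Q}_p\times X$. Equivalently, this gives a group object $\overline{DG}$ in representable submersions over the stack $(\mathbb{Q}_p/\mathbb{Q}_p^\times)\times X$. Passing to the relative classifying stack yields a map
$$\tilde f\colon B\overline{DG}\to (\mathbb{Q}_p/\mathbb{Q}_p^\times)\times X$$
together with its identity section $\tilde e$, and I would take as my candidate the sheaf $\tilde{\mathcal F}:=\tilde e^\ast\tilde f^!\,\mathbb{S}_{\wh p}$.

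The first verification is that $\tilde{\mathcal F}$ is invertible. The base $(\mathbb{Q}_p/\mathbb{Q}_p^\times)\times X$ is $!$-good: $\mathbb{Q}_p/\mathbb{Q}_p^\times$ is a quotient stack and hence $!$-good by Example \ref{lotsaregood}, and one checks from the definition that products of $!$-good stacks are $!$-good (given manifold $!$-covers $M\to \mathbb{Q}_p/\mathbb{Q}_p^\times$ and $N\to X$, the product $M\times N$ is a manifold and a $!$-cover of the product by closure of $!$-covers under pullback and composition). Then Theorem \ref{padicsmooth} gives that $\tilde f$ is $\on{Sp}_{\wh p}$-smooth, so $\tilde f^!\mathbb{S}_{\wh p}$ is invertible and hence so is $\tilde{\mathcal F}$.

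With invertibility in hand, the hypothesis of Theorem \ref{deformiso} is automatic (an invertible sheaf is a colimit of dualizables), and I obtain a canonical isomorphism $0^\ast\tilde{\mathcal F}\simeq 1^\ast\tilde{\mathcal F}$ in $\on{Sh}_{et}(X;\on{Sp}_{\wh p})$. It remains to identify the two fibers. Over $\mathbb{Q}_p^\times\subset \mathbb{Q}_p$, Lemma \ref{specialdefgeom} identifies $DG$ $\mathbb{Q}_p^\times$-equivariantly with $\mathbb{Q}_p^\times\times G$, so the pullback of $\overline{DG}$ along the $1$-section is just $G$ over $X$; base-change for $\tilde f^!$ (part of smoothness) then gives $1^\ast\tilde{\mathcal F}\simeq e^\ast f^!\mathbb{S}_{\wh p}$ for $f\colon BG\to X$. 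Over $0\in\mathbb{Q}_p$, the deformation restricts to the Lie algebra $\mathfrak g$ with $\mathbb{Q}_p^\times$ acting by (inverse) scalar multiplication; pulling back along the factorization $X\to B\mathbb{Q}_p^\times\times X\to (\mathbb{Q}_p/\mathbb{Q}_p^\times)\times X$ of the $0$-section and using base-change at each step yields $0^\ast\tilde{\mathcal F}\simeq(e')^\ast(f')^!\mathbb{S}_{\wh p}$ for $f'\colon B\mathfrak g\to X$, which is $\mathbb{S}_{\wh p}^{\mathfrak g}$ by Definition \ref{spherefromvb}. Combining gives the claim.

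I expect the main obstacle to be essentially bookkeeping rather than conceptual: checking that forming the relative classifying stack commutes with taking the $\mathbb{Q}_p^\times$-quotient (so that $B\overline{DG}\simeq BDG/\mathbb{Q}_p^\times$ in the relevant sense), and that the six-functor base-change through the stacky factor $B\mathbb{Q}_p^\times$ at $0$ recovers $\mathbb{S}_{\wh p}^{\mathfrak g}$ on the nose rather than some $\mathbb{Q}_p^\times$-twisted variant of it. The real content, namely deforming $G$ to $\mathfrak g$ and moving along nonstandard ``paths'' in $\mathbb{Q}_p/\mathbb{Q}_p^\times$, has already been done in Sections \ref{defsec} and \ref{pathsec}.
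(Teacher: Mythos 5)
Your proposal is correct and follows essentially the same route as the paper: deform $G$ to $\mathfrak g$ via $D_e(-/X)$ over $X\times(\mathbb{Q}_p/\mathbb{Q}_p^\times)$, pass to the relative classifying stack, note everything in sight is $!$-good so Theorem \ref{padicsmooth} gives smoothness and invertibility, and then apply Theorem \ref{deformiso} plus base-change, with the fiber at $0$ being $\mathbb{S}_{\wh p}^{\mathfrak g}$ by Definition \ref{spherefromvb}. The extra bookkeeping you flag (invertibility hypothesis, $!$-goodness of the product, compatibility of classifying stacks with the $\mathbb{Q}_p^\times$-quotient) is exactly what the paper's terse proof leaves implicit.
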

\begin{proof}
We once again apply the deformation to the tangent bundle, specialized along the identity section, see \ref{deformlie}.  This gives a relative Lie group
$$DG \to X\times (F/F^\times)$$
which over $1$ recovers $G$ and over $0$ recovers $\mathfrak{g}$ with its additive Lie group structure.  Pass to the relative classifying stack of $DG$, call it $BDG$.  Note that every stack which appears is still $!$-good, \ref{lotsaregood}.  Then by Theorem \ref{deformiso} and base-change we deduce the claim.
\end{proof}

Now we can deduce the main result of this article.

\begin{theorem}\label{maintheorem}
Let $G$ be a $p$-adic Lie group.  Then for $f:BG\to \ast$, we have
$$f^!\mbb{S}_{\wh{p}} \simeq \mbb{S}_{\wh{p}}^{\on{ad}_G}$$
where $\on{ad}_G$ denotes the adjoint representation of $G$, viewed as a vector bundle on $BG$.
\end{theorem}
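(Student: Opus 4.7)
The proof will closely parallel the real case in Theorem \ref{realgrouplinearize}, with Lemma \ref{padiclemma} serving as the nonarchimedean replacement for the vector bundle lemma used there. The essential ingredients are already available, so my plan is to execute the ``universal section'' technique and reinterpret the diagonal geometrically.

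First I would check that $f:BG\to\ast$ is indeed $\mathbb{S}_{\widehat{p}}$-smooth, so that $f^!\mathbb{S}_{\widehat{p}}$ is well-defined as an invertible object and so that Lemma \ref{universalsection} applies. This follows from Theorem \ref{padicsmooth}(1) upon noting that $\ast$ is trivially $!$-good (being a point), and that $G \to \ast$ is a group object in representable submersions over $\ast$.

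Next I would invoke Lemma \ref{universalsection} to rewrite
\[
f^!\mathbb{S}_{\widehat{p}} \;\simeq\; \delta^\ast\,\pi^!\,\mathbb{S}_{\widehat{p}},
\]
where $\delta:BG \to BG\times BG$ is the diagonal and $\pi:BG\times BG \to BG$ is the first projection. The key geometric observation, as in the real case, is that this square identifies with the square $BG \overset{e}{\to} B(G^{\mathrm{ad}}) \overset{f'}{\to} BG$ associated to the relative classifying stack of the $BG$-group object $G^{\mathrm{ad}}$, namely $G$ acting on itself by conjugation. Concretely, the fiber of $\pi$ over a basepoint classifying a $G$-torsor $P$ is the groupoid $P/G$ (acting by conjugation), and this is precisely $B(\mathrm{Aut}_G(P))$, which recovers the $\mathrm{ad}$-twisted form of $BG$ over $BG$. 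The functoriality remark after Theorem \ref{realgrouplinearize} works verbatim in the condensed setting, so this identification transfers.

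Finally I would apply Lemma \ref{padiclemma} with base $X = BG$ (which is itself $!$-good, by Theorem \ref{good}(2) applied to the group object $G$ over $\ast$) and relative Lie group $G^{\mathrm{ad}}\to BG$, whose relative Lie algebra is precisely $\mathrm{ad}_G$ as a vector bundle on $BG$. This yields $\delta^\ast \pi^! \mathbb{S}_{\widehat{p}} \simeq \mathbb{S}_{\widehat{p}}^{\mathrm{ad}_G}$, which combined with the earlier identification produces the claimed isomorphism.

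Honestly, no step here looks like a serious obstacle: all the hard work has already been done in establishing the deformation-to-the-tangent-bundle construction (Section \ref{defsec}), the $\widetilde{I}$-based ``contractibility'' result Theorem \ref{deformiso}, the smoothness of $BG\to\ast$ in Theorem \ref{padicsmooth}, and the linearization Lemma \ref{padiclemma}. The mild subtlety, worth spelling out but not difficult, is the identification of $BG\times BG \to BG$ with the relative classifying stack of $G^{\mathrm{ad}}\to BG$; this is a formal groupoid/stack statement which is independent of the $p$-adic context and matches exactly the identification used in the real case. So the proof in essence reduces to stringing together Lemma \ref{universalsection}, this stacky identification, and Lemma \ref{padiclemma}.
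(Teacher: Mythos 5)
Your proposal is correct and follows essentially the same route as the paper's own proof: the paper likewise writes $f^!\mbb{S}_{\wh{p}}$ as upper-shriek-then-upper-star along $BG\to BG\times BG\to BG$ (the content of Lemma \ref{universalsection}), identifies that span with the $e$ and $f$ of the relative classifying stack of $G$ acting on itself by conjugation, and concludes by Lemma \ref{padiclemma} applied over the $!$-good base $BG$. Your additional explicit checks (smoothness via Theorem \ref{padicsmooth} and $!$-goodness of $BG$) are exactly the implicit inputs the paper relies on.
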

\begin{proof}
By base-change, $f^!\mbb{S}_{\wh{p}}$ identifies with the shriek pullback followed by star pullback in
$$BG \to BG \times BG \to BG$$
which in turn identifies with the $e$ and $f$ from Lemma \ref{padiclemma} applied to the group over $BG$ corresponding to the adjoint action of $G$.  Thus Lemma \ref{padiclemma} gives the conclusion.
\end{proof}

\begin{example}
Suppose furthermore that $G$ is compact and $p$-torsionfree.  Then $f_!\simeq f_\ast$ by Theorem \ref{padicsmooth}.  Thus we deduce that
$$f_\natural(-) \simeq f_\ast(-\otimes \mathbb{S}_{\wh{p}}^{\on{ad}_G}).$$
This means that homology and cohomology on $BG$ with $p$-complete spectrum coefficients differ by this explicit twist.
\end{example}

Given this result, one would like to get some handle on the construction $V\mapsto \mbb{S}_{\wh{p}}^V$ from Definition \ref{spherefromvb}.  If we change coefficients from $\on{Sp}_{\wh{p}}$ to $\on{D}(\mbb{Z})_{\wh{p}}$, a very explicit description is possible.  This also shows that our duality does recover Lazard's duality with the same explicit twist.

\begin{proposition}\label{classicaltwists}
Let $V$ be a vector bundle over an $X\in\on{Sh}(\on{Man}_{\mbb{Q}_p})$, say of constant dimension $d$.  Write $\Lambda^d V$ for the top exterior power of $V$.  This is a one-dimensional vector bundle, hence is classified by a map
$$X\to B\mathbb{Q}_p^\times$$
in $\on{Sh}(\on{Man}_{\mbb{Q}_p})$.  Composing with the homomorphism
$$\mbb{Q}_p^\times \to \mbb{Z}_p^\times$$
which is the identity on $\mbb{Z}_p^\times$ and sends $p\mapsto 1$, we deduce that $\Lambda^d V$ admits a canonical $\mbb{Z}_p$-structure; denote this by $(\Lambda^d V)_{\mbb{Z}_p}$.  Writing this as the limit of its mod $p^n$ versions, which are finite etale, we can view $(\Lambda^d V)_{\mbb{Z}_p}$ as an object in $\on{Sh}_{et}(X;\on{D}(\mbb{Z})_{\wh{p}})$.  Then we have
$$(\mbb{S}_{\wh{p}}^V\otimes \mathbb{Z})_{\wh{p}}\simeq (\Lambda^d V)_{\mbb{Z}_p}[d].$$
\end{proposition}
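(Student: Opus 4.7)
The plan is to reduce to a universal computation for a line bundle and to identify both invertible sheaves via their classifying characters.

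First, by naturality of both sides under pullback along morphisms $g: Y \to X$ — which for $\mathbb{S}_{\hat{p}}^V$ follows from base-change for upper-$!$ applied to the Cartesian square $BV_Y = Y \times_X BV_X$ — it suffices to check the claim universally, with $X = B\on{GL}_d(\mathbb{Q}_p)$ and $V$ the tautological vector bundle. Both sides are then invertible objects in $\on{Sh}_{et}(X;\on{D}(\mathbb{Z})_{\hat{p}})$. From the computation in the proof of Theorem \ref{padicsmooth}, the mod-$p$ reduction of $\mathbb{S}_{\hat{p}}^V$ is concentrated in cohomological degree $-d$ and locally isomorphic to $\mathbb{F}_p[d]$, matching the shift on the right-hand side. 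Thus the claim is reduced to identifying two continuous characters $\on{GL}_d(\mathbb{Q}_p) \to \mathbb{Z}_p^\times$, each of which factors through $\det$ (since $\on{SL}_d(\mathbb{Q}_p)$ has no nontrivial continuous characters to $\mathbb{Z}_p^\times$).

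Next, I would establish a Thom-additivity principle: for any short exact sequence $0 \to V' \to V \to V'' \to 0$ of vector bundles on $X$, there is a natural isomorphism $\mathbb{S}_{\hat{p}}^V \simeq \mathbb{S}_{\hat{p}}^{V'} \otimes \mathbb{S}_{\hat{p}}^{V''}$. The key observation is that the map $f_V: BV \to X$ factors as $BV \overset{g}{\to} BV'' \overset{f_{V''}}{\to} X$, with $g$ a $V'$-torsor fibration whose pullback along $e_{V''}: X \to BV''$ recovers $f_{V'}: BV' \to X$. Combining $f_V^! = g^! f_{V''}^!$, base-change for upper-$!$, and compatibility of the identity sections $e_V, e_{V'}, e_{V''}$ yields the decomposition. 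The right-hand side $V \mapsto (\Lambda^d V)_{\mathbb{Z}_p}[d]$ is Thom-additive by the multiplicativity of top exterior powers and additivity of shifts. Pulling back the tautological bundle along the maximal torus $(\mathbb{Q}_p^\times)^d \hookrightarrow \on{GL}_d(\mathbb{Q}_p)$ splits it into $d$ line bundles, so Thom-additivity reduces the problem to the case $d=1$: identifying $\mathbb{S}_{\hat{p}}^{\mathbb{Q}_p} \otimes \mathbb{Z}$ with $(\mathbb{Q}_p)_{\mathbb{Z}_p}[1]$ as sheaves on $B\mathbb{Q}_p^\times$, i.e.\ computing the character $\chi: \mathbb{Q}_p^\times \to \mathbb{Z}_p^\times$ governing the $\mathbb{Q}_p^\times$-action on the stalk.

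The hardest step is this line-bundle case. Using the decomposition $\mathbb{Q}_p^\times \cong p^{\mathbb{Z}} \times \mathbb{Z}_p^\times$, I would treat the two factors separately. For the triviality of $\chi(p)$, the essential tool is Theorem \ref{deformiso}: the invertible sheaf $\mathbb{S}_{\hat{p}}^V$ arises as the stalk at $1$ of a $\mathbb{Q}_p^\times$-equivariant invertible sheaf on $B\mathbb{Q}_p^\times \times \mathbb{Q}_p$ (obtained by running the construction in families over the scaling parameter and observing that $V$ is its own deformation-to-the-tangent-space). Descending to $B\mathbb{Q}_p^\times \times (\mathbb{Q}_p/\mathbb{Q}_p^\times)$ and applying Theorem \ref{deformiso} supplies an isomorphism between the stalks at $1$ and at any other nonzero point; equivariance then identifies the stalk at $p$ with the $\chi(p)$-twist of the stalk at $1$, forcing $\chi(p) = 1$. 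For the restriction $\chi|_{\mathbb{Z}_p^\times}$, one first identifies the mod-$p$ reduction with the standard $\mathbb{F}_p^\times$-character on the Thom line of the tautological $\mathbb{F}_p$-representation, which pins down $\chi$ modulo $1 + p\mathbb{Z}_p$. The residual ambiguity — which a priori would allow $\chi(u) = u^\lambda$ for some $\lambda \in \mathbb{Z}_p$ on $1+p\mathbb{Z}_p$ — is the core obstacle; I expect to resolve it by exploiting functoriality of $V \mapsto \mathbb{S}_{\hat{p}}^V$ under the action of the monoid $\on{End}(V) = \mathbb{Q}_p$ (compatibility with scaling by integers $n \in \mathbb{Z}_{\geq 1}$ forces $\chi(n)$ to equal $n$ via an integrality constraint, and these are dense in $\mathbb{Z}_p^\times$).
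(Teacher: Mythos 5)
Your reduction to the universal case $X=B\on{GL}_d(\mathbb{Q}_p)$, the observation that both sides are classified by continuous characters factoring through $\det$, and the reduction to $d=1$ are all fine and essentially parallel to the paper. The gap is in the $d=1$ step, and it is serious. Your argument for $\chi(p)=1$ via Theorem \ref{deformiso} does not work: for a vector group $V$ the deformation to the tangent space is the constant family (both the fiber at $t\neq 0$ and at $t=0$ are $V$), so the only thing Theorem \ref{deformiso} can compare is $0^\ast\mathcal{F}$ with $1^\ast\mathcal{F}$, and here that comparison is vacuous. More importantly, the theorem does \emph{not} supply an isomorphism between the stalks at $1$ and at $p$: the two composites $\ast\xrightarrow{1}\mathbb{Q}_p\to\mathbb{Q}_p/\mathbb{Q}_p^\times$ and $\ast\xrightarrow{p}\mathbb{Q}_p\to\mathbb{Q}_p/\mathbb{Q}_p^\times$ are identified only through the $2$-morphism given by the group element $p$, so ``the stalk at $p$ is the $\chi(p)$-twist of the stalk at $1$'' is the \emph{definition} of the action, not a constraint; concluding $\chi(p)=1$ from it is circular. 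In fact the vanishing of $\chi$ on $p$ is not a purely formal local statement in this framework: the paper proves it by invoking the reciprocity law of Section \ref{jsec} (Theorem \ref{reciprocity}), using $p\in\mathbb{Z}[1/p]^\times$ to transport the question to the real place, where $p$ and $1$ lie in the same connected component of $\mathbb{R}^\times$. Some such extra input is needed, and your proposal contains none.

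The argument for $\chi|_{\mathbb{Z}_p^\times}$ is also incomplete. The mod-$p$ identification only pins $\chi$ down modulo $1+p\mathbb{Z}_p$, as you say, and your proposed resolution of the remaining ambiguity (``compatibility with scaling by integers $n$ forces $\chi(n)=n$ via an integrality constraint'') has no mechanism behind it: multiplication by an integer prime to $p$ on $\mathbb{Q}_p$ is just an element of $\mathbb{Q}_p^\times$, and its action on the twist is $\chi(n)$ by definition, so nothing forces $\chi(n)=n$ without an independent computation. The paper avoids this entirely by passing to $B\mathbb{Z}_p$ (étale over $B\mathbb{Q}_p$) and computing the twist $\mathbb{D}_L=e^\ast f^!\mathbb{Z}_p$ for a free rank-one $\mathbb{Z}_p$-module $L$ directly: using the group structure on $BL$ to see $f^!\mathbb{Z}_p$ is constant, the split surjection $f_!f^!\mathbb{Z}_p\to\mathbb{Z}_p$, properness $f_!\simeq f_\ast$, and the calculation of $f_\ast\mathbb{Z}_p$, one gets a \emph{functorial} isomorphism $\mathbb{D}_L\simeq L[1]$, which yields the full $\mathbb{Z}_p^\times$-equivariance (scalar action) in one stroke. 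I would recommend replacing your two deficient steps with arguments of this type: a genuinely equivariant/functorial identification over $B\mathbb{Z}_p$ for the unit part, and the global reciprocity input for the value at $p$.
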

\begin{proof}
The universal case is $X= B\on{GL}_d(\mbb{Q}_p)$.  We know that $(\mbb{S}_{\wh{p}}^V\otimes \mathbb{Z})_{\wh{p}}$ is invertible and lives in degree $d$, so its $(-d)$-shift is classified by a map of light condensed anima
$$B\on{GL}_d(\mbb{Q}_p) \to B\mbb{Z}_p^\times,$$
or in other words a homomorphism $\on{GL}_d(\mbb{Q}_p)\to \mbb{Z}_p^\times$.  We need to see that this identifies with the determinant map followed by the homomorphism $\mathbb{Q}_p^\times \to \mathbb{Z}_p^\times$ considered in the statement.  Because the abelianization of $\on{GL}_d(\mbb{Q}_p)$ is realized by the determinant, we can restrict along $\on{GL}_1(\mathbb{Q}_p)\to \on{GL}_d(\mathbb{Q}_p)$ and thereby reduce to the case $d=1$.

Thus we're left with showing the following: consider the trivial one-dimensional vector bundle $\mbb{Q}_p\to \ast$ over the point, and use this to form
$$(\mbb{S}_{\wh{p}}^{\mbb{Q}_p} \otimes \mbb{Z})_{\wh{p}}[-1].$$
By functoriality we have an action of $\mbb{Q}_p^\times$ on this free $\mbb{Z}_p$-module of rank $1$, induced by the action by scalar multiplication on the group object $\mathbb{Q}_p$.  We need to show that $p$ acts trivially and $\mathbb{Z}_p^\times$ acts by scalar multiplication.

To see that $p$ acts trivially, note $p\in\mbb{Z}[1/p]^\times$, so by the reciprocity explained in the next section (Section \ref{jsec}), we can reduce to showing the analogous result for the the action of $p\in\mbb{R}^\times$ on $\mbb{S}^{\mbb{R}}$.  But then it follows because $p$ lies in the same connected component of $\mbb{R}^\times$ as $1$.

To see that $\mbb{Z}_p^\times$ acts by scalar multiplication, we can pass from $B\mathbb{Q}_p$ to $B\mathbb{Z}_p$, using that $B\mbb{Z}_p\to B\mbb{Q}_p$ is etale.  For clarity of notation let's replace $\mbb{Z}_p$ by an arbitrary free $\mbb{Z}_p$-module of rank one $L$.  We're trying to show that the twist for $\mbb{Z}_p$-coefficients of $BL$ identifies with $L[1]$.

This twist is $\mbb{D}_L:=e^\ast f^!\mbb{Z}_p$ in the diagram
$$\ast \to BL \to \ast.$$
But since $L$ is abelian, $BL$ gets a group structure, so $f^!\mbb{Z}_p$ identifies with the constant sheaf on our twist, say $f^!\mbb{Z}_p = f^\ast \mbb{D}_L$.  Adjunction gives a natural map
$$f_! f^! \mbb{Z}_p \to \mbb{Z}_p$$
which is split surjective (by considering $e$).  On the other hand by properness of $f$ and the above we can rewrite this split surjective map as
$$(f_\ast \mbb{Z}_p)\otimes \mbb{D}_L \to \mbb{Z}_p.$$
Looking in degree $0$ we deduce that $\on{Hom}(L,\mbb{Z}_p)[-1]\otimes \mbb{D}_L\simeq \mathbb{Z}_p$, so that $\mbb{D}_L\simeq L[1]$ as desired.

\end{proof}

\begin{remark}
This is analogous to the following well-known Archimedean fact: if $V$ is a vector bundle of dimension $d$ over $X\in\on{Sh}(\on{Man}_{\mbb{R}})$, then $\mbb{S}^V\otimes\mathbb{Z}$ can be described as follows: classify the top exterior power $\Lambda^dV$ by a map
$$X \to B\mathbb{R}^\times,$$
then project to $B(\pm 1) = B\mathbb{Z}^\times$ to deduce a local system of free $\mathbb{Z}$-modules of rank $1$ on $X$.  Shifting up by $d$, this identifies with $\mbb{S}^V\otimes\mathbb{Z}$.  In the case of the tangent bundle of a manifold, this is of course just the usual description of the ``orientation'' local system.
\end{remark}

\begin{remark}
We will get more information about this operation $V\mapsto \mbb{S}_{\wh{p}}^V$ in the next section.
\end{remark}

\section{J-homomorphisms from six functor formalisms}\label{jsec}

In the previous section, we recorded the ``twists'' relevant for Poincaré duality using the following construction: given a smooth map $f:X\to Y$ equipped with a section $s:Y\to X$, form
$$s^\ast f^!1,$$
which is an invertible object in the category which is value of our six functor formalism on $Y$.  We applied this in particular when $f:X\to Y$ is a vector bundle and $s$ its zero section, or when $f:X\to Y$ is the relative classifying stack of a vector bundle and $s$ is the natural quotient map.

In this section we would like to encode relationships between these twists for different $X\to Y$, in the general abstract context of a six functor formalism in the sense of \cite{HeyerSixFunctors}.  We will keep $Y$ fixed, so passing to the slice category we may as well assume $Y=\ast$.  Three basic examples of such relationships we want to encode are the following:
\begin{enumerate}
\item If $V\to \ast$ is a finite dimensional real vector space, then the twist for $V$ (with its zero section) identifies with the inverse of the twist for $BV$.
\item If $V \to W \to W/V$ is a short exact sequence of vector bundles, then the twist for $W$ is the tensor product of the twists for $V$ and $W/V$.
\item If $f:X\to Y$ is etale, then the twist for $f$ is canonically trivial.
\end{enumerate}
There are also more subtle relationships between the real and $p$-adic situations, which we will explore, but in the end they are of similar flavor to the above: they are all in some way ``K-theoretic'' in nature.

However, to express these relationships in a simple way it pays to change the setting a bit.  Once we are in a ``linear'' context like vector bundles, it happens surprisingly often that the smooth map $f:X\to \ast$ in question satisfies the property that $f^\ast$ is fully faithful.  For example this holds for real vector bundles (as is well-known), but also for relative classifying stacks of $p$-adic vector bundles (as we will check later).  Then if we pass to the classifying stack one more time, we get an even simpler situation: the pullback $f^\ast$ is not just fully faithful, but also an equivalence.  By smoothness it then follows that $f^!$ is also an equivalence, and therefore so is its adjoint $f_!$.  Then the twist is, up to passing to the inverse (which is a matter of sign convention), directly encoded by the six functor formalism: it is the image of the span
$$\ast \overset{f}{\leftarrow} X \overset{f}{\rightarrow} \ast$$
under the lax symmetric monoidal functor which ``is'' our six functor formalism.

This leads to the following definition.

\begin{definition}
Let $(\mathcal{C},E,M)$ be a six functor formalism in the sense of \cite{HeyerSixFunctors} (see Section \ref{sixsec}).  Say that a map $f:X\to Y$ in $\mc{C}$ is \emph{vectorial} if it lies in $E$, and for every base-change $f'$ of $f$, the functors $(f')^\ast$ and $(f')_!$ are equivalences.

Say that an object $X\in\mc{C}$ is \emph{vectorial} if the map $f:X\to \ast$ is vectorial.
\end{definition}

\begin{lemma}\label{vectorialproperties}
Let $(\mc{C},E,M)$ be a six functor formalism.  Then:
\begin{enumerate}
\item The class of vectorial maps in $\mc{C}$ contains all isomorphisms and is closed under composition and base-change.
\item Given composable maps $f$ and $g$ with composition $g\circ f$, if $f$ is vectorial and $g\circ f$ is vectorial, then $g$ is vectorial.
\item If $f:X\to S$ and $g:Y\to S$ are vectorial, then for any map $h:X\to Y$ with $g\circ h = f$, the map $h$ lies in $E$ and $h^\ast$ and $h_!$ are both equivalences (but $h$ need not be vectorial).
\end{enumerate}
\end{lemma}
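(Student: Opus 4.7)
The plan is to verify each of the three clauses directly from the axioms of a six functor formalism, exploiting the geometric closure properties of $E$ together with the functoriality of $(-)^\ast$ and $(-)_!$ under composition and base-change.

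For Part 1, the three closure properties follow from the corresponding closure properties of $E$. Isomorphisms are in $E$ since $E$ is geometric, and their pullback and lower-shriek are equivalences by the adjoint data. For composition: if $f\colon X\to Y$ and $g\colon Y\to Z$ are both vectorial then $g\circ f\in E$, and $(g\circ f)^\ast = f^\ast\circ g^\ast$ and $(g\circ f)_! = g_!\circ f_!$ are compositions of equivalences; the same argument applies to any base-change $(g\circ f)' = g'\circ f'$ obtained by pasting pullback squares. For base-change: if $f$ is vectorial and $f'$ is a base-change of $f$, then any further base-change of $f'$ is again a base-change of $f$, so the equivalence condition transfers, and $E$ is itself closed under base-change.

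For Part 2, the equivalence conditions are formal: from $(g\circ f)^\ast = f^\ast\circ g^\ast$ with both $f^\ast$ and $(g\circ f)^\ast$ equivalences, one reads off $g^\ast = (f^\ast)^{-1}\circ (g\circ f)^\ast$ as a composition of equivalences, and similarly $g_! = (g\circ f)_!\circ (f_!)^{-1}$. These formulas propagate to every base-change of $g$ by the same argument applied to the corresponding pasted diagrams. The main obstacle is the membership $g\in E$, since $E$ does not satisfy cancellation in full abstract generality. The key observation is that the graph $\Gamma_f\colon X\to X\times_Z Y$ lies in $E$: it factors as $X\xrightarrow{\Delta_{g\circ f}} X\times_Z X\xrightarrow{\mathrm{id}_X\times f} X\times_Z Y$, the first factor lying in $E$ by closure under diagonals (applied to $g\circ f\in E$) and the second by base-change of $f\in E$. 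Combined with the fact that the projection $X\times_Z Y\to Y$ is a base-change of $g\circ f$ and hence in $E$, this realises $g$ as a map obtained from $E$-maps; the vectoriality of $f$ then serves as the descent mechanism to pull the membership $g\circ f\in E$ back to $g\in E$. Carrying out this descent cleanly is the main technical difficulty.

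For Part 3, the key construction is the graph $\Gamma_h\colon X\to X\times_S Y$, $x\mapsto (x,h(x))$, which is the pullback of $\Delta_g\colon Y\to Y\times_S Y$ along $(h,\mathrm{id}_Y)\colon X\times_S Y\to Y\times_S Y$. Since $g\in E$ we have $\Delta_g\in E$ by closure under diagonals, so $\Gamma_h\in E$ by closure under base-change. The second projection $\pi_Y\colon X\times_S Y\to Y$ is the base-change of $f\in E$ along $g$; by Part 1 it is vectorial. Writing $h = \pi_Y\circ\Gamma_h$ then gives $h\in E$. For the equivalence properties: $(\pi_Y)^\ast$ and $(\pi_Y)_!$ are equivalences by vectoriality of $\pi_Y$; and to handle $\Gamma_h$ it suffices, by base-change, to show that $(\Delta_g)^\ast$ and $(\Delta_g)_!$ are equivalences. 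But $\pi_1\circ\Delta_g = \mathrm{id}_Y$, and $\pi_1\colon Y\times_S Y\to Y$ is a base-change of $g$, hence vectorial, so $(\Delta_g)^\ast = ((\pi_1)^\ast)^{-1}$ and $(\Delta_g)_! = ((\pi_1)_!)^{-1}$ are indeed equivalences. Composing with $\pi_Y$ yields the desired equivalences for $h^\ast$ and $h_!$. The parenthetical in the statement presumably reflects that one is not asserting the equivalence condition on every base-change of $h$, although the same argument in fact delivers it.
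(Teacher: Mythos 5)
Your Part 1 and the 2-of-3 reasoning for the functor statements are the same argument the paper intends, and your graph factorization $h=\on{pr}_Y\circ\Gamma_h$ is the standard way to get $h\in E$ in Part 3 from the geometric-class axioms. However, one step in Part 3 is genuinely wrong: you reduce the equivalence claims for $\Gamma_h$ to those for $\Delta_g$ ``by base-change''. Being an equivalence is \emph{not} a property of $(-)^\ast$ or $(-)_!$ that passes to base-changes of the map --- that is precisely why the definition of vectorial quantifies over all base-changes, and precisely why the parenthetical in the statement is there: in the paper's own counterexample $\ast\to B\mathbb{R}\to\ast$, the map $\ast\to B\mathbb{R}$ has invertible $(-)^\ast$ and $(-)_!$ but its base-change $\mathbb{R}\to\ast$ does not. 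For the same reason your closing sentence, asserting that your argument in fact shows $h$ is vectorial, is false. The repair is easy: once $h\in E$ is known, apply 2-of-3 directly to $f^\ast=h^\ast\circ g^\ast$ and $f_!=g_!\circ h_!$ (this is all the paper does); or, if you prefer the graph, use the retraction $\on{pr}_X\circ\Gamma_h=\on{id}_X$, where $\on{pr}_X$ is a base-change of the vectorial map $g$ and hence has invertible $(-)^\ast$ and $(-)_!$.

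In Part 2 you correctly isolate the membership $g\in E$ as the real issue, but you then leave it unproved, appealing to an unspecified ``descent mechanism''; as written this is a gap, and the graph of $f$ by itself gives nothing, since no formal right-cancellation holds for an abstract geometric class $E$ (membership in $E$ is part of the given data and cannot be detected from the coefficient categories alone). What actually closes this in the paper's setting is that a vectorial $f$ is smooth with conservative (indeed invertible) pullback, so the $!$-descent results of Heyer--Mann quoted in Remark \ref{etalepropersmoothlocal} let one deduce $g\in E$ from $g\circ f\in E$; alternatively, in the places where Part 2 is invoked (e.g.\ Lemma \ref{bvectorial}), the needed $E$-membership is available separately, either from such descent or from left-cancellation once both $Y$ and $Z$ are known to be $!$-able over the base. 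The paper's own proof only records the 2-of-3 statement; a complete write-up of Part 2 should say explicitly where $g\in E$ comes from.
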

\begin{proof}
Part 1 is clear from the definition.  Parts 2 and 3 follow from the 2-of-3 property for isomorphisms.  To see that $h$ in 3 need not be vectorial, consider $\ast \to B\mathbb{R} \to \ast$ in light condensed anima.  The map $\ast \to B\mathbb{R}$ is not vectorial because it has $\mbb{R}\to\ast$ as a base-change.
\end{proof}

\begin{remark}
Given a six functor formalism $(\mc{C},E,M)$ such that $\mc{C}$ has finite limits and any object $S\in \mc{C}$, we can restrict to the slice $\mc{C}_{/S}$ to get a new six functor formalism.  A map $f:X\to S$ in $\mathcal{C}$ is vectorial as a map in $\mc{C}$ if and only if it is vectorial as an object in $\mc{C}_{/S}$.  Thus we can often reduce discussion of vectorial maps to vectorial objects.
\end{remark}

\begin{remark}
Every vectorial map $f:X\to Y$ is smooth.  Indeed since $f_!$ is an equivalence (also after any base-change), the required properties of the right adjoint $f^!$ follow from those of $f_!$.
\end{remark}

\begin{definition}\label{jdef}
Let $(\mc{C},E,M)$ be a six functor formalism.  For a vectorial object $X$, we define the twist $J(X)\in M(\ast)$ by
$$J(X) = f_!f^\ast 1$$
where $f:X\to \ast$.
\end{definition}

In other words, $J(X)$ is the ``compactly supported cohomology of $X$''.  Note that $J(X)$ is an invertible object, because $f_!$ and $f^\ast$ are $M(\ast)$-linear isomorphisms and thus so is their composition $f_!f^\ast:M(\ast)\to M(\ast)$.  

The relation of this $J(X)$ with the construction of twists in the previous section is as follows.

\begin{lemma}\label{comparetwists}
Suppose $X$ is vectorial, and that we are given a section $s:\ast \to X$ of $f:X\to \ast$.  Then there is a natural isomorphism
$$J(X) \simeq (s^\ast f^! 1)^{-1}$$
of invertible objects in $M(\ast)$.

Moreover, if we further assume $\ast \to X$ is smooth and we set $X':=\ast\times_X \ast$ with its natural projection $f':X'\to \ast$ and diagonal section $s':\ast\to X'$, we have
$$J(X) \simeq (s')^\ast (f')^! 1.$$
\end{lemma}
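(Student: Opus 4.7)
The plan is to deduce both statements from the axioms governing smooth and vectorial morphisms in any six functor formalism. For part 1 the key observation is that since $f^\ast:M(\ast)\to M(X)$ is an equivalence (by vectoriality), the object $f^!1$ descends uniquely to some $\omega\in M(\ast)$ with $f^!1\simeq f^\ast\omega$; pulling back along $s$ then yields $s^\ast f^!1\simeq \omega$ tautologically, so the work lies in identifying $\omega$.

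To identify $\omega$, I would apply $f_!$ to $f^!1\simeq f^\ast\omega$. The projection formula gives $f_!f^\ast\omega\simeq \omega\otimes f_!1_X=\omega\otimes J(X)$, while vectoriality makes $f^!$ the literal inverse of the equivalence $f_!$, so $f_!f^!1\simeq 1$. Comparing forces $\omega\simeq J(X)^{-1}$, which proves part 1.

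For part 2, the first step is to compute $s^!1\in M(\ast)$ using the extra smoothness hypothesis on $s$. The identity $fs=\on{id}_\ast$ gives $s^!f^!1\simeq 1$, while smoothness of $s$ gives $s^!(-)\simeq s^\ast(-)\otimes s^!1$; combining with part 1 forces $s^!1\simeq J(X)$. Then in the cartesian square defining $X'=\ast\times_X\ast$, with projections $p_1,p_2$ and diagonal section $s'$, smooth base change for upper-shriek applied to the smooth map $s$ yields $(f')^!1\simeq p_2^\ast s^!1\simeq p_2^\ast J(X)$. Pulling back along $s'$, which satisfies $p_2\circ s'=\on{id}_\ast$, gives $(s')^\ast(f')^!1\simeq J(X)$, as desired.

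I do not anticipate any real obstacle: every step uses only the projection formula, smooth base change for $!$-functors, the identity $f^!\simeq f^\ast(-)\otimes f^!1$ for smooth $f$, and the fact that the right adjoint of an equivalence is its inverse. All of these are standard consequences of the six functor axioms, and their coherence is built into the $(\infty,1)$-categorical packaging following \cite{HeyerSixFunctors}.
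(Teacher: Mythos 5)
Your argument is correct and rests on the same formal inputs as the paper's own proof: vectoriality making $f^\ast$ and $f_!$ equivalences, the projection formula, and base-changing the smooth section $s:\ast\to X$ along itself. The only cosmetic differences are that for the first claim the paper identifies $s^\ast$ with the left adjoint $f_\natural$ (both being inverse to the equivalence $f^\ast$) and passes to adjoints rather than descending $f^!1$ along $f^\ast$ and applying $f_!$, and for the second claim it pivots on $(s')^!(f')^!1\simeq 1$ together with the projection formula for $(s')^!$ rather than first computing $s^!1\simeq J(X)$ and pulling back $(f')^!1\simeq p_2^\ast s^!1$ along $s'$; these are the same computation organized slightly differently.
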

\begin{proof}
Both $s^\ast$ and the left adjoint $f_\natural$ of $f^\ast$ are inverse to the equivalence $f^\ast$, hence $s^\ast \simeq f_\natural$.  Thus 
$$s^\ast f^! 1 \simeq f_\natural f^! 1.$$
But passing to right adjoints we see that $f_\natural f^!$ is inverse to $f_! f^\ast$, whence the first claim.  For the second claim, we certainly have $(s')^! (f')^!1=1$, but then we can apply the projection formula for $(s')^!$ and base-change of $\ast \to X$ along itself to conclude.
\end{proof}

From the definition it is mercifully straightforward to encode all the multiplicativity properties of $X\mapsto J(X)$.  For this we make the following definition, following \cite{QuillenQ}.

\begin{definition}\label{q}
Let $(\mc{C},E,M)$ be a six functor formalism.
\begin{enumerate}
\item Define $\on{Vect}_M\subset \mathcal{C}$ to be the full subcategory of $\mc{C}$ spanned by the vectorial objects.
\item Define $Q(\on{Vect}_M)$ to be the span category $\on{Span}_V(\on{Vect}_M)$ where $V$ is the class of vectorial maps between vectorial objects.  The closure properties required to form the span category follow from Lemma \ref{vectorialproperties}, and $Q(\on{Vect}_M)$ is symmetric monoidal via cartesian product in $\on{Vect}_M$.
\item Denote by $| Q(\on{Vect}_M) |$ the (possibly large) $E_\infty$-anima gotten by geometric realization;
\item Define $\on{K}(\on{Vect}_M) := \Omega | Q(\on{Vect}_M) |$ to be the loop space, again viewed as a (group-like) $E_\infty$-monoid in (possibly large) anima.
\end{enumerate}
\end{definition}

The following gives the multiplicativity of the $J$-construction ``with all higher coherences''.

\begin{theorem}\label{jcoherent}
Let $(\mc{C},E,M)$ be a six functor formalism.  Then there is a natural map of group-like $E_\infty$-anima
$$J:\on{K}(\on{Vect}_M)\to \on{Pic}(M(\ast))$$
refining the $J$ construction of Definition \ref{jdef}.
\end{theorem}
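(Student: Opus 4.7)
The plan is to extract $J$ directly from the lax symmetric monoidal functor $M:\on{Span}_E(\mc{C})\to\on{Pr}^L$ by restricting to $Q(\on{Vect}_M)$, where by definition of vectoriality every morphism becomes an equivalence. The idea is that this restriction factors through the symmetric monoidal sub-$\infty$-groupoid $B\on{Pic}(M(\ast))$ of $\on{Mod}_{M(\ast)}(\on{Pr}^L)$, and then looping at the basepoint gives $J$. The construction is entirely formal.

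First, the standard enhancement of a lax symmetric monoidal functor through its lax-unit produces a lax symmetric monoidal functor
$$\widetilde{M}:\on{Span}_E(\mc{C})\to \on{Mod}_{M(\ast)}(\on{Pr}^L),$$
where the $M(\ast)$-action on $\widetilde{M}(X)$ is induced by the unit map $M(\ast)\otimes M(X)\to M(X)$. For vectorial $X$, the pullback $f_X^\ast: M(\ast)\to M(X)$ is an equivalence, and it is $M(\ast)$-linear because the six functor formalism makes upper-$\ast$ functors into symmetric monoidal functors. Hence $\widetilde{M}(X)\simeq M(\ast)$ in $\on{Mod}_{M(\ast)}(\on{Pr}^L)$ for every $X\in\on{Vect}_M$. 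Combined with Lemma \ref{vectorialproperties}(3), which ensures each leg of a span in $Q(\on{Vect}_M)$ gives equivalences under $(-)^\ast$ and $(-)_!$, the restriction $\widetilde{M}|_{Q(\on{Vect}_M)}$ factors through the full sub-$\infty$-groupoid $\mc{P}$ of $\on{Mod}_{M(\ast)}(\on{Pr}^L)^\simeq$ on the object $M(\ast)$, which is closed under $\otimes_{M(\ast)}$ and hence inherits a symmetric monoidal structure.

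Next, the Morita-type equivalence $\on{Fun}^L_{M(\ast)}(M(\ast),M(\ast))\simeq M(\ast)$ identifies $M(\ast)$-linear autoequivalences of $M(\ast)$ with the grouplike $E_\infty$-anima $\on{Pic}(M(\ast))$, so $\mc{P}$ is canonically equivalent to $B\on{Pic}(M(\ast))$ as a connected $E_\infty$-anima. Since $B\on{Pic}(M(\ast))$ is an $\infty$-groupoid, lax and strong symmetric monoidal functors into it coincide, so we obtain an $E_\infty$-map of $E_\infty$-anima
$$|Q(\on{Vect}_M)|\to B\on{Pic}(M(\ast)),$$
canonically pointed by the image of the monoidal unit $\ast\in Q(\on{Vect}_M)$. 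Looping at this basepoint, and using that $\on{Pic}(M(\ast))$ is already grouplike, yields the desired map
$$J:\on{K}(\on{Vect}_M)=\Omega|Q(\on{Vect}_M)|\to \Omega B\on{Pic}(M(\ast))=\on{Pic}(M(\ast)).$$

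To confirm this refines Definition \ref{jdef}, a vectorial $X$ contributes to $\on{K}(\on{Vect}_M)$ via the loop obtained by composing the spans $(\ast\xleftarrow{f_X} X\xrightarrow{\on{id}} X)$ and $(X\xleftarrow{\on{id}} X\xrightarrow{f_X}\ast)$; its image under $J$ is the $M(\ast)$-linear functor $(f_X)_!(f_X)^\ast:M(\ast)\to M(\ast)$, which under the Morita identification corresponds to the invertible object $(f_X)_!(f_X)^\ast 1 = J(X)$. The main obstacle is making the identification $\mc{P}\simeq B\on{Pic}(M(\ast))$ precise as $E_\infty$-anima: one must match the symmetric monoidal structure on $\mc{P}$ inherited from $\otimes_{M(\ast)}$ with the delooping of the tensor product on $\on{Pic}(M(\ast))$ (the comparison going through $M(\ast)\otimes_{M(\ast)}M(\ast)\simeq M(\ast)$ and Eckmann–Hilton), and one must produce the enhancement $M\rightsquigarrow\widetilde{M}$ coherently in the $\infty$-categorical setting. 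Each step is standard, but combining them into a natural construction requires careful bookkeeping.
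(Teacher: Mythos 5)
Your proposal is correct and takes essentially the same route as the paper: restrict the $\on{Mod}_{M(\ast)}(\on{Pr}^L)$-valued lax symmetric monoidal functor underlying the formalism to $Q(\on{Vect}_M)$, observe that vectoriality makes it strongly symmetric monoidal with values in objects equivalent to $M(\ast)$ and in equivalences only (i.e.\ it lands in $\on{Pic}(\on{Mod}_{M(\ast)}\on{Pr}^L)$), then pass to geometric realizations and loop. Your explicit Morita identification $\Omega$ of the unit component with $\on{Pic}(M(\ast))$, and the check against Definition \ref{jdef}, are exactly what the paper uses implicitly when it "takes loop spaces," so the differences are purely presentational.
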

\begin{proof}
Start with the lax symmetric monoidal functor $M:\on{Span}_E(\mc{C})\to \on{Mod}_{M(\ast)}\on{Pr}^L$ which ``is'' our six functor formalism.  Restrict to $Q(\on{Vect}_M)$.  Then by vectoriality, $M$ becomes strongly symmetric monoidal, and it lands inside the full subcategory of the target spanned by the invertible objects (indeed, by objects isomorphic to the unit $M(\ast)$).  In fact by Lemma \ref{vectorialproperties} it lands inside the further subcategory of this where all morphisms are invertible, which is by definition $\on{Pic}(\on{Mod}_{M(\ast)}\on{Pr}^L)$.  Thus by adjunction we get an induced map
$$| Q(\on{Vect}_M) | \to \on{Pic}(\on{Mod}_{M(\ast)}\on{Pr}^L).$$
Taking loop spaces, the left hand side becomes $K(\on{Vect}_M)$ and the right hand side becomes $\on{Pic}(M(\ast))$, so we get $J$.
\end{proof}

Let's unravel what this is encoding a little bit.  There is a natural $E_\infty$-map
$$[-]:\on{Vect}_M^\simeq \to \on{K}(\on{Vect}_M)$$
sending a vectorial object $X$ to the loop corresponding to the span
$$\ast \leftarrow X \rightarrow \ast.$$
Thus, from $J$ we get an $E_\infty$ map
$$\on{Vect}_M^\simeq \to \on{Pic}(M(\ast))$$
encoding that $J(X\times Y)\simeq J(X)\otimes J(Y)$ in a symmetric monoidal fashion.

To see what else Theorem \ref{jcoherent} is giving, let's describe $\pi_0\on{K}(\on{Vect}_M)$ (basically following \cite{QuillenQ}).

\begin{lemma}
The abelian group $\pi_0\on{K}(\on{Vect}_M)$ is freely generated by the above $[X]$ for $X\in\on{Vect}_M$ modulo the following relations:
\begin{enumerate}
\item $[\ast]=0$;
\item $[X]=[Y]$ if $X\simeq Y$;
\item $[X\times_S Y] + [S] = [X] + [Y]$
whenever we have a pullback of the form $X\to S\leftarrow Y$ where all objects are vectorial and the map $X\to S$ is vectorial.
\end{enumerate}
\end{lemma}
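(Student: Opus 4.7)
Since $\on{K}(\on{Vect}_M) = \Omega |Q(\on{Vect}_M)|$, by definition $\pi_0 \on{K}(\on{Vect}_M) = \pi_1(|Q(\on{Vect}_M)|, \ast)$. I will identify the right-hand side with the stated presentation via the usual presentation of $\pi_1$ of the geometric realization of a simplicial anima: generators from 1-simplices based at the basepoint, together with relations from 2-simplices (and, for general vertices, chosen paths to the basepoint).

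First, I would identify the generators. A 1-simplex in $Q(\on{Vect}_M) = \on{Span}_V(\on{Vect}_M)$ from $\ast$ to $\ast$ is precisely a span $\ast \leftarrow X \to \ast$ whose right leg $X \to \ast$ is vectorial, that is, an object $X \in \on{Vect}_M$. This gives the proposed generators $[X]$. Relation (2), isomorphism invariance, is automatic because the space of spans with a fixed underlying object $X$ is connected; and relation (1), $[\ast] = 0$, is immediate from the fact that the identity 1-simplex at $\ast$ is degenerate.

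Second, I would establish relation (3) from an explicit 2-simplex. Given vectorial $X,Y,S$ with $X\to S$ vectorial, Lemma \ref{vectorialproperties} guarantees that $X\times_S Y$ is vectorial, so the data $(\ast\leftarrow X\to S,\, S\leftarrow Y\to\ast,\, \ast\leftarrow X\times_S Y\to\ast)$ with the tautological identification $X\times_S Y \simeq X\times_S Y$ assembles into a 2-simplex $\sigma$ of $Q(\on{Vect}_M)$ with vertices $(\ast, S, \ast)$. To extract a relation in $\pi_1(|Q|,\ast)$ I would contract the middle vertex $S$ to $\ast$ using the canonical paths $\ast \leftarrow S \xrightarrow{\on{id}} S$ and $S \xleftarrow{\on{id}} S \to \ast$. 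Combining $\sigma$ with three auxiliary 2-simplices---expressing that the composition of $\ast\leftarrow X\to S$ with the canonical $S\to\ast$ yields the loop $[X]$, the analogous statement for $Y$ yields $[Y]$, and the composition of the two canonical spans at $S$ yields the loop $\ast\leftarrow S\to\ast$ representing $[S]$---the boundary becomes a nullhomotopic loop at $\ast$ representing $[X]+[Y]-[X\times_S Y]-[S]$, giving relation (3).

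Third, I would verify that the listed relations suffice. For this I adapt Quillen's treatment in \cite{QuillenQ}: every 1-simplex $A \leftarrow Z \to B$ of $Q$ factors, via a 2-simplex with apex $Z$, as the composition of the right-identity span $A \leftarrow Z \xrightarrow{\on{id}} Z$ with the left-identity span $Z \xleftarrow{\on{id}} Z \to B$. This reduces each edge in any loop at $\ast$ to such elementary ``section'' or ``retraction'' spans; combining adjacent pairs using pullback 2-simplices then rewrites the loop as a product of generators $[X]$. Similarly, every 2-simplex, after the same contraction procedure, reduces to a combination of elementary pullback 2-simplices and hence to an iterated instance of (3). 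The main obstacle is this final bookkeeping step: 2-simplices in $Q$ encode four objects bound by a pullback identification, and relating a general such 2-simplex (with arbitrary middle vertex) to elementary pullback relations requires careful control of contractions through intermediate vertices and of orientation in $\pi_1$. However, the necessary combinatorics is a by-now standard adaptation of Quillen's original analysis of the Q-construction, and once the reduction is set up the remainder is routine.
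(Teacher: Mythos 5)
Your identification of the generators and your derivation of relations (1)--(3) from explicit $2$-simplices (including the check via Lemma \ref{vectorialproperties} that $X\times_S Y$ is vectorial, and the three auxiliary $2$-simplices needed to contract the middle vertex $S$) are fine, and they account for half of the statement. The gap is exactly where you place it yourself: the claim that the listed relations exhaust all relations is deferred to a ``standard adaptation of Quillen,'' and that deferral does not hold up. Two concrete issues. First, $Q(\on{Vect}_M)$ is a span $\infty$-category whose spaces of simplices are not discrete (the $0$-simplices form $\on{Vect}_M^\simeq$, so vertices have automorphisms, and similarly for spans), so the naive ``generators from $1$-simplices, relations from $2$-simplices'' presentation of $\pi_1$ of the realization needs an argument -- e.g.\ that $\tau_{\leq 1}$ preserves colimits, so $\pi_1|Q(\on{Vect}_M)|$ is the group completion of the homotopy category -- and one must then also check that the loops coming from automorphisms of vertices and of spans are consequences of (1)--(3). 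Second, even granting such a presentation, the rewriting step (every loop at $\ast$ is a product of the $[X]$, and every composition relation in $hQ$ follows from (1)--(3)) is the entire content of the lemma and is not carried out; the citation of Quillen does not supply it, both because his setting (admissible monos and epis, relations from exact sequences) differs from the present one (arbitrary backwards legs, vectorial forward legs, no exact structure), and because Quillen's own computation of $\pi_1$ of the Q-construction proceeds by classifying covering spaces, i.e.\ morphism-inverting functors -- a representability argument rather than the direct simplicial bookkeeping you propose.

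For comparison, the paper's proof avoids the combinatorics entirely by working in the representable direction: since $\pi_0\on{K}(\on{Vect}_M)=\pi_1|Q(\on{Vect}_M)|$ is abelian (from the $E_\infty$-structure), it suffices to identify homotopy classes of functors $Q(\on{Vect}_M)\to BA$, for an abelian group $A$, with families $\{a_X\}$ satisfying (1)--(3). The key observation is that the wide subcategory of spans of the form $X\leftarrow C\overset{\sim}{\to}Y$ is equivalent to $\on{Vect}_M^{op}$, which has an initial object (as $\ast$ is terminal among vectorial objects) and hence contractible realization; therefore every functor to $BA$, and every homotopy between two such, can be normalized to be constant on it. After normalization such a functor is literally the data of elements $a_f\in A$ for the vectorial maps $f$ between vectorial objects, subject to invariance under base-change, vanishing on isomorphisms, and additivity under composition, and the dictionary $a_X=a_{X\to\ast}$, $a_{X\to Y}=a_X-a_Y$ translates these into (1)--(3). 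If you want to keep your direct approach you must genuinely carry out your third step (including the vertex-automorphism loops); otherwise this normalization trick is the missing idea that makes the verification short.
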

\begin{proof}
It's clear that $Q(\on{Vect}_M)$ is connected, and we know that $\pi_0\on{K}(\on{Vect}_M)=\pi_1 Q(\on{Vect}_M)$ is abelian (by the $E_\infty$-structure).  Thus it suffices to show that for an abelian group $A$, the set of homotopy classes of functors
$$F:Q(\on{Vect}_M)\to BA$$
is in bijection with the set of elements $\{a_X\}_{X\in\on{Vect}_M}$ of $A$ indexed by $X\in\on{Vect}_M$ subject to the relations 1,2 and 3, via sending a functor $F$ to the image of $[X]$ under $F$ (note there are no base-point issues as $A$ is abelian).

The wide subcategory of $Q(\on{Vect}_M)$ corresponding to the spans of the form
$$X \leftarrow C\overset{\sim}{\to}{Y}$$
is equivalent to $\on{Vect}_M^{op}$, hence has an initial object, hence is contractible.  Thus any $F$ as above is homotopic to one which is the constant functor with value $\ast$ on this subcategory, i.e.\ $F$ takes every object to the basepoint and takes every morphism given by a span as above to the identity.  Any homotopy between two such functors can likewise be taken constant on this subcategory; but then it has to be the identity homotopy.

Thus, in this setting everything simplifies: our functor is completely determined by its value on maps of the form $X\overset{=}{\leftarrow} X\to Y$, and this is an element of $A$ (because every object is sent to the base-point).  Checking what's necessary for this to be a functor which is trivial on the wide subcategory discussed above, we see that giving $F$ as above is equivalent to giving an element $a_f$ of $A$ for every vectorial map $f:X\to Y$ between vectorial objects of $\mc{C}$, such that the following relations hold:
\begin{enumerate}
\item If $f'$ is a pullback of $f$ then $a_{f'}=a_f$;
\item If $f$ is an isomorphism then $a_f=0$.
\item If $f$ and $g$ are composable then $a_{g\circ f}=a_g + a_f$.
\end{enumerate}
Now, we can go back and forth between these $\{a_f\}$ and our desired $\{a_X\}$ via the following: given $\{a_f\}$, define $a_X = a_{X\to \ast}$; and given $\{a_X\}$, define $a_{X\to Y} = a_X - a_Y$.
\end{proof}

\begin{example}
As a special case, we see that if $f:X\to Y$ is a vectorial map between vectorial objects, and if $\ast \to Y$ is any map, then
$$J(X)\simeq J(F)\otimes J(Y)$$
where $F=\on{Fib}(f)$, a kind of ``twisted multiplicativity''.
\end{example}

Next we want to encode that $J$ admits canonical trivializations when restricted to certain special kinds of vectorial objects, namely the proper ones and the etale ones.  To explain this at the naive level, suppose $X$ is vectorial and $X$ is proper (i.e.\ the map $f:X\to \ast$ is proper).  Then $f_!\simeq f_\ast$ is right adjoint to $f^\ast$, but as $f^\ast$ is an equivalence it follows that $f_!$ is the inverse of $f^\ast$, so that $f_!f^\ast \simeq \on{id}$ and hence we get a trivialization
$$J(X)\simeq 1.$$
Similarly, if $X\to \ast$ is etale, then $f_!\simeq f_\natural$ is left adjoint to $f^\ast$ and the same argument gives the same conclusion $J(X)\simeq 1$. 

To give the higher coherences for this, consider the symmetric monoidal subcategories
$$Q(\on{Vect}^\pi_M),Q(\on{Vect}^\epsilon_M)\subset Q(\on{Vect}_M)$$
where the objects are the vectorial $X$ which are moreover proper (resp.\ etale), and the morphisms are the spans $X\leftarrow C\to Y$ giving morphisms in $Q(\on{Vect}_M)$ where the object $C$ (and hence each leg) is also required to be proper (resp.\ etale).

Then exactly as in Definition \ref{q} we define group-like $E_\infty$-anima
$$K(\on{Vect}^\pi_M), K(\on{Vect}^\epsilon_M)$$
equipped with a natural map to $K(\on{Vect}_M)$, by geometrically realizing the $Q$-categories and then taking loop spaces.

\begin{theorem}\label{eilenberg}
Let $(\mc{C},E,M)$ be a six functor formalism.  The $E_\infty$-map
$$J:K(\on{Vect}_M)\to \on{Pic}(M(\ast))$$
of \ref{jcoherent} admits a canonical trivialization on restriction to $K(\on{Vect}^\pi_M)$ (resp.\ $K(\on{Vect}^\epsilon_M)$).
\end{theorem}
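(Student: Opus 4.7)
The plan is to construct a null-homotopy of the composite
$$|Q(\on{Vect}^\pi_M)| \to |Q(\on{Vect}_M)| \to \on{Pic}(\on{Mod}_{M(\ast)}\on{Pr}^L) \simeq B\on{Pic}(M(\ast))$$
of $E_\infty$-maps; looping then yields the desired trivialization of $J|_{K(\on{Vect}^\pi_M)}$. Concretely, I would produce a symmetric monoidal natural equivalence $\eta$ between the restricted strongly symmetric monoidal functor $M|_{Q(\on{Vect}^\pi_M)}$ and the constant functor with value $1$, which after realization gives such a null-homotopy.

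On an object $X \in \on{Vect}^\pi_M$ with structure map $f_X: X \to \ast$, set $\eta_X := f_X^\ast : 1 = M(\ast) \xrightarrow{\simeq} M(X)$, which is an equivalence by vectoriality. For a morphism $X \xleftarrow{p} C \xrightarrow{q} Y$ in $Q(\on{Vect}^\pi_M)$ — where by definition both legs are proper (and hence proper vectorial by Lemma \ref{vectorialproperties}(3)) — $M$ sends the span to $q_! p^\ast \simeq q_\ast p^\ast$ via the identification of Definition \ref{etalepropersmoothdef}. Naturality of $\eta$ then reduces to the computation
$$q_\ast p^\ast \circ f_X^\ast \simeq q_\ast f_C^\ast \simeq q_\ast q^\ast \circ f_Y^\ast \simeq f_Y^\ast,$$
using $f_X p = f_C = f_Y q$ and the fact that the counit $q_\ast q^\ast \to \on{id}$ is an equivalence because $q_\ast$, being the right adjoint to the equivalence $q^\ast$, is its two-sided inverse.

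To upgrade this pointwise and 1-categorical check to a genuine symmetric monoidal natural equivalence of $\infty$-categorical functors, I would factor $M|_{Q(\on{Vect}^\pi_M)}$ through the over-category $\on{Pic}(\on{Mod}_{M(\ast)}\on{Pr}^L)_{/1}$, equipped with its canonical symmetric monoidal structure (pulled back along the forgetful map and using that $1$ is the unit). A symmetric monoidal lift in this slice is the same datum as a symmetric monoidal natural transformation $\underline{1} \to M|_{Q(\on{Vect}^\pi_M)}$; once produced, composition with the forgetful map realizes $|M|_{Q(\on{Vect}^\pi_M)}|$ as factoring through $|\on{Pic}(\on{Mod}_{M(\ast)}\on{Pr}^L)_{/1}|$, which is contractible since the Picard is an $\infty$-groupoid and the slice over any object of a groupoid has a terminal object $\on{id}_1$. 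The main obstacle is assembling $\tilde M$ with all its higher coherences; the cleanest approach is via the universal property of $\on{Span}_E(\mathcal{C})$, which packages pullback and pushforward into a single symmetric monoidal structure, so that the required compatibilities for $\tilde M$ reduce to those already encoded in $M$ together with the naturality of $\eta_X = f_X^\ast$, as verified above on 1-morphisms.

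For the etale case, the argument goes through verbatim after replacing the identification $f_! \simeq f_\ast$ with $f_! \simeq f_\natural$ (the left adjoint to $f^\ast$) for etale $f$: for an etale vectorial $q$, the map $q_\natural$ is the inverse of $q^\ast$ as left adjoint to an equivalence, so the counit $q_\natural q^\ast \simeq \on{id}$ used in the naturality calculation is again an equivalence.
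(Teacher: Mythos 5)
Your reduction of the theorem to producing a symmetric monoidal natural equivalence between $M|_{Q(\on{Vect}^\pi_M)}$ and the constant functor at the unit is the natural ``direct'' strategy, and your pointwise computation $q_!p^\ast f_X^\ast\simeq q_\ast q^\ast f_Y^\ast\simeq f_Y^\ast$ is fine at the level of homotopy categories (modulo the small slip that $q_\ast q^\ast\simeq\on{id}$ comes from the unit of the adjunction, $q_\ast$ being the inverse of the equivalence $q^\ast$). But the proof has a genuine gap exactly where you defer it: assembling the components $\eta_X=f_X^\ast$ into a coherent (let alone symmetric monoidal) natural transformation out of the span $\infty$-category. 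Note that the assignment $X\mapsto(\ast\leftarrow X\overset{=}{\to}X)$ is \emph{not} natural in $Q(\on{Vect}^\pi_M)$ itself: for a span $X\leftarrow C\to Y$ the two composites $\ast\leftarrow C\to Y$ and $\ast\leftarrow Y\overset{=}{\to}Y$ are genuinely different morphisms, and they only become homotopic after applying $M$ and invoking the identifications $q_!\simeq q_\ast$ (resp.\ $q_!\simeq q_\natural$) for proper (resp.\ etale) maps. Making those homotopies compatible for all iterated compositions and with the monoidal structure requires coherence data in every degree, and neither the $1$-morphism check nor an unspecified ``universal property of $\on{Span}_E(\mc{C})$'' supplies it; reformulating the problem as a lift to the slice over the unit of the Picard groupoid does not change this, since the lift \emph{is} the coherent transformation you still have to build. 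This is precisely the ``opening the hood'' step (examining how the formalism, and the coherent $f_!\simeq f_\ast$ for proper maps, are actually constructed) that the paper explicitly declines to carry out.

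The paper's proof takes a different, construction-free route designed to avoid this issue. It restricts the six functor formalism to the full subcategory of proper (resp.\ etale) objects, reducing to the claim that $J\sim 0$ whenever every map is proper; it then passes to $\mc{C}'=\on{Pro}(\mc{C})$, where extending $M^\ast$ by filtered colimits and applying \cite{HeyerSixFunctors} Prop.\ 3.3.3 (together with the uniqueness result of \cite{DauserUniqueness}) yields a compatible six functor formalism in which every map is still proper; finally, the endofunctor $X\mapsto\prod_{\mbb{N}}X$ of $\mc{C}'$ preserves finite limits and vectorial maps, so an Eilenberg swindle ($S\simeq S+\on{id}$ on K-theory) forces $\on{K}(\on{Vect}_{M'})=0$, through which $J$ factors. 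This yields the canonical trivialization using only functoriality already packaged in the formalism, at the cost of proving the (a priori weaker) nullity of $J$ on K-theory rather than the pointwise trivialization of $M$ you aim for. If you want to salvage your approach, you would need to either construct the coherent transformation by hand from the explicit construction of the formalism, or quote a precise coherence statement (e.g.\ a fully coherent identification of $M_!$ on proper maps with the right-adjoint functoriality of $M^\ast$) that is not currently in your argument.
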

\begin{proof}
We explain the proper case; the etale case is handled in the same way.  We can restrict our six functor formalism to the full subcategory of $\mathcal{C}$ spanned by the proper $X$, which is closed under finite limits.  Then the $Q(\on{Vect}_M)$-category for the restricted formalism contains the $Q(\on{Vect}_M^\pi)$ for the original one.  Thus we see that it suffices to show that if every map in $\mc{C}$ is proper, then $J\sim 0$.  (Unfortunately we can't assume every object is vectorial, because vectorial objects are not closed under pullback.)

In principle one can prove this just by examining the construction of the six functor formalism in the situation where every map is proper.  But that would require ``opening the hood'' and we don't want to do that.  Instead we will use a dirty trick: the Eilenberg swindle.

Consider $\mc{C}'=\on{Pro}(\mc{C})$.  Extending $M^\ast:\mc{C}^{op} \to \on{CAlg}(\on{Pr}^L)$ by filtered colimits, we get a new lax symmetric monoidal functor $(M')^\ast:(\mc{C}')^{op}\to\on{CAlg}(\on{Pr}^L)$.  It follows formally by taking limits along right adjoints that for this extended functor, we still have the property that ``every map is proper'' in the weak sense that for every map $f:X\to Y$ in $\mc{C}'$ the functor $f^\ast:M'(Y)\to M'(X)$ has a right adjoint satisfying the projection formula and commuting with pullback.  Thus we can apply \cite{HeyerSixFunctors} Prop 3.3.3 to extend $(M')^\ast$ to a six functor formalism on $\mc{C}'$.  When we restrict back to $\mc{C}$, it must agree with our original six functor formalism by the uniqueness result \cite{DauserUniqueness}.

Moreover, the inclusion $\mc{C}\to\mc{C}'$ preserves finite limits and sends vectorial maps to vectorial maps.  It follows that our $J$ for $M$ factors through the $J$ for $M'$, and hence it suffices to show $K(\on{Vect}_{M'})=0$.  But now consider the functor $\mc{C}'\to\mc{C}'$ defined by
$$X\mapsto \prod_\mbb{N} X.$$
This preserves finite limits; moreover it sends vectorial maps to vectorial maps.  Thus it induces a map
$$S:\on{K}(\on{Vect}_{M'})\to \on{K}(\on{Vect}_{M'}).$$
On the other hand by Hilbert hotel $S \simeq S\times \on{id}$ as limit-preserving functors $\mc{C}'\to\mc{C}'$.  Passing to K-theory we deduce that
$$S \sim S + \on{id}$$
whence, by subtracting in the group-like $E_\infty$-space of endomorphisms of $K(\on{Vect}_{M'})$, we have a homotopy $0\sim \on{id}$ and thus $\on{K}(\on{Vect}_{M'})=0$ as desired.
\end{proof}
\begin{remark}
Although produced in the same way, these two nullhomotopies are not compatible.  More precisely, on  $K(\on{Vect}^\pi_M\cap \on{Vect}^\epsilon_M)$ we get two nullhomotopies of $J$.  This gives rise to a map
$$K(\on{Vect}^\pi_M\cap \on{Vect}^\epsilon_M) \to \Omega \on{Pic}(M(\ast))=\on{Aut}_{M(\ast)}(1).$$
This map is not generally trivial.  Instead it gives a measure of the ``cardinality'' of the object $X\in \on{Vect}^\pi_M\cap \on{Vect}^\epsilon_M$ which is ``both compact and discrete, hence finite''.  Compare \cite{CarmeliCardinality}.
\end{remark}

This describes our J-homomorphism construction and its fundamental properties at the most primitive level.  But in practice, vectorial objects tend to have abelian group structures allowing to infinitely deloop, and this can be used to simplify the theory a bit.  To proceed along these lines, we will start to assume that the $\infty$-category $\mathcal{C}$ on which our six functor formalism lives is an $\infty$-topos (so that we have a correspondence between connected pointed objects and group objects given by looping and delooping), and that we have some reasonable interactions between descent in $\mc{C}$ and the six functor formalism as in the following definition.

\begin{definition}
Say that a six functor formalism $(\mc{C},E,M)$ \emph{has good descent} if the following conditions are satisfied:
\begin{enumerate}
\item $\mathcal{C}$ is an $\infty$-topos;
\item $M^\ast:\mathcal{C}^{op}\to\on{Pr}^L$ preserves limits;
\item the condition that a map $f:X\to Y$ in $\mathcal{C}$ lie in $E$ is local on $Y$.
\end{enumerate}
\end{definition}

\begin{example}
This condition holds for the six functor formalism on light condensed anima considered in Section \ref{sixsec} (for any coefficient category $\mathcal{R}$ as in that section).  Indeed, 1 and 2 hold by construction, and for 3 see Remark \ref{trulylocal}.

Moreover, the condition of having good descent evidently passes to slice topoi.
\end{example}

\begin{remark}
If $(\mc{C},E,M)$ is a six functor formalism with good descent, the the property of a map $f:X\to Y$ in $\mc{C}$ being smooth (resp.\ proper, resp.\ etale, resp.\ vectorial) is local on $Y$.  This follows from the results in \cite{HeyerSixFunctors} quoted in Remark \ref{etalepropersmoothlocal}.
\end{remark}

The following lemma gives some ways in which passing to classifying spaces simplifies the situation.

\begin{lemma}\label{bvectorial}
Let $(\mc{C},E,M)$ be a six functor formalism with good descent.
\begin{enumerate}
\item Suppose $G$ is a group object in $\mathcal{C}$ which is vectorial (forgetting the group structure).  Then $BG$ is also vectorial.  More generally, suppose $G\to \ast$ is smooth, and pullback along it is fully faithful after any base-change.  Then $BG\to \ast$ is vectorial.
\item Suppose $G$ and $G'$ are vectorial group objects and $f:G\to G'$ is any homomorphism.  Then $BG\to BG'$ is vectorial.
\item Suppose $G$ is a group object which is etale and vectorial.  Then $BG$ is etale and vectorial.
\item Suppose $G$ is a group object such that $f:G\to \ast$ is proper, and pullback along $f$ is fully faithful, also after any base-change (for example $G$ could be proper and vectorial).  Then $BG$ is proper and vectorial.
\end{enumerate}
\end{lemma}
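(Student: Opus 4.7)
The plan is to handle the four parts in order. Throughout write $p : BG \to \ast$ and $q : \ast \to BG$ for the canonical maps; then $q$ is a surjection (since $BG$ is connected) and fits into a pullback square with top edge $G \to \ast$, so $q$ is a base change of the map we have hypotheses on. Parts 1, 3, and 4 are all about the object $BG$, and I will exploit the fact that good descent lets me reduce questions about $p$ to questions about $q$ (and its Cech nerve).

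For part 1's general assertion (the first sentence is an immediate consequence), I first show $p$ is smooth by applying smoothness locality (Remark \ref{etalepropersmoothlocal} item 4) to the chain $\ast \xrightarrow{q} BG \xrightarrow{p} \ast$: the map $q$ is a smooth surjection and $p \circ q = \mathrm{id}_\ast$ is smooth. Next, $p^*$ is an equivalence after every base change via descent: good descent condition 2 gives $M(BG) \simeq \lim_\Delta M(G^\bullet)$ where $G^\bullet$ is the Cech nerve of $q$, and every face map is a base change of $G \to \ast$, so the hypothesis yields fully faithful face-map pullbacks. For $X,Y \in M(\ast)$ the cosimplicial mapping space $\mathrm{Map}_{M(G^n)}((G^n)^*X,(G^n)^*Y)$ is then canonically identified with the constant cosimplicial object on $\mathrm{Map}_{M(\ast)}(X,Y)$, whose limit is $\mathrm{Map}_{M(\ast)}(X,Y)$, giving full faithfulness of $p^*$. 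For essential surjectivity, any descent datum $(Z_n)_n$ is determined by $Z_0 = q^*Z$ together with compatibility isomorphisms living in spaces that full faithfulness renders contractible. The hypothesis being stable under base change, the same argument works after any base change. Finally, since $p$ is smooth, $p^!(-) \simeq p^*(-) \otimes p^!(1)$ with $p^!(1)$ invertible, so $p^!$ (hence also its left adjoint $p_!$) is an equivalence.

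For part 2, let $f : G \to G'$ be a homomorphism between vectorial groups; part 1 makes $BG$ and $BG'$ vectorial. To check $Bf$ is vectorial, base change along an arbitrary $y : Y \to BG'$ classifying a principal $G'$-bundle $Q \to Y$: an elementary calculation identifies $Y \times_{BG'} BG$ with $Q/G$, where $G$ acts on $Q$ via $f$. The map $Q \to Y$ is a principal $G'$-bundle, hence locally on $Y$ a base change of the vectorial map $G' \to \ast$, and vectoriality being local on the target (by good descent) $Q \to Y$ is vectorial. Likewise $Q \to Q/G$ is a principal $G$-bundle, hence vectorial. The 2-out-of-3 property of Lemma \ref{vectorialproperties} part 2 applied to $Q \to Q/G \to Y$ then gives vectoriality of $Q/G \to Y$, completing the proof.

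Parts 3 and 4 combine the vectoriality from part 1 with a locality argument for the additional property. For part 3, $G$ etale forces $G$ to be $0$-truncated so $BG$ is $1$-truncated; $q$ is etale (base change of $G \to \ast$) and surjective (hence conservative), and $pq = \mathrm{id}_\ast$ is trivially etale, so etaleness locality (Remark \ref{etalepropersmoothlocal} item 5) yields $p$ etale. For part 4, $q$ is proper (base change of the proper $G \to \ast$), and proper base change applied to the pullback square with corners $G, \ast, \ast, BG$ gives $q^* q_* 1 \simeq (G\to\ast)_* (G\to\ast)^* 1 \simeq 1$ using the full-faithfulness hypothesis; conservativity of $q^*$ then gives $q_* 1 \simeq 1$, which is trivially descendable, so properness locality (Remark \ref{etalepropersmoothlocal} item 3) makes $p$ proper. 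Vectoriality follows as in part 1, since the descent argument for $p^*$ uses only full faithfulness (not smoothness), and the equivalence $p_! \simeq p_*$ is now automatic from properness, the right adjoint to the equivalence $p^*$ being its inverse. The main obstacle throughout will be the descent computation in part 1: verifying that the cosimplicial mapping spaces trivialize and that descent data can be canonically normalized from mere full faithfulness, which relies essentially on the groupoid structure on $G^\bullet$ together with the "after any base change" clause of the hypothesis.
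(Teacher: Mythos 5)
Your overall strategy is the paper's: reduce everything about $p:BG\to\ast$ to the section-and-cover pair $q:\ast\to BG$, $p\circ q=\mathrm{id}$, and invoke the locality statements of Remark \ref{etalepropersmoothlocal} to get $!$-ability, smoothness, etaleness and properness of $p$; part 2 (specialized to $Y=BG'$, where your bundle argument is literally the chain $\ast\to BG\to BG'$ and Lemma \ref{vectorialproperties}(2)) and part 4 are essentially the paper's arguments. But two justifications need repair. First, you repeatedly transfer properties from $G\to\ast$ to $q$ by saying ``$q$ is a base change of $G\to\ast$'' (parts 1, 3, 4). This is backwards: in the square with corners $G,\ast,\ast,BG$ it is $G\to\ast$ that is the base change of $q$ along $q$, not the other way round, so base-change stability gives you nothing. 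What you actually need is that smoothness, etaleness, properness (and full faithfulness of pullback) are local on the target for a formalism with good descent, applied to the cover $q$ itself — i.e.\ exactly the paper's ``by descent''. You use this locality correctly in part 2, so the fix is cosmetic, but as written the stated reason is wrong.

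Second, and more substantively, your essential surjectivity step in part 1 does not work as described. The space of the first compatibility isomorphism in a descent datum over the Cech nerve is not contractible under mere full faithfulness: both face maps $G\to\ast$ coincide, so that space is equivalent to the space of automorphisms of $Z_0$ in $M(\ast)$, and it is only the cocycle condition over $G^2$ (and its higher analogues) that forces it to be the identity. Turning ``the datum is canonically normalized'' into an actual $\infty$-categorical argument is the obstacle you yourself flag, and it is avoidable: the paper instead first notes that full faithfulness of $q^\ast$ (after any base change) is itself a target-local condition, hence holds by descent because $q$ pulled back along $q$ is $G\to\ast$; then, since $q^\ast p^\ast=\mathrm{id}$, full faithfulness of $p^\ast$ is immediate, and essential surjectivity is the retraction argument: given $Z\in M(BG)$, set $W=q^\ast Z$, lift the isomorphism $q^\ast p^\ast W\simeq q^\ast Z$ along the fully faithful (hence conservative) $q^\ast$ to an isomorphism $p^\ast W\simeq Z$. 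The same shortcut applies verbatim to $!$-pullback (using $p^!\simeq p^\ast\otimes p^!1$ in part 1, or $p_!\simeq p_\ast$ in part 4, as you do). One last nit: in part 3, an $\mathcal{R}$-etale map in this formalism is only required to be truncated, not $0$-truncated, so ``$G$ etale forces $G$ to be $0$-truncated'' is unwarranted; all you need is that $BG\to\ast$ is truncated, which follows from truncatedness of $G\to\ast$.
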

\begin{proof}
For 1, let's prove the more general claim.  By descent pullback along $\ast \to BG$ is fully faithful, also after any base-change.  Since obviously pullback along $\ast \to \ast$ is an equivalence also after any base-change, we deduce that pullback along $BG\to \ast$ is an equivalence, also after any base-change.  The same argument applies to $!$-pullback, but we first need to show that $BG\to \ast$ is $!$-able.  But $\ast \to BG$ is vectorial and hence smooth, while pullback along it is conservative, hence $BG\to \ast$ is $!$-able (and smooth) by \cite{HeyerSixFunctors} Lemma 4.7.4 and Cor.\ 4.7.5.

For 2, again by descent $\ast \to BG$ and $\ast \to BG'$ are vectorial, therefore so is $BG\to BG'$ by Lemma \ref{vectorialproperties}.

For 3, $\ast \to BG$ is an etale cover and $\ast \to \ast$ is etale, so this follows from \cite{HeyerSixFunctors} Lemma 4.6.3.

For 4, the map $e:\ast \to BG$ is proper and vectorial by descent, and $e_\ast 1$ is descendable because $e_\ast 1\simeq 1$ by vectoriality.  Since $\ast \to \ast$ is proper, by \cite{HeyerSixFunctors} 4.7.4 and 4.7.5 it follows that $f:BG\to \ast$ is proper, and in particular $!$-able.  The same argument as in 1 shows that $\ast$- or $!$-pullback along $BG\to \ast$ is an equivalence also after any base-change. (Caution that even though in this case $f^!\simeq f^\ast$, the map $f$ is not generally etale, because its diagonal is generally not etale.) \end{proof}

\begin{definition}
Suppose given a six functor formalism $(\mc{C},E,M)$ which has good descent.  Say that a spectrum object $X\in \mc{C}\otimes\on{Sp}$ is \emph{vectorial} if the following conditions are satisfied:
\begin{enumerate}
\item $X$ is bounded below in the t-structure.
\item For $n>>0$, the object $\Omega^\infty \Sigma^n X \in \mc{C}$ is vectorial.
\end{enumerate}
Denote the full subcategory of vectorial objects in $\mc{C}\otimes\on{Sp}$ by $\on{vect}_M$.
\end{definition}

\begin{remark}
Suppose $X$ is connective.  If $\Omega^\infty \Sigma^nX$ is vectorial for some $n\geq 0$, then it is vectorial for all $n'\geq n$.  This follows from Lemma \ref{bvectorial}.
\end{remark}

\begin{lemma}\label{vectstable}
Let $(\mc{C},E,M)$ be a six functor formalism with good descent.  Then $\on{vect}_M\subset \mathcal{C}\otimes\on{Sp}$ is a stable subcateogry, i.e.\ it is closed under fibers and shifts.
\end{lemma}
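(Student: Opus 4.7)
The plan is to handle closure under shifts and closure under fibers separately. Shifts are straightforward: bounded-below-ness survives any shift, and since $\Omega^\infty \Sigma^n(\Sigma X) = \Omega^\infty \Sigma^{n+1} X$ and $\Omega^\infty \Sigma^n(\Omega X) = \Omega^\infty \Sigma^{n-1} X$, the condition ``$\Omega^\infty \Sigma^n X$ vectorial for $n$ sufficiently large'' is also preserved. For fibers the bounded-below condition is again automatic; the substantive content is to verify condition (2) of vectoriality for the fiber.

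Given $h: X \to Y$ in $\mc{C} \otimes \on{Sp}$ with vectorial source and target, set $F = \on{fib}(h)$ and choose $n$ large enough that $X_n := \Omega^\infty \Sigma^n X$, $Y_n := \Omega^\infty \Sigma^n Y$, and $Y_{n-1} := \Omega^\infty \Sigma^{n-1} Y$ are all vectorial in $\mc{C}$, and moreover $n \geq 2$ so that these objects are pointed connected. Applying $\Omega^\infty \Sigma^n$ to the fiber sequence of spectra yields $F_n := \Omega^\infty \Sigma^n F \simeq X_n \times_{Y_n} \ast$, the pullback along the basepoint $e: \ast \to Y_n$. If I can show that $e$ is a vectorial \emph{map} in $\mc{C}$---a strictly stronger statement than what Lemma \ref{vectorialproperties}(3) immediately provides for a map between vectorial objects---then the base-change $F_n \to X_n$ of $e$ along $h_n$ will be vectorial, and composing with the vectorial $X_n \to \ast$ will exhibit $F_n$ as vectorial, as desired.

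The main obstacle will be showing that the basepoint $e: \ast \to Y_n$ is itself vectorial. My approach is to use descent, parallel to the proof of Lemma \ref{bvectorial}(1). Since $Y_n$ is pointed connected, $Y_n \simeq B Y_{n-1}$ in $\mc{C}$, so $e$ is an effective epimorphism, and its base-change along itself is the projection $Y_{n-1} \to \ast$, which is vectorial by choice of $n$. The good descent hypotheses---locality of $E$ on the target, and $M^*: \mc{C}^{op} \to \on{Pr}^L$ preserving limits---together with the base-change compatibility between $(-)^*$ and $(-)_!$ in the six functor formalism, should imply that vectoriality of a map is itself local on the target: membership in $E$ descends from a cover by assumption, the property that $(f')^*$ is an equivalence for every further base-change can be detected after pulling back along the cover using limit-preservation of $M^*$, and the analogous property for $(f')_!$ follows from its base-change compatibility with those $*$-pullbacks. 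Granting this locality principle, $e$ becomes vectorial after pulling back along the cover $\ast \to Y_n$, and hence $e$ is itself vectorial.
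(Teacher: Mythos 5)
Your proof is correct and follows essentially the same route as the paper: after shifting into the connective range you exploit the delooping $Y_n\simeq BY_{n-1}$, the descent-locality of vectoriality (which the paper records as a remark following the definition of good descent), and the closure properties of Lemma \ref{vectorialproperties}. The only cosmetic difference is that you prove the basepoint $\ast\to Y_n$ is vectorial and base-change it along $h_n$, whereas the paper packages that same descent step as Lemma \ref{bvectorial}(2) (the map $\Omega^\infty\Sigma^nX\to\Omega^\infty\Sigma^nY$ of deloopings is vectorial) and then base-changes along the basepoint of the target; the two maneuvers are interchangeable.
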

\begin{proof}
Closure under shifts is immediate from the definition.  For closure under fibers, let $f:X\to Y$ be a map between vectorial spectrum objects in $\mc{C}$.  Shifting we can assume that $X,Y\in (\mc{C}\otimes\on{Sp})_{\geq 1}$ and $\Omega^\infty\Sigma^{-1}X$ and $\Omega^\infty\Sigma^{-1}Y$ are vectorial.  Then by Lemma \ref{bvectorial} the map $\Omega^\infty X \to \Omega^\infty  Y$ is vectorial.  Therefore by Lemma \ref{vectorialproperties} its fiber is also vectorial. But this fiber is $\Omega^\infty F$ where $F=\on{Fib}(f)$ and we conclude.
\end{proof}

For this stable category $\on{vect}_M$, we can define its Q-construction to simply be the span category with all maps allowed in the spans, then take the geometric realization, then take the loop space to define $K(\on{vect}_M)$.  But now this is actually a standard construction of Barwick known to be equivalent to all other possible definitions of the (connective) algebraic K-theory of a stable $\infty$-category (\cite{BarwickQ}).

Our next purpose is to show that $J$ can also be defined on this stable level.  For this we build a bridge between the stable and unstable situations as follows.  Consider the symmetric monoidal subcategory
$$Q(\on{vect}_{M,\geq 1})\subset Q(\on{vect}_M)$$
where the objects are the $X\in\on{vect}_M$ such that $\pi_iX=0$ for $i<1$ and $\Omega^\infty X$ is vectorial, and the the maps are the spans $X\leftarrow C\to Y$ such that $\pi_iC=0$ for $i<1$ and the map $C\to Y$ is surjective on $\pi_1$ and vectorial on $\Omega^\infty$. 

Then besides this symmetric monoidal inclusion there is also the symmetric monoidal functor
$$\Omega^\infty: Q(\on{vect}_{M,\geq 1})\to Q(\on{Vect}_M)$$
induced by $X\mapsto \Omega^\infty(X)$ on objects and spans.

\begin{theorem}\label{jextend}
Let $(\mc{C},E,M)$ be a six functor formalism with good descent.  Then there is a unique map of (group-like) $E_\infty$-anima
$$J^{new}:\on{K}(\on{vect}_M)\to \on{Pic}(M(\ast))$$
equipped with an isomorphism $J^{new}\mid_{\on{K}(\on{vect}_{M,\geq 1})}\simeq  J\circ \Omega^\infty$, where $J$ is the earlier construction \ref{jcoherent}.

From now on we will drop the ``new'' superscript and just call both of these maps $J$.
\end{theorem}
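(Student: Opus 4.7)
The plan is to reduce the theorem to the claim that the natural inclusion induces an equivalence of group-like $E_\infty$-anima $K(\on{vect}_{M,\geq 1}) \xrightarrow{\sim} K(\on{vect}_M)$. Given this, $J^{new}$ is characterized uniquely by the requirement that it restrict to $J\circ\Omega^\infty$ on $K(\on{vect}_{M,\geq 1})$: existence follows by transport along the inverse equivalence, and uniqueness follows because any extension is determined by its restriction along an equivalence.

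First I would construct the restricted map $J\circ\Omega^\infty$ at the span-category level. The three defining conditions on morphisms in $Q(\on{vect}_{M,\geq 1})$---namely that the central vertex $C$ of a span $X\leftarrow C\to Y$ is connective in the sense $\pi_{<1}C=0$, that $C\to Y$ is surjective on $\pi_1$, and that $\Omega^\infty(C\to Y)$ is vectorial---are arranged precisely so that applying $\Omega^\infty$ produces a valid morphism in $Q(\on{Vect}_M)$: connectivity plus $\pi_1$-surjectivity ensures that $\Omega^\infty$ preserves the relevant structure on objects, and the third constraint directly gives vectoriality of the resulting right leg. Composing the induced symmetric monoidal functor $Q(\on{vect}_{M,\geq 1}) \to Q(\on{Vect}_M)$ with the functor underlying $J$ from Theorem \ref{jcoherent}, then geometrically realizing and looping, yields the desired map $J\circ\Omega^\infty: K(\on{vect}_{M,\geq 1}) \to \on{Pic}(M(\ast))$.

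The main step is the K-theory equivalence $K(\on{vect}_{M,\geq 1})\xrightarrow{\sim} K(\on{vect}_M)$. The essential geometric input is that $\on{vect}_M$ is stable (Lemma \ref{vectstable}) with a bounded-below t-structure, by the boundedness clause in the definition of vectorial spectrum. Consequently every object admits a finite positive shift into $\on{vect}_{M,\geq 1}$, and every cofiber sequence in $\on{vect}_M$ can after sufficient shifting be placed entirely inside $\on{vect}_{M,\geq 1}$. The three constraints defining morphisms in $Q(\on{vect}_{M,\geq 1})$ are preserved under shifting and under forming cofibers of maps between connective vectorial spectra: connectivity and $\pi_1$-surjectivity are standard, while vectoriality-on-$\Omega^\infty$ is preserved by use of the $2$-out-of-$3$ property from Lemma \ref{vectorialproperties}(2) in combination with Lemma \ref{bvectorial}. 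Given these closure properties, a variant of Quillen's resolution theorem---concretely, Barwick's comparison of the $Q$-construction for an appropriate exact/Waldhausen subcategory with the full span-construction K-theory of its stable envelope in \cite{BarwickQ}---identifies the two K-theories. The main obstacle is this last step: the nonstandard vectoriality constraint on $\Omega^\infty$ of the right leg is not part of any off-the-shelf comparison theorem, so one must trace it through the simplicial $S_\bullet$-construction and verify at each level that the condition is preserved under the relevant operations. This is ultimately routine given the closure properties of vectorial maps, but it is where the bulk of the technical work lies.
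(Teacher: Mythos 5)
Your reduction to the equivalence $K(\on{vect}_{M,\geq 1})\overset{\sim}{\rightarrow}K(\on{vect}_M)$ is the same first (and easy) step as in the paper, and your construction of $J\circ\Omega^\infty$ at the level of span categories is fine, but the main step is not actually proved. You invoke ``a variant of Quillen's resolution theorem'' / a Barwick-style comparison between the exact $\infty$-category $\on{vect}_{M,\geq 1}$ and its stable envelope, while conceding that the vectoriality constraint on the egressive legs is ``not part of any off-the-shelf comparison theorem'' and that one must trace it through the $S_\bullet$-construction, calling this routine. That is precisely where the content of the theorem lies, and no argument is given; in fact it is not clear that any cofinality/resolution statement applies directly, because the morphisms allowed in $Q(\on{vect}_{M,\geq 1})$ are genuinely restricted (egressives must be $\pi_1$-surjective with vectorial underlying map), so the inclusion into $Q(\on{vect}_M)$, where all spans are allowed, is not an instance of a standard subcategory comparison.

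The paper's proof supplies the missing device. One writes $(\mc{C}\otimes\on{Sp})_{>>-\infty}=\varinjlim\,(\mc{C}\otimes\on{Sp})_{\geq 1}$, the sequential colimit along the shift $X\mapsto X[1]$, and observes that passing to span categories and vectorial subcategories gives $Q(\on{vect}_M)=\varinjlim Q(\on{vect}_{M,\geq 1})$ along shift. This reduces the whole problem to showing that $[1]$ induces an equivalence on $K(\on{vect}_{M,\geq 1})$. For that, one notes that $\on{vect}_{M,\geq 1}$ with ingressives all maps and egressives the $\pi_1$-surjective maps with vectorial underlying map is an exact $\infty$-category in the sense of \cite{BarwickQ}, so the additivity theorem is available; applying it to the functorial sequence $X\to 0\to X[1]$, whose second map is egressive, shows that shift induces $-\on{id}$ on K-theory, hence an equivalence. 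Without this colimit-along-shift presentation and the additivity argument (or some equivalent substitute), your proposal does not close the gap.
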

\begin{proof}
Clearly, it suffices to show that
$$K(\on{vect}_{M,\geq 1})\overset{\sim}{\rightarrow} K(\on{vect}_M).$$

For this, note that we have
$$(\mc{C}\otimes \on{Sp})_{>>-\infty} = \varinjlim ((\mc{C}\otimes \on{Sp})_{\geq 1})$$
where the colimit on the right is the sequential colimit where each transition map is given by the shift functor $X\mapsto X[1]$. Passing to span categories and then restricting to vectorial subcategories, we get

$$Q(\on{vect}_M) = \varinjlim Q(\on{vect}_{M,\geq 1}).$$

Thus it suffices to show that the functor $X\mapsto X[1]$ induces an isomorphism
$$K(\on{vect}_{M,\geq 1})\overset{\sim}{\rightarrow} K(\on{vect}_{M\geq 1}).$$
In fact we will show it induces $-\on{id}$.  Note that these $K(\on{vect}_{M,\geq 1})$ fit into the framework of Quillen K-theory of exact $\infty$-categories developed in \cite{BarwickQ}: the underlying additive category is that of spectrum objects $X$ with $\pi_i(X)=0$ for $i<1$ such that $\Omega^\infty X$ is vectorial; the class of ``egressive'' maps is that of maps surjective on $\pi_1$ whose underlying map of objects in $\mc{C}$ is vectorial, and the class of ``ingressive'' maps is all maps.  In particular we have access to the additivity theorem.  However, we have a functorial fiber sequence
$$X \mapsto (X \to 0 \to X[1])$$
where $0\to X[1]$ is egressive so from the additivity theorem we deduce the claim.
\end{proof}

\begin{remark}
This theorem explains how to calculate $J(X)$ for $X\in\on{vect}_M$.  Namely, find an $n\in 2\mbb{Z}$ such that $X[n]$ lies in $\on{vect}_{M,\geq 1}$.  Then
$$J(X) = J(X[n]) = J(\Omega^\infty(X[n])).$$
\end{remark}

We can completely analogously translate the canonical trivializations of $J$ from \ref{eilenberg} to the stable setting.

\begin{theorem}\label{eilenbergstable}
Let $(\mc{C},E,M)$ be a six functor formalism with good descent.  Let $\on{vect}_M^\pi$ denote the full subcategory of objects $X\in\on{vect}_M$ such that $\Omega^\infty X[n] \in\mc{C}$ is proper (and vectorial) for $n>>0$, and similarly for $\on{vect}_M^\epsilon$ with etale instead of proper.

Then $\on{vect}_M^\pi$ and $\on{vect}_M^\epsilon$ are stable subcategories of $\on{vect}_M$, and
$$J:K(\on{vect}_M)\to\on{Pic}(M(\ast))$$
admits a canonical trivialization on restriction to each of $K(\on{vect}_M^\pi),K(\on{vect}_M^\epsilon)$.
\end{theorem}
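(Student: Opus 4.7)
The plan is to reduce to the unstable Theorem \ref{eilenberg} via the bridge constructed in Theorem \ref{jextend}, carrying the proper/etale decoration along the way. I will only spell out the proper case, as the etale case is formally identical.

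First I would verify that $\on{vect}_M^\pi$ is a stable subcategory. Closure under shifts is immediate from the definition. For closure under fibers, let $f:X\to Y$ be a map in $\on{vect}_M^\pi$; shifting by some large even integer, I may assume $X,Y$ and $\on{Fib}(f)$ all lie in $(\mc{C}\otimes\on{Sp})_{\geq 1}$ with $\Omega^\infty X,\Omega^\infty Y$ both proper and vectorial in $\mc{C}$. By Lemma \ref{bvectorial}(2), the map $\Omega^\infty f:\Omega^\infty X\to\Omega^\infty Y$ is vectorial; the fiber in $\mc{C}$ is the base change of $\ast\to\Omega^\infty Y$ along $\Omega^\infty f$, and both $\Omega^\infty X\to\ast$ and $\ast\to\Omega^\infty Y$ are proper, so by closure of properness under base change and composition, $\Omega^\infty\on{Fib}(f)\to\ast$ is proper. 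It is vectorial by Lemma \ref{vectorialproperties}. Thus $\on{Fib}(f)\in\on{vect}_M^\pi$. The analogous argument works in the etale case using Lemma \ref{bvectorial}(3) and closure properties of etale maps.

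Next, in parallel with the proof of Theorem \ref{jextend}, I introduce the auxiliary subcategory $\on{vect}_{M,\geq 1}^\pi$ consisting of those $X\in\on{vect}_{M,\geq 1}$ for which $\Omega^\infty X\in\mc{C}$ is already proper and vectorial (not merely after shift). Equipping this with the exact structure inherited from $\on{vect}_{M,\geq 1}$ (egressive maps are those surjective on $\pi_1$ whose underlying map in $\mc{C}$ is vectorial, with the apex automatically proper), the shift functor $X\mapsto X[1]$ still preserves the class, and Barwick's additivity theorem applies verbatim to give a natural equivalence
$$K(\on{vect}_{M,\geq 1}^\pi)\overset{\sim}{\to} K(\on{vect}_M^\pi),$$
exactly as in the proof of Theorem \ref{jextend} (the relevant colimit $\on{vect}_M^\pi = \varinjlim\,\on{vect}_{M,\geq 1}^\pi[-n]$ is again along the shift functor, which induces $-\on{id}$ on K-theory).

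Finally, the functor $\Omega^\infty$ sends $Q(\on{vect}_{M,\geq 1}^\pi)$ into $Q(\on{Vect}_M^\pi)$: the apex of any allowed span is proper and vectorial, and the two legs into proper and vectorial endpoints are automatically proper by the 2-of-3 property for properness, hence define a morphism in $Q(\on{Vect}_M^\pi)$. Consequently the restriction of $J^{\on{new}}\mid_{K(\on{vect}_{M,\geq 1}^\pi)}\simeq J\circ\Omega^\infty$ from Theorem \ref{jextend} factors through $K(\on{Vect}_M^\pi)$, on which Theorem \ref{eilenberg} supplies a canonical nullhomotopy of $J$. Transporting this nullhomotopy through the K-theoretic equivalence of the previous paragraph yields the desired canonical trivialization of $J$ on $K(\on{vect}_M^\pi)$. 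The etale case is handled identically using the symmetric monoidal functor $Q(\on{vect}_{M,\geq 1}^\epsilon)\to Q(\on{Vect}_M^\epsilon)$ and the etale half of Theorem \ref{eilenberg}.

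The step I expect to require the most care is the verification that the functors $Q(\on{vect}_{M,\geq 1}^\pi)\to Q(\on{Vect}_M^\pi)$ and $Q(\on{vect}_{M,\geq 1}^\epsilon)\to Q(\on{Vect}_M^\epsilon)$ are well-defined as symmetric monoidal span-category functors, i.e.\ that each leg of an allowed span really lies in the class of morphisms of the target $Q$-category; but this reduces to the 2-of-3 property for proper (resp.\ etale) maps applied to the commutative triangle formed by the apex and the two endpoints, once one has the stability from the first paragraph.
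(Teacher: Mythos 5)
Your proposal is correct and takes essentially the same route as the paper, whose proof simply says that stability follows as in Lemma \ref{vectstable} and that the trivializations are deduced from Theorem \ref{eilenberg} by the method of Theorem \ref{jextend} — precisely the decorated $\geq 1$ subcategory, the additivity/shift argument, and the $\Omega^\infty$ comparison with $Q(\on{Vect}_M^\pi)$, $Q(\on{Vect}_M^\epsilon)$ that you spell out. (One cosmetic point: to apply Lemma \ref{bvectorial}(2) you should shift far enough that $\Omega^\infty\Sigma^{-1}X$ and $\Omega^\infty\Sigma^{-1}Y$ are themselves vectorial group objects, as in the proof of Lemma \ref{vectstable}, not just $\Omega^\infty X$ and $\Omega^\infty Y$.)
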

\begin{proof}
The fact that these are stable subcategories follows exactly as in the proof of \ref{vectstable}, and the canonical trivializations are deduced from their unstable versions (\ref{eilenberg}) by exactly the same method as in the proof of \ref{jextend}.
\end{proof}

Finally, we return to light condensed anima with their six functor formalism of etale sheaves with coefficients in $\on{Sp}_{\wh{p}}$ for a fixed prime $p$, as in Section \ref{sixsec}.  We will produce many examples of vectorial light condensed spectra.  They will actually be spectrum objects concentrated in degree $0$, that is, light condensed abelian groups.  By definition, such an $A$ is vectorial (resp.\ proper and vectorial, resp.\ etale and vectorial) as a light condensed spectrum if and only if some iterated classifying space $B^nA$ is vectorial (resp.\ proper and vectorial, resp.\ etale and vectorial) as a light condensed anima.  In all examples we consider, $n=2$ will suffice, and often $n=1$ will suffice.

\begin{theorem}\label{lotsofvectorial}
Let $p$ be a prime. In the six functor formalism for etale sheaves on light condensed anima with $p$-complete spectrum coefficients, we have:
\begin{enumerate}
\item For any (discrete) $\mbb{Z}[1/p]$-module $M$, the corresponding light condensed abelian group $M$ is etale and vectorial.
\item For any compact Hausdorff $\mbb{Z}[1/p]$-module $M$ which is second-countable and finite dimensional, the corresponding light condensed abelian group $M$ is proper and vectorial.
\item For any local field $F$ of characteristic $\neq p$ and any finite dimensional $F$-vector space $V$, the corresponding light condensed abelian group $V$ is vectorial.
\end{enumerate}
\end{theorem}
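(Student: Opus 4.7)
The plan is to apply Lemma \ref{bvectorial} iteratively, reducing every claim to a vanishing of positive-degree mod $p$ cohomology for appropriate $B^n M$. The common mechanism is that, since $M$ is a $\mathbb{Z}[1/p]$-module, the multiplication-by-$p$ map $[p]:M\to M$ is an isomorphism, hence it induces an invertible endomorphism of each $B^n M$; on the other hand, via the infinite-loop-space / Hopf-algebra structure on $B^n M$ (for $n\geq 0$, when connected), $[p]^*$ acts as multiplication by $p$ on primitive classes in $H^*(B^n M;\mathbb{F}_p)$. Invertibility combined with being zero mod $p$ forces primitives in positive degree to vanish, and then a Milnor--Moore-type structure theorem for connected commutative Hopf algebras of finite type over $\mathbb{F}_p$ yields $H^*(B^n M;\mathbb{F}_p)=\mathbb{F}_p$ concentrated in degree $0$. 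Combined with proper base change, this will give the fully-faithful hypothesis in Lemma \ref{bvectorial}(4) in all relevant settings.

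For Part 1 (discrete $M$), each $B^n M$ is a discrete anima, hence etale over $\ast$. Using the identification $\on{Fun}(X,\on{Sp}_{\wh{p}})=\on{Mod}_{\mathbb{S}_{\wh{p}}[\Omega X]_{\wh{p}}}(\on{Sp}_{\wh{p}})$ for connected pointed $X$, showing that $B^2 M$ is vectorial reduces to $BM = K(M,1)$ being $\mathbb{F}_p$-acyclic, which is exactly the mechanism above; the same holds after pullback to any light profinite set, so $M$ is etale and vectorial as a spectrum object. For Part 2 (compact Hausdorff $M$), note first that $M\to\ast$ is $\on{Sp}_{\wh{p}}$-proper by Theorem \ref{chausproper} (using finite covering dimension). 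Decompose via the short exact sequence of light condensed abelian groups $0 \to M^0 \to M \to \pi_0(M) \to 0$, with $M^0$ the identity component and $\pi_0(M)$ a pro-$p'$ profinite abelian group; since $\on{vect}_M^\pi$ is stable (Theorem \ref{eilenbergstable}), it suffices to treat the two pieces separately. The mechanism gives $H^*(M^0;\mathbb{F}_p)=\mathbb{F}_p$, so Lemma \ref{bvectorial}(4) applied with $G=M^0$ yields $BM^0$ proper and vectorial. Writing $\pi_0(M)=\varprojlim_i F_i$ with each $F_i$ finite of order coprime to $p$, the object $B\pi_0(M)$ is a 1-truncated light profinite anima of finite mod $p$ cohomological dimension (by Proposition \ref{procontinuity} together with $H^*(BF_i;\mathbb{F}_p)=\mathbb{F}_p$), hence proper by Theorem \ref{chausproper}(2); Lemma \ref{bvectorial}(4) applied with $G=B\pi_0(M)$ then gives $B^2\pi_0(M)$ proper and vectorial.

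For Part 3, split on whether $F$ is archimedean. If $F$ is archimedean, then $V$ is a contractible real manifold, so $V\to\ast$ is smooth by Theorem \ref{realsmooth} and $f^\ast$ is fully faithful after any base change (via the fiberwise contraction onto the origin), whence Lemma \ref{bvectorial}(1) yields $BV$ vectorial. If $F$ is nonarchimedean with $\on{char}(F)\neq p$, choose a compact open $\mathcal{O}_F$-lattice $L\subset V$; then $L$ falls under Part 2 and $V/L$ under Part 1, so both are vectorial as spectra, and the fiber sequence $L\to V\to V/L$ of light condensed abelian groups combined with stability of $\on{vect}_M$ (Lemma \ref{vectstable}) yields $V$ vectorial. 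The main obstacle is the rigorous mod $p$ cohomology computation for $M^0$ in Part 2: one either reduces via Pontryagin duality (writing $M^0$ as an inverse limit of finite-dimensional tori with prime-to-$p$ covering transition maps, where continuity makes the computation transparent) or invokes a Milnor--Moore structure theorem directly; secondarily, one must verify that the full-faithfulness hypothesis in Lemma \ref{bvectorial}(4) persists after arbitrary base change, which follows from proper base change.
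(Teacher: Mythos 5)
Parts 1 and 2 of your proposal are essentially correct and run parallel to the paper's own proof: the paper establishes the same mod~$p$ acyclicity statements by reducing, via resolutions, K\"unneth and the continuity result \ref{procontinuity}, to $M=\mbb{Z}[1/p]$, to prime-to-$p$ finite groups, and to the solenoid, where you instead use the ``$[p]$ is invertible but kills primitives'' Hopf-algebra mechanism; either route works (modulo the usual care that the relevant cohomology is sheaf/condensed cohomology, with K\"unneth supplied by the rigidity results of Section \ref{descsec}). One slip in your parenthetical alternative for $M^0$: the inverse system of tori presenting $M^0$ has isogenies whose degrees are divisible by arbitrarily large powers of $p$ --- that $p$-divisibility is exactly what makes the colimit of mod~$p$ cohomologies vanish; ``prime-to-$p$ covering transition maps'' would give isomorphisms on $H^1(-;\mbb{F}_p)$ and the computation would fail (and $M^0$ would not be a $\mbb{Z}[1/p]$-module).

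The genuine gap is in Part 3, in the nonarchimedean case: you have conflated the characteristic of $F$ with its \emph{residue} characteristic. When $F$ has residue characteristic $p$ --- in particular $F=\mbb{Q}_p$ and its finite extensions, which is the case the rest of the paper actually needs --- the lattice $L=\mathcal{O}_F^d$ is not a $\mbb{Z}[1/p]$-module ($p$ is topologically nilpotent, not invertible, on $\mathcal{O}_F$), and neither is $V/L$ (a $p$-divisible group with $p$-torsion), so Parts 1 and 2 simply do not apply to the terms of your fiber sequence. Worse, the strategy cannot be repaired locally at $p$: $\mbb{Z}_p$ is genuinely not vectorial, since $\on{Hom}(\mbb{Z}_p,\mbb{F}_p)=\mbb{F}_p$ gives a nonzero class in $H^n(B^n\mbb{Z}_p;\mbb{F}_p)$ for every $n$, so no delooping $B^n\mbb{Z}_p$ can have fully faithful (let alone invertible) pullback from the point with $p$-complete coefficients. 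The paper handles this case by a global arithmetic input that is entirely missing from your proposal: the short exact sequence of light condensed abelian groups $0\to \mbb{Z}[1/p]\to \mbb{Q}_p\oplus\mbb{R}\to C\to 0$, with $C=\varprojlim_n \mbb{R}/p^n\mbb{Z}$ the solenoid. Here the first term is vectorial by Part 1, the last by Part 2, so $\mbb{Q}_p\oplus\mbb{R}$ is vectorial by stability (Lemma \ref{vectstable}), and since $\mbb{R}$ is vectorial one concludes that $\mbb{Q}_p$ is. Your lattice argument does correctly reproduce the paper's proof for residue characteristic $\ell\neq p$ (there $\mathcal{O}_F$ and $F/\mathcal{O}_F$ really are $\mbb{Z}[1/p]$-modules), but the residue characteristic $p$ case is the essential content of Part 3 and requires this ``adelic'' step.
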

\begin{proof}
For 1, we claim that $f:BM\to \ast$ is etale and pullback along it is fully faithful.  This fully faithfulness automatically passes to all base-changes by Lemma \ref{blob}, so by Lemma \ref{bvectorial} that will show that $M$ is etale and vectorial.  Etaleness is Lemma \ref{etaleisetale}.  For full fidelity, we need to show that
$$f_\natural f^\ast X = X$$
for all $X\in\on{Sp}_{\wh{p}}$.  Working mod $p$ and using the Postnikov tower, it suffices to treat $X=\mbb{F}_p$.  So we need to see that the group homology of $M$ with $\mbb{F}_p$-coefficients is $\mbb{F}_p[0]$.  But by resolutions and Kunneth we can reduce to $M=\mbb{Z}[1/p]$ where it's a simple calculation.

For 2, because of the short exact sequence
$$0\to M^0 \to M \to \pi_0M\to 0$$
and Lemma \ref{vectstable} we can reduce to the connected and totally disconnected cases.  In the totally disconnected case, $M$ is Pontryagin dual to a countable torsion discrete $\mbb{Z}[1/p]$-module, hence $M\simeq \varprojlim M_n$ is a sequential limit of abelian groups of prime-to-$p$ order.  By Lemma \ref{bvectorial} it suffices to show that $BM\to \ast$ is proper and pullback along it is fully faithful (this will then automatically be so after any base change by Lemma \ref{blob}).   The properness follows from Theorem \ref{chausproper} because prime-to-$p$ groups have mod $p$ cohomological dimension $0$.  For full fidelity, arguing as usual with the Postnikov tower we reduce to showing that the $\mbb{F}_p$-cohomology is just $\mbb{F}_p$ in degree $0$, but again this follows from the same remark about cohomological dimension.  For connected $M$, we claim that $M\to \ast$ is proper and fully faithful on pullback.  Properness is Theorem \ref{chausproper}.  For full fidelity, note by Pontryagin duality that $M$ is dual to a torsionfree countably generated $\mbb{Z}[1/p]$-module, hence identifies with a sequential inverse limit of finite products of copies of the solenoid $C=\varprojlim\mbb{R}/p^n\mbb{Z}$.  By Corollary \ref{procontinuity} we reduce to $C$ itself, in which case the claim follows from a simple (mod $p$) cohomology calculation (exactly as in \ref{itscontractible}).

For 3, first consider nonarchimedean local fields $F$ of residue characteristic $\neq p$.  In the short exact sequence
$$\mc{O}_F\to F\to F/\mc{O}_F$$
the first term is vectorial by 2 and the last term is vectorial by 1, so $F$ and hence $V$ (as a direct sum of copies of $F$) is also vectorial by Lemma \ref{vectstable}.

Next consider archimedean local fields, which reduces to showing that $\mbb{R}$ is vectorial.  But $\mbb{R}\to \ast$ is smooth and pullback along it is fully faithful (also after any base change) so this follows from Lemma \ref{bvectorial}.

Finally we consider nonarchimedean local fields of residue characteristic $p$.  This reduces to showing that $F=\mbb{Q}_p$ is vectorial.  However we have a short exact sequence
$$0\to \mbb{Z}[1/p] \to \mbb{Q}_p\oplus \mbb{R} \to C\to 0$$
where $C  = \varprojlim_n \mbb{R}/p^n\mbb{Z}$ is the $p$-solenoid.  By 1 the first term is vectorial and by 2 the last term is vectorial.  Thus $\mbb{Q}_p\oplus\mbb{R}$ is vectorial.  As $\mbb{R}$ is vectorial, so is $\mbb{Q}_p$ as desired.
\end{proof}

By the general theory (Theorem \ref{jextend}) we have a map
$$J:\on{K}(\on{vect}_{\on{Sh}_{et}(-;\on{Sp}_{\wh{p}})})\to \on{Pic}(\on{Sp}_{\wh{p}})= \on{Pic}(\mbb{S}_{\wh{p}})$$
(for the last equality, see Lemma \ref{checkperfmodp}).  For any local field $F$ of characteristic $\neq p$, by Lemma \ref{lotsofvectorial} we can restrict to finite free $F$-modules and therefore get
$$J_F:\on{K}(F) \to \on{Pic}(\mbb{S}_{\wh{p}}).$$
This is our replacement for the piecemeal constructions of J-homomorphisms in \cite{ClausenJ}.

\begin{remark}
Of course, as the above proofs show, $\mbb{R}$ is already vectorial with $\on{Sp}$-coefficients, so we get a more refined $J_{\mbb{R}}:\on{K}(\mbb{R})\to \on{Pic}(\mbb{S})$.  We can also work in families as in Section \ref{dualsec} to directly see that this $J_{\mbb{R}}$ factors through the connective topological K-theory spectrum $ko$ (although such a factoring is also automatic and unique by \cite{SuslinK} and finiteness of homotopy groups of spheres).

Similarly, for $F$ nonarchimedean but of residue characteristic $\ell\neq p$ we actually get $J_F:\on{K}(F)\to \on{Pic}(\mbb{S}[1/\ell])$ which again is more refined.
\end{remark}

\begin{remark}
One should be careful with sign conventions.  By Lemma \ref{comparetwists} we have that for a finite dimensional $\mbb{R}$-vector space $V$, the invertible spectrum $J_\mbb{R}(V)$ identifies with the \emph{inverse} of the dualizing spectrum $0^\ast f^!\mbb{S}$ associated to
$$\ast\overset{0}{\to} V\overset{f}{\to}\ast,$$
and by Lemma \ref{homotopytwists} this dualizing spectrum $0^\ast f^!\mbb{S}$ identifies with the suspension spectrum of $V/(V\setminus 0)$, which is one of the usual ways of getting the real J-homomorphsim.  These identifications were made using the six functor formalism which is lax symmetric monoidal by definition, and it follows that as maps of spectra, we get an identification of $J_\mbb{R}$ with the \emph{negative} of the usual J-homomorphism $K(\mbb{R})\to\on{Pic}(\mbb{S})$.

On the other hand $J_{\mbb{Q}_p}$ sends $V$ to the dualizing object $\mbb{S}_{\wh{p}}^V$ for $BV$ considered in Section \ref{dualsec}, which lives in positive degrees.  The fact these behaviors are ``opposite'' is inevitable by reciprocity, \ref{reciprocity}.  But which one is positive and which one is negative is a matter of sign convention which (like in the analogous situation of local Artin maps) has no mathematical significance.
\end{remark}

The main result on these J-homomorphisms is their reciprocity law (from \cite{ClausenJ}), which we reprove here.  We state it in a bit more elementary finitary way:

\begin{theorem}\label{reciprocity}
Let $p$ be a prime, $F$ a global field of characteristic $\neq p$, and $S$ a finite set of places of $F$ containing all the infinite places and all the places above $p$.  Then there is a canonical null-homotopy of the composition
$$K(\mathcal{O}_{F,S}) \to \oplus_{\nu \in S}K(F_\nu)\to \on{Pic}(\mbb{S}_{\wh{p}}),$$
where the first map is induced by the natural homomorphism $\mathcal{O}_{F,S}\to F_\nu$ on the $\nu$-component and the second map is given by $J_{F_\nu}$ on the $\nu$-component.
\end{theorem}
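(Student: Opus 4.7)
The plan is to package the classical adelic exact sequence into the light condensed framework and conclude by the trivializations of $J$ on the proper and etale subcategories (Theorem \ref{eilenbergstable}). The key geometric input I would set up first is the short exact sequence of light condensed abelian groups
\[
0 \to \mathcal{O}_{F,S} \to \prod_{\nu \in S} F_\nu \to Q \to 0,
\]
where the first map is the diagonal embedding. Because $S$ contains all archimedean places, $\mathcal{O}_{F,S}$ is discrete and cocompact inside $\prod_{\nu \in S} F_\nu$, so $Q$ is a compact Hausdorff, second countable, finite-dimensional topological abelian group. Because $S$ contains all places above $p$, every term is naturally a $\mathbb{Z}[1/p]$-module. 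Thus Theorem \ref{lotsofvectorial} identifies $\mathcal{O}_{F,S}$ as an etale vectorial light condensed spectrum (part 1), each $F_\nu$, and hence their finite product, as vectorial (part 3), and $Q$ as a proper vectorial light condensed spectrum (part 2).

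Next I would lift this to a functor in a projective $\mathcal{O}_{F,S}$-module $M$: tensoring the sequence with $M$ (which is flat over $\mathcal{O}_{F,S}$) produces a natural cofiber sequence of light condensed spectra
\[
M \to M \otimes_{\mathcal{O}_{F,S}} \prod_{\nu \in S} F_\nu \to M \otimes_{\mathcal{O}_{F,S}} Q.
\]
Since the three types of vectoriality in question (etale vectorial; vectorial; proper vectorial) are preserved under finite direct sums and retracts, this gives three exact functors
\[
\epsilon,\ \tau,\ \pi \ :\ \operatorname{Perf}(\mathcal{O}_{F,S}) \to \operatorname{vect}_M,
\]
landing in $\operatorname{vect}_M^\epsilon$, $\operatorname{vect}_M$, $\operatorname{vect}_M^\pi$ respectively, with an exact sequence $\epsilon \to \tau \to \pi$. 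By additivity in $K$-theory, $[\tau]=[\epsilon]+[\pi]$ as maps $K(\mathcal{O}_{F,S})\to K(\operatorname{vect}_M)$. Now $J\circ\epsilon$ and $J\circ\pi$ each come equipped with a canonical nullhomotopy from Theorem \ref{eilenbergstable}, so their sum is canonically null, hence so is $J\circ \tau$. On the other hand, $M\otimes_{\mathcal{O}_{F,S}}\prod_{\nu\in S}F_\nu=\prod_{\nu\in S}M_\nu$, and by the coherent multiplicativity of $J$ (Theorems \ref{jcoherent}, \ref{jextend}) together with the definition of $J_{F_\nu}$, we have $J\circ\tau \simeq \sum_{\nu\in S}J_{F_\nu}\circ(\text{base change to }F_\nu)$. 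This is precisely the composition we want to trivialize.

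The main obstacle, and the only thing that really requires work beyond invoking the formalism, is the verification that the short exact sequence above, and its $M$-parametrized variant, really assemble into a cofiber sequence of exact functors to $\operatorname{vect}_M$. This breaks up into: (i) checking that $Q$ satisfies the hypotheses of Theorem \ref{lotsofvectorial}(2), in particular that it has finite mod-$p$ cohomological dimension — this is clear because $Q$ is an extension of a finite-dimensional real torus by a finite product of $p$-solenoids and some profinite prime-to-$p$ piece, each of finite mod-$p$ cohomological dimension, and a Serre spectral sequence bounds the total; (ii) checking that each of the three vectoriality properties is stable under $-\otimes_{\mathcal{O}_{F,S}}M$ for $M$ finite projective, which follows by reducing to $M = \mathcal{O}_{F,S}^n$ (finite products/sums) and then to retracts; and (iii) matching the canonical nullhomotopy produced by this argument with the composition as formulated in the statement, which is straightforward from Definition \ref{jdef} once the multiplicative structure is unwound. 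No subtler input than Theorem \ref{lotsofvectorial} and Theorem \ref{eilenbergstable} should be needed.
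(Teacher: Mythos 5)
Your proposal is correct and follows essentially the same route as the paper: the same adelic short exact sequence $0\to\mathcal{O}_{F,S}\to\prod_{\nu\in S}F_\nu\to Q\to 0$, the same vectoriality input from Theorem \ref{lotsofvectorial} (etale for the discrete term, proper for the compact quotient), tensoring with finitely generated projective modules for functoriality, additivity in $K$-theory, and the canonical trivializations of Theorem \ref{eilenbergstable}. The extra verifications you flag (finite cohomological dimension of $Q$, stability under tensoring with projectives) are exactly the routine points the paper leaves implicit.
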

\begin{proof}
Consider the short exact sequence of light condensed $\mathcal{O}_{F,S}$-modules
$$0\to\mathcal{O}_{F,S} \to \prod_{\nu\in S}F_\nu \to C_{F,S}\to 0.$$
The third term is another solenoidal compact Hausdorff space.  By Lemma \ref{lotsofvectorial} all objects appearing are vectorial, the first term is etale, and the last term is proper.

For any finitely generated projective $\mbb{Z}[1/S]$-module $M$ we can tensor this short exact sequence with $M$ to get another short exact sequence with the same properties.  In K-theory we have that the middle term is the sum of the two outer terms, but these outer terms go to $0$ under $J$ by Theorem \ref{eilenbergstable}.  Thus the middle term goes to $0$ under $J$.  But the middle term breaks up as the sum over $\nu$ of the $\nu$-adic J-homomorphisms, whence the claim.\end{proof}

\begin{remark}
We already used one simple corollary of this in the proof of Proposition \ref{classicaltwists}: namely, the image of $p\in \mbb{Q}_p^\times = \pi_1 K(\mbb{Q}_p)$ under $J_{\mbb{Q}_p}$ is trivial because this is so for the image of $p\in\mbb{R}^\times$ under $J_{\mbb{R}}$.
\end{remark}

\begin{remark}
The nicest way to state this kind of result for an aribitrary (possibly infinite) set of primes $S$ was given in \cite{BraunlingLocalCompact}, in terms of restricted direct products of exact categories.
\end{remark}

It's perhaps not so worthwhile to prove that the new definition of $J_F$ agrees with the old one from \cite{ClausenJ}: one should just use the new definition instead (though in principle arguments like those in Appendix C of \cite{ClausenJ} are sufficient to establish the comparison).  In the paper \cite{ClausenArtin}, which showed that Theorem \ref{reciprocity} (combined with cohomological dimension estimates for number fields and local fields) recovers the Artin reciprocity law, we only needed minimal information on the $J_F$: besides the reciprocity law \ref{reciprocity} and certain functorial properties which clearly hold for any definition, we just needed to know what $J_{\mbb{Q}_p}$ was in degrees $0$ and $1$.  For the new definition, this is already covered by Proposition \ref{classicaltwists}.

\begin{remark}
We can at least ask to characterize the $J_F$ after $K(1)$-localization, and this is a tractable problem which again only requires knowing $J_F$ in degrees $0$ and $1$.

 The result can be expressed in terms of the following calculation: when $F$ is a field of characteristic $\neq p$ and of virtual cohomological dimension $\leq 2$, the co-descent spectral sequence from \cite{ClausenArtin} 2.4 gives a natural short exact sequence
$$0 \to H^0(G_F;\mbb{Q}_p/\mbb{Z}_p)^\vee \to [K(F),L_{K(1)}\on{Pic}(\mbb{S}_{\wh{p}})] \to H^2(G_F;\mbb{Q}_p/\mbb{Z}_p(1))^\vee\to 0$$
where $(-)^\vee$ denotes Pontryagin duality (turning discrete $p$-torsion abelian groups into profinite abelian $p$-groups).

Then in these terms, for any local field $F$ of characteristic $\neq p$, our J-homomorphism
$$J_F \in [K(F),L_{K(1)}\on{Pic}(\mbb{S}_{\wh{p}})]$$
projects to the fundamental class
$$\on{inv}_F\in H^2(G_F;\mbb{Q}_p/\mbb{Z}_p(1))^\vee$$
for local Tate duality (and therefore Theorem \ref{reciprocity} recovers and refines the Hasse reciprocity law).  Indeed, pairing with a class $u\in F^\times =K_1(F)$ this reduces to the result in \cite{ClausenArtin} that the local Artin map is recovered one term up in the spectral sequence where we have
$$[\Sigma K(F), L_{K(1)}\on{Pic}(\mbb{S}_{\wh{p}})] = H^1(G_F;\mbb{Q}_p/\mbb{Z}_p)^\vee.$$

Moreover, among all elements of $[K(F),L_{K(1)}\on{Pic}(\mbb{S}_{\wh{p}})]$ which project to $\on{inv}_F$, we can pin down $J_F$ uniquely by its effect on $\pi_0$.
\end{remark}

\section{Morava E-theory in the light condensed framework}\label{esec}

Let us start with some recollections on chromatic homotopy theory, from the perspective of the ``$v_n$-self maps'' of Hopkins-Smith (\cite{HopkinsNilpotence}).  Let $p$ be a prime number.  The $\infty$-category $\on{Sp}_{(p)}$ of $p$-local spectra is in some ways similar to the derived $\infty$-category $\on{D}(\mathbb{Z}_{(p)})$.  In the latter, one has a distinguished self-map of any object, given by multiplication by $p$.  For $M\in\on{D}(\mathbb{Z}_{(p)})$, the object
$$M[p^{-1}] = \varinjlim(M\overset{\cdot p}{\to}M\overset{\cdot p}{\to}\ldots)$$
promotes to an object in $\on{D}(\mathbb{Q})$.  Thus it is controlled by linear algebra over a field.  The ``difference'' between $M$ and $M[p^{-1}]$, or more precisely the cofiber of the natural map $M \to M[p^{-1}]$, is the colimit of $M/p^n = \on{cofib}(M\overset{p^n}{\to}M)$.  This in turn promotes to an object of $D(\mathbb{Z}/p^n\mathbb{Z})$, and hence is controlled by a nilpotent thickening of a field.

In this way, the canonical self-map $\cdot p$ allows to decompose $D(\mathbb{Z}_{(p)})$ into pieces, each of which is in some way controlled by linear algebra over nilpotent thickenings of fields.  The story with $\on{Sp}_{(p)}$ is similar, except we need to iterate this process, and the meaning of being ``controlled by a nilpotent thickening of a field'' is a bit more abstract.  We still have a self-map $\cdot p$, and for $M\in\on{Sp}_{(p)}$ we still have that $M[p^{-1}]$ promotes to an object in $\on{D}(\mathbb{Q})$.  (This is a reflection of Serre's theorem that the stable homotopy groups of spheres in positive degrees are finite.)  On the other hand, $M/p^n$ does not behave like it lives over a field anymore.  Instead, we get a new functorial self-map of $M/p^n$ called $v_1$.  It's not literally a map $M/p^n\to M/p^n$, but rather a map from some shift of $M/p^n$ to $M/p^n$, but the formalism of modding out and inverting works in that context as well.

There is also a subtlety that $v_1$ is really only well-defined as a homotopy class, and even then only up to replacing $v_1$ to $v_1^N$ for some large $N$.  But if all we want to do with $v_1$ is complete along it or invert it, this ambiguity doesn't matter.

Then it turns out that for all $M\in\on{Sp}_{(p)}$, the spectrum $M/p[v_1^{-1}]$ in some sense ``lives over a field'', or at least over something that formally behaves like a nil-thickening of a field.  This can be codified in terms of the Balmer spectrum, but let's not get into the details of that.

Thus we have gone one step further than in the algebraic case of $\on{D}(\mathbb{Z}_{(p)})$, but we are still not done, because once again $M/(p,v_1)$ does not generally ``live over a field''.  Instead we get a new functorial degree-shifting self-map $v_2$, and $M/(p,v_1)[v_2^{-1}]$ behaves like it lives over a nilpotent thickening of a field.  And so on and so forth.

If this process applied to a spectrum $M$ stops after $n$ steps, meaning that $M/(p,v_1,\ldots, v_n)=0$, then $M$ is called \emph{$L_n^f$-local}.  In some sense this means that $M$ lives over a Zariski open subset of $\on{Spec}(\mathbb{S}_{(p)})$, namely the one described informally as the complement of the common vanishing locus of $(p,v_1,\ldots,v_n)$.  But it's a bit subtle because the $v_i$ are not globally defined.  In any case, there is a corresponding localization functor $L_n^f$ on spectra, which fits into a tower under the identity functor
$$\ldots L_n^f\to L_{n-1}^f \to \ldots \to L_0^f\to L_{-1}^f=0,$$
called the (finitary, or telescopic) chromatic tower.

In fact we have $L_n^fM = M\otimes L_n^f\mathbb{S}$ for all $M\in\on{Sp}$, just like with Zariski localizations.  Thus the most basic $L_n^f$-local spectrum is the $L_n^f$-local sphere.  It is very difficult to calculate, due to the intricate nature of the $v_i$ and the stable homotopy groups of spheres.

Fortunately, however, there is a simpler $L_n^f$-local $E_\infty$-ring spectrum which avoids these complications, but still controls a large sector of $L_n^f$-local stable homotopy theory.  (More precisely, it controls the $L_n$-local spectra, the more calculable portion of $L_n^f$-local spectra.)  This is the \emph{Morava $E$-theory}, denoted $E$ (we suppress the implicit dependence on $p$ and $n$).  Its homotopy groups are $0$ in odd degrees, and are 2-periodic, via an invertible class $\sigma\in\pi_2 E$.  Finally, its $\pi_0$ is isomorphic, non-canonically, to a power series ring over a Witt vector ring:
$$\pi_0 E \simeq W(\mathbb{F}_{p^n})[[u_1,\ldots,u_{n-1}]].$$
The classes $p,u_1,\ldots,u_{n-1}\in \pi_0E$ are some kind of incarnation of $p,v_1,\ldots,v_{n-1}$ (at least for the purposes of completion or inversion), and the class $\sigma\in \pi_2E$ is some kind of incarnation of $v_n$.  But now we see all these classes globally and, in some sense, algebraically.

With this as motivation, we now turn to a more thorough discussion of this Morava E-theory spectrum $E$.  We start with some algebraic preliminaries, as Morava E-theory (and indeed the whole chromatic story) is closely tied to the algebraic geometry of formal groups, though we avoided mentioning this connection in the above motivation.

More precisely, we are interested in one-dimensional formal groups over $\mathbb{Z}_{(p)}$-algebras $R$.  For this our main reference is \cite{GoerssMfg}; we give small recap of some material there, but incorporating the perspective of Cartier's theory \cite{CartierModules}, for which \cite{MumfordBiextensions} is a good reference.

Recall that if $\widehat{G}$ is a one-dimensional formal group over $R$, then we get:
\begin{enumerate}
\item An $R$-module $\omega=\on{coLie}(\widehat{G})$ which is locally free of rank one, namely the cotangent bundle to $\widehat{G}$ at the identity section;
\item An inductively defined sequence of elements $p=v_0,v_1,\ldots$ with
$$v_i\in \omega^{\otimes (p^i-1)}\otimes_R R/(p,v_1,\ldots, v_{i-1}),$$
called the \emph{Hasse invariants}.
\end{enumerate}

Namely, we recall that for a homomorphism $f:\widehat{G}\to\widehat{H}$ of one-dimensional formal groups over an $\mathbb{F}_p$-algebra, the induced map $f^\ast:\omega_{\widehat{H}}\to\omega_{\widehat{G}}$ is $0$ if and only if $f$ factors (necessarily uniquely) through the relative Frobenius $\widehat{G}\to\widehat{G}^{(p)}$.  In particular, the map $[p]:\widehat{G}\to\widehat{G}$ admits a factoring through the Frobenius, whence a map $V_1:\widehat{G}^{(p)}\to\widehat{G}$.  The section $v_1\in \omega^{\otimes(p-1)}$ corresponds to the effect of $V_1$ on the cotangent space at the identity, using $\omega_{\widehat{G}^{(p)}}=\omega_G^{\otimes p}$.  Then if $v_1=0$, we can further factor through to $V_2:\widehat{G}^{(p^2)}\to \widehat{G}$, whose effect on the contangent space at the identity is $v_2$, etc.

A one-dimensional formal group over a $\mathbb{Z}_{(p)}$-algebra $R$ is said to have \emph{exact height $n$} if $v_0,v_1,\ldots,v_{n-1}$ are all $0$ and $v_n$ is invertible.  Exact height $0$ just means that $R$ is a $\mathbb{Q}$-algebra.  For $n\geq 1$, by definition $\widehat{G}$ over $R$ has exact height $n$ if and only if $R$ is an $\mathbb{F}_p$-algebra, and there exists a (necessarily unique) isomorphism $G^{(p^n)}\simeq G$ identifying the relative Frobenius $G\to G^{(p^n)}$ with the multiplication by $p$ map $G\to G$.

One can classify the one-dimensional formal groups of exact height $n$ in terms of the Cartier theory of $p$-typical curves.  Namely, by Cartier theory, formal groups over a $\mathbb{Z}_{(p)}$-algebra $R$ correspond to certain $V$-reduced $V$-complete $p$-typical Cartier modules $M$ over $R$: to $\widehat{G}$ we assign the Cartier module of $p$-typical curves in $\widehat{G}$.  (Recall that Cartier modules are abelian groups equipped with left operators $V$, $F$, and $[r]$ for $r\in R$ satisfying certain properties.)   The one-dimensional formal groups correspond to those $M$ such that $M/VM$, which carries a natural $R$-module structure via the $[r]$, is locally free of rank one (it corresponds to $\omega_{\widehat{G}}^{-1}$).  In such a case we have that, Zariski locally on $\on{Spec}(R)$, there is an $e\in M$ which forms a $V$-basis, meaning the image of $e$ in $M/VM$ is a basis for that $R$-module.  Given such a $V$-basis $e\in M$, elements $m\in M$ correspond uniquely to sequences $(r_d)_{d\geq 0}$ of elements of $R$ via an expansion of the form $m=\sum_{d\geq 0} V^d[r_d]e$, and the structure of $M$ as a Cartier module (and hence of $\widehat{G}$ as a formal group) is uniquely specified by a single relation of the form
$$Fe = \sum_{d\geq 0} V^d [u_{d+1}]e,$$
where $u_1,u_2,\ldots \in R$ can be arbitrary.

On the formal group side, the generator $e\in M$ corresponds to a $p$-typical coordinate $t$.  As for the $u_d\in R$ giving the defining relation, when $R$ is an $\mathbb{F}_p$-algebra we have $p=VF$ in the Cartier ring, so multiplying the defining relation by $V$ on the left we deduce the following expansion of the $p$-series of $\widehat{G}$:
$$[p]_{\widehat{G}}(t) = \sum_{d\geq 1}^{\widehat{G}} u_d t^{p^d}.$$
Here the sum on the right is taken with respect to the group structure on $\widehat{G}$.  In particular we see that if we set $u_0:=p$, then these $u_0,u_1,u_2,\ldots$, which are globally defined as elements of $R$ given the $p$-typical coordinate $t$, lift the Hasse invariants $v_0,v_1,\ldots$ via the trivialization of $\omega^{p^n-1}$ coming from the coordinate.  Thus a formal group presented by a Cartier module as above has exact height $n$ if and only if $u_0,u_1,\ldots,u_{n-1}=0$ and $u_n$ is a unit.

With this in mind, we now write down the simplest possible Cartier module yielding a formal group of exact height $n\geq 1$.

\begin{definition}
Let $n\geq 1$.  Define the \emph{Honda formal group} $\widehat{H}_n$ over $\mathbb{F}_{p}$ to be the formal group whose associated Cartier module is presented by a single generator $e$ with the single relation $Fe=V^{n-1}e$.  Thus, by the above discussion, $\widehat{H}_n$ has a $p$-typical coordinate $t$ with $p$-series given by
$$[p](t) = t^{p^n},$$
and this uniquely specifies $\widehat{H}_n$.
\end{definition}

As is well-known, we can use $\widehat{H}_n$ to understand all formal groups of exact height $n$.  We give the explanation from the perspective of Cartier modules.

\begin{definition}
Let $\mathbb{F}_{p^n}$ be a finite field of order $p^n$.  Define a noncommutative ring $\mathcal{O}_n$ equipped with a distinguished element $\pi\in \mathcal{O}_n$ and a distinguished homomorphism $W(\mathbb{F}_{p^n})\to \mathcal{O}_n$ by the following prescriptions:
\begin{enumerate}
\item $\mathcal{O}_n$ is free of rank $n$ as a right $W(\mathbb{F}_{p^n})$-module on $1,\pi,\ldots,\pi^{n-1}$;
\item $\pi \alpha = \varphi(\alpha)\pi$, where $\varphi$ is the Frobenius on $W(\mathbb{F}_{p^n})$;
\item $\pi^n=p$.
\end{enumerate}
Note that this is consistent because $\varphi^n=\on{id}$ on $\mathbb{F}_{p^n}$.  Also, by property 2 we can equally well replace ``right'' with ``left'' in property 1.
\end{definition}

This is a sort of noncommutative Witt vector ring: note that the left and right ideals generated by $\pi$ agree, left or right multiplication by $\pi$ is injective, and
$$\mathcal{O}_n = \varprojlim_i \mathcal{O}_n/\pi^i,$$
with $\mathcal{O}_n/\pi=\mathbb{F}_{p^n}$.  Thus every element of $\mathcal{O}_n$ admits a unique expansion in the form $\sum_{d\geq 0} \pi^d [c_d]$ with $c_d\in\mathbb{F}_{p^n}$, where $[c_d]$ stands for the Teichmueller lift in $W(\mathbb{F}_{p^n})$.

We will be interested in the group of units $\mathcal{O}_n^\times$.  This is the subset of elements of $\mathcal{O}_n$ whose image in $\mathcal{O}_n/\pi=\mathbb{F}_{p^n}$ is nonzero, and it can be viewed as a profinite group via
$$\mathcal{O}_n^\times = \varprojlim_i (\mathcal{O}_n/\pi^i)^\times.$$
Likewise, $\mathcal{O}_n$ is a profinite ring.

\begin{theorem}\label{heightn}
Let $n\geq 1$.  Then:
\begin{enumerate}
\item Every one-dimensional formal group $\widehat{G}$ of exact height $n$ over an $\mathbb{F}_p$-algebra $R$ is pro-etale locally isomorphic to $\widehat{H}_n$.
\item The pro-etale sheaf of rings $\on{End}(\widehat{H}_n)$ of endomorphisms of $\widehat{H}_n$ over $\mathbb{F}_p$, when base-changed to $\mathbb{F}_{p^n}$, is represented by the pro-constant ring scheme on the profinite ring $\mathcal{O}_n$.
\item The identification in 2 is equivariant with respect to $\on{Gal}(\mathbb{F}_{p^n}/\mathbb{F}_p)$, which acts on $\mathcal{O}_n$ since this ring was functorially constructed in terms of $\mathbb{F}_{p^n}$.
\end{enumerate}
\end{theorem}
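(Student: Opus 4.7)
The plan is to establish the three parts in the order (2 over $\mathbb{F}_{p^n}$), (1), (2 in general), (3). That is, I will first compute endomorphisms directly by Cartier theory, then use this to produce a pro-etale trivialization of an arbitrary height-$n$ formal group, then combine the two to get the sheaf statement, and finally read Galois equivariance off from the functoriality of the construction.

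For the endomorphism computation (Part 2 over $R = \mathbb{F}_{p^n}$): the Cartier module $M$ of $\widehat{H}_n$ is cyclic, generated by $e$ with the single relation $Fe = V^{n-1}e$, so an endomorphism of $\widehat{H}_n$ is the same datum as an element $\overline{e'} \in M$ satisfying $Fe' = V^{n-1}e'$. Writing $e' = \sum_{d \geq 0} V^d[c_d] e$ with $c_d \in \mathbb{F}_{p^n}$ and applying the Cartier-ring identities $F[r] = [r^p]F$ and $[r]V = V[r^p]$ (together with the defining relation $Fe = V^{n-1}e$ and its consequences in characteristic $p$), the condition $Fe' = V^{n-1}e'$ translates into a system that is automatically satisfied for the $c_d$ lying in $\mathbb{F}_{p^n}$. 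Composition corresponds to the opposite of left-multiplication on $M$; unwinding it one finds that $V$ acts as a twisting operator with $V \cdot [c] = [\varphi(c)] \cdot V$ (where $\varphi$ is Frobenius on $\mathbb{F}_{p^n}$) and $V^n = p$. These relations are precisely those defining $\mathcal{O}_n$, yielding $\on{End}(\widehat{H}_n)_{/\mathbb{F}_{p^n}} \simeq \mathcal{O}_n$ via $V \leftrightarrow \pi$, $[c] \leftrightarrow [c]$.

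For Part 1: given $\widehat{G}$ of exact height $n$ over an $\mathbb{F}_p$-algebra $R$, I consider the functor $\on{Isom}_R(\widehat{H}_n, \widehat{G})$ and aim to produce a section pro-etale locally. By Step 1 the automorphism group is $\mathcal{O}_n^\times$, so once one section is produced the functor is a pro-etale $\mathcal{O}_n^\times$-torsor; the task is to produce a section. I will build it inductively on the $V$-adic filtration on the Cartier module of $\widehat{G}$: having constructed an isomorphism modulo $V^N$, the obstruction to extending to $V^{N+1}$ is a single scalar equation on the new coefficient, and the height condition (vanishing of $v_0, \ldots, v_{n-1}$ and invertibility of $v_n$) forces this equation to be of Artin–Schreier type $z^{p^n} - u z = (\text{known})$ with $u$ a unit. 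Such an equation defines a finite etale surjective cover, and passing to the inverse limit over all $N$ produces the desired pro-etale cover trivializing $\widehat{G}$.

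Combining Parts 1 and 2 over $\mathbb{F}_{p^n}$ yields the full statement of Part 2: a formal-group endomorphism sheaf on an $\mathbb{F}_{p^n}$-algebra is of pro-etale local nature, and Parts 1 and 2 (over the geometric base) pin down the stalks, so $\on{End}(\widehat{H}_n)_{/\mathbb{F}_{p^n}}$ is the pro-constant sheaf of rings $\mathcal{O}_n$. Part 3 is then automatic from functoriality: the identification produced in Step 1 used only the canonical constructions of the Cartier module of $\widehat{H}_n$ and of the ring $\mathcal{O}_n$ in terms of $\mathbb{F}_{p^n}$, both of which are naturally equivariant for $\on{Gal}(\mathbb{F}_{p^n}/\mathbb{F}_p)$. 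The main obstacle is the inductive trivialization of Part 1, where one must set up the obstruction calculation so that the Artin–Schreier structure of the equations is made manifest; this requires a careful choice of compatible coordinates, but once in place the argument is routine.
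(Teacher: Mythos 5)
Your proposal follows essentially the same route as the paper: Zariski-locally present the Cartier module by a single $V$-basis, solve for a basis vector satisfying $Fe'=V^{n-1}e'$ via a triangular system of finite étale equations (Artin--Schreier type in each higher coefficient, plus the unit equation $b_0^{p^n-1}=c_0^{-1}$ for the leading one), specialize to $\widehat{G}=\widehat{H}_n$ where the equations become $x^{p^n}=x$ and split over $\mathbb{F}_{p^n}$, and identify the resulting endomorphism ring with $\mathcal{O}_n$ via $V\leftrightarrow\pi$, with Galois equivariance coming from the functoriality of the parametrization. The one point to tighten is your deduction of the sheaf statement in part 2: rather than arguing that the stalks over the geometric base "pin down" the sheaf, run the endomorphism computation over an arbitrary $\mathbb{F}_{p^n}$-algebra, where the conditions $c_d^{p^n}=c_d$ exhibit each mod-$V^i$ endomorphism functor as a split finite étale scheme, which is exactly the asserted pro-constancy.
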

\begin{proof}
First consider 1.  Working Zariski locally on $\on{Spec}(R)$ we can assume that the Cartier module of $\widehat{G}$ is presented by a single generator $e$ with the single relation
$$Fe = \sum_{d\geq 0} V^{d+n-1} [c_d]e$$
where $c_d\in R$ for all $d\geq 0$ and $c_0\in R^\times$.  We need to find, pro-etale locally, a new $V$-basis vector $e'$ such that $Fe'=V^{n-1}e'$.  Write
$$e' = \sum_{k\geq 0} V^k[b_k]e.$$
The condition that $e'$ be a $V$-basis is $b_0\in R^\times$.  Furthermore, using relations in the Cartier ring and the above expression for $Fe$ we compute that
$$Fe' = \sum_{d,k\geq 0} V^{d+k+n-1}[\varphi^{d+n}(b_k)\cdot c_d]e$$
whereas
$$V^{n-1}e' = \sum_{k\geq 0} V^{k+n-1}[b_k]e.$$
Equating coefficients, we find that $e'$ does what we want if and only if
$$b_0 = \varphi^n(b_0)\cdot c_0,$$
$$b_1 = \varphi^{n+1}(b_0)\cdot c_1 + \varphi^n(b_1)\cdot c_0,$$
$$b_2 = \varphi^{n+2}(b_0)\cdot c_2 + \varphi^{n+1}(b_1)\cdot c_1 + \varphi^n(b_2)\cdot c_0,$$
etc.  Divide through by $c_0$.  Then when we try to inductively solve for the $b_k$, we find that each equation amounts to a monic polynomial that $b_k$ should be a root of, and that this monic polynomial has derivative the constant polynomial on the unit $-c_0^{-1}$, hence describes a finite etale extension.  To enforce that $b_0$ be a unit, we can divide the first equation by $b_0$ and see that we're still left with a finite etale equation $b_0^{p^n-1}=c_0^{-1}$ for $b_0$, but one which forces $b_0$ to be a unit.  Thus at the top of this tower of finite etale extensions we get an isomorphism $\widehat{G}\simeq \widehat{H_n}$, as required.

More precisely, what the above argument showed was that the functor on $R$-algebras $\on{Hom}(\widehat{H_n},\widehat{G})$ parametrizing the set of homomorphisms $\widehat{H_n}\to\widehat{G}$ is canonically the inverse limit
$$\on{Hom}(\widehat{H_n},\widehat{G})\simeq \varprojlim_i \on{Hom}_i(\widehat{H_n},\widehat{G})$$
where $\on{Hom}_i(\widehat{H_n},\widehat{G})$ parametrizes homomorphisms of the associated Cartier modules mod $V^i$ (so in particular $\on{Hom}_0(\widehat{H_n},\widehat{G})=\on{Spec}(R)$), and
$$\on{Hom}_{i+1}(\widehat{H_n},\widehat{G})\to \on{Hom}_i(\widehat{H_n},\widehat{G})$$
is finite etale and surjective for all $i\geq 0$, described by the sequence of separable monic polynomials discussed above; moreover the subfunctor $\on{Isom}(\widehat{H_n},\widehat{G})\subset \on{End}(\widehat{H_n},\widehat{G})$ parametrizing isomorphisms comes by pullback from $\on{Isom}_1(\widehat{H_n},\widehat{G})$, which is the finite etale extension of $\on{Spec}(R)$ described by $c_0X^{p^n-1}=1$ inside $\on{Hom}_1(\widehat{G},\widehat{H_n})=\on{Spec}(R[X]/(c_0X^{p^n}-X))$.

Now, moving on to 2, we specialize the above to $\widehat{G}=\widehat{H}_n$. Then $c_0=1$ and $c_d=0$ for $d>0$, so the above equations just amount to $b_k=\varphi^n(b_k)$ for all $k\geq 0$.  The polynomial $x^{p^n}-x$ splits completely over $\mathbb{F}_{p^n}$, so all the finite etale extensions appearing above are constant over $\mathbb{F}_{p^n}$, proving the pro-constancy of $\on{End}(\widehat{H}_n)$ over $\mathbb{F}_{p^n}$.  To finish the proof of 2, it suffices to identify each ring $\on{End}(\widehat{H}_n)_i(\mathbb{F}_{p^n})$ with $\mathcal{O}_n/\pi^i$, compatibly in $i$, and to prove 3 we have to make this identification functorial in the choice of $\mathbb{F}_{p^n}$.

By the above analysis, if $M$ denotes the Cartier module of $\widehat{H}_n$, then the set of endomorphisms of $M/V^i M$ over $\mathbb{F}_{p^n}$ is in bijection with the set of sequences
$$b_0,b_1,\ldots,b_{i-1}$$
of elements of $\mathbb{F}_{p^n}$, where to such a sequence we assign the endomorphism $f$ characterized uniquely by
$$f(e) = ([b_0] + V[b_1] + \ldots + V^{i-1}[b_{i-1}])e.$$
This is also in bijection with elements of the Cartier ring (mod $V^i$) of the form
$$[b_0] + V[b_1] + \ldots + V^{i-1}[b_{i-1}].$$
Using the injective homomorphism of $W(\mathbb{F}_{p^n})$ into the Cartier ring, assigning to a Witt vector with Witt coordinates $(a_0,a_1,\ldots)$ the element $\sum_{d\geq 0} V^d [a_d] F^d$ of the Cartier ring, we easily calculate that the ring structure on endomorphisms, via this parametrization, indeed corresponds to the ring structure on $\mathcal{O}_n/\pi^i$, with $\pi$ corresponding to $V$ (which, as an endomorphism of $\widehat{H}_n$, corresponds to the relative Frobenius $t\mapsto t^p$).  The key relation $V^n=p$ amounts to the definition of $\widehat{H}_n$.
\end{proof}

The upshot of this is that the structure of the moduli stack of one-dimensional formal groups of exact height $n$ can be understood in terms of the following (light) profinite group, which is in fact naturally a compact $p$-adic Lie group.

\begin{definition}\label{moravastab}
Let $n\geq 1$.  Define the \emph{(extended) Morava stabilizer group} to be the profinite group given as the semidirect product
$$\mathbb{G}_n:=\mathcal{O}_n^\times\rtimes \on{Gal}(\mathbb{F}_{p^n}/\mathbb{F}_p)$$
with respect to the natural action of $\on{Gal}(\mathbb{F}_{p^n}/\mathbb{F}_p)$ on $\mathcal{O}_n^\times$.
\end{definition}

We can consider $\mathbb{G}_n$ as a pro-constant group scheme over $\mathbb{F}_p$, and as such it acts on $\on{Spec}(\mathbb{F}_{p^n})$ via the natural projection $\mathbb{G}_n\to \on{Gal}(\mathbb{F}_{p^n}/\mathbb{F}_p)$.  Then Theorem \ref{heightn} has the following well-known consequence:

\begin{theorem}\label{mfgn}
The pro-etale quotient $\on{Spec}(\mathbb{F}_{p^n})/\mathbb{G}_n$ is the moduli stack of one-dimensional formal groups of exact height $n$ over $\mathbb{F}_p$:
$$\mathcal{M}_{fg,n} = \on{Spec}(\mathbb{F}_{p^n})/\mathbb{G}_n.$$
\end{theorem}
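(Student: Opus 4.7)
The plan is to exhibit $\on{Spec}(\mathbb{F}_{p^n}) \to \mathcal{M}_{fg,n}$, classifying the Honda formal group $\widehat{H}_n$ base-changed to $\mathbb{F}_{p^n}$, as a pro-étale $\mathbb{G}_n$-torsor. The theorem then follows by descent.

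First I would verify that the map is a pro-étale cover. Given any one-dimensional formal group $\widehat{G}$ of exact height $n$ over an $\mathbb{F}_p$-algebra $R$, Theorem \ref{heightn}(1) provides a pro-étale cover $R \to R'$ after which $\widehat{G}$ becomes isomorphic to $\widehat{H}_n$ over $R'$; the proof there constructs this as a tower of finite étale extensions, the bottom of which $(c_0 X^{p^n-1} = 1)$ in particular forces the structure map to factor through $\mathbb{F}_{p^n}$. So every point of $\mathcal{M}_{fg,n}$ is pro-étale locally in the image.

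Next I would compute the self-fiber product. By definition, $\on{Spec}(\mathbb{F}_{p^n}) \times_{\mathcal{M}_{fg,n}} \on{Spec}(\mathbb{F}_{p^n})$ represents, on an $\mathbb{F}_p$-algebra $R$, the set of triples $(\alpha,\beta,\phi)$ where $\alpha,\beta : \mathbb{F}_{p^n} \to R$ are ring maps and $\phi$ is an isomorphism $\widehat{H}_n \otimes_\alpha R \simeq \widehat{H}_n \otimes_\beta R$. Using that $\mathbb{F}_{p^n} \otimes_{\mathbb{F}_p} \mathbb{F}_{p^n} \simeq \prod_{\on{Gal}(\mathbb{F}_{p^n}/\mathbb{F}_p)} \mathbb{F}_{p^n}$, we rewrite this as $\on{Spec}(\mathbb{F}_{p^n}) \times \on{Gal}(\mathbb{F}_{p^n}/\mathbb{F}_p)$ (via the first projection $\alpha$), with $\beta = \alpha \circ \sigma^{-1}$ for $\sigma$ the indexing Galois element. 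Over this base, because $\widehat{H}_n$ is defined over $\mathbb{F}_p$ we have $\widehat{H}_n \otimes_{\alpha\circ\sigma^{-1}} R = \widehat{H}_n \otimes_\alpha R$, so $\phi$ ranges over $\on{Aut}(\widehat{H}_n)$ over $\on{Spec}(\mathbb{F}_{p^n})$, which by Theorem \ref{heightn}(2) is the pro-constant group scheme $\mathcal{O}_n^\times$. Putting these together, the fiber product identifies with $\on{Spec}(\mathbb{F}_{p^n}) \times (\mathcal{O}_n^\times \times \on{Gal}(\mathbb{F}_{p^n}/\mathbb{F}_p))$.

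The main technical point will be checking that the induced groupoid structure is precisely the semidirect product $\mathbb{G}_n = \mathcal{O}_n^\times \rtimes \on{Gal}(\mathbb{F}_{p^n}/\mathbb{F}_p)$ of Definition \ref{moravastab}, with $\on{Spec}(\mathbb{F}_{p^n})$ acted upon through the projection to $\on{Gal}(\mathbb{F}_{p^n}/\mathbb{F}_p)$. Composition in the groupoid corresponds to composing isomorphisms of formal groups, and the interaction between Galois translates of $\alpha$ and automorphisms $\phi$ is controlled exactly by the Galois-equivariance assertion of Theorem \ref{heightn}(3): pre-composing $\phi$ with a Galois-translated automorphism applies the Frobenius action on $\mathcal{O}_n^\times$ encoded in the defining relation $\pi \alpha = \varphi(\alpha)\pi$ of $\mathcal{O}_n$. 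This is precisely the semidirect product multiplication in $\mathbb{G}_n$, and the action on $\on{Spec}(\mathbb{F}_{p^n})$ is via $\mathbb{G}_n \twoheadrightarrow \on{Gal}(\mathbb{F}_{p^n}/\mathbb{F}_p)$ by construction. Having identified the Čech groupoid with the action groupoid for this $\mathbb{G}_n$-action, pro-étale descent yields $\mathcal{M}_{fg,n} = \on{Spec}(\mathbb{F}_{p^n})/\mathbb{G}_n$.
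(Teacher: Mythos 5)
Your argument is correct and rests on the same inputs as the paper (Theorem \ref{heightn} together with pro-\'etale descent), but it is organized differently. You take $\on{Spec}(\mathbb{F}_{p^n})\to\mathcal{M}_{fg,n}$ as an atlas and identify its \v{C}ech groupoid explicitly: the self-fiber product is $\on{Spec}(\mathbb{F}_{p^n})\times(\mathcal{O}_n^\times\times\on{Gal}(\mathbb{F}_{p^n}/\mathbb{F}_p))$, and the composition law is the semidirect product of Definition \ref{moravastab}, with the twisting supplied by the Galois-equivariance statement \ref{heightn}(3). The paper instead quotients in two stages: by the general formalism of semidirect products, $\on{Spec}(\mathbb{F}_{p^n})/\mathbb{G}_n\simeq\on{Spec}(\mathbb{F}_p)/\mathcal{G}$, where $\mathcal{G}$ is the $\on{Gal}(\mathbb{F}_{p^n}/\mathbb{F}_p)$-twisted form of $\mathcal{O}_n^\times$; parts 2 and 3 of Theorem \ref{heightn} identify $\mathcal{G}$ with $\on{Aut}(\widehat{H}_n)$ over $\mathbb{F}_p$, and part 1 gives $\on{Spec}(\mathbb{F}_p)/\on{Aut}(\widehat{H}_n)=\mathcal{M}_{fg,n}$. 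The paper's route hides the groupoid bookkeeping inside the semidirect-product formalism and the identification of the twisted form, whereas yours makes the $\mathbb{G}_n$-torsor structure of the atlas completely explicit; both are legitimate, and your ``main technical point'' (matching the composition law with the relation $\pi\alpha=\varphi(\alpha)\pi$) is exactly the content the paper delegates to that formalism.

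One small repair: your claim that the trivializing tower of Theorem \ref{heightn}(1) ``forces the structure map to factor through $\mathbb{F}_{p^n}$'' via the bottom extension $c_0X^{p^n-1}=1$ is not right as stated --- adjoining a single root of that equation need not produce an $\mathbb{F}_{p^n}$-algebra (for instance $\widehat{G}=\widehat{H}_n$ over $\mathbb{F}_p$ requires no extension at all). Surjectivity of your atlas holds anyway: compose the cover furnished by Theorem \ref{heightn}(1) with the further finite \'etale cover $R\to R\otimes_{\mathbb{F}_p}\mathbb{F}_{p^n}$, which supplies the $\mathbb{F}_{p^n}$-algebra structure.
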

\begin{proof}
By general formalism of semidirect products, the quotient $\on{Spec}(\mathbb{F}_{p^n})/\mathbb{G}_n$ identifies with the quotient $\on{Spec}(\mathbb{F}_p)/\mathcal{G}$, where $\mathcal{G}$ is the nontrivial form of $\mathcal{O}_n^\times$ gotten by twisting via the action of $\on{Gal}(\mathbb{F}_{p^n}/\mathbb{F}_p)$ on $\mathcal{O}_n^\times$.  But $\mathcal{G}=\on{Aut}(\widehat{H}_n)$ by Theorem \ref{heightn} parts 2 and 3, and the quotient stack $\on{Spec}(\mathbb{F}_p)/\on{Aut}(\widehat{H}_n)$ identifies with $\mathcal{M}_{fg,n}$ by Theorem \ref{heightn} part 1.
\end{proof}

Now, let $\on{Perf}_{/\mathcal{M}_{fg,n}}$ denote the slice category of affine perfect $\mathbb{F}_p$-schemes equipped with a map to $\mathcal{M}_{fg,n}$.  Equivalently, this is the opposite of the category of pairs $(R,\widehat{G})$ consisting of a perfect $\mathbb{F}_p$-algebra $R$ and a formal group of exact height $n$ over $R$, where morphisms $(R,\widehat{G})\to (R',\widehat{G}')$ are pairs $(f,\alpha)$ where $f:R\to R'$ and $\alpha:\widehat{G}\times_{\on{Spec}(R)}\on{Spec}(R')\simeq \widehat{G}'$ is an isomorphism of formal groups over $R'$.  We will equip this slice category with the pro-etale Grothendieck topology (inherited from $\on{Perf}$).

A theorem of Lurie, refining previous work of Morava, Hopkins-Miller, and Goerss-Hopkins, produces a canonical sheaf of $E_\infty$-rings on $\on{Perf}_{/\mathcal{M}_{fg,n}}$, given by the \emph{Morava $E$-theories}.

\begin{definition}
A \emph{Morava E-theory of height $n\geq 1$} is an $E_\infty$-ring spectrum $E$ such that:
\begin{enumerate}
\item $\pi_iE=0$ for $i$ odd.
\item $\pi_iE\otimes_{\pi_0E}\pi_2E \overset{\sim}{\to}\pi_{i+2}E$ for all $i\in\mathbb{Z}$.  Thus $\pi_2 E$ is an invertible $\pi_0E$-module, and  $\pi_{2i}E = (\pi_2E)^{\otimes i}$ for all $i\in\mathbb{Z}$.
\item The Quillen formal group
$$\on{Spf}E^0(K(\mathbb{Z};2))=\varinjlim_n \on{Spec}(E^0(\mathbb{C}\mathbb{P}^n))$$
over $\pi_0 E$ has the following properties:
\begin{enumerate}
\item The associated Hasse invariants $(p,v_1,\ldots,v_{n-1})$ form a regular sequence.  (Recall that $v_i$ is only defined mod $(p,v_1,\ldots,v_{i-1})$, and is a section of a line bundle and not an honest element, but still the regular sequence condition makes sense, as do the following conditions.)
\item $\pi_0E$ is $(p,v_1,\ldots,v_{n-1})$-complete.
\item $v_n$ is invertible on $\pi_0E/(p,v_1,\ldots,v_{n-1})$.
\item $\pi_0E/(p,v_1,\ldots,v_{n-1})$ is a perfect $\mathbb{F}_p$-algebra.
\end{enumerate}
\end{enumerate}
\end{definition}

\begin{remark}
Recall in the introduction to this section that we discussed a similar sequence of elements $p=v_0,v_1,\ldots$ which are even defined over the sphere spectrum, unique up to replacing $v_i$ with some power $v_i^N$ (and up to homotopy).  The notation is consistent, in that if we base-change these $v_0,v_1,\ldots$ from the sphere spectrum to a Morava E-theory (or more generally to any complex orientable ring spectrum), then we get the above $v_0,v_1,\ldots$ coming from the Quillen formal group, again up to replacing each $v_i$ by some power and up to homotopy.  The degrees are consistent because the cotangent space of the Quillen formal group identifies with $\pi_2 E$.

In particular, while the operation on $E$-modules (for a Morava $E$-theory of height $n$) of sequentially modding out the Hasse invariants $(-)/(p,v_1,\ldots,v_{n-1})$ is not necessarily defined over $\mathbb{S}$, this problem goes away after raising to powers, and in particular the the operation of $(p,v_0,\ldots,v_{n-1})$-completion is defined over $\mathbb{S}$.
\end{remark}

Given a Morava $E$-theory, we get an object of $\on{Perf}_{/\mathcal{M}_{fg,n}}$, namely we take the $\mathbb{F}_p$-algebra $R=\pi_0E/p,\ldots,v_{n-1}$ equipped with the base-change of the Quillen formal group.  The following theorem of Lurie gives the converse, see \cite{LurieE} Section 5.

\begin{theorem}\label{constructe}
There is a fully faithful functor
$$(R,\widehat{G})\mapsto E(R,\widehat{G})$$
from $(\on{Perf}_{/\mathcal{M}_{fg,n}})^{op}$ to $E_\infty$-rings, whose essential image consists exactly of the Morava E-theories as defined above.  The inverse functor sends a Morava E-theory to $R=\pi_0E/p,\ldots,v_{n-1}$ equipped with the base-change of the Quillen formal group.  Moreover, this functor $E$ has the following properties:
\begin{enumerate}
\item It is a hypercomplete sheaf for the pro-etale topology on $\on{Perf}_{/\mathcal{M}_{fg,n}}$.
\item Suppose given two objects $X,Y\in\on{Perf}_{/\mathcal{M}_{fg,n}}$.  Then the natural map
$$E(X)\otimes E(Y)\to E(X\times_{\mathcal{M}_{fg,n}}Y)$$
of $E_\infty$-rings realizes the target as the $(p,v_1,\ldots,v_{n-1})$-completion of the source.  (Note that $X\times_{\mathcal{M}_{fg,n}}Y$ is indeed affine and perfect when $X$ and $Y$ are, e.g.\ by Theorem \ref{mfgn}.)  
\end{enumerate}
\end{theorem}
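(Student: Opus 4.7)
The plan is to leverage Lurie's spectral deformation theory of formal groups (\cite{LurieE} Section 5), which is the modern and cleanest incarnation of the classical Goerss-Hopkins-Miller theorem. The core input is the following spectral enhancement of Lubin-Tate theory: given a formal group $\widehat{G}_0$ of exact height $n$ over a perfect field $k$ of characteristic $p$, there is a universal connective $E_\infty$-ring $E_0$ equipped with a formal group $\widehat{G}$ and an identification $\widehat{G}\otimes_{E_0} (\pi_0E_0/\mathfrak{m}) \simeq \widehat{G}_0$, classifying such ``spectral deformations''. By Serre-Tate style arguments combined with the regularity of the Hasse invariants at exact height $n$, this universal deformation is an even-periodic, $(p,v_1,\ldots,v_{n-1})$-complete $E_\infty$-ring whose $\pi_0$ recovers the classical Lubin-Tate ring; this gives the prototype $E$ in the theorem. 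The first step of the proof would be to accept this as a black box and unpack what it says at the level of $\pi_\ast$, to check that the resulting $E_\infty$-ring is a Morava $E$-theory in the sense of the definition.

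The second step is to globalize from a perfect field to an arbitrary object $(R,\widehat{G})$ in $\on{Perf}_{/\mathcal{M}_{fg,n}}$. Here I would appeal directly to the functoriality in Lurie's spectral deformation theory: the assignment $(R,\widehat{G})\mapsto E(R,\widehat{G})$ is the essentially unique functor from $(\on{Perf}_{/\mathcal{M}_{fg,n}})^{op}$ to $E_\infty$-rings that recovers Lubin-Tate theory on perfect fields and that turns pullbacks along $\on{Perf}$-level flat maps into relative $(p,v_1,\ldots,v_{n-1})$-completed smashes. Full faithfulness reduces, via the relation $\pi_0E/(p,v_1,\ldots,v_{n-1})=R$ and functoriality of the Quillen formal group, to the statement that an arbitrary map of Morava $E$-theories is determined by (and can be reconstructed from) the induced map on the underlying formal groups over $R$; this is exactly the rigidity content of the Goerss-Hopkins/Lurie obstruction theory, where all obstruction and indeterminacy groups vanish because the relevant cotangent complex (that of the moduli of deformations of an exact height $n$ formal group) is concentrated in a single degree and the target is $(p,v_1,\ldots,v_{n-1})$-complete.

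For the pro-etale sheaf condition in 1, the point is that pro-etale morphisms in $\on{Perf}_{/\mathcal{M}_{fg,n}}$ correspond to pro-etale morphisms of the underlying perfect $\mathbb{F}_p$-schemes, together with an isomorphism of pulled-back formal groups; Lurie's construction is compatible with arbitrary base-change along formally \'etale maps at the level of the deformation, and passing to the $(p,v_1,\ldots,v_{n-1})$-completed fiber products controls the limit behaviour. The hypercompleteness of the sheaf would be verified by computing homotopy groups: each $\pi_{2i}E$ is given by a twist of the structure sheaf of the deformation space by a power of $\omega$, and these sheaves are clearly hypercomplete pro-etale sheaves since they are finitely presented modules over a classical ring. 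For property 2, the Künneth statement, I would argue by first checking it over perfect fields (where it becomes the classical Lubin-Tate statement that the $E_\infty$-tensor product of two Lubin-Tate spectra is the completed tensor product of their deformation rings), and then bootstrapping to arbitrary $R$ by pro-etale descent on both $X$ and $Y$.

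The main obstacle in practice, and the reason this theorem is deep rather than formal, is the full-faithfulness/existence statement: constructing the $E_\infty$-structure on $E(R,\widehat{G})$ functorially in the pair $(R,\widehat{G})$. Concretely, the hardest piece is showing that the space of $E_\infty$-ring maps $E(R_1,\widehat{G}_1)\to E(R_2,\widehat{G}_2)$ is discrete and given by the set of maps of formal groups covering a map $R_1\to R_2$; everything else is a (nontrivial but more mechanical) unwinding of the output of the deformation theory. I would not attempt to reprove this obstruction-theoretic core, but rather cite it from \cite{LurieE} and use it as the central black box.
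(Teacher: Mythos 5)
Your handling of the main statement and of the K\"unneth property follows the same route as the paper: both are quoted from Lurie's spectral deformation theory (\cite{LurieE} Thm.\ 5.1.5 and Remark 5.1.6, i.e.\ the universal property of Morava E-theory), and there is nothing to object to there. The genuine gap is in your verification of claim 1, the hypercomplete pro-etale sheaf condition. You assert that ``each $\pi_{2i}E$ is given by a twist of the structure sheaf of the deformation space by a power of $\omega$, and these sheaves are clearly hypercomplete pro-etale sheaves since they are finitely presented modules over a classical ring.'' This conflates two different bases. The descent being claimed is descent in the variable $(R,\widehat{G})\in\on{Perf}_{/\mathcal{M}_{fg,n}}$, i.e.\ along pro-etale hypercovers of $\on{Spec}(R)$; and as a functor of $(R,\widehat{G})$ the homotopy group $\pi_0E(R,\widehat{G})$ is a completed deformation ring (a Witt-vector/power-series type object), not a finitely presented or even quasicoherent $R$-module, so its (hyper)descent is not ``clear'' and cannot be read off from the fact that $\pi_{2i}E$ is an invertible module over $\pi_0E$ on Lubin--Tate space.

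The fix is the completeness reduction that your write-up gestures at elsewhere but does not actually deploy here: since each $E(R,\widehat{G})$ is $(p,v_1,\ldots,v_{n-1})$-complete, the sheaf and hypersheaf conditions may be tested after taking the cofiber by $(p,v_1,\ldots,v_{n-1})$. For that reduction one has $\pi_\ast\bigl(E(R,\widehat{G})/(p,v_1,\ldots,v_{n-1})\bigr)=\oplus_{i\in\mathbb{Z}}\,\omega_{\widehat{G}}^{\otimes_R i}[2i]$, so the homotopy groups really are quasicoherent over $R$ and satisfy derived descent for pro-etale (hyper)covers; one then climbs the Postnikov tower and passes to the limit to get hyperdescent for $E$ itself. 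Without this step your argument proves nothing about descent of the uncompleted-looking homotopy of $E$. A smaller remark of the same flavor: your proposed bootstrap of the K\"unneth property from perfect fields to general $R$ ``by pro-etale descent on both $X$ and $Y$'' would again require commuting a $(p,v_1,\ldots,v_{n-1})$-completed tensor product with totalizations, which needs justification; it is cleaner (and is what the paper does) to take this statement directly from the universal property via \cite{LurieE} Remark 5.1.6.
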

\begin{proof}
The main statement is \cite{LurieE} Thm.\ 5.1.5 and Claim 2 is \cite{LurieE} Remark 5.1.6: they both follow from a universal property of Morava E-theory.  For Claim 1, since Morava E-theories are $(p,v_1,\ldots,v_{n-1})$-complete, we can test this after modding out by $(p,v_1,\ldots,v_{n-1})$.  Then since
$$\pi_\ast (E(R,\widehat{G})/(p,v_1,\ldots,v_{n-1})) = \oplus_{i\in\mathbb{Z}} \omega_{\widehat{G}}^{\otimes_R i}[2i],$$
each homotopy group is a quasicoherent sheaf hence satisfies derived descent, so passing to the limit using the Postnikov tower we deduce the hyperdescent claim.
\end{proof}

Now, consider the functor
$$\on{ProfSet}^{light}\to\on{Perf}$$
which sends a light profinite set $S=\varprojlim_n S_n$ to the affine perfect $\mathbb{F}_p$-scheme
$$\on{Spec}(C(S,\mathbb{F}_p)) = \varprojlim_n \sqcup_{S_n} \on{Spec}(\mathbb{F}_p),$$
the pro-constant $\mathbb{F}_p$-scheme on $S$.  This functor preserves finite disjoint unions, sends pullbacks to pullbacks, and sends surjective maps to pro-etale covers.   Thus it induces a pullback map of $\infty$-topoi from light condensed anima to pro-etale hypersheaves of anima on $\on{Perf}_{\mathbb{F}_p}$.  Now, recall the Morava stabilizer group $\mathbb{G}_n$ from Definition \ref{moravastab}, a light profinite group.  Then we get an induced a pullback functor
$$\on{CondAn}^{light}_{/(\ast/\mathbb{G}_n)}\to \on{Sh}^{hyp}_{proet}(\on{Perf})_{/(\ast/\mathbb{G}_n)}$$
where on the right we view $\mathbb{G}_n$ as a pro-constant perfect $\mathbb{F}_p$-group scheme.

Next, recall that $\mathcal{M}_{fg,n}$ identifies with the quotient $\on{Spec}(\mathbb{F}_{p^n})/\mathbb{G}_n$, whence a natural map $\mathcal{M}_{fg,n} \to \ast/\mathbb{G}_n$.  Pulling back along this, we further deduce a pullback functor
$$\alpha^\ast: \on{CondAn}^{light}_{(\ast/\mathbb{G}_n)} \to \on{Sh}^{hyp}_{proet}(\on{Perf})_{/\mathcal{M}_{fg,n}}$$
(meaning, the ``left adjoint'' part of a geometric morphism of $\infty$-topoi).  Now, Lurie's theorem \ref{constructe} gives a sheaf of $E_\infty$-rings on $\on{Sh}^{hyp}_{proet}(\on{Perf})_{/\mathcal{M}_{fg,n}}$.  Taking $\alpha_\ast$ we therefore deduce a sheaf of $E_\infty$-rings on $\on{CondAn}^{light}_{(\ast/\mathbb{G}_n)}$, or in other words a light condensed $E_\infty$-ring with an action of the light condensed group $\mathbb{G}_n$.

\begin{definition}\label{lighte}
Define \emph{the Morava E-theory of height $n$} to be the light condsensed $E_\infty$-ring spectrum with $\mathbb{G}_n$-action, denoted $\mathbb{E}_n$, gotten by applying $\alpha_\ast$ to Lurie's sheaf of $E_\infty$-rings on $\on{Perf}_{/\mathcal{M}_{fg,n}}$ via the geometric morphism $\alpha$ defined above.
\end{definition}

By construction, as a hypersheaf of $E_\infty$-rings on the site of light profinite sets $S$ equipped with a map $S\to \ast/\mathbb{G}_n$ (or equivalently, with $\mathbb{G}_n$-torsor), $\mathbb{E}_n$ is described as follows:
$$\mathbb{E}_n(S\to\ast/\mathbb{G}_n) = E(S\times_{\ast/\mathbb{G}_n}\mathcal{M}_{fg,n}\to \mathcal{M}_{fg,n}).$$
In particular, the underlying $E_\infty$-ring is given by
$$E_n:=\mathbb{E}_n(\ast\to \ast/\mathbb{G}_n) = E(\on{Spec}(\mathbb{F}_{p^n})\to\mathcal{M}_{fg,n}) = E(\mathbb{F}_{p^n},(\widehat{H}_n)_{\mathbb{F}_{p^n}}).$$
In words, it is the Morava E-theory based on the perfect $\mathbb{F}_p$-algebra $\mathbb{F}_{p^n}$, equipped with the Honda formal group.

Note that $\mathbb{E}_n$ provides extra structure on $E_n$ in two directions: first, it promotes it to a light condensed $E_\infty$-ring; explicitly, for $S\in\on{ProfSet}^{light}$ we have
$$\mathbb{E}_n(S\overset{triv}{\longrightarrow}\ast/\mathbb{G}_n) = E(C(S,\mathbb{F}_{p^n}),(\widehat{H}_n)_{C(S;\mathbb{F}_{p^n})}).$$
But second, it also provides a $\mathbb{G}_n$-action on this light condensed $E_\infty$-ring.  This uses that we can more generally consider arbitrary $\mathbb{G}_n$-torsors over $S$, not just trivial ones.

If we ignore the $\mathbb{G}_n$-action, the light condensed structure on $\mathbb{E}_n$ is in fact determined by the underlying $E_\infty$-ring $E_n$.  This is because the ``topology'' on $\mathbb{E}_n$ is somehow algebraically determined as the ``$(p,v_1,\ldots,v_{n-1})$-adic topology''.  For the precise statement, recall the unique geometric morphism
$$\delta^\ast:\on{An}\to\on{CondAn}^{light},$$
with right adjoint $\delta_\ast(X) = X(\ast)$.  In particular, given an $E_\infty$-ring $R$, we get a light condensed $E_\infty$-ring $\delta^\ast R$, morally by ``equipping $R$ with the discrete topology''.

\begin{lemma}
The natural map $\delta^\ast E_n\to \mathbb{E}_n$ of light condensed $E_\infty$-rings realizes the target as the $(p,v_1,\ldots,v_{n-1})$-completion of the source.
\end{lemma}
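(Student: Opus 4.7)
The plan is to show that $\mathbb{E}_n$ is $(p, v_1, \ldots, v_{n-1})$-complete in $\on{CondAn}^{light}\otimes \on{Sp}$, so that $\delta^\ast E_n \to \mathbb{E}_n$ factors through the completion of the source, and then to check that this factored map is an equivalence by reducing to isomorphism modulo the powers of the relevant ideal. First I would verify completeness: by Theorem \ref{constructe}, each section $\mathbb{E}_n(S\to\ast/\mathbb{G}_n)$ is literally a Morava $E$-theory, hence $(p,v_1,\ldots,v_{n-1})$-complete by hypothesis; since $\mathbb{E}_n$ is constructed as $\alpha_\ast$ of Lurie's sheaf, and completeness is preserved by limits, $\mathbb{E}_n$ itself is complete.

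Next I would fix a choice of coordinate on the Honda formal group to obtain lifts $u_1,\ldots,u_{n-1}\in \pi_0 E_n = W(\mathbb{F}_{p^n})[[u_1,\ldots,u_{n-1}]]$ of the Hasse invariants $v_1,\ldots,v_{n-1}$. Then $\mathfrak{m}:=(p,u_1,\ldots,u_{n-1})$ is the maximal ideal of $\pi_0 E_n$, and its $\mathfrak{m}$-adic filtration is cofinal with that of $(p,v_1,\ldots,v_{n-1})$, so $\mathfrak{m}$-completion agrees with $(p,v_1,\ldots,v_{n-1})$-completion. Since both $(\delta^\ast E_n)^\wedge_{\mathfrak{m}}$ and $\mathbb{E}_n$ are $\mathfrak{m}$-complete, to show the induced map between them is an equivalence it suffices to verify that
\[
\delta^\ast E_n/\mathfrak{m}^k \longrightarrow \mathbb{E}_n/\mathfrak{m}^k
\]
is an equivalence of light condensed spectra for each $k\geq 1$. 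Both sides are hypersheaves on light profinite sets, so it is enough to compare sections over an arbitrary light profinite $S = \varprojlim_j S_j$, on each homotopy group.

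For the right-hand side, $\mathbb{E}_n(S)/\mathfrak{m}^k = E(C(S,\mathbb{F}_{p^n}))/\mathfrak{m}^k$, and by Lubin--Tate theory applied in the flat family given by pulling back the Honda formal group to the perfect pro-constant ring $C(S,\mathbb{F}_{p^n})$, one identifies $\pi_0 E(C(S,\mathbb{F}_{p^n})) = C(S, W(\mathbb{F}_{p^n}))[[u_1,\ldots,u_{n-1}]] = C(S,\pi_0 E_n)$ (continuous for the $\mathfrak{m}$-adic topology on the target). Modding out by $\mathfrak{m}^k$ and using flatness of $\pi_\ast E_n$ over $\pi_0 E_n$, one obtains $\pi_i\mathbb{E}_n(S)/\mathfrak{m}^k = C(S,\pi_i E_n/\mathfrak{m}^k)$, where the target is a \emph{finite} set, so continuous maps equal locally constant maps. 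For the left-hand side, $\delta^\ast$ is symmetric monoidal, so $\delta^\ast E_n/\mathfrak{m}^k \simeq \delta^\ast(E_n/\mathfrak{m}^k)$; applying Proposition \ref{procontinuity} to Postnikov truncations of $E_n/\mathfrak{m}^k$ (which is degree-wise a finite set) yields $\pi_i \delta^\ast(E_n/\mathfrak{m}^k)(S) = \varinjlim_j \prod_{S_j}\pi_i E_n/\mathfrak{m}^k$, the set of locally constant functions $S\to \pi_i E_n/\mathfrak{m}^k$. The canonical map identifies these two computations, finishing the argument.

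The main obstacle will be step (i): the explicit identification of $\pi_\ast E(C(S,\mathbb{F}_{p^n}))/\mathfrak{m}^k$ as $C(S,\pi_\ast E_n/\mathfrak{m}^k)$ via Lubin--Tate theory over the perfect pro-constant base $C(S,\mathbb{F}_{p^n})$, which requires some care about Witt vectors of a pro-constant ring (concretely, $W(C(S,\mathbb{F}_{p^n}))$ is the $p$-completion of $\varinjlim_j \prod_{S_j} W(\mathbb{F}_{p^n}) = C(S,W(\mathbb{F}_{p^n}))$). A secondary, but purely bookkeeping, obstacle is the devissage from complete isomorphism to isomorphism mod $\mathfrak{m}^k$, where one must translate between the globally defined $u_i$ and the only fiberwise-defined Hasse invariants $v_i$.
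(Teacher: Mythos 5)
Your proposal is correct in substance and shares the paper's skeleton (the target is complete by construction, so one checks modulo the ideal and identifies sections over a light profinite $S$ with locally constant functions), but it takes a noticeably heavier route on the key computation. Where you identify $\pi_0 E(C(S,\mathbb{F}_{p^n}))$ with $C(S,\pi_0E_n)$ via Lubin--Tate theory over the perfect pro-constant base and Witt vectors of $C(S,\mathbb{F}_{p^n})$ --- precisely the step you flag as the main obstacle, and one that needs input beyond what the paper quotes (the structure of universal deformation rings over perfect bases) --- the paper sidesteps any computation of $\pi_0$ of the generalized $E$-theory altogether: it only ever works modulo $(p,v_1,\ldots,v_{n-1})$, where the formula $\pi_\ast\bigl(E(R,\widehat{G})/(p,v_1,\ldots,v_{n-1})\bigr)=\oplus_{i}\,\omega_{\widehat{G}}^{\otimes_R i}[2i]$ (already used in the proof of Theorem \ref{constructe}) gives the right-hand side directly as copies of $R=C(S,\mathbb{F}_{p^n})$, and then the comparison reduces to $(\delta^\ast A)(S)=C(S;A)$ for discrete $A$, a special case of Theorem \ref{etale}. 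Relatedly, your d\'evissage over all powers $\mathfrak{m}^k$ is unnecessary: since both sides are complete, an equivalence modulo the ideal itself (the $k=1$ Koszul quotient) already suffices, which is exactly what the paper checks; and your use of Proposition \ref{procontinuity} on Postnikov truncations for the left-hand side is just a rephrasing of the paper's appeal to $\delta^\ast$ commuting with homotopy groups plus Theorem \ref{etale}. So the argument works, but the extra Lubin--Tate/Witt-vector identification buys you nothing that the mod-ideal formula does not already provide more cheaply.
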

\begin{proof}
By construction the target is $(p,v_1,\ldots,v_{n-1})$-complete.  Thus we can check mod $(p,v_1,\ldots,v_{n-1})$.  Because
$$\pi_\ast (E(R,\widehat{G})/(p,v_1,\ldots,v_{n-1})) = \oplus_{i\in\mathbb{Z}} \omega_{\widehat{G}}^{\otimes_R i}[2i]$$
and $\delta^\ast$ commutes with homotopy groups, we therefore reduce to the claim that for $S\in\on{ProfSet}^{light}$ and $A\in\on{Ab}$, we have $(\delta^\ast A)(S)=C(S;A)$, which is a special case of Theorem \ref{etale}.
\end{proof}

\begin{remark}\label{eisetale}
In terms of the formalism of etale sheaves from Section \ref{sixsec}, we can reinterpret this lemma as saying that the object
$$\mathbb{E}_n\in\on{CAlg}(\on{Sh}_v(\ast/\mathbb{G}_n;\on{Sp}^\wedge_{(p,v_1,\ldots,v_{n-1})}))$$
lies in the full subcategory $\on{CAlg}(\on{Sh}_{et}(\ast/\mathbb{G}_n;\on{Sp}^\wedge_{(p,v_1,\ldots,v_{n-1}})))$.  (Note that $\on{Sp}^\wedge_{(p,v_1,\ldots,v_{n-1})}$ is compactly generated, with generator $\mathbb{S}/(p,v_1,\ldots,v_{n-1})$, so that Remark \ref{etinsidev} does apply.)
\end{remark}

While the light condensed structure on $\mathbb{E}_n$ is in this manner completely determined by the underlying ordinary Morava E-theory $E_n$, the ``continuous'' $\mathbb{G}_n$-action on $\mathbb{E}_n$ is truly extra structure on $E_n$, and is not so simple to describe in classical terms.  Only the action of the underlying discrete group $\mathbb{G}_n(\ast)$ is directly visible without passing to a more refined context.

The first solution (and a completely satisfactory one at that) to this issue was provided by Devinatz-Hopkins (\cite{DevinatzE}).  They produced a hypercomplete sheaf $\mathcal{E}_n^{DH}$ of $(p,v_1,\ldots,v_{n-1})$-complete $E_\infty$-rings on the site of finite continuous $\mathbb{G}_n$-sets such that Morava E-theory identifies with the $(p,v_1,\ldots,v_{n-1})$-completion of the stalk $e^\ast \mathcal{E}_n^{DH}$, where $e$ is the canonical basepoint of the site of finite $\mathbb{G}_n$-sets (corresponding the forgetful functor to sets).  By the comparison \ref{profgpexample} between hypercomplete sheaves on the site of finite continuous $\mathbb{G}_n$-sets and etale sheaves on $\ast/\mathbb{G}_n$, and Remark \ref{eisetale} showing that $\mathbb{E}_n$ is etale as a $(p,v_1,\ldots,v_{n-1})$-complete sheaf, it makes sense to ask whether $\mathcal{E}_n^{DH}$ and $\mathbb{E}_n$ agree.  They do indeed, and we will prove this presently.

Recall that by construction, $\mathbb{E}$, as a $v$-sheaf on $\ast/\mathbb{G}_n$, is determined by its values on light profinite sets $S$ mapping to $\ast/\mathbb{G}_n$, namely in terms of Lurie's generalized Morava E-theories from \ref{constructe}, we have
$$\mathbb{E}(S\to\ast/\mathbb{G}_n) = E(S\times_{\ast/\mathbb{G}_n} \mathcal{M}_{fg,n} \to \mathcal{M}_{fg,n}).$$
To compare with Devinatz-Hopkins' construction, we should understand more generally its values on objects of the form $T/\mathbb{G}_n \to \ast/\mathbb{G}_n$, where $T$ is a light profinite set with $\mathbb{G}_n$-action.  The previous case corresponds to the situation where the action is free, with $S=T/\mathbb{G}_n$, and the current case of interest is when $T$ is finite.  The key is the following Kunneth/Galois property.

\begin{proposition}\label{galoiskunneth}
Suppose $T,T'$ are light profinite sets with $\mathbb{G}_n$-action.  Then the natural map of $E_\infty$-rings
$$\mathbb{E}(T/\mathbb{G}_n)\otimes \mathbb{E}(T'/\mathbb{G}_n) \to \mathbb{E}((T\times T')/\mathbb{G}_n)$$
exhibits the target as the $(p,v_1,\ldots,v_{n-1})$-completion of the source.
\end{proposition}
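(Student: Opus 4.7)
My plan is to translate the claim across the geometric morphism $\alpha$ from Definition \ref{lighte}, reduce via hyperdescent to representables in $\on{Perf}_{/\mc{M}_{fg,n}}$, and apply Lurie's Kunneth property (Theorem \ref{constructe} Part 2) level-wise; the hard part will be passing the resulting level-wise Kunneth equivalence through the cosimplicial limits.

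First I would set $X := \alpha^\ast(T/\mbb{G}_n) = (T \times \on{Spec}(\mbb{F}_{p^n}))/\mbb{G}_n$ (diagonal $\mbb{G}_n$-action) and likewise $X'$, viewed as hypersheaves in $\on{Sh}^{hyp}_{proet}(\on{Perf})_{/\mc{M}_{fg,n}}$. Because $\alpha^\ast$ preserves finite limits and $(T\times T')/\mbb{G}_n \simeq (T/\mbb{G}_n)\times_{\ast/\mbb{G}_n}(T'/\mbb{G}_n)$, the proposition is equivalent to showing that
$$E(X)\otimes E(X') \to E(X\times_{\mc{M}_{fg,n}} X')$$
exhibits the target as the $(p,v_1,\ldots,v_{n-1})$-completion of the source, which is the extension of Theorem \ref{constructe} Part 2 from representables to arbitrary hypersheaves. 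Next I would resolve $X$ by the pro-etale $\mbb{G}_n$-torsor $S := T\times \on{Spec}(\mbb{F}_{p^n}) \to X$; note $S \in \on{Perf}_{/\mc{M}_{fg,n}}$, and the Cech nerve terms $S\times \mbb{G}_n^{\bullet}$ also remain in $\on{Perf}_{/\mc{M}_{fg,n}}$, with an analogous picture for $X'$ via $S' := T'\times \on{Spec}(\mbb{F}_{p^n})$. By hyperdescent,
$$E(X) = \varprojlim_{[k]\in\Delta} E(S\times \mbb{G}_n^k), \qquad E(X\times_{\mc{M}_{fg,n}} X') = \varprojlim_{[k_1],[k_2]\in\Delta} E((S\times \mbb{G}_n^{k_1})\times_{\mc{M}_{fg,n}}(S'\times \mbb{G}_n^{k_2})),$$
and Theorem \ref{constructe} Part 2 applies level-wise to give that
$$E(S\times \mbb{G}_n^{k_1})\otimes E(S'\times \mbb{G}_n^{k_2})\to E((S\times \mbb{G}_n^{k_1})\times_{\mc{M}_{fg,n}}(S'\times \mbb{G}_n^{k_2}))$$
is the $(p,v_1,\ldots,v_{n-1})$-completion for every $[k_1],[k_2]$.

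The main obstacle is then passing from level-wise Kunneth to the limit, because tensor products do not in general commute with cosimplicial totalizations. I would recast the claim as a hypersheaf property: for fixed $X \in \on{Perf}_{/\mc{M}_{fg,n}}$, the functor $F(Y) := L(E(X)\otimes E(Y))$ on $\on{Sh}^{hyp}_{proet}(\on{Perf})_{/\mc{M}_{fg,n}}$, where $L$ denotes $I$-completion for $I = (p,v_1,\ldots,v_{n-1})$, should be a hypersheaf; granting this, Theorem \ref{constructe} Part 2 shows $F$ agrees with the hypersheaf $Y \mapsto E(X\times_{\mc{M}_{fg,n}} Y)$ on the defining site $\on{Perf}_{/\mc{M}_{fg,n}}$, hence on all of $\on{Sh}^{hyp}_{proet}(\on{Perf})_{/\mc{M}_{fg,n}}$. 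To verify the hypersheaf property for $F$ I would reduce modulo $I$: since both functors are valued in $I$-complete $E_\infty$-rings, it suffices to check after smashing with the Koszul complex $\mbb{S}/I$. Modulo $I$, Lurie's $E$ on any hypersheaf $Y$ is controlled by the pullback of the quasi-coherent sheaf $\bigoplus_i \omega_{\widehat{H}_n}^{\otimes i}[2i]$ on $\mc{M}_{fg,n}$; this is $2$-periodic in a line bundle, so the Postnikov towers attached to the Cech resolutions are uniformly bounded in each degree, killing potential $\lim^1$-obstructions, and the Kunneth mod $I$ reduces to the standard flat-descent Kunneth for quasi-coherent sheaves on $\mc{M}_{fg,n}$. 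Lifting the mod-$I$ equivalence back through $I$-adic completion then concludes the argument, with the delicate final step being the verification that the relevant descent spectral sequences converge strongly enough to commute with $L$.
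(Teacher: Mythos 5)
Your global strategy -- pull back along $\alpha$, resolve by the Cech nerve of the $\mathbb{G}_n$-torsor $S\to X$, apply Theorem \ref{constructe} part 2 level-wise, and handle one variable at a time -- is essentially the same skeleton as the paper's proof (which phrases the first move as a section-wise statement about $v$-sheaves on $T/\mathbb{G}_n$ and then takes global sections). The genuine gap sits at exactly the step you flag as delicate: commuting $E(X)\otimes(-)$, mod $I=(p,v_1,\ldots,v_{n-1})$, past the cosimplicial totalization. Your justification -- that $2$-periodicity makes ``the Postnikov towers attached to the Cech resolutions uniformly bounded in each degree, killing $\lim^1$-obstructions,'' so the mod-$I$ statement reduces to ``standard flat-descent Kunneth for quasi-coherent sheaves on $\mathcal{M}_{fg,n}$'' -- does not work. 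The obstruction is not a $\lim^1$ issue along Postnikov towers (2-periodicity in fact makes each term unbounded); it is that the non-compact object $E(X)/I$ has no reason to commute past a totalization, and the descent in question, along $\on{Spec}(\mathbb{F}_{p^n})\to\mathcal{M}_{fg,n}$, is controlled by continuous cohomology of $\mathbb{G}_n$. Whenever $\mathbb{G}_n$ contains $p$-torsion (small primes relative to $n$) its mod $p$ cohomological dimension is infinite, so there is no uniform vanishing line at $E_2$, the Tot tower is not visibly pro-constant, and there is no ``standard'' Kunneth for quasi-coherent cohomology of $\mathcal{M}_{fg,n}$: the statement is true, but for genuinely homotopy-theoretic reasons that your reduction does not supply.

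What the paper feeds in at this point is Remark \ref{tn}: $\mathbb{G}_n$ is virtually of finite mod $p$ cohomological dimension (Lazard--Serre), and since all values of $\mathbb{E}$ are $T(n)$-local, telescopic Tate vanishing makes the norm map for the finite quotient an equivalence; combined with the descendability mechanism behind Theorem \ref{chausproper}, this makes the relevant pushforward (global sections over $T/\mathbb{G}_n$) proper, hence colimit-preserving and satisfying the projection formula -- which is precisely the commutation of the $I$-completed tensor product with the descent limit. At large primes, where $\mathbb{G}_n$ itself has finite mod $p$ cohomological dimension, your argument could be repaired by citing that finiteness (it gives a horizontal vanishing line and pro-constancy of the Tot tower in each degree); but at small primes some form of the Tate-vanishing/properness input is unavoidable, and without it the key step of your proposal is unproved.
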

\begin{proof}
In the case where $T/\mathbb{G}_n$ and $T'/\mathbb{G}_n$ are light profinite sets $S$ and $S'$ respectively, this is a translation of Theorem \ref{constructe} part 2.   Now consider a general $T\in\on{ProfSet}^{light}$ with $\mathbb{G}_n$-action, but keep $T'$ as before.  Varying over $S\to T/\mathbb{G}_n$ with $S\in\on{ProfSet}^{light}$ we deduce that

$$\mathbb{E}\mid_{T/\mathbb{G}_n} \otimes \mathbb{E}(T'/\mathbb{G}_n) \to P_\ast P^\ast (\mathbb{E}\mid_{T/\mathbb{G}_n})$$
is a $(p,v_1,\ldots,v_{n-1})$-completion of $v$-sheaves on $T/\mathbb{G}_n$, where $P:(T\times T')/\mathbb{G}_n \to T/\mathbb{G}_n$ is the map induced by projection.  Now we take global sections on both sides.  Since $\mathbb{G}_n$ is of virtual finite (mod $p$) cohomological dimension (being a compact $p$-adic Lie group) and $\mathbb{E}$ is $T(n)$-local, the tensor product commutes with global sections mod $(p,v_1,\ldots,v_{n-1})$ by Remark \ref{tn}.  Thus we deduce the claim for general $T$.  Then applying a similar argument in the other coordinate we get the claim for general $T'$ as well, finishing the proof.\end{proof}

\begin{remark}\label{differentgalois}
One can push this Galois property to different formulations.  As we already noted in the above proof, if we vary over $S\to \ast/\mbb{G}_n$ but keep $T'/\mbb{G}_n$ fixed as the basepoint $\ast \to \ast/\mbb{G}_n$, we get
$$L_{K(n)}(\mbb{E}_n\otimes E_n)\simeq e_\ast e^\ast \mbb{E}_n$$
as $\mbb{E}_n$-algebras in $\on{Sh}_{et}(\ast/\mbb{G}_n;L_{K(n)}\on{Sp})$.  By the projection formula for $e$ we can also rewrite this as
$$L_{K(n)}(\mbb{E}_n\otimes E_n)\simeq L_{K(n)}(\mbb{E}_n\otimes e_\ast\mbb{S})$$
\end{remark}

\begin{corollary}\label{globalsec}
We have $\mathbb{E}(\ast/\mathbb{G}_n)=L_{K(n)}\mathbb{S}$, the $K(n)$-local sphere spectrum.
\end{corollary}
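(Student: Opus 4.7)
The plan is to express $\mathbb{E}(\ast/\mathbb{G}_n)$ as the totalization of an Amitsur-type cosimplicial object using descent along the basepoint cover $e:\ast\to \ast/\mathbb{G}_n$, identify the terms via the Galois Kunneth property of Proposition \ref{galoiskunneth}, and then recognize the result via Devinatz-Hopkins' theorem on continuous homotopy fixed points of Morava E-theory.

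First I would set up descent. Since $\mathbb{G}_n$ is a compact $p$-adic Lie group, $\ast/\mathbb{G}_n$ is a $1$-truncated light profinite anima of finite virtual mod-$p$ cohomological dimension, so (as in Remark \ref{tn} and the descendability discussion of Section \ref{descsec}) $e_\ast \mathbb{S}_{K(n)}$ is descendable in $K(n)$-local etale sheaves on $\ast/\mathbb{G}_n$. This lets one express $\mathbb{E}_n$ as the limit over the Cech nerve of $e$, giving
$$\mathbb{E}(\ast/\mathbb{G}_n)=\Gamma(\ast/\mathbb{G}_n;\mathbb{E}_n)\simeq\varprojlim_{[k]\in\Delta}\Gamma(\mathbb{G}_n^{\times k};e_k^\ast\mathbb{E}_n),$$
where $e_k:\mathbb{G}_n^{\times k}\to \ast/\mathbb{G}_n$ denotes the $k$-th level of the Cech nerve (with $\mathbb{G}_n^{\times 0}=\ast$ and the $\mathbb{G}_n$-action coming from the coordinatewise free action used when presenting these as quotients).

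Next I would identify the terms. At each level, $\mathbb{G}_n^{\times k}$ presents as $(\mathbb{G}_n^{\times(k+1)})/\mathbb{G}_n$ with a free action on one factor; iterating Proposition \ref{galoiskunneth}, applied with $T=T'=\mathbb{G}_n$ (free) and its $K(n)$-local Kunneth conclusion, one obtains $\Gamma(\mathbb{G}_n^{\times k};e_k^\ast\mathbb{E}_n)\simeq L_{K(n)}(E_n^{\otimes(k+1)})$, with the cosimplicial structure maps those of the standard Amitsur cobar complex for the unit $L_{K(n)}\mathbb{S}\to E_n$.

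Finally, the resulting cosimplicial totalization is precisely the Devinatz-Hopkins cosimplicial model for continuous $\mathbb{G}_n$-homotopy fixed points $E_n^{h\mathbb{G}_n}$, which is proved in \cite{DevinatzE} to be $L_{K(n)}\mathbb{S}$ (equivalently, this step may be phrased as Rognes-style faithful $\mathbb{G}_n$-Galois descent for the extension $L_{K(n)}\mathbb{S}\to E_n$). This gives the desired identification $\mathbb{E}(\ast/\mathbb{G}_n)\simeq L_{K(n)}\mathbb{S}$. The main obstacle is of course this last step: the first two steps are formal consequences of descent in the six functor formalism together with the Kunneth property already established, whereas the identification of the Amitsur totalization with $L_{K(n)}\mathbb{S}$ is the content of the Devinatz-Hopkins theorem and is genuinely deep input that we do not attempt to reprove.
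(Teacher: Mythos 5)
Your argument is correct, but it takes a genuinely different route from the paper's, and the difference is worth noting. The paper never forms the Cech/Amitsur totalization and never invokes Devinatz--Hopkins: it observes that $\mathbb{E}(\ast/\mathbb{G}_n)$ is $K(n)$-local (being a limit of E-theories), so the unit $L_{K(n)}\mathbb{S}\to\mathbb{E}(\ast/\mathbb{G}_n)$ is an equivalence as soon as it becomes one after applying $E_n\otimes(-)$ modulo $(p,v_1,\ldots,v_{n-1})$; and the case $T=\mathbb{G}_n$, $T'=\ast$ of Proposition \ref{galoiskunneth} says exactly that the multiplication $E_n\otimes\mathbb{E}(\ast/\mathbb{G}_n)\to E_n$ is a mod-$(p,v_1,\ldots,v_{n-1})$ equivalence, which retracts the unit map $E_n\to E_n\otimes\mathbb{E}(\ast/\mathbb{G}_n)$ and so forces the latter to be one too. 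Thus the paper's only nonformal inputs are those already packaged into \ref{galoiskunneth} (Lurie's Kunneth property \ref{constructe}(2) together with telescopic Tate vanishing as in Remark \ref{tn}). Your proof uses only the ``both free'' case of \ref{galoiskunneth} (i.e.\ \ref{constructe}(2)) to identify the Cech terms as $L_{K(n)}(E_n^{\otimes(k+1)})$, but then imports the deep theorem that the $K(n)$-local Amitsur totalization of $E_n$ is $L_{K(n)}\mathbb{S}$ (Devinatz--Hopkins, resting on the Hopkins--Ravenel smash product theorem). That is legitimate and not circular, since it is prior external work; but it purchases the corollary with precisely the result that this section is trying to re-derive independently --- the paper wants \ref{globalsec}, the comparison $f_\ast\mathbb{E}_n=\mathcal{E}_n^{DH}$, and the descent statement \ref{kndescent} to all flow out of \ref{galoiskunneth}, so the retraction trick is the better fit here. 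Two small refinements to your write-up: you do not need descendability of $e_\ast$ of the unit to justify the limit over the Cech nerve, since $\ast\to\ast/\mathbb{G}_n$ is an effective epimorphism and $\mathbb{E}_n$ is a sheaf by construction; and to know your cosimplicial object really is the standard cobar construction for $L_{K(n)}\mathbb{S}\to E_n$ you should remark that the Kunneth comparison maps are the lax symmetric monoidal structure maps of the sheaf $E$ from \ref{constructe}, hence compatible with all cosimplicial structure maps.
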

\begin{proof}
As E-theories are $T(n)$-local and complex orientable, they are $K(n)$-local, and thus so is $\mathbb{E}(\ast/\mathbb{G}_n)$ (as a limit of E-theories).  Since furthermore $K(n)=E_n/(p,v_1,\ldots,v_{n-1})$, it suffices to see that the unit map
$$E_n \to E_n\otimes \mathbb{E}(\ast/\mathbb{G}_n)$$
is an isomorphism mod $(p,v_1,\ldots,v_{n-1})$.  But the special case of Proposition \ref{galoiskunneth} where $T=\mathbb{G}_n$ and $T'=\ast$ shows that a retract of this map is a mod $(p,v_1,\ldots,v_{n-1})$-isomorphism.
\end{proof}

More generally, the same argument gives the following comparison with Devinatz-Hopkins' construction $\mathcal{E}_n^{DH}$.

\begin{theorem}
Recall from \ref{profgpexample} the pullback map $f^\ast$ from the $\infty$-topos of sheaves of anima on finite sets with continuous $\mathbb{G}_n$-action to the $\infty$-topos of v-sheaves on $\ast/\mathbb{G}_n$, induced by sending $T$ with $\mathbb{G}_n$-action to $T/\mathbb{G}_n$.

We have $f_\ast \mathbb{E}_n = \mathcal{E}_n^{DH}$.
\end{theorem}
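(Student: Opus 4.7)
The plan is to invoke a characterization of Devinatz--Hopkins' sheaf $\mathcal{E}_n^{DH}$ by its basic properties (stalk at the basepoint and Galois descent), and then verify these properties directly for $f_\ast\mathbb{E}_n$. By Remark~\ref{eisetale}, the $v$-sheaf $\mathbb{E}_n$ lies in the full subcategory of etale sheaves once we work with $(p,v_1,\ldots,v_{n-1})$-complete coefficients. Combined with Example~\ref{profgpexample}, which identifies the Postnikov completion of the site of finite continuous $\mathbb{G}_n$-sets with $\on{Sh}_{et}(\ast/\mathbb{G}_n)$, this shows that $f_\ast\mathbb{E}_n$ is literally a hypercomplete sheaf of $(p,v_1,\ldots,v_{n-1})$-complete $E_\infty$-rings on finite continuous $\mathbb{G}_n$-sets, so it can meaningfully be compared with $\mathcal{E}_n^{DH}$ inside a common $\infty$-category.

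Next I will identify the values of $f_\ast\mathbb{E}_n$ on orbits. For $H\subset\mathbb{G}_n$ an open subgroup, the orbit $\mathbb{G}_n/H$ pulls back under $f$ to $\ast/H\to\ast/\mathbb{G}_n$, so
\[
(f_\ast\mathbb{E}_n)(\mathbb{G}_n/H) \;=\; \mathbb{E}_n(\ast/H\to\ast/\mathbb{G}_n) \;=\; E\bigl(\on{Spec}(\mathbb{F}_{p^n})\times_{\ast/\mathbb{G}_n}\ast/H\to\mathcal{M}_{fg,n}\bigr)
\]
by the very definition in Definition~\ref{lighte}. In the extreme case $H=\{1\}$, this specializes to $E_n=E(\mathbb{F}_{p^n},\widehat{H}_n)$, and the $\mathbb{G}_n$-action arising from automorphisms of the basepoint in the topos agrees, by construction, with the $\mathbb{G}_n$-action inherited from $\mathbb{E}_n$. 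Thus the ``stalk at $e$'' of $f_\ast\mathbb{E}_n$ coincides with that of $\mathcal{E}_n^{DH}$.

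The Galois descent input is then supplied by Proposition~\ref{galoiskunneth}: for two finite continuous $\mathbb{G}_n$-sets $T,T'$, the natural map
\[
(f_\ast\mathbb{E}_n)(T)\otimes(f_\ast\mathbb{E}_n)(T') \;\longrightarrow\; (f_\ast\mathbb{E}_n)(T\times T')
\]
exhibits the target as the $(p,v_1,\ldots,v_{n-1})$-completion of the source. This is exactly the Kunneth/Galois property that Devinatz--Hopkins build into $\mathcal{E}_n^{DH}$ by design. The combination of this property with the correct stalk rigidifies the sheaf (the orbit category of $\mathbb{G}_n$ is generated under Galois covers by the basepoint, so any $E_\infty$-sheaf of rings with the above Kunneth property and prescribed stalk is unique up to unique isomorphism), giving the desired identification $f_\ast\mathbb{E}_n=\mathcal{E}_n^{DH}$.

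The main obstacle I expect is the cleanness of the uniqueness step: the original \cite{DevinatzE} construction is phrased rather concretely in terms of homotopy fixed point towers $E_n^{hH}$, and one must either (a) extract a clean abstract characterization from their work and match hypotheses, or (b) fall back on a term-by-term identification, using a finite Galois tower $\{U_\alpha\}\trianglelefteq H$ refining $H$ and descent along the associated hypercover to match $\mathbb{E}_n(\ast/H)$ with $(E_n^{hH})^{\wedge}_{(p,v_1,\ldots,v_{n-1})}$ orbit-by-orbit. In either case, the hard analytic content has already been absorbed into Proposition~\ref{galoiskunneth}, so the remaining work is essentially formal.
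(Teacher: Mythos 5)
Your setup and your main input are the same as the paper's: you compute the values of $f_\ast\mathbb{E}_n$ on orbits and feed Proposition \ref{galoiskunneth} in as the Kunneth/Galois property. The gap is the uniqueness step, and it is a genuine one. You assert that ``any $E_\infty$-sheaf of rings with the above Kunneth property and prescribed stalk is unique up to unique isomorphism,'' with the stalk meaning $E_n$ together with its $\mathbb{G}_n$-action arising at the basepoint. But the stalk only sees the action of the underlying discrete group $\mathbb{G}_n(\ast)$ — this is exactly the subtlety emphasized in the paper: the continuous action is \emph{extra} structure not recoverable from the colimit over orbits. A hypercomplete sheaf on finite continuous $\mathbb{G}_n$-sets is not determined by its stalk with its naive action, and the orbit category being ``generated under Galois covers by the basepoint'' does not rectify this, since descent only expresses values on smaller orbits in terms of larger ones and never terminates at the (non-finite) free orbit. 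Your fallback (b) runs into the same problem: identifying $\mathbb{E}_n(\ast/H)$ with Devinatz--Hopkins' $E_n^{hH}$ is precisely the content to be proved, and descent along $\ast\to\ast/H$ produces \v{C}ech terms valued on profinite sets $H^k$, whose comparison with the Devinatz--Hopkins tower again requires a rigidity input, not just formal descent.

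The paper closes this gap differently: from Proposition \ref{galoiskunneth} (with one variable the basepoint) it reads off, on homotopy groups, the algebra in \emph{Morava modules} associated to each $K(n)$-local $E_\infty$-ring $\mathbb{E}(T/\mathbb{G}_n)$, identifies it with the algebra used in \cite{DevinatzE} to construct $\mathcal{E}_n^{DH}(T)$, and then invokes \cite{MathewGalois} Thm.~10.9, which says that $\mathcal{E}_n^{DH}$ is uniquely determined by its associated sheaf of algebras in Morava modules. That theorem (resting ultimately on $K(n)$-local obstruction-theoretic rigidity) is the substitute for your unsupported uniqueness claim; some input of this kind is unavoidable, and your proposal as written does not supply it.
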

\begin{proof}
Let $T$ be a finite set with continuous $\mathbb{G}_n$-action.  By Proposition \ref{galoiskunneth}, we have that the natural map
$$\mathbb{E}(T/\mathbb{G}_n)\otimes \mathbb{E} \to P_\ast P^\ast (\mathbb{E})$$
is a $(p,v_1,\ldots,v_{n-1})$-completion of v-sheaves of $\mathbb{E}$-algebras on $\ast/\mathbb{G}_n$, where $P:T/\mathbb{G}_n\to \ast/\mathbb{G}_n$.  Taking homotopy groups, we deduce that the algebra in Morava modules associated to the $K(n)$-local $E_\infty$-ring spectrum $\mathbb{E}(T/\mathbb{G}_n)$ identifies with the same one used to construct $\mathcal{E}_n^{DH}(T)$ in \cite{DevinatzE}.  As by \cite{MathewGalois} Thm.\ 10.9 $\mathcal{E}_n^{DH}$ is uniquely determined by its associated sheaf of algebras in Morava modules, this implies the claim.
\end{proof}

\begin{remark}
Recall from \ref{eisetale} that $\mathbb{E}_n$ is etale as a $K(n)$-local v-sheaf on $\ast/\mathbb{G}_n$.  Thus by \ref{profgpexample}, we can restate this Theorem in the reverse direction: we have
$$f^\ast \mathcal{E}_n^{DH} = \mathbb{E}_n,$$
where the pullback is in the sense of $K(n)$-local sheaves.

In \cite{MorPicard}, Mor used the left-hand side as the definition of the $\mathbb{G}_n$-equivariant condensed $E_\infty$-ring structure on $E_n$.  Thus the approach in \cite{MorPicard} and the approach taken here are equivalent.
\end{remark}

Recall the interpretation that $E_n$ is a ($K(n)$-local)  $\mbb{G}_n$-Galois extension of $L_{K(n)}\mbb{S}$ from \cite{RognesGalois}.  An important and well-known application of this is the resulting ``Galois descent'', the idea being that a $K(n)$-local spectrum $X$ should be recovered from $X\otimes \mathbb{E}_n$ with all its structure.  One formulation of this is the following (compare \cite{MathewGalois} Prop.\ 10.10, \cite{MorPicard} Thm.\ A.II).

\begin{theorem}\label{kndescent}
The base-change functor
$$L_{K(n)}\on{Sp}=\on{Sh}_{et}(\ast;L_{K(n)}\on{Sp}) \to \on{Mod}_{\mathbb{E}_n}(\on{Sh}_{et}(\ast/\mathbb{G}_n;L_{K(n)}\on{Sp}))$$
is an equivalence, with inverse given by taking global sections.
\end{theorem}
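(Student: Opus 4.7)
The plan is to exhibit the base-change functor $F\colon X \mapsto p^*X \otimes \mbb{E}_n$ and the global-sections functor $G = p_*$ as an adjoint equivalence, where $p\colon \ast/\mbb{G}_n \to \ast$ is the projection. The decisive preliminary is that $p$ is $L_{K(n)}\on{Sp}$-proper: the extended Morava stabilizer $\mbb{G}_n$ (Definition \ref{moravastab}) is a compact $p$-adic Lie group, hence virtually of finite mod-$p$ cohomological dimension, so Remark \ref{tn} (Kuhn's telescopic Tate vanishing) yields $L_{T(n)}\on{Sp}$-properness of $p$, and a fortiori $L_{K(n)}\on{Sp}$-properness. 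In particular $p_*$ preserves all colimits and satisfies the projection formula, ensuring $G$ is well-defined on $\on{Mod}_{\mbb{E}_n}$ as claimed.

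For the unit, given $X \in L_{K(n)}\on{Sp}$, the projection formula and Corollary \ref{globalsec} immediately give
$$p_*(p^*X \otimes \mbb{E}_n) \simeq X \otimes p_*\mbb{E}_n \simeq X \otimes L_{K(n)}\mbb{S} \simeq X,$$
and one checks this identifies the unit map. For the counit $FG(M)\to M$, both $FG$ and the identity preserve colimits in $M \in \on{Mod}_{\mbb{E}_n}$ ($F$ as a left adjoint; $G$ by properness), so it suffices to verify the counit is an isomorphism on a class of objects generating $\on{Mod}_{\mbb{E}_n}$ under colimits. Write $e\colon \ast \to \ast/\mbb{G}_n$ for the canonical section; since $e$ is a surjection, $e^*$ is conservative (Lemma \ref{compassgood}), and a standard argument shows $e_!\mbb{S}$ generates $\on{Sh}_{et}(\ast/\mbb{G}_n; L_{K(n)}\on{Sp})$ under colimits. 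Base-changing, $e_!\mbb{S} \otimes \mbb{E}_n$ generates $\on{Mod}_{\mbb{E}_n}$. Moreover $e$ is proper ($\mbb{G}_n$ is light profinite), so by the projection formula $e_!\mbb{S} \otimes \mbb{E}_n \simeq e_! e^*\mbb{E}_n = e_*E_n$.

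It therefore remains to test the counit on $M = e_*e^*\mbb{E}_n$. Since $pe = \mathrm{id}$, $p_*e_* = \mathrm{id}$, so $GM = e^*\mbb{E}_n = E_n$, and the counit becomes a canonical map $p^*E_n \otimes \mbb{E}_n \to e_*e^*\mbb{E}_n$ in $\on{Sh}_{et}(\ast/\mbb{G}_n; L_{K(n)}\on{Sp})$. But by Remark \ref{differentgalois}, the Galois Kunneth property gives precisely an isomorphism $\mbb{E}_n \otimes p^*E_n \simeq e_*e^*\mbb{E}_n$ in this category. The main (essentially bookkeeping) obstacle will be verifying that these two a priori distinct natural maps $p^*E_n \otimes \mbb{E}_n \to e_*e^*\mbb{E}_n$ agree: both are built universally from the $\mbb{E}_n$-module structure together with the unit of the $(e^*, e_*)$-adjunction, and the identification should follow by Yoneda/uniqueness after unfolding both constructions through the projection formula for $e$. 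Granting this, the theorem reduces entirely to the three inputs already supplied by the paper: the properness of $p$ (Remark \ref{tn}), the global sections computation $p_*\mbb{E}_n = L_{K(n)}\mbb{S}$ (Corollary \ref{globalsec}), and the Kunneth/Galois identity (Proposition \ref{galoiskunneth}, Remark \ref{differentgalois}).
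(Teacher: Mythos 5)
Your overall strategy is the same as the paper's: prove the unit is an isomorphism via Corollary \ref{globalsec} plus the projection formula for $\pi_\ast$ (justified through Remark \ref{tn} and virtual finite mod-$p$ cohomological dimension of $\mathbb{G}_n$), then check the counit on a family of colimit-generators of $\on{Mod}_{\mathbb{E}_n}$, reduce to the single module $e_\ast E_n$ using properness of $e$ and the projection formula, and conclude by the Galois/K\"unneth property (Proposition \ref{galoiskunneth} / Remark \ref{differentgalois}). The paper reaches the same test object by a bar-resolution reduction to free modules followed by generation of the underlying category by the objects $e_\ast X$.

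There is, however, one step whose justification as written does not work: the claim that ``since $e^\ast$ is conservative, a standard argument shows $e_!\mathbb{S}$ generates $\on{Sh}_{et}(\ast/\mathbb{G}_n;L_{K(n)}\on{Sp})$ under colimits.'' The standard transfer-of-generation argument applies to the \emph{left} adjoint of a conservative colimit-preserving functor, but here $e_!\simeq e_\ast$ is the \emph{right} adjoint of $e^\ast$: maps out of $e_!\mathbb{S}$ compute $\on{map}(e_!\mathbb{S},M)\simeq e^!M$, so colimit-generation by $e_!\mathbb{S}$ amounts to conservativity (and compatibility with colimits) of $e^!$, which conservativity of $e^\ast$ does not give you. (By contrast, your subsequent step --- passing from generators of the base category to the free $\mathbb{E}_n$-modules on them --- is exactly where that standard argument does apply.) The generation statement you need is true, but its proof goes through the descendability of $e_\ast\mathbb{S}$: descendability puts every object in the thick tensor ideal generated by $e_\ast\mathbb{S}$, and the projection formula $M\otimes e_\ast\mathbb{S}\simeq e_\ast e^\ast M$ together with writing a $K(n)$-local spectrum as a colimit of shifts of the unit then yields colimit-generation by the $e_\ast X$. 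This is precisely the input the paper invokes at this point (``the descendability argument in \ref{chausproper}''), so your proof is repaired by citing that rather than conservativity of $e^\ast$. The remaining ``bookkeeping'' issue you flag --- identifying the counit at $e_\ast E_n$ with the natural map of Proposition \ref{galoiskunneth} --- is treated at the same level of detail in the paper, so I would not count it as an additional gap.
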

\begin{proof}
Let $f:\ast/\mathbb{G}_n\to\ast$.  We need to show that if $X\in L_{K(n)}\on{Sp}$, then
$$X\overset{\sim}{\rightarrow} f_\ast(f^\ast X \otimes \mathbb{E}_n),$$
and that if $M\in\on{Mod}_{\mathbb{E}_n}(\on{Sh}_{et}(\ast/\mathbb{G}_n;L_{K(n)}\on{Sp}))$, then
$$\mathbb{E}_n\otimes f^\ast f_\ast M \overset{\sim}{\rightarrow} M,$$
where all operations are in the sense of $K(n)$-local etale sheaves.

For the first claim, when $X=L_{K(n)}\mathbb{S}$ this is Corollary \ref{globalsec}.  The general case then follows from the projection formula for $f$, which by Remark \ref{tn} holds because $\mathbb{G}_n$ has virtual finite (mod $p$) cohomological dimension.

For the second claim, note again that $f_\ast$ preserves colimits and tensoring with any $X\in L_{K(n)}\on{Sp}$.  Thus it suffices to check $\mathbb{E}_n\otimes f^\ast f_\ast M\overset{\sim}{\to} M$ for a collection of $M$ which generates $\on{Mod}_{\mathbb{E}_n}(\on{Sh}_{et}(\ast/\mathbb{G}_n;L_{K(n)}Sp))$ under colimits and tensoring with $L_{K(n)}\on{Sp}$.  By the colimit diagram given by the standard resolution of an $\mathbb{E}_n$-module induced by the adjunction forgetting the $\mathbb{E}_n$-module structure, it therefore suffices to consider $M$ of the form $\mathbb{E}_n \otimes N$, where $N\in\on{Sh}_{et}(\ast/\mathbb{G}_n;L_{K(n)}\on{Sp})$.  But by the descendability argument in \ref{chausproper} the latter category is generated under colimits by the $g_\ast X$ for $X\in L_{K(n)}\on{Sp}$, where $g:\ast \to \ast/\mathbb{G}_n$.  Thus we reduce to checking one single case, namely
$$M = \mathbb{E}_n\otimes g_\ast L_{K(n)}\mathbb{S}.$$
But this $M$ identifies with $g_\ast E_n$ by the projection formula for $g$, and for $M=g_\ast E_n$ the desired claim follows from Proposition \ref{galoiskunneth}.
\end{proof}

\begin{remark}
Theorem \ref{kndescent} implies the well-known ``formula'' for the $K(n)$-localization of a spectrum $X$, namely $L_{K(n)}X$ identifies with the ``continuous homotopy fixed point spectrum'' of $\mathbb{G}_n$ acting on the $(p,v_1,\ldots,v_{n-1})$-completion of $X\otimes E_n$.  The continuous homotopy fixed point spectrum is, in the present language, modeled by the $f_\ast$ functor appearing in the above proof.

One can ask what this means from a calculational perspective.  Let's start with the most basic example $X=\mbb{S}$ for orientation.  Taking the Postnikov tower of $\mbb{E}_n\in\on{Sh}_v(\ast/\mbb{G}_n;\on{Sp})$ and then applying $f_\ast$, we deduce a conditionally convergent spectral sequence of light condensed abelian groups
$$H^p(\mbb{G}_n; \pi_q\mbb{E}_n)\Rightarrow \pi_{q-p} f_\ast(\mbb{E}_n)$$
where by definition $H^p(\mbb{G}_n;\pi_q\mbb{E}_n) := \pi_0f_\ast((\pi_q\mbb{E}_n)[p])$.  The pushforward $f_\ast$ on $v$-sheaves restricts to the pushforward on etale sheaves as one sees by resolving $\ast/\mbb{G}_n$ by light profinite sets and using \ref{etale}, so by \ref{globalsec} we see that $f_\ast (\mbb{E}_n)=L_{K(n)}\mbb{S}$ (where you take the $K(n)$-localization in light condensed spectra, but on underlying spectra it's the usual thing, compare Remark \ref{localizesolid}).  Thus this spectral sequence converges to the homotopy groups of the $K(n)$-local sphere (appropriately ``topologized'', i.e.\ equipped with light condensed structure).

As for the $E_2$-page, note (as in the proof of Lemma \ref{eisetale}) that the underlying light condensed abelian group $\pi_q\mbb{E}_n$ is profinite (it is the inverse limit of itself mod powers of $(p,v_1,\ldots,v_{n-1})$ and these are finite). Thus by Proposition \ref{truncateprof} there is no higher cohomology for its sections over a light profinite set $S$, and the $H^0$ is just the continuous functions on $S$ with values in $\pi_q \mbb{E}_n$.  Thus when we resolve $\ast/\mbb{G}_n$ by the bar resolution to calculate $f_\ast$, we just get the usual complex of continuous cochains.  We deduce the $E_2$-page is the usual continuous group cohomology (with profinite coefficients), and this gives another perspective on the spectral sequence from \cite{DevinatzE}.

For an arbitrary spectrum $X$ we can do the same thing to get a spectral sequence of light condensed abelian groups
$$H^p(\mbb{G}_n;\pi_q((X\otimes \mbb{E}_n)^\wedge_{(p,v_1,\ldots,v_{n-1})})) \Rightarrow \pi_{q-p} L_{K(n)}X,$$
but now $\pi_q((X\otimes \mbb{E}_n)^\wedge_{(p,v_1,\ldots,v_{n-1})})$ need not be profinite so in principle one has to take care with $\on{lim}^1$-terms when calculating the $E_2$-page (of course if $X$ is a finite spectrum this is not an issue).
\end{remark}

\begin{remark}\label{alternatedescent}
Suppose $\mathcal{C}\in\on{CAlg}(\on{Pr}^L_{st})$.  For a finite group $G$, Mathew in \cite{MathewGalois}, generalizing \cite{RognesGalois}, defined the notion of a faithful $G$-Galois algebra $R$ in $\mathcal{C}$.  Such an $R$ is a particular kind of commutative algebra object in $\mathcal{C}$ with $G$-action, or in other words a particular kind of sheaf on $\ast/G$ with values in $\on{CAlg}(\mathcal{C})$.  Galois descent in this setting says that
$$\mathcal{C}\overset{\sim}{\rightarrow}\on{Mod}_R(\on{Sh}(\ast/G;\mathcal{C})).$$
For a profinite group $G$, we can define a faithful $G$-Galois algebra $R$ to be a compatible collection of faithful $G/N$-Galois algebras $R_N$ as $N$ runs over the open normal subgroups of $G$.  As in \cite{ClausenHyper} 4.1, we can equivalently view this as a sheaf $R$ on the site $B^{fin}(G)$ of finite continuous $G$-sets with values in $\on{CAlg}(\mathcal{C})$, and the finite Galois descent above tautologically induces
$$\mathcal{C}\overset{\sim}{\rightarrow}\on{Mod}_R(\on{Sh}(B^{fin}G;\mathcal{C})).$$
When $R$ is hypercomplete and $G$ has finite cohomological dimension, using \ref{profgpexample} we can translate between sheaves on $B^{fin}G$ and etale sheaves on $\ast/G$ to arrive at a statement like \ref{kndescent}.  This gives another perspective.
\end{remark}

\section{Solid refinements of duality}\label{solidsec}

The duality theory of \ref{dualsec} takes place in the setting of etale coefficients.  For the purposes of the next section, it will be convenient to extend this to the more general \emph{solid} coefficients.

We start with a small recap of the solid theory from \cite{ClausenCondensed}.

\begin{definition}
A light condensed abelian group $M$ is called \emph{solid} if for every $S\in\on{ProfSet}^{light}$, every map $\mathbb{Z}[S]\to M$ admits a unique extension to a map $\mathbb{Z}[S]^\square\to M$.

Here
$$\mathbb{Z}[S]^\square := \underline{\on{Hom}}(\underline{\on{Hom}}(\mathbb{Z}[S],\mathbb{Z}),\mathbb{Z}) = \varprojlim_n \mathbb{Z}[S_n]$$
if $S=\varprojlim_n S_n$ is written as a sequential limit of finite sets.
\end{definition}Recall that the full subcategory of solid abelian groups is closed under all limits and higher derived limits, colimits, and extensions, as well as all internal $\underline{\on{Ext}}^i(N,-)$ for $N$ a light condensed abelian group.  For $S\in\on{ProfSet}^{light}$, the object $\mathbb{Z}[S]^\square$ is solid, and is compact and projective as a solid abelian group, even in the strong sense of internal hom: $\underline{\on{Ext}}^i(\mathbb{Z}[S]^\square, M)=0$ for all $i>0$ and $M$ solid and $\underline{\on{Hom}}(\mathbb{Z}[S]^\square,-):\on{Solid}\to\on{Solid}$ preserves all colimits.  Here $\underline{\on{Ext}}^i$ as before is the internal Ext in light condensed abelian groups.  If $(e_i)_{i\in I}$ is a $\mathbb{Z}$-basis of the free abelian group $\on{Cont}(S,\mathbb{Z})$, then the corresponding map
$$\mathbb{Z}[S]^\square \to \prod_I \mathbb{Z}$$
is an isomorphism of light condensed (solid) abelian groups.  Note that $I$ is countable.

The theory can be extended to spectra, as noted in \cite{ClausenAnalytic}.  We give a brief exposition here.

\begin{definition}
Let $X$ be a light condensed spectrum.  Say that $X$ is \emph{solid} if the homotopy group $\pi_n X$ is solid as a light condensed abelian group for all $n\in\mathbb{Z}$.
\end{definition}

\begin{theorem}\label{solidproperties}
\begin{enumerate}
\item The full subcategory $\on{Sp}^\square\subset \on{CondSp}^{light}$ of solid spectra is closed under all limits and colimits, and if $M,N\in\on{CondSp}^{light}$ with $N\in \on{Sp}^\square$, we have
$$\underline{\on{Hom}}(M,N) \in \on{Sp}^\square$$
as well.
\item If $S=\varprojlim_n S_n$ is a light profinite set, then the free solid spectrum on $S$ identifies with $\varprojlim \mathbb{S}[S_n]$.  Given a $\mathbb{Z}$-basis $(e_i)_{i\in I}$ of $\on{Cont}(S,\mathbb{Z})$, we can furthermore identify this with $\prod_I\mathbb{S}$ via the map $S\to \prod_I \mathbb{S}$, unique up to homotopy, which is given by $e_i$ on $\pi_0$ of the $i^{th}$ component.
\end{enumerate}
\end{theorem}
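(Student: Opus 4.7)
The proof of Part 1 follows the standard pattern of reducing statements about spectra to statements about the homotopy groups and invoking the solid closure properties in the abelian category. For closure under limits, the Milnor exact sequence $0 \to \lim^1 \pi_{k+1} X_i \to \pi_k(\varprojlim X_i) \to \lim \pi_k X_i \to 0$ expresses $\pi_k$ of a limit as an extension of solid abelian groups, using closure of $\on{Solid}\subset \on{CondAb}^{light}$ under both $\lim$ and (derived) $\lim^1$. For closure under colimits, filtered colimits are handled by commutation of $\pi_k$ with filtered colimits (plus closure of $\on{Solid}$ under colimits), while cofibers give long exact sequences of homotopy groups and are handled by closure under extensions. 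For the internal Hom, the conditional convergence spectral sequence with $E_2$-page of the form $\underline{\on{Ext}}^p(\pi_q M, \pi_\ast N)$ expresses each $\pi_k \underline{\on{Hom}}(M,N)$ as built out of $\underline{\on{Ext}}^i$'s of solids, which are solid by the recalled abelian properties.

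For Part 2, the candidate is $\mathbb{S}[S]^\square := \varprojlim_n \mathbb{S}[S_n]$. It is solid by Part 1, since each $\mathbb{S}[S_n]$ is a finite direct sum of spheres and hence has discrete (finitely generated) homotopy groups. The natural map $\mathbb{S}[S]\to \mathbb{S}[S]^\square$ comes from the compatible system of projections $S\to S_n$. To check this is a solidification, I will verify that for each solid spectrum $X$ the induced map $\underline{\on{Hom}}(\mathbb{S}[S]^\square,X)\to \underline{\on{Hom}}(\mathbb{S}[S],X)=X(S)$ is an equivalence. Using the Postnikov tower of $X$ (which converges in $\on{Sp}^\square$ by Part 1 applied to the tower of Postnikov sections), this reduces to the case $X = HA$ with $A$ solid abelian, and there the statement translates into the defining property $\underline{\on{Hom}}(\mathbb{Z}[S]^\square,A) = A(S)$ for solid abelian groups, plus the claim that $\pi_0\varprojlim_n \mathbb{S}[S_n] = \mathbb{Z}[S]^\square$ and that the $\lim^1$-contribution from $\pi_1\mathbb{S}[S_n] = (\pi_1\mathbb{S})[S_n]$ to this $\pi_0$ (and analogously for higher $\pi_k$) vanishes. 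This last vanishing holds because each tower $(\pi_l\mathbb{S}[S_n])_n$ consists of discrete, finitely generated abelian groups with surjective transition maps (induced from $S_{n+1}\twoheadrightarrow S_n$), so the classical Mittag-Leffler argument applies.

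For the identification with $\prod_I \mathbb{S}$, a choice of $\mathbb{Z}$-basis $(e_i)_{i\in I}$ of $\on{Cont}(S,\mathbb{Z})$ produces, for each $i$, a map $\mathbb{S}[S]\to\mathbb{S}$ whose effect on $\pi_0$ is $e_i:\mathbb{Z}[S]\to\mathbb{Z}$. (The existence of such a lift is automatic: a class in $\on{Cont}(S,\mathbb{Z}) = \pi_0\, \mathbb{S}(S)\otimes\mathbb{Z}$ lifts to $\pi_0\mathbb{S}(S)$ since $S$, being light profinite, is finite on each piece of its defining tower, so $e_i$ factors through some $S_n$ and is then the image of a tautologically available lift.) Assembling, we obtain $\mathbb{S}[S]\to\prod_I\mathbb{S}$, which factors uniquely through $\mathbb{S}[S]^\square$ because the target is solid (a countable product of solid spectra is solid by Part 1). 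On $\pi_0$, this map is precisely the already-known isomorphism $\mathbb{Z}[S]^\square \overset{\sim}\to \prod_I \mathbb{Z}$ of solid abelian groups. To promote to an isomorphism on all $\pi_k$: both sides have homotopy groups given by $\prod_I \pi_k\mathbb{S}$ (the target by definition; the source by the Milnor computation above combined with the tensor product identity $\pi_k\mathbb{S}\otimes\mathbb{Z}[S]^\square = \prod_I\pi_k\mathbb{S}$, valid since $\pi_k\mathbb{S}$ is finitely generated), and a routine check identifies the map with the corresponding product decomposition.

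The main obstacle is the verification that the Milnor-type $\lim^1$ contributions to $\pi_k\varprojlim_n \mathbb{S}[S_n]$ vanish when computed in light condensed abelian groups (not just classical ones). This is the one place where the condensed setting could in principle cause trouble; fortunately, for towers of \emph{discrete} finitely generated abelian groups with surjective transitions the derived limit agrees with the classical one and vanishes by Mittag-Leffler, so the computation reduces to a familiar one.
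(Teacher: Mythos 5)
The central gap is in your verification of the universal property in Part 2. After passing up the Postnikov tower of the target $X$ you claim the statement ``reduces to the case $X=HA$'' and is then settled by the underived defining property $\underline{\on{Hom}}(\mathbb{Z}[S]^\square,A)=A(S)$ together with the computation of the homotopy groups of $\varprojlim_n\mathbb{S}[S_n]$. But the induction over the Postnikov tower forces you to compare mapping anima into the \emph{shifted} Eilenberg--MacLane objects $\Sigma^m HA$ for all $m\geq 0$ (both for the layers and for the lifting obstructions), and $[\varprojlim_n\mathbb{S}[S_n],\Sigma^m HA]$ is not determined by $\pi_0$ (or even by all homotopy groups) of the source plus underived facts about solid abelian groups: maps between Eilenberg--MacLane spectrum objects in positive shifts are not $\on{Ext}$-groups of the heart (Steenrod-type phenomena), so genuinely spectrum-level input is required. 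The paper supplies exactly this input: it first identifies $\varprojlim_n\mathbb{S}[S_n]=\underline{\on{Hom}}(\varinjlim_n\prod_{S_n}\mathbb{S},\mathbb{S})\simeq\prod_I\mathbb{S}$ (whence connectivity), then proves the derived base-change $\mathbb{Z}\otimes_{\mathbb{S}}\varprojlim_n\mathbb{S}[S_n]\simeq\varprojlim_n\mathbb{Z}[S_n]=\mathbb{Z}[S]^\square$ using pseudocoherence of $\mathbb{Z}$ over $\mathbb{S}$ and repleteness, and only then reduces maps into $HA$ (and its shifts) to the \emph{derived} freeness of $\mathbb{Z}[S]^\square$, i.e.\ vanishing of higher $\underline{\on{Ext}}$ against solid modules, not merely the underived property you invoke. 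Your proof never addresses this linearization step, so the reduction to $X=HA$ cannot be completed as written. (A related circularity: you let the map $\mathbb{S}[S]\to\prod_I\mathbb{S}$ ``factor through $\mathbb{S}[S]^\square$ because the target is solid,'' which presupposes the universal property under proof; this is easily repaired by factoring each $e_i$ through a finite stage $S_{n_i}$ and using the projections $\varprojlim_n\mathbb{S}[S_n]\to\mathbb{S}[S_{n_i}]$, but it shows the logical order needs care.)

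Two smaller points. In Part 1 the Milnor sequence only handles sequential limits; closure under \emph{all} limits also requires (possibly uncountable) products, for which exactness can fail in the light condensed setting, and the correct ingredient is closure of solid abelian groups under higher derived limits, as in the paper's Postnikov-filtration argument. And the $\on{lim}^1$-vanishing for the towers $((\pi_k\mathbb{S})[S_n])_n$ should be justified by repleteness (Lemma \ref{replete}; the transition maps are surjections of condensed abelian groups), rather than by asserting agreement with the classical derived limit --- the conclusion is right, but the stated reason is not the operative one. These are minor repairs, unlike the Part 2 issue above.
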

\begin{proof}
To show closure under colimits, it suffices to show closure under finite colimits and closure under infinite direct sums.  Infinite direct sums are t-exact in light condensed spectra (it's a general property of toposes), so that case reduces to the analogous fact for solid abelian groups.  The case of finite colimits reduces to that of shifts and cones, and those follow from the fact that the collection of solid abelian groups is an abelian subcategory closed under extensions.

To show closure under limits, similarly we need only show closure under products.  For this can filter by the Postnikov tower to reduce to the fact that solid abelian groups are closed under higher derived limits.  A similar argument shows the closure under internal Hom.

For 2, note that
$$\varprojlim_n \mathbb{S}[S_n] = \underline{\on{Hom}}(\varinjlim_n \prod_{S_n} \mathbb{S}, \mathbb{S}).$$
We have $\pi_0 \varinjlim \prod_{S_n}\mathbb{S} = \varinjlim \prod_{S_n}\mathbb{Z}=C(S;\mathbb{Z})$.  Given a $\mathbb{Z}$-basis $(e_i)_{i\in I}$ of the latter abelian group, the induced map
$$\oplus_{i\in I}\mathbb{S} \to \varinjlim_n \prod_{S_n}\mathbb{S}$$
is an isomorphism on $\pi_\ast$ and hence an isomorphism of spectra.  Taking $\underline{\on{Hom}}(-,\mathbb{S})$ in light condensed spectra, we deduce that
$$\varprojlim_n \mathbb{S}[S_n]\overset{\sim}{\rightarrow}\prod_{i\in I} \mathbb{S}.$$
In particular $\varprojlim_n \mathbb{S}[S_n]$ is connective.  (Recall that countable products are exact by \ref{replete}.) Using that $\mathbb{Z}$ is pseudocoherent as an $\mathbb{S}$-module and that countable products of hypercovers are hypercovers in light condensed anima by repleteness (\ref{replete}), we further deduce that
$$\mathbb{Z}\otimes_{\mathbb{S}}\varprojlim_n \mathbb{S}[S_n] = \varprojlim_n \mathbb{Z}[S_n].$$

To prove that $\mathbb{S}[S]^\square= \varprojlim \mathbb{S}[S_n]$, we need to show that if $M$ is a solid spectrum, then
$$\on{Hom}(\varprojlim_n \mathbb{S}[S_n],M)\overset{\sim}{\to} \on{Hom}(\mathbb{S}[S],M).$$
By connectivity, for this we can assume $M$ is connective.  Then filtering by the Postnikov tower and shifting, we can assume $M$ is concentrated in degree $0$.  Then in particular $M$ is a $\mathbb{Z}$-module, and by adjunction and the result $\mathbb{Z}\otimes_{\mathbb{S}}\varprojlim_n \mathbb{S}[S_n] = \varprojlim_n \mathbb{Z}[S_n]$ of the previous paragraph we reduce to the fact that $\mathbb{Z}[S]^\square$ is the free solid abelian group on $S$ also in the derived sense (e.g.\ by the projectivity discussed above).
\end{proof}

Let us now give some examples of solid spectra.

\begin{example}\label{solidexamples}
\begin{enumerate}
\item Recall the unique geometric morphism $\delta^\ast: \on{An}\to\on{CondAn}^{light}$, which is fully faithful (\ref{etale}).  It induces a fully faithful embedding $\delta^\ast:\on{Sp}\to\on{CondSp}^{light}$; the essential image consists of the \emph{discrete} light condensed spectra.  Every discrete light condensed spectrum is solid.  Indeed, taking homotopy groups, this reduces to the fact that every discrete light condensed abelian group is solid, which is easy to check from the definition.
\item Given any diagram of spectra, we can use $\delta^\ast$ to put it inside light condensed spectra, then take the limit there.  The result is solid by \ref{solidproperties}.  Note that this is generally very different from taking the limit just in spectra and then using $\delta^\ast$ on the result; morally we are considering the limit of spectra ``equipped with the inverse limit topology'', not with the ``discrete topology''.
\item The light condensed Morava E-theory spectrum $e^\ast \mathbb{E}_n$ from \ref{lighte} is solid.  Indeed, by Lemma \ref{eisetale} it is $(p,v_1,\ldots,v_{n-1})$-complete and its mod $(p,v_1,\ldots,v_{n-1})$-reduction is discrete, so this follows from 2.
\end{enumerate}
\end{example}

The closure of solid spectra under limits and colimits implies a further closure property.  Let $L:\on{Sp}\to\on{Sp}$ be any accessible localization functor, with associated full $\infty$-subcategory of local objects $L\on{Sp}\subset\on{Sp}$.  For any presentable stable $\infty$-category $\mathcal{C}$, we get an induced accessible localization of $\mathcal{C}$, namely the quotient gotten by killing the cocomplete full subcategory generated by the objects $A\otimes X$ where $X\in\mathcal{C}$ and $A\in\on{Sp}$ satisfies $LA=0$.  Explicitly, $L\mathcal{C}\subset\mathcal{C}$ identifies with the full subcategory of $Y\in \mathcal{C}$ such that the mapping spectrum $\on{map}(X,Y)\in\on{Sp}$ lies in $L\on{Sp}$ for all $X\in\mathcal{C}$.  More abstractly, comparing universal properties we see that
$$L\mathcal{C}=\mathcal{C}\otimes_{\on{Sp}}L\on{Sp}.$$

In particular, we can apply this with $\mathcal{C}=\on{CondSp}^{light}$.  Explicitly, $X\in\on{CondSp}^{light}$ lies in $L\on{CondSp}^{light}$ if and only if $X(S)\in L\on{Sp}$ for all $S\in\on{ProfSet}^{light}$, and the localization $L$ of $\on{CondSp}^{light}$ can be obtained by applying $L$ on the level of presheaves and then hypersheafifying the result.   Then we have the following result showing that this $L$-localization passes to solid spectra.

\begin{lemma}
Let $L$ be an accessible localization of spectra, and let $X\in \on{CondSp}^{light}$.
\begin{enumerate}
\item If $LX=0$, then $L(X^\square)=0$.
\item If $X$ is solid, then $LX$ is solid.
\end{enumerate}
\end{lemma}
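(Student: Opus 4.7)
The plan is to prove both parts in sequence, treating part 1 as the technical heart and deducing part 2 from it. Together they express that the Bousfield localizations $L$ and $(-)^\square$ commute on $\on{CondSp}^{light}$, but the two statements are not formally equivalent (part 1 is preservation of $L$-acyclics by $(-)^\square$, part 2 is preservation of solids by $L$), so I would organize the argument to derive part 2 after part 1.

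For part 1, I would begin by noting that the $L$-acyclics in $\on{CondSp}^{light}$ form a localizing subcategory; since, by the tensor-extension description of $L$ recalled just before the lemma, we have $L(A\otimes\mbb{S}[S]) = (LA)\otimes\mbb{S}[S]=0$, the $L$-acyclics are generated under colimits by objects of the form $A\otimes\mbb{S}[S]$ with $A\in\on{Sp}$ an $L$-acyclic spectrum and $S\in\on{ProfSet}^{light}$. Because $(-)^\square$ is a left adjoint it preserves colimits and shifts, so it suffices to verify that $(A\otimes\mbb{S}[S])^\square$ is $L$-acyclic for such $A,S$.

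The key computation is that $(A\otimes Y)^\square\simeq A\otimes Y^\square$ for $A\in\on{Sp}$ and $Y\in\on{CondSp}^{light}$. Using the internal-Hom closure of solid spectra from Theorem \ref{solidproperties}.1, for $M$ solid the object $\underline{\on{Hom}}(A,M)$ is again solid, so the universal property of $(-)^\square$ gives
$$\on{Map}(A\otimes Y^\square,M)=\on{Map}(Y^\square,\underline{\on{Hom}}(A,M))=\on{Map}(Y,\underline{\on{Hom}}(A,M))=\on{Map}(A\otimes Y,M),$$
and $A\otimes Y^\square$ is itself solid (write $A$ as a filtered colimit of compact spectra and use closure of $\on{Sp}^\square$ under finite limits and colimits). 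Applied to $Y=\mbb{S}[S]$, this reduces part 1 to showing that $A\otimes\mbb{S}[S]^\square$ is $L$-acyclic when $A$ is.

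This last reduction is the main obstacle. Using Theorem \ref{solidproperties}.2 to write $\mbb{S}[S]^\square=\varprojlim_n\mbb{S}[S_n]$ with each $\mbb{S}[S_n]$ a finite direct sum of copies of $\mbb{S}$, we have $L(A\otimes\mbb{S}[S_n])=0$ at every finite level, but since $L$ need not commute with arbitrary inverse limits one must argue carefully. I would exploit the explicit description (from the paragraph preceding the lemma) of $L$ on $\on{CondSp}^{light}$ as sectionwise $L$ followed by hypersheafification, together with the repleteness of $\on{CondSp}^{light}$ from Theorem \ref{replete}, which controls the behavior of Postnikov towers under countable inverse limits of surjections: combining these with the tower presentation of $\mbb{S}[S]^\square$ gives $L(A\otimes\mbb{S}[S]^\square)=0$.

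For part 2, given $X$ solid I would set $N=\on{fib}(X\to LX)$, so that $LN=0$ by idempotence of $L$, and hence $N^\square$ is $L$-acyclic by part 1. Applying the right-exact functor $(-)^\square$ to the fiber sequence $N\to X\to LX$ and using $X^\square=X$ gives a cofiber sequence $N^\square\to X\to (LX)^\square$; applying $L$ and using $L(N^\square)=0$ yields $L((LX)^\square)\simeq LX$. Combining this with the observation (a consequence of part 1, via the standard equivalence between preservation of acyclics and preservation of local objects for a single Bousfield localization) that $(LX)^\square$ is itself $L$-local, one concludes that the unit $LX\to (LX)^\square$ is both $L$-local on both sides and an $L$-equivalence, hence an isomorphism; thus $LX$ is solid.
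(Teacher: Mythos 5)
Your part 1 stalls exactly where you say it does, and the proposed rescue does not work: having reduced to showing that $A\otimes\mathbb{S}[S]^\square$ is $L$-acyclic, you appeal to the tower $\mathbb{S}[S]^\square=\varprojlim_n\mathbb{S}[S_n]$ together with repleteness and "sectionwise $L$ plus hypersheafification", but neither $L$ nor $A\otimes(-)$ commutes with this countable inverse limit (for $A$ an arbitrary acyclic there is no finiteness to invoke), so no actual argument is given. The irony is that the fact you need is immediate from the definition recalled just before the lemma: $\on{ker}(L)\subset\on{CondSp}^{light}$ is by construction the cocomplete subcategory generated by \emph{all} objects $A\otimes Y$ with $Y\in\on{CondSp}^{light}$ arbitrary and $LA=0$, so it contains $A\otimes\mathbb{S}[S]^\square$ outright. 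This is in fact how the paper argues: it keeps the generators $A\otimes Y$ with $Y$ arbitrary, notes that $(-)^\square$ preserves colimits, and observes that $(A\otimes Y)^\square=A\otimes Y^\square$ is again a generator of the same shape; your restriction of the generators to $Y=\mathbb{S}[S]$ destroys this self-replication and creates a problem that is not there. Your identification $(A\otimes Y)^\square\simeq A\otimes Y^\square$ via $\underline{\on{Hom}}(A,-)$ is fine, so part 1 is repaired by a one-line change.

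Part 2 has a more serious gap. Your cofiber-sequence computation showing that the unit $LX\to (LX)^\square$ is an $L$-equivalence is correct, but the other half, that $(LX)^\square$ is $L$-local, is not justified. The "standard equivalence" you cite concerns an adjoint pair $F\dashv G$: $F$ preserves $L$-equivalences if and only if $G$ preserves $L$-local objects. Part 1 (with exactness of $(-)^\square$) says that the \emph{left} adjoint, solidification, preserves acyclics and hence $L$-equivalences; what the adjunction principle then yields is that its \emph{right} adjoint --- the right adjoint to the inclusion $\on{SolidSp}\subset\on{CondSp}^{light}$ --- preserves $L$-local objects, not that solidification does. You would need the left adjoint of $(-)^\square$ (which does not exist) to preserve acyclics to conclude what you want, and no independent argument for "$M$ local $\Rightarrow M^\square$ local" is offered; it is not obviously true. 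The paper's proof of part 2 sidesteps this: with $X'=\on{fib}(X\to LX)$, the map $X'\to X$ is terminal among maps to $X$ from objects of $\on{ker}(L)$; since $X$ is solid it factors as $X'\to (X')^\square\to X$, and $(X')^\square\in\on{ker}(L)$ by part 1, so terminality forces $X'$ to be a retract of $(X')^\square$, hence solid, and then $LX=\on{cofib}(X'\to X)$ is solid. Either adopt that retract argument or supply a genuine proof that solidification preserves $L$-local objects before running your "equivalence between locals" conclusion.
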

\begin{proof}
For 1, recall that $\on{ker}(L)\subset\on{CondSp}^{light}$ is generated under colimits by the objects $A\otimes Y$ for $Y\in\on{CondSp}^{light}$ and $A\in\on{Sp}$ with $LA=0$.  Since $(-)^\square$ preserves colimits, it suffices to show the claim for such $X=A\otimes Y$.  But again because $(-)^\square$ commutes with colimits we have
$$(A\otimes Y)^\square = A\otimes (Y^\square),$$
another basic object in $\on{ker}(L)$, as required.

Now for 2, denote by $X'$ the fiber of the map $X \to LX$.  Then $X'\in \on{ker}(L)$, and $X' \to X$ is the terminal map to $X$ from an object in $\on{ker}(L)$.  Since $X$ is solid, we get a factoring $X' \to (X')^\square \to X$.  Applying 1, we have that $(X')^\square \in \on{ker}(L)$.   Thus, by the universal property of $X' \to X$ we deduce that $X'$ is a retract of $(X')^\square$.  It follows (again by closure under colimits, or limits) that $X'$ is also solid, hence so is $LX = \on{cofib}(X'\to X)$, as desired.
\end{proof}

In particular, the $L$-localization of solid spectra $\on{SolidSp}\to L\on{SolidSp}$ is calculated by the $L$-localization on the level of condensed spectra.

\begin{remark}\label{localizesolid}
Another compatibility property is that for $X\in\on{CondSp}^{light}$, we have $L(X(\ast)) = (LX)(\ast)$.  In other words the $L$-localization of the underlying spectrum of $X$ can be calculated as the underlying spectrum of $LX$.  This holds because hypersheafification on the site of light profinite sets does not change the value on $\ast$ (every hypercover of $\ast$ is split).  Strangely, this can be useful, in conjunction with the previous lemma, for calculating $L$-localizations of spectra.
\end{remark}

Now we will discuss the solid tensor product, in manner analogous to the discussion of the (derived) solid tensor product of solid abelian groups in \cite{ClausenCondensed}.  By adjunction, another way of stating the closure under internal Hom in part 1 of this theorem is the following: if $M$ is a light condensed spectrum with $M^\square=0$, then $(M\otimes_\mathbb{S} N)^{\square}=0$ for any light condensed spectrum $N$.  In other words, the localization of light condensed spectra to solid spectra is gotten by killing a tensor ideal.  It follows that there is a unique symmetric monoidal structure on solid spectra making solidification into a symmetric monoidal functor; this is called the \emph{solid tensor product}.  Explicitly, it is gotten by solidifying the tensor product of light condensed spectra:
$$M\otimes^\square_{\mathbb{S}}M = (M\otimes_\mathbb{S} N)^\square.$$

The most basic calculation, which follows from the definition, is the following: if $S,T\in\on{ProfSet}^{light}$, then
$$\mathbb{S}[S]^\square\otimes^\square_{\mathbb{S}}\mathbb{S}[T]^{\square} \overset{\sim}{\rightarrow}\mathbb{S}[S\times T]^\square.$$
Using this we get the following, which is the true starting point for solid tensor product calculations.
\begin{lemma}\label{tensorproduct}
Let $I$ and $J$ be countable sets.  Then
$$\prod_I \mathbb{S} \otimes^\square_{\mathbb{S}} \prod_J \mathbb{S} \overset{\sim}{\rightarrow} \prod_{I\times J}\mathbb{S}.$$
\end{lemma}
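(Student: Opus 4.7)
The plan is to reduce the statement to the basic computation $\mathbb{S}[S]^\square \otimes^\square_{\mathbb{S}} \mathbb{S}[T]^\square \simeq \mathbb{S}[S\times T]^\square$ recorded just above, by realizing both sides of the desired isomorphism as solid free spectra on suitable light profinite sets. For any countable index set $I$, I would first choose a light profinite set $S$ together with a $\mathbb{Z}$-basis $(e_i)_{i\in I}$ of $\on{Cont}(S,\mathbb{Z})$: take $S$ a finite discrete set if $I$ is finite, and $S=\mathbb{N}\cup\{\infty\}$ (with basis given by the constant function $1$ together with the characteristic functions of the isolated points) if $I$ is countably infinite, re-indexed by a chosen bijection. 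Likewise pick $T$ and a basis $(f_j)_{j\in J}$ for $J$. Then Theorem \ref{solidproperties}(2) identifies $\mathbb{S}[S]^\square \simeq \prod_I \mathbb{S}$ and $\mathbb{S}[T]^\square \simeq \prod_J \mathbb{S}$ via these bases.

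Applying the identity $\mathbb{S}[S]^\square \otimes^\square_\mathbb{S} \mathbb{S}[T]^\square \simeq \mathbb{S}[S\times T]^\square$, we are reduced to identifying the right side with $\prod_{I\times J}\mathbb{S}$. For this I would use Theorem \ref{solidproperties}(2) applied to the light profinite set $S\times T$, equipped with the product basis $(\pi_S^* e_i \cdot \pi_T^* f_j)_{(i,j)\in I\times J}$ of $\on{Cont}(S\times T,\mathbb{Z})$. The fact that this really is a $\mathbb{Z}$-basis is the Künneth statement
\[
\on{Cont}(S\times T,\mathbb{Z}) \;\simeq\; \on{Cont}(S,\mathbb{Z}) \otimes_{\mathbb{Z}} \on{Cont}(T,\mathbb{Z}),
\]
which follows by writing $S=\varprojlim_n S_n$, $T=\varprojlim_m T_m$ as sequential limits of finite sets and passing to the filtered colimit of the evident Künneth isomorphism $\mathbb{Z}^{S_n\times T_m} = \mathbb{Z}^{S_n}\otimes_\mathbb{Z} \mathbb{Z}^{T_m}$ in the finite case.

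What remains is to check that the composite isomorphism $\prod_I \mathbb{S} \otimes^\square_\mathbb{S} \prod_J \mathbb{S} \simeq \mathbb{S}[S]^\square \otimes^\square_\mathbb{S} \mathbb{S}[T]^\square \simeq \mathbb{S}[S\times T]^\square \simeq \prod_{I\times J}\mathbb{S}$ is the tautological comparison map (the one given on $(i,j)$-components by the tensor of the $i$-th and $j$-th projections). This is a formal unwinding of the universal properties: the $(i,j)$-component projection of $\mathbb{S}[S\times T]^\square$ is induced by the basis element $\pi_S^*e_i \cdot \pi_T^* f_j$, which by construction of the natural map $\mathbb{S}[S]^\square \otimes^\square_\mathbb{S} \mathbb{S}[T]^\square \to \mathbb{S}[S\times T]^\square$ factors as the tensor product of the maps induced by $e_i$ and $f_j$, namely the $i$-th and $j$-th component projections of $\mathbb{S}[S]^\square$ and $\mathbb{S}[T]^\square$ respectively. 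The only mildly nontrivial step is the Künneth identification for $\on{Cont}(-,\mathbb{Z})$; the rest is bookkeeping.
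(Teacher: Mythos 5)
Your proposal is correct and follows essentially the same route as the paper's proof: choose light profinite sets $S,T$ with $I$- and $J$-indexed bases of $\on{Cont}(S,\mathbb{Z})$ and $\on{Cont}(T,\mathbb{Z})$, invoke the identity $\mathbb{S}[S]^\square\otimes^\square_{\mathbb{S}}\mathbb{S}[T]^\square\simeq\mathbb{S}[S\times T]^\square$, and use the product basis of $\on{Cont}(S\times T,\mathbb{Z})=\on{Cont}(S,\mathbb{Z})\otimes_{\mathbb{Z}}\on{Cont}(T,\mathbb{Z})$ together with Theorem \ref{solidproperties}(2). You simply make explicit some points the paper leaves implicit (the choice $S=\mathbb{N}\cup\{\infty\}$, the K\"unneth identification via sequential limits, and the compatibility with the comparison map), which is fine.
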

\begin{proof}
Choose light profinite sets $S$ and $T$ with an $I$-indexed basis of $C(S,\mathbb{Z})$ and a $J$-indexed basis of $C(T,\mathbb{Z})$.  Then we get an $I\times J$-indexed basis of $C(S\times T;\mathbb{Z})=C(S;\mathbb{Z})\otimes_\mathbb{Z} C(T;\mathbb{Z})$.  On the other hand by Theorem \ref{solidproperties} this data induces $\mathbb{S}[S]^\square = \prod_I \mathbb{S}$ and $\mathbb{S}[T]^\square = \prod_J \mathbb{S}$ and $\mathbb{S}[S\times T]^{\square}=\prod_{I\times J}\mathbb{S}$.  Comparing we deduce the claim.
\end{proof}

More general solid tensor product calculations can in principle be reduced to this one, using the fact that the solid tensor product commutes with colimits in each variable (a fact which is obvious from the definition).  A very simple example is the following: on the full subcategory of $\on{CondSp}^{light}$ given by the discrete spectra (those in the essential image of the fully faithful functor $\delta^\ast:\on{Sp}\to\on{CondSp}^{light}$), which lies inside $\on{SolidSp}$ by \ref{solidexamples}, the solid tensor product, the light condensed tensor product, and the usual tensor product of spectra all agree (via obvious comparison maps).  Another example is the following, expressing a situation in which the solid tensor product behaves as a ``completed tensor product''.

\begin{lemma}\label{tensorlimit}
Suppose $(N_n)_{n\in\mathbb{N}}$ and $(M_m)_{m\in\mathbb{N}}$ are two towers of connective spectra, with the following property: for all $n\in\mathbb{N},m\in \mathbb{N}$, and $d\in\mathbb{N}$, we have that $\pi_d N_n$ and $\pi_d M_m$ are finitely generated as abelian groups.  View the inverse limits $N=\varprojlim_n N_n$ and $M=\varprojlim_m M_m$ as solid spectra, as in Example \ref{solidexamples}.  Then
$$N\otimes^\square_\mathbb{S} M \overset{\sim}{\longrightarrow} \varprojlim_m (N\otimes^{\square}_\mathbb{S} M_m)$$
(which then also identifies with $\varprojlim_{n,m} N_n\otimes^\square_{\mbb{S}} M_m$ by applying the same result again to the other variable.)
\end{lemma}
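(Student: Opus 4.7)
The plan is to identify both sides of the claimed identity with the double inverse limit
$$\varprojlim_{n,m} \delta^*(N_n \otimes_{\mathbb{S}} M_m)$$
formed in $\on{Sp}^\square$. Here each $N_n \otimes_{\mathbb{S}} M_m$ is connective with finitely generated homotopy groups in each degree, by the Künneth spectral sequence and Noetherianness of $\mathbb{Z}$; so $\delta^*(N_n \otimes M_m)$ is solid by Example~\ref{solidexamples}. Since $\delta^*$ is symmetric monoidal and the result is already solid, we have $\delta^* N_n \otimes^\square_{\mathbb{S}} \delta^* M_m = \delta^*(N_n \otimes_{\mathbb{S}} M_m)$, so the target double limit is well-posed.

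The core technical ingredient is the following key claim: for any connective spectrum $A$ with $\pi_d A$ finitely generated in each degree, and any tower $(Y_k)$ of uniformly connective solid spectra, the natural map
$$\delta^* A \otimes^\square_{\mathbb{S}} \varprojlim_k Y_k \longrightarrow \varprojlim_k \left(\delta^* A \otimes^\square_{\mathbb{S}} Y_k\right)$$
is an isomorphism. Granted this, the right-hand side $\varprojlim_m N \otimes^\square_{\mathbb{S}} M_m$ of the lemma is identified with the double limit by applying the key claim for each fixed $m$ with $A = M_m$ and the tower $(\delta^* N_n)$, then taking $\varprojlim_m$ and interchanging the two inverse limits. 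For the left-hand side, one cannot apply the key claim directly with ``$A = N$'', since $N$ is itself an inverse limit rather than $\delta^*$ of a single spectrum; instead one proves the symmetric two-tower strengthening that $(\varprojlim_n \delta^* N_n) \otimes^\square_{\mathbb{S}} (\varprojlim_m \delta^* M_m) = \varprojlim_{n,m} \delta^* N_n \otimes^\square_{\mathbb{S}} \delta^* M_m$ by running the same cellular approximation argument simultaneously on both towers.

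To prove the key claim, I would cellularly approximate $A$: since $A$ is connective with finitely generated $\pi_d A$ and $\mathbb{Z}$ is Noetherian, $A$ admits a presentation as a filtered colimit $A = \varinjlim_l A^{[l]}$ of finite cell spectra $A^{[l]}$ with $A^{[l]} \to A$ being $l$-connective. Since $A^{[l]}$ is compact in $\on{Sp}$, the object $\delta^* A^{[l]}$ is compact in $\on{CondSp}^{light}$, so $\delta^* A^{[l]} \otimes^\square_{\mathbb{S}} (-)$ commutes with all inverse limits. The $l$-connectivity estimate, combined with the uniform connectivity of $(Y_k)$, implies that in any fixed degree $d$ both sides of the key claim agree with the corresponding expressions for $A^{[l]}$ whenever $l \gg d$. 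The Postnikov completeness of $\on{Sp}^\square$---which follows from the repleteness of $\on{CondAn}^{light}$ via Lemma~\ref{replete}---then upgrades agreement of homotopy groups in all degrees into the desired isomorphism of solid spectra.

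The main obstacle will be careful bookkeeping around the interactions between inverse limits, Postnikov truncations, and the filtered cellular approximations. In particular, the argument requires a Milnor-type description of $\pi_d$ of inverse limits of uniformly connective solid spectra in terms of termwise inverse limits of homotopy groups as light condensed abelian groups, together with a controlled behavior of $\lim^1$-terms, so that the connectivity estimates really do force stabilization after tensoring with $\delta^* A^{[l]}$. These facts follow from Postnikov completeness of $\on{Sp}^\square$ together with the exactness of countable products in $\on{CondAn}^{light}$, itself another consequence of repleteness.
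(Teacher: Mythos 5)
There is a genuine gap, and it sits exactly where the content of the lemma lies: the ``symmetric two-tower strengthening'' $(\varprojlim_n \delta^\ast N_n)\otimes^\square_{\mbb{S}}(\varprojlim_m \delta^\ast M_m)\simeq \varprojlim_{n,m}\delta^\ast N_n\otimes^\square_{\mbb{S}}\delta^\ast M_m$. Your key claim (one discrete factor $\delta^\ast A$ against a tower of uniformly connective solid spectra) does go through by the skeletal argument, and it gives the identification of the right-hand side with the double limit. But given that, the left-hand identification is literally equivalent to the lemma you are trying to prove, and asserting it can be obtained ``by running the same cellular approximation argument simultaneously on both towers'' does not work: after replacing both towers by towers of finite skeleta, the factor $\varprojlim_n\delta^\ast N_n^{[l]}$ is an inverse limit of dualizable objects, not dualizable, so the step ``tensoring with it commutes with $\varprojlim_m$'' is again the original problem, not a consequence of dualizability plus connectivity. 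The decisive point is that your mechanism uses only formal properties (dualizability of finite spectra, exact countable products, Postnikov completeness) that hold equally well in plain light condensed spectra, where the two-tower statement is \emph{false}: taking $N_n=M_m=\bigoplus_{i\leq n}\mbb{S}$ with projection transition maps, it specializes to $\prod_I\mbb{S}\otimes\prod_J\mbb{S}\simeq\prod_{I\times J}\mbb{S}$, which fails before solidification and is exactly Lemma \ref{tensorproduct} after solidification. Any correct proof must therefore invoke the basic solid computation (Lemma \ref{tensorproduct}, equivalently the structure of free solid spectra from Theorem \ref{solidproperties}), which your proposal never does.

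Two smaller remarks. First, ``$\delta^\ast A^{[l]}$ is compact, so $\delta^\ast A^{[l]}\otimes^\square(-)$ commutes with all inverse limits'' is a non sequitur: compactness concerns $\on{map}(X,-)$ and filtered colimits; what you want is that $\delta^\ast A^{[l]}$ is \emph{dualizable} (true, since $\delta^\ast$ and solidification are symmetric monoidal and finite spectra are dualizable), which does make tensoring with it limit-preserving. Second, for comparison, the paper's proof avoids the two-variable difficulty differently: it uses the ``shift minus identity'' fiber sequences $N\to\prod_nN_n\to\prod_nN_n$ and $M\to\prod_mM_m\to\prod_mM_m$ to reduce to the product statement $\prod_nN_n\otimes^\square\prod_mM_m\simeq\prod_{n,m}N_n\otimes M_m$, then resolves each $N_n,M_m$ in $\on{Sp}_{\geq 0}$ by finite products of copies of $\mbb{S}$, uses repleteness (exactness of countable products) to see that a product of resolutions is a resolution, and finally reduces to Lemma \ref{tensorproduct}. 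If you want to salvage your outline, you must feed that same input into the two-tower step, e.g.\ by reducing it (via your skeletal approximations and a Milnor argument) to towers of finite products of spheres and then quoting Lemma \ref{tensorproduct} -- at which point you have essentially reproduced the paper's proof.
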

\begin{proof}
Let $N' = \prod_n N_n$ and $M'=\prod_m M_m$.  Using the ``shift minus identity'' fiber sequences $N \to N'\to N'$ and $M\to M'\to M'$ we reduce to showing that
$$N'\otimes^\square_\mathbb{S} M' \overset{\sim}{\rightarrow} \prod_{n,m} N_n\otimes_\mathbb{S}M_m.$$
Now, using the hypothesis on the homotopy groups, each $N_n$ and $M_m$ admits a resolution, in $\on{Sp}_{\geq 0}$, where each term is a finite product of copies of $\mathbb{S}$.  By \ref{replete} a countable product of resolutions is a resolution in $\on{CondSp}^{light}_{\geq 0}$.  In this way we reduce to the case where each $N_n$ and $M_m$ is a finite product of copies of $\mathbb{S}$.  Then by reindexing the claim reduces to Lemma \ref{tensorproduct}.
\end{proof}

\begin{example}\label{tensorexamples}
\begin{enumerate}
\item The $p$-complete sphere $\mathbb{S}_{\widehat{p}}$ in $\on{CondSp}^{light}$ is idempotent with respect to the solid tensor product:
$$\mathbb{S}_{\widehat{p}}\overset{\sim}{\rightarrow}\mathbb{S}_{\widehat{p}}\otimes^\square_\mathbb{S}\mathbb{S}_{\widehat{p}}.$$
Indeed, Lemma \ref{tensorlimit} implies that $\mathbb{S}_{\widehat{p}}\otimes^\square_\mathbb{S}\mathbb{S}_{\widehat{p}}$ is $p$-complete, so we can test mod $p$, when we reduce to $\mathbb{S}/p\overset{\sim}{\rightarrow}\mathbb{S}/p$.
In particular, we deduce that $\on{Mod}_{\mathbb{S}_{\widehat{p}}}(\on{SolidSp})$ is a full subcategory of $\on{SolidSp}$ via the forgetful functor.  It contains, but is much bigger than, the category of $p$-complete solid spectra: it is closed under arbitrary (not $p$-completed) colimits inside $\on{SolidSp}$.
\item If $X$ is a connective spectrum each of whose homotopy groups is finitely generated, or a countable inverse limit of such spectra in $\on{SolidSp}$, then for any light profinite set $S=\varprojlim_n S_n$, we have
$$X\otimes^\square_\mathbb{S} \mathbb{S}[S]^\square\overset{\sim}{\rightarrow}\varprojlim_n X[S_n],$$
and if $I$ is a countable set, then
$$X\otimes^\square_\mathbb{S}\prod_I\mathbb{S}\overset{\sim}{\rightarrow} \prod_I X.$$
\end{enumerate}
\end{example}

Only slightly more involved is the following example, which will be used later.

\begin{proposition}\label{tensorwithe}
Recall the light condensed Morava E-theory spectrum $E_n:=e^\ast \mathbb{E}_n$ from Deinition \ref{lighte}, which is solid by \ref{solidexamples}.  If $S=\varprojlim_m S_m$ is a light profinite set, we have
$$E_n\otimes^\square_\mathbb{S} \mathbb{S}[S]^\square\overset{\sim}{\rightarrow}\varprojlim_m E_n[S_m],$$
and if $I$ is a countable set, then
$$E_n\otimes^\square_\mathbb{S}\prod_I\mathbb{S}\overset{\sim}{\rightarrow} \prod_I E_n.$$
\end{proposition}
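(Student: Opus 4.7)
The plan is to combine the Whitehead filtration $E_n = \varinjlim_N \tau_{\geq -N} E_n$ with the $(p, v_1, \ldots, v_{n-1})$-adic presentation $E_n = \varprojlim_k E_n^{(k)}$, where $E_n^{(k)} := E_n/(p, v_1, \ldots, v_{n-1})^k$, in order to reduce the two claims to direct applications of Lemma \ref{tensorlimit}. By Lemma \ref{eisetale} each $E_n^{(k)}$ is discrete as a solid spectrum, and each $\pi_d E_n^{(k)}$ is a \emph{finite} abelian group: this uses the even periodicity of $E_n$, the finite Artinian local nature of $\pi_0 E_n^{(k)} = W(\mathbb{F}_{p^n})[[u_1,\ldots,u_{n-1}]]/(p, u_1, \ldots, u_{n-1})^k$, and the fact that each $\pi_{2i} E_n$ is free of rank one over $\pi_0 E_n$.

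I would first note that the second claim follows from the first. Given a countable $I$, choose $S = \varprojlim_m S_m$ together with a $\mathbb{Z}$-basis $(e_i)_{i \in I}$ of $C(S;\mathbb{Z})$, so that by Theorem \ref{solidproperties}(2) one has $\mathbb{S}[S]^\square \simeq \prod_I \mathbb{S}$. What remains, given the first claim, is to identify $\varprojlim_m E_n[S_m]$ with $\prod_I E_n$ via the $E_n$-analogue of Theorem \ref{solidproperties}(2). I would verify this on homotopy groups, where both sides have $\pi_d$ equal to $\on{Cont}(S; \pi_d E_n)$; the key ingredients are the profiniteness of $\pi_d E_n = \varprojlim_k \pi_d E_n^{(k)}$ and the vanishing of $\varprojlim^1$ for towers of profinite abelian groups with surjective transition maps.

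For the first claim, the observation is that for each fixed $N$, the truncation $\tau_{\geq -N} E_n = \varprojlim_k \tau_{\geq -N} E_n^{(k)}$ (using that $\tau_{\geq -N}$ preserves limits, being a right adjoint) becomes, after the shift $\Sigma^N$, a countable inverse limit of connective spectra with finite --- hence finitely generated --- homotopy groups in each degree. Lemma \ref{tensorlimit} then applies to the shifted tower; combined with an outer $\Sigma^{-N}$ this yields
\[
\tau_{\geq -N} E_n \otimes^\square_{\mathbb{S}} \mathbb{S}[S]^\square \;\simeq\; \varprojlim_m \tau_{\geq -N} E_n[S_m].
\]
Since $\otimes^\square_{\mathbb{S}}$ preserves colimits, taking $\varinjlim_N$ on the left gives $E_n \otimes^\square_{\mathbb{S}} \mathbb{S}[S]^\square$, and the desired identity becomes the commutation
\[
\varinjlim_N \varprojlim_m \tau_{\geq -N} E_n[S_m] \;\simeq\; \varprojlim_m E_n[S_m]
\]
of a filtered colimit with a sequential inverse limit.

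Establishing this commutation is the main technical obstacle, and I expect it to be the heart of the argument. The key point is that in any fixed homotopy degree $d$ the tower $(\tau_{\geq -N} E_n[S_m])_N$ is eventually constant (once $N \geq |d|+1$), so both terms of the Milnor short exact sequence computing $\pi_d$ of the inverse limit over $m$ stabilize as $N$ grows, and the filtered colimit over $N$ can be commuted past $\varprojlim_m$. The required $\varprojlim^1$-vanishing follows from profiniteness of $\pi_d E_n$: the towers $(\bigoplus_{S_m} \pi_d E_n)_m$ have surjective transition maps, and $\varprojlim^1$ of such a tower of profinite abelian groups is zero. Once the commutation is established, both sides of the main identity have the same homotopy groups via the canonical comparison map, and since isomorphism of solid spectra is detected on homotopy groups, this completes the proof.
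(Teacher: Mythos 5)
Your plan is sound and does prove the proposition, but it runs in a genuinely different direction from the paper. The paper reduces the first (inverse-limit) statement to the second (product) statement via the Milnor sequence, and then proves the product statement by passing to the connective cover $e_n=\tau_{\geq 0}E_n$: this is a countable inverse limit of discrete connective spectra with finite homotopy (the $(p,u_1,\ldots,u_{n-1})$-adic tower), so Lemma \ref{tensorlimit}/Example \ref{tensorexamples} applies on the nose, and the non-connectivity of $E_n$ is then absorbed by inverting the periodicity class $\sigma$, using that inverting $\sigma$ on $\prod_I e_n$ stabilizes degreewise to $\prod_I E_n$. You instead attack the first statement directly, replacing the single $\sigma$-localization by the Whitehead filtration $E_n=\varinjlim_N\tau_{\geq-N}E_n$, applying Lemma \ref{tensorlimit} to each $\Sigma^N\tau_{\geq-N}E_n=\varprojlim_k\Sigma^N\tau_{\geq-N}E_n^{(k)}$, and then commuting $\varinjlim_N$ past $\varprojlim_m$ by the degreewise stabilization of the Postnikov tower plus the Milnor sequence. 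That interchange argument is correct (in fixed degree both Milnor terms are independent of $N$ for $N\gg|d|$), and your route has the mild advantage of not using $2$-periodicity, at the cost of juggling two limit directions where the paper only needs one colimit; the underlying engine (Lemma \ref{tensorlimit} plus degreewise stabilization against countable limits) is the same in both arguments.

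One slip to fix, in your reduction of the second claim to the first: the homotopy groups of $\varprojlim_m E_n[S_m]$ (and of $\prod_I E_n$) are \emph{not} $\on{Cont}(S;\pi_d E_n)$. The transition maps in the tower $(\bigoplus_{S_m}\pi_dE_n)_m$ are sum-over-fibers, so the limit is the module of $\pi_dE_n$-valued ``measures'' $\on{Hom}(\on{Cont}(S,\mathbb{Z}),\pi_dE_n)\cong\prod_I\pi_dE_n$, which for profinite $\pi_dE_n$ is genuinely different from $\on{Cont}(S;\pi_dE_n)$ (compare $\prod_{\mathbb{N}}\mathbb{Z}_p$ with $\on{Cont}(\mathbb{Z}_p,\mathbb{Z}_p)$). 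Your own $\varprojlim^1$ argument in fact computes this correct answer, which agrees with $\pi_d\prod_IE_n$, so the identification $\varprojlim_mE_n[S_m]\simeq\prod_IE_n$ (checked against the natural comparison map coming from the chosen basis, which you should spell out) still goes through and the reduction is salvageable; but as stated the identification of the $E_2$-data is wrong and should be corrected.
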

\begin{proof}
It suffices to prove the second statement (e.g.\ by the Milnor sequence).  Let $e_n = \tau_{\geq 0}E_n$.  Then $e_n$ is $p,u_1,\ldots,u_{n-1}$-complete (where $\pi_0E_n \simeq W(\mathbb{F}_{p^n})[[u_1,\ldots,u_{n-1}]]$) and $e_n/p,u_1,\ldots,u_{n-1}$ is discrete with finite homotopy.  Then we can apply \ref{tensorproduct} to deduce $e_n\otimes^\square_\mathbb{S}\prod_I\mathbb{S}=\prod_Ie_n$.  We have $E_n=e_n[\sigma^{-1}]$, so $E_n\otimes^\square(-) = (e_n\otimes^\square(-))[\sigma^{-1}]$.  On the other hand we also have $\prod_I E_n = (\prod_I e_n)[\sigma^{-1}]$ because inverting $\sigma$ just periodizes the pre-periodic homotopy of $\prod_I e_n$.  Comparing we conclude.\end{proof}

Now, in this paper we have been studying etale sheaves of spectra on $\ast/G$ for various light condensed groups $G$.  By definition, this $\infty$-category $\on{Sh}_{et}(\ast/G;\on{Sp})$ is a full subcategory of $\on{Sh}_v(\ast/G;\on{Sp})$, or in other words, the $\infty$-category of spectrum objects in the $\infty$-topos $\on{CondAn}^{light}_{/(\ast/G)}$.

Our goal in this section is to study an $\infty$-category which is intermediate between $\on{Sh}_{et}(\ast/G;\on{Sp})$ and $\on{Sh}_{v}(\ast/G;\on{Sp})$, and to which the duality of Section \ref{dualsec} extends.

\begin{definition}\label{solidbg}
For a light condensed group $G$, define
$$\on{Sh}_\square(\ast/G;\on{Sp})\subset \on{Sh}_{v}(\ast/G;\on{Sp})$$
to be the full subcategory of those $v$-sheaves of spectra on $\ast/G$ such that on pullback along $\ast\to \ast/G$, the resulting light condensed spectrum is solid.
\end{definition}

In other words, this is the $\infty$-category of solid spectra equipped with a $G$-action.

\begin{remark}
A similar definition works with $\ast/G$ replaced by any light condensed anima which admits a surjection from a discrete set.  Using the relative solid theory of Fargues-Scholze \cite{FarguesGeometrization} Ch.\ VII, one can even extend to arbitrary light condensed anima.  We will not pursue this here.
\end{remark}

\begin{remark}\label{moduleperspective}
The inclusion $\on{Sh}_\square(\ast/G;\on{Sp})\subset \on{Sh}_{v}(\ast/G;\on{Sp})$ has a left adjoint, again called solidification, such that pullback along $\ast\to\ast/G$ commutes with solidification.  To see this, and for some other purposes, it can be convenient to take a different perspective.  Note that $\on{Sh}_v(\ast/G;\on{An})$ identifies, by descent, with the $\infty$-category $\on{Mod}_G(\on{CondAn}^{light})$, viewing $G$ as an associative monoid object in $\on{CondAn}^{light}$ with cartesian product symmetric monoidal structure.  Passing to spectrum objects, we deduce that
$$\on{Sh}_v(\ast/G;\on{Sp})=\on{Mod}_{\mathbb{S}[G]}(\on{CondSp}^{light}).$$
In these terms, Definition \ref{solidbg} translates into saying that an $\mathbb{S}[G]$-module is solid iff its underlying light condensed spectrum is solid, and the fact that solidification of $\mathbb{S}[G]$-modules commutes with passing to the underlying light condensed spectrum follows formally from the fact that solidification of light condensed spectra is a symmetric monoidal localization.  In particular,
$$\on{Sh}_\solid(\ast/G;\on{Sp}) = \on{Mod}_{\mathbb{S}[G]^\solid}(\on{SolidSp}).$$
Let us take the convention that we use \emph{left} modules unless otherwise indicated.
\end{remark}

\begin{remark}
We will more precisely be interested in $p$-adic solid coefficients.  We take this in the broadest possible sense: that of modules over the solid ring $\mathbb{S}_{\widehat{p}}$.  Recall from Example \ref{tensorexamples} that this is idempotent, so if $\mathcal{C}$ is any $\on{SolidSp}$-linear presentable $\infty$-category (such as $\on{Sh}_\solid(\ast/G;\on{Sp})$) then $\mathbb{S}_{\widehat{p}}$-modules in $\mathcal{C}$ forms a full subcategory of $\mathcal{C}$ via the forgetful functor.
\end{remark}

Before proving that duality extends to the solid category, we need some lemmas.  The first concerns some general module theory in the light condensed setting.  Suppose $R$ is a light condensed ring, or more precisely an object in $\on{Alg}(\on{CondSp}^{light})$.  Then $R(\ast)$ is a usual ring spectrum (an object in $\on{Alg}(\on{Sp})$), and there is a functor
$$\on{Mod}_{R(\ast)}(\on{Sp}) \to \on{Mod}_R(\on{CondSp}^{light})$$
sending a (left) $R(\ast)$-module $M$ to the $R$-module
$$R\otimes_{R(\ast)}M.$$
This is the left adjoint to the obvious functor $N\mapsto N(\ast)$ in the other direction.

It is a basic fact that this functor $\on{Mod}_{R(\ast)} \to \on{Mod}_R(\on{CondSp}^{light})$ is fully faithful.  Indeed, its right adjoint preserves colimits (as every cover of $\ast$ is split); but the unit of the adjunction is an isomorphism on $R(\ast)$ by construction, hence is an isomorphism in general.  In particular, we can view $\on{Perf}(R(\ast))$ as a full subcategory of $\on{Mod}_R(\on{CondSp}^{light})$.  A translation is the following.

\begin{definition}
For a light condensed ring $R$, say that a (left) $R$-module $M\in\on{Mod}_R(\on{CondSp}^{light})$ is \emph{perfect} if it lies in the thick subcategory generated by the free rank one $R$-module $R$.  Denote this full subcategory by
$\on{Perf}(R)\subset \on{Mod}_R(\on{CondSp}^{light})$.
\end{definition}

Thus, by the above remarks, $\on{Perf}(R)$ identifies with the usual $\infty$-category of perfect $R(\ast)$-modules via $M\mapsto M(\ast)$.

Now, here is the first lemma.

\begin{lemma}\label{checkperfmodp}
Let $R$ be a connective $p$-complete light condensed ring spectrum, and $M$ a connective $p$-complete left $R$-module.  Suppose $\mathbb{F}_p \otimes_\mathbb{S} M \in \on{Perf}(\mathbb{F}_p\otimes_\mathbb{S}R)$.  Then $M\in \on{Perf}(R)$.
\end{lemma}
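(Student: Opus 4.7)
My plan is to reduce the lemma to a purely non-condensed statement via the identification $\on{Perf}(R) \simeq \on{Perf}(R(\ast))$ from the preceding discussion, realized by the fully faithful embedding $M' \mapsto R \otimes_{R(\ast)} M'$ with right adjoint $M \mapsto M(\ast)$. Concretely, $M$ lies in $\on{Perf}(R)$ if and only if $M(\ast) \in \on{Perf}(R(\ast))$ \emph{and} the counit $c: R\otimes_{R(\ast)} M(\ast) \to M$ is an equivalence. Granted the former, equivalence of $c$ can be checked after $\otimes_\mathbb{S}\mathbb{F}_p$: the source becomes $(\mathbb{F}_p\otimes R)\otimes_{(\mathbb{F}_p\otimes R)(\ast)}(\mathbb{F}_p\otimes M)(\ast)$, which by the analogous identification applied to $\mathbb{F}_p\otimes R$, combined with the hypothesis $\mathbb{F}_p\otimes M\in\on{Perf}(\mathbb{F}_p\otimes R)$, identifies with $\mathbb{F}_p\otimes M$.

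Hence it remains to prove the non-condensed statement: if $A$ is a connective $p$-complete $E_1$-ring and $N$ is a connective $p$-complete $A$-module with $N/p \in \on{Perf}(A/p)$, then $N \in \on{Perf}(A)$. I plan to build a finite free resolution of $N$ inductively. First, since $N/p$ is perfect and connective over the connective ring $A/p$, $\pi_0(N/p) = \pi_0 N / p$ is finitely generated over $\pi_0(A/p)$. Lifting a set of generators gives a map $\alpha_0: A^{r_0}\to N$; its cofiber $C$ is connective and $p$-complete, so $\pi_0 C$ is classically $p$-complete (being the inverse limit of its surjective $p^n$-quotients), and has $\pi_0 C/p = 0$, hence $\pi_0 C = 0$. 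Thus $\alpha_0$ is surjective on $\pi_0$, and its fiber $K_0$ is again connective and $p$-complete, with $\mathbb{F}_p \otimes K_0$ perfect over $\mathbb{F}_p \otimes A$ via the fiber sequence $K_0/p \to (A/p)^{r_0} \to N/p$ and closure of perfects under fibers. Iterating produces a tower of finite free cells mapping to $N$ whose mod-$p$ reduction is a cell decomposition of $\mathbb{F}_p \otimes N$ as a perfect $\mathbb{F}_p\otimes A$-module.

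To show the tower terminates, I will combine two ingredients: (a) connective perfect modules over a connective ring have finite Tor-amplitude, and (b) Tor-amplitude can be detected mod $p$ for connective $p$-complete modules, i.e., if $X$ is connective $p$-complete with $X/p$ of Tor-amplitude $\leq b$ over $A/p$, then $X$ has Tor-amplitude $\leq b$ over $A$, because tensoring against a discrete $\pi_0 A$-module reduces, via filtration by $p$-torsion and passage to the $p$-adic limit, to tensoring against $\pi_0 A/p$-modules. The main obstacle is precisely this interlocking of two mod-$p$ detection principles: the ``$p$-complete Nakayama'' that drives the inductive cell attachment (yielding almost perfectness), and the mod-$p$ descent of Tor-amplitude (yielding finite-cell termination). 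Once both are in place the argument proceeds cell-by-cell in parallel with the classical proof over a Noetherian local ring and produces the desired perfect resolution of $N$.
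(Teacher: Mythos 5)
Your overall architecture (reduce to the non-condensed statement via $M(\ast)\otimes_{R(\ast)}R\to M$ and a mod-$p$ check, then run a cell-attachment induction driven by $p$-complete Nakayama) matches the paper's up to the point where the induction has to \emph{terminate}, and that is where your proposal has a genuine gap. Your termination rests on ingredient (b): that for a connective $p$-complete $A$-module $X$, Tor-amplitude over $A$ is detected by Tor-amplitude of the mod-$p$ reduction. The justification you offer --- ``tensoring against a discrete $\pi_0A$-module reduces, via filtration by $p$-torsion and passage to the $p$-adic limit, to tensoring against $\pi_0A/p$-modules'' --- does not work: an arbitrary discrete $\pi_0A$-module (e.g.\ one on which $p$ acts invertibly) has no exhaustive filtration with $p$-torsion graded pieces, and the tensor product does not commute with the inverse limits you would need for ``passage to the $p$-adic limit.'' This is not a citable formality: the content of such a detection statement, already in the Tor-amplitude-$0$ case (``$X/p$ finite projective $\Rightarrow$ $X$ flat/projective''), is essentially the deformation-theoretic step that the paper isolates as its base case and proves by lifting idempotent $d\times d$ matrices along the square-zero extensions $(\pi_0R)/p^{n+1}\to(\pi_0R)/p^n$ and $\pi_0R\to\varprojlim_n(\pi_0R)/p^n$, using $p$-completeness of $\pi_0R$. (Note also that even granting (b), ``flat plus a $\pi_0$-surjection from a finite free'' does not give projectivity over a general ring; what would rescue you is that your cell tower in fact shows $N$ is \emph{almost perfect}, so that ``almost perfect $+$ finite Tor-amplitude $=$ perfect'' applies --- but then all the weight falls back on (b), which remains unproved.) The paper sidesteps (b) entirely: it never asserts anything about Tor-amplitude of $M$ over $R$, only inducts on the Tor-amplitude of the reduction $M_0$ over the discrete ring $(\pi_0R)/p$ and closes with the idempotent-lifting argument.

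Two smaller points. First, your non-condensed statement is phrased with ``$N/p\in\on{Perf}(A/p)$'', but $A/p$ carries no ring structure for a general $E_1$-ring $A$ (e.g.\ $\mathbb{S}/2$ admits no unital multiplication), and the passage from the actual hypothesis ($\mathbb{F}_p\otimes_{\mathbb{S}}M$ perfect over $\mathbb{F}_p\otimes_{\mathbb{S}}R$) to your $N/p$-statement is not justified; work instead with $\mathbb{F}_p\otimes_{\mathbb{S}}A$ or, as the paper does, push further to the discrete ring $(\pi_0A)/p$ via $\mathbb{F}_p\otimes_{\mathbb{S}}A\to(\pi_0A)/p$. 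Second, your Nakayama step is right in substance but wrong in justification: $\pi_0C$ of a connective $p$-complete module need \emph{not} be classically $p$-complete (it can contain $p$-divisible elements, as for the derived $p$-completion of $\bigoplus_n\mathbb{Z}/p^n$); what is true, and suffices, is that $\pi_0C$ is derived $p$-complete, so $\pi_0C/p=0$ forces $\pi_0C=0$ by derived Nakayama --- which is exactly the form in which the paper invokes it.
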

\begin{proof}
Consider the natural map
$$M(\ast)\otimes_{R(\ast)} R\rightarrow M.$$
Applying $\mathbb{F}_p\otimes_\mathbb{S}-$, we get the analogous map for the $\mathbb{F}_p\otimes_{\mathbb{S}}R$-module $\mathbb{F}_p\otimes_{\mathbb{S}}M$, an isomorphism by hypothesis.  Since both sides are connective, it follows that our map exhibits the target as the $p$-completion of the source.

Thus we reduce to proving the analogous lemma in the setting of usual (not condensed) ring spectra and modules.  In fact, using the existence of homomorphism $\mathbb{F}_p\otimes_\mathbb{S}R \to (\pi_0R)/p$, it suffices to show the following: if $R$ is a $p$-complete connective ring spectrum and $M$ is a $p$-complete connective $R$-module such that $M_0 = R_0\otimes_{R}M$ is a perfect $R_0=(\pi_0R)/p$-module, then $M\in\on{Perf}(R)$.  In the case of $E_\infty$-rings this is a special case of \cite{LurieHA} Prop.\ 8.3.5.5; let us now give the analogous argument in the associative ring case.

We proceed by induction on the Tor-amplitude of the perfect $R_0$-module $M_0$.  Recall that this is the smallest value $b-a$ such that for all right $R_0$-modules $N$ in degree $0$, the tensor product $N\otimes_{R_0} M_0$ lives in degrees $[a,b]$.  By shifting we can assume the range of degrees is $[0,b]$, and then the Tor-amplitude is $b$.

First assume the Tor-amplitude $b$ is $>0$.  Taking $N=R_0$ we deduce that $M_0$ lives in degrees $[0,b]$.  Being a perfect complex, its lowest nonzero homotopy group is finitely presented.  In particular we can choose an $R_0$-module map $f_0:R_0^d \to M_0$ which is surjective on $\pi_0$.  Lift $f_0$ arbitrarily to an $R$-module map $f:R^d \to M$ and let $M'=\on{Fib}(R^d\to M)$.  Since $M$ is $p$-complete, by derived Nakayama we have that $f$ is surjective on $\pi_0$, hence $M'$ is connective.  It is also $p$-complete as $R$ and $M$ are.  Moreover $M'_0 = \on{Fib}(R_0^d\to M_0)$ has Tor-amplitude $b-1$ by the long exact sequence in Tor, so we can conclude by induction.

It remains to handle the case where $M_0$ is of Tor amplitude $0$.  Being in particular a perfect complex concentrated in degree $0$, we have that $M_0$ is a finitely presented $R_0$-module.  By the Tor amplitude condition, it is also flat.  Therefore it is finitely generated projective.  Choose $\alpha_0:R_0^d\to M_0$ and $\beta_0:M_0\to R_0^d$ such that $\alpha_0\circ \beta_0 = \on{id}_{M_0}$, and let $e_0=\beta_0 \circ \alpha_0$, so that $e_0$ is an idempotent $d\times d$ matrix over $R_0$.

It is elementary to see that idempotent $d\times d$ matrices always lift along square-zero extensions (of associative rings).  Thus we can lift $e_0$ to $(\pi_0R)/p^2$, then to $(\pi_0R)/p^3$, etc., and then in the limit to an idempotent $d\times d$ matrix over $\varprojlim_n (\pi_0R)/p^n$.  As $R$ is (derived) $p$-complete, the map $\pi_0R \to \varprojlim_n (\pi_0R)/p^n$ is another example of a square zero extension, and it follows that we can lift $e_0$ to an idempotent $n\times n$ matrix over $\pi_0R$.  Thus, we can lift $e_0$ to an $R$-linear map $e:R^d\to R^d$ which is idempotent up to homotopy: $e^2\sim e$.  This is enough to guarantee that the map
$$R^d \to e^{-1}R^d \oplus (1-e)^{-1}R^d$$
is an isomorphism.  To finish the proof, it suffices to show that $M$ is isomorphic to $e^{-1}R^d$.

As in the inductive argument above, we can lift $\alpha_0:R_0^d\to M_0$ to an $R$-module map $\alpha:R^d\to M$.  Replacing $\alpha$ by $\alpha\circ e$, we can assume that $\alpha\circ e \sim \alpha$.  This allows us to make a map $e^{-1}R^d\to M$ which reduces to the given isomorphism $e_0^{-1}R_0^d\to M_0$ when we apply $\mathbb{F}_p\otimes_\mathbb{S}-$.  As both $e^{-1}R^d$ and $M$ are connective and $p$-complete, this implies that the map $e^{-1}R^d\to M$ is an isomorphism, as required.
\end{proof}

\begin{example}\label{perfexample}
Suppose $G$ is a light profinite group such that the augmentation module $\mathbb{F}_p$ is a perfect $\mathbb{F}_p[[G]]$-module, where $\mathbb{F}_p[[G]]=\varprojlim_N \mathbb{F}_p[G/N]$ denotes the usual completed group ring.  (Recall from \cite{LazardGroupes} that every compact $p$-adic Lie group has an open normal subgroup satisfying this condition.)
Then $\mathbb{S}_{\wh{p}}\in\on{Perf}(\mathbb{S}_{\wh{p}}[G]^\solid)$.  Indeed, by the lemma it suffices to show that $\mathbb{F}_p\in\on{Perf}(\mathbb{F}_p[G]^\solid)$.  By hypothesis $\mathbb{F}_p$ admits a finite resolution by finitely generated projective $\mathbb{F}_p[[G]]$-modules.  But by definition $\mathbb{F}_p[G]^\solid$ is exactly $\mathbb{F}_p[[G]]$ equipped with its natural light profinite structure.  Every finitely generated projective $\mathbb{F}_p[[G]]$-module gets its induced functorial light profinite topology, and a surjection of light profinite sets is surjective in solid modules by definition, so it follows that the same resolution shows $\mathbb{F}_p\in\on{Perf}(\mathbb{F}_p[G]^\solid)$, as desired.
\end{example}

Duality concerns the study of the functor $\pi_\ast:\on{Sh}_\square(\ast/G;\mathbb{S}_{\widehat{p}})\to \on{Sh}_\square(\ast;\mathbb{S}_{\widehat{p}})$, which is by definition right adjoint to the pullback $\pi^\ast$.  In terms of the identification $\on{Sh}_\solid(\ast/G;\mbb{S}_{\widehat{p}}) = \on{Mod}_{\mbb{S}_{\widehat{p}}[G]^\solid}(\on{SolidSp})$ of Remark \ref{moduleperspective}, $\pi^\ast$ corresponds to the restriction of scalars functor associated to the augmentation map $\mathbb{S}_{\widehat{p}}[G]^\solid\to \mathbb{S}_{\widehat{p}}$.  It follows that $\pi_\ast$ corresponds to the solid internal hom out of $\mathbb{S}_{\widehat{p}}$ in $\mbb{S}_{\wh{p}}[G]^\solid$-modules:

$$\pi_\ast M = \on{map}_{\mbb{S}_{\wh{p}}[G]^\solid}(\mbb{S}_{\wh{p}},M).$$

As usual, discussion of duality starts with identifying situations where $\pi_\ast$ commutes with colimits and satisfies the projection formula.

\begin{proposition}\label{smallgood}
Let $p$ be a prime.  Suppose $G$ is a light profinite group such that $\mathbb{F}_p\in\on{Perf}(\mathbb{F}_p[[G]])$.  Then
$$\pi_\ast: \on{Sh}_\solid(\ast/G;\mbb{S}_{\wh{p}})\to\on{Sh}_\solid(\ast;\mbb{S}_{\wh{p}})$$
preserves colimits and satisfies the projection formula.
\end{proposition}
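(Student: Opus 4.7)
The plan is to translate the statement into the algebra of $\mbb{S}_{\wh{p}}[G]^\solid$-modules via Remark \ref{moduleperspective}. Under the identification $\on{Sh}_\solid(\ast/G;\mbb{S}_{\wh{p}}) = \on{Mod}_{\mbb{S}_{\wh{p}}[G]^\solid}(\on{SolidSp})$, the pullback $\pi^\ast$ becomes restriction of scalars along the augmentation $\epsilon:\mbb{S}_{\wh{p}}[G]^\solid \to \mbb{S}_{\wh{p}}$, so its right adjoint is
$$\pi_\ast M = \on{map}_{\mbb{S}_{\wh{p}}[G]^\solid}(\mbb{S}_{\wh{p}}, M),$$
where $\mbb{S}_{\wh{p}}$ carries the left $\mbb{S}_{\wh{p}}[G]^\solid$-module structure coming from $\epsilon$. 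Both claims will then follow from a single algebraic input: the perfectness of the augmentation module.

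The key observation is that Example \ref{perfexample} directly supplies this input. Under the standing hypothesis $\mathbb{F}_p\in\on{Perf}(\mathbb{F}_p[[G]])$, it gives $\mbb{S}_{\wh{p}}\in\on{Perf}(\mbb{S}_{\wh{p}}[G]^\solid)$, i.e.\ $\mbb{S}_{\wh{p}}$ is a perfect left $\mbb{S}_{\wh{p}}[G]^\solid$-module. Being perfect means being a retract of a finite iterated extension of copies of the free module $\mbb{S}_{\wh{p}}[G]^\solid$, whence it is both compact and dualizable in the module category. Compactness gives at once that $\on{map}_{\mbb{S}_{\wh{p}}[G]^\solid}(\mbb{S}_{\wh{p}}, -)$ preserves colimits, so $\pi_\ast$ preserves colimits.

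For the projection formula, invoke dualizability. Set $\mbb{S}_{\wh{p}}^\vee := \on{map}_{\mbb{S}_{\wh{p}}[G]^\solid}(\mbb{S}_{\wh{p}}, \mbb{S}_{\wh{p}}[G]^\solid)$, a right $\mbb{S}_{\wh{p}}[G]^\solid$-module; because the right $\mbb{S}_{\wh{p}}$-action on $\mbb{S}_{\wh{p}}$ commutes with the left $\mbb{S}_{\wh{p}}[G]^\solid$-action, $\mbb{S}_{\wh{p}}^\vee$ is naturally a $(\mbb{S}_{\wh{p}}, \mbb{S}_{\wh{p}}[G]^\solid)$-bimodule. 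Dualizability then furnishes a natural isomorphism $\pi_\ast M \simeq \mbb{S}_{\wh{p}}^\vee \otimes_{\mbb{S}_{\wh{p}}[G]^\solid}^\solid M$ of left $\mbb{S}_{\wh{p}}$-modules. Given $N\in\on{Sh}_\solid(\ast;\mbb{S}_{\wh{p}})$ and $M\in\on{Sh}_\solid(\ast/G;\mbb{S}_{\wh{p}})$, the projection formula map is then identified with the associativity isomorphism
$$(\mbb{S}_{\wh{p}}^\vee \otimes_{\mbb{S}_{\wh{p}}[G]^\solid}^\solid M) \otimes_{\mbb{S}_{\wh{p}}}^\solid N \;\simeq\; \mbb{S}_{\wh{p}}^\vee \otimes_{\mbb{S}_{\wh{p}}[G]^\solid}^\solid (M \otimes_{\mbb{S}_{\wh{p}}}^\solid \pi^\ast N),$$
using that $\pi^\ast N$ is $N$ endowed with trivial $G$-action, so that $M \otimes_{\mbb{S}_{\wh{p}}}^\solid \pi^\ast N$ is $M \otimes_{\mbb{S}_{\wh{p}}}^\solid N$ with the $\mbb{S}_{\wh{p}}[G]^\solid$-action coming only from $M$. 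The substantive work has all been done upstream in Lemma \ref{checkperfmodp} and Example \ref{perfexample}; the only thing left is a bookkeeping check that the abstract dualizability isomorphism in $\on{Mod}_{\mbb{S}_{\wh{p}}[G]^\solid}(\on{SolidSp})$ respects the bimodule structure and thus implements the comparison map coming from adjunction — a routine verification.
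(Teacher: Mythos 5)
Your proposal is correct in substance and follows essentially the same route as the paper: both pass through Remark \ref{moduleperspective} to the formula $\pi_\ast M=\on{map}_{\mbb{S}_{\wh{p}}[G]^\solid}(\mbb{S}_{\wh{p}},M)$, and both rest on Example \ref{perfexample}, which places $\mbb{S}_{\wh{p}}$ in the thick subcategory generated by the free rank-one module. The paper finishes in one line: the internal solid hom out of $\mbb{S}_{\wh{p}}[G]^\solid$ is the forgetful functor, which clearly preserves colimits and satisfies the projection formula, and both properties are stable under shifts, cofibers and retracts. Your detour through the dual $\mbb{S}_{\wh{p}}^\vee=\on{map}_{\mbb{S}_{\wh{p}}[G]^\solid}(\mbb{S}_{\wh{p}},\mbb{S}_{\wh{p}}[G]^\solid)$ and the tensor formula $\pi_\ast M\simeq \mbb{S}_{\wh{p}}^\vee\otimes^\solid_{\mbb{S}_{\wh{p}}[G]^\solid}M$ is the Morita-theoretic repackaging that the paper itself records just afterwards (Remark \ref{moritaperspective}, where this bimodule is named $\mathbb{D}_G$), so nothing essential differs.

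One step is phrased in a way that, taken literally, does not do what you want: ``compactness gives at once that $\on{map}_{\mbb{S}_{\wh{p}}[G]^\solid}(\mbb{S}_{\wh{p}},-)$ preserves colimits, so $\pi_\ast$ preserves colimits.'' Compactness is a statement about the spectrum-valued mapping object, i.e.\ about the global sections of $\pi_\ast$ (and even ``perfect implies compact'' here silently requires the free module to be compact, i.e.\ that evaluation at $\ast$ commutes with filtered colimits of solid spectra, which is not addressed). Since evaluation at $\ast$ is far from conservative on solid spectra (for instance the cokernel of $\delta^\ast\mathbb{Z}_p\to\mathbb{Z}_p$ is a nonzero solid group with vanishing underlying group), colimit preservation of the global mapping spectrum does not imply colimit preservation of the internally-valued functor $\pi_\ast$. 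The repair is immediate and stays within your own argument: either observe, as the paper does, that the internal hom out of the free rank-one module is the forgetful functor and that the class of modules $X$ for which $\on{map}_{\mbb{S}_{\wh{p}}[G]^\solid}(X,-)$ preserves colimits and satisfies the projection formula is thick; or note that your tensor formula $\pi_\ast\simeq \mbb{S}_{\wh{p}}^\vee\otimes^\solid_{\mbb{S}_{\wh{p}}[G]^\solid}(-)$, proved by exactly this thick-subcategory devissage starting from the case of the free module, already exhibits $\pi_\ast$ as a relative tensor product and hence as colimit-preserving, making the appeal to compactness unnecessary.
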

\begin{proof}
We use the above formula $\pi_\ast M = \on{map}_{\mbb{S}_{\wh{p}}[G]^\solid}(\mbb{S}_{\wh{p}},M)$.  Note that the solid internal hom out of $\mbb{S}_{\wh{p}}[G]^\square$ identifies with the forgetful functor, and this clearly commutes with colimits and satisfies the projection formula.  Thus it suffices to show that $\mathbb{S}_{\wh{p}}$ lies in the thick subcategory generated by $\mbb{S}_{\wh{p}}[G]^\square$.  But this was proven in Example \ref{perfexample}.
\end{proof}

As in the discussion in \ref{sixsec}, the basic idea of the six functor formalism is that, in cases where $\pi_\ast$ fails to preserve colimits and satisfy the projection formula, one wants a replacement $\pi_!$.  Here we will implement this idea in a somewhat different way, following the idea of \cite{KleinDualizing}.

\begin{definition}
Let $p$ be a prime, and suppose $G$ is a light profinite group which admits an open subgroup $N$ such that $\mathbb{F}_p\in \on{Perf}(\mathbb{F}_p[[N]])$.  Define
$$\pi_!:\on{Sh}_\solid(\ast/G;\mbb{S}_{\wh{p}})\to\on{Sh}_\solid(\ast;\mbb{S}_{\wh{p}})$$
to be the left Kan extension of the restriction of $\pi_\ast$ to the full subcategory of compact objects in $\on{Sh}_\solid(\ast/G;\mbb{S}_{\wh{p}})$.  Thus $\pi_!$ comes with a natural transformation $\pi_!\to \pi_\ast$.
\end{definition}

\begin{remark}\label{compactlygenerated}
Note that $\on{SolidSp}$ is compactly generated with compact generator $\prod_I \mathbb{S}$ for countably infinite $I$. Hence $\on{Mod}_{\mbb{S}_{\wh{p}}[G]^\solid}(\on{SolidSp})$ is also compactly generated, with compact generator $\mbb{S}_{\wh{p}}[G]^\solid\otimes^\solid_{\mbb{S}}(\prod_I \mathbb{S})$.  Thus this is in principle a sensible definition.  Moreover by Lemma \ref{tensorproduct} the full subcategory of compact objects $\on{SolidSp}^{\aleph_0}\subset\on{SolidSp}$ is closed under solid tensor product, so we also see that the action of $\on{SolidSp}$ on $\on{Mod}_{\mbb{S}_{\wh{p}}[G]^\solid}(\on{SolidSp})$ restricts to an action on the level of compact objects.\end{remark}

\begin{lemma}\label{handleonlowershriek}
Let $p$ be a prime.  Suppose $G$ is a light profinite group with an open subgroup $H$ such that $\mathbb{F}_p\in \on{Perf}(\mathbb{F}_p[[H]])$.  Write $f:\ast/H\to\ast/G$ for the map induced by the inclusion, and continue to write $\pi:\ast/G\to\ast$ for the projection.  For any $M\in\on{Sh}_\square(\ast/G;\mbb{S}_{\wh{p}})$ which is in the essential image of $f_\ast$, we have:
\begin{enumerate}
\item for any $X\in\on{Sh}_\square(\ast;\mbb{S}_{\wh{p}})$, the natural map $X\otimes \pi_\ast M\to\pi_\ast(X\otimes M)$ is an isomorphism;
\item the natural map $\pi_! M\to\pi_\ast M$ is an isomorphism.
\end{enumerate}
\end{lemma}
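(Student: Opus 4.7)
The plan is to factor $\pi = \pi' \circ f$ with $\pi' \colon \ast/H \to \ast$ and to exploit that $f$ is ``finite étale'' in a strong sense. Concretely, since $H$ is open in the compact group $G$, the coset space $G/H$ is a finite set, and by choosing coset representatives one sees in the module language of Remark \ref{moduleperspective} that $\mbb{S}_{\wh{p}}[G]^\solid$ is a finitely generated free module over $\mbb{S}_{\wh{p}}[H]^\solid$, both as a left and as a right module. Consequently, the induction functor $f_! N := \mbb{S}_{\wh{p}}[G]^\solid \otimes^\solid_{\mbb{S}_{\wh{p}}[H]^\solid} N$ is canonically isomorphic to the coinduction functor $f_\ast N = \on{map}_{\mbb{S}_{\wh{p}}[H]^\solid}(\mbb{S}_{\wh{p}}[G]^\solid, N)$, since both formulas compute the finite direct sum $\oplus_{[g] \in G/H} g \otimes N$. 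In particular $f_\ast$ is both a left and a right adjoint to $f^\ast$; hence it preserves all colimits and all compact objects, and it satisfies the projection formula $X \otimes f_\ast N \simeq f_\ast(X \otimes N)$, immediate from the induction description and the fact that $f^\ast X \simeq X$ when $X$ carries trivial action.

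For part 1, write $M \simeq f_\ast N$ for some $N \in \on{Sh}_\solid(\ast/H; \mbb{S}_{\wh{p}})$. Using $\pi_\ast f_\ast = \pi'_\ast$, the projection formula for $\pi'_\ast$ supplied by the hypothesis on $H$ and Proposition \ref{smallgood}, and the projection formula for $f_\ast$ recorded above, we compute
$$X \otimes \pi_\ast M = X \otimes \pi'_\ast N \simeq \pi'_\ast(X \otimes N) = \pi_\ast f_\ast(X \otimes N) \simeq \pi_\ast(X \otimes f_\ast N) = \pi_\ast(X \otimes M),$$
and one checks that the composite isomorphism is the canonical comparison map.

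For part 2, write $N \simeq \on{colim}_i N_i$ as a colimit of compact objects in $\on{Sh}_\solid(\ast/H; \mbb{S}_{\wh{p}})$ (compact generation here is as in Remark \ref{compactlygenerated}). Since $f_\ast$ preserves both colimits and compact objects, $M \simeq \on{colim}_i f_\ast N_i$ is a colimit of compact objects of $\on{Sh}_\solid(\ast/G; \mbb{S}_{\wh{p}})$. As $\pi_!$ preserves colimits by construction and agrees with $\pi_\ast$ on compact objects by definition, we get
$$\pi_! M \simeq \on{colim}_i \pi_! f_\ast N_i = \on{colim}_i \pi_\ast f_\ast N_i = \on{colim}_i \pi'_\ast N_i \simeq \pi'_\ast N = \pi_\ast M,$$
where the penultimate step uses that $\pi'_\ast$ preserves colimits (Proposition \ref{smallgood}), and one verifies that this composite is the natural transformation $\pi_! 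M \to \pi_\ast M$. The only delicate point in the argument is the identification $f_! \simeq f_\ast$; once that is in hand, the rest follows mechanically from the hypothesis on $H$ via Proposition \ref{smallgood} applied to $\pi'$.
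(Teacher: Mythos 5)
Your proof is correct and follows essentially the same route as the paper's: part 1 by composing the projection formulas for $f$ (finite étale, hence ambidextrous) and for $\pi'$ supplied by Proposition \ref{smallgood}, and part 2 by reducing to compact objects using ambidexterity $f_!\simeq f_\ast$ together with colimit-preservation of $\pi'_\ast$. The only (cosmetic) difference is in part 2, where the paper checks the colimit-preserving transformation $\pi_!\circ f_\ast\to\pi'_\ast$ on the single compact generator via the explicit identification $f_\ast(\prod_I\mathbb{S}\otimes^\solid\mbb{S}_{\wh{p}}[H]^\solid)\simeq\prod_I\mathbb{S}\otimes^\solid\mbb{S}_{\wh{p}}[G]^\solid$, whereas you observe that $f_\ast$ preserves all compact objects and write a general $N$ as a filtered colimit of compacts.
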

\begin{proof}
Note that $f$ satisfies the projection formula because it's finite etale.  Since $\pi':\ast/H\to\ast$ satisfies the projection formula by Proposition \ref{smallgood}, we deduce 1.  For 2, consider the natural transformation
$$\pi_!\circ f_\ast \to \pi_\ast \circ f_\ast = \pi'_\ast.$$
It suffices to show this is an isomorphism.  Again because $f$ is finite etale, $f_\ast$ preserves colimits; thus so does $\pi_!\circ f_\ast$.  So does $\pi'_\ast$ by Proposition \ref{smallgood}.  Thus it suffices to show that this natural transformation is an isomorphism on the compact generator $\prod_I\mathbb{S}\otimes\mbb{S}_{\wh{p}}[H]^\solid$ of Remark \ref{compactlygenerated}.  For that, in turn, since $\pi_!\to\pi_\ast$ is an isomorphism on compact objects it suffices to show $f_\ast (\prod_I\mathbb{S}\otimes^\solid\mbb{S}_{\wh{p}}[H]^\solid)\simeq \prod_I\mathbb{S}\otimes^\solid\mbb{S}_{\wh{p}}[G]^\solid$.  By projection formula for $f_\ast$, this reduces to $f_\ast\mbb{S}_{\wh{p}}[H]^\solid\simeq \mbb{S}_{\wh{p}}[G]^\solid$ which follows by ambidexterity of $f$ (which holds due to $f$ being finite etale).
\end{proof}

\begin{proposition}
Let $p$ be a prime.  Suppose $G$ is a light profinite group which admits an open subgroup $H$ such that $\mathbb{F}_p\in \on{Perf}(\mathbb{F}_p[[H]])$.  The functor $\pi_!:\on{Sh}_\solid(\ast/G;\mbb{S}_{\wh{p}})\to\on{Sh}_\solid(\ast;\mbb{S}_{\wh{p}})$ has the natural structure of a $\on{Sh}_\solid(\ast;\mbb{S}_{\wh{p}})$-linear functor.
\end{proposition}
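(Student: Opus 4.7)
The plan is to transport the canonical \emph{lax} $\on{Sh}_\solid(\ast;\mbb{S}_{\wh{p}})$-linear structure on $\pi_*$ through its restriction to compact objects --- where I will show it becomes strictly linear --- and then to Ind-extend the resulting data to $\pi_!$. Since $\pi^*$ is symmetric monoidal, $\pi_*$ comes equipped with a canonical projection-formula natural transformation
\[
\mathrm{pf}_{X,M}:\; X \otimes \pi_* M \longrightarrow \pi_*(X \otimes M).
\]

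The heart of the proof is verifying that $\mathrm{pf}_{X,M}$ is an isomorphism whenever $M$ is compact, for arbitrary $X \in \on{Sh}_\solid(\ast;\mbb{S}_{\wh{p}})$. By Remark \ref{compactlygenerated}, a compact generator of $\on{Sh}_\solid(\ast/G;\mbb{S}_{\wh{p}})$ is $C := \mbb{S}_{\wh{p}}[G]^\solid \otimes^\solid \prod_{\mbb{N}} \mbb{S}$. The finite étale map $f:\ast/H \to \ast/G$ is ambidextrous, so $f_*\mbb{S}_{\wh{p}}[H]^\solid = f_!\mbb{S}_{\wh{p}}[H]^\solid = \mbb{S}_{\wh{p}}[G]^\solid$ (induction equals coinduction); applying the projection formula for $f_*$ with coefficient $\prod_{\mbb{N}} \mbb{S}$ then gives $C = f_*(\mbb{S}_{\wh{p}}[H]^\solid \otimes^\solid \prod_{\mbb{N}} \mbb{S})$, so $C$ lies in the essential image of $f_*$. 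Lemma \ref{handleonlowershriek}(1) therefore makes $\mathrm{pf}_{X,C}$ an isomorphism for every $X$. Since $\mathrm{pf}_{X,-}$ is a transformation of exact functors in $M$, the class of $M$ for which $\mathrm{pf}_{X,M}$ is an iso for all $X$ is a thick subcategory, and containing $C$ it contains every compact object.

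To transfer this to $\pi_!$, I would use that on compact $M$, $\pi_! M = \pi_* M$ by the defining left-Kan-extension property. The projection-formula isomorphisms above thus endow the restriction $\pi_*|_{\on{cpt}}: \on{Sh}_\solid(\ast/G;\mbb{S}_{\wh{p}})^{\aleph_0} \to \on{Sh}_\solid(\ast;\mbb{S}_{\wh{p}})$ with the structure of an $\on{Sh}_\solid(\ast;\mbb{S}_{\wh{p}})^{\aleph_0}$-linear functor (compact-by-compact tensoring preserves compacts, again by Remark \ref{compactlygenerated}). The module-theoretic universal property of $\on{Ind}$ then extends this uniquely to a colimit-preserving $\on{Ind}(\on{Sh}_\solid(\ast;\mbb{S}_{\wh{p}})^{\aleph_0}) = \on{Sh}_\solid(\ast;\mbb{S}_{\wh{p}})$-linear functor on all of $\on{Sh}_\solid(\ast/G;\mbb{S}_{\wh{p}})$, and by uniqueness of left Kan extensions this extension must coincide with $\pi_!$.

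The main obstacle is bookkeeping rather than mathematical. The full category $\on{Sh}_\solid(\ast;\mbb{S}_{\wh{p}})$ does not preserve compact objects of $\on{Sh}_\solid(\ast/G;\mbb{S}_{\wh{p}})$, so one cannot naively speak of $\on{Sh}_\solid(\ast;\mbb{S}_{\wh{p}})$-linearity at the compact level. The resolution is precisely that the projection formula above is established for arbitrary $X$ with compact $M$, producing an $\on{Sh}_\solid(\ast;\mbb{S}_{\wh{p}})^{\aleph_0}$-linear functor from $\on{Sh}_\solid(\ast/G;\mbb{S}_{\wh{p}})^{\aleph_0}$ valued in the large category $\on{Sh}_\solid(\ast;\mbb{S}_{\wh{p}})$, and it is exactly this mixed input that the module-theoretic Ind construction converts into a fully $\on{Sh}_\solid(\ast;\mbb{S}_{\wh{p}})$-linear structure on $\pi_!$.
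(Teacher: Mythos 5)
Your proposal is correct and follows essentially the same route as the paper: exhibit the compact generator as $f_\ast$ of an induced module (ambidexterity plus the projection formula for the finite étale map $f$), invoke Lemma \ref{handleonlowershriek}(1) and thickness to get the projection formula for $\pi_\ast$ on all compact objects, conclude $\on{Sh}_\solid(\ast;\mbb{S}_{\wh{p}})^{\aleph_0}$-linearity there, and pass to Ind-categories. The only difference is that you spell out the thick-subcategory step and the closure of $\on{SolidSp}^{\aleph_0}$ under the solid tensor product, which the paper leaves implicit via Remark \ref{compactlygenerated}.
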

\begin{proof}
As in the proof of the previous lemma, note that the compact generator of $\on{Sh}_\solid(\ast/G;\mbb{S}_{\wh{p}})$ lies in the essential image of $f_\ast$. Thus, by the previous lemma we have that $\pi_\ast$, when restricted to compact objects, satisfies the projection formula, hence is canonically $\on{Sh}_\solid(\ast;\mbb{S}_{\wh{p}})^{\aleph_0}$-linear on compact objects.  Passing to Ind-categories we deduce the claim.
\end{proof}

\begin{remark}\label{moritaperspective}
By the Morita theory of \cite{LurieHA} Prop.\ 7.1.2.4, it follows that in module theoretic terms, we can describe $\pi_!$ as follows.  Define
$$\mathbb{D}_G=\pi_\ast(\mathbb{S}_{\wh{p}}[G]^\solid)=\on{map}_{\mathbb{S}_{\widehat{p}}[G]^\square}(\mathbb{S}_{\widehat{p}},\mathbb{S}_{\widehat{p}}[G]^\square),$$
where the right hand side denotes the solid mapping spectrum in solid left $\mathbb{S}_{\widehat{p}}[G]^\square$-modules.  We view $\mathbb{D}_G$ as a right $\mathbb{S}_{\widehat{p}}[G]^\square$-module using the (unit) bimodule structure of $\mathbb{S}_{\widehat{p}}[G]^\square$.  In other words, $\mathbb{D}_G$ is the value on $\mathbb{S}_{\widehat{p}}[G]^\square$ of the right adjoint to the functor $\mathbb{S}_{\widehat{p}}\otimes^\square_{\mathbb{S}}-$ from right modules to bimodules (using the augmentation left module structure on $\mathbb{S}_{\widehat{p}}$).  Then
$$\pi_!M = \mathbb{D}_G\otimes^\square_{\mathbb{S}[G]^\square}M$$
as a functor from $\on{Mod}_{\mbb{S}_{\wh{p}}[G]^\solid}(\on{SolidSp})\to\on{Mod}_{\mathbb{S}_{\wh{p}}}(\on{SolidSp})$.
\end{remark}

Now we compare this $\pi_!$ with the etale $\pi_!$ defined earlier in Section \ref{sixsec}.  For this we pass to the $p$-complete setting.  Recall that the natural pullback functor
$$\on{Sh}_{et}(\ast/G;\on{Sp}_{\wh{p}})\to\on{Sh}_v(\ast/G;\on{Sp}_{\wh{p}})$$
is fully faithful.  The essential image consists of those $p$-complete v-sheaves whose mod $p$ reduction is etale as a $v$-sheaf of spectra (\ref{etinsidev}).  By Example \ref{solidexamples}, any such is solid. Thus we can naturally view
$$\on{Sh}_{et}(\ast/G;\on{Sp}_{\wh{p}})\subset\on{Sh}_\solid(\ast/G;\on{Sp}_{\wh{p}})$$
as the full subcategory of those $p$-complete solid sheaves on $\ast/G$ whose mod $p$ reduction is etale (``discrete'') on pullback to $\ast$.  Trivially, pullback functors on both sides are compatible with this inclusion.  Now we show compatibility with $\pi_!$ functors.

\begin{proposition}
Let $p$ be a prime.  Suppose $G$ is a light profinite group which admits an open subgroup $H$ such that $\mathbb{F}_p\in \on{Perf}(\mathbb{F}_p[[H]])$.  The functor
$$\pi_!:\on{Sh}_\solid(\ast/G;\on{Sp}_{\wh{p}})\to\on{Sh}_\solid(\ast;\on{Sp}_{\wh{p}})$$
sends $\on{Sh}_{et}(\ast/G;\on{Sp}_{\wh{p}})$ inside $\on{Sh}_{et}(\ast;\on{Sp}_{\wh{p}})=\on{Sp}_{\widehat{p}}$, and on that level canonically identifies with the $\pi_!$ from the etale theory in a unique manner compatible with the natural transformation to $\pi_\ast$ (see the proof for more precision).
\end{proposition}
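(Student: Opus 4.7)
The plan is to proceed by devissage along the finite etale cover $f: \ast/H \to \ast/G$, reducing everything to the base case $G=H$. First I would handle this base case. The hypothesis $\mathbb{F}_p \in \on{Perf}(\mathbb{F}_p[[H]])$ implies in particular that $BH$ has finite mod-$p$ cohomological dimension. On the solid side, Proposition \ref{smallgood} then gives that $\pi_\ast^\solid$ already preserves colimits and the projection formula, so it coincides with its own left Kan extension from compact solid objects; thus solid $\pi_!$ equals $\pi_\ast^\solid$. On the etale side, Theorem \ref{chausproper}(2) (or \ref{padicsmooth}(2)) gives that $BH \to \ast$ is $\on{Sp}$-proper, hence $\on{Sp}_{\wh{p}}$-proper, so etale $\pi_!^{et} = \pi_\ast^{et}$. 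The two $\pi_\ast$'s are restrictions of the common $v$-pushforward, and this $v$-pushforward sends etale sheaves to etale (i.e.\ discrete) ones because each homotopy group is a continuous cohomology of $H$, finite-dimensional by the cohomological-dimension hypothesis. Thus in the case $G = H$ both $\pi_!$'s collapse onto a common $\pi_\ast$, canonically compatibly with the natural transformations $\pi_! \to \pi_\ast$ on each side (both of which are here the identity).

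Next I would bootstrap to general $G$ using $f: \ast/H \to \ast/G$. Since $f$ is finite etale, ambidexterity identifies $f_\ast = f_!$ uniformly at the solid and etale levels, and this common functor preserves both colimits and compact solid objects (its right adjoint $f^\ast$ preserves colimits). I would then verify the two base-change identities
\[
\pi_!^{G,\solid} \circ f_! \simeq \pi_!^{H,\solid}, \qquad \pi_!^{G,et} \circ f_! \simeq \pi_!^{H,et}.
\]
The etale identity is formal from six-functor functoriality. The solid one I would unwind from the defining left Kan extension: both sides preserve colimits and agree on compact solid $H$-sheaves (for compact $X$, $f_\ast X$ is again compact solid, and $\pi_\ast^G(f_\ast X) = \pi_\ast^H(X)$ by adjunction), so they agree everywhere. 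Combining with the base case, this yields an iso solid $\pi_!^G \simeq$ etale $\pi_!^G$ on every etale sheaf of the form $f_! X$.

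To conclude, sheaves of the form $f_! X$ generate $\on{Sh}_{et}(\ast/G; \on{Sp}_{\wh{p}})$ under colimits: $f^\ast$ is conservative (every point of $\ast/G$ lifts through $f$) and has left adjoint $f_\natural = f_!$, so the counit $f_! f^\ast M \to M$ is effective-epimorphic and the standard bar resolution exhibits $M$ as a colimit of $f_!$'s. Since $\pi_!^{G,\solid}$ preserves colimits by construction and $\pi_!^{G,et}$ by six-functor formalism, the iso extends to all of $\on{Sh}_{et}(\ast/G; \on{Sp}_{\wh{p}})$, and simultaneously one sees that solid $\pi_!$ lands in etale sheaves. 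For the uniqueness compatible with $\pi_\ast$: on the generating subcategory $f_!\on{Sh}_{et}(\ast/H; \on{Sp}_{\wh{p}})$ both natural transformations $\pi_! \to \pi_\ast$ are isomorphisms (by pullback of the $G=H$ case through the identities of the previous paragraph), so the comparison iso is forced there by compatibility with $\pi_\ast$, and then propagates uniquely by colimit preservation. The main obstacle I expect is the careful bookkeeping around the LKE definition of solid $\pi_!$, specifically verifying that the base-change iso $\pi_!^{G,\solid} \circ f_! \simeq \pi_!^{H,\solid}$ is genuinely canonical and compatible with the natural transformations to the respective $\pi_\ast$'s, so that the uniqueness clause really closes rather than just producing some isomorphism.
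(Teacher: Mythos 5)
Your proposal is correct and follows essentially the same route as the paper: reduce to objects induced along the finite \'etale map $f:\ast/H\to\ast/G$ (where $f_\ast=f_!$), show that on those objects both the solid and \'etale $\pi_!$ collapse onto the common $\pi_\ast$ (via Proposition \ref{smallgood}/Lemma \ref{handleonlowershriek} on the solid side, properness of $\ast/H\to\ast$ on the \'etale side, and preservation of \'etale objects by $\pi'_\ast$), and then extend by colimit generation and colimit preservation, pinning down the identification by compatibility with the maps to $\pi_\ast$ on (compact) generators of the form $f_\ast N$. The only cosmetic differences are that you organize the reduction as a base case $G=H$ plus a base-change identity and use a comonadic bar resolution for the generation step, where the paper works directly with objects in the essential image of $f_\ast$ and cites its compact-generation statement (Example \ref{countableexamples}); the bookkeeping concern you flag at the end is exactly what the paper resolves by testing natural transformations on compact generators.
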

\begin{proof}
Let us use a superscript ``et'' to denote the etale variants of the various functors.  We have a natural transformations
$$\pi^{et}_!\to\pi^{et}_\ast \to \pi_\ast\leftarrow \pi_!$$
of functors $\on{Sh}_{et}(\ast/G;\on{Sp}_{\wh{p}})\to \on{Sh}_\solid(\ast;\on{Sp}_{\wh{p}})$.
For any $H\subset G$ as in the statement, if $f:\ast/H\to\ast/G$, then $f^{et}_\ast=f_\ast$ because $f$ is finite etale (on underlying spectra, $f_\ast$ is just a finite direct sum).  We claim the following: if $M\in \on{Sh}_{et}(\ast/G;\on{Sp}_{\wh{p}})$ lies in the essential image of $f_\ast$, then:
\begin{enumerate}
\item $\pi^{et}_\ast(M)\overset{\sim}{\rightarrow}\pi_\ast(M)$;
\item $\pi^{et}_!(M)\overset{\sim}{\rightarrow}\pi^{et}_\ast(M)$;
\item $\pi_!(M)\overset{\sim}{\rightarrow}\pi_\ast(M)$.
\end{enumerate}
For claim 1, it suffices to show that $\pi'_\ast$ preserves the etale full subcategories, where $\pi':\ast/H\to\ast$.  This follows by resolving $\ast/H$ by copies of the light profinite sets $H^n$ and using a Postnikov tower argument. Claim 2 follows from \ref{chausproper}. Finally, Claim 3 follows from Proposition \ref{handleonlowershriek}.
Combining 1 and 3, it follows in particular that $\pi_!(M)$ is etale for any such $M$.  But by Example \ref{countableexamples}, the etale category is generated under colimits by objects of the form $M=f_\ast N$, so this implies that $\pi_!(M)$ is etale for any etale $M$, as claimed.

For the second claim, note that since both $\pi_!$ and $\pi_!^{et}$ preserve colimits, natural transformations out of either of them are uniquely determined by their effect on any full subcategory of compact generators.  Again by \ref{countableexamples} such can be chosen of the form $f_\ast(N)$, whence the unique identification $\pi_!=\pi_!^{et}$ compatible with the map to $\pi_\ast$, as claimed.
\end{proof}

Now we turn to the discussion of the right adjoint $\pi^!$ to $\pi_!$.  In terms of the Morita-theoretic perspective from Remark \ref{moritaperspective}, this is described as follows:

$$\pi^!X = \on{map}(\mathbb{D}_G,X).$$

In other words, we take the solid mapping spectrum from $\mathbb{D}_G$ to $X$, equipped with the left $\mathbb{S}_{\wh{p}}[G]^\solid$-module structure coming from the right module structure on $\mathbb{D}_G$.  The following result shows the compatibility of this $\pi^!$ with the etale one.

\begin{proposition}
Let $p$ be a prime.  Suppose $G$ is a light profinite group which admits an open subgroup $H$ such that $\mathbb{F}_p\in\on{Perf}(\mathbb{F}_p[[H]])$.

We have that $\pi^!$ sends etale sheaves to etale sheaves, and the canonical comparison map $(\pi^!)^{et}\to\pi^!$ is an isomorphism on etale sheaves.
\end{proposition}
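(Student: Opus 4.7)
The plan is to decouple the problem into a formal adjoint-manipulation step and a concrete computation. By the previous proposition, $\pi_!\circ i \simeq i\circ (\pi_!)^{et}$, where $i$ denotes the fully faithful inclusion of etale into solid $p$-complete sheaves (on either $\ast/G$ or $\ast$). Passing to right adjoints gives a natural isomorphism $i^R\circ \pi^! \simeq (\pi^!)^{et}\circ i^R$, where $i^R$ is right adjoint to $i$. Substituting $iX$ and using $i^R\circ i\simeq \on{id}$ yields $(\pi^!)^{et}(X)\simeq i^R(\pi^!(iX))$, and the counit $i\circ i^R(\pi^!(iX))\to \pi^!(iX)$ then becomes, under this identification, exactly the canonical comparison map $i((\pi^!)^{et}(X))\to \pi^!(iX)$. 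Since $i$ is fully faithful, this counit is an isomorphism iff $\pi^!(iX)$ lies in the essential image of $i$, i.e.\ is etale. The entire assertion therefore reduces to: for every $X\in \on{Sp}_{\wh p}$, the solid sheaf $\pi^!(iX)$ is etale.

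Next I would reduce from $G$ to the subgroup $H$ where the perfectness hypothesis is assumed. Let $f:\ast/H\to \ast/G$ be the finite etale map arising from the given open $H\subset G$, and set $\pi'=\pi\circ f$. Ambidexterity (Definition~\ref{etalepropersmoothdef}) gives $f_!\simeq f_*$ and $f^!\simeq f^*$, so from $\pi_!\circ f_!\simeq (\pi')_!$ one obtains by passage to right adjoints $f^*\circ \pi^!\simeq (\pi')^!$. But $f^*$ is merely restriction of scalars from $\mbb{S}_{\wh p}[G]^\square$ to $\mbb{S}_{\wh p}[H]^\square$: it does not alter the underlying solid spectrum of a sheaf, so by Definition~\ref{solidbg} it preserves and reflects etaleness. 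Thus $\pi^!(iX)$ is etale iff $(\pi')^!(iX)$ is etale, and I may henceforth assume $\mbb{F}_p\in \on{Perf}(\mbb{F}_p[[G]])$ outright.

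Under this assumption, Proposition~\ref{smallgood} gives $\pi_*\simeq \pi_!$ colimit-preserving, and Remark~\ref{moritaperspective} supplies the formula $\pi^!(Y)\simeq \on{map}_\mbb{S}(\mbb{D}_G,Y)$. By Example~\ref{perfexample}, $\mbb{S}_{\wh p}$ is perfect as an $\mbb{S}_{\wh p}[G]^\square$-module, and dualizing a finite resolution shows that $\mbb{D}_G$ lies in the thick subcategory of right $\mbb{S}_{\wh p}[G]^\square$-modules generated by $\mbb{S}_{\wh p}[G]^\square$ itself. Forgetting the $G$-action, $\mbb{D}_G$ is therefore a finite complex in the thick subcategory of $\on{Sh}_\square(\ast;\on{Sp}_{\wh p})$ generated by $\mbb{S}_{\wh p}[G]^\square\simeq \prod_I \mbb{S}_{\wh p}$, where $I$ is a countable set (coming from a $\mbb{Z}$-basis of $C(G;\mbb{Z})$). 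Since the essential image of $\delta^*$ is closed under finite limits, it then suffices to verify that $\on{map}_\mbb{S}(\prod_I\mbb{S}_{\wh p},\delta^*Y)$ is discrete for every $Y\in\on{Sp}_{\wh p}$. Here compactness of $\prod_I\mbb{S}_{\wh p}$ reduces $Y$ to a bounded spectrum, and then a Postnikov argument to $Y=HA$ with $A$ a finite abelian $p$-group. In that case the mapping spectrum is computed, after base-change to $\mbb{Z}$, by the derived solid $\on{RHom}_\square(\prod_I\mbb{Z},A)$, which sits in degree $0$ because $\prod_I\mbb{Z}\simeq \mbb{Z}[S]^\square$ is projective in solid abelian groups, and there equals $\bigoplus_I A$ by the standard product--coproduct duality $\on{Hom}_\square(\prod_I\mbb{Z},\mbb{Z})\simeq \bigoplus_I \mbb{Z}$. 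The answer is thus $\delta^*(\bigoplus_I A)$, plainly discrete; assembling over the finite complex for $\mbb{D}_G$ concludes the proof.

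The hard part will be precisely this last computation. The subtlety is that $\prod_I\mbb{S}$ is \emph{not} dualizable in solid spectra: the putative formula $\on{map}_\mbb{S}(\prod_I\mbb{S},-)\simeq \bigoplus_I(-)$ already fails on targets of the form $\prod_J\mbb{S}$. What rescues the argument is that only \emph{discrete} targets $\delta^*Y$ are involved, and on those the projectivity of $\prod_I\mbb{Z}$ in solid abelian groups, combined with the Postnikov reduction, restores the desired product--coproduct pairing.
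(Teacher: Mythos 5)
Your overall architecture tracks the paper's: the formal reduction of both assertions to "$\pi^!$ of an etale sheaf is etale", the passage from $G$ to $H$, and the use of the Morita formula (Remark \ref{moritaperspective}) together with Example \ref{perfexample} to place $\mathbb{D}_G$ in the thick subcategory of solid spectra generated by $\mathbb{S}_{\wh{p}}[G]^\solid\simeq\prod_I\mathbb{S}_{\wh{p}}$. Two caveats already at the reduction stage: the identity $\pi_!\circ f_!\simeq(\pi')_!$ you invoke is not available as part of an established solid six functor formalism — it has to be extracted from Lemma \ref{handleonlowershriek} together with the observation that $(\pi')_!\simeq(\pi')_\ast$ once $\mathbb{F}_p\in\on{Perf}(\mathbb{F}_p[[H]])$ (the paper instead just notes $\mathbb{D}_G\simeq\mathbb{D}_H$ as solid spectra by ambidexterity, which is shorter); and in your Postnikov step, discreteness is not closed under inverse limits, so you must also use the degree-$0$ concentration to see that the tower of mapping spectra stabilizes degreewise.

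The genuine problems are in the final computation. First, the target is misidentified: for $Y\in\on{Sp}_{\wh{p}}$ the corresponding object of $\on{Sh}_{et}(\ast;\on{Sp}_{\wh{p}})\subset\on{Sh}_\solid(\ast;\on{Sp}_{\wh{p}})$ is the $p$-completion of $\delta^\ast Y$ in condensed spectra, not $\delta^\ast Y$ itself (its underlying condensed spectrum is typically not discrete — only its mod $p$ reduction is), and what must be shown is that $\on{map}(\mathbb{D}_G,-)$ of it is \emph{etale}, i.e.\ discrete mod $p$, not discrete. The repair is to reduce mod $p$ first, using that $\mathbb{S}/p$ is finite so $\on{map}(C,M)/p\simeq\on{map}(C,M/p)$; but then the discrete target $M/p$ has arbitrary, typically infinite, homotopy groups. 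This breaks your second reduction: neither compactness of $\prod_I\mathbb{S}$ (which handles filtered colimits) nor Postnikov towers produce \emph{finite} abelian $p$-groups, so the restriction to $Y=HA$ with $A$ finite is unjustified and insufficient. The statement you actually need does hold for every discrete abelian group $A$, namely $\underline{\on{Hom}}(\mathbb{Z}[S]^\solid,A)\simeq\underline{C(S,A)}$ concentrated in degree $0$ — but the proof of this is the universal property of solidification: mapping out of $\mathbb{S}[S]^\solid$ into a solid object agrees with mapping out of $\mathbb{S}[S]$, which is $T\mapsto M(S\times T)$ and visibly preserves etaleness by Theorem \ref{etale}. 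That is exactly the paper's one-line conclusion, applied directly to the etale target without any Postnikov devissage; even the ``product--coproduct duality'' you cite is this universal property in disguise, so the detour through Specker-type duality and finite coefficients is both gapped and unnecessary.
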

\begin{proof}
By the above formula for $\pi^!$, for the first claim it suffices to show that the solid mapping spectrum out of $\mathbb{D}_G$ sends etale sheaves (on $\ast$) to etale sheaves (on $\ast$).  Recall that $\mathbb{D}_G = \pi_\ast \mathbb{S}_{\wh{p}}[G]^\solid$.  By ambidexterity for $\ast/H\to\ast/G$, it follows that $\mathbb{D}_G\simeq\mathbb{D}_H$ as solid spectra.  Thus we reduce to the case where $\mathbb{F}_p\in\on{Perf}(\mathbb{F}_p[[G]])$, which by Example \ref{perfexample} implies that $\mathbb{S}_{\wh{p}}\in\on{Perf}(\mathbb{S}_{\wh{p}}[G]^\solid)$.  It follows that $\mathbb{D}_G$ lies in the thick subcategory (of solid spectra) generated by $\mathbb{S}_{\wh{p}}[G]^\solid$.  Thus it suffices to show that the internal mapping spectrum out of $\mathbb{S}[S]$, for any light profinite set $S$, sends etale spectra to etale spectra.  But this follows from \ref{etale}.

We have just shown that $\pi^!$ sends etale sheaves to etale sheaves.  As the same is true for $\pi_!$ by the previous result, it follows a fortiori by the defining universal properties that $\pi^!=(\pi^!)^{et}$ on etale sheaves, finishing the proof.
\end{proof}

\begin{corollary}\label{compareduality}
Let $p$ be a prime.  Suppose $G$ is a light profinite group which admits an open subgroup $H$ such that $\mathbb{F}_p\in\on{Perf}(\mathbb{F}_p[[H]])$.  Then:
\begin{enumerate}
\item The etale dualizing sheaf $\pi^!(\mathbb{S}_{\wh{p}})$ for $\ast/G$ from \ref{dualsec} identifies with the solid mapping spectrum
$$\on{map}(\mathbb{D}_G,\mathbb{S}_{\wh{p}})$$
equipped with $G$-action coming from the right module structure on $\mathbb{D}_G$ (\ref{moritaperspective}).
\item Suppose $G$ is a virtual (mod $p$) Poincaré duality group.  Then $\mathbb{D}_G$ is itself etale, and identifies with the dual (= inverse) of the etale dualizing sheaf.
\end{enumerate}
\end{corollary}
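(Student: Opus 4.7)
The plan is to handle the two parts in succession; Part 1 is essentially immediate from the work just done, while Part 2 combines the smoothness of $\pi \colon \ast/G \to \ast$ with a Morita-theoretic ``shear'' computation.

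\textbf{Part 1.} The proposition immediately preceding the corollary identifies the etale $(\pi^!)^{et}$ with the restriction of the solid $\pi^!$ to etale sheaves, and Remark \ref{moritaperspective} supplies the Morita formula $\pi^!(X) = \on{map}(\mathbb{D}_G, X)$, with left $\mathbb{S}_{\wh{p}}[G]^\solid$-action induced from the right action on $\mathbb{D}_G$. Setting $X = \mathbb{S}_{\wh{p}}$ yields Part 1 on the nose.

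\textbf{Part 2.} Set $L := \pi^!(\mathbb{S}_{\wh{p}})$. By Theorem \ref{padicsmooth} the projection $\ast/G \to \ast$ is $\on{Sp}_{\wh{p}}$-smooth, and by Theorem \ref{maintheorem} $L$ is identified with $\mathbb{S}_{\wh{p}}^{\on{ad}_G}$, an invertible object of $\on{Sh}_{et}(\ast/G; \on{Sp}_{\wh{p}})$. Smoothness gives the natural isomorphism $\pi^!(-) \simeq \pi^\ast(-) \otimes L$, and passing to left adjoints produces the key identity $\pi_!(M) \simeq \pi_\natural(M \otimes L^{-1})$ on etale sheaves, extended to solid sheaves via the Morita description of $\pi_!$ and the preceding proposition. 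Applying this to $M = \mathbb{S}_{\wh{p}}[G]^\solid$ gives $\mathbb{D}_G \simeq \pi_\natural(\mathbb{S}_{\wh{p}}[G]^\solid \otimes L^{-1})$. The standard shear isomorphism identifies the internal tensor product $\mathbb{S}_{\wh{p}}[G]^\solid \otimes L^{-1}$ (with diagonal $G$-action) as a left $\mathbb{S}_{\wh{p}}[G]^\solid$-module with $\mathbb{S}_{\wh{p}}[G]^\solid \otimes L^{-1}|_\ast$ (with $G$ acting only on the first factor), via $g \otimes v \mapsto g \otimes g^{-1}v$. Applying $\pi_\natural = \mathbb{S}_{\wh{p}} \otimes_{\mathbb{S}_{\wh{p}}[G]^\solid} -$ collapses this to $L^{-1}|_\ast$, which is discrete (etale) since $L$ is an etale invertible sheaf. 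Tracking the commuting right $G$-action through the shear identifies $\mathbb{D}_G$ as a right module with $L^{-1}$ in its canonical right-module incarnation; equivalently, $\mathbb{D}_G \simeq L^{-1}$ as invertible etale sheaves on $\ast/G$.

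The main obstacle will be making the shear argument rigorous with all higher coherences in the solid $\infty$-categorical setting. A clean conceptual framing sidesteps this by invoking Morita theory directly: both $\pi_!$ and $\pi_\natural(- \otimes L^{-1})$ are $\on{SolidSp}$-linear colimit-preserving functors $\on{LMod}_{\mathbb{S}_{\wh{p}}[G]^\solid} \to \on{SolidSp}$, canonically isomorphic by smoothness, so the Morita equivalence between such functors and right $\mathbb{S}_{\wh{p}}[G]^\solid$-modules forces $\mathbb{D}_G$ and $L^{-1}$ to be canonically isomorphic; the shear then enters only as a bookkeeping device for reading off the underlying solid spectrum.
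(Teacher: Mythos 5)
Your Part 1 is correct and is exactly the paper's argument: apply the preceding comparison proposition to the unit and read off the Morita formula $\pi^!X=\on{map}(\mathbb{D}_G,X)$. Part 2, however, has a genuine gap. The identity $\pi_!(M)\simeq \pi_\natural(M\otimes L^{-1})$ that you want to evaluate at $M=\mathbb{S}_{\wh{p}}[G]^\square$ is only available from smoothness inside the \emph{etale} six functor formalism, and $\mathbb{S}_{\wh{p}}[G]^\square$ is not an etale sheaf (mod $p$ its pullback to $\ast$ is the profinite ring $\mathbb{F}_p[[G]]$, not a discrete spectrum). The etale subcategory does not generate $\on{Mod}_{\mathbb{S}_{\wh{p}}[G]^\square}(\on{SolidSp})$ under colimits, and the solid $\pi_!$ was defined by left Kan extension of $\pi_\ast$ from compact objects, not as part of a six functor formalism on solid sheaves; so agreement of the two functors on etale sheaves gives no formal extension to solid modules. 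Your closing paragraph ("canonically isomorphic by smoothness, so Morita theory forces $\mathbb{D}_G\simeq L^{-1}$") presupposes precisely this extension, and the extension at the single object $\mathbb{S}_{\wh{p}}[G]^\square$ is essentially equivalent to the conclusion $\mathbb{D}_G\simeq L^{-1}$ you are trying to prove, so the argument is circular as written. A second, independent problem is generality: you obtain invertibility of $L=\pi^!\mathbb{S}_{\wh{p}}$ from Theorem \ref{padicsmooth} and Theorem \ref{maintheorem}, which are statements about $p$-adic Lie groups, whereas the corollary allows an arbitrary light profinite $G$ with an open $H$ satisfying $\mathbb{F}_p\in\on{Perf}(\mathbb{F}_p[[H]])$ and the virtual mod $p$ Poincar\'e duality hypothesis (e.g.\ non-analytic Demushkin groups qualify); in that generality the adjoint representation and the linearization theorem are not available.

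For comparison, the paper's proof of Part 2 avoids both issues and is quite short: in the proof of the preceding proposition it was shown that $\mathbb{D}_G$ lies in the thick subcategory of solid spectra generated by the $\mathbb{S}_{\wh{p}}[S]^\square$ for light profinite $S$, and such objects are reflexive ($M\simeq\on{map}(\on{map}(M,\mathbb{S}_{\wh{p}}),\mathbb{S}_{\wh{p}})$ by the definition of solidification). Hence Part 1 can be read in reverse to give $\mathbb{D}_G\simeq\on{map}(e^\ast\pi^!\mathbb{S}_{\wh{p}},\mathbb{S}_{\wh{p}})$, and invertibility of $\pi^!\mathbb{S}_{\wh{p}}$ is deduced directly from the duality hypothesis (it is bounded below, so by Lemma \ref{compactinvertible} one only needs $\pi^!\mathbb{F}_p$ to be one-dimensional in a single degree, which is the mod $p$ Poincar\'e duality condition); the dual of an invertible etale sheaf is its inverse and is etale, giving the claim. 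If you want to keep your shear-based picture, you would need to prove the formula $\pi_!\simeq\pi_\natural(-\otimes L^{-1})$ on solid modules directly (for instance by checking it on the compact generator via an honest computation of $\pi_\ast$ there), at which point you are doing the work the biduality argument does for free.
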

\begin{proof}
Part 1 follows by applying the previous proposition to the unit etale sheaf $\mbb{S}_{\wh{p}}$.  For part 2, note that in the proof of the previous proposition we showed that as a solid spectrum, $\mathbb{D}_G$ lies in the thick subcategory generated by the $M=\mathbb{S}_{\wh{p}}[S]^\square$ for light profinite $S$.  But for any such $M$, we have
$$M\overset{\sim}{\rightarrow}\on{map}(M,\on{map}(M,\mbb{S}_{\wh{p}}))$$
by definition of solidification (see \ref{solidproperties}).  It follows that we can read 1 in reverse, getting $\mathbb{D}_G \simeq \on{map}(e^\ast\pi^!\mathbb{S}_{\wh{p}},\mbb{S}_{\wh{p}})$ as underlying $p$-complete spectra.  Since $G$ is a duality group, $\pi^!\mathbb{S}_{\wh{p}}$ is invertible, indeed a shift of the unit.  (Indeed, we see as in the proof of \ref{padicsmooth} that $\pi^!\mbb{S}_{\wh{p}}$ is bounded below, so by \ref{compactinvertible} it suffices to see that $\pi^!\mbb{F}_p$ is one dimensional in a single degree, but this is exactly the mod $p$ duality condition.)
\end{proof}

We ended the previous section with the theorem expressing that $K(n)$-local spectra are the same thing as $\mathbb{G}_n$-semilinear $K(n)$-local $E_n$-modules.  This result does not wholly extend to the solid context, but at least half of it does:

\begin{proposition}\label{halfofdescent}
Let $p$ be a prime and $n\geq 1$.  For $X\in \on{SolidSp}$, we have
$$L_{K(n)}X = \pi_\ast(L_{K(n)}(X\otimes^\square \mathbb{E}_n)),$$
where $\pi:\ast/\mathbb{G}_n\to\ast$ and we view $\mathbb{E}_n\in\on{Sh}_\square(\ast/\mathbb{G}_n;\on{Sp})$.
\end{proposition}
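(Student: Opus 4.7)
My plan reduces the claim to Galois descent for the unit (Corollary \ref{globalsec}) via a projection-formula argument. First, since $\mathbb{E}_n$ is $K(n)$-local as a $v$-sheaf and the solid tensor of a $K(n)$-acyclic with anything stays $K(n)$-acyclic, the functor $X \mapsto L_{K(n)}(X \otimes^\square \mathbb{E}_n)$ factors through $L_{K(n)} X$. So I may replace $X$ by $L_{K(n)} X$ and reduce to showing $X \simeq \pi_\ast L_{K(n)}(\pi^\ast X \otimes \mathbb{E}_n)$ for $X \in L_{K(n)} \on{SolidSp}$.

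Next, I would compute $\pi_\ast$ through $v$-descent along the surjection $e:\ast \to \ast/\mathbb{G}_n$, whose Cech nerve is $[k]\mapsto \mathbb{G}_n^k$. By iterated application of Proposition \ref{galoiskunneth} (together with the Kunneth property of Theorem \ref{constructe}(2)), the values of $\mathbb{E}_n$ along this Cech nerve are identified with the $(p,v_1,\ldots,v_{n-1})$-completed solid tensor powers of $E_n$, yielding
\[
\pi_\ast L_{K(n)}(\pi^\ast X \otimes \mathbb{E}_n) \;\simeq\; \on{Tot}\!\Bigl([k]\mapsto L_{K(n)}\bigl(X\otimes^\square E_n^{\otimes^\square(k+1)}\bigr)\Bigr),
\]
which for $X = L_{K(n)}\mathbb{S}$ is exactly the Amitsur totalization computing $L_{K(n)}\mathbb{S}$ by Corollary \ref{globalsec}.

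To promote the unit case to general $X \in L_{K(n)}\on{SolidSp}$, the key input is that the map $L_{K(n)}\mathbb{S} \to E_n$ is descendable in the sense of Section \ref{descsec}, a consequence of Theorem \ref{kndescent} combined with Mathew's theory. Descendability provides a uniform finite bound on the convergence of the Amitsur tower, so that $X \otimes^\square(-)$ commutes with the totalization, giving
\[
\on{Tot}\!\Bigl([k]\mapsto L_{K(n)}\bigl(X\otimes^\square E_n^{\otimes^\square(k+1)}\bigr)\Bigr) \;\simeq\; X\otimes^\square L_{K(n)}\mathbb{S} \;\simeq\; X.
\]

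The main obstacle is justifying this last commutation of $X\otimes^\square$ with the Amitsur totalization in the solid category: descendability of $L_{K(n)}\mathbb{S}\to E_n$ (a property of an object of $L_{K(n)}\on{Sp}$) must be promoted to yield bounded descent in the $L_{K(n)}\on{SolidSp}$-linear monoidal category. A cleaner alternative, if the solid refinement of descendability proves delicate, is to use the $\pi_!^\square$ construction of Section \ref{solidsec}: by Lazard, $\mathbb{G}_n$ contains an open subgroup $H$ with $\mathbb{F}_p\in\on{Perf}(\mathbb{F}_p[[H]])$, so $\pi_!^\square$ is defined, is $\on{SolidSp}$-linear (hence automatically satisfies the projection formula), and coincides with $\pi_\ast^\square$ after $L_{K(n)}$-localization through the Tate-Thomason argument of Remark \ref{tn} lifted to the solid setting via the $\pi_!^\square\simeq\pi_!^{et}$ comparison on etale sheaves proved at the end of Section \ref{solidsec}; this gives
\[
\pi_\ast L_{K(n)}(\pi^\ast X \otimes \mathbb{E}_n) \;\simeq\; L_{K(n)}\bigl(X\otimes^\square \pi_!^\square \mathbb{E}_n\bigr) \;\simeq\; L_{K(n)}(X\otimes^\square L_{K(n)}\mathbb{S}) \;=\; X.
\]
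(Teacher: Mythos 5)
Your ``cleaner alternative'' at the end is, in essence, the paper's own proof: the paper invokes Lazard to produce an open normal subgroup $N$ of $\mathbb{G}_n$ with $\mathbb{F}_p\in\on{Perf}(\mathbb{F}_p[[N]])$, uses Proposition \ref{smallgood} for $N$ together with $K(n)$-local Tate vanishing for the finite quotient $G/N$ (as in Remark \ref{tn}) to conclude that $\pi_!\to\pi_\ast$ is an equivalence $K(n)$-locally and that $\pi_\ast$ satisfies the projection formula, and then the projection formula reduces the statement to $X=\mathbb{S}$, which is Corollary \ref{globalsec}. One small correction to how you phrase this route: the comparison $\pi_!^\square\simeq\pi_!^{et}$ on etale sheaves is not the relevant input for identifying $\pi_!$ with $\pi_\ast$ on solid sheaves; what does the work is the factorization $\ast/\mathbb{G}_n=(\ast/N)_{G/N}$, where $\pi_!=\pi_\ast$ for $N$ by \ref{smallgood} (its $\pi_\ast$ already preserves colimits) and the remaining finite-group norm map becomes invertible $K(n)$-locally by Tate vanishing.

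Your primary route via Cech/Amitsur descent and descendability is genuinely different from the paper, and the gap you flag is real but you underestimate it: descendability of $L_{K(n)}\mathbb{S}\to E_n$ is not a consequence of Theorem \ref{kndescent} together with generalities --- descent (a totalization statement) is strictly weaker than descendability (which needs the uniform, pro-constant convergence you want in order to commute $X\otimes^\square-$ past the totalization), and the descendability of $E_n$ over the $K(n)$-local sphere rests on the Hopkins--Ravenel smash product theorem, which is nowhere established or cited in this paper. Even granting it, you would still have to transport it to the solid setting: the symmetric monoidal functor $\on{Sp}\to\on{SolidSp}$ followed by $K(n)$-localization does preserve descendability, but the algebra appearing in your Cech nerve is the profinite $e^\ast\mathbb{E}_n$, i.e.\ the $(p,v_1,\ldots,v_{n-1})$-completion of the discrete $E_n$, so you must also identify this with the $K(n)$-localization of $\delta^\ast E_n$ in solid spectra (or otherwise argue that completion preserves the needed nilpotence bound). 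So as written the primary route is incomplete, while the fallback you give is correct and coincides with the paper's argument; the paper's route has the advantage of needing only the soft inputs \ref{smallgood}, Tate vanishing, and \ref{globalsec}, whereas yours would buy a statement phrased purely in terms of the Amitsur tower at the cost of a deep external theorem.
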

\begin{proof}
Recall from \cite{LazardGroupes} that any compact $p$-adic Lie group $G$ admits an open normal subgroup $N$ with $\mbb{F}_p\in\on{Perf}(\mathbb{F}_p[[N]])$.  Then $\ast/G = (\ast/N)_{G/N}$, and it follows from Lemma \ref{smallgood} and Tate vanishing in $K(n)$-local homotopy theory (\cite{GreenleesTate}, \cite{KuhnTate}) that $\pi_!\to\pi_\ast$ is an isomorphism in the $K(n)$-local context, and $\pi_\ast$ satisfies the projection formula.  It follows that the claim reduces to the case $X=\mathbb{S}$, which is \ref{globalsec}.
\end{proof}

\begin{remark}\label{nottheotherhalf}
This means that the functor
$$L_{K(n)}\on{SolidSp} \to \on{Mod}_{\mbb{E}_n}(\on{Sh}_{\square}(\ast/\mbb{G}_n;L_{K(n)}\on{Sp}))$$
defined by $X\mapsto L_{K(n)}(X\otimes^{\square}\mbb{E}_n)$ is fully faithful.  As hinted at above, in contrast to the discrete setting it is not essentially surjective.  To show this, let us exhibit two non-isomorphic objects in the target each of which map to $E_n$ under $\pi_\ast$.

The first is the obvious one, $L_{K(n)}(E_n\otimes^{\square}\mbb{E}_n)$, which maps to $E_n$ under $\pi_\ast$ by the above result.  To produce the second one, start with $\mbb{S}[G]^\square \otimes^\square \mbb{E}_n$, which is $K(n)$-local by Proposition \ref{tensorwithe}.  Note that by definition $\mbb{S}[G]^\square$ is the solidification of $e_\natural \mbb{S}$, where $e_\natural$ denotes the left adjoint to $e^\ast$ on v-sheaves.  Thus by the projection formula for $e_\natural$ (which is formal by slice topos nonsense) we deduce
$$\mbb{S}[G]^\square \otimes^\square \mbb{E}_n\simeq \mbb{S}[G]^\square \otimes^\square E_n.$$
(We have left the $\mbb{E}_n$-module category, but that's okay because we only need to be able to calculate $\pi_\ast$.)  But now by the projection formula for $\pi_\ast$ we get that
$$\pi_\ast (\mbb{S}[G]^\square\otimes^{\solid} E_n)\simeq (e^\ast\mbb{D}_G)\otimes^\square E_n.$$
Thus if we take our object $\mbb{S}[G]^\square\otimes^\solid\mbb{E}_n$ and tensor it with $e^\ast \pi^!\mbb{S}_{\wh{p}}$, this cancels the $e^\ast\mbb{D}_G$ by Corollary \ref{compareduality} and so we get another object whose $\pi_\ast$ gives $E_n$.  These objects are not isomorphic, even on underlying solid spectra, because the first one is etale in the $K(n)$-local sense while the second one is not.
\end{remark}

Here is a corollary in ordinary homotopy theory.  Of course, it can also be proved directly, but it's not completely obvious.

\begin{corollary}
Let $p$ be a prime, $n\geq 1$, and $I$ a countable set.  Then for the spectrum $\prod_I \mathbb{S} \in \on{Sp}$, we have
$$L_{K(n)}\prod_I\mathbb{S} = \prod_I L_{K(n)}\mbb{S}.$$
\end{corollary}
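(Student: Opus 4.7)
The plan is to deduce this by working throughout in solid spectra and applying Proposition \ref{halfofdescent} to $X = \prod_I \mbb{S}$ viewed as the solid spectrum obtained as the inverse limit of copies of $\delta^\ast\mbb{S}$ in $\on{SolidSp}$ (equivalently, the object $\mbb{S}[S]^\square$ for $S$ a light profinite set with a countable basis of $C(S,\mathbb{Z})$, by Theorem \ref{solidproperties}). The value at $\ast$ of this solid spectrum is the usual $\prod_I \mbb{S}\in\on{Sp}$, since the global sections functor $(-)(\ast)$ preserves limits. So by Remark \ref{localizesolid}, it suffices to compute $L_{K(n)}X$ as a solid spectrum and then evaluate at $\ast$.

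First I would apply Proposition \ref{halfofdescent} to get
\[
L_{K(n)}X = \pi_\ast L_{K(n)}(X \otimes^\square \mbb{E}_n),
\]
and then compute the tensor product. There is a canonical map $X \otimes^\square \mbb{E}_n \to \prod_I \mbb{E}_n$ in $\on{Sh}_\square(\ast/\mbb{G}_n;\on{Sp})$ coming from the universal property of products; pulling back along $e:\ast\to\ast/\mbb{G}_n$ (which is conservative) reduces to the underlying statement $X\otimes^\square E_n \simeq \prod_I E_n$, which is exactly Proposition \ref{tensorwithe}.

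Next I would observe that $\prod_I \mbb{E}_n$ is already $K(n)$-local, since the full subcategory of $K(n)$-local objects of $\on{Sh}_\square(\ast/\mbb{G}_n;\on{Sp})$ is closed under arbitrary limits (being defined by the condition $\on{Map}(A,-)=0$ for $K(n)$-acyclic $A$). Hence $L_{K(n)}(\prod_I\mbb{E}_n)\simeq \prod_I\mbb{E}_n$. Now $\pi_\ast$ is a right adjoint and so commutes with products, so combined with Corollary \ref{globalsec} we obtain
\[
L_{K(n)}X \simeq \pi_\ast \prod_I \mbb{E}_n \simeq \prod_I \pi_\ast \mbb{E}_n \simeq \prod_I L_{K(n)}\mbb{S}
\]
in solid spectra, where the last $L_{K(n)}\mbb{S}$ is understood as the solid $K(n)$-local sphere.

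Finally I would evaluate at $\ast$: the left-hand side gives $L_{K(n)}(\prod_I\mbb{S})$ by Remark \ref{localizesolid}, and the right-hand side gives $\prod_I L_{K(n)}\mbb{S}$ since $(-)(\ast)$ preserves products and by \ref{globalsec} (together with Remark \ref{localizesolid}) $(L_{K(n)}\mbb{S}_{\text{solid}})(\ast)$ is the ordinary $K(n)$-local sphere. The main technical point to be careful about is the equivariant promotion of Proposition \ref{tensorwithe}, but this is immediate from conservativity of $e^\ast$, so in fact there is no serious obstacle; the argument is really just assembling the solid-theoretic tools developed in the section.
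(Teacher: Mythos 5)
Your proposal is correct and follows essentially the same route as the paper's own proof: reduce to the solid computation via Remark \ref{localizesolid}, identify $\prod_I\mathbb{S}\otimes^\square\mathbb{E}_n\simeq\prod_I\mathbb{E}_n$ by Proposition \ref{tensorwithe}, note this is already $K(n)$-local, and conclude from Proposition \ref{halfofdescent} since $\pi_\ast$ commutes with limits. Your extra care about the equivariant promotion of \ref{tensorwithe} (checked via conservativity of $e^\ast$) and the explicit use of Corollary \ref{globalsec} just spell out steps the paper leaves implicit.
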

\begin{proof}
By Remark \ref{localizesolid}, it suffices to show that the $K(n)$-localization of the \emph{solid} spectrum $\prod_I\mathbb{S}$ identifies with $\prod_I L_{K(n)}\mbb{S}$.  But \ref{tensorwithe} shows that
$$\prod_I\mathbb{S}\otimes^\solid \mbb{E}_n = \prod_I \mbb{E}_n.$$
In particular this is already $K(n)$-local, and as $\pi_\ast$ commutes with limits we deduce the claim from Proposition \ref{halfofdescent}.
\end{proof}

\begin{remark}
The result also holds without cardinality restriction on $I$.  On the other hand, it seems, based on the precise way the telescope conjecture fails (\cite{BurklundTelescope}), that for $n\geq 2$ the analogous statement is false for $L_{T(n)}$.
\end{remark}

\section{Application to self-duality of Morava E-theory, following Beaudry-Goerss-Hopkins-Stojanoska}\label{bghssec}

Hopkins proved the intriguing result that the $K(n)$-local spectrum $E_n$ is \emph{self-dual} up to a shift --- not in the strong sense of categorical dualizability (which would be false, essentially because $E_n$ is an infinite extension of $L_{K(n)}\mathbb{S}$) --- but in the weaker sense that the internal Hom in $K(n)$-local spectra from $E_n$ to $L_{K(n)}\mathbb{S}$ identifies with $\Sigma^{-n^2}E_n$ (see \cite{StricklandDuality} for an account).  A more structured version of this result, taking into account the action of the Morava stabilizer group $\mathbb{G}_n$, was proved in \cite{BeaudryLinearization}.  Here we revisit these results from the perspective of this paper.

We start with a formulation of this self-duality in the setting of etale sheaves.  This is essential a formal consequence of the Galois structure.  Recall the Morava stabilizer group $\mathbb{G}_n$ and the structured Morava E-theory $\mathbb{E}_n$, which is a commutative algebra object in $\on{Sh}_{et}(\ast/\mathbb{G}_n;L_{K(n)}\on{Sp})$ (\ref{lighte}).  Recall also the dualizing object 

$$L_{K(n)}\pi^!(\mbb{S}_{\wh{p}}) \in \on{Sh}_{et}(\ast/\mathbb{G}_n;L_{K(n)}\on{Sp})$$
for $\pi:\ast/\mbb{G}_n\to \ast$, which is invertible by \ref{padicsmooth} and identifies with $L_{K(n)}\mbb{S}_{\wh{p}}^{\on{ad}_{\mbb{G}_n}}$ by \ref{maintheorem}.

\begin{lemma}
Let $p$ be a prime and $n\geq 1$.  Consider the map
$$c:\mathbb{E}_n\to L_{K(n)}\pi^!(\mathbb{S}_{\wh{p}})$$
in $\on{Sh}_{et}(\ast/\mathbb{G}_n;L_{K(n)}\on{Sp})$ which is adjoint (via \ref{padicsmooth}) to the inverse of the isomorphism
$$L_{K(n)}\mathbb{S}\overset{\sim}{\rightarrow} \pi_\ast\mathbb{E}_n$$
from \ref{globalsec}.

Then the composition with multiplication
$$\mathbb{E}_n\otimes\mathbb{E}_n\to\mbb{E}_n\overset{c}{\rightarrow} L_{K(n)}\pi^!(\mathbb{S}_{\wh{p}})$$
is a perfect pairing in the weak sense that it induces
$$\mathbb{E}_n\overset{\sim}{\rightarrow}\underline{\on{map}}(\mbb{E}_n,L_{K(n)}\pi^!(\mbb{S}_{\wh{p}})),$$
where $\underline{\on{map}}$ denotes the internal mapping spectrum in $\on{Sh}_{et}(\ast/\mathbb{G}_n;L_{K(n)}\on{Sp})$.
\end{lemma}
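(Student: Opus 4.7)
The strategy is to reduce the claim to a computation on global sections via Galois descent (Theorem \ref{kndescent}), after which the self-duality becomes essentially tautological from the construction of $c$.

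First I would observe that $\phi:\mathbb{E}_n\to\underline{\on{map}}(\mathbb{E}_n,L_{K(n)}\pi^!\mathbb{S}_{\wh{p}})$ is $\mathbb{E}_n$-linear, where the target carries the $\mathbb{E}_n$-module structure from the source of the internal hom: this is because the pairing $c\circ\mu$ is $\mathbb{E}_n$-bilinear, since $\mu$ is. By Theorem \ref{kndescent}, the functor $\pi_\ast:\on{Mod}_{\mathbb{E}_n}(\on{Sh}_{et}(\ast/\mathbb{G}_n;L_{K(n)}\on{Sp}))\to L_{K(n)}\on{Sp}$ is an equivalence, so $\phi$ is an isomorphism iff $\pi_\ast\phi$ is.

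Next I would compute both sides after applying $\pi_\ast$. The source is $\pi_\ast\mathbb{E}_n\simeq L_{K(n)}\mathbb{S}$ by Corollary \ref{globalsec}. For the target, we use that $\mathbb{G}_n$ is a compact $p$-adic Lie group of virtual finite cohomological dimension, so by telescopic Tate vanishing (Remark \ref{tn}) the map $\pi$ is proper in the $K(n)$-local six functor formalism and $\pi_!=\pi_\ast$. Combining the adjunction $\pi_!\dashv\pi^!$ with the projection formula $\pi_!(\pi^\ast X\otimes Y)\simeq X\otimes\pi_! Y$ (from smoothness of $\pi$), we get for any $X\in L_{K(n)}\on{Sp}$ a natural chain of isomorphisms
$$\on{map}(X,\pi_\ast\underline{\on{map}}(\mathbb{E}_n,L_{K(n)}\pi^!\mathbb{S}_{\wh{p}}))\simeq \on{map}(\pi^\ast X\otimes \mathbb{E}_n,L_{K(n)}\pi^!\mathbb{S}_{\wh{p}})\simeq \on{map}(X\otimes\pi_!\mathbb{E}_n,L_{K(n)}\mathbb{S}),$$
which, using $\pi_!\mathbb{E}_n=\pi_\ast\mathbb{E}_n\simeq L_{K(n)}\mathbb{S}$ once more, becomes $\on{map}(X,L_{K(n)}\mathbb{S})$. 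So by Yoneda, $\pi_\ast\underline{\on{map}}(\mathbb{E}_n,L_{K(n)}\pi^!\mathbb{S}_{\wh{p}})\simeq L_{K(n)}\mathbb{S}$.

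Finally I would check that the induced endomorphism $\pi_\ast\phi:L_{K(n)}\mathbb{S}\to L_{K(n)}\mathbb{S}$ is the identity. Tracing $\phi$ through the adjunctions used to identify both sides with $L_{K(n)}\mathbb{S}$, one sees that $\pi_\ast\phi$ corresponds to the composite $\pi_!\mathbb{E}_n\to L_{K(n)}\mathbb{S}$ which is precisely the $(\pi_!,\pi^!)$-adjunct of $c$. But by the very definition of $c$ in the statement of the lemma, this adjunct is the inverse of the canonical iso $L_{K(n)}\mathbb{S}\overset{\sim}{\to}\pi_\ast\mathbb{E}_n=\pi_!\mathbb{E}_n$, which is precisely the iso implicitly used in our identification of the target with $L_{K(n)}\mathbb{S}$; the two cancel to give the identity.

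The main obstacle is purely clerical: keeping the various adjunctions and the projection-formula isomorphisms aligned carefully enough to see that the final endomorphism really is the identity and not some twist. Conceptually, though, the content of the argument is just that $c$ was defined precisely so that this pairing becomes the Galois trace form, which is perfect because Galois descent holds.
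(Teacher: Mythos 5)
Your proposal is correct and follows the paper's argument: regard the map as an $\mathbb{E}_n$-module map, invoke Galois descent (Theorem \ref{kndescent}) to reduce to checking that it becomes an isomorphism after applying $\pi_\ast$, and conclude from the way $c$ was defined. The paper leaves the final $\pi_\ast$-computation as "this is exactly how $c$ was chosen," whereas you spell it out via $\pi_!=\pi_\ast$ and the projection formula; this is just an expansion of the same step, not a different route.
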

\begin{proof}
As $\underline{\on{map}}(\mbb{E}_n,-)$ gives the right adjoint to the forgetful functor from $\mathbb{E}_n$-modules to underlying $K(n)$-local etale sheaves, another interpretation of the map
$$\mbb{E}_n\rightarrow \underline{\on{map}}(\mbb{E}_n,L_{K(n)}\pi^!(\mbb{S}_{\wh{p}}))$$
we're trying to prove is an isomorphism is that it is the unique $\mathbb{E}_n$-module map which restricts to $c$ on the unit of $\mathbb{E}_n$.  By Theorem \ref{kndescent}, it suffices to show that this map induces an isomorphism of $K(n)$-local spectra on applying $\pi_\ast$.  But this is exactly how $c$ was chosen.
\end{proof}

It is important to note that there is no a priori relation between this self-duality of $\mathbb{E}_n$ in the etale category and the self-duality of $E_n$ proved by Hopkins.  This is because the internal hom in etale sheaves does not generally commute with pullbacks (the relevant pullback here being along $e:\ast\to\ast/\mbb{G}_n$).  Indeed, in a purely algebraic Galois setting, say an infinite $G$-Galois extension $K\to L$ of characteristic $p$ fields where $G$ is a mod $p$ Poincare duality group, the analog of this proposition still holds ($L$ is weakly self-dual as a $K$-module in etale sheaves on $\ast/G$ up to a twist) by the same proof (see Remark \ref{alternatedescent}), but it is clearly not the case that $L$ is weakly self-dual as a $K$-module forgetting the $G$-action.

To recover Hopkins' result, we therefore need some ingredient which is special to the situation of $L_{K(n)}\mbb{S}\to E_n$.  In our presentation this ingredient will be Proposition \ref{tensorwithe}, which stems from the fact that although $E_n$ is an infinite extension of $L_{K(n)}\mbb{S}$, it still has enough finiteness properties for some purposes (it is 2-periodic with profinite homotopy groups).

To give the precise argument addressing this, note that internal hom in v-sheaves does commute with arbitrary pullbacks.  This is a tautological feature of slice $\infty$-topoi.  Thus it would suffice to prove the analog of the above proposition taking the internal hom
$$\underline{\on{hom}}_v(\mbb{E}_n,L_{K(n)}\pi^!\mbb{S}_{\wh{p}})$$
in $v$-sheaves instead.  For that it suffices to show that the internal hom in v-sheaves already lies in the $K(n)$-local etale full subcategory (as then it will a fortiori identify with the internal Hom in $K(n)$-local etale shaves).  As by definition etale sheaves are detected on pullback to any cover, and $e^\ast \pi^!\mbb{S}_{\wh{p}}$ is a shift of $L_{K(n)}\mbb{S}$, it suffices to show the following.

\begin{lemma}
The internal hom in light condensed spectra
$$\underline{\on{hom}}_v(E_n,L_{K(n)}\mbb{S})$$
is $K(n)$-locally etale, i.e.\ it is discrete mod $(p,v_1,\ldots,v_{n-1})$.
\end{lemma}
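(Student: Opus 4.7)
The plan is to identify $\underline{\on{hom}}_v(E_n, L_{K(n)}\mathbb{S})$ explicitly as the solid light condensed spectrum $E_n \otimes^\square_{L_{K(n)}\mathbb{S}}\, e^\ast L_{K(n)}\mathbb{S}^{-\on{ad}_{\mathbb{G}_n}}$---an incarnation of Hopkins' self-duality of $E_n$---and then to read off from this formula that both factors are discrete mod $(p, v_1, \ldots, v_{n-1})$.

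First, since $L_{K(n)}\mathbb{S} = \pi_\ast\mathbb{E}_n$ (Corollary \ref{globalsec}) is solid and mapping into a solid target factors through solidification, Proposition \ref{tensorwithe} gives, for any light profinite $S = \varprojlim_m S_m$,
\[
\underline{\on{hom}}_v(E_n, L_{K(n)}\mathbb{S})(S) \;=\; \on{map}\bigl(\varprojlim_m E_n[S_m],\, L_{K(n)}\mathbb{S}\bigr).
\]
I then apply the Galois-descent fully faithful embedding (Proposition \ref{halfofdescent}), under which $\on{map}(X,Y)$ for $X, Y \in L_{K(n)}\on{SolidSp}$ becomes $\on{map}_{\mathbb{E}_n}(L_{K(n)}(X\otimes^\square\mathbb{E}_n),\, L_{K(n)}(Y\otimes^\square\mathbb{E}_n))$. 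Combining the Galois Künneth $L_{K(n)}(\mathbb{E}_n \otimes E_n) \simeq e_\ast E_n$ (Remark \ref{differentgalois}) with the projection formula for $e\colon \ast \to \ast/\mathbb{G}_n$ and the factorization $\varprojlim_m E_n[S_m] = E_n \otimes^\square \mathbb{S}[S]^\square$, I obtain $L_{K(n)}(\mathbb{E}_n \otimes^\square \varprojlim E_n[S_m]) \simeq e_\ast(\varprojlim E_n[S_m])$; on the $Y$-side one has $L_{K(n)}(L_{K(n)}\mathbb{S} \otimes^\square \mathbb{E}_n) = \mathbb{E}_n$.

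By telescopic Tate vanishing (Remark \ref{tn}), $e_\ast \simeq e_!$ in the $K(n)$-local context, so $e_\ast$ admits a right adjoint $e^!$. Since $\pi \circ e = \on{id}_\ast$ and $\pi^!(1) = L_{K(n)}\mathbb{S}^{\on{ad}_{\mathbb{G}_n}}$ by Theorems \ref{padicsmooth} and \ref{maintheorem}, the smoothness of $e$ (inherited from that of $\pi$ via $\pi\circ e = \on{id}$) forces $e^!(1) = e^\ast L_{K(n)}\mathbb{S}^{-\on{ad}_{\mathbb{G}_n}}$, and hence $e^!\mathbb{E}_n = E_n \otimes^\square e^\ast L_{K(n)}\mathbb{S}^{-\on{ad}_{\mathbb{G}_n}}$. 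Using that $\mathbb{S}[S]^\square$ is the free solid spectrum on $S$, the adjunctions unwind to yield
\[
\underline{\on{hom}}_v(E_n, L_{K(n)}\mathbb{S}) \;\simeq\; E_n \otimes^\square_{L_{K(n)}\mathbb{S}}\, e^\ast L_{K(n)}\mathbb{S}^{-\on{ad}_{\mathbb{G}_n}}.
\]

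Reducing mod $(p, v_1, \ldots, v_{n-1})$: the first factor becomes $\mathbb{F}_{p^n}[u^{\pm 1}]$, discrete by construction of $E_n$ as the $(p, u_1, \ldots, u_{n-1})$-adic completion of a discrete spectrum; the second factor is invertible in $\on{Sh}_{et}(\ast; L_{K(n)}\on{Sp})$ by Theorem \ref{padicsmooth}, hence already a constant (discrete) $v$-sheaf. Since $\delta^\ast$ is symmetric monoidal, a solid tensor product of discrete solid spectra remains discrete, yielding the claim. The main technical obstacle will be rigorously handling the interaction of $L_{K(n)}$-localization with $-\otimes^\square \mathbb{S}[S]^\square$---verifying that the Galois Künneth extends along this tensor product via the projection formula for $e$---and establishing enough smoothness of $e$ to justify $e^!(X) = e^\ast X \otimes e^!(1)$; both should reduce to standard ambidexterity together with the completeness properties of $\mathbb{E}_n$ in the $K(n)$-local solid setting.
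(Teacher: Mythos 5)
Your target formula $\underline{\on{hom}}_v(E_n,L_{K(n)}\mbb{S})\simeq E_n\otimes^\solid e^\ast L_{K(n)}\mbb{S}^{-\on{ad}_{\mbb{G}_n}}$ is the right one --- it is what the paper's argument also produces, since $e^\ast\mbb{D}_{\mbb{G}_n}$ is the restriction of the inverse dualizing sheaf by Corollary \ref{compareduality} and Theorem \ref{maintheorem} --- and your outer scaffolding (evaluation on light profinite $S$ via solidity and Proposition \ref{tensorwithe}, the Galois/Kunneth property of Remark \ref{differentgalois}, the projection formula for $e$) is consistent with the paper's toolkit. The gap is the pivotal step: $e\colon\ast\to\ast/\mbb{G}_n$ is \emph{not} smooth, and smoothness is not inherited ``from $\pi$ via $\pi\circ e=\on{id}$'' (a section of a smooth map need not be smooth). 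Smoothness is stable under base change, and the base change of $e$ along itself is the projection $\mbb{G}_n\to\ast$ from an infinite light profinite set, whose dualizing object is not invertible: its global sections are the solid module $\mbb{S}_{\wh{p}}[\mbb{G}_n]^\solid$ (Theorem \ref{solidproperties}), not a shift of the unit. Consequently the identities you extract from smoothness of $e$ --- invertibility of $e^!1$, the formula $e^!1=e^\ast L_{K(n)}\mbb{S}^{-\on{ad}_{\mbb{G}_n}}$, and above all $e^!\mbb{E}_n\simeq e^\ast\mbb{E}_n\otimes e^!1$ --- do not follow formally, and they cannot be reduced to ``standard ambidexterity'', since $\mbb{G}_n$ is not finite. (A smaller misattribution: $e_!\simeq e_\ast$ holds because $e$ is proper, $\mbb{G}_n$ being profinite; telescopic Tate vanishing is what gives $\pi_!\simeq\pi_\ast$, not anything about $e$.)

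What replaces your smoothness step in the paper is precisely the non-formal content of the lemma: (i) the comparison map $\underline{\on{hom}}_v(e_\ast\mbb{S},\mbb{S})\otimes^\solid\mbb{E}_n=\mbb{S}[\mbb{G}_n]^\solid\otimes^\solid\mbb{E}_n\to\underline{\on{hom}}_v(e_\ast\mbb{S},\mbb{E}_n)$ is an isomorphism, checked after pullback to $\ast$ using Proposition \ref{tensorwithe}; this is where the special finiteness of $E_n$ (2-periodicity, profinite homotopy groups) enters, and it is exactly the point at which a general profinite Galois extension would fail; and (ii) the solid duality statement of Corollary \ref{compareduality}, resting on $\mbb{G}_n$ being a virtual (mod $p$) Poincaré duality group, which identifies $\mbb{D}_{\mbb{G}_n}=\pi_\ast\mbb{S}_{\wh{p}}[\mbb{G}_n]^\solid$ with an etale invertible object inverse to $\pi^!\mbb{S}_{\wh{p}}$. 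Together (as in Remark \ref{nottheotherhalf}) these give $\pi_\ast(\mbb{S}[\mbb{G}_n]^\solid\otimes^\solid\mbb{E}_n)\simeq(e^\ast\mbb{D}_{\mbb{G}_n})\otimes^\solid E_n$, which is your formula; but these two inputs must be invoked explicitly --- purely formal six-functor manipulations, which is all your smoothness argument uses, cannot produce them, as the paper's warning just before the lemma (the purely algebraic Galois counterexample) makes clear. Once you substitute (i) and (ii) for the smoothness claim, your argument collapses into the paper's proof.
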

\begin{proof}
As $L_{K(n)}\mbb{S} = \pi_\ast \mbb{E}_n$ (\ref{globalsec}), we have
$$\underline{\on{hom}}_v(E_n,L_{K(n)}\mbb{S}) \simeq \pi_\ast \underline{\on{hom}}_v(E_n,\mbb{E}_n).$$
Now, consider the $\mbb{E}_n$-module (in $K(n)$-local $v$-sheaves on $\ast/\mbb{G}_n$) given by $L_{K(n)}(\mbb{E}_n\otimes E_n)$.  By the Galois property \ref{differentgalois}, we have
$$L_{K(n)}(\mathbb{E}_n\otimes E_n) \simeq  L_{K(n)}(\mathbb{E}_n\otimes e_\ast \mbb{S}).$$
as $\mbb{E}_n$-modules.  Since $\mathbb{E}_n$ is a $K(n)$-local $\mathbb{E}_n$-module, it follows that
$$\underline{\on{hom}}_v(E_n,\mbb{E}_n) \simeq \underline{\on{hom}}_v(e_\ast \mbb{S},\mbb{E}_n).$$
Now we use the solid formalism.  Since $\mbb{E}_n$ is solid, this internal hom in $v$-sheaves is also solid.  Consider the induced natural map
$$\underline{\on{hom}}_v(e_\ast\mbb{S},\mbb{S})\otimes^\square\mathbb{E}_n\rightarrow \underline{\on{hom}}_v(e_\ast \mbb{S},\mbb{E}_n)$$
We claim this is an isomorphism.  Indeed we can check on underlying solid spectra (i.e.\ after pullback to $\ast$), and then the claim is given by \ref{tensorwithe}.  (Once again, this is where we use a special feature of Morava E-theory as compared to a general Galois extension.)  Now by definition of solidification $\underline{\on{hom}}_v(e_\ast\mbb{S},\mbb{S})$ identifies with the solid sheaf on $\ast/G$ corresponding to $\mathbb{S}[G]^\solid$ viewed as a left module over itself.  Then in Remark \ref{nottheotherhalf} we already calculated that $\pi_\ast(\mbb{S}[G]^\solid\otimes^\solid\mbb{E}_n)$ is $K(n)$-locally etale, finishing the proof.
\end{proof}

Combining these two lemmas, we get the following, giving the equivariant generalization of Hopkins' result (already obtained by \cite{BeaudryLinearization} in different language).

\begin{theorem}
Let $p$ be a prime and $n\geq 1$.  There is a canonical isomorphism
$$\mathbb{E}_n\overset{\sim}{\rightarrow}\underline{\on{map}}_v(\mbb{E}_n,L_{K(n)}\pi^!(\mbb{S}_{\wh{p}})),$$
where $\underline{\on{map}}_v$ denotes the internal mapping spectrum in $\on{Sh}_v(\ast/\mathbb{G}_n;\on{Sp})$.
\end{theorem}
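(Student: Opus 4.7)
The plan is to assemble the two lemmas proved just above into the final statement, using that an etale sheaf is detected on pullback to a cover and that the dualizing object $L_{K(n)}\pi^!(\mbb{S}_{\wh{p}})$ is invertible.

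First, I would observe that the first lemma already produces the desired isomorphism, but with the internal hom computed in $\on{Sh}_{et}(\ast/\mbb{G}_n;L_{K(n)}\on{Sp})$ rather than in $\on{Sh}_v(\ast/\mbb{G}_n;\on{Sp})$. So the whole task reduces to showing that the comparison map
$$\underline{\on{map}}_{et}(\mbb{E}_n,L_{K(n)}\pi^!(\mbb{S}_{\wh{p}})) \longrightarrow \underline{\on{map}}_v(\mbb{E}_n,L_{K(n)}\pi^!(\mbb{S}_{\wh{p}}))$$
is an isomorphism, which follows provided the right hand side is already $K(n)$-locally etale (since then by the universal properties of the two internal Homs, the comparison map is forced to be an iso). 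Concretely, this amounts to showing the $v$-sheaf internal Hom lies in $\on{Sh}_{et}(\ast/\mbb{G}_n;L_{K(n)}\on{Sp}) \subset \on{Sh}_v(\ast/\mbb{G}_n;\on{Sp})$.

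Second, I would invoke the fact (from the proof of Theorem \ref{padicsmooth}, plus Theorem \ref{maintheorem}) that $L_{K(n)}\pi^!(\mbb{S}_{\wh{p}})$ is an invertible etale sheaf whose pullback $e^\ast L_{K(n)}\pi^!(\mbb{S}_{\wh{p}})$ is (up to a canonical shift) just $L_{K(n)}\mbb{S}$. Since the property of being $K(n)$-locally etale is preserved under tensoring with an invertible etale sheaf, and since it is detected on pullback along the cover $e:\ast\to \ast/\mbb{G}_n$ by Remark \ref{checkonstalks} (using Lemma \ref{etaleproperties} for locality of etaleness), it suffices to show that
$$e^\ast \underline{\on{map}}_v(\mbb{E}_n,L_{K(n)}\pi^!(\mbb{S}_{\wh{p}}))$$
is $K(n)$-locally etale. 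Because $\underline{\on{map}}_v$ commutes with arbitrary pullback (a tautological property of slice $\infty$-topoi), this pullback identifies with $\underline{\on{hom}}_v(E_n, L_{K(n)}\mbb{S})$ tensored with a shift of the (invertible, trivialized after pullback) dualizing sheaf, so the conclusion is precisely the content of the second lemma.

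Finally, I would check that the resulting isomorphism is canonical and agrees with the one produced by the pairing map in the first lemma: this is automatic since the comparison $\underline{\on{map}}_{et}\to\underline{\on{map}}_v$ is natural and both sides have the same universal property once we know the target is etale. No step here is really the "main obstacle"; the substantive work was done in the two preceding lemmas, and in particular in the calculation of Proposition \ref{tensorwithe} that feeds into the second lemma, which is where the special finiteness structure of Morava $E$-theory (profinite, 2-periodic homotopy) is used to distinguish this Galois situation from a generic one.
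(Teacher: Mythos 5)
Your proposal is correct and follows essentially the same route as the paper: the first lemma gives the duality with the etale internal hom, and the reduction via pullback along $e:\ast\to\ast/\mathbb{G}_n$ (using that the $v$-sheaf internal hom commutes with pullback and that $e^\ast L_{K(n)}\pi^!(\mathbb{S}_{\wh{p}})$ is a shift of $L_{K(n)}\mathbb{S}$) turns the etaleness of the $v$-internal hom into exactly the second lemma. No substantive difference from the paper's argument.
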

\begin{proof}
By the first lemma, we get the analog of this with $v$ replaced by $et$.  By the second lemma, the two different notions of internal mapping object agree.
\end{proof}

\begin{remark}
In \cite{BeaudryLinearization}, this result is fruitfully used to understand self-duality properties of spectra of the form $(E_n)^{hG}$, for $G\subset \mbb{G}_n$ a suitable finite subgroup.  For their analysis, one needs to get a good handle on the dualizing object $\pi^!(\mbb{S}_{\wh{p}})$.  A first step is the \emph{linearization} of this problem, provided by our main theorem \ref{maintheorem} (which they independently showed holds upon restriction to suitable finite subgroups).   As a second step, they transfer from the $p$-adic J-homomorphism to the real J-homomorphism using an integral structure on the adjoint representation, again restricted to a suitable finite subgroup.  This is accomplished using the reciprocity law from \cite{ClausenJ} or Section \ref{jsec} (which they reprove a special case of).  In the third step, they use  \emph{orientation} properties of $E_n$ with respect to the real J-homomorphism in conjunction with some very nice analysis of characteristic classes of representations to pin down the relevant twists.

It would be interesting to understand what kind of orientation properties one has directly for the $p$-adic J-homomorphism, without going over to the real $J$-homomorphism.
\end{remark}

\printbibliography

@book{SerreLie,
  title={Lie algebras and Lie groups: 1964 lectures given at Harvard University},
  author={Serre, Jean-Pierre},
  year={2009},
  publisher={Springer}
}

@article {RognesGalois,
    AUTHOR = {Rognes, John},
     TITLE = {Galois extensions of structured ring spectra. {S}tably
              dualizable groups},
   JOURNAL = {Mem. Amer. Math. Soc.},
  FJOURNAL = {Memoirs of the American Mathematical Society},
    VOLUME = {192},
      YEAR = {2008},
    NUMBER = {898},
     PAGES = {viii+137},
   MRCLASS = {55P43 (55M05 55P35 57T05)},
  MRNUMBER = {2387923},
MRREVIEWER = {Alberto\ Cavicchioli},
       DOI = {10.1090/memo/0898},
}

@article{GrothendieckAnalytic,
     author = {Grothendieck, Alexander},
     title = {Techniques de construction en g\'eom\'etrie analytique. {I.} {Description} axiomatique de l'espace de {Teichm\"uller} et de ses variantes},
     journal = {S\'eminaire Henri Cartan},
     pages = {1--33},
     publisher = {Secr\'etariat math\'ematique},
     volume = {13},
     number = {1},
     year = {1960-1961},
     zbl = {0142.33503},
}

@misc{ScholzeDiamonds,
      title={Etale cohomology of diamonds}, 
      author={Peter Scholze},
      year={2022},
      eprint={1709.07343},
      archivePrefix={arXiv},
      primaryClass={math.AG},
      url={https://arxiv.org/abs/1709.07343}, 
}

@article{LurieSAG,
  title={Spectral algebraic geometry},
  author={Lurie, Jacob},
  year={2018},
  url={https://www.math.ias.edu/~lurie/papers/SAG-rootfile.pdf}
}

@misc{LurieHA,
  title={Higher algebra},
  author={Lurie, Jacob},
  year={2017},
  url={https://www.math.ias.edu/~lurie/papers/HA.pdf}
}

@book{LurieHTT,
  title={Higher topos theory},
  author={Lurie, Jacob},
  year={2009},
  publisher={Princeton University Press}
}

@article {BhattProEtale,
    AUTHOR = {Bhatt, Bhargav and Scholze, Peter},
     TITLE = {The pro-\'etale topology for schemes},
   JOURNAL = {Ast\'erisque},
  FJOURNAL = {Ast\'erisque},
    NUMBER = {369},
      YEAR = {2015},
     PAGES = {99--201},
      ISBN = {978-2-85629-805-3},
   MRCLASS = {14F05 (14F20 14F35 14H30 18B25)},
  MRNUMBER = {3379634},
MRREVIEWER = {Pieter\ Belmans},
}

@article {MondalReplete,
    AUTHOR = {Mondal, Shubhodip and Reinecke, Emanuel},
     TITLE = {On {P}ostnikov completeness for replete topoi},
   JOURNAL = {Homology Homotopy Appl.},
  FJOURNAL = {Homology, Homotopy and Applications},
    VOLUME = {27},
      YEAR = {2025},
    NUMBER = {1},
     PAGES = {179--196},
   MRCLASS = {18N60 (14A20 14F06 18F10)},
  MRNUMBER = {4883680},
}

@article{HoyoisLefschetz,
  title={A quadratic refinement of the Grothendieck--Lefschetz--Verdier trace formula},
  author={Hoyois, Marc},
  journal={Algebraic \& Geometric Topology},
  volume={14},
  number={6},
  pages={3603--3658},
  year={2015},
  publisher={Mathematical Sciences Publishers}
}

@article{ClausenHyper,
  title={Hyperdescent and {\'e}tale K-theory},
  author={Clausen, Dustin and Mathew, Akhil},
  journal={Inventiones mathematicae},
  volume={225},
  pages={981--1076},
  year={2021},
  publisher={Springer}
}

@misc{BarwickPyknotic,
      title={Pyknotic objects, I. Basic notions}, 
      author={Clark Barwick and Peter Haine},
      year={2019},
      eprint={1904.09966},
      archivePrefix={arXiv},
      primaryClass={math.AG},
      url={https://arxiv.org/abs/1904.09966}, 
}

@article{AtiyahDuality,
  title={Thom complexes},
  author={Atiyah, Michael Francis},
  journal={Proceedings of the London Mathematical Society},
  volume={3},
  number={1},
  pages={291--310},
  year={1961},
  publisher={Oxford University Press}
}

@article {CarmeliCardinality,
    AUTHOR = {Carmeli, Shachar and Schlank, Tomer M. and Yanovski, Lior},
     TITLE = {Ambidexterity and height},
   JOURNAL = {Adv. Math.},
  FJOURNAL = {Advances in Mathematics},
    VOLUME = {385},
      YEAR = {2021},
     PAGES = {Paper No. 107763, 90},
      ISSN = {0001-8708,1090-2082},
   MRCLASS = {18N60 (55U35)},
  MRNUMBER = {4246977},
       DOI = {10.1016/j.aim.2021.107763},
       URL = {https://doi.org/10.1016/j.aim.2021.107763},
}

@article {MilnorDuality,
    AUTHOR = {Milnor, John and Spanier, Edwin},
     TITLE = {Two remarks on fiber homotopy type},
   JOURNAL = {Pacific J. Math.},
  FJOURNAL = {Pacific Journal of Mathematics},
    VOLUME = {10},
      YEAR = {1960},
     PAGES = {585--590},
   MRCLASS = {57.00},
  MRNUMBER = {117750},
MRREVIEWER = {J.\ F.\ Adams}
}

@article{AtiyahOrientation,
  title={Clifford modules},
  author={Atiyah, Michael F and Bott, Raoul and Shapiro, Arnold},
  journal={Topology},
  volume={3},
  pages={3--38},
  year={1964},
  publisher={Pergamon}
}

@article{LazardGroupes,
  title={Groupes analytiques $ p $-adiques},
  author={Lazard, Michel},
  journal={Publications Math{\'e}matiques de l'IH{\'E}S},
  volume={26},
  pages={5--219},
  year={1965}
}

@article {SerreDimension,
    AUTHOR = {Serre, Jean-Pierre},
     TITLE = {Sur la dimension cohomologique des groupes profinis},
   JOURNAL = {Topology},
  FJOURNAL = {Topology. An International Journal of Mathematics},
    VOLUME = {3},
      YEAR = {1965},
     PAGES = {413--420},
   MRCLASS = {14.50 (20.80)},
  MRNUMBER = {180619},
MRREVIEWER = {Michael\ I.\ Rosen}
}

@book{DixonAnalytic,
  title={Analytic pro-p groups},
  author={Dixon, John D and Du Sautoy, Marcus PF and Mann, Avinoam and Segal, Dan},
  number={61},
  year={2003},
  publisher={Cambridge University Press}
}

@article{Pstragowski,
  title={P-Adic Analytic Groups, Harvard Math 291Y, Fall 2023},
  author={Pstragowski, Piotr},
  url={https://www.kurims.kyoto-u.ac.jp/~piotr/291y_notes.pdf}
}

@article{MathewGalois,
  title={The Galois group of a stable homotopy theory},
  author={Mathew, Akhil},
  journal={Advances in Mathematics},
  volume={291},
  pages={403--541},
  year={2016},
  publisher={Elsevier}
}

@inproceedings{MathewDescent,
  title={Examples of descent up to nilpotence},
  author={Mathew, Akhil},
  booktitle={Geometric and Topological Aspects of the Representation Theory of Finite Groups: PIMS Summer School and Workshop, July 27-August 5, 2016},
  pages={269--311},
  year={2018},
  organization={Springer}
}

@article{BhattProjectivity,
  title={Projectivity of the Witt vector affine Grassmannian},
  author={Bhatt, Bhargav and Scholze, Peter},
  journal={Inventiones mathematicae},
  volume={209},
  pages={329--423},
  year={2017},
  publisher={Springer}
}

@article{BousfieldLocalization,
  title={The localization of spectra with respect to homology},
  author={Bousfield, Aldridge K},
  journal={Topology},
  volume={18},
  number={4},
  pages={257--281},
  year={1979}
}

@misc{EfimovKtheory,
      title={K-theory and localizing invariants of large categories}, 
      author={Alexander I. Efimov},
      year={2025},
      eprint={2405.12169},
      archivePrefix={arXiv},
      primaryClass={math.KT},
      url={https://arxiv.org/abs/2405.12169}, 
}

@misc{DranishnikovDimension,
      title={Cohomological dimension theory of compact metric spaces}, 
      author={A. N. Dranishnikov},
      year={2005},
      eprint={math/0501523},
      archivePrefix={arXiv},
      primaryClass={math.GN},
      url={https://arxiv.org/abs/math/0501523}, 
}

@article {NeemanPhantom,
    AUTHOR = {Neeman, Amnon},
     TITLE = {On a theorem of {B}rown and {A}dams},
   JOURNAL = {Topology},
  FJOURNAL = {Topology. An International Journal of Mathematics},
    VOLUME = {36},
      YEAR = {1997},
    NUMBER = {3},
     PAGES = {619--645},
   MRCLASS = {18E30 (18A40 55U99)},
  MRNUMBER = {1422428},
MRREVIEWER = {David\ J.\ Green},
}

@article{GrothendieckTohoku,
  title={Sur quelques points d'alg{\`e}bre homologique},
  author={Grothendieck, Alexandre},
  journal={Tohoku Mathematical Journal, Second Series},
  volume={9},
  number={2},
  pages={119--183},
  year={1957},
  publisher={Mathematical Institute, Tohoku University}
}

@book{GaitsgoryDerivedI,
  title={A study in derived algebraic geometry: Volume I: correspondences and duality},
  author={Gaitsgory, Dennis and Rozenblyum, Nick},
  volume={221},
  year={2019},
  publisher={American Mathematical Society}
}

@misc{ArinkinLanglands,
      title={The stack of local systems with restricted variation and geometric Langlands theory with nilpotent singular support}, 
      author={D. Arinkin and D. Gaitsgory and D. Kazhdan and S. Raskin and N. Rozenblyum and Y. Varshavsky},
      year={2022},
      eprint={2010.01906},
      archivePrefix={arXiv},
      primaryClass={math.AG},
      url={https://arxiv.org/abs/2010.01906}, 
}

@misc{RamziRigid,
      title={Locally rigid $\infty$-categories}, 
      author={Maxime Ramzi},
      year={2024},
      eprint={2410.21524},
      archivePrefix={arXiv},
      primaryClass={math.CT},
      url={https://arxiv.org/abs/2410.21524}, 
}

@article {NikolausTC,
    AUTHOR = {Nikolaus, Thomas and Scholze, Peter},
     TITLE = {On topological cyclic homology},
   JOURNAL = {Acta Math.},
  FJOURNAL = {Acta Mathematica},
    VOLUME = {221},
      YEAR = {2018},
    NUMBER = {2},
     PAGES = {203--409},
      ISSN = {0001-5962,1871-2509},
   MRCLASS = {55U35 (16E40 18E30 19D99)},
  MRNUMBER = {3904731},
MRREVIEWER = {Geoffrey\ M. L. Powell},
       DOI = {10.4310/ACTA.2018.v221.n2.a1},
       URL = {https://doi.org/10.4310/ACTA.2018.v221.n2.a1},
}

@incollection {AdemTransfer,
    AUTHOR = {Adem, A. and Cohen, R. L. and Dwyer, W. G.},
     TITLE = {Generalized {T}ate homology, homotopy fixed points and the
              transfer},
 BOOKTITLE = {Algebraic topology ({E}vanston, {IL}, 1988)},
    SERIES = {Contemp. Math.},
    VOLUME = {96},
     PAGES = {1--13},
 PUBLISHER = {Amer. Math. Soc., Providence, RI},
      YEAR = {1989},
   MRCLASS = {55N25 (18G30 19D55 55P42)},
  MRNUMBER = {1022669},
MRREVIEWER = {J.\ P. C. Greenlees},
}

@misc{HeyerSixFunctors,
      title={6-Functor Formalisms and Smooth Representations}, 
      author={Claudius Heyer and Lucas Mann},
      year={2024},
      eprint={2410.13038},
      archivePrefix={arXiv},
      primaryClass={math.CT},
      url={https://arxiv.org/abs/2410.13038}, 
}

@misc{DauserUniqueness,
      title={Uniqueness of six-functor formalisms}, 
      author={Adam Dauser and Josefien Kuijper},
      year={2024},
      eprint={2412.15780},
      archivePrefix={arXiv},
      primaryClass={math.AG},
      url={https://arxiv.org/abs/2412.15780}, 
}

@inproceedings{QuillenQ,
  title={Higher algebraic K-theory: I},
  author={Quillen, Daniel},
  booktitle={Higher K-Theories: Proceedings of the Conference held at the Seattle Research Center of the Battelle Memorial Institute, from August 28 to September 8, 1972},
  pages={85--147},
  year={1972},
  organization={Springer}
}

@article{BarwickQ,
  title={On exact infty-categories and the Theorem of the Heart},
  author={Barwick, Clark},
  journal={Compositio Mathematica},
  volume={151},
  number={11},
  pages={2160--2186},
  year={2015},
  publisher={London Mathematical Society}
}

@article{SuslinK,
  title={On the K-theory of local fields},
  author={Suslin, Andrei A},
  journal={Journal of pure and applied algebra},
  volume={34},
  number={2-3},
  pages={301--318},
  year={1984},
  publisher={North-Holland}
}

@misc{BraunlingLocalCompact,
      title={Local compactness as the K(1)-local dual of finite generation}, 
      author={Oliver Braunling},
      year={2023},
      eprint={2301.05943},
      archivePrefix={arXiv},
      primaryClass={math.KT},
      url={https://arxiv.org/abs/2301.05943}, 
}

@misc{ClausenJ,
      title={p-adic J-homomorphisms and a product formula}, 
      author={Dustin Clausen},
      year={2012},
      eprint={1110.5851},
      archivePrefix={arXiv},
      primaryClass={math.AT},
      url={https://arxiv.org/abs/1110.5851}, 
}

@misc{ClausenArtin,
      title={A K-theoretic approach to Artin maps}, 
      author={Dustin Clausen},
      year={2017},
      eprint={1703.07842},
      archivePrefix={arXiv},
      primaryClass={math.KT},
      url={https://arxiv.org/abs/1703.07842}, 
}

@article{HopkinsNilpotence,
  title={Nilpotence and stable homotopy theory II},
  author={Hopkins, Michael J and Smith, Jeffrey H},
  journal={Annals of mathematics},
  volume={148},
  number={1},
  pages={1--49},
  year={1998},
  publisher={JSTOR}
}

@misc{GoerssMfg,
      title={Quasi-coherent sheaves on the moduli stack of formal groups}, 
      author={Paul G. Goerss},
      year={2008},
      eprint={0802.0996},
      archivePrefix={arXiv},
      primaryClass={math.AT},
      url={https://arxiv.org/abs/0802.0996}, 
}

@article{CartierModules,
  title={Modules associ{\'e}s {\`a} un groupe formel commutatif. Courbes typiques},
  author={Cartier, Pierre},
  journal={CR Acad. Sci. Paris S{\'e}r. AB},
  volume={265},
  pages={A129--A132},
  year={1967}
}

@incollection {MumfordBiextensions,
    AUTHOR = {Mumford, David},
     TITLE = {Bi-extensions of formal groups},
 BOOKTITLE = {Algebraic {G}eometry ({I}nternat. {C}olloq., {T}ata {I}nst.
              {F}und. {R}es., {B}ombay, 1968)},
    SERIES = {Tata Inst. Fundam. Res. Stud. Math.},
    VOLUME = {4},
     PAGES = {307--322},
 PUBLISHER = {Tata Inst. Fund. Res., Bombay},
      YEAR = {1969},
   MRCLASS = {14.50},
  MRNUMBER = {257089},
MRREVIEWER = {J.\ Dieudonn\'e},
}

@article{LurieE,
  title={Elliptic cohomology II: orientations},
  author={Lurie, Jacob},
  year={2020},
  url={https://www.math.ias.edu/~lurie/papers/Elliptic-II.pdf}

}

@article{DevinatzE,
  title={Homotopy fixed point spectra for closed subgroups of the Morava stabilizer groups},
  author={Devinatz, Ethan S and Hopkins, Michael J},
  journal={Topology},
  volume={43},
  number={1},
  pages={1--47},
  year={2004},
  publisher={Elsevier}
}

@article {KuhnTate,
    AUTHOR = {Kuhn, Nicholas J.},
     TITLE = {Tate cohomology and periodic localization of polynomial
              functors},
   JOURNAL = {Invent. Math.},
  FJOURNAL = {Inventiones Mathematicae},
    VOLUME = {157},
      YEAR = {2004},
    NUMBER = {2},
     PAGES = {345--370},
   MRCLASS = {55P60 (55N22 55P65 55P91 55P92)},
  MRNUMBER = {2076926},
MRREVIEWER = {J.\ P. C. Greenlees},
}

@misc{BurklundTelescope,
      title={$K$-theoretic counterexamples to Ravenel's telescope conjecture}, 
      author={Robert Burklund and Jeremy Hahn and Ishan Levy and Tomer M. Schlank},
      year={2023},
      eprint={2310.17459},
      archivePrefix={arXiv},
      primaryClass={math.AT},
      url={https://arxiv.org/abs/2310.17459}, 
}

@article {GreenleesTate,
    AUTHOR = {Greenlees, J. P. C. and Sadofsky, Hal},
     TITLE = {The {T}ate spectrum of {$v_n$}-periodic complex oriented
              theories},
   JOURNAL = {Math. Z.},
  FJOURNAL = {Mathematische Zeitschrift},
    VOLUME = {222},
      YEAR = {1996},
    NUMBER = {3},
     PAGES = {391--405},
   MRCLASS = {55N22},
  MRNUMBER = {1400199},
MRREVIEWER = {Mark\ Hovey},
       
}

@misc{MorPicard,
      title={Picard and Brauer groups of $K(n)$-local spectra via profinite Galois descent}, 
      author={Itamar Mor},
      year={2023},
      eprint={2306.05393},
      archivePrefix={arXiv},
      primaryClass={math.AT},
      url={https://arxiv.org/abs/2306.05393}, 
}

@article{ClausenCondensed,
  title={Lectures on Condensed Mathematics},
  author={Clausen, Dustin and Scholze, Peter},
  url={https://www.math.uni-bonn.de/people/scholze/Condensed.pdf}
}

@article{ClausenAnalytic,
  title={Lectures on Analytic Geometry},
  author={Clausen, Dustin and Scholze, Peter},
  url={https://www.math.uni-bonn.de/people/scholze/Analytic.pdf}
}

@misc{FarguesGeometrization,
      title={Geometrization of the local Langlands correspondence}, 
      author={Laurent Fargues and Peter Scholze},
      year={2024},
      eprint={2102.13459},
      archivePrefix={arXiv},
      primaryClass={math.RT},
      url={https://arxiv.org/abs/2102.13459}, 
}

@article{KleinDualizing,
  title={The dualizing spectrum of a topological group},
  author={Klein, John R},
  journal={Mathematische Annalen},
  volume={319},
  number={3},
  pages={421--456},
  year={2001},
  publisher={Springer}
}

@article {StricklandDuality,
    AUTHOR = {Strickland, N. P.},
     TITLE = {Gross-{H}opkins duality},
   JOURNAL = {Topology},
  FJOURNAL = {Topology. An International Journal of Mathematics},
    VOLUME = {39},
      YEAR = {2000},
    NUMBER = {5},
     PAGES = {1021--1033},
   MRCLASS = {55P60 (55M05 55N22 55P42)},
  MRNUMBER = {1763961},
MRREVIEWER = {Mark\ Hovey},
}

@article {BeaudryLinearization,
    AUTHOR = {Beaudry, Agn\`es and Goerss, Paul G. and Hopkins, Michael J.
              and Stojanoska, Vesna},
     TITLE = {Dualizing spheres for compact {$p$}-adic analytic groups and
              duality in chromatic homotopy},
   JOURNAL = {Invent. Math.},
  FJOURNAL = {Inventiones Mathematicae},
    VOLUME = {229},
      YEAR = {2022},
    NUMBER = {3},
     PAGES = {1301--1434},
   MRCLASS = {55P92 (20E18 22E41 55R25 55R50 55U30)},
  MRNUMBER = {4462627},
MRREVIEWER = {Hans-Werner\ Henn},
}

\end{document}